\documentclass[reqno,12pt]{amsart}

\usepackage[colorlinks = true,linkcolor = blue,urlcolor  = blue,citecolor = blue,anchorcolor = blue,linktocpage=true,
]{hyperref}
\usepackage{url}

\usepackage[rgb]{xcolor} 
\definecolor{mygreen}{rgb}{0,0.7,0.3}
\definecolor{myblue}{rgb}{0,0.50,1.20}
\definecolor{orange}{rgb}{2.55,1.65,0}
\definecolor{myorange}{rgb}{1,0.5,0.1}

\definecolor{fillred}{rgb}{1,0.9,0.9}
\definecolor{fillgreen}{rgb}{0.9,1,0.9}

\usepackage{amsthm,amssymb,amscd}
\usepackage{cleveref}
\usepackage{mathrsfs}
\usepackage{bbm}
\usepackage
{graphicx}
\usepackage{epic,eepic}
\usepackage{tikz-cd}
\tikzcdset{scale cd/.style={every label/.append style={scale=#1},
    cells={nodes={scale=#1}}}}
\usepackage{stmaryrd}

\usepackage{here}
\usepackage{bm}
\usepackage[all]{xy}

\usepackage{tikz}
\usetikzlibrary{calc,decorations.markings}
\usetikzlibrary{arrows.meta}
\usetikzlibrary{positioning}
\usetikzlibrary{cd, intersections, calc, decorations.pathmorphing, arrows, decorations.pathreplacing}

\usepackage{version}

\excludeversion{NB}
\excludeversion{NB2}
 \numberwithin{equation}{section}

\usepackage{thmtools}
\usepackage{cleveref}

\declaretheorem[
  style=plain,
  name=Theorem,
  numberwithin=section
]{thm}

\declaretheorem[
  style=plain,
  name=Proposition,
  sibling=thm
]{prop}

\declaretheorem[
  style=plain,
  name=Corollary,
  sibling=thm
]{cor}

\declaretheorem[
  style=plain,
  name=Lemma,
  sibling=thm
]{lem}

\declaretheorem[
  style=plain,
  name=Theorem
]{introthm}

\declaretheorem[
  style=definition,
  name=Definition,
  sibling=thm
]{dfn}

\declaretheorem[
  style=definition,
  name=Example,
  sibling=thm
]{ex}

\declaretheorem[
  style=definition,
  name=Claim,
  sibling=thm
]{claim}

\declaretheorem[
  style=definition,
  name=Notation,
  sibling=thm
]{conv}

\declaretheorem[
  style=remark,
  name=Remark,
  sibling=thm
]{rem}

\crefname{thm}{Theorem}{Theorems}
\crefname{prop}{Proposition}{Propositions}
\crefname{cor}{Corollary}{Corollaries}
\crefname{lem}{Lemma}{Lemmas}

\crefname{introthm}{Theorem}{Theorems}
\crefname{introcor}{Corollary}{Corollaries}
\crefname{introconj}{Conjecture}{Conjectures}

\crefname{dfn}{Definition}{Definitions}
\crefname{ex}{Example}{Examples}
\crefname{claim}{Claim}{Claims}
\crefname{conj}{Conjecture}{Conjectures}
\crefname{conv}{Notation}{Notations}

\crefname{rem}{Remark}{Remarks}
\crefname{prob}{Problem}{Problems}

\crefname{figure}{Figure}{Figures}
\crefname{section}{Section}{Sections}
\crefname{subsection}{Section}{Sections}
\crefname{appendix}{Appendix}{Appendices}

\def\C{{\mathbb C}}
\def\Q{{\mathbb Q}}
\def\Z{{\mathbb Z}}
\def\ve{{\varepsilon}}

\newcommand\R{{\mathbb{R}}}

\newcommand{\std}{\mathrm{std}}
\newcommand{\Hom}{\mathop{\mathrm{Hom}}\nolimits}

\newcommand{\uf}{\mathrm{uf}}
\newcommand{\fr}{\mathrm{fr}}

\newcommand{\mrm}{\mathrm{m}}

\newcommand{\ext}{\mathrm{e}}

\newcommand{\GS}{\mathrm{GS}}
\newcommand{\bbs}{\mathbbmtt{s}}
\newcommand{\ols}{\overline{s}}
\newcommand{\olS}{\overline{S}}
\newcommand{\Inv}{\mathrm{Inv}}

\newcommand\vw{{\overline{w_0}}}

\newcommand{\lieg}{\mathfrak{g}}
\newcommand{\lieh}{\mathfrak{h}}

\newcommand{\Sy}{\mathfrak{S}}
\newcommand{\op}{\mathrm{op}}

\newcommand{\weight}{\mathop{\mathrm{wt}}}

\newcommand{\cR}{{\mathcal{R}}}

\newcommand{\tcR}{{\underline{\mathcal{R}}}}
\newcommand{\inv}{\mathrm{inv}}
\newcommand{\wtmu}{\widetilde{\sfm}}
\newcommand{\GKdim}{\mathop{\mathrm{GKdim}}}

\newcommand\A{{\mathcal{A} }}

\newcommand{\cF}{\mathcal{F}}

\newcommand{\cO}{\mathcal{O}}
\newcommand{\cT}{\mathcal{T}}
\newcommand{\bcT}{\overline{\mathcal{T}}}
\newcommand{\cD}{\mathcal{D}}
\newcommand{\cE}{\mathcal{E}}
\newcommand{\cM}{\mathcal{M}}
\newcommand{\bA}{\mathbf{A}}
\newcommand\bs{{\boldsymbol{s}}}
\newcommand\ba{{\boldsymbol{a}}}
\newcommand\bn{{\boldsymbol{n}}}
\newcommand\bi{{\mathbf{i}}}
\newcommand\bg{{\boldsymbol{g}}}
\newcommand{\bD}{\boldsymbol{\Delta}}
\newcommand{\bJ}{\mathbf{J}}

\newcommand{\bbD}{\boldsymbol{D}}

\newcommand{\sfS}{\mathsf{S}}
\newcommand{\sfT}{\mathsf{T}}

\newcommand{\sfm}{\mathsf{m}}
\newcommand{\sfA}{\mathsf{A}}

\newcommand{\sfI}{\mathsf{I}}
\newcommand{\sfJ}{\mathsf{J}}
\newcommand{\sfRev}{\mathsf{Rev}}
\newcommand{\sfL}{\mathsf{L}}
\newcommand{\sfR}{\mathsf{R}}
\newcommand{\qq}{q}
\newcommand{\qqd}{q^{\frac{1}{2d}}}

\newcommand{\Uq}{\mathcal{U}_{q}}

\newcommand{\bt}{\mathop{\widetilde{\otimes}}}
\newcommand{\ut}{\mathop{\underline{\otimes}}}
\newcommand{\Deg}{\mathrm{Deg}}

\newcommand{\tbbs}{\widetilde{\bbs}}
\newcommand{\uDelta}{\mathop{\underline{\Delta}}}
\newcommand{\flA}{\mathsf{A}}
\newcommand{\sA}{\mathscr{A}}

\newcommand{\bSigma}{{\boldsymbol{\Sigma}}}
\newcommand{\vtr}[2]{#1\! \vartriangleright\! #2}
\newcommand{\btr}[2]{#1\! \blacktriangleright\! #2}
\newcommand{\isigma}{\imath_{4,\qq}^{\mathrm{sh}}}
\newcommand{\Phisigma}{\Phi_{3}^{\ext, \mathrm{sh}}}

\DeclareRobustCommand{\triangle}{%
  \mathchoice
    {\triangleaux{1}}
    {\triangleaux{1}}
    {\triangleaux{0.7}}
    {\triangleaux{0.5}}
}
\newcommand{\triangleaux}[1]{
\begin{tikzpicture}[scale=#1]
\draw(-30:0.15) -- (90:0.15) -- (210:0.15) -- cycle;
\end{tikzpicture}
}

\DeclareRobustCommand{\square}{%
  \mathchoice
    {\squareaux{1}}
    {\squareaux{1}}
    {\squareaux{0.7}}
    {\squareaux{0.5}}
}
\newcommand{\squareaux}[1]{
\begin{tikzpicture}[scale=#1]
\draw(0,0) -- (0,0.25) -- (0.25, 0.25) -- (0.25, 0) -- cycle;
\end{tikzpicture}
}

\DeclareRobustCommand{\dsquare}{%
  \mathchoice
    {\dsquareaux{1}}
    {\dsquareaux{1}}
    {\dsquareaux{0.7}}
    {\dsquareaux{0.5}}
}
\newcommand{\dsquareaux}[1]{
\begin{tikzpicture}[scale=#1]
\draw(0,0) -- (0,0.25) -- (0.25, 0.25) -- (0.25, 0) -- cycle;
\draw(0, 0.07) -- (0.25, 0.07);
\draw(0, 0.18) -- (0.25, 0.18);
\end{tikzpicture}
}

\DeclareMathOperator{\wt}{wt}
\DeclareMathOperator{\ad}{ad}
\DeclareMathOperator{\id}{id}
\DeclareMathOperator{\Ima}{Im}

\newcommand\Conf{{\mathrm{Conf}}}%

\DeclareMathOperator{\sgn}{sgn}

\newcommand{\dprod}{\mathop{\overrightarrow{\prod}}\limits}

\usepackage[backend=biber,
  style=alphabetic, 
  giveninits=true, 
  doi=false, 
  url=false, 
  maxbibnames=99, 
  maxcitenames=99,
  maxalphanames=99,
  sorting=nyt
]{biblatex}

\AtEveryBibitem{
  \clearfield{issn}
  \clearfield{isbn}
}

\DeclareFieldFormat[article]{title}{\mkbibemph{#1}}

\DeclareFieldFormat[inproceedings]{title}{\mkbibemph{#1}}

\DeclareFieldFormat[incollection]{title}{\mkbibemph{#1}}

\DeclareFieldFormat{booktitle}{#1}

\DeclareFieldFormat{journaltitle}{#1}

 \DeclareFieldFormat{title}{\mkbibemph{#1}\addcomma\space}

\DeclareBibliographyDriver{book}{%
  \printnames{author}%
  \setunit{\addcomma\space}%
  \mkbibemph{\printfield{title}}%
  \setunit{\addcomma\space}%
  \printfield{series}%
  \setunit{\space}%
  \printfield{volume}%
  \setunit{\addperiod\space}%
  \printlist{publisher}%
  \setunit{\addcomma\space}%
  \printfield{year}%
  \setunit{\addcomma\space}%
  \printfield{pages}%
  \finentry
}

\renewbibmacro*{volume+number+eid}{%
  \mkbibbold{\printfield{volume}}%
  \setunit{\addcomma\space}%
  \printtext{no.\space\printfield{number}}%
}

\DeclareFieldFormat{pages}{#1}

\DeclareBibliographyDriver{article}{%
  \printnames{author}%
  \setunit{\addcomma\space}%
  \printfield[title]{title}%
  \setunit{\addcomma\space}%
  \printfield{journaltitle}%
  \setunit{\addspace}%
  \usebibmacro{volume+number+eid}%
  \setunit{\addspace}%
  \printtext{\mkbibparens{\printfield{year}}}%
  \setunit{\addcomma\space}%
  \printfield{pages}%
  \finentry
}

\tikzset{->-/.style 2 args={
	postaction={decorate},
	decoration={markings, mark=at position #1 with {\arrow[thick, #2]{>}}} 
    },
    ->-/.default={0.5}{}
}
\tikzset{-<-/.style 2 args={
	postaction={decorate},
	decoration={markings, mark=at position #1 with {\arrow[thick, #2]{<}}} 
    },
    -<-/.default={0.5}{}
}

\tikzset{wdist/.style 2 args={
    blue,->,shorten >=0.9em,shorten <=0.9em,transform canvas={xshift=#1 cm,yshift=#2 cm}
    },
    wdist/.default={0}{-0.2}
}
\tikzset{wdistt/.style 2 args={
    blue,<->,shorten >=0.9em,shorten <=0.9em,transform canvas={xshift=#1 cm,yshift=#2 cm}
    },
    wdistt/.default={0}{-0.2}
}
\tikzset{hdist/.style 2 args={
    mygreen!40,{Triangle[width=6pt,length=6pt]}-, line width=2pt,shorten >=0.9em,shorten <=0.9em,transform canvas={xshift=#1 cm,yshift=#2 cm}
    },
    hdist/.default={0}{-0.6}
}

\newcommand{\bline}[3]{
    \path (#1)++(0,-#3) coordinate(m1);
    \path (#2)++(0,-#3) coordinate(m2);
    \filldraw[gray!30] (m1) -- (#1) -- (#2) -- (m2) --cycle;
    \draw[thick] (#1) -- (#2);
}
\newcommand{\tline}[3]{
    \path (#1)++(0,#3) coordinate(m1);
    \path (#2)++(0,#3) coordinate(m2);
    \filldraw[gray!30] (m1) -- (#1) -- (#2) -- (m2) --cycle;
    \draw[thick] (#1) -- (#2);
}

\newcommand\dnode[2]{
\filldraw[draw=#2,fill=#2!15,even odd rule](#1) circle(1.7pt) circle(3pt)
}
\newcommand\tnode[2]{
\draw[#2](#1) circle(3pt); \filldraw[draw=#2,fill=#2](#1) circle(1.7pt)
}
\newcommand{\fnode}[2]{
\draw(#1) node[draw=#2,scale=0.6]{\pgfuseplotmark{square*}}
}
\newcommand{\fdnode}[2]{
\draw(#1) node[draw=#2,fill=#2!15,scale=0.7]{\pgfuseplotmark{square*}} node[draw=#2,fill=white,scale=0.35]{\pgfuseplotmark{square*}}
}
\newcommand{\ftnode}[2]{
\draw(#1) node[draw=#2,scale=0.7]{\pgfuseplotmark{square*}} node[draw=#2,fill=#2,scale=0.35]{\pgfuseplotmark{square*}}
}

\tikzset{
  qarrow/.style={->,shorten >=2pt,shorten <=2pt,>=latex
      },
}
\tikzset{
  head/.style={shorten >=4pt
      },
}
\tikzset{
  tail/.style={shorten <=4pt
      },
}
\newcommand{\qarrow}[2]{\draw[qarrow](#1) -- (#2)}
\newcommand{\qdarrow}[2]{\draw[qarrow,dashed](#1) -- (#2)}
\newcommand{\qsarrow}[2]{\draw[qarrow,head,tail](#1) -- (#2)}

\newcommand{\divpt}[3]{
\pgfmathsetmacro{\n}{#3}
\foreach \i in {0,...,\n} \draw ($(#1)!\i/\n!(#2)$) coordinate(x\i); 
}

\newcommand{\quiverAv}[4]{
    \pgfmathsetmacro{\r}{#4}
    \pgfmathsetmacro{\n}{\r+1}
    \divpt{#2}{#1}{\n}
    \foreach \i in {1,...,\r} \draw(x\i) coordinate (a\i); 
    \divpt{#3}{#1}{\n}
    \foreach \i in {1,...,\r} \draw(x\i) coordinate (b\i); 
    \divpt{#2}{#3}{\n}
    \foreach \i in {1,...,\r} \draw(x\i) coordinate (V\i 0); 
    \foreach \j in {1,...,\r} {   
        \pgfmathsetmacro{\k}{\r+1-\j} 
        \divpt{a\j}{b\j}{\k}; 
        \foreach \h in {0,...,\k} \draw(x\h) coordinate (V\h\j) ; 
        }
    \foreach \j in {1,...,\r} {   
        \pgfmathsetmacro{\k}{\r+1-\j}
        \pgfmathsetmacro{\h}{\k-1}
        \foreach \p in {0,...,\h} { 
            \pgfmathtruncatemacro{\q}{\p+1}
            \draw[qarrow](V\q\j)--(V\p\j);
            }
        \foreach \p in {0,...,\h} { 
            \pgfmathtruncatemacro{\q}{\p+1}
            \pgfmathtruncatemacro{\i}{\j-1}
            \ifnum\j<2 \draw[qarrow,myblue] (V\p\j)--(V\q\i); 
            \else
            \draw[qarrow](V\p\j)--(V\q\i);
            \fi
            }
        }    
    \foreach \j in {1,...,\r} {   
        \pgfmathsetmacro{\k}{\r+1-\j}
        \pgfmathsetmacro{\h}{\k-1}
        \pgfmathsetmacro{\i}{\j-1}
        \foreach \p in {1,...,\k} { 
            \ifnum \j<2 
            \draw[qarrow,myblue](V\p\i)--(V\p\j);
            \else
            \draw[qarrow](V\p\i)--(V\p\j);
            \fi
            }
        }
}

\newcommand{\vertexA}[4]{
    \pgfmathsetmacro{\r}{#4}
    \pgfmathsetmacro{\n}{\r+1}
    \divpt{#2}{#1}{\n}
    \foreach \i in {1,...,\r} \draw(x\i) coordinate (a\i); 
    \divpt{#3}{#1}{\n}
    \foreach \i in {1,...,\r} \draw(x\i) coordinate (b\i); 
    \divpt{#2}{#3}{\n}
    \foreach \i in {1,...,\r} \draw(x\i) coordinate (V\i 0); 
    \foreach \j in {1,...,\r} {   
        \pgfmathsetmacro{\k}{\r+1-\j} 
        \divpt{a\j}{b\j}{\k}; 
        \foreach \h in {0,...,\k} \draw(x\h) coordinate (V\h\j) ; 
        }
}

\newcommand{\quiverCv}[4]{
    \path($(#1)!0.2!(#2)$) coordinate(x1L); 
    \path($(#1)!0.2!(#3)$) coordinate(x1R); 
    \pgfmathsetmacro{\r}{#4}
    \pgfmathsetmacro{\n}{\r+1}
    \divpt{#2}{x1L}{\n}
    \foreach \i in {1,...,\r} \draw(x\i) coordinate (a\i); 
    \divpt{#3}{x1R}{\n}
    \foreach \i in {1,...,\r} \draw(x\i) coordinate (b\i); 
    \divpt{#2}{#3}{\n}
    \foreach \i in {1,...,\r} \draw(x\i) coordinate (V\i 0) ; 
    \foreach \i in {1,...,\r} \draw(V\i 0);
    \pgfmathtruncatemacro{\s}{\r-1}
    \foreach \j in {1,...,\s} {   
        \divpt{a\j}{b\j}{\r}; 
        \foreach \h in {0,...,\r} \draw(x\h)
        coordinate (V\h\j); 
        }
    \divpt{a\r}{b\r}{\r}; 
        \foreach \h in {0,...,\r} \draw(x\h) coordinate (V\h\r); 
    \foreach \j in {1,...,\s} { 
        \foreach \p in {0,...,\s} { 
            \pgfmathtruncatemacro{\q}{\p+1}
            \draw[qarrow](V\q\j)--(V\p\j);
            }
        }
        \foreach \p in {0,...,\s} { 
            \pgfmathtruncatemacro{\q}{\p+1}
            \draw[qarrow,head,tail](V\q\r)--(V\p\r);
            }
    \pgfmathsetmacro{\u}{\r-2}
     \ifnum\r>2 
    \foreach \j in {1,...,\u} { 
        \foreach \p in {1,...,\s} { 
            \pgfmathtruncatemacro{\k}{\j+1}
            \draw[qarrow](V\p\j)--(V\p\k);
            }
        }
    \foreach \j in {1,...,\u} { 
        \foreach \p in {1,...,\r} { 
            \pgfmathtruncatemacro{\k}{\j+1}
            \pgfmathtruncatemacro{\q}{\p-1}
            \draw[qarrow](V\q\k)--(V\p\j);
            }
        }
    \else
    \fi 
        \foreach \p in {1,...,\r} { 
            \pgfmathtruncatemacro{\q}{\p-1}
            \draw[qarrow,tail](V\q\r)--(V\p\s);
            }
        \foreach \p in {1,...,\s} { 
            \draw[qarrow,head](V\p\s)--(V\p\r);
            }
    \foreach \p in {0,...,\u} {
        \pgfmathtruncatemacro{\q}{\p+1}
        \draw[qarrow,myblue](V\p 1) -- (V\q 0);
        \draw[qarrow,myblue](V\q 0) -- (V\q 1);
    }
    \draw[qarrow,head,tail,myblue] (V\s \r) -- (V\r 0);
    \draw[qarrow,head,tail,myblue] (V\r 0) -- (V\r \r);
}

\newcommand{\quiverCvMiddle}[4]{
    \pgfmathsetmacro{\r}{#4}
    \pgfmathsetmacro{\n}{\r+1}
    \divpt{#2}{#1}{\n}
    \foreach \i in {1,...,\r} \draw(x\i) coordinate (a\i); 
    \divpt{#3}{#1}{\n}
    \foreach \i in {1,...,\r} \draw(x\i) coordinate (b\i); 
    \divpt{#2}{#3}{\n}
    \foreach \i in {1,...,\r} \draw(x\i) coordinate (V\i 0) ; 
    \foreach \i in {1,...,\r} \draw(V\i 0);
    \pgfmathtruncatemacro{\s}{\r-1}
    \foreach \j in {1,...,\s} {   
        \divpt{a\j}{b\j}{\r}; 
        \foreach \h in {0,...,\r} \draw(x\h)
        coordinate (V\h\j); 
        }
    \divpt{a\r}{b\r}{\r}; 
        \foreach \h in {0,...,\r} \draw(x\h) coordinate (V\h\r); 
    \foreach \j in {1,...,\s} { 
        \foreach \p in {0,...,\s} { 
            \pgfmathtruncatemacro{\q}{\p+1}
            \draw[qarrow](V\q\j)--(V\p\j);
            }
        }
        \foreach \p in {0,...,\s} { 
            \pgfmathtruncatemacro{\q}{\p+1}
            \draw[qarrow,head,tail](V\q\r)--(V\p\r);
            }
    \pgfmathsetmacro{\u}{\r-2}
     \ifnum\r>2 
    \foreach \j in {1,...,\u} { 
        \foreach \p in {1,...,\s} { 
            \pgfmathtruncatemacro{\k}{\j+1}
            \draw[qarrow](V\p\j)--(V\p\k);
            }
        }
    \foreach \j in {1,...,\u} { 
        \foreach \p in {1,...,\r} { 
            \pgfmathtruncatemacro{\k}{\j+1}
            \pgfmathtruncatemacro{\q}{\p-1}
            \draw[qarrow](V\q\k)--(V\p\j);
            }
        }
    \else
    \fi 
        \foreach \p in {1,...,\r} { 
            \pgfmathtruncatemacro{\q}{\p-1}
            \draw[qarrow,tail](V\q\r)--(V\p\s);
            }
        \foreach \p in {1,...,\s} { 
            \draw[qarrow,head](V\p\s)--(V\p\r);
            }
    \foreach \p in {0,...,\u} {
        \pgfmathtruncatemacro{\q}{\p+1}
        \draw[qarrow,myblue](V\p 1) -- (V\q 0);
        \draw[qarrow,myblue](V\q 0) -- (V\q 1);
    }
    \draw[qarrow,head,tail,myblue] (V\s \r) -- (V\r 0);
    \draw[qarrow,head,tail,myblue] (V\r 0) -- (V\r \r);
}

\newcommand{\vertexC}[4]{
    \pgfmathsetmacro{\r}{#4}
    \pgfmathsetmacro{\n}{\r+1}
    \divpt{#2}{#1}{\n}
    \foreach \i in {1,...,\r} \draw(x\i) coordinate (a\i); 
    \divpt{#3}{#1}{\n}
    \foreach \i in {1,...,\r} \draw(x\i) coordinate (b\i); 
    \divpt{#2}{#3}{\n}
    \foreach \i in {1,...,\r} \draw(x\i) coordinate (V\i 0) ; 
    \pgfmathtruncatemacro{\s}{\r-1}
    \foreach \j in {1,...,\s} {   
        \divpt{a\j}{b\j}{\r}; 
        \foreach \h in {0,...,\r} \draw(x\h) coordinate (V\h\j) ; 
        }
    \divpt{a\r}{b\r}{\r}; 
        \foreach \h in {0,...,\r} \draw(x\h) coordinate (V\h\r); 
    
}

\pgfdeclarelayer{bg}    
\pgfsetlayers{bg,main}  

\begin{document}
\title[Quantum configuration spaces of decorated flags]
{Algebraic study of quantum configuration spaces of decorated flags}

\author[Tsukasa Ishibashi]{Tsukasa Ishibashi}
\address{(Tsukasa Ishibashi) Mathematical Institute, Tohoku University, 6-3 Aoba, Aramaki, Aoba-ku, Sendai, Miyagi 980-8578, Japan.}
\email{tsukasa.ishibashi.a6@tohoku.ac.jp}

\author[Hironori Oya]{Hironori Oya}
\address{(Hironori Oya) Department of Mathematics, Institute of Science Tokyo, 2-12-1 Ookayama, Meguro-ku, Tokyo, 152-8551, Japan}
\email{hoya@math.titech.ac.jp}

\date{\today}

\begin{abstract}
Let $G$ be a connected, simply connected complex simple algebraic group and $\sA_G=G/U^+$ its base affine space, whose elements are called decorated flags. We introduce \emph{the quantum configuration space of decorated flags} $\mathcal{O}_q(\mathrm{Conf}_K \mathscr{A}_G)$ and initiate its algebraic study, based on the representation theory of quantized enveloping algebras. 
Our algebra $\mathcal{O}_q(\mathrm{Conf}_K \mathscr{A}_G)$ gives a quantum analogue of the configuration space $\mathrm{Conf}_K \mathscr{A}_G$ of $K$ decorated flags, which provides local building blocks for the Fock--Goncharov moduli space $\mathscr{A}_{G,\boldsymbol{\Sigma}}$ \cite{FG06} of decorated twisted $G$-local systems on a marked surface $\boldsymbol{\Sigma}$. 

We establish basic algebraic properties of $\mathcal{O}_q(\mathrm{Conf}_K \mathscr{A}_G)$ such as the domain property, quantum normalization of representatives, the quantum cyclic shifts, the quantum Wilson lines, whose classical counterparts have been fundamental in the study of $\mathscr{A}_{G,\boldsymbol{\Sigma}}$. Moreover, we construct 
quantum seeds for $\mathcal{O}_q(\mathrm{Conf}_4 \mathscr{A}_G)$ by transporting the Berenstein--Zelevinsky quantum cluster structure on $\mathcal{O}_q(G)$ \cite{BZ} via quantum Wilson lines, and prove that $\mathcal{O}_q(\mathrm{Conf}_4 \mathscr{A}_G)$ coincides with the corresponding quantum cluster algebra and its upper counterpart after the localization at frozen variables. 
The exchange matrices for our quantum seeds agree with the Goncharov--Shen exchange matrices \cite{GS:Quantum}. 
We also show that quantum seeds for $\mathcal{O}_q(\mathrm{Conf}_4 \mathscr{A}_G)$ restrict to those for $\mathcal{O}_q(\mathrm{Conf}_3 \mathscr{A}_G)$. 

Finally, we construct quantum seeds for $\mathcal{O}_q(\mathrm{Conf}_K \mathscr{A}_G)$, $K\geq 5$, and prove that the corresponding quantum cluster algebras contain the quantum configuration space $\mathcal{O}_q(\mathrm{Conf}_K \mathscr{A}_G)$.
\end{abstract}

\maketitle

\setcounter{tocdepth}{1}
\tableofcontents

\section{Introduction}
Let $G$ be a connected, simply connected complex simple algebraic group. The moduli space $\mathrm{Loc}_{G,\Sigma}$ of $G$-local systems (or equivalently, flat $G$-connections) on a surface $\Sigma$ has been studied from various geometric viewpoints, especially motivated by its appearance as the classical phase space of gauge theories, such as 2-dimensional Yang--Mills theory \cite{AB,Witten91} and 3-dimensional Chern--Simons theory \cite{Witten89}. The symplectic/Poisson structures on $\mathrm{Loc}_{G,\Sigma}$ are further investigated by Goldman \cite{Goldman84,Goldman86} and Fock--Rosly \cite{FR99}, where the latter work establishes the intimate connection between the Atiyah--Bott--Goldman Poisson structure and the classical $r$-matrix of the Lie group. 
There are several approaches to the mathematical construction of its quantization from various viewpoints, including the geometric quantization \cite{Hitchin,APW}, skein quantization \cite{Turaev91,Bullock97,PS00,Sikora01}, quantum moduli algebra \cite{AGS1,AGS2,AGS3,BR1,BR2}, quantization via factorization homology \cite{BBJ18,JLSS}, among others. These constructions reveal, in different ways, a close relationship between the quantization of $\mathrm{Loc}_{G,\Sigma}$ and quantum groups, including the quantized enveloping algebra and the quantum coordinate ring of $G$.

A quantization approach based on explicit coordinate systems have lead to the quantum cluster theory \cite{Kashaev98,CF99,BZ,FG:Quantum,GS:Quantum}. In particular, Fock and Goncharov \cite{FG06} introduced an extension $\sA_{G,\bSigma}$ of $\mathrm{Loc}_{G,\Sigma}$, which parameterize the twisted $G$-local systems on a `marked' surface $\bSigma$, together with additional data of \emph{decorations} at marked points of $\bSigma$. 
It has been established that the moduli space $\sA_{G,\bSigma}$ admits a natural cluster $K_2$-structure for any connected, simply connected complex simple algebraic group $G$ in the sequence of works
\cite{FG06,Zickert,Le19,GS:Quantum}, and moreover if $\bSigma$ has no interior marked points (punctures), the quantum cluster theory provides a quantized algebra $\cO_q^{\mathrm{cl}}(\sA_{G,\bSigma})$ of functions as a quantum upper cluster algebra. See \cite{IOS,IOpuncture} for the comparison of $\cO(\sA_{G,\bSigma})$ and the upper cluster algebra. 

While the quantum cluster framework is plausible for its explicit computability, positivity and other features, it is often non-trivial to relate it to the classical geometry. In particular, its connection to the quantum groups is not apparent at all from the definition. Our aim is to rather introduce an algebraic model $\cO_q(\sA_{G,\bSigma})$ of quantized algebra of functions based on the representation theory of quantum groups, and then relate it to $\cO_q^{\mathrm{cl}}(\sA_{G,\bSigma})$. In this paper, we initiate such a study for the case where $\bSigma=\bbD_K$ is the disk with $K$-marked points on the boundary. In this case, $\sA_{G,\bbD_K}$ is isomorphic to the configuration space $\Conf_K \sA_G$ of $K$ decorated flags. 

\subsection{Quantum configuration spaces}
Fix an opposite pair $B^\pm\subset G$ of Borel subgroups, let $H:=B^+\cap B^-$ be the associated Cartan subgroup, and $U^\pm:=[B^\pm,B^\pm]$ the maximal unipotent subgroups. The elements of the homogeneous $G$-space $\sA_G:=G/U^+$ are called \emph{decorated flags}. 

For $K \in \Z_{\geq 2}$, the configuration space of $K$ decorated flags is defined to be the quotient stack 
\[
\Conf_K \sA_G := [(\overbrace{\sA_G \times \dots \times \sA_G}^{K\text{ times}})/G],
\]
where the action of $G$ is the diagonal left action.\footnote{We do not need the details about the geometry of stacks, as we discuss them only for motivating the algebraic constructions in this paper. In particular, we only focus on the algebras of global functions on them. We just remark that the algebra of global functions on a quotient stack $[X/G]$ is given by $\cO([X/G])=\cO(X)^G$, and that any morphism of stacks $f: \mathcal{X} \to \mathcal{Y}$ induces a morphism of algebras $f^\ast:\cO(\mathcal{Y}) \to \cO(\mathcal{X})$. For more details, see \cite{IO:Wilson, IOS} and the references therein.} To motivate our constructions, let us describe the algebra of functions $\cO(\Conf_K \sA_G)$ on 
$\Conf_K \sA_G$. By the algebraic Peter--Weyl theorem, we have an isomorphism of $G\times G$-modules 
\begin{align}\label{eq:O(G)_PW}
    \cO(G) \cong \bigoplus_{\lambda \in P_+} V(\lambda)^\ast \boxtimes V(\lambda),
\end{align}
where $P_+$ denotes the set of integral dominant weights and $V(\lambda)$ is the irreducible  highest weight $G$-module with highest weight $\lambda \in P_+$. 
It restricts to an isomorphism of $G$-modules 
\begin{align}\label{eq:O(A)_PW}
    \cO(\sA_G)=\cO(G)^{1 \times U^+} \cong \bigoplus_{\lambda \in P_+} V(\lambda)^\ast.
\end{align}
We thus get an isomorphism
\begin{align}\label{eq:classical_Conf_k}
    \cO(\Conf_K \sA_G)= (\cO(\sA_G)^{\otimes K})^G \cong \bigoplus_{\lambda_1,\dots,\lambda_K \in P_+} (V(\lambda_1)^\ast \otimes \dots \otimes V(\lambda_K)^\ast)^G.
\end{align}
The quantum analogue of \eqref{eq:O(G)_PW} is obtained by upgrading $V(\lambda)$ into an irreducible integrable highest weight module of highest weight $\lambda\in P_+$ over the quantized enveloping algebra $\Uq$ (over $\Bbbk:=\Q(\qqd)$; see \cref{subsec:QEA}), which is called \emph{the quantized coordinate ring} $\cO_{\qq}(G)$ of $G$. 
We then introduce $\cO_{\qq}(\sA_G)$ as a quantum analogue of \eqref{eq:O(A)_PW} (see \eqref{eq:def_AG}). In order to consider the quantum analogue of \eqref{eq:classical_Conf_k}, we use \emph{the braided tensor product} (see \cite{Majid} and \cref{subsec:braided_product} below) $\A_1 \bt \A_2$ defined for locally finite $\Uq$-module algebras $\A_1$, $\A_2$. It turns out that $\cO_{\qq}(\sA_G)^{\bt K}$ is an appropriate quantum analogue of $\cO(\sA_G)^{\otimes K}$. We then define $\cO_{\qq}(\Conf_K \sA_G)$ as 
\[
\cO_{\qq}(\Conf_K \sA_G):=\left(\cO_{\qq}(\sA_G)^{\bt K}\right)^\inv,
\]
where $\inv$ stands for a $\Uq$-invariant part. Since we consider the braided tensor product, it is a subalgebra of $\cO_q(\sA_G)^{\bt K}$, which is an Ore domain (\cref{prop:qcsgrading}). 
We call $\cO_{\qq}(\Conf_K \sA_G)$ \emph{the quantum configuration space of $K$ decorated flags} (\cref{def:quantum_config_space}). We remark that the same algebra (up to conventional differences) also appears in the categorical work of Jordan--Le--Schrader--Shapiro \cite{JLSS}, denoted by $\cF_\qq(\Conf_K^{\mathrm{fr}})^{U_\qq(\mathfrak{g})}$ in their notation, whose quantum cluster structure is described for $G=SL_2$ and $PGL_2$.
The main theme of this paper is the algebraic study of $\cO_\qq(\Conf_K \sA_G)$ for general $G$. 

\subsection{Normalized representative}
Consider a $G$-orbit $[\flA_1,\dots,\flA_K] \in \Conf_K\sA_G$ such that the pair $(\flA_1,\flA_K)$ is \emph{generic} in the sense that $[\flA_1,\flA_K]=[h.[U^+], \vw.[U^+]]$ for some $h \in H$. Here $\vw$ is a certain lift of the longest element $w_0$ of the Weyl group $W$ in $G$. 
Then there exists a unique representative of the form $([U^+],\flA'_2,\dots,\flA'_{K-1}, h^{-1}\vw.[U^+])$ with a unique $h \in H$ in the orbit $[\flA_1,\dots,\flA_K]$. We call it \emph{the normalized representative} with respect to the position $(1, K)$. 
This leads to an isomorphism of algebraic varieties
\begin{align}\label{eq:normalized_config}
    H \times \sA_G^{K-2} \cong \Conf_K^{\{K\}} \sA_G, \quad (h,\flA'_2,\dots,\flA'_{K-1}) \mapsto [[U^+],\flA'_2,\dots,\flA'_{K-1}, h^{-1}\vw.[U^+]],
\end{align}
where $\Conf_K^{\{K\}} \sA_G \subset \Conf_K \sA_G$ denotes the Zariski open subspace (which is representable by an algebraic variety) consisting of $G$-orbits $[\flA_1,\dots,\flA_K]$ such that the pair $(\flA_1,\flA_K)$ is generic. See \cref{fig:normalized_config} for an illustration.

\begin{figure}[ht]
    \centering
\begin{tikzpicture}
\draw(0,0) circle(1.6cm);
\draw[red,thick] (90:1.6) arc(90:120:1.6);
\foreach \i in {90,60,120} \filldraw(\i:1.6) circle(1.5pt);
\node[above] at (90:1.6) {$\flA_1$};
\node[above right] at (60:1.6) {$\flA_2$};
\node[above left] at (120:1.6) {$\flA_K$};
\draw[thick,->] (2.5,0) --node[midway,above]{Normalize} ++(2,0);
\begin{scope}[xshift=7cm]
\draw(0,0) circle(1.6cm);
\foreach \i in {90,60,120} \filldraw(\i:1.6) circle(1.5pt);
\node[above] at (90:1.6) {$[U^+]$};
\node[above right] at (60:1.6) {$\flA'_2$};
\node[above left] at (120:1.6) {$h^{-1}\vw.[U^+]$};
\end{scope}
\end{tikzpicture}
    \caption{Taking the normalized representative with respect to the position $(1,K)$.}
    \label{fig:normalized_config}
\end{figure}

We remark that a similar normalization also works for the moduli space $\sA_{G,\boldsymbol{\Sigma}}$ by choosing any ideal arc and imposing the genericity condition for the associated pair of decorations, which is a fundamental feature of this moduli space that allows us to cut-and-paste the geometric data along ideal arcs. This leads to the construction of cluster charts via ideal triangulation \cite{FG06,Zickert,Le19,GS:Quantum}, and the definition of Wilson lines \cite{IO:Wilson,IOS}.

We will establish a quantum analogue of the normalized representative in the following generalized form. 
\begin{introthm}[{\cref{t:invstr}}]\label{introthm:normrep}
Let $\A$ be a locally finite $\Uq$-module algebra, and let $\A^{\ext}:=\A\otimes \Bbbk[P]$ be its extension by the group algebra $\Bbbk[P]=\bigoplus_{\lambda\in P}\Bbbk e(\lambda)$ of the weight lattice $P$, endowed with certain multiplication (see before \cref{t:invstr} for its precise definition). Then there is an injective algebra homomorphism
\[
\Phi_{\A}^{\ext}\colon
\left(\cO_\qq(\sA_G)\bt\A\bt\cO_\qq(\sA_G)\right)^{\inv}
\to \A^{\ext}.
\]
Moreover, for each $\lambda\in P_+$ there is $D_\lambda\in \left(\cO_\qq(\sA_G)\bt\A\bt\cO_\qq(\sA_G)\right)^{\inv}$ such that $\Phi_{\A}^{\ext}(D_\lambda)=e(\lambda)$. The set $\cD:= \{\qq^aD_{\lambda}\mid a\in \frac{1}{2d}\Z, \lambda\in P_+\}$ is an Ore set of $(\cO_\qq(\sA_G)\bt \A\bt \cO_\qq(\sA_G))^{\inv}$, and $\Phi_{\A}^{\ext}$ extends to an algebra isomorphism
\begin{align}\label{eq:intro_normalization}
\left(\cO_\qq(\sA_G)\bt\A\bt\cO_\qq(\sA_G)\right)^{\inv}[\cD^{-1}]
\xrightarrow{\sim}\A^{\ext}.
\end{align}
\end{introthm}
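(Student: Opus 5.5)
We plan to reduce to an explicit description of $\cO_\qq(\sA_G)$ as $\bigoplus_{\lambda\in P_+}V(\lambda)^\ast$, with each summand spanned by matrix coefficients $c^\lambda_{f,v_\lambda}$ ($v_\lambda$ a highest weight vector). A homogeneous element of $\cO_\qq(\sA_G)\bt\A\bt\cO_\qq(\sA_G)$ of bidegree $(\lambda,\mu)$ in the outer factors is a $\Uq$-equivariant datum, so an invariant element corresponds to a $\Uq$-homomorphism $V(\lambda)\otimes V(\mu)\to\A$ (up to duals and braiding conventions). The classical picture suggests defining $\Phi^{\ext}_\A$ by evaluating the first factor at the lowest-weight direction and the last at the highest weight vector. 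For invariant $x$ of bidegree $(\lambda,\mu)$ we take the corresponding map $\varphi_x\colon V(\lambda)\otimes V(\mu)\to\A$ and set
\[
\Phi^{\ext}_\A(x):=\varphi_x\bigl(v_{w_0\lambda}\otimes v_\mu\bigr)\otimes e(\text{weight determined by }\lambda,\mu),
\]
where the exponent is chosen (e.g.\ $e(\lambda)$ or $e(-w_0\lambda)$, depending on conventions) so that the dual to $D_\lambda$ gives $e(\lambda)$. The multiplication on $\A^{\ext}$ (with $e(\lambda)$ $q$-commuting with weight vectors of $\A$ via the pairing) is designed to make $\Phi^\ext_\A$ multiplicative. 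We will check this using the braided product formula: the $R$-matrix terms acting on $v_{w_0\lambda}$ and $v_\mu$ collapse to their Cartan parts because these are extremal vectors killed by the relevant root vectors, leaving exactly the $q$-power in the definition of $\A^\ext$.

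Injectivity follows because a $\Uq$-homomorphism from $V(\lambda)\otimes V(\mu)$ is determined by its value on $v_{w_0\lambda}\otimes v_\mu$, which generates $V(\lambda)\otimes V(\mu)$ as a $\Uq$-module (standard cyclicity of the tensor product of a lowest and a highest weight vector). This reduces to direct-sum compatibility over bidegrees. For $D_\lambda$ we take $\A$-degree zero: the invariant pairing $V(\lambda)\otimes V(\lambda^\ast)\to\Bbbk\cdot 1\subset\A$, i.e.\ the quantum analogue of $\Delta_{\varpi}(\flA_1,\flA_K)$, normalized so that its value on the extremal pair is $1$.

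For the Ore property and the isomorphism we argue as follows. Since $\Phi^\ext_\A(D_\lambda)=e(\lambda)$ and each $e(\lambda)$ $q$-commutes with homogeneous elements of $\A^\ext$, injectivity yields that $D_\lambda$ $q$-commutes with homogeneous invariants. This makes $\cD$ a multiplicative set of normal elements, hence Ore (the domain property transfers through the injective map into $\A^\ext$, which is a domain when needed, or at least $e(\lambda)$ are regular). The extension to the localization is then automatic.

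Surjectivity is the main obstacle. Given a weight vector $a\in\A$ of weight $\beta$ and $\nu\in P$, we must produce an invariant $x$ with $\Phi(x)=a\,e(\nu+\text{shift})$, up to multiplication by $D_\lambda^{-1}$. The plan is to choose $\mu$ dominant and large enough that the $\Uq$-submodule generated by $a$ (finite-dimensional by local finiteness) admits a homomorphism from $V(\lambda)\otimes V(\mu)$ sending $v_{w_0\lambda}\otimes v_\mu$ to $a$. By Frobenius reciprocity this is $\Hom(V(\lambda)\otimes V(\mu),\A)\cong\Hom(V(\mu),V(\lambda)^\ast\otimes\A)$. Such a map exists exactly when $v_{w_0\lambda}^\ast\otimes a$-type vectors satisfy the highest weight conditions $E_i^{\langle\mu,h_i\rangle+1}=0$, together with a condition relating $\mu$, $\lambda$ and $\beta$. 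We will try to show that for $\lambda$ sufficiently large the relevant vector is a highest weight vector of weight $\mu=-w_0\lambda+\beta$ (the vector $v^\ast\otimes a$ with extremal $v^\ast$ absorbs all $E_i$ actions by a triangularity argument), and then correct the $e$-exponent by multiplying by $D_{\lambda'}^{-1}$. The delicate points are the precise triangularity/annihilation argument and tracking the $q$-shifts.
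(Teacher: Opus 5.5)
Your architecture is the paper's: identify invariants with $\Uq$-homomorphisms out of a tensor product of two irreducibles, read off the value at an extremal tensor, get injectivity from cyclicity, get multiplicativity from the collapse of the braiding at extremal vectors, and make $\cD$ Ore because $D_\lambda$ is normal and regular. But the corner you wrote down, $\varphi_x(v_{w_0\lambda}\otimes v_\mu)$, is the wrong one for the paper's braided product. The collapse you claim does not happen there.

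By \cref{p:comm}, the correction terms of $\tcR$ (built from $\Theta^{\sfS}$) raise weights in the earlier slot and lower them in the later one. So in $\phi\phi'$ they raise the weight of the first-slot factor of $\phi'$ and lower that of the last-slot factor of $\phi$. They can therefore never reach the lowest weight $-\lambda_1$ in the first slot, nor the highest weight $\lambda_2^{\ast}$ in the last slot. The uncontaminated corner is thus the coefficient of $\xi_{\lambda_1}(\lambda_1)\otimes a\otimes\xi_{w_0\lambda_2}(\lambda_2)$, i.e.\ pairing with $v_{\lambda_1}$ and $v_{w_0\lambda_2}$. This is exactly the paper's $\Phi_{\lambda_1;\A;\lambda_2}$, twisted by $e(\lambda_2^{\ast})$ and normalized by $\qq^{-(\lambda_1,\lambda_1)/2}$ (\cref{p:key}).

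At your corner the map is not multiplicative. Take $G=SL_2$ and $\A=\cO_\qq(\sA_G)$. By \eqref{eq:D_k_qcomm}, $D_{3;\varpi_1}D_{1;\varpi_1}=\qq^{-1/2}D_{1;\varpi_1}D_{3;\varpi_1}$. Your images, however, are $e(\varpi_1)$ and $c\,\xi_{\varpi_1}(\varpi_1)e(\cdot)$ with $c\neq 0$, and these $\qq^{1/2}$-commute in $\A^{\ext}$. The fix is to swap corners. Injectivity at the correct corner is obtained in the paper by first applying the cyclic shift of \cref{p:cyclic}, which moves $\A$ to the last slot; the corner then becomes evaluation at $v_{w_0\lambda_2}\otimes v_{\lambda_1}$ (\cref{p:invsp}).

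The genuine gap is surjectivity. You rightly flag it as the crux, but you leave it without its key input, and the mechanism you suggest fails. $E_s$ does kill the extremal $\xi_{w_0\lambda}$, but through the coproduct it then acts on $a$. So $\xi_{w_0\lambda}\otimes a$ is a highest weight vector only when $E_s.a=0$ for all $s$. Producing the lower correction terms is precisely the nontrivial part, and no triangularity argument gives it for free.

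What the paper uses is \cite[Proposition 31.2.6]{Lusztig:Intro}. The image of $\Hom_{\Uq}(V(\lambda_1)\otimes V(\lambda_2),M)\to M$, $\phi\mapsto\phi(v_{w_0\lambda_1}\otimes v_{\lambda_2})$, is exactly the set of $m\in M_{-\lambda_1^{\ast}+\lambda_2}$ with $E_s^{k}.m=0$ for $k>\langle\lambda_1^{\ast},\alpha_s^\vee\rangle$ and $F_s^{k}.m=0$ for $k>\langle\lambda_2,\alpha_s^\vee\rangle$. Necessity is easy; sufficiency amounts to a presentation of $V(\lambda_1)\otimes V(\lambda_2)$ as a cyclic module.

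With this, surjectivity goes as follows.
\begin{itemize}
\item For a weight vector $m$ of weight $\gamma$, pick $N$ with $E_s^{N+1}.m=F_s^{N+1}.m=0$ and $\gamma+N\rho\in P_+$.
\item Take $(\lambda_1,\lambda_2)=(2N\rho,\gamma+2N\rho)$. Both weights must be enlarged, not just one, because the $E$-condition is governed by $\lambda_1$ and the $F$-condition by $\lambda_2$.
\item This gives $\sum_{\lambda_1,\lambda_2}\Ima\Phi^{M}_{\lambda_1,\lambda_2}=M$.
\item Surjectivity of the localized map follows, since its image also contains $e(\gamma)$ for every $\gamma\in P$.
\end{itemize}
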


In particular when $\A=\cO_{\qq}(\sA_G)^{\bt (K-2)}$, $\Phi_{\A}^{\ext}$ provides an isomorphism
\begin{align*}
    \cO_{\qq}(\Conf_K \sA_G)[\cD^{-1}]\xrightarrow{\sim} (\cO_q(\sA_G)^{\bt K-2})^{\ext},
\end{align*}
which is a quantum analogue of \eqref{eq:normalized_config}. 

\subsection{Quantum cyclic shift}
We have an isomorphism of stacks
\begin{align}\label{eq:cyclic_shift}
    \mathcal{S}_K: \Conf_K \sA_G \xrightarrow{\sim} \Conf_K \sA_G, \quad [\flA_1,\flA_2,\dots,\flA_K] \mapsto [\flA_2,\dots,\flA_K,s_G.\flA_1],
\end{align}
called \emph{the twisted cyclic shift},
where $s_G:=\overline{w}_0^2 \in Z(G)$. This is a fundamental symmetry of $\Conf_K \sA_G$. For example, the cluster structure on $\Conf_K \sA_G$ is introduced so that it is invariant under $\mathcal{S}_K$ \cite{FG06,GS:DT,GS:Quantum}. In particular, it is lifted to an automorphism on $\cO_q^{\mathrm{cl}}(\Conf_K \sA_G)$ as a composite of quantum cluster transformations. 
We will introduce a quantum analogue of $\mathcal{S}_K$ on our quantum configuration space in the following generalized form.
\begin{introthm}[\cref{t:cyclic}]\label{introthm:cyclic}
Let $\A$ be a locally finite $\Uq$-module algebra. Define a $\Bbbk$-linear map
\begin{align*}
    \sigma_{\A}\colon (\cO_\qq(\sA_G)\bt \A\bt \cO_\qq(\sA_G))^{\inv}\to (\cO_\qq(\sA_G)\bt \cO_\qq(\sA_G)\bt \A)^{\inv}
\end{align*}
by 
\begin{align*}
    \sum_{i}\phi_{i}\otimes a_i\otimes \phi'_{i}\mapsto 
     \sum_{i}(-1)^{\langle 2\rho^{\vee}, \wt \phi'_{i}\rangle}\qq^{(2\rho, \wt \phi'_{i})}\phi'_{i}\otimes \phi_{i}\otimes a_i.
\end{align*}
See \cref{s:QEA} for the definition of $\rho, \rho^{\vee}$ and $\wt$. Then $\sigma_{\A}$ is an isomorphism of $\Bbbk$-algebras. 
\end{introthm}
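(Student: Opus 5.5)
The plan is to realize $\sigma_{\A}$ as the restriction to invariants of a braiding isomorphism, and then to identify that braiding on invariant elements with the explicit scalar formula in the statement. Put $X:=\cO_\qq(\sA_G)\bt\A$ and $Y:=\cO_\qq(\sA_G)$, so that the source is $(X\bt Y)^{\inv}$ and the target is $(Y\bt X)^{\inv}$. In the category of locally finite $\Uq$-modules we have the universal $R$-matrix and the braiding $c_{X,Y}\colon X\otimes Y\to Y\otimes X$, $x\otimes y\mapsto \sum R_2y\otimes R_1x$ (with the paper's conventions for $\bt$).

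\textbf{Step 1 (algebra map on invariants).} Write $m_{X\bt Y}=(m_X\otimes m_Y)\circ(\id\otimes c_{Y,X}\otimes\id)$, and use the corresponding formula for $Y\bt X$. The standard hexagon manipulation shows that $c_{X,Y}$ intertwines $m_{X\bt Y}$ with the multiplication of the braided product $Y\bt X$ formed using the \emph{reversed} braiding $c^{-1}_{X,Y}$. The two multiplications on $Y\otimes X$ differ by the double braiding $c_{X,Y}c_{Y,X}$ between factors. When one of the elements multiplied is $\Uq$-invariant, this discrepancy disappears: for invariant $z=\sum y_i\otimes x_i$ one has $(\Delta(u))z=\varepsilon(u)z$, so the $R$-matrix legs acting on $z$ can be collapsed. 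Concretely, $\Delta(h)R=R\Delta^{\op}(h)$ together with invariance shows that the two products agree on $(Y\otimes X)^{\inv}$. Hence $c_{X,Y}$ restricts to an algebra isomorphism $(X\bt Y)^{\inv}\to(Y\bt X)^{\inv}$; its inverse is $c_{X,Y}^{-1}$, which preserves invariants because it is a module map.

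\textbf{Step 2 (computing $c_{X,Y}$ on invariants).} Take an invariant $\sum_i\phi_i\otimes a_i\otimes\phi'_i$ with each $\phi'_i$ a weight vector. We have $c(\sum x_i\otimes\phi'_i)=\sum R_2\phi'_i\otimes R_1x_i$. Using invariance, move the $R_1$-leg onto the $Y$-factor via the antipode: $\sum R_1x_i\otimes\phi'_i=\sum x_i\otimes S^{-1}(R_1)\phi'_i$ up to the appropriate ordering. This turns $c$ into the action of the Drinfeld element $u=\sum S(R_2)R_1$ (or its inverse, depending on conventions) on the last factor, giving $\sum u\phi'_i\otimes x_i$.

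Next evaluate $u$ on $\cO_\qq(\sA_G)$. Under the decomposition \eqref{eq:O(A)_PW}, $u$ acts on the component $V(\lambda)^\ast$ as $v_\lambda K_{\pm2\rho}$, where $v_\lambda$ is the ribbon scalar $\qq^{\mp(\lambda,\lambda+2\rho)}$. This is not yet a function of $\wt\phi'$ alone, and the $\lambda$-dependent part must be removed. Since $\sum\phi_i\otimes a_i\otimes\phi'_i$ is invariant, I will apply the ribbon/twist identity $\theta_{X\otimes Y}=(\theta_X\otimes\theta_Y)c_{Y,X}c_{X,Y}$, together with $\theta=1$ on invariants. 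Alternatively, I can use the specific structure of $\cO_\qq(\sA_G)$, whose elements are matrix coefficients $c_{f,v_\lambda}$ with $v_\lambda$ a highest weight vector. Either route should cancel $\qq^{(\lambda,\lambda+2\rho)}$ against a highest-weight contribution, leaving exactly $(-1)^{\langle2\rho^\vee,\wt\phi'\rangle}\qq^{(2\rho,\wt\phi')}$. The sign comes from the normalization of the ribbon element (the choice $\theta$ versus $\theta\cdot K_{2\rho}$ giving $(-1)^{\langle 2\rho^\vee,\lambda\rangle}$ on $V(\lambda)$), i.e.\ from the spin factor needed for a genuine pivotal structure on integral weights.

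\textbf{Main obstacle.} I expect the hard step to be the precise bookkeeping in Step 2: matching conventions ($\Delta$, $R$ versus $R_{21}$, left versus right dual modules) so that the collapsed $R$-matrix becomes exactly $(-1)^{\langle 2\rho^{\vee}, \wt \phi'\rangle}\qq^{(2\rho,\wt\phi')}$ on $\phi'$, with no residual $\lambda$-dependent factor. I would first check the case $G=SL_2$ with $\A=\Bbbk$ directly. There $(\cO_\qq(\sA_G)^{\bt2})^{\inv}$ is generated by the quantum determinant-type invariant, which pins down the sign and the power of $\qq$. Once the formula is identified with $c_{X,Y}$ restricted to invariants, bijectivity and multiplicativity follow from Step 1.
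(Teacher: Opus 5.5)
Two claims in your proposal are false: the claim in Step 1 that the two braided products agree on invariants, and the claim in Step 2 that the braiding restricted to invariants equals the stated formula. They fail by exactly compensating factors, so the argument as written does not go through.

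\textbf{Step 2 fails.} Take $c=\tcR$, the braiding used to define $\bt$, and put $\kappa(\lambda):=(-1)^{\langle 2\rho^{\vee},\lambda\rangle}\qq^{(\lambda,\lambda+2\rho)}$.
\begin{itemize}
\item Moving the first leg of $\Theta^{\sfS}$ across an invariant via \cref{l:antipode} shows that, on $(X\ut\cO(\lambda_2))^{\inv}$, the braiding is $P\circ(1\otimes\Omega)$. Here $\Omega$ is an operator on $\cO(\lambda_2)\cong V(\lambda_2^{\ast})$ that does not depend on $X$.
\item Both $\Omega$ and $(-1)^{\langle 2\rho^\vee,\wt\rangle}\qq^{(2\rho,\wt)}$ carry invariants to invariants. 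A Schur-type argument (the relevant invariant spaces are one-dimensional) therefore makes them proportional.
\item On the lowest weight vector $\xi_{\lambda_2}(\lambda_2)$, which is killed by $\Uq^-$, $\Omega$ acts by $\qq^{(\lambda_2,\lambda_2)}$. This gives
\[
\tcR_{X,Y}=\kappa(\lambda_2)\,\sigma_{\A}\quad\text{on }(X\ut\cO(\lambda_2))^{\inv}.
\]
\end{itemize}
So a $\lambda_2$-dependent factor remains, and neither of your routes removes it. The braiding on invariants only sees the $\Uq$-module $\cO(\lambda_2)$, so the matrix-coefficient structure is irrelevant. The ribbon identity only constrains the double braiding.

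Concretely, take $G=SL_2$ and $\A=\Bbbk$ (the check you proposed would catch this). A direct computation gives $\tcR(D_{\varpi})=-\qq^{3/2}D_{\varpi}$, whereas $\sigma_2(D_{\varpi})=D_{\varpi}$ by \cref{cor:sigma_2}.

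\textbf{Step 1 fails too.} Since $\kappa(\lambda+\lambda')=\qq^{2(\lambda,\lambda')}\kappa(\lambda)\kappa(\lambda')$ and $\sigma_{\A}$ is multiplicative, the braiding restricted to invariants is not multiplicative. In the example, $\tcR(D_{\varpi}^2)=\qq^{4}D_{2\varpi}$ while $\tcR(D_{\varpi})^2=\qq^{3}D_{2\varpi}$. So the conclusion of Step 1 is false. There are two errors.
\begin{itemize}
\item The correct braided-category fact is that $c_{X,Y}$ is an algebra map from $X\bt Y$ formed with the \emph{reversed} braiding to $Y\bt X$ formed with $c$. In your direction, one pair of strands acquires a full twist.
\item Invariance does not make the two braided products coincide. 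View an invariant $\phi'$ as a morphism $\Bbbk\to X\ut Y$; naturality and the hexagon then move the middle crossing onto the two $\cO_\qq(\sA_G)$-factors. By \cref{lem:OA_comm}, these factors are braided-commutative only up to $\qq^{(\lambda_2,\lambda_2')}$. Hence on invariants with last degrees $\lambda_2,\lambda_2'$, the reversed-braiding product equals $\qq^{-2(\lambda_2,\lambda_2')}$ times the original one.
\end{itemize}

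\textbf{How to repair it.} Combining the two correct statements gives $\tcR(\phi\phi')=\qq^{2(\lambda_2,\lambda'_2)}\tcR(\phi)\tcR(\phi')$. The quadratic defect of $\kappa$ absorbs this exactly, so $\sigma_{\A}=\kappa(\lambda_2)^{-1}\tcR_{X,Y}$ is multiplicative. With these two computations supplied, your approach becomes a valid and more conceptual alternative.

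The paper never interprets $\sigma_{\A}$ as a braiding. It compares only the leading components of $\sigma_{\A}(\phi\phi')$ and $\sigma_{\A}(\phi)\sigma_{\A}(\phi')$, absorbing the $\Theta$-terms with \cref{l:antipode}. It then concludes via \cref{p:invsp}, since invariants with equal leading terms are equal.
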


If we set $\A:=\cO_{\qq}(\sA_G)^{\bt (K-2)}$, we obtain a $\Bbbk$-algebra isomorphism 
\begin{align*}
\sigma_K:=\sigma_{\A}\colon \cO_\qq(\Conf_K\sA_G)\xrightarrow{\sim}\cO_\qq(\Conf_K\sA_G). 
\end{align*}
It is called \emph{a quantum cyclic shift} on $\cO_\qq(\Conf_K\sA_G)$. For $\lambda\in P_+$, we write the element $D_{\lambda}\in \cO_\qq(\Conf_K\sA_G)$ in \cref{introthm:normrep} as $D_{K;\lambda}$, and set 
\begin{align*}
    D_{i;\lambda}:=\sigma_K^{i}(D_{K;\lambda})
\end{align*}
for $i=1,\dots, K-1$. Then, for $I=\{i_1,\dots, i_k\}\subset [1,K]$,  
\[
\cD_I:= \left\{\qq^a D_{i_1;\lambda_1}\cdots D_{i_k;\lambda_k}\ \middle|\  a\in \frac{1}{2d}\Z, \lambda_1,\dots, \lambda_k\in P_+\right\}
\]
forms an Ore set of $\cO_\qq(\Conf_K\sA_G)$, and we set 
\begin{align*}
&\cO_\qq(\Conf_K^I\sA_G):=\cO_\qq(\Conf_K\sA_G)[\cD_I^{-1}],\\
&\cO_\qq(\Conf_K^\times \sA_G):=\cO_\qq(\Conf_K\sA_G)[\cD_{[1,K]}^{-1}].
\end{align*}
We regard $\cO_\qq(\Conf_K^I\sA_G)$ as a quantum analogue of the subspace $\Conf_K^I\sA_G\subset \Conf_K\sA_G$ consisting of $G$-orbits $[\flA_1,\dots,\flA_K]$ such that the pairs  $(\flA_{i_l},\flA_{i_{l+1}})$, $l=1,\dots, k$ ($\flA_{K+1}:=\flA_1$) are generic. 
\subsection{Disjoint embeddings of quantum configuration spaces}
An order-preserving injective map $p\colon[1,K]\to[1,L]$ induces a $\Bbbk$-algebra embedding
\[
\iota_p^{\inv}\colon \cO_\qq(\Conf_K\sA_G)\longrightarrow \cO_\qq(\Conf_L\sA_G).
\]
We prove that if the polygons determined by order-preserving injective maps $p\colon[1,K]\to[1,L]$ and $p'\colon[1,K]\to[1,L]$ have disjoint interiors (see \cref{fig:intro_polygon_embedding}), then the images of $\iota_p^{\inv}$ and $\iota_{p'}^{\inv}$ $\qq$-commute. It is a consequence of the following theorem. 
\begin{figure}[ht]
    \centering
\begin{tikzpicture}[scale=0.78,every node/.style={transform shape}]
\def\R{2}
\draw[thick] (0,0) circle(\R);
\foreach \i in {1,2,3,4,5,6,7}{
    \fill(141.42-51.42*\i:\R) circle(1.5pt) coordinate(A\i);
    \node at (141.42-51.42*\i:\R+0.3) {\scriptsize \i};
}
\filldraw[fill=myblue!15,draw=blue] (A1) -- (A2) -- (A3) -- (A4) --cycle;
\node at ($(A1)!0.6!(A3)$) {$p$};
\filldraw[fill=myblue!15,draw=blue] (A1) -- (A4) -- (A5) -- (A6) -- (A7) --cycle;
\node at ($(A1)!0.6!(A5)$) {$p'$};

\begin{scope}[xshift=6cm]
\draw[thick] (0,0) circle(\R);
\foreach \i in {1,2,3,4,5,6,7}{
    \fill(141.42-51.42*\i:\R) circle(1.5pt) coordinate(A\i);
    \node at (141.42-51.42*\i:\R+0.3) {\scriptsize \i};
}
\filldraw[fill=myblue!15,draw=blue] (A1) -- (A2) -- (A3) -- (A4) --cycle;
\node at ($(A1)!0.6!(A3)$) {$p$};
\filldraw[fill=myblue!15,draw=blue] (A5) -- (A6) -- (A7) --cycle;
\node[left] at ($(A7)!0.5!(A5)$) {$p'$};
\end{scope}

\begin{scope}[xshift=12cm]
\draw[thick] (0,0) circle(\R);
\foreach \i in {1,2,3,4,5,6,7}{
    \fill(141.42-51.42*\i:\R) circle(1.5pt) coordinate(A\i);
    \node at (141.42-51.42*\i:\R+0.3) {\scriptsize \i};
}
\filldraw[fill=myblue!15,draw=blue] (A1) -- (A2) -- (A3) -- (A4) --cycle;
\node at ($(A1)!0.6!(A3)$) {$p$};
\filldraw[fill=myblue!15,draw=blue] (A4) -- (A5) -- (A6) -- (A7) --cycle;
\node at ($(A7)!0.5!(A5)$) {$p'$};
\end{scope}
\end{tikzpicture}
    \caption{The cases when the polygons determined by $p$ and $p'$ have disjoint interiors.}
    \label{fig:intro_polygon_embedding}
\end{figure}
\begin{introthm}[{\cref{thm:edge}}]\label{introthm:disjoint}
Let $K, K'\in \Z_{\geq 2}$. Define $p\colon [1,K]\to[1,K+K'-2]$ and $p'\colon [1,K']\to [1,K+K'-2]$ by 
  \begin{align*}
      &p(i)=i\ \text{for}\ i\in [1, K],\\ 
      &p'(1)=1,\ p'(j)=K+j-2\ \text{for}\ j\in [2,K'].
  \end{align*}
 Then, for $\phi\in \cO^{\inv}(\lambda_1,\dots, \lambda_K)$ and $\phi'\in \cO^{\inv}(\lambda'_1,\dots, \lambda'_{K'})$, we have 
  \[
  \iota_{p}^{\inv}(\phi)\iota_{p'}^{\inv}(\phi')=\qq^{(\lambda_1, \lambda'_1)-(\lambda_K, \lambda'_2)}\iota_{p'}^{\inv}(\phi')\iota_p^{\inv}(\phi)
  \]
  in $\cO_\qq(\Conf_{K+K'-2}\sA_G)$.
\end{introthm}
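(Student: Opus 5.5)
We work inside the braided tensor power $\cO_\qq(\sA_G)^{\bt(K+K'-2)}$. Write $\iota_p^{\inv}(\phi)=\sum \phi_1\otimes\cdots\otimes\phi_K\otimes 1\otimes\cdots\otimes 1$ and $\iota_{p'}^{\inv}(\phi')=\sum \phi'_1\otimes 1\otimes\cdots\otimes 1\otimes\phi'_2\otimes\cdots\otimes\phi'_{K'}$. Here the components $\phi_i$ have $\Uq$-weights in $V(\lambda_i)^\ast$, and the $\phi'_j$ lie in $V(\lambda'_j)^\ast$. The plan is to compute both products with the braided multiplication rule. In $\A_1\bt\A_2$ one has $(1\otimes b)(a\otimes 1)=\sum R^{(2)}\cdot a\otimes R^{(1)}\cdot b$, or the analogous form depending on the convention of \cref{subsec:braided_product}. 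The key input is that each element is $\Uq$-invariant, so that after moving components past each other the universal $R$-matrix acting on a block can be absorbed.

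The first step is to group the tensor factors. Put $X=\cO_\qq(\sA_G)$ (position $1$), $Y=\cO_\qq(\sA_G)^{\bt(K-1)}$ (positions $2,\dots,K$) and $Z=\cO_\qq(\sA_G)^{\bt(K'-1)}$ (positions $K+1,\dots,K+K'-2$). Associativity of $\bt$ lets us write $\iota_p^{\inv}(\phi)=\sum x\otimes y\otimes 1$ and $\iota_{p'}^{\inv}(\phi')=\sum x'\otimes 1\otimes z'$, with $x\otimes y$ invariant in $X\bt Y$ and $x'\otimes z'$ invariant in $X\bt Z$. In the product $\iota_{p'}^{\inv}(\phi')\iota_p^{\inv}(\phi)$, the factor $z'$ must pass $x$ and $y$, which produces an $R$-matrix acting on $z'$ and on $x\otimes y$ via the coproduct. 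Since $x\otimes y$ is invariant, this $R$-matrix reduces to its Cartan part. What remains is a multiplication in $X$ of $x'$ and $x$, which we then compare with $x x'$.

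The second step handles the factors that are not fully invariant. The position-$1$ components $x,x'$ are not invariant by themselves, so we need commutation relations in $\cO_\qq(\sA_G)$. The relevant quantity is $\phi_1\phi'_1$ versus $\phi'_1\phi_1$, which differ by an $R$-matrix; this is the standard $RTT$-type relation for $\cO_\qq(G)^{1\times U^+}$, and I would take it from the earlier section defining $\cO_\qq(\sA_G)$. Combining the two $R$-matrix contributions, one from the braiding and one from the relation in $X$, and again using invariance of $x\otimes y$ and $x'\otimes z'$ via $(\Delta\otimes\id)R=R_{13}R_{23}$, the non-Cartan parts should cancel. I expect the surviving factor to be $\qq^{(\lambda_1,\lambda'_1)}$, coming from the extremal weight pairing at position $1$ where both $x$ and $x'$ are lowest-weight-type vectors of $\cO_\qq(\sA_G)$ relative to the right $U^+$-invariance. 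A second factor $\qq^{-(\lambda_K,\lambda'_2)}$ should come from the adjacency of positions $K$ and $K+1$. Equivalently, the whole computation can be reduced by weight bookkeeping to a pairing of $-\wt$ of the $Y$-part with $\wt$ of the $Z$-part, where by invariance $\wt y=-\wt x$ and $\wt z'=-\wt x'$.

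The main obstacle is to make this cancellation precise, since the exponent involves only $\lambda_K$ and $\lambda'_2$ and not the intermediate weights. As a cleaner route I would first use \cref{t:cyclic} (the quantum cyclic shift $\sigma$) to rotate the picture so that the shared vertex $1$ becomes an endpoint. Concretely, I would apply $\sigma^{-1}$ to move position $1$ to the end. Then $p$ occupies $[1,K-1]\cup\{\text{last}\}$ and $p'$ occupies $[K,\dots,\text{last}]$, sharing only the last vertex. Shared-endpoint commutation is a direct one-step braiding computation, using the relation in $\cO_\qq(\sA_G)$ for the shared factor together with invariance for the remaining factors. The twist signs and the powers $\qq^{(2\rho,\wt)}$ in $\sigma$ contribute equally to both products and cancel. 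Finally, I would verify the exponent on the case $K=K'=2$, where the elements are the $D$-type invariants, as a sanity check.
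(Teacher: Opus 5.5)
There is a genuine gap, caused by a misreading of $p'$. Since $p'(2)=K$, the element $\iota_{p'}^{\inv}(\phi')$ is supported on positions $\{1\}\cup[K,K+K'-2]$. The two polygons therefore share the whole diagonal $\{1,K\}$, not only the vertex $1$.

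Your decomposition $X\bt Y\bt Z$ with $\iota_{p'}^{\inv}(\phi')=\sum x'\otimes 1\otimes z'$ is consequently false: position $K$ lies in $Y$ and carries the $\cO(\lambda'_2)$-component. (Also, $[K+1,K+K'-2]$ has only $K'-2$ slots, not $K'-1$.) Your proposed sanity check already exposes this. For $K=K'=2$ both $p$ and $p'$ are the identity of $[1,2]$ and your $Z$ is empty, yet $\phi'=D_{\lambda'}$ has a nontrivial second component.

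For the same reason, the factor $\qq^{-(\lambda_K,\lambda'_2)}$ cannot come from ``adjacency of positions $K$ and $K+1$''. An element of $(\cO_\qq(\sA_G)^{\bt K})^{\inv}\bt 1$ commutes exactly with everything supported on positions $>K$, because braiding anything past an invariant block is the plain flip (\cref{lem:Conf_comm}(1), \cref{cor:disjoint}). Adjacency contributes nothing; the factor comes from the shared slot $K$.

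The cyclic-shift reduction fails too. After $\sigma^{-1}$ moves position $1$ to the end, the two supports become $[1,K-1]\cup\{K+K'-2\}$ and $[K-1,K+K'-2]$, which still share two vertices.

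The missing idea is to use the invariance of only one of the two elements. Expand
\[
\iota_{p'}^{\inv}(\phi')=\sum_i \eta_1^{(i)}(\lambda'_1)^{[1]}\,\eta_2^{(i)}(\lambda'_2)^{[K]}\,\eta_3^{(i)}(\lambda'_3)^{[K+1]}\cdots\eta_{K'}^{(i)}(\lambda'_{K'})^{[K+K'-2]}
\]
as ordered products of single-slot elements, and move $\iota_p^{\inv}(\phi)$ through the factors one at a time. Besides exact commutation with the slots $>K$, you need two relations for $\phi\in\cO^{\inv}(\lambda_1,\dots,\lambda_K)$ and $\xi\in V(\lambda)^{\ast}$:
\[
\phi\,\xi(\lambda)^{[1]}=\qq^{(\lambda_1,\lambda)}\xi(\lambda)^{[1]}\phi,\qquad \phi\,\xi(\lambda)^{[K]}=\qq^{-(\lambda,\lambda_K)}\xi(\lambda)^{[K]}\phi.
\]
These transfer to $\cO_\qq(\sA_G)^{\bt(K+K'-2)}$ through the algebra map $\iota_p$. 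Each is proved in three moves:
\begin{itemize}
\item write the product in which $\xi$ must cross the other $K-1$ slots of $\phi$ through the braiding $\tcR$;
\item apply the braided commutativity $m_{\cO_\qq(\sA_G)}=\qq^{(\mu_1,\mu_2)}m_{\cO_\qq(\sA_G)}\circ\tcR_{\cO(\mu_1),\cO(\mu_2)}$ on $\cO(\mu_1)\ut\cO(\mu_2)$ (\cref{lem:OA_comm}) to the two factors in the common slot; this is the precise form of your ``$RTT$-type relation'';
\item recombine via \cref{c:YB} into the braiding of $\xi$ past all of $\phi$, which is the flip.
\end{itemize}
The scalars $\qq^{(\lambda_1,\lambda'_1)}$ and $\qq^{-(\lambda_K,\lambda'_2)}$ then come out directly. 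The invariance of $\phi'$ is never used, so no two-sided cancellation of non-Cartan terms is needed.
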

This theorem is used when we ``amalgamate'' the quantum seeds for $\cO_\qq(\Conf_{3}\sA_G)$ to construct a quantum seed for $\cO_\qq(\Conf_{K}\sA_G)$. 

\subsection{Quantum Wilson lines}
For any marked surface $\bSigma$ and a homotopy class $[c]$ of an arc running between boundary intervals of $\bSigma$, we can associate a morphism
\begin{align*}
    g_{[c]}: \sA^\times_{G,\bSigma} \to G,
\end{align*}
using the holonomy of $G$-local system along $c$ and the normalizations at the boundary intervals where $c$ starts and ends. Here, $\sA^\times_{G,\bSigma} \subset \sA_{G,\bSigma}$ denotes the subspace such that the pair of decorations along each boundary interval is generic. See \cite{IOS} for details. 
When we consider an arc $c$ connecting boundary intervals of $\bSigma$ which do not share their endpoints, $g_{[c]}$ can be computed by restricting twisted $G$-local systems to a band neighborhood $B_c$ of $c$. This can be illustrated as follows.             
\[
\begin{tikzpicture}
\begin{scope}[rotate=90]
\draw[thick](-1,-1) -- (1,-1);
\draw[thick] (1,1) -- (-1,1);
\draw[dashed] (-1,-1) -- (-1,1);
\draw[dashed] (1,-1) -- (1,1);
\foreach \i in {(-1,-1),(1,-1),(1,1),(-1,1)} \filldraw \i circle(1.5pt);
\node[left] at (-1,1){$\flA_4$};
\node[left] at (1,1){$\flA_1$};
\node[right] at (1,-1){$\flA_2$};
\node[right] at (-1,-1){$\flA_3$};
\draw[red,thick,->-] (0,1)--node[midway,above]{$c$} (0,-1) ;
\end{scope}
\draw[thick,<-](1.5,0) --node[midway,above]{Res} ++(2,0);
\begin{pgfonlayer}{bg} 
\filldraw[fill=myblue!15,draw=blue,dashed,thick] (-1,-1) -- (1,-1) -- (1,1) -- (-1,1) --cycle;
\end{pgfonlayer} 
\begin{scope}[xshift=7cm]
\draw(0,-0.2) ellipse(3cm and 1.8cm);
\draw[shorten >=-15pt,shorten <=-15pt] (-0.8,0.7) to[bend right=20]
node[inner sep=0,pos=-0.15](A){} node[inner sep=0,pos=1.15](B){}
(0.8,0.7);
\draw(A) to[bend left=20] node[inner sep=0,pos=0.5](C){} (B);
\filldraw[thick,fill=gray!20] (-1.5,-0.5) circle(.5cm);
\filldraw[thick,fill=gray!20] (1.5,-0.5) circle(.5cm);
\foreach \i in {-30,90,210}
\fill(-1.5,-0.5)++(\i:0.5) circle(2pt);
\draw(-1.5,-0.5)++(90:0.5);
\draw(-1.5,-0.5)++(-30:0.5);
\foreach \i in {120,240}
\fill(1.5,-0.5)++(\i:0.5) circle(2pt);
\draw(1.5,-0.5)++(120:0.5);
\draw(1.5,-0.5)++(240:0.5);
\draw[red,thick,->-={0.6}{}] (-1.5,-0.5)++(30:0.5) to[out=30,in=180] node[midway,below]{$c$} ($(1.5,-0.5)+(180:0.5)$);
\begin{pgfonlayer}{bg}
    \filldraw[fill=myblue!15,draw=blue,dashed] (-1.5,-0.5)++(90:0.5) to[out=30,in=180] ($(1.5,-0.5)+(120:0.5)$) arc[start angle=120, end angle=240, radius=0.5] to[out=180,in=30] node[midway,below]{$B_c$} ($(-1.5,-0.5)+(-30:0.5)$) arc[start angle=-30, end angle=60, radius=0.5];
\end{pgfonlayer}
\end{scope}
\end{tikzpicture}
\]
Then the subspace $\Conf_4^{\{2,4\}}\sA_G\subset \Conf_4\sA_G$ naturally appears. 
We have an injective morphism 
\begin{align}
G\times H\times H \to \Conf_4\sA_G,\ (g, h, h')\mapsto [[U^+], gh. [U^+], g\overline{w_0}. [U^+], (h')^{-1}\overline{w_0}.[U^+]],\label{eq:classical_Wilson}
\end{align}
which induces an isomorphism of algebraic varieties 
\begin{align}
\imath_{4}\colon G\times H\times H\xrightarrow{\sim} \Conf_4^{\{2, 4\}}\sA_G.\label{eq:classical_Wilson_isom}
\end{align}
Then, the composite morphism 
\[
    \sA^\times_{G,\bSigma}\xrightarrow{\text{Res}}\Conf_4^{\{2, 4\}}\sA_G \xrightarrow{\imath_{4}^{-1}} G\times H\times H\xrightarrow{\text{projection}} G
\]
gives the Wilson line $g_{[c]}$ along $c$. In this paper, we construct a quantum analogue of \eqref{eq:classical_Wilson} and \eqref{eq:classical_Wilson_isom} as follows.

\begin{introthm}[\cref{thm:qW}]\label{introthm:qW}
There is an injective algebra homomorphism 
    \begin{align*}
    \cO_\qq(\Conf_4\sA_G)\to {}^{\ext'}\cO_\qq(G)^{\ext},
    \end{align*}
    which extends to an isomorphism
    \[
    \imath_{4,\qq}\colon\cO_\qq(\Conf_4^{\{2,4\}}\sA_G)\xrightarrow{\sim}{}^{\ext'}\cO_\qq(G)^{\ext}.
    \]
    Here ${}^{\ext'}\cO_\qq(G)^{\ext}:=\Bbbk[P]\otimes \A\otimes \Bbbk[P]=\bigoplus_{\lambda, \mu\in P}\Bbbk e'(\lambda)\otimes \cO_\qq(G)\otimes e(\mu)$ is an extension of $\cO_\qq(G)$ by the two copies of $\Bbbk[P]$, endowed with certain multiplication (see before \cref{t:Gext} for its precise definition). 
\end{introthm}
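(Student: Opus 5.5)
The plan is to build $\imath_{4,\qq}$ by applying the normalization of \cref{introthm:normrep} twice, once for each of the two generic pairs $(\flA_1,\flA_4)$ and $(\flA_2,\flA_3)$, and then to identify the resulting middle piece with $\cO_\qq(G)$. This mirrors the classical picture \eqref{eq:classical_Wilson}, where $\flA_1=[U^+]$ and $\flA_4=(h')^{-1}\vw.[U^+]$ are fixed, and the middle pair $(\flA_2,\flA_3)=(gh.[U^+],g\vw.[U^+])$ is encoded by $(g,h)$.

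First, I take $\A=\cO_\qq(\sA_G)\bt\cO_\qq(\sA_G)$ in \cref{introthm:normrep}. This gives an injective map
\[
\cO_\qq(\Conf_4\sA_G)\to (\cO_\qq(\sA_G)\bt\cO_\qq(\sA_G))^{\ext},
\]
which becomes an isomorphism after inverting $\cD=\cD_{\{4\}}$; the $e(\mu)$ factor records $h'$. The factor $\cO_\qq(\sA_G)\bt\cO_\qq(\sA_G)$ is not yet $\cO_\qq(G)$, since the invariant part has disappeared. I therefore need a non-invariant analogue for a pair of decorated flags. The classical statement is that generic pairs $(\flA_2,\flA_3)$ form $G\times H$ via $(g,h)\mapsto(gh.[U^+],g\vw.[U^+])$.

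The quantum version I aim for is an algebra isomorphism
\[
(\cO_\qq(\sA_G)\bt\cO_\qq(\sA_G))[\cD_{23}^{-1}]\xrightarrow{\sim}{}^{\ext'}\cO_\qq(G),
\]
where $\cD_{23}$ consists of the images of the $D_\lambda$ on the pair $(2,3)$. On components, the map sends $\phi\otimes\phi'\in V(\lambda)^\ast\otimes V(\mu)^\ast$ to $e'(\lambda)$ times the matrix coefficient built from $\phi$ and $\phi'$. Concretely, I pair the lowest-weight vector of $V(\mu)$, twisted by $\vw$, against the highest-weight vector of $V(\lambda)$, using the Peter--Weyl decomposition \eqref{eq:O(G)_PW}. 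This is essentially the quantum Gauss decomposition.

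To get this, I would again apply the machinery behind \cref{t:invstr}, now with the $\Uq$-action kept rather than taking invariants. Equivalently, I reinterpret $\cO_\qq(G)$ as the locally finite module algebra on which $\Uq$ acts on one side, so that $\cO_\qq(\sA_G)\bt\A\bt\cO_\qq(\sA_G)$ with $\A=\cO_\qq(G)$ has invariant part of the required shape. The best route may be the reverse direction: define $\imath_{4,\qq}^{-1}$ on generators, sending $\cO_\qq(G)$, $e'(\lambda)$ and $e(\mu)$ to explicit elements of $\cO_\qq(\Conf_4\sA_G)[\cD_{\{2,4\}}^{-1}]$. Then I check that both composites are identities on generators.

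For this I would use the cyclic shift $\sigma_4$ of \cref{introthm:cyclic} to move the $(1,4)$-normalization to a $(2,3)$-normalization, so that $D_{2;\lambda}$ (notation as in the paper) becomes available. The Ore property of $\cD_{\{2,4\}}$ is already stated in the paper.

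The main obstacle will be the multiplicativity check. I must show that the braided product on $\cO_\qq(\sA_G)^{\bt 4}$, restricted to invariants and pushed through the two normalizations, matches the twisted multiplication on ${}^{\ext'}\cO_\qq(G)^{\ext}$. In particular, the $q$-commutation between $e'(\lambda)$ and a matrix coefficient of weight $(\beta,\gamma)$ must come out as $\qq^{(\lambda,\beta)}$-type factors. I would verify this using the universal $R$-matrix form of the braiding, exploiting that extremal vectors make all but the Cartan part of $R$ act trivially.

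Injectivity before localization then follows from the injectivity of $\Phi^{\ext}_\A$ and the domain property. Surjectivity after localization follows by showing that the generators $e'(\lambda)$, $e(\mu)$ and the matrix coefficients lie in the image once the $D_{2;\lambda}$ and $D_{4;\mu}$ are inverted.
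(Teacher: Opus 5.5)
Your architecture is the same as the paper's. You apply $\Phi_4^{\ext}$ from \cref{t:invstr} with $\A=\cO_\qq(\sA_G)^{\bt 2}$. You then convert the middle factor into ${}^{\ext'}\cO_\qq(G)$: send the second tensor factor to lowest-weight matrix coefficients (this is $\Psi^{+,-}$), multiply in $\cO_\qq(G)$, and twist by $e'(\lambda_1)$. Your remark that only the Cartan part of the $R$-matrix survives on $v_{\lambda'_1}\otimes v_{-\lambda_2}$ is exactly how \cref{p:reltoG} is proved. Inverting $D_{2;\lambda}$ also works, because $\Phi_4^{\ext}(D_{2;\lambda})$ is the nonzero invariant in the one-dimensional space $(\cO(\lambda^\ast)\ut\cO(\lambda))^{\inv}$. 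It is in fact equal to $D_{\lambda^\ast}$ by \cref{cor:sigma_2}, and it goes to $D'_{\lambda^\ast}$ and then to $e'(\lambda^\ast)$.

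The gap is that the middle isomorphism (\cref{t:Gext}) carries all the content, and you do not establish it. Your injectivity argument fails: injectivity of $\Phi_\A^{\ext}$ together with the domain property says nothing about the second map. Two ingredients are needed.
\begin{enumerate}
\item[(i)] $\sfm$ is injective on each piece $\cO(\lambda_1)\ut\cO^-(-\lambda_2)$. This holds because $v_{\lambda_1}\otimes v_{-\lambda_2}$ generates $V(\lambda_1)\otimes V(\lambda_2^\ast)$, so a nonzero $\widetilde{\xi}$ gives a nonzero matrix coefficient.
\item[(ii)] The factor $e'(\lambda_1)$, together with the right weight $\lambda_1-\lambda_2$, puts different pieces in different graded components, so their images form a direct sum.
\end{enumerate}
Point (ii) is essential: without the $e'$-twist the multiplication map is far from injective, since $\sfm(D'_\lambda)=1$ for every $\lambda$. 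Surjectivity after localization is also not a matter of checking generators. You need that $\cO_\qq(G)$ is spanned by the products $c^{V(\lambda)}(\xi,v_\lambda)\,c^{V(\mu)}(\xi',v_{w_0\mu})$, i.e.\ that $\sfm$ is surjective (cf.~\cite[Proposition 9.2.2]{Joseph:book}). That fact is the real content behind your phrase ``quantum Gauss decomposition''.

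Two further corrections. First, the map must carry the normalization $\qq^{-(\lambda_1,\lambda_1)/2}$. With the bare product $\sfm_0$ one gets $\sfm_0(\psi\psi')=\qq^{(\lambda_2,\lambda'_1)}\sfm_0(\psi)\sfm_0(\psi')$. Combined with $e'(\lambda)\phi=\qq^{(\lambda,\wt_r\phi)}\phi\,e'(\lambda)$, multiplicativity of $\psi\mapsto e'(\lambda_1)\sfm_0(\psi)$ then fails by exactly $\qq^{-(\lambda_1,\lambda'_1)}$, and the factor $\qq^{-(\lambda_1,\lambda_1)/2}$ cancels this.

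Second, the alternative routes you float are not viable. Taking $\A=\cO_\qq(G)$ in \cref{t:invstr} describes $(\cO_\qq(\sA_G)\bt\cO_\qq(G)\bt\cO_\qq(\sA_G))^{\inv}$; it does not relate $\cO_\qq(\sA_G)^{\bt 2}$ to $\cO_\qq(G)$. Defining $\imath_{4,\qq}^{-1}$ on generators would need a presentation of $\cO_\qq(G)$, which you do not have for general $G$. A merely linear inverse would presuppose the bijectivity you are trying to prove.

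The workable route is the paper's:
\begin{enumerate}
\item[(a)] define the forward map $\wtmu$ on all of $\cO_\qq(\sA_G)\bt\cO_\qq(\sA_G^-)$;
\item[(b)] prove it is multiplicative by the $R$-matrix computation;
\item[(c)] prove it is injective by (i)--(ii);
\item[(d)] prove it is surjective after inverting the $D'_\lambda$, using surjectivity of $\sfm$;
\item[(e)] compose with $\Phi_4^{\ext}$ and $\Psi^{+,-}$.
\end{enumerate}
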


The \emph{quantum Wilson line} is the composite $\cO_\qq(G)\hookrightarrow {}^{\ext'}\cO_\qq(G)^{\ext} \xrightarrow{\imath_{4,\qq}^{-1}} \cO_\qq(\Conf_4^{\{2,4\}}\sA_G)$, where the first map is the inclusion.

\subsection{Quantum cluster structures for $\cO_\qq(\Conf_3\sA_G)$ and $\cO_\qq(\Conf_4\sA_G)$}
The cluster structures \cite{CAI,FG06} on $\Conf_K\sA_G$ are extensively studied by many authors, for example, \cite{FG06,Fei,Le19,Magee,GHKK,GS:DT,GS:Quantum,KellerLiu}. From a geometric viewpoint, constructing a cluster structure amounts to finding a collection of compatible toric charts that realizes the positive structure of the variety. We consider their quantum analogues in our terminology. We use the quantum Wilson line to establish quantum cluster structures \cite{BZ} on the quantum configuration spaces.

Let $\bbs$ be a double reduced word of $(w_0,w_0)\in W\times W$. The quantized coordinate ring $\cO_\qq(G)$ carries the Berenstein--Zelevinsky quantum seed $\bi(\bbs)$ associated with $\bbs$. 
By adding the frozen variables to $\bi(\bbs)$ (corresponding to the extension of $\cO_\qq(G)$ by the two copies of $\Bbbk[P]$) and applying a certain strict similarity transform  (\cref{def:coeffmod}), we obtain a quantum seed $\bi_{\square}(\bbs)$ for $\cO_\qq(\Conf_4\sA_G)$ via $\isigma:=\imath_{4,\qq}\circ\sigma_4^{-1}$. 
\begin{introthm}[{\cref{thm:cluster_square}, \cref{thm:GS_quantum}}]\label{introthm:cluster_square}
For an arbitrary double reduced word $\bbs$ of $(w_0,w_0)$, we can construct a quantum seed $\bi_{\square}(\bbs)$ in the skew field of fractions $\cF(\cO_\qq(\Conf_4^{\times}\sA_G))$ of $\cO_\qq(\Conf_4^{\times}\sA_G)$ such that \begin{align*}
A_\qq(\bi_{\square}(\bbs);J_{\square}(\bbs)^{\fr})
=U_\qq(\bi_{\square}(\bbs);J_{\square}(\bbs)^{\fr})
=\cO_\qq(\Conf_4^{\times}\sA_G).
\end{align*}
Here $A_\qq(\bi_{\square}(\bbs);J_{\square}(\bbs)^{\fr})$ denotes the quantum cluster algebra associated with $\bi_{\square}(\bbs)$ whose invertible frozen variables are indexed by $J_{\square}(\bbs)^{\fr}$, and $U_\qq(\bi_{\square}(\bbs);J_{\square}(\bbs)^{\fr})$ is its upper counterpart.
Moreover, the exchange matrix of $\bi_{\square}(\bbs)$ coincides with that of the Goncharov--Shen seed $\bi^\GS(\bbs)$. If $\bbs$ and $\bbs'$ are double reduced words of $(w_0, w_0)$, then $\bi_{\square}(\bbs)$ and $\bi_{\square}(\bbs')$ are mutation equivalent.
\end{introthm}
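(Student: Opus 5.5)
The idea is to transport everything through the isomorphism $\isigma=\imath_{4,\qq}\circ\sigma_4^{-1}$ of \cref{introthm:qW} and \cref{introthm:cyclic}. This reduces the statement to a statement about the extended algebra ${}^{\ext'}\cO_\qq(G)^{\ext}$, where the Berenstein--Zelevinsky theory is available. First we localize. By construction, $\cO_\qq(\Conf_4^{\times}\sA_G)$ is the further localization of $\cO_\qq(\Conf_4^{\{2,4\}}\sA_G)$ (after the cyclic shift, of the appropriate $\cO_\qq(\Conf_4^{I}\sA_G)$) at the remaining elements $D_{i;\lambda}$. So $\isigma$ identifies $\cO_\qq(\Conf_4^\times\sA_G)$ with ${}^{\ext'}\cO_\qq(G)^{\ext}[\mathcal{S}^{-1}]$. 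Here $\mathcal{S}$ is the image of the remaining Ore set.

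The key computation at this stage is to identify these images explicitly. We expect $\isigma(D_{i;\lambda})$ to be, up to powers of $\qq$ and multiplication by $e(\mu)$, $e'(\mu')$, the principal generalized minors $\Delta_{\lambda,\lambda}$ and $\Delta_{w_0\lambda,\lambda}$ (or $\Delta_{\lambda,w_0\lambda}$) in $\cO_\qq(G)$. These are exactly the frozen variables of the BZ seed $\bi(\bbs)$ for the double Bruhat cell $G^{w_0,w_0}$. We compute this by evaluating $\Phi^{\ext}$ and the cyclic shift on the explicit invariant elements $D_\lambda$ of \cref{introthm:normrep}.

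Next we build the seed. We take the BZ quantum seed $\bi(\bbs)$, whose cluster variables are quantum generalized minors $\Delta_{u\varpi_i,v\varpi_i}$. We adjoin the frozen variables $e(\varpi_i)$ and $e'(\varpi_i)$, $i\in I$. Since these $q$-commute with the minors according to their weights, the enlarged collection is still a quantum torus with an explicit compatibility matrix. We then apply a strict similarity transform (\cref{def:coeffmod}), i.e.\ multiply cluster variables by monomials in the new frozen variables. The aim is for the resulting variables to be images of genuine elements of $\cO_\qq(\Conf_4\sA_G)$ that are homogeneous of weights $(\lambda_1,\dots,\lambda_4)$, and for the exchange matrix to acquire the extra frozen rows of the Goncharov--Shen matrix. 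Pulling back by $\isigma^{-1}$ gives $\bi_\square(\bbs)$. Comparing its exchange matrix with $\bi^{\GS}(\bbs)$ is a combinatorial check. The mutable part agrees with BZ's matrix by known results, so only the frozen rows need verification, from the weight bookkeeping of the similarity transform. Mutation equivalence for different $\bbs,\bbs'$ follows from the BZ result that $\bi(\bbs)$ and $\bi(\bbs')$ are related by mutations (braid and commutation moves). Strict similarity transforms and the adjunction of $q$-commuting frozen variables commute with mutation, because the transform is by frozen monomials.

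Finally we prove the equality $A_\qq=U_\qq=\cO_\qq(\Conf_4^\times\sA_G)$. Through $\isigma$, this becomes the statement that the quantum cluster algebra and upper cluster algebra of the extended seed, with frozen variables inverted, equal ${}^{\ext'}\cO_\qq(G)^{\ext}$ localized at the frozen minors. Here we invoke the known quantum theorem (Goodearl--Yakimov, building on BZ) that $\cO_\qq(G)[\Delta_{\text{frozen}}^{-1}]$, i.e.\ the quantized coordinate ring of the reduced double Bruhat cell $G^{w_0,w_0}$, equals both $A_\qq(\bi(\bbs))$ and $U_\qq(\bi(\bbs))$ with invertible frozens. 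Tensoring with the Laurent algebra $\Bbbk[P]\otimes\Bbbk[P]$ of the extra frozen variables preserves both equalities, since the extension is a twisted group-algebra extension, graded by $P\times P$ with invertible homogeneous pieces. The similarity transform does not change either algebra once frozens are invertible.

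The main obstacle I expect is the first step. We need to verify that the localizing Ore set $\cD_{[1,4]}$ corresponds precisely, under $\isigma$, to inverting the BZ frozen minors together with $e,e'$. In particular we must show that no additional localization is created or needed. This requires careful tracking of the twisting signs $(-1)^{\langle 2\rho^\vee,\wt\rangle}\qq^{(2\rho,\wt)}$ in $\sigma_4$ and of the $w_0$-lifts. I would try first to compute $\isigma(D_{1;\lambda})$ and $\isigma(D_{3;\lambda})$ directly on highest/lowest weight vectors.
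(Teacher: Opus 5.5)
Your construction of the seed and your proof of $A_\qq=U_\qq=\cO_\qq(\Conf_4^{\times}\sA_G)$ follow the paper: transport along $\isigma$, adjoin $e(\varpi_s^{\ast})=\isigma(D_{1;\varpi_s^{\ast}})$ and $e'(\varpi_s^{\ast})=\isigma(D_{3;\varpi_s})$ as isolated frozen variables to get $\bi_{\dsquare}(\bbs)$, invoke \cref{thm:clusterDouble}, and pass to the strict similarity transform $\bi_{\square}(\bbs)$ via \cref{prop:modified_seed}(1). The genuine gap is the last sentence of the statement, mutation equivalence. \cref{prop:modified_seed}(2) only says that mutating a strict similarity transform gives \emph{some} strict similarity transform of the mutated seed. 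The similarity datum is not carried along, because the new variable is dictated by the exchange relation. So from $\sigma\mu_{t_k}\cdots\mu_{t_1}\bi_{\dsquare}(\bbs)=\bi_{\dsquare}(\bbs')$ (\cref{thm:indep_Double}) you only learn that $\sigma\mu_{t_k}\cdots\mu_{t_1}\bi_{\square}(\bbs)$ is a strict similarity transform of $\bi_{\dsquare}(\bbs')$ with an \emph{unknown} datum. By contrast, $\bi_{\square}(\bbs')$ is defined by the specific datum $([\gamma_{\bbs';j}]_+,[\delta_{\bbs';j}]_+)$. Different data give different seeds, whose variables differ by monomials in $D_{1;\varpi_s^{\ast}}$ and $D_{3;\varpi_s}$, and nothing in your argument identifies the two. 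Comparing exchange matrices cannot settle this either: the datum enters $\cE_{\square}$ only through $\sum_j\varepsilon_{\bbs;ij}m_{j;s}$ and $\sum_j\varepsilon_{\bbs;ij}n_{j;s}$, and $\cE(\bbs)$ has a nontrivial kernel.

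All variables in the $\bi_{\dsquare}$-lineage lie in $(\isigma)^{-1}(\cO_\qq(G^{w_0,w_0}))$ and so have $\Deg=(0,0)$. Hence the datum is read off from $\Deg$, and what you must prove is that the $j$-th variable of $\sigma\mu_{t_k}\cdots\mu_{t_1}\bi_{\square}(\bbs)$ has $\Deg=([\gamma_{\bbs';j}]_+,[\delta^{\ast}_{\bbs';j}]_+)$. The paper obtains this from the classical side. Since $\cE_{\square}(\bbs)=\cE^{\GS}(\bbs)$ and the initial degrees agree (\eqref{eq:isigma_A_square}, \cref{prop:weight}), the degrees of the mutated quantum variables equal those of $\sigma\mu_{t_k}\cdots\mu_{t_1}\bi^{\GS}(\bbs)$. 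The cluster variables of this classical seed are those of $\bi^{\GS}(\bbs')$ because their $g$-vectors match; this uses \cref{prop:modified_seed}(3), \cref{thm:indep_GS}, \cref{prop:GS_variable_minor} with the specialization $\qq=1$ of quantum minors, and \cref{thm:separation}. A direct check of the piecewise-linear degree rule along each braid move would be an alternative, but you have not indicated it.

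Two smaller points. First, compatibility of the enlarged pair does not follow from $\qq$-commutation alone. It needs $\sum_j\varepsilon_{\bbs;ij}\gamma_{\bbs;j}=0=\sum_j\varepsilon_{\bbs;ij}\delta_{\bbs;j}$, which the paper derives from \cref{prop:weight0}. Second, your ``weight bookkeeping'' for $\cE_{\square}(\bbs)=\cE^{\GS}(\bbs)$ should be made precise as in the paper. Both matrices annihilate the degree vectors (\cref{prop:weight0}, \eqref{eq:weight_Poisson}) and agree on the $J(\bbs)$-columns. The degrees $(\varpi_s,0)$ and $(0,\varpi_s^{\ast})$ of the $S_{\pm\infty}$-variables are linearly independent, so the remaining columns are forced. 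Also, the localizing images are $\isigma(D_{2;\lambda})\propto e'(\lambda)\Delta_{w_0\lambda,\lambda}$ and $\isigma(D_{4;\lambda})\propto\Delta_{\lambda^{\ast},w_0\lambda^{\ast}}e(\lambda^{\ast})$; no $\Delta_{\lambda,\lambda}$ occurs.
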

Let us make one technical remark. 
In the construction of $\bi_{\square}(\bbs)$, we transport the quantum seed for ${}^{\ext'}\cO_\qq(G)^{\ext}$ via $\isigma=\imath_{4,\qq}\circ\sigma_4^{-1}$ rather than $\imath_{4,\qq}$. At the classical level, $\isigma$ corresponds to the left picture, while $\imath_{4,\qq}$ corresponds to the right one. 

\[
\begin{tikzpicture}
\path (3,3) coordinate(A);
\path (0,3) coordinate(B);
\path (0,0) coordinate(C);
\path (3,0) coordinate(D);
\bline{-0.5,0}{3.5,0}{0.15};
\tline{-0.5,3}{3.5,3}{0.15};
\draw[red,->-,thick] (1.5,3) --node[midway,right]{$g$} (1.5,0);
\filldraw(A) circle(1.5pt) node[above=0.3em,anchor=south west]{$\flA_2$}; 
\filldraw(B) circle(1.5pt) node[above=0.3em,anchor=south east]{$\flA_1$};
\filldraw(D) circle(1.5pt) node[below=0.3em,anchor=north west]{$\flA_3$};
\filldraw(C) circle(1.5pt) node[below=0.3em,anchor=north east]{$\flA_4$};
\begin{pgfonlayer}{bg}  
\filldraw[fill=myblue!15,draw=blue,dashed,thick] (B) -- (C) -- (D) -- (A) --cycle;
\end{pgfonlayer}
\end{tikzpicture}
\qquad \qquad 
\raisebox{8pt}{
\begin{tikzpicture}[rotate=90]
\path (3,3) coordinate(A);
\path (0,3) coordinate(B);
\path (0,0) coordinate(C);
\path (3,0) coordinate(D);
\bline{-0.5,0}{3.5,0}{0.15};
\tline{-0.5,3}{3.5,3}{0.15};
\draw[red,->-,thick] (1.5,3) --node[midway,above]{$g$} (1.5,0);
\filldraw(A) circle(1.5pt) node[left=0.3em,anchor=east]{$\flA_1$}; 
\filldraw(B) circle(1.5pt) node[left=0.3em,anchor=east]{$\flA_4$};
\filldraw(D) circle(1.5pt) node[right=0.3em,anchor=west]{$\flA_2$};
\filldraw(C) circle(1.5pt) node[right=0.3em,anchor=west]{$\flA_3$};
\begin{pgfonlayer}{bg}  
\filldraw[fill=myblue!15,draw=blue,dashed,thick] (B) -- (C) -- (D) -- (A) --cycle;
\end{pgfonlayer}
\end{tikzpicture}}
\]
We adopt the former convention since it is more compatible with the amalgamation of cluster structures when constructing a quantum seed for $\cO_\qq(\Conf_K\sA_G)$. 
For the study of quantum cluster structure on $\cO_\qq(\Conf_K\sA_G)$, we need a refinement of \cref{introthm:cluster_square} in which only some of the frozen variables are inverted.
\begin{introthm}[{\cref{thm:cluster_square_cpt}}]\label{introthm:cluster_square_cpt}
For an arbitrary double reduced word $\bbs$ of $(w_0,w_0)$, 
\begin{align*}
A_\qq(\bi_{\square}(\bbs);S_{-\infty}\sqcup S_{\infty})
=U_\qq(\bi_{\square}(\bbs);S_{-\infty}\sqcup S_{\infty})
=\cO_\qq(\Conf_4^{\{1,3\}}\sA_G).
\end{align*}
Here $S_{-\infty}\sqcup S_{\infty}$ is a specific proper subset of $J_{\square}(\bbs)^{\fr}$. 
\end{introthm}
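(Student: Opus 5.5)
We derive this from \cref{introthm:cluster_square}, transporting everything through $\isigma=\imath_{4,\qq}\circ\sigma_4^{-1}$. The frozen variables of $\bi_{\square}(\bbs)$ split into groups attached to the four sides of the square. In the chosen convention, $S_{-\infty}$ and $S_{\infty}$ are the frozen variables that correspond, under $\isigma$, to the two copies of $\Bbbk[P]$, namely the $e'(\lambda)$ and the $e(\mu)$. Up to $\qq$-powers these are the elements $D_{1;\lambda}$ and $D_{3;\lambda}$ (after applying $\sigma_4$). The remaining frozen variables are the generalized minors $\Delta_{\varpi_i,\varpi_i}$ and $\Delta_{w_0\varpi_i,\varpi_i}$-type elements of $\cO_\qq(G)$, which correspond to the sides $2,4$.

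\textbf{Step 1 (algebra side).} Inverting only $D_{1;\lambda},D_{3;\lambda}$ in $\cO_\qq(\Conf_4\sA_G)$ yields $\cO_\qq(\Conf_4^{\{1,3\}}\sA_G)$. Under $\isigma$, this should be identified with ${}^{\ext'}\cO_\qq(G)^{\ext}$ in the rotated picture. Concretely, $\sigma_4$ carries $\cD_{\{2,4\}}$ to $\cD_{\{1,3\}}$, and by \cref{introthm:qW} the localization at $\cD_{\{2,4\}}$ is $\imath_{4,\qq}^{-1}({}^{\ext'}\cO_\qq(G)^{\ext})$. So I would first establish that $\isigma$ restricts to an isomorphism $\cO_\qq(\Conf_4^{\{1,3\}}\sA_G)\cong {}^{\ext'}\cO_\qq(G)^{\ext}$, with both $\Bbbk[P]$'s inverted and $\cO_\qq(G)$ not localized at its frozen minors.

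\textbf{Step 2 (cluster side).} By the strict similarity transform (\cref{def:coeffmod}), the quantum cluster algebra of $\bi_{\square}(\bbs)$ with $S_{\pm\infty}$ inverted equals the BZ quantum cluster algebra $A_\qq(\bi(\bbs))$ with no frozen variables inverted, tensored (with twisted multiplication) with the Laurent algebra in the $e,e'$. The same holds for the upper cluster algebras. The key point is that rescaling exchange variables by Laurent monomials in invertible frozen variables commutes with mutation and with taking upper bounds. Then I invoke the known theorem (Goodearl--Yakimov) that for $\cO_\qq(G)$ without frozen inversion one has $A_\qq(\bi(\bbs))=U_\qq(\bi(\bbs))=\cO_\qq(G)$, independently of $\bbs$. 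Tensoring with $\Bbbk[P]\otimes\Bbbk[P]$ preserves these equalities: the extension is a twisted group-algebra extension by a torus, so upper bounds decompose by $P\times P$-grading.

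\textbf{Main obstacle.} The hardest part is making precise that $S_{-\infty}\sqcup S_\infty$ corresponds exactly to the monomials in $e,e'$, possibly times $\cO_\qq(G)$-minors. If the similarity transform mixes them, the new frozen variables $e(\varpi_i)\Delta$ would invert minors. I would handle this by checking, via the explicit formula for $\isigma$ on $D_{i;\lambda}$, that the frozen variables in $S_{\pm\infty}$ are images of $D_{1;\varpi_i},D_{3;\varpi_i}$. Since $A\subset U$ always holds, the remaining inclusions can then be checked degree-wise.
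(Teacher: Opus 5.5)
Your proposal follows the paper's proof: the strict similarity transform (\cref{prop:modified_seed}, whose similarity datum involves only $S_{-\infty}\sqcup S_{\infty}$) reduces to $\bi_{\dsquare}(\bbs)$, whose frozen vertices in $S_{\pm\infty}$ are isolated and carry $(\isigma)^{-1}(e(\cdot))$, $(\isigma)^{-1}(e'(\cdot))$. Hence both $A_\qq$ and $U_\qq$ decompose as $\bigoplus_{(\lambda,\mu)} e'(\mu)(\cdots)e(\lambda^{\ast})$ over the non-localized Berenstein--Zelevinsky algebras, and \cref{thm:qW} composed with $\sigma_4^{-1}$ identifies the result with $\cO_\qq(\Conf_4^{\{1,3\}}\sA_G)$.

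One correction concerns the key input $A_\qq(\bi(\bbs);\varnothing)=U_\qq(\bi(\bbs);\varnothing)=\cO_\qq(G)$. This is not the Goodearl--Yakimov theorem, which only gives the version localized at the frozen minors. For the same reason, the statement is not a consequence of \cref{introthm:cluster_square} itself. The needed result comes from the more recent work \cite[Theorem B]{QY}, \cite[Theorem A]{OQY}, \cite[Corollary B]{Dey}.
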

The key input for the proof of \cref{introthm:cluster_square_cpt} is the equalities 
\[
A_\qq(\bi(\bbs);\varnothing)=U_\qq(\bi(\bbs);\varnothing)=\cO_\qq(G), 
\]
recently shown in \cite[Theorem B]{QY}, \cite[Theorem A]{OQY}, and \cite[Corollary B]{Dey}. 

Next, we define a quantum seed for $\cO_\qq(\Conf_3^{\times}\sA_G)$ by restricting the quantum seed for $\cO_\qq(\Conf_4^{\times}\sA_G)$. Let $\bs$ be a reduced word of $w_0$. We can complete it to a double reduced word $\bbs=\bs\ast\overline{\bs'}$ of $(w_0,w_0)$ by adjoining another reduced word $\overline{\bs'}$ of $w_0$, written in barred letters. By selecting the quantum cluster variables of $\bi_{\square}(\bbs)$ whose second tensor component is equal to $1$, we obtain a ``subseed'' of $\bi_{\square}(\bbs)$. It induces a quantum seed for $\cO_\qq(\Conf_3^{\times}\sA_G)$ that does not depend on the choice of $\overline{\bs'}$.

\begin{introthm}[{\cref{thm:cluster_triangle}}]\label{introthm:cluster_triangle}
For an arbitrary reduced word $\bs$ of $w_0$, we can construct a quantum seed $\bi_{\triangle}(\bs)$ in the skew field of fractions $\cF(\cO_\qq(\Conf_3^{\times}\sA_G))$ of $\cO_\qq(\Conf_3^{\times}\sA_G)$, and 
\begin{align*}
A_\qq(\bi_{\triangle}(\bs);J_{\triangle}(\bs)^{\fr})
=U_\qq(\bi_{\triangle}(\bs);J_{\triangle}(\bs)^{\fr})
=\cO_\qq(\Conf_3^{\times}\sA_G).
\end{align*}
Moreover, if $\bs$ and $\bs'$ are reduced words of $w_0$, then $\bi_{\triangle}(\bs)$ and $\bi_{\triangle}(\bs')$ are mutation equivalent.
\end{introthm}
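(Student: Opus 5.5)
The plan is to deduce \cref{introthm:cluster_triangle} from \cref{introthm:cluster_square} and \cref{introthm:cluster_square_cpt} by realizing $\cO_\qq(\Conf_3\sA_G)$ inside $\cO_\qq(\Conf_4\sA_G)$. We use the embedding $\iota_p^{\inv}$ for an order-preserving $p\colon[1,3]\to[1,4]$ that omits the index whose tensor component is set to $1$. Concretely, $\cO_\qq(\Conf_3\sA_G)$ is identified with the subalgebra of invariants in $\cO_\qq(\sA_G)^{\bt 4}$ whose second tensor component is the unit. After passing through $\isigma$, this subalgebra corresponds to the part of ${}^{\ext'}\cO_\qq(G)^{\ext}$ generated by matrix coefficients that are highest weight vectors on one side. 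Classically this is the embedding $\cO(U^-)$-type data, namely the functions on $G$ invariant under a unipotent subgroup, i.e. $\cO(G/U^+)$, together with the frozen $\Bbbk[P]$'s.

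\textbf{Step 1: the subseed.} Take $\bbs=\bs\ast\overline{\bs'}$ and check which cluster variables of $\bi_{\square}(\bbs)$ have trivial second component. These should be exactly the Berenstein--Zelevinsky generalized minors $\Delta_{w_{\le k}\varpi_{i_k},\varpi_{i_k}}$ coming from the unbarred part $\bs$, together with the appropriate frozen variables attached to the edges $(1,2),(2,3),(3,1)$ of the triangle. We then verify three things:
\begin{itemize}
\item the variables indexed by $\bs$ are independent of $\bs'$;
\item the exchange matrix restricted to the mutable vertices of this subset has its nonzero entries only within the subset, so it defines a genuine seed $\bi_\triangle(\bs)$ (the full subquiver on the chosen vertices, with the new frozen ones declared frozen);
\item the compatibility matrix restricts, which is automatic because the restriction of a compatible pair is compatible.
\end{itemize}
Mutation equivalence for different $\bs$ follows from the $\Conf_4$ statement, since $\bs\to\bs'$ by braid moves induces mutations supported in the subseed.

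\textbf{Step 2: the equalities.} The inclusion $A_\qq\subset U_\qq$ is general. For $\cO_\qq(\Conf_3^\times\sA_G)\subset A_\qq(\bi_\triangle(\bs);J_\triangle(\bs)^{\fr})$, we normalize at $(1,3)$ via \cref{introthm:normrep}. This gives $\cO_\qq(\Conf_3^{\{3\}})\cong\cO_\qq(\sA_G)^{\ext}$, and $\cO_\qq(\sA_G)$ is generated by its weight vectors, expressible through quantum minors. Alternatively, we intersect \cref{introthm:cluster_square} with the second-component-trivial part.

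For $U_\qq\subset\cO_\qq(\Conf_3^\times\sA_G)$, an element of the upper cluster algebra of the subseed lies in the upper cluster algebra of $\bi_\square(\bbs)$ localized at the extra frozen variables, hence in $\cO_\qq(\Conf_4^\times\sA_G)$. We then need to show that such an element, having trivial second component degree (a grading argument using the $P^4$-grading of \cref{prop:qcsgrading}), lies in the image of $\cO_\qq(\Conf_3^\times\sA_G)$.

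\textbf{Main obstacle.} The hardest step is the last descent. We must show that an element of $\cO_\qq(\Conf_4^\times\sA_G)$ of degree $0$ in the second slot, with denominators only from triangle frozen variables, comes from $\Conf_3$. This requires that the localization at $D_{\lambda}$'s attached to edges involving vertex $2$ does not create new degree-zero elements. I would handle this by the grading together with the fact that those $D$'s carry nonzero weight in slot $2$, so that a degree-zero element in the localized algebra has no such denominators. Weight space in slot $2$ equal to $0$ forces $V(0)$, giving the identification.
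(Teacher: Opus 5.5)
\textbf{What matches the paper.} Your Step~1 (restrict $\bi_{\square}(\bs\ast\overline{\bs'})$ along $\iota_{43}^{1\bullet}$, i.e.\ keep the variables whose second tensor factor is trivial) is how the paper defines $\bi_{\triangle}(\bs)$. Your mutation-equivalence argument (braid moves on $\bs$ give mutation sequences for $\bi_{\square}$ supported in $J_{\triangle}(\bs)^{\uf}$) is also the paper's.

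\textbf{The gap: the inclusion $\cO_\qq(\Conf_3^{\times}\sA_G)\subset A_\qq$.} Step~2 does not establish this inclusion, for two reasons.
\begin{enumerate}
\item Normalization does identify $\cO_\qq(\Conf_3^{\times}\sA_G)$ with $\cO_\qq(\sA_G)^{\ext}$ with some minors inverted. But $\cO_\qq(\sA_G)$ is generated by all $\xi(\varpi_s)$ with $\xi\in V(\varpi_s)^{\ast}$, and only the extremal ones $\xi_{w\varpi_s}(\varpi_s)$ are quantum minors. For non-minuscule $\varpi_s$ the other weight vectors are not minors. Showing that everything is a quantum cluster polynomial is exactly the content of the Berenstein--Zelevinsky/Goodearl--Yakimov theorem, which you never invoke.
\item Your alternative, intersecting \cref{thm:cluster_square} with the part of second-slot degree $0$, also fails. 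A cluster-polynomial expression of $\iota_{43}^{1\bullet}(\phi)$ in $\bi_{\square}(\bbs)$ may use variables off the triangle together with inverses of $D_{1;\lambda},D_{2;\lambda}$. The degree-$0$ part of the localized algebra is strictly larger than the image of $\Conf_3$. Already classically for $G=SL_2$, with decorated flags nonzero vectors $v_i\in\C^2$, the function $\det(v_1,v_2)\det(v_3,v_4)\det(v_2,v_3)^{-1}$ is regular on $\Conf_4^{\times}\sA_G$ and has degree $0$ in $v_2$. It has a denominator from an edge at vertex $2$, yet it is not a function of $(v_1,v_3,v_4)$. This example also refutes the claim in your last paragraph that degree $0$ in slot $2$ rules out such denominators.
\end{enumerate}

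\textbf{The gap: the inclusion $U_\qq\subset\cO_\qq(\Conf_3^{\times}\sA_G)$.} You assert that the upper cluster algebra of the subseed lies in $U_\qq(\bi_{\square}(\bbs);J_{\square}(\bbs)^{\fr})$ localized at the extra (diagonal) frozen variables. This is the hard direction of freezing. In general one only has $A(\text{frozen})\subset A[x_F^{-1}]\subset U[x_F^{-1}]\subset U(\text{frozen})$. The reverse of the last inclusion follows from this chain only when $A(\text{frozen})=U(\text{frozen})$, which is what you are trying to prove. Note also that this localization is not contained in $\cO_\qq(\Conf_4^{\times}\sA_G)$, since the diagonal variables are mutable in $\bi_{\square}(\bbs)$.

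\textbf{The missing idea.} Leave $\Conf_4$ for this part. The map $\Phisigma=\Phi_3^{\ext}\circ\sigma_3$ followed by $\pi_-$ (see \eqref{eq:AG_B}) gives an isomorphism $\cO_\qq(\Conf_3^{\times}\sA_G)\cong\cO_\qq(B_{\ast}^-)^{\ext}$. Goodearl--Yakimov's theorem (\cref{thm:clusterBorel}, extended in \cref{thm:cluster_Be}) gives $A_\qq=U_\qq=\cO_\qq(B_{\ast}^-)^{\ext}$ for the extended BZ seed $\widetilde{\bi}^-(\bs^\op)$. One then checks that $\Phisigma(\bi_{\triangle}(\bs))$ is a strict similarity transform of $\widetilde{\bi}^-(\bs^\op)$, matching indices via $\gamma_{\bs^\op;j}=\delta_{\bs;j'}$. \Cref{prop:modified_seed} then transfers both equalities. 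It also supplies the fact that the $A_{\triangle,\bs;j}$ generate the skew field, which your Step~1 takes for granted.
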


Conversely, we can show that the quantum seed for $\cO_\qq(\Conf_4\sA_G)$ can be obtained by ``amalgamating'' two quantum seeds for $\cO_\qq(\Conf_3\sA_G)$.

\begin{introthm}[{Informal: \cref{thm:triangle_square_1,thm:triangle_square_2} for precise statements}]\label{introthm:triangle_square}
Let $\bs_1, \bs_2$ be reduced words of $w_0$. Then the quantum seed $\bi_{\square}(\bs_1\ast(\overline{\bs}_2^\ast)^\op)$ (resp.~$\bi_{\square}((\overline{\bs}_2^\ast)^\op\ast \bs_1)$) is an amalgamation of $\bi_{\triangle}(\bs_1)$ and $\bi_{\triangle}(\bs_2)$ (resp.~$\bi_{\triangle}(\bs_2)$ and $\bi_{\triangle}(\bs_1)$) in this order. Here, $\op$ stands for the operation of reversing the order of a word, and the superscript $*$ denotes the Dynkin involution (see \cref{sec:Lie_theory}).
\end{introthm}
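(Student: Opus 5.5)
We aim to realize the amalgamation directly inside $\cO_\qq(\Conf_4^{\times}\sA_G)$, using the embeddings $\iota_p^{\inv}$ for the two triangles of the square. Take the diagonal $(1,3)$ and let $p,p'\colon[1,3]\to[1,4]$ be $p=(1,2,3)$ and $p'=(1,3,4)$. These two triangles have disjoint interiors. By \cref{introthm:disjoint} with $K=K'=3$, the images $\iota_p^{\inv}(\cO_\qq(\Conf_3\sA_G))$ and $\iota_{p'}^{\inv}(\cO_\qq(\Conf_3\sA_G))$ then $\qq$-commute, with explicit exponents $(\lambda_1,\lambda'_1)-(\lambda_3,\lambda'_2)$.

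\emph{Step 1: matching variables.} I would first show that the cluster variables of $\bi_{\square}(\bs_1\ast(\overline{\bs}_2^\ast)^\op)$ split into three groups:
\begin{itemize}
\item those coming from the letters of $\bs_1$, which equal $\iota_p^{\inv}$ of the cluster variables of $\bi_{\triangle}(\bs_1)$ (transported through $\sigma_4$ as needed);
\item those coming from the barred letters, which equal $\iota_{p'}^{\inv}$ of the variables of $\bi_{\triangle}(\bs_2)$;
\item the frozen variables on the diagonal $(1,3)$, namely the elements $D_{\cdot;\lambda}$ attached to that edge for fundamental weights, which appear in both triangles and are identified.
\end{itemize}
For the first group I would use the definition of $\bi_{\triangle}(\bs)$ as the subseed of $\bi_{\square}(\bs\ast\overline{\bs'})$ whose second tensor factor is $1$, together with its independence of $\overline{\bs'}$ (\cref{introthm:cluster_triangle}). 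For the second group I would apply the quantum cyclic shift $\sigma_4^2$ (\cref{introthm:cyclic}), which exchanges the two triangles. This should convert the word $\bs_1\ast(\overline{\bs}_2^\ast)^\op$ into one of the form $\bs_2\ast\overline{\cdots}$. The Dynkin involution $\ast$ and the reversal $\op$ are exactly what is needed to account for the twist by $s_G=\overline{w}_0^2$ and for the behaviour of BZ generalized minors under the antipode- and involution-type symmetries of $\cO_\qq(G)$.

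\emph{Step 2: exchange matrices.} The exchange matrix of $\bi_{\square}$ coincides with the Goncharov--Shen matrix (\cref{introthm:cluster_square}). The GS matrices are, by construction, amalgamations of triangle matrices along the diagonal, with the diagonal-frozen contributions added. So the $\widetilde B$ part reduces to checking that the restriction to each triangle reproduces the exchange matrix of $\bi_{\triangle}(\bs_i)$. This should follow from the restriction statement for the first group and from $\sigma_4$-equivariance for the second.

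\emph{Step 3: compatibility ($\Lambda$) matrices.} For pairs of variables within one triangle, the $\qq$-commutation exponents are inherited from $\bi_{\triangle}$. For pairs in different triangles, they are given by \cref{introthm:disjoint}. Evaluating this on weights, with the $\lambda$'s read off from $\wt$ of generalized minors, gives $\Lambda$ as the amalgamated form. The diagonal frozen variables are glued by adding their $\Lambda$-entries.

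The resp.\ statement for $(\overline{\bs}_2^\ast)^\op\ast\bs_1$ follows by applying $\sigma_4$ once more, or by the symmetric argument with the triangles in the opposite order.

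I expect the main obstacle to be Step 1 for the second triangle. One has to identify explicitly $\sigma_4^{2}$ applied to quantum minors transported via $\isigma$, including the signs $(-1)^{\langle 2\rho^\vee,\cdot\rangle}$, the powers of $\qq$, and the strict similarity normalization. One also has to confirm that these land exactly on $\iota_{p'}^{\inv}$ of the triangle variables with the word $(\overline{\bs}_2^\ast)^\op$. I would first work this out for fundamental-weight minors $\Delta_{w\varpi_i,\varpi_i}$, using the formula of $\sigma_\A$ on weight vectors, and then track the $\qq$-power normalizations in the bar-invariant forms.
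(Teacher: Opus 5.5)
Your skeleton matches the paper's: the first triangle comes from the definition of $\bi_{\triangle}$, the second is transported by $\sigma_4^2$, and the exchange matrices come from quiver combinatorics. However, the one nontrivial step is left open, and the set-up is mislabelled.

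\textbf{The triangles are swapped.} The subseed ``whose second tensor factor is $1$'' lives on the triangle $(1,3,4)$. So $\bs_1$ sits on $p'$ via $\iota_{43}^{1\bullet}=\iota_{p'}^{\inv}$. The barred variables are the images of the variables of $\bi_{\triangle}(\bs_2)$ under $\iota_{\bullet 3}^{12}=\sigma_4^2\circ\iota_{p'}^{\inv}=\iota_p^{\inv}\circ\sigma_3^2$. The rotation $\sigma_3^2$ cannot be dropped: it puts the chosen corner at vertex $3$, and sends the $S_\infty$-variables to $D_{1;\varpi_s}=A_{\square,\bbs;s^\ast_{-\infty}}$. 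Neither $\iota_p^{\inv}$ nor $\iota_{p'}^{\inv}$ applied to $\bi_{\triangle}(\bs_2)$ produces them.

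\textbf{The key step is circular as stated.} You cannot argue that $\sigma_4^2$ ``converts the word into $\bs_2\ast\overline{\cdots}$''. That $\sigma_4^2$ carries this square seed to another square seed is a consequence of the theorem, not an input.

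\textbf{The missing idea is leading-term rigidity.} The relevant invariants are determined by one extremal leading term (\cref{thm:isigma}), so no computation with full quantum minors is needed.
\begin{itemize}
\item By \eqref{eq:A_triangle} and the definition of $\sigma$, $\iota^{12}_{\bullet3}(A_{\triangle,\bs_2;j})$ has leading term $\pm\qq^{(2\rho,[\delta_{\bs_2;j}]_+)+\cdots}\,\xi_{w_0\mu}(\mu)\otimes\xi_\nu(\nu)\otimes\xi_{w\lambda}(\lambda)\otimes 1$. Here $\mu=[\delta^\ast_{\bs_2;j}]_+$, $\nu=[\delta_{\bs_2;j}]_-$, $\lambda=\varpi_{s_{2;j}}$ and $w\lambda=\delta_{\bs_2;j}$.
\item Hence it is a multiple of $\phi_{\mu,\nu,\lambda}$. \Cref{thm:isigma}(1) then gives its image $\qq^{\frac12(\delta_{\bs_2;j},[\delta_{\bs_2;j}]_+)-\frac12(\varpi_{s_{2;j}},\varpi_{s_{2;j}})}e'(\varpi_{s_{2;j}})\Delta_{\delta_{\bs_2;j},\varpi_{s_{2;j}}}e([\delta_{\bs_2;j}]_+)$ under $\isigma$; the signs $(-1)^{\langle 2\rho^\vee,\cdot\rangle}$ cancel.
\item Comparison with \eqref{eq:isigma_A_square} reduces to the Weyl group identities $\delta_{\bs_2;j}=\gamma^\ast_{\bbs;\sfJ^{12}_{\bullet3}(j)}$ and $\varpi_{s_{2;j}}=\delta^\ast_{\bbs;\sfJ^{12}_{\bullet3}(j)}$. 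This is where $\ast$ and $\op$ enter. They do not come from $s_G$ or from antipode-type symmetries of $\cO_\qq(G)$.
\end{itemize}
Your opposite direction also works: start from $\isigma^{-1}$ of a minor $\Delta_{w\varpi,\varpi}$ and apply $\sigma_4^2$. But it works only once you use the uniqueness parts of \cref{thm:isigma}(1),(2).

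\textbf{Steps 2 and 3 need adjusting.} $\Lambda$ needs no separate check. It is defined by $\qq$-commutation of the actual elements, so it restricts automatically once the variables agree. In particular, diagonal entries are not ``added'': additivity under amalgamation is a feature of exchange matrices, not of $\Lambda$. \Cref{introthm:disjoint} is not needed here. What must be verified are conditions (1) and (3) of \cref{def:clusteremb}, and this is purely combinatorial on $\bJ_{\std}(\bbs)$:
\begin{itemize}
\item unfrozen vertices of each triangle have no arrows leaving that triangle;
\item reversing, barring and starring the word reproduces the quiver of $\bi_{\triangle}(\bs_2)$ under $\sfJ^{12}_{\bullet3}$.
\end{itemize}
No ``$\sigma_4$-equivariance'' of exchange matrices is available a priori.

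\textbf{The ``resp.'' part.} It does not follow by applying $\sigma_4$, which rotates the chosen corners together with the triangles. It turns $\iota^{\inv}_{1,3,4}$ into $\iota^{\inv}_{1,2,4}\circ\sigma_3$, whereas $\bi_{\square}((\overline{\bs}_2^\ast)^{\op}\ast\bs_1)$ involves $\iota^{\inv}_{2,3,4}$ and $\iota^{\inv}_{1,2,4}\circ\sigma_3^2$. You need the parallel argument, as in \cref{thm:triangle_square_2}.
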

This theorem can easily be seen at the level of the exchange matrices of the quantum seeds: see \cref{itrofig:triangle_square_1,introfig:triangle_square_2}, where they are represented in terms of quivers (cf.~\cref{sec:quiver}). We  formulate this amalgamation by also taking the quantum cluster variables into account, which constitutes the nontrivial part of \cref{introthm:triangle_square}.

\begin{figure}[ht]
    \centering
\begin{tikzpicture}
\def\R{3}
\coordinate(A) at (0,0);
\coordinate(B) at (1.2*\R,0);
\coordinate(C) at (0.6*\R,\R);
\node[scale=0.7] at (0.6*\R,0.9*\R) {$\bullet$};
\draw[thick,right hook->] (1.2*\R,0.5*\R) --node[midway,above]{$\iota_{43}^{1\bullet}$ or $\iota_{\bullet 3}^{12}$} ++(2,0);
\draw[thick] (B) -- (C);
\draw[thick] (A) -- (B) -- (C) --cycle;

\vertexA{C}{B}{A}{3};
\foreach \i in {1,2,3}{
    \fnode{a\i}{black};
    \fnode{b\i}{black};
    \fnode{V\i 0}{myblue};
    }
\foreach \i in {11,21,12}
\draw(V\i) circle(2pt);
\qarrow{a3}{b3};
\qarrow{a2}{V12}; \qarrow{V12}{b2};
\qarrow{a1}{V11}; \qarrow{V11}{V21}; \qarrow{V21}{b1};
\qarrow{b2}{V21};
\qarrow{b3}{V12}; 
\qarrow{V12}{V11};
\qarrow{V21}{V12};
\qarrow{V12}{a3};
\qarrow{V11}{a2};
{\color{myblue}
\qarrow{V10}{a1};
\qarrow{V11}{V10};
\qarrow{V20}{V11};
\qarrow{V21}{V20};
\qarrow{V30}{V21};
\qarrow{b1}{V30};
}
\begin{scope}[xshift=2.2*\R cm]
\coordinate(A) at (0,0);
\coordinate(B) at (\R,0);
\coordinate(C) at (0,\R);
\coordinate(D) at (\R,\R);
\node[below left] at ($(A)$) {$4$};
\node[below right] at ($(B)$) {$3$};
\node[above left] at ($(C)$) {$1$};
\node[above right] at ($(D)$) {$2$};
\node[scale=0.7] at (0.05*\R,0.86*\R) {$\bullet$};
\node[scale=0.7] at (0.95*\R,0.14*\R) {$\bullet$};

\draw[thick,dashed] (B) -- (C);
\draw[thick] (A) -- (B) -- (D) -- (C) --cycle;

\vertexA{C}{B}{A}{3};
\foreach \i in {1,2,3}{
    \filldraw[fill=white](a\i) circle(2pt);
    \fnode{b\i}{black};
    \fnode{V\i 0}{myblue};
    }
\foreach \i in {11,21,12}
\draw(V\i) circle(2pt);
\qarrow{a3}{b3};
\qarrow{a2}{V12}; \qarrow{V12}{b2};
\qarrow{a1}{V11}; \qarrow{V11}{V21}; \qarrow{V21}{b1};
\qarrow{b2}{V21};
\qarrow{b3}{V12}; 
\qarrow{V12}{V11};
\qarrow{V21}{V12};
\qarrow{V12}{a3};
\qarrow{V11}{a2};
{\color{myblue}
\qarrow{V10}{a1};
\qarrow{V11}{V10};
\qarrow{V20}{V11};
\qarrow{V21}{V20};
\qarrow{V30}{V21};
\qarrow{b1}{V30};
}

\vertexA{B}{C}{D}{3};
\foreach \i in {1,2,3}{
    \fnode{b\i}{black};
    \fnode{V\i 0}{myblue};
    }
\foreach \i in {11,21,12}
\draw(V\i) circle(2pt);
\qarrow{a3}{b3};
\qarrow{a2}{V12}; \qarrow{V12}{b2};
\qarrow{a1}{V11}; \qarrow{V11}{V21}; \qarrow{V21}{b1};
\qarrow{b2}{V21};
\qarrow{b3}{V12}; 
\qarrow{V12}{V11};
\qarrow{V21}{V12};
\qarrow{V12}{a3};
\qarrow{V11}{a2};
{\color{myblue}
\qarrow{V10}{a1};
\qarrow{V11}{V10};
\qarrow{V20}{V11};
\qarrow{V21}{V20};
\qarrow{V30}{V21};
\qarrow{b1}{V30};
}
\end{scope}
\end{tikzpicture}
    \caption{Type $A_3$, $\bs=(1,2,3,1,2,1)$, $\bbs=(1,2,3,1,2,1,\overline{3},\overline{2},\overline{3},\overline{1},\overline{2},\overline{3})$. The quantum seed $\bi_{\square}(\bbs)$ can be obtained as an amalgamation of two copies of $\bi_{\triangle}(\bs)$.}
    \label{itrofig:triangle_square_1}
\end{figure}
\begin{figure}[ht]
    \centering
\begin{tikzpicture}
\def\R{3}
\coordinate(A) at (0,0);
\coordinate(B) at (1.2*\R,0);
\coordinate(C) at (0.6*\R,\R);
\node[scale=0.7] at (0.6*\R,0.9*\R) {$\bullet$};
\draw[thick,right hook->] (1.2*\R,0.5*\R) --node[midway,above]{$\iota_{4\bullet}^{12}$ or $\iota_{43}^{\bullet 2}$} ++(2,0);
\draw[thick] (B) -- (C);
\draw[thick] (A) -- (B) -- (C) --cycle;

\vertexA{C}{B}{A}{3};
\foreach \i in {1,2,3}{
    \fnode{a\i}{black};
    \fnode{b\i}{black};
    \fnode{V\i 0}{myblue};
    }
\foreach \i in {11,21,12}
\draw(V\i) circle(2pt);
\qarrow{a3}{b3};
\qarrow{a2}{V12}; \qarrow{V12}{b2};
\qarrow{a1}{V11}; \qarrow{V11}{V21}; \qarrow{V21}{b1};
\qarrow{b2}{V21};
\qarrow{b3}{V12}; 
\qarrow{V12}{V11};
\qarrow{V21}{V12};
\qarrow{V12}{a3};
\qarrow{V11}{a2};
{\color{myblue}
\qarrow{V10}{a1};
\qarrow{V11}{V10};
\qarrow{V20}{V11};
\qarrow{V21}{V20};
\qarrow{V30}{V21};
\qarrow{b1}{V30};
}
\begin{scope}[xshift=2.2*\R cm]
\coordinate(A) at (0,0);
\coordinate(B) at (\R,0);
\coordinate(C) at (0,\R);
\coordinate(D) at (\R,\R);
\node[below left] at ($(A)$) {$4$};
\node[below right] at ($(B)$) {$3$};
\node[above left] at ($(C)$) {$1$};
\node[above right] at ($(D)$) {$2$};
\node[scale=0.7] at (0.05*\R,0.14*\R) {$\bullet$};
\node[scale=0.7] at (0.95*\R,0.86*\R) {$\bullet$};

\draw[thick,dashed] (D) -- (A);
\draw[thick] (A) -- (B) -- (D) -- (C) --cycle;

\vertexA{D}{B}{A}{3};
\foreach \i in {1,2,3}{
    \filldraw[fill=white](b\i) circle(2pt);
    \fnode{a\i}{black};
    \fnode{V\i 0}{myblue};
    }
\foreach \i in {11,21,12}
\draw(V\i) circle(2pt);
\qarrow{a3}{b3};
\qarrow{a2}{V12}; \qarrow{V12}{b2};
\qarrow{a1}{V11}; \qarrow{V11}{V21}; \qarrow{V21}{b1};
\qarrow{b2}{V21};
\qarrow{b3}{V12}; 
\qarrow{V12}{V11};
\qarrow{V21}{V12};
\qarrow{V12}{a3};
\qarrow{V11}{a2};
{\color{myblue}
\qarrow{V10}{a1};
\qarrow{V11}{V10};
\qarrow{V20}{V11};
\qarrow{V21}{V20};
\qarrow{V30}{V21};
\qarrow{b1}{V30};
}

\vertexA{A}{C}{D}{3};
\foreach \i in {1,2,3}{
    \fnode{a\i}{black};
    \filldraw[fill=white](b\i) circle(2pt);
    \fnode{V\i 0}{myblue};
    }
\foreach \i in {11,21,12}
\draw(V\i) circle(2pt);
\qarrow{a3}{b3};
\qarrow{a2}{V12}; \qarrow{V12}{b2};
\qarrow{a1}{V11}; \qarrow{V11}{V21}; \qarrow{V21}{b1};
\qarrow{b2}{V21};
\qarrow{b3}{V12}; 
\qarrow{V12}{V11};
\qarrow{V21}{V12};
\qarrow{V12}{a3};
\qarrow{V11}{a2};
{\color{myblue}
\qarrow{V10}{a1};
\qarrow{V11}{V10};
\qarrow{V20}{V11};
\qarrow{V21}{V20};
\qarrow{V30}{V21};
\qarrow{b1}{V30};
}
\end{scope}
\end{tikzpicture}
    \caption{Type $A_3$, $\bs=(1,2,3,1,2,1)$, $\bbs=(\overline{3},\overline{2},\overline{3},\overline{1},\overline{2},\overline{3}, 1,2,3,1,2,1)$. The quantum seed $\bi_{\square}(\bbs)$ can be obtained as an amalgamation of two copies of $\bi_{\triangle}(\bs)$.}
    \label{introfig:triangle_square_2}
\end{figure}
\subsection{Quantum cluster structures for $\cO_\qq(\Conf_{K+2}\sA_G)$, $K\geq 3$}
The preceding results suggest a local-to-global construction of quantum seeds for $\cO_\qq(\Conf_{K+2}\sA_G)$, $K\geq 3$. The basic idea is that every cluster variable should be supported on a triangle, whereas every exchange relation should be visible in a quadrilateral.  Choose reduced words $\bs_1,\dots,\bs_K$ of $w_0$, and let 
\begin{align*}
    \tbbs&:=\bs_1\ast (\overline{\bs}_2^{\ast})^\op\ast \bs_3\ast (\overline{\bs}_4^\ast)^\op\ast \cdots\\
    &=
\begin{cases}
\bs_1\ast (\overline{\bs}_2^\ast)^\op\ast \bs_3\ast (\overline{\bs}_4^\ast)^\op\ast \cdots \ast (\overline{\bs}_K^\ast)^\op&\text{when $K$ is even},\\
\bs_1\ast (\overline{\bs}_2^\ast)^\op\ast \bs_3\ast (\overline{\bs}_4^\ast)^\op\ast \cdots \ast \bs_K&\text{when $K$ is odd}.
\end{cases}
\end{align*}
Using the triangle embeddings and their amalgamations, we construct a family of mutually $\qq$-commuting elements
\[
\left(A_{\bbD_{K+2},\tbbs;(s,m)}\right)_{(s,m)\in J_{\bbD_{K+2}}(\tbbs)}
\subset \cO_\qq(\Conf_{K+2}^{\times}\sA_G),
\]
and a $\Z$-valued $J_{\bbD_{K+2}}(\tbbs)^\uf\times J_{\bbD_{K+2}}(\tbbs)$-matrix $\cE_{\bbD_{K+2}}(\tbbs)$. This construction can be illustrated as follows. 
\[
\begin{tikzpicture}[scale=2.2]
\node[anchor=east] at (-0.2,0.5){$K$: even};
{\color{blue}
  \draw (0,1) -- (5,1);
  \draw (0,0) -- (5,0);
  \draw (0,0) -- (0,1);
  \draw (5,0) -- (5,1);
  \draw (1,0) -- (1,1);
  \draw (2,0) -- (2,1);
  \draw (4,0) -- (4,1);
  \draw (0,1) -- (1,0);
  \draw (1,1) -- (2,0);
  \draw (4,1) -- (5,0);
}
\node at (3,0.5) {$\cdots$};
\foreach \i in {0,1,4} \node[scale=0.7] at (0.05+\i,0.85){$\bullet$};
\foreach \i in {1,2,5} \node[scale=0.7] at (-0.05+\i,0.15){$\bullet$};
\node[scale=0.9] at (0.25,0.25){$\bs_1$};
\node[scale=0.9] at (1.25,0.25){$\bs_3$};
\node[scale=0.9] at (4.25,0.25){$\bs_{K-1}$};
\node[scale=0.9,rotate=180] at (0.75,0.75){$\bs_2$};
\node[scale=0.9,rotate=180] at (1.75,0.75){$\bs_4$};
\node[scale=0.9,rotate=180] at (4.75,0.75){$\bs_K$};

\begin{scope}[yshift=-1.5cm]
\node[anchor=east] at (-0.2,0.5){$K$: odd};
{\color{blue}
  \draw (0,1) -- (5,1);
  \draw (0,0) -- (6,0);
  \draw (0,0) -- (0,1);
  \draw (5,0) -- (5,1);
  \draw (1,0) -- (1,1);
  \draw (2,0) -- (2,1);
  \draw (4,0) -- (4,1);
  \draw (0,1) -- (1,0);
  \draw (1,1) -- (2,0);
  \draw (4,1) -- (5,0);
  \draw (5,1) -- (6,0);
}
\node at (3,0.5) {$\cdots$};
\foreach \i in {0,1,4,5} \node[scale=0.7] at (0.05+\i,0.85){$\bullet$};
\foreach \i in {1,2,5} \node[scale=0.7] at (-0.05+\i,0.15){$\bullet$};
\node[scale=0.9] at (0.25,0.25){$\bs_1$};
\node[scale=0.9] at (1.25,0.25){$\bs_3$};
\node[scale=0.9] at (4.25,0.25){$\bs_{K-2}$};
\node[scale=0.9] at (5.25,0.25){$\bs_K$};
\node[scale=0.9,rotate=180] at (0.75,0.75){$\bs_2$};
\node[scale=0.9,rotate=180] at (1.75,0.75){$\bs_4$};
\node[scale=0.9,rotate=180] at (4.75,0.75){$\bs_{K-1}$};
\end{scope}
\end{tikzpicture}
\]

We note that \cref{introthm:triangle_square} ensures that the quantum cluster variables on the diagonals are well-defined, and \cref{introthm:disjoint} controls the commutation relation of variables supported on different pieces. 
Since the variables $A_{\bbD_{K+2},\tbbs;(s,m)}$, $(s,m)\in J_{\bbD_{K+2}}(\tbbs)$,  $\qq$-commute, we have an algebra homomorphism 
\[
M_{\tbbs}\colon
\cT(\Lambda_{\bbD_{K+2}}(\tbbs))
\to
\cF(\cO_\qq(\Conf_{K+2}^{\times}\sA_G)).
\]
from the quantum torus $\cT(\Lambda_{\bbD_{K+2}}(\tbbs))$ (see \cref{ssec:Qtorus}) to the skew field of fractions $\cF(\cO_\qq(\Conf_{K+2}^{\times}\sA_G))$ of $\cO_\qq(\Conf_{K+2}^{\times}\sA_G)$. 
The main theorem is the following. 
\begin{introthm}[{\cref{thm:cluster_polygon}}]\label{introthm:cluster_polygon}
The collection
\[
\bi_{\bbD_{K+2}}(\tbbs)
=\left(\Lambda_{\bbD_{K+2}}(\tbbs),
\cE_{\bbD_{K+2}}(\tbbs),
\left(A_{\bbD_{K+2},\tbbs;(s,m)}\right)_{(s,m)\in J_{\bbD_{K+2}}(\tbbs)}\right)
\]
forms a quantum seed in $\cF(\cO_\qq(\Conf_{K+2}^{\times}\sA_G))$, and
\begin{align*}\label{eq:intro_cluster_polygon}
\cO_\qq(\Conf_{K+2}^{\times}\sA_G)
\subset
A_\qq(\bi_{\bbD_{K+2}}(\tbbs);J_{\bbD_{K+2}}(\tbbs)^{\fr}).
\end{align*}
\end{introthm}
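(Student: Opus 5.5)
We prove the two assertions in order: first the seed axioms for $\bi_{\bbD_{K+2}}(\tbbs)$, then the inclusion of $\cO_\qq(\Conf_{K+2}^{\times}\sA_G)$ into the quantum cluster algebra. The second is the substantial part.

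For the seed property we need three things: the variables $A_{\bbD_{K+2},\tbbs;(s,m)}$ quasi-commute according to $\Lambda_{\bbD_{K+2}}(\tbbs)$; they are algebraically independent and generate $\cF(\cO_\qq(\Conf_{K+2}^{\times}\sA_G))$ as a skew field; and the compatibility condition between $\cE_{\bbD_{K+2}}(\tbbs)$ and $\Lambda_{\bbD_{K+2}}(\tbbs)$ holds. Quasi-commutation is built into the construction. For two variables supported on the same quadrilateral it follows from \cref{introthm:cluster_square} and \cref{introthm:triangle_square}. For variables on different triangles it follows from \cref{introthm:disjoint} applied to the embeddings $\iota_p^{\inv}$; the exponent $(\lambda_1,\lambda'_1)-(\lambda_K,\lambda'_2)$ is read off from the weights of the variables.

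Compatibility is local. Each row of $\cE_{\bbD_{K+2}}(\tbbs)$ is indexed by a mutable vertex, and that vertex lies either in the interior of a triangle or on a diagonal. Its entire neighbourhood in the quiver therefore sits inside one quadrilateral $p\colon[1,4]\to[1,K+2]$. We transport the compatibility of $\bi_\square(\bbs)$ (\cref{introthm:cluster_square}) along $\iota_p^{\inv}$. The $\Lambda$-pairings with variables outside this quadrilateral are given by \cref{introthm:disjoint}, and we check that their contribution is annihilated by the corresponding row of $\cE$. This amounts to an explicit weight computation: the frozen-type weights of a mutable variable balance, in the same way as in the Goncharov--Shen amalgamation.

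For the transcendence degree and generation we use \cref{introthm:normrep} iteratively. After inverting $\cD_{[1,K+2]}$, the algebra $\cO_\qq(\Conf_{K+2}^{\times}\sA_G)$ becomes an Ore localization of a quantum torus-like algebra built from the triangle pieces $\cO_\qq(\Conf_3^\times\sA_G)$, glued by the extension by $\Bbbk[P]$. Each triangle piece is generated (as a skew field) by its cluster variables by \cref{introthm:cluster_triangle}. A Gelfand--Kirillov dimension count then matches $|J_{\bbD_{K+2}}(\tbbs)|=\dim \Conf_{K+2}\sA_G$.

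For the inclusion, the plan is to show that $\cO_\qq(\Conf_{K+2}\sA_G)$ is generated, after inverting frozen variables, by the images $\iota_p^{\inv}(\cO_\qq(\Conf_3\sA_G))$ for the triangles of the fan triangulation together with finitely many quadrilateral pieces. The first step is to establish this generation statement. Via the Peter--Weyl decomposition $\cO^{\inv}(\lambda_1,\dots,\lambda_{K+2})$, the invariant spaces $(V(\lambda_1)^\ast\otimes\cdots\otimes V(\lambda_{K+2})^\ast)^{\inv}$ factor through contracting along a diagonal: an invariant of $K+2$ points is a sum of products of invariants of a triangle and of a $(K+1)$-gon, linked through intermediate weights. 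Inverting the $D_{i;\lambda}$ on the diagonal realizes this factorization using the normalized representative of \cref{introthm:normrep}. This should give
\[
\cO_\qq(\Conf_{K+2}^{\times}\sA_G)\subset \Bigl\langle \iota_p^{\inv}\bigl(\cO_\qq(\Conf_4^{\{1,3\}}\sA_G)\bigr),\ \cD_{[1,K+2]}^{-1}\Bigr\rangle,
\]
with $p$ ranging over consecutive quadrilaterals of the zigzag triangulation attached to $\tbbs$. The second step handles each quadrilateral: by \cref{introthm:cluster_square_cpt}, $\iota_p^{\inv}(\cO_\qq(\Conf_4^{\{1,3\}}\sA_G))$ equals a quantum cluster algebra with only the boundary frozen variables inverted. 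Since cluster monomials in a subseed remain cluster monomials after amalgamation, once the frozen diagonal variables are unfrozen, each such image lies in $A_\qq(\bi_{\bbD_{K+2}}(\tbbs);J^{\fr})$. The elements of $\cD$ on boundary edges are monomials in frozen variables and hence lie in the algebra. Diagonal $D$'s are cluster variables but are not inverted, so the first step must avoid inverting them.

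The main obstacle is exactly this last point: proving the generation with only the boundary $D$'s inverted. I would attempt it by induction on $K$, cutting off the last triangle using \cref{introthm:normrep} with respect to the boundary position $(1,K+2)$. The goal is to express $\cO_\qq(\Conf_{K+2}^{\times}\sA_G)$ as generated by $\iota(\cO_\qq(\Conf_{K+1}^{\times}\sA_G))$ and one quadrilateral piece. The degree arguments follow the proof of \cref{introthm:cluster_square_cpt}, which used $A_\qq(\bi(\bbs))=\cO_\qq(G)$.
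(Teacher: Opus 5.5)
Your architecture matches the paper's. Compatibility is checked quadrilateral by quadrilateral, using \cref{thm:edge} and the degree-zero property of $X$-variables (\cref{prop:weight0}) for variables off the quadrilateral. The inclusion comes from \cref{thm:cluster_square_cpt} together with the fact that the quadrilateral embeddings are embeddings of seeds.

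However, the step you call the main obstacle is the heart of the proof, and the induction you sketch for it cannot work as stated. Removing a vertex of $\bbD_{K+2}$ creates a closing edge that is an interior diagonal. So $\iota(\cO_\qq(\Conf_{K+1}^{\times}\sA_G))$ contains inverses of the $D$'s on that diagonal, which are not in $A_\qq(\bi_{\bbD_{K+2}}(\tbbs);J_{\bbD_{K+2}}(\tbbs)^{\fr})$; for an ear they are inverses of mutable cluster variables. An inductive statement about $\Conf^{\times}$ therefore gives nothing for the inclusion. Normalizing at the left edge $(1,K+2)$ also does not fit the zigzag. Removing vertex $K+2$ destroys the normalization. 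For any other $v$, the triangle $(1,v,K+2)$ lies in a zigzag quadrilateral only if $v\in\{2,K+1\}$, and then the remaining $(K+1)$-gon is not a union of zigzag triangles. Your Peter--Weyl factorization ``along a diagonal'' likewise inverts diagonal $D$'s, so it does not yield your displayed inclusion.

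The missing idea is the right intermediate localization. The paper uses $\cO_\qq(\Conf_{K+2}^{\overline{\times}}\sA_G)=\cO_\qq(\Conf_{K+2}^{[1,K+2]\setminus\{L,K+2\}}\sA_G)$, which inverts only the top and bottom boundary edges and is the analogue of $\Conf_4^{\{1,3\}}$. It proves (\cref{claim}) that this algebra is generated by the images of $\cO_\qq(\Conf_4^{\overline{\times}}\sA_G)$ under the zigzag quadrilateral embeddings.

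The induction removes the right-hand ear via $\iota_{<}$. It normalizes, by \cref{t:invstr} and cyclic shifts, at the top or bottom edge $e$ of the last quadrilateral that does not contain the ear. Then $\cO_\qq(\Conf_{K+2}^{\{e\}}\sA_G)$ is generated by $\iota_{<}(\cO_\qq(\Conf_{K+1}^{\{e\}}\sA_G))$ together with the triangle spanned by $e$ and the ear. This triangle is not a zigzag triangle, but it lies in the last quadrilateral's $\Conf_4^{\overline{\times}}$-piece. Every boundary $D^{-1}$ that is needed comes from $\iota_{<}(\cO_\qq(\Conf_{K+1}^{\overline{\times}}\sA_G))$ or from that piece. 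Since $D_{L;\lambda}$ and $D_{K+2;\lambda}$ are frozen, passing to $\Conf_{K+2}^{\times}$ is harmless.

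The same claim is what your seed-property argument lacks; a bare ``GK count'' does not supply it. Combined with \cref{thm:cluster_square}, the claim makes $M_{\tbbs}(\cM)$ an Ore set and $M_{\tbbs}$ surjective onto $\cO_\qq(\Conf_{K+2}^{\times}\sA_G)[M_{\tbbs}(\cM)^{-1}]$. Only then do the lower bound $\GKdim\geq NK+(K+1)|S|$ and \cite[Lemma 2.2 (b)]{LeYu} give injectivity. That lower bound is obtained via $\Phi^{\ext}_{K+2}$ and the Berenstein--Zelevinsky seeds for $\cO_\qq(B^-)$.
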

We remark that \cref{introthm:cluster_square_cpt}, a refined version of \cref{introthm:cluster_square}, is essential in the proof of \cref{introthm:cluster_polygon}, since $\cO_\qq(\Conf_4^{\{1,3\}}\sA_G)$, rather than $\cO_\qq(\Conf_4^{\times}\sA_G)$, appears as a local piece of $\cO_\qq(\Conf_{K+2}^{\times}\sA_G)$.

\subsection{Future directions}
In this paper, we introduce the quantum configuration spaces of decorated flags and initiate their algebraic study. It seems worthwhile to investigate their precise relationships with other quantum algebras. Here, we highlight two directions in particular. See \cref{fig:future_direction}. 
\begin{itemize}
    \item[(A)] \textbf{Relation to the skein algebra:} In an ongoing work of the authors with Wataru Yuasa, based on the Reshetikhin--Turaev functor, we will introduce a \emph{clasped $G$-skein algebra} $\mathscr{S}_{G,\bSigma}$ a skein-theoretic quantization of the moduli space $\sA_{G,\bSigma}$ for any complex semisimple Lie group $G$ and a marked surface $\bSigma$ without punctures. It generalizes Muller's skein algebra \cite{Muller} for $G=SL_2$, clasped skein algebras \cite{IYsl3,IYsp4} for $G=SL_3,Sp_4$,
    and would be closely related to the projected stated skein algebra for $G=SL_n$ whose quantum cluster structure for the case $\bSigma=\boldsymbol{D}_K$ is established by Cao, Huang and Wang \cite{HW,CHW}.
    We will prove that for $\bSigma=\boldsymbol{D}_K$, our skein algebra $\mathscr{S}_{G,\boldsymbol{D}_K}$ is isomorphic to the quantum configuration space $\cO_\qq(\Conf_K \sA_G)$. 
    This relation would provides, a (conjectural for $K \geq 5$) quantum cluster structure on $\mathscr{S}_{G,\bSigma}$ after an appropriate localization, as well as graphical intuition for the elements of the quantum configuration space. 
    \item[(B)] \textbf{Relation to the quantum moduli algebra:} The \emph{quantum moduli algebra} has been introduced as a combinatorial quantization of the Chern--Simons theory associated with any punctured surface and extensively studied by Alekseev, Grosse, Schomerus \cite{AGS1,AGS2,AGS3}, Buffenoir and Roche \cite{BR1,BR2}. The root-of-unity case is studied by Baseilhac and Roche \cite{BaseilhacRoche1,BaseilhacRoche2}. The quantum moduli algebra is known to quantize $\mathrm{Loc}_{G,\Sigma}$, and Faitg and Baseilhac--Faitg--Roche \cite{Faitg,BFR} recently established its isomorphism with the $\mathfrak{g}$-skein algebra defined using the Reshetikhin--Turaev functor. We expect that our construction could be regarded as a ``clasped'' version of the quantum moduli algebra, after an appropriate generalization to marked surfaces. This relation would provides a quantum cluster structure on the quantum moduli algebra, and would also clarify the relationship between the cluster Poisson structure and the Fock--Rosly description of the Atiyah--Bott--Goldman Poisson structure at the classical limit.
\end{itemize}

\begin{figure}[ht]
    \centering
\begin{tikzpicture}
\node[draw,rectangle,rounded corners=5pt](A) at (0,0) {Quantum configuration space};
\node[draw,rectangle,rounded corners=5pt](B) at (-5.5,-3) {Clasped skein algebra};
\node[draw,rectangle,rounded corners=5pt](C) at (5.5,-3) {Clasped quantum moduli algebra};
\draw(A) --node[left=0.3em]{(A)} (B);
\draw(A) --node[right=0.3em]{(B)} (C);
\draw(B) --node[below]{Clasped version of \cite{BFR}} (C);
\end{tikzpicture}
    \caption{Expected relations among three quantization frameworks.}
    \label{fig:future_direction}
\end{figure}

Other possible directions include the following. As shown in \cref{introthm:triangle_square} and \cref{introthm:cluster_polygon}, $\cO_\qq(\Conf_{K+2}\sA_G)$ can be viewed as an ``amalgamation'' of copies of $\cO_\qq(\Conf_{3}\sA_G)$. This is reminiscent of the structure of the quantum Grothendieck rings of monoidal categories of finite-dimensional modules over quantum loop algebras, introduced in \cite{Nakajima,VV,Her:qt}. Indeed, they can be viewed as an ``amalgamation'' of infinitely many copies of the quantum coordinate ring of $U^+$. This structure was revealed in \cite{HL:QGro,FHOO1} and has recently been generalized in the form of quantum virtual Grothendieck rings \cite{JLO} and bosonic extensions of quantum groups \cite{OP,KKOP:bosonic}; see also \cite{Contu}. It would be interesting to investigate the precise relationship between these algebras and the quantum configuration spaces.

\subsection{General notation}
\begin{itemize}
    \item[(1)] For integers $a, b\in \Z$ with $a\leq b$, we set $[a, b]:=\{m\in \Z\mid a\leq m\leq b\}$. 
    \item[(2)] For a vector space $V$, its dual space is denoted by $V^{\ast}$. For $v\in V$ and $\xi\in V^{\ast}$, the pairing of $v$ and $\xi$ is denoted by $\langle \xi, v\rangle$ or $\langle v, \xi\rangle$. 
    \item[(3)] In this paper, a module $M$ over an associative algebra $R$ always stands for a left module. The action of $R$ on $M$ is denoted by $r.m$ for $r\in R$ and $m\in M$. 
    \item[(4)]  \emph{An Ore set} $\mathcal{S}$ of an associative algebra $R$ stands for a left and right Ore set consisting of non-zero divisors. Denote by $R[\mathcal{S}^{-1}]$ the algebra of fractions with respect to the Ore set $\mathcal{S}$. In this case, $R$ is naturally regarded as a subalgebra of $R[\mathcal{S}^{-1}]$. See \cite[Chapter 6]{GW}.
    \item[(5)] An associative algebra $R$ is called \emph{an Ore domain} if $R$ is a domain and $R\setminus \{0\}$ forms an Ore set of $R$. In this case, $R[(R\setminus\{0\})^{-1}]$ is called \emph{the skew field of fractions of $R$}, and it is denoted by $\cF(R)$. 
    \item[(6)]Let $R, S$ be associative algebras, and $\mathcal{S}$ an Ore set of $R$. If there exists an algebra homomorphism $f:R\to S$ such that $f(s)$ is invertible in $S$ for all $s\in \mathcal{S}$, we can extend $f$ uniquely to the algebra homomorphism $R[\mathcal{S}^{-1}]\to S$. This extended homomorphism will be also denoted by $f$ in this paper. 
    \item[(7)] Let $R$ be an associative algebra. For elements $r_j\in R$ indexed by a totally ordered finite set $J=\{j_{0}<j_1<\cdots <j_k\}$, write
    \[
    \dprod_{j\in J}r_j:=r_{j_{0}}r_{j_{1}}\cdots r_{j_{k}}\in R.
    \]
\end{itemize}

\subsection*{Acknowledgements}
The authors are grateful to Fan Qin, Linhui Shen, Milen Yakimov, and Wataru Yuasa for numerous helpful discussions and comments. 
T. I. was supported by JSPS KAKENHI Grant Numbers~JP20K22304 and JP24K16914.
H. O. was supported by JSPS KAKENHI Grant-in-Aid for Early-Career Scientists, Grant Number JP23K12950.
\subsection*{Declaration on the Use of Generative AI}
The authors used a generative AI (ChatGPT, provided by OpenAI) for English-language editing to improve the readability of this manuscript. 
After the initial version of this paper was posted on arXiv, Conjecture 9.1 of that version was proved with assistance from GPT-6 Astra Pro model in ChatGPT. Specifically, this assistance concerned the proof that $\cO_\qq(\sA_G)^{\bt K}$ is a domain (\cref{prop:domain}) and the proof of the injectivity of $M_{\tbbs}$ in the proof of \cref{thm:cluster_polygon}. In particular, for the latter, the assistance concerned the application of \cite[Lemma 2.2 (b)]{LeYu} and the argument for estimating the Gelfand--Kirillov dimension of $\cO_\qq(\Conf_{K+2}^{\times}\sA_G)[M_{\tbbs}(\cM)^{-1}]$. The proofs were reconstructed and finalized by the authors. The authors take full responsibility for the mathematical arguments, references, and all other content of this paper.

\section{Quantized enveloping algebras and quantum coordinate rings}\label{s:QEA}
In this section, we review the basics of quantized enveloping algebras. 

\subsection{Complex simple Lie algebras}\label{sec:Lie_theory}
Let $\lieg$ be a complex finite dimensional simple Lie algebra associated with a Cartan matrix $C=(c_{st})_{s, t\in S}$.  Then $\lieg$ is the complex Lie algebra generated by $\{e_s, f_s, \alpha^{\vee}_s\mid s\in S\}$ subject to the following relations: 
\begin{itemize}
	\item[(i)] $[\alpha^{\vee}_s, \alpha^{\vee}_t]=0$,
	\item[(ii)] $[\alpha^{\vee}_s, e_{t}]=c_{st}e_{t}$, $[\alpha^{\vee}_s, f_{t}]=-c_{st}f_{t}$, 
	\item[(iii)] $[e_{s},f_{t}]=\delta_{st}\alpha^{\vee}_s$, 
	\item[(iv)] ${(\ad e_{s})^{1-c_{st}}(e_{t})=0}$ and ${(\mathrm{ad} f_{s})^{1-c_{st}}(f_{t})=0}$ for $s\neq t$. 
    
    Here, $(\ad x)(y):=[x, y]$ for $x, y\in \lieg$.
\end{itemize} 
Let $\lieh:=\sum_{s\in S}\mathbb{C}\alpha_s^{\vee}$ be a Cartan subalgebra of $\lieg$, and set 
\begin{align*}
&P:=\{\mu\in \lieh^{\ast}\mid \langle\alpha_s^{\vee}, \mu\rangle\in \mathbb{Z}\text{ for all } s\in S\},\\
&P_+:=\{\lambda\in P\mid \langle\alpha_s^{\vee}, \lambda \rangle\geq 0\text{ for all } s\in S\}, 
\end{align*}
called the weight lattice and the set of integral dominant weights, respectively. Denote by $\varpi_s\in \lieh^{\ast}$ (resp.~$\varpi_s^{\vee}\in \lieh$ ) the fundamental weight (resp.~the fundamental coweight) corresponding to $s\in S$, and set $\rho:=\sum_{s\in S}\varpi_s$ (resp.~$\rho^{\vee}:=\sum_{s\in S}\varpi_s^{\vee}$). Denote by $\Pi=\{\alpha_s\}_{s\in S}$, $\Pi^{\vee}=\{\alpha_s^{\vee}\}_{s\in S}$, $\Phi$, $\Phi^{\vee}$, $\Phi_+$, and $\Phi^{\vee}_+$ the set of simple roots, simple coroots, roots, coroots, positive roots, and positive coroots, respectively. 
We have 
\begin{align*}
    C_{st}=\langle \alpha_s^\vee,\alpha_t \rangle \in \Z, \quad \alpha_t = \sum_{u \in S}C_{ut}\varpi_u.
\end{align*}
We regard $\Pi, \Phi, \Phi_+$ as subsets of $P$. Set $Q:=\sum_{s\in S}\Z \alpha_{s}$, $Q_+:=\sum_{s\in S}\Z_{\geq 0} \alpha_{s}$. There is a partial ordering on $P$ defined by $\mu\leq \lambda$ if and only if $\lambda-\mu\in Q_+$. We write $\mu <\lambda$ if $\mu\leq \lambda$ and $\mu\neq \lambda$. 

The root space decomposition of $\lieg$ is written as 
\begin{align*}
	\lieg=\lieh\oplus \bigoplus_{\beta\in \Phi} \lieg_{\beta},\ \lieg_{\beta}:=\{x\in \lieg\mid [h, x]=\langle h, \beta\rangle x \text{ for }h\in \lieh\}. 
\end{align*}
Consider a $\Z$-bilinear form $(-,-)\colon P\times P\to \Q$ satisfying 
\begin{itemize}
	\item $(\alpha_s, \alpha_s)\in 2\Z_{>0}$ for all $s\in S$, and $\min\{(\alpha_s, \alpha_s)\mid s\in S\}=2$, 
	\item $\displaystyle \langle \alpha_s^{\vee}, \mu\rangle =2\frac{(\alpha_s, \mu)}{(\alpha_s, \alpha_s)}$ for all $s\in S$ and $\mu\in P$.  
\end{itemize}
(See \cite[Chapter 2]{Kac:book}.) 

Let $W$ be the Weyl group of $\lieg$, which is generated by the simple reflections $\{r_s \mid s\in S\}$. The Weyl group $W$ acts on $\lieh$ (resp.~$\lieh^{\ast}$) by 
\[
r_s(h)=h-\langle\alpha_s, h\rangle\alpha_s^{\vee}\quad  (\text{resp.~}r_s(\mu)=\mu-\langle\alpha_s^{\vee}, \mu\rangle\alpha_s)
\]
for $h\in \lieh$ (resp.~$\mu\in \lieh^{\ast}$) and $s\in S$. Then, the pairing between $\lieh$ and $\lieh^{\ast}$, and the bilinear form $(-,-)$ on $P$ are $W$-invariant. For $w\in W$, set
\[
\ell(w):=\min \{\ell\in \Z_{\geq 0} \mid w=r_{s_1}\cdots r_{s_\ell}\}. 
\]
This is called the length of $w$. It is known that $W$ has a unique element of maximum length, which is denoted by $w_0$. For $h\in \lieh$ (resp.~$\mu\in \lieh^{\ast}$), set 
\[
h^{\ast}:=-w_0(h)\quad (\text{resp.~}\mu^{\ast}:=-w_0(\mu).)
\]
Note that $(h^{\ast})^{\ast}=h$ and $(\mu^{\ast})^{\ast}=\mu$ for all $h\in \lieh$ and $\mu\in\lieh^{\ast}$. For $s\in S$, define $s^{\ast}\in S$ by the condition $\alpha_{s^{\ast}}=\alpha_s^{\ast}$. Then 
\[
\varpi_{s^{\ast}}=\varpi_{s}^{\ast},\ \alpha_{s^{\ast}}^{\vee}=(\alpha_s^{\vee})^{\ast},\ \varpi_{s^{\ast}}^{\vee}=(\varpi_s^{\vee})^{\ast},\ \rho^{\ast}=\rho,\ (\rho^{\vee})^{\ast}=\rho^{\vee}.
\]
For $w\in W$, set 
\[
S(w):=\{(s_1,\dots, s_{\ell(w)})\in S^{\ell(w)}\mid w=r_{s_1}\cdots r_{s_{\ell(w)}}\}.
\]
(Notice that $S(e)$ is the empty set $\varnothing$.) An element of $S(w)$ is called a reduced word of $w$.

\subsection{Quantized enveloping algebras}\label{subsec:QEA}
Write $d:=\det C\in \Z_{> 0}$. Let $\qqd$ be an indeterminate. Set 
	\begin{align*}
	    &\qq:=(\qqd)^{2d},\\
	    &\qq_{s}:=\qq^{\frac{(\alpha_{s},\alpha_{s})}{2}}=\qq^{(\rho, \alpha_s)}\text{ for }s\in S,\\
		&[n]_\qq:=\frac{\qq^{n}-\qq^{-n}}{\qq-\qq^{-1}}\ \text{ for\ }n\in\mathbb{Z},\\
		&{\displaystyle \left[\begin{array}{c}
				n\\
				k
			\end{array}\right]_\qq:=\begin{cases}
				{\displaystyle \frac{[n]_\qq[n-1]_\qq\cdots[n-k+1]_\qq}{[k]_\qq[k-1]_\qq\cdots[1]_\qq}} & \text{if\ }n\in\mathbb{Z},k\in\mathbb{Z}_{>0},\\
				1 & \text{if\ }n\in\mathbb{Z},k=0,
		\end{cases}}\\
		&[n]_\qq!:=[n]_\qq[n-1]_\qq\cdots[1]_\qq\text{\ for\ }n\in\mathbb{Z}_{>0},\ [0]_\qq!:=1.
	\end{align*}
	For a rational function $R\in\Q(\qq)$, we define $R_{\qq_s}$ as
	the rational function obtained from $R$ by substituting $\qq$ by $\qq_{s}$
	($s\in S$). In the following, we set $\Bbbk:=\Q(\qqd)$ and simply write $\otimes_{\Bbbk}$ as $\otimes$.

\begin{dfn}\label{d:QEA} The quantized enveloping algebra $\Uq:=\Uq(\lieg)$ is the unital associative $\Bbbk$-algebra defined by the
	generators 
	\[
	E_{s},F_{s}, K_s, K_s^{-1}\;(s\in S),
	\]
	and the relations (i)--(iv) below: 
	\begin{enumerate}
		\item[(i)] $K_sK_s^{-1}=1=K_s^{-1}K_s,\;K_sK_t=K_tK_s$ for $s, t\in S$,
		\item[(ii)] $K_sE_{t}=\qq_s^{c_{st}}E_{t}K_s,\;K_sF_{t}=\qq_s^{-c_{st}}F_{t}K_s$ for $s, t\in S$,
		\item[(iii)] ${\displaystyle \left[E_{s},F_{t}\right]=\delta_{st}\frac{K_{s}-K_{s}^{-1}}{\qq_{s}-\qq_{s}^{-1}}}$ for $s, t\in S$,
		\item[(iv)] ${\displaystyle \sum_{k=0}^{1-c_{st}}(-1)^{k}\left[\begin{array}{c}
				1-c_{st}\\
				k
			\end{array}\right]_{\qq_s}X_{s}^{k}X_{t}X_{s}^{1-c_{st}-k}=0}$ for $s, t\in S$ with $s\neq t$, and $X=E,F$. 
	\end{enumerate}
	The $\Bbbk$-subalgebra of $\Uq(\lieg)$ generated by $\{E_{s}\}_{s\in S}$
	(resp.~$\{F_{s}\}_{s\in S}$, $\{K_{s}^{\pm 1}\}_{s\in S}$, $\{E_{s}, K_s^{\pm 1}\}_{s\in S}$,
	$\{F_{s}, K_s^{\pm 1}\}_{s\in S}$) is denoted by $\Uq^{+}$
	(resp.~$\Uq^{-}$, $\Uq^{0}$, $\Uq^{\geq0}$, $\Uq^{\leq0}$).
	
	For $\alpha\in Q$, write 
    \[
    (\Uq)_{\alpha}:=\{X\in\Uq\mid K_sXK_s^{-1}=\qq_s^{\langle \alpha_s^{\vee},\alpha\rangle}X\;\text{for all}\;s\in S\}.
    \]
	The elements of $(\Uq)_{\alpha}$ are said to be homogeneous. For
	a homogeneous element $x\in(\Uq)_{\alpha}$, we set $\wt x=\alpha$.
	For any subset $L\subset\Uq$ and $\alpha\in Q$, we set $L_{\alpha}:=L\cap(\Uq)_{\alpha}$.
	
	For $\alpha=\sum_{s\in S}m_s\alpha_s\in Q$, write 
	\[
	K_{\alpha}:=\prod_{s\in S} K_s^{m_s}.
	\]
	Then  
	\[
	K_{\alpha}X=\qq^{(\alpha, \wt x)}XK_{\alpha}.
	\]
    for a homogeneous element $X\in \Uq$.
    
	The quantized enveloping algebra $\Uq$ is a Hopf algebra endowed with the coproduct $\Delta\colon\Uq\to\Uq\otimes\Uq$, the counit 	$\varepsilon\colon\Uq\to\Bbbk$ and the antipode $\sfS\colon\Uq\to\Uq$ 
	satisfying  
	\begin{align*}
		\Delta\left(E_{s}\right) & =E_{s}\otimes 1+K_{s}\otimes E_{s}, & \varepsilon\left(E_{s}\right) & =0, & \sfS\left(E_{s}\right) & =
		-K_{s}^{-1}E_{s},\\
		\Delta\left(F_{s}\right) & =F_{s}\otimes K_{s}^{-1}+1\otimes F_{s}, & \varepsilon\left(F_{s}\right) & =0, & 
		\sfS\left(F_{s}\right) & =-F_{s}K_{s},\\
		\Delta\left(K_s\right) & =K_s\otimes K_s, & \varepsilon\left(K_s\right) & =1, & \sfS\left(K_s\right) & =K_s^{-1}.
	\end{align*}
\end{dfn}
In this paper, we also use the following opposite coproduct 
\[
\uDelta:= \mathsf{P}\circ \Delta\colon \Uq\to\Uq\otimes\Uq
\]
where $\mathsf{P}\colon \Uq\otimes \Uq\to \Uq\otimes \Uq$ is the $\Bbbk$-algebra involution given by $X_1\otimes X_2\mapsto X_2\otimes X_1$.

\subsection{Integrable modules and universal $R$-matrices}

For a $\Bbbk$-vector space $V$, write $V^{\ast}:=\Hom_{\Bbbk}(V, \Bbbk)$, and when $V$ is a $\Uq$-module, its dual space $V^{\ast}$ is also considered as a  $\Uq$-module by 
\[
\langle X.\xi, v\rangle:= \langle \xi, \sfS(X).v\rangle
\]
for $\xi\in V^{\ast}, X\in\Uq$ and $v\in V$. For a $\Uq$-module $V$, write
    \begin{align*}
    V^{\inv}
    &:=\{v\in V\mid  X.v=\varepsilon(X)v\text{ for all }X\in \Uq\}.
    \end{align*}

For $\Uq$-modules $U$ and $V$, their tensor product $U\otimes V$ is regarded as a $\Uq$-module through the coproduct $\Delta$. 
Moreover, we can equip the vector space $U\otimes V$ with another $\Uq$-module structure by the opposite coproduct $\uDelta$. In this case, we write this space as $U\ut V$.

For finite dimensional $\Uq$-modules $U$ and $V$, we have an isomorphism of $\Uq$-modules
\begin{align}
    \Upsilon_{U, V}\colon U^{\ast}\ut V^{\ast} \xrightarrow{\sim} (U\otimes V)^{\ast},\ \xi\otimes \zeta\mapsto (u\otimes v\mapsto \langle \xi, u\rangle\langle \zeta, v\rangle), \label{eq:dualtensor}  
\end{align}
and the homomorphism of $\Uq$-modules
\begin{align}
    V^{\ast}\otimes V\to\Bbbk,\ \xi\otimes v\mapsto \langle \xi, v\rangle.\label{eq:dualeval}
\end{align}
Here $\Bbbk$ is regarded as a trivial $\Uq$-module. 

    Let $V$ be a $\Uq^0$-module. For $\mu \in P$, we set 
    \begin{align*}
	V_{\mu}:= \{u \in V \mid K_s.u=\qq_s^{\langle \alpha_s^{\vee}, \mu \rangle}u\ \text{\ for\ all\ } s \in S\},
    \end{align*}
    and 
    \begin{align*}
    V_{\neq \mu}&:=\bigoplus_{\nu\in P, \nu\neq \mu}V_{\nu},&
    V_{> \mu}&:=\bigoplus_{\nu\in P, \nu> \mu}V_{\nu},&
    V_{< \mu}&:=\bigoplus_{\nu\in P, \nu< \mu}V_{\nu},
    \end{align*}    
	The space $V_{\mu}$ is called the weight space of $V$ of weight $\mu$. For $u\in V_{\mu}$, we write $\weight u:=\mu$. When $V$ is a module over $\Uq$ (resp.~$\Uq^{\geq 0}$, $\Uq^{\leq 0}$), we always consider its weight spaces by the action of $\Uq^{0}$ induced from that of $\Uq$ (resp.~$\Uq^{\geq 0}$, $\Uq^{\leq 0}$). 
    A $\Uq$-module $V$ is called a \emph{type one} module if it is a direct sum of its weight spaces. In the following, all $\Uq$-modules are assumed to be type one. 
    
    A $\Uq$-module $V$ is said to be \emph{integrable} if $E_s$ and $F_s$ act locally nilpotently on $V$ for all $s\in S$. 

	For $\lambda\in P_{+}$, denote by $V(\lambda)$ the irreducible (integrable) highest weight $\Uq$-module generated by a highest weight vector $v_{\lambda}$ of weight $\lambda$.  

	For $w\in W$, define $v_{w\lambda}\in V(\lambda)_{w\lambda}$ by
	\begin{align*}
		v_{w\lambda}=F_{s_{1}}^{(\langle \alpha_{s_{1}}^{\vee}, r_{s_{2}}\cdots r_{s_{\ell}}\lambda\rangle)}\cdots F_{s_{\ell-1}}^{(\langle \alpha_{s_{\ell-1}}^{\vee}, r_{s_{\ell}}\lambda\rangle)}F_{s_{\ell}}^{(\langle \alpha_{s_{\ell}}^{\vee}, \lambda\rangle)}.v_{\lambda}
	\end{align*}
	for $(s_{1},\dots,s_{\ell})\in S(w)$. Here $F_s^{(k)} := F_s^k/[k]_{\qq_s}!$ for $s\in S, k \in \Z_{\geq 0}$.
    It is known that $v_{w\lambda}$ depends only on $w\lambda$, and does not depend on the choice of $(s_{1},\dots,s_{\ell})\in S(w)$ and $w\in W$ \cite[Proposition 39.3.7]{Lusztig:Intro}. Note that $V(\lambda)_{w\lambda}=\Bbbk v_{w\lambda}$.
    
    For $\lambda\in P_+$ and $w\in W$, define $\xi_{w\lambda}\in V(\lambda)^\ast$ by $\langle \xi_{w\lambda}, v_{w\lambda}\rangle=1$ and $\xi_{w\lambda}|_{V(\lambda)_{\neq w\lambda}}=0$. Remark that $\wt \xi_{w\lambda}=-w\lambda$ in our convention, and $(V(\lambda)^\ast)_{-w\lambda}=\Bbbk \xi_{w\lambda}$.  

Next, we fix our convention on $R$-matrices. 
\begin{prop}[{see~\cite[Chapter 6]{Jantzen:Quantum}}]\label{p:formU} 
	There uniquely exists a $\Bbbk$-bilinear pairing $(\ ,\ )_{D}\colon \Uq^{\leq 0}\times\Uq^{\geq 0}\to\Bbbk$
	such that 
	\begin{itemize}
		\item[(i)] $(\Delta(X),Y_{2}\otimes Y_{1})_{D}=(X,Y_{1}Y_{2})_{D}$ for $X\in \Uq^{\leq 0}, Y_{1},Y_{2}\in \Uq^{\geq 0}$, 
		\item[(ii)] $(X_{1}\otimes X_{2},\Delta(Y))_{D}=(X_{1}X_{2},Y)_{D}$ for $X_{1}, X_{2}\in\Uq^{\leq 0},Y\in\Uq^{\geq 0}$, 
		\item[(iii)] $(F_{s}, K_{\alpha})_{D}=(K_{\alpha},E_{s})_{D}=0$ for $s\in S, \alpha\in Q$, 
		\item[(iv)] $(K_{\alpha}, K_{\beta})_{D}=\qq^{-(\alpha, \beta)}$ for $\alpha, \beta\in Q$, 
		\item[(v)] ${\displaystyle (F_{s},E_{t})_{D}=-\frac{\delta_{st}}{\qq_{s}-\qq_{s}^{-1}}}$
		for $s, t\in S$ ,
	\end{itemize}
	here the $\Bbbk$-bilinear map $(\ ,\ )_{D}\colon (\Uq^{\leq 0}\otimes \Uq^{\leq 0})\times (\Uq^{\geq 0}\otimes\Uq^{\geq 0})\to\Bbbk$
	is defined by 
	\[
	(X_{1}\otimes X_{2}, Y_{1}\otimes Y_{2})_{D}=(X_{1}, Y_{1})_{D}(X_{2}, Y_{2})_{D}
	\]
	for $X_{1}, X_{2}\in\Uq^{\leq 0}, Y_{1},Y_{2}\in\Uq^{\geq0}$. 
\end{prop}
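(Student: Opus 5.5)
We would follow the standard construction of the Drinfeld pairing in \cite[Chapter 6]{Jantzen:Quantum}, treating uniqueness and existence separately.

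\emph{Uniqueness.} The algebra $\Uq^{\leq 0}$ is generated by $\{F_s, K_s^{\pm1}\}_{s\in S}$ and $\Uq^{\geq 0}$ by $\{E_s, K_s^{\pm1}\}_{s\in S}$. Properties (i) and (ii), together with the explicit coproduct formulas for $\Delta(E_s)$, $\Delta(F_s)$ and $\Delta(K_s)$, express $(X_1X_2,Y)_D$ and $(X,Y_1Y_2)_D$ through pairings of shorter monomials. An induction on the total length of monomials in the generators therefore reduces $(X,Y)_D$ to pairings of generators with generators. Those values are fixed by (iii)--(v), together with $(F_s,E_t)_D$ and the values $(1,\cdot)_D=\varepsilon(\cdot)$ and $(\cdot,1)_D=\varepsilon(\cdot)$, which (i) and (ii) force. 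Hence the pairing is unique.

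\emph{Existence.} First we would work with the auxiliary algebras $\widetilde{U}^{\leq 0}$ and $\widetilde{U}^{\geq 0}$: these are generated by $F_s, K_\alpha$ (resp.~$E_s, K_\alpha$) subject only to the torus relations and the relations $K_\alpha F_s K_\alpha^{-1}=\qq^{-(\alpha,\alpha_s)}F_s$ (resp.~the analogous relation for $E_s$), but \emph{not} the quantum Serre relations. Each is a smash product of a free algebra with $\Bbbk[Q]$, and each is a bialgebra with the same coproduct formulas. On such algebras we would define the pairing on generators by (iii)--(v) and extend it by the rules (i) and (ii). Concretely, for each generator $X$ we define a linear form $\phi_X$ on $\widetilde U^{\geq 0}$ by using the coproduct and the values on generators; the map $X\mapsto \phi_X$ is then extended to an algebra anti/homomorphism into the dual algebra of $\widetilde U^{\geq 0}$, whose product is dual to $\Delta$ with the tensor factors flipped as in (i). For this extension to be well defined we must check that the relations of $\widetilde U^{\leq0}$ are respected. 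The torus relations hold because of the bicharacter property of (iv). The relation $K_\alpha F_s=\qq^{-(\alpha,\alpha_s)}F_sK_\alpha$ holds by a weight computation, because a $Q$-graded pairing vanishes unless the weights are opposite. The symmetric argument handles the relations of $\widetilde U^{\geq0}$, so we obtain a pairing $\widetilde U^{\leq0}\times\widetilde U^{\geq0}\to\Bbbk$ satisfying (i)--(v).

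\emph{Main obstacle: descent through the Serre relations.} It remains to show that the quantum Serre elements $u^-_{st}$ and $u^+_{st}$ (relation (iv) of \cref{d:QEA}) lie in the left and right radicals of this pairing. Once that is shown, the pairing descends to $\Uq^{\leq0}\times\Uq^{\geq0}$. Since the radicals are ideals, it suffices to treat the generators $u^+_{st}$ themselves. For this we would introduce the skew-derivations $r_s, r'_s$ on the positive part, defined by extracting the $E_s\otimes$ or $\otimes E_s$ component of $\Delta$. Via (ii), these compute $(F_sX, Y)_D$ and $(XF_s,Y)_D$. Thus $u^+_{st}$ lies in the radical if and only if $r_i(u^+_{st})=0$ for all $i$, since the degree-zero part vanishes for weight reasons. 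Finally, a direct $\qq_s$-binomial computation, using the identity $\sum_k(-1)^k\left[\begin{smallmatrix}n\\k\end{smallmatrix}\right]_{\qq_s}\qq_s^{k(n-1)}=0$ with $n=1-c_{st}$, shows $r_i(u^+_{st})=0$. The same computation, applied symmetrically, handles $u^-_{st}$.
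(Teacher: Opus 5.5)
Your proposal is correct and is essentially the standard construction of \cite[Chapter 6]{Jantzen:Quantum}, which is all the paper relies on here, since it gives no argument beyond that citation: uniqueness by reducing to generators through (i)--(ii), existence on the Serre-free algebras $\widetilde U^{\leq 0},\widetilde U^{\geq 0}$, and descent by showing that the quantum Serre elements lie in the radicals, which are ideals. Two small precision points: to get (ii), $X\mapsto\phi_X$ should be a homomorphism into $(\widetilde U^{\geq 0})^{\ast}$ with the unflipped product $(f\ast g)(Y)=(f\otimes g)(\Delta(Y))$ (the flip in (i) enters only in the recursive definition of $\phi_X$ for generators $X$, and (i) then extends to all $X$ because it is stable under products), and only the ``if'' direction of your radical criterion via the $r_i$ is justified and needed, since the pairing on the Serre-free algebras is degenerate.
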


The bilinear pairing $(\ ,\ )_{D}$ is called the \emph{Drinfeld pairing}. It has the following
properties. 
\begin{enumerate}
	\item For $\alpha,\beta\in Q_{+}$, ${(\ ,\ )_{D}}\mid_{(\Uq^{\leq 0})_{-\alpha}\times(\Uq^{\geq 0})_{\beta}}=0$
	unless $\alpha=\beta$. 
	\item For $\alpha\in Q_{+}$, $\left.(\ ,\ )_{D}\right|_{(\Uq^{-})_{-\alpha}\times(\Uq^{+})_{\alpha}}$
	is non-degenerate.
\end{enumerate}

Let $\alpha\in Q_+$. Take a basis $\{B_{\alpha, 1},\dots, B_{\alpha, k_{\alpha}}\}$ of $(\Uq^-)_{-\alpha}$. Then, by the property of Drinfeld pairing, we have a basis $\{B_{\alpha, 1}^+,\dots, B_{\alpha, k_{\alpha}}^+\}$ of $(\Uq^+)_{\alpha}$ satisfying $(B_{\alpha, i}, B_{\alpha, j}^+)_D=\delta_{ij}$. Set 
\begin{align*}
&\Theta_{\alpha}:=\sum_{i=1}^{k_{\alpha}}B_{\alpha, i}\otimes B_{\alpha, i}^+\in \Uq^-\otimes \Uq^+.
\end{align*}
It is easy to show that $\Theta_{\alpha}$ does not depend on the choice of $\{B_{\alpha, 1},\dots, B_{\alpha, k_{\alpha}}\}$ and that $\Theta_{0}=1\otimes 1$. 
For $N\in\mathbb{Z}_{>0}$, set 
\begin{align*}
&\mathcal{H}_N:=\Uq^{\geq 0}\left(\sum_{\alpha\in Q_+; \langle \rho^{\vee}, \alpha\rangle \geq N} (\Uq^-)_{-\alpha}\right)\otimes \Uq+\Uq\otimes \Uq^{\leq 0}\left(\sum_{\alpha\in Q_+; \langle \rho^{\vee}, \alpha\rangle \geq N} (\Uq^+)_{\alpha}\right),\\
&\Uq\otimes_{\mathrm{comp}} \Uq:=\lim_{\underset{N}{\longleftarrow}} (\Uq\otimes \Uq)/\mathcal{H}_N.
\end{align*}
Then $\Uq\otimes_{\mathrm{comp}} \Uq$ has the $\Bbbk$-algebra structure induced from that of $\Uq\otimes \Uq$, and there exists an embedding $\Uq\otimes \Uq\hookrightarrow \Uq\otimes_{\mathrm{comp}} \Uq$ in an obvious way. See \cite[Chapter 4]{Lusztig:Intro}. We set 
\begin{align*}
&\Theta:=\sum_{\alpha\in Q_+}\Theta_{\alpha}\in \Uq\otimes_{\mathrm{comp}} \Uq.
\end{align*}
It is called the \emph{quasi-$R$-matrix}.

For $\Uq$-modules $U$ and $V$, define a $\Bbbk$-linear isomorphism $\Pi:=\Pi_{U,  V}\colon U\otimes V\to U\otimes V $ by 
\[
u\otimes v\mapsto \qq^{-(\weight u, \weight v)}u\otimes v
\]
for weight vectors $u\in U$ and $v\in V$.

\begin{prop}
Let $U$ and $V$ be finite dimensional $\Uq$-modules. Then $\Theta$ defines a well-defined $\Bbbk$-linear automorphism on $V\otimes U$, and we have 
\begin{align*}
\Delta(X)\circ \Theta\circ \Pi=\Theta\circ \Pi\circ \uDelta(X)
\end{align*}
for all $X\in \Uq$ on $V\otimes U$. Therefore, if we define $P_{U, V}\colon U\otimes V\to V\otimes U$ as $u\otimes v\mapsto v\otimes u$, then 
\[
\cR_{U, V}:= \Theta\circ \Pi\circ P_{U, V}\colon U\otimes V\to V\otimes U
\]
is an isomorphism of $\Uq$-modules.
\end{prop}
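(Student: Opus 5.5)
We need to show two things: that $\Theta$ acts as a well-defined linear automorphism of $V\otimes U$, and that $\Delta(X)\circ\Theta\circ\Pi=\Theta\circ\Pi\circ\uDelta(X)$ on $V\otimes U$. The isomorphism property of $\cR_{U,V}$ then follows formally: $P_{U,V}$ intertwines the $\Delta$-action on $U\otimes V$ with the $\uDelta$-action on $V\otimes U$, and $\Theta\circ\Pi$ carries the latter back to the $\Delta$-action.

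\textbf{Well-definedness.} Since $U$ and $V$ are finite dimensional and type one, only finitely many weights occur in each. Hence $(\Uq^-)_{-\alpha}$ and $(\Uq^+)_{\alpha}$ act by zero on $V$ and on $U$ once $\langle\rho^{\vee},\alpha\rangle$ is large. So the ideal $\mathcal{H}_N$ acts by zero on $V\otimes U$ for $N\gg0$, and the action of $\Uq\otimes\Uq$ factors through $\Uq\otimes_{\mathrm{comp}}\Uq$. Moreover $\Theta=1\otimes1+\sum_{\alpha>0}\Theta_\alpha$, and each $\Theta_\alpha$ with $\alpha\neq0$ strictly lowers the weight in the first factor. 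Thus $\Theta-1$ is nilpotent on $V\otimes U$ and $\Theta$ is invertible. The map $\Pi$ is invertible because it is diagonal.

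\textbf{Intertwining relation.} It suffices to check it on the generators $K_s$, $E_s$, $F_s$, since both sides are multiplicative in $X$.

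For $K_s$ the relation is immediate. $\Pi$ commutes with weights, each $\Theta_\alpha$ has total weight $0$, and $\Delta(K_s)=\uDelta(K_s)$.

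For $E_s$ we first push $\Pi$ through. Using weight bookkeeping,
\[
\Pi\circ\uDelta(E_s)=\Pi\circ(1\otimes E_s+E_s\otimes K_s)=(K_s^{-1}\otimes E_s+E_s\otimes 1)\circ\Pi .
\]
Here the factor $q^{-(\mathrm{wt},\mathrm{wt})}$ changes by $q^{-(\mu,\alpha_s)}$, and this is absorbed by $K_s^{-1}$ acting on the first factor. The required identity then reduces to
\[
\Delta(E_s)\,\Theta=\Theta\,(E_s\otimes 1+K_s^{-1}\otimes E_s)
\]
in $\Uq\otimes_{\mathrm{comp}}\Uq$. Comparing components of weight $\alpha$, this becomes
\[
(E_s\otimes1)\Theta_{\alpha}-\Theta_\alpha(E_s\otimes 1)=\Theta_{\alpha-\alpha_s}(K_s^{-1}\otimes E_s)-(K_s\otimes E_s)\Theta_{\alpha-\alpha_s}.
\]
This is Lusztig's defining property of the quasi-$R$-matrix (\cite[Theorem 4.1.2]{Lusztig:Intro}, \cite[Chapter 7]{Jantzen:Quantum}). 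It is proved from the Drinfeld pairing as follows. Use the skew-derivations $r_s,{}_sr$ on $\Uq^-$ defined by $[E_s,y]=\bigl(K_s\,{}_sr(y)-r_s(y)K_s^{-1}\bigr)/(\qq_s-\qq_s^{-1})$. Then apply properties (i) and (ii) of $(\ ,\ )_D$ to identify the adjoint operations on the dual basis $B^+_{\alpha,i}$ with left and right multiplication by $E_s$ (up to normalization). The case of $F_s$ is symmetric.

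\textbf{Main obstacle.} The main obstacle is this last step, matching conventions. The coproduct here is $\Delta(E_s)=E_s\otimes1+K_s\otimes E_s$, and the pairing normalizations are $(F_s,E_s)_D=-1/(\qq_s-\qq_s^{-1})$ and $(K_\alpha,K_\beta)_D=\qq^{-(\alpha,\beta)}$. One must verify that with these, $\Theta$ built from dual bases (with no sign or power twist) satisfies exactly the identity above rather than a variant with $\Theta^{-1}$ or $\bar\Theta$. I would check this first in rank one, where $\Theta_{n\alpha_s}$ is explicitly a multiple of $F_s^n\otimes E_s^n$, and then invoke the general statement in \cite[Chapter 7]{Jantzen:Quantum}, whose conventions for the Drinfeld pairing match those of \cref{p:formU}.
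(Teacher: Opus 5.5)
Your proposal is correct and follows the standard route. The paper states this proposition without proof, deferring to \cite{Lusztig:Intro} and \cite{Jantzen:Quantum}, and your argument is exactly Jantzen's Chapter~7 argument: the twist $\Pi\circ\uDelta(E_s)=(E_s\otimes 1+K_s^{-1}\otimes E_s)\circ\Pi$ (and its $F_s$-analogue) reduces everything to $\Delta(E_s)\Theta=\Theta(E_s\otimes 1+K_s^{-1}\otimes E_s)$, and nilpotency of $\Theta-1$ gives invertibility. The convention issue you flag is harmless, because the coproduct and the normalizations in \cref{p:formU} are literally Jantzen's, so his identity holds for $\Theta$ itself rather than $\Theta^{-1}$. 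You can also confirm this directly from the $\alpha=\alpha_s$ component: there $\Theta_{\alpha_s}=-(\qq_s-\qq_s^{-1})F_s\otimes E_s$, and both sides of your identity equal $-(K_s-K_s^{-1})\otimes E_s$.
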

Let $\Sy_n$ be the symmetric group of degree $n$. For $\Bbbk$-vector spaces $V_1,\dots, V_n$ and $\sigma\in \Sy_n$, define the $\Bbbk$-linear map $P_{\sigma}$ as 
\[
P_{\sigma}\colon V_1\otimes \cdots\otimes V_n\to V_{\sigma^{-1}(1)}\otimes \cdots\otimes V_{\sigma^{-1}(n)},\ v_1\otimes \cdots\otimes v_n\mapsto v_{\sigma^{-1}(1)}\otimes \cdots\otimes v_{\sigma^{-1}(n)}.
\]

Suppose that  $V_1,\dots, V_n$ are finite dimensional $\Uq$-modules. For $1\leq i<j\leq n$, define the $\Bbbk$-linear map $\cR_{i, j}$ as 
\[
\cR_{i, j}:=P_{\sigma^{-1}}\circ (\cR_{V_{i}, V_{j}}\otimes \id)\circ P_{\sigma}\colon V_1\otimes \cdots\otimes V_{i}\otimes \cdots \otimes V_{j}\otimes\cdots \otimes V_n\to V_1\otimes \cdots\otimes V_{j}\otimes \cdots \otimes V_{i}\otimes\cdots \otimes V_n,
\]
where $\sigma\in \Sy_n$ is an element satisfying $\sigma(i)=1$ and $\sigma(j)=2$. Namely, the subscript of $\cR_{i, j}$ indicates the position where the operator $\cR$ acts. Note that $\cR_{i, i+1}$ is an isomorphism of $\Uq$-modules for $i=1,\dots, n-1$.  
\begin{prop}[{see \cite[Theorem 7.5]{Jantzen:Quantum}, \cite[Proposition 32.2.4]{Lusztig:Intro}}]\label{p:YB}
For finite dimensional $\Uq$-modules $V_1, V_2, V_3$, we have 
\begin{align*}
&\cR_{12}\circ \cR_{23}\circ \cR_{12}=\cR_{23}\circ \cR_{12}\circ \cR_{23}\colon V_1\otimes V_2\otimes V_3\to  V_3\otimes V_2\otimes V_1, \\
&\cR_{V_1\otimes V_2, V_3}= \cR_{12}\circ \cR_{23}\colon V_1\otimes V_2\otimes V_3\to  V_3\otimes V_1\otimes V_2, \\
&\cR_{V_1, V_2\otimes V_3}= \cR_{23}\circ \cR_{12}\colon V_1\otimes V_2\otimes V_3\to  V_2\otimes V_3\otimes V_1.
\end{align*}
\end{prop}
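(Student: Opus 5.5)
The plan is to derive all three identities from two structural properties of the quasi-$R$-matrix $\Theta$: its behaviour under the coproduct, and naturality. The Yang--Baxter equation will then follow formally from one hexagon identity together with naturality, so the real work is in the hexagon identities.

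\emph{Step 1 (naturality).} For $\Uq$-module homomorphisms $f\colon U\to U'$ and $g\colon V\to V'$ between finite dimensional modules, I will check that
\[
\cR_{U',V'}\circ(f\otimes g)=(g\otimes f)\circ\cR_{U,V}.
\]
This is immediate term by term. $P$ is natural. $\Pi$ is natural because module maps preserve weights. Each $\Theta_\alpha\in\Uq^-\otimes\Uq^+$ acts through the module structure, and only finitely many $\Theta_\alpha$ act nontrivially on finite dimensional modules.

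\emph{Step 2 (hexagons).} I will establish the coproduct formulas for $\Theta$ in the form needed here. Writing $\widetilde{\Theta}:=\Theta\circ\Pi$ as an operator on a tensor product of weight modules, the identities to prove are
\[
(\Delta\otimes\id)(\widetilde\Theta)=\widetilde\Theta_{13}\widetilde\Theta_{23},\qquad (\id\otimes\Delta)(\widetilde\Theta)=\widetilde\Theta_{13}\widetilde\Theta_{12},
\]
with the leg conventions adapted to the order in which $\cR$ is defined. These should be read as equalities of operators on triple tensor products. For the $\Pi$-parts they are simply the bilinearity of $(\ ,\ )$ together with additivity of weights under $\Delta$. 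For the $\Theta$-parts I would follow Lusztig's argument (\cite[Chapter 4]{Lusztig:Intro}). One uses properties (i) and (ii) of the Drinfeld pairing in \cref{p:formU}, which say that $\Delta$ on $\Uq^{\le0}$ is dual to multiplication on $\Uq^{\ge0}$ and vice versa. Expanding $(\Delta\otimes\id)(\Theta_\alpha)$ in dual bases then gives $\sum_{\beta+\gamma=\alpha}$ of a product of $\Theta_\beta$ and $\Theta_\gamma$ in different legs, with $K$-factors appearing from $\Delta(F_s)=F_s\otimes K_s^{-1}+1\otimes F_s$. These $K$-factors are exactly absorbed when $\Theta$ is moved past $\Pi$, using $\Pi\circ(X\otimes 1)=(X K_{-\wt X}\otimes\ldots)\circ\Pi$-type relations.

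Inserting these formulas into $\cR=\Theta\circ\Pi\circ P$ and tracking the permutations will give
\[
\cR_{V_1\otimes V_2,V_3}=\cR_{12}\circ\cR_{23}\qquad\text{and}\qquad \cR_{V_1,V_2\otimes V_3}=\cR_{23}\circ\cR_{12}.
\]
The main obstacle is the bookkeeping of the $K$-factors and of which tensor legs $\Theta$ acts on after each permutation. As a fallback I would use the uniqueness characterization of $\Theta$: it is the unique element with $\Theta_0=1\otimes1$ satisfying $\Delta(X)\Theta=\Theta\,\overline{\Delta}(X)$ for the twisted coproduct. One shows that both candidate operators satisfy the analogous intertwining relation for the three-fold coproduct and have the same weight-zero component, and uniqueness then forces them to agree.

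\emph{Step 3 (Yang--Baxter).} Apply Step 1 to the module map $\cR_{V_1,V_2}\colon V_1\otimes V_2\to V_2\otimes V_1$ in the first slot and $\id_{V_3}$ in the second:
\[
\cR_{V_2\otimes V_1,V_3}\circ(\cR_{V_1,V_2}\otimes\id)=(\id\otimes\cR_{V_1,V_2})\circ\cR_{V_1\otimes V_2,V_3}.
\]
By the first hexagon identity applied to both sides, the left-hand side equals $\cR_{12}\circ\cR_{23}\circ\cR_{12}$ and the right-hand side equals $\cR_{23}\circ\cR_{12}\circ\cR_{23}$, which is the Yang--Baxter equation.
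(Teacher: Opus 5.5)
Your proposal is correct, and it is the standard argument of the references the paper cites in place of a proof: coproduct formulas for $\Theta$ obtained from the Drinfeld pairing (as in \cite[Chapter 4]{Lusztig:Intro}) give the two hexagon identities, and the Yang--Baxter equation then follows from one hexagon together with naturality applied to $\cR_{V_1,V_2}\otimes\id$. The one correction is the order of factors: for $\widetilde\Theta=\Theta\circ\Pi$ acting on $V\otimes U$, as in $\cR_{U,V}$, the identities are $(\Delta\otimes\id)(\widetilde\Theta)=\widetilde\Theta_{23}\widetilde\Theta_{13}$ and $(\id\otimes\Delta)(\widetilde\Theta)=\widetilde\Theta_{12}\widetilde\Theta_{13}$ (your displayed orders hold for the flipped element $P\circ\widetilde\Theta\circ P$ on $U\otimes V$), and these are exactly what yield $\cR_{V_1,V_2\otimes V_3}=\cR_{23}\circ\cR_{12}$ and $\cR_{V_1\otimes V_2,V_3}=\cR_{12}\circ\cR_{23}$.
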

Set
\[
\Theta^{\sfS}:=\sum_{\alpha\in Q_+}(\sfS^{-1}\otimes \sfS^{-1})(\Theta_{\alpha})
=\sum_{\alpha\in Q_+}\sum_{i=1,\dots, k_{\alpha}}\sfS^{-1}(B_{\alpha, i})\otimes \sfS^{-1}(B_{\alpha, i}^+)
\in \Uq\otimes_{\mathrm{comp}} \Uq.
\]
For finite dimensional $\Uq$-modules $U$ and $V$, define 
\[
\tcR_{U, V}:=P_{U, V}\circ \Pi\circ \Theta^{\sfS}\colon U\otimes V\to V\otimes U. 
\]
Then we have the following proposition. 
\begin{prop}\label{p:tildeR}
    For finite dimensional $\Uq$-modules $U$ and $V$, we have the following commutative diagram:
 		\[
		\begin{tikzcd}
            U^{\ast}\ut V^{\ast}
            \arrow[r,"\tcR_{U^{\ast}, V^{\ast}}", "\sim"']
            \arrow[d,"\sim" sloped, "\Upsilon_{U, V}"']
            \arrow[rd, phantom, "\circlearrowleft" marking] 
            &V^{\ast}\ut U^{\ast}
            \arrow[d,"\Upsilon_{V, U}", "\sim"' sloped]\\
            (U\otimes V)^{\ast}
            \arrow[r,"\cR_{V, U}^{\ast}"', "\sim"]
            &(V\otimes U)^{\ast},
		\end{tikzcd}
		\]
  where $\cR_{V, U}^{\ast}\colon (U\otimes V)^{\ast}\to (V\otimes U)^{\ast}$ denotes the isomorphism of $\Uq$-modules induced from $\cR_{V, U}\colon V\otimes U\to U\otimes V$.
  In particular, $\tcR_{U^{\ast}, V^{\ast}}$ is an isomorphism of $\Uq$-modules. 
\end{prop}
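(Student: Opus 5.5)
The plan is to verify the commutativity of the diagram by evaluating both composites on elements $\xi\otimes\zeta\in U^{\ast}\ut V^{\ast}$ and pairing the results with $v\otimes u\in V\otimes U$. Since $\Upsilon_{U,V}$, $\Upsilon_{V,U}$ and $\cR_{V,U}^{\ast}$ are already known to be isomorphisms of $\Uq$-modules, commutativity gives $\tcR_{U^{\ast},V^{\ast}}=\Upsilon_{V,U}^{-1}\circ\cR_{V,U}^{\ast}\circ\Upsilon_{U,V}$. This is a composite of $\Uq$-module isomorphisms, which proves the final claim. By linearity it suffices to take $\xi,\zeta,u,v$ to be weight vectors.

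First I note that $\Theta^{\sfS}$ acts as a well-defined operator on $U^{\ast}\otimes V^{\ast}$. Because $U^{\ast}$ and $V^{\ast}$ are finite dimensional, only finitely many $\Theta_\alpha$ act nontrivially, and $\sfS^{-1}$ preserves weight spaces of $\Uq$. The key identity comes from the definition of the dual action, $\langle X.\xi,u\rangle=\langle\xi,\sfS(X).u\rangle$. It gives $\langle \sfS^{-1}(X).\xi,u\rangle=\langle \xi, X.u\rangle$, so for each $\alpha$
\[
\langle (\sfS^{-1}\otimes\sfS^{-1})(\Theta_\alpha).(\xi\otimes\zeta),\,u\otimes v\rangle=\langle \xi\otimes\zeta,\,\Theta_\alpha.(u\otimes v)\rangle ,
\]
where pairings of tensor products are taken componentwise as in $\Upsilon$.

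Next I compare the Cartan parts. On the left, $\Upsilon_{V,U}(\tcR_{U^{\ast},V^{\ast}}(\xi\otimes\zeta))$ evaluated on $v\otimes u$ equals $\langle \Pi\Theta^{\sfS}(\xi\otimes\zeta),u\otimes v\rangle$, because $P$ merely swaps the factors back. A summand of $\Theta^{\sfS}(\xi\otimes\zeta)$ of weight $(\mu,\nu)$ pairs nontrivially with $u\otimes v$ only if $\mu=-\wt u$ and $\nu=-\wt v$. In that case $\Pi$ contributes $\qq^{-(\mu,\nu)}=\qq^{-(\wt u,\wt v)}$. On the right, $\cR_{V,U}(v\otimes u)=\Theta\Pi(u\otimes v)=\qq^{-(\wt u,\wt v)}\Theta(u\otimes v)$, so $\cR_{V,U}^{\ast}\Upsilon_{U,V}(\xi\otimes\zeta)$ evaluated on $v\otimes u$ equals $\qq^{-(\wt u,\wt v)}\langle\xi\otimes\zeta,\Theta(u\otimes v)\rangle$. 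Summing the identity of the previous paragraph over $\alpha$ then shows that the two sides agree.

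The only real obstacle is bookkeeping: getting the order of factors right under $P_{U,V}$ versus $P_{V,U}$, and the sign convention $\wt\xi=-\wt u$ for dual weight vectors, so that the $\Pi$-factors match exactly and not with inverted exponent. Apart from this, the argument is a direct computation.
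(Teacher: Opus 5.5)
Your proposal is correct and follows essentially the same route as the paper: both evaluate $\cR_{V,U}^{\ast}\Upsilon_{U,V}(\xi\otimes\zeta)$ and $\Upsilon_{V,U}\tcR_{U^{\ast},V^{\ast}}(\xi\otimes\zeta)$ on $v\otimes u$, move each $\Theta_\alpha$ across the pairing via $\langle \sfS^{-1}(X).\xi,u\rangle=\langle\xi,X.u\rangle$, and match the $\Pi$-factors. The only cosmetic difference is that you read off the common factor $\qq^{-(\wt u,\wt v)}$ from the weights of $u,v$, whereas the paper writes it as $\qq^{-(\mu-\alpha,\nu+\alpha)}$ using the weights $\mu,\nu$ of $\xi,\zeta$.
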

\begin{proof}
For $\xi\in (U^{\ast})_{\mu}$, $\zeta\in (V^{\ast})_{\nu}$, $u\in U$ and $v\in V$, we have 
\begin{align*}
    &\langle \cR_{V, U}^{\ast}(\Upsilon_{U, V}(\xi \otimes \zeta)), v\otimes u\rangle\\
    &=\langle \Upsilon_{U, V}(\xi \otimes \zeta), \cR_{V, U}(v\otimes u)\rangle\\
    &=\langle \Upsilon_{U, V}(\xi \otimes \zeta), \Theta. (\Pi(P_{V, U}(v\otimes u)))\rangle\\
    &=\sum\nolimits_{\alpha\in Q_+}\qq^{-(-\mu+\alpha, -\nu-\alpha)}\sum\nolimits_{i=1,\dots, k_{\alpha}}\langle \Upsilon_{U, V}(\xi \otimes \zeta),
    B_{\alpha, i}.u\otimes B_{\alpha, i}^+.v\rangle\\
    &=\sum\nolimits_{\alpha\in Q_+}\qq^{-(\mu-\alpha, \nu+\alpha)}\sum\nolimits_{i=1,\dots, k_{\alpha}}\langle \Upsilon_{U, V}(\sfS^{-1}(B_{\alpha, i}).\xi\otimes \sfS^{-1}(B_{\alpha, i}^+).\zeta), 
    u\otimes v\rangle\\
    &=\sum\nolimits_{\alpha\in Q_+}\langle \Upsilon_{U, V}(\Pi((\sfS^{-1}\otimes \sfS^{-1})(\Theta_{\alpha}).(\xi \otimes \zeta))), 
    u\otimes v\rangle\\
    &= \langle \Upsilon_{U, V}(\Pi(\Theta^{\sfS}.(\xi \otimes \zeta))), 
    u\otimes v\rangle\\
    &=\langle \Upsilon_{V, U}(\tcR_{U^{\ast}, V^{\ast}}(\xi \otimes \zeta)), 
    v\otimes u\rangle.
\end{align*}
Therefore, we have $\cR_{V, U}^{\ast}\circ \Upsilon_{U, V}=\Upsilon_{V, U}\circ \tcR_{U^{\ast}, V^{\ast}}$.
\end{proof}
The following is a well known property of universal $R$-matrices, which can be checked straightforwardly.  
\begin{prop}\label{prop:Rop}
For finite dimensional $\Uq$-modules $U$ and $V$, 
\[
\tcR_{U, V}=\cR_{U, V}^{\mathrm{op}},
\]
where $\cR_{U, V}^{\mathrm{op}}= P_{U,V} \circ \cR_{V, U}\circ P_{U,V}$. Indeed, $\Pi\circ \Theta^{\sfS}=\Theta\circ \Pi$ on $U\otimes V$. 
\end{prop}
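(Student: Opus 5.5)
The identity $\tcR_{U,V}=\cR^{\op}_{U,V}$ is, by definition of both sides, equivalent to $\Pi\circ\Theta^{\sfS}=\Theta\circ\Pi$ as operators on $U\otimes V$. Indeed, $\tcR_{U,V}=P_{U,V}\circ\Pi\circ\Theta^{\sfS}$, while
\[
\cR^{\op}_{U,V}=P_{U,V}\circ\Theta\circ\Pi\circ P_{V,U}\circ P_{U,V}=P_{U,V}\circ\Theta\circ\Pi .
\]
Cancelling the invertible $P_{U,V}$ reduces the statement to the operator identity. I prove this degree by degree in $\alpha\in Q_+$. Both sides preserve the finite-dimensional space $U\otimes V$, and only finitely many $\alpha$ act nontrivially, so the infinite sums cause no trouble.

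\textbf{Step 1 (antipode on each factor).} For homogeneous $X$ write $\sfS^{-1}$ explicitly: $\sfS(E_s)=-K_s^{-1}E_s$ gives $\sfS^{-1}(E_s)=-E_sK_s^{-1}$, and similarly $\sfS^{-1}(F_s)=-K_sF_s$. Since $\sfS^{-1}$ is an anti-automorphism, for $y\in(\Uq^+)_\alpha$ we get $\sfS^{-1}(y)=\tau(y)K_\alpha^{-1}$ up to sign, and for $x\in(\Uq^-)_{-\alpha}$ we get $\sfS^{-1}(x)=K_\alpha\,\tau'(x)$. Here $\tau,\tau'$ denote the corresponding anti-automorphisms of $\Uq^\pm$. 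Rather than tracking these by hand, I use the standard fact, provable from the Drinfeld pairing properties (i)--(ii) of \cref{p:formU}, that
\[
(\sfS\otimes\sfS)(\Theta_\alpha)=(K_\alpha\otimes K_\alpha^{-1})\,\Theta_\alpha'\,(\text{suitable }K\text{-factors}),
\]
where $\Theta'_\alpha$ is the same canonical element. More precisely, the key fact is that the universal $R$-matrix $\cR=\Theta\,q^{-H\otimes H}$ (up to convention) satisfies $(\sfS\otimes\sfS)(\cR)=\cR$. Equivalently, the operator $\Theta\circ\Pi$ is invariant under $\sfS^{-1}\otimes\sfS^{-1}$ in the appropriate sense.

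\textbf{Step 2 (commuting $\Pi$ past the $K$-factors).} I apply both sides to weight vectors $u\in U_\mu$ and $v\in V_\nu$. The right side gives
\[
\sum_{\alpha\in Q_+}q^{-(\mu,\nu)}\,\Theta_\alpha(u\otimes v).
\]
The left side gives
\[
\sum_{\alpha\in Q_+}\sum_i q^{-(\mu-\alpha,\,\nu+\alpha)}\,\sfS^{-1}(B_{\alpha,i})u\otimes\sfS^{-1}(B^+_{\alpha,i})v .
\]
Substituting the $\sfS^{-1}$ formulas from Step 1 produces $K$-factors acting on $u$ and $v$. These contribute scalars $q^{(\alpha,\mu)}$, $q^{-(\alpha,\nu)}$ and the like, and the exponent $-(\mu-\alpha,\nu+\alpha)$ expands to $-(\mu,\nu)-(\mu,\alpha)+(\alpha,\nu)+(\alpha,\alpha)$. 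The requirement is that these $K$-scalars cancel the extra terms exactly, and that the remaining element $\sum_i\tau'(B_{\alpha,i})\otimes\tau(B^+_{\alpha,i})$, with signs, equals $\Theta_\alpha$.

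\textbf{Step 3 (main obstacle).} The main obstacle is this last equality: the canonical element $\Theta_\alpha$ is invariant under the anti-automorphism pair $(\tau',\tau)$, including the $(-1)^{\mathrm{ht}\alpha}$ signs and the $q^{(\alpha,\alpha)}$-type factor. I handle it through uniqueness. The dual-basis element $\Theta_\alpha$ is determined by the Drinfeld pairing, so it suffices to show that $(\tau'(x),\tau(y))_D$, corrected by the scalar, equals $(x,y)_D$. Because $\Theta_\alpha$ is independent of the choice of basis, it is enough to check the pairing on generators and then use (i)--(ii) together with the anti-multiplicativity of $\tau,\tau'$; the pairing relations are designed so that reversing the order of products is compatible with swapping the coproduct factors.

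Alternatively, the step can be reduced to the known identity $(\sfS\otimes\sfS)\mathcal R=\mathcal R$ from \cite[Chapter 7]{Jantzen:Quantum}, which makes the scalar bookkeeping of Step 2 routine.
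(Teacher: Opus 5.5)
Your reduction to the operator identity $\Pi\circ\Theta^{\sfS}=\Theta\circ\Pi$ on $U\otimes V$ is exactly the paper's argument, which simply asserts this identity as a well-known, straightforward check; your degree-by-degree verification (or your appeal to $(\sfS\otimes\sfS)$-invariance of the universal $R$-matrix) correctly supplies that check. The bookkeeping is cleaner than you anticipate: since $\sfS^{-1}(x)=(-1)^{\langle\rho^{\vee},\alpha\rangle}\qq^{\frac12(\alpha,\alpha)-(\rho,\alpha)}K_\alpha\tau'(x)$ and $\sfS^{-1}(y)=(-1)^{\langle\rho^{\vee},\alpha\rangle}\qq^{(\rho,\alpha)-\frac12(\alpha,\alpha)}\tau(y)K_\alpha^{-1}$, all signs and $\qq$-powers cancel between the two tensor factors, and the $K_\alpha^{\pm1}$-scalars combine with $\qq^{-(\mu-\alpha,\nu+\alpha)}$ to give exactly $\qq^{-(\mu,\nu)}$. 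What remains is only the clean invariance $(\tau'\otimes\tau)(\Theta_\alpha)=\Theta_\alpha$, i.e.\ invariance of the Drinfeld pairing under simultaneous reversal of products; its inductive proof must track the $K$-factors in $\Delta$, since reversal does not intertwine $\Delta$ with its opposite on the nose.
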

\begin{cor}\label{c:YB}
    For finite dimensional $\Uq$-modules $V_1, V_2, V_3$, we have 
\begin{align*}
&\tcR_{12}\circ \tcR_{23}\circ \tcR_{12}=\tcR_{23}\circ \tcR_{12}\circ \tcR_{23}\colon V_1\ut V_2\ut V_3\to  V_3\ut V_2\ut V_1, \\
&\tcR_{V_1\ut V_2, V_3}= \tcR_{12}\circ \tcR_{23}\colon V_1\ut V_2\ut V_3\to  V_3\ut V_1\ut V_2, \\
&\tcR_{V_1, V_2\ut V_3}= \tcR_{23}\circ \tcR_{12}\colon V_1\ut V_2\ut V_3\to  V_2\ut V_3\ut V_1.
\end{align*}
Here we define $\tcR_{i, j}$ in the same way as $\cR_{i, j}$.
\end{cor}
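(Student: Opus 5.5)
The plan is to deduce all three identities from \cref{p:YB} by transporting it through \cref{prop:Rop}. That proposition gives $\tcR_{U,V}=P_{U,V}\circ\cR_{V,U}\circ P_{U,V}$ for arbitrary finite dimensional $\Uq$-modules $U,V$. Define the global reversal $P_{\mathrm{rev}}\colon V_1\otimes\cdots\otimes V_n\to V_n\otimes\cdots\otimes V_1$. Applying \cref{prop:Rop} locally in two adjacent tensor slots gives
\[
\tcR_{i,i+1}=P_{\mathrm{rev}}\circ \cR_{n-i,n-i+1}\circ P_{\mathrm{rev}}
\]
on triple tensor products ($n=3$). In particular $\tcR_{12}=P_{\mathrm{rev}}\cR_{23}P_{\mathrm{rev}}$ and $\tcR_{23}=P_{\mathrm{rev}}\cR_{12}P_{\mathrm{rev}}$. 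Here the $\cR$'s act on $V_3\otimes V_2\otimes V_1$ and its permutations, which are also triples of finite dimensional modules. Since $P_{\mathrm{rev}}^2=\id$, conjugation by $P_{\mathrm{rev}}$ interchanges the roles of the indices $12$ and $23$.

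For the braid relation, we conjugate:
\[
\tcR_{12}\tcR_{23}\tcR_{12}=P_{\mathrm{rev}}\,\cR_{23}\cR_{12}\cR_{23}\,P_{\mathrm{rev}},
\]
where the right-hand side is computed on $V_3\otimes V_2\otimes V_1$. Applying the first identity of \cref{p:YB} to the triple $(V_3,V_2,V_1)$ turns this into $P_{\mathrm{rev}}\cR_{12}\cR_{23}\cR_{12}P_{\mathrm{rev}}=\tcR_{23}\tcR_{12}\tcR_{23}$.

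For the second identity, \cref{prop:Rop} gives
\[
\tcR_{V_1\ut V_2,V_3}=P\circ\cR_{V_3,V_1\ut V_2}\circ P,
\]
where $P$ is the block swap. The underlying vector space of $V_1\ut V_2$ is $V_1\otimes V_2$ with module structure given by $\uDelta$, and the flip $V_1\ut V_2\to V_2\otimes V_1$ is a $\Uq$-module isomorphism. Because $\cR$ is natural in its arguments (it is built from $\Theta$, $\Pi$ and $P$), we have $\cR_{V_3,V_1\ut V_2}=(P_{12}\otimes\id)\circ\cR_{V_3,V_2\otimes V_1}\circ(\id\otimes P_{12})$ as maps. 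We then expand $\cR_{V_3,V_2\otimes V_1}=\cR_{23}\cR_{12}$ using the third identity of \cref{p:YB}. Composing all the flips, they assemble into $P_{\mathrm{rev}}$-conjugation, and the result reduces to $\tcR_{12}\circ\tcR_{23}$. The third identity of the present corollary follows symmetrically from the second identity of \cref{p:YB}.

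The main obstacle is the bookkeeping of permutations. Each local flip has to be matched correctly with the module structures: $\ut$ on one side and $\otimes$ on the other. I would check it on pure tensors $v_1\otimes v_2\otimes v_3$ by tracking explicitly where each factor goes.
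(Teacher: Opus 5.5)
Your proposal is correct and is essentially the argument the paper intends. The corollary is stated without proof right after \cref{prop:Rop}: the identity $\tcR_{U,V}=\cR_{U,V}^{\mathrm{op}}$ transports the braid and hexagon relations of \cref{p:YB} by conjugating with the reversal of tensor factors. Your handling of $V_1\ut V_2\cong V_2\otimes V_1$ via the flip and the naturality of $\cR$ correctly supplies the permutation bookkeeping the paper leaves implicit.
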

\subsection{Quantum coordinate rings}\label{subsec:QCR}
Let $G$ be a connected, simply-connected complex simple algebraic group $G$ whose Lie algebra is $\lieg$. 
In this subsection, we recall the definition of quantum coordinate ring $\cO_\qq(G)$ which can be regarded as a quantum analogue of the coordinate ring $\cO(G)$ of $G$.  
\begin{dfn}
Let $V$ be a $\Uq$-module. For $\xi\in V^{\ast}$ and $v\in V$, define a $\Bbbk$-linear map $c^V(\xi, v)\in \Uq^{\ast}$ by 
\[
X\mapsto \langle \xi, X.v\rangle.
\]
\end{dfn}
The dual space $\Uq^{\ast}$ is an associative algebra whose multiplication map is induced from the coproduct $\Delta$ of $\Uq$. The following lemma immediately follows from the definition. 
\begin{lem}[{\cite[Lemma 7.10]{Jantzen:Quantum}}]\label{p:coordprod}
Let $V_1$ and $V_2$ be $\Uq$-modules. Then, for any $\xi_i\in V_i^{\ast}$ and $v_i\in V_i$ $(i=1,2)$, we have 
\[
c^{V_1}(\xi_1, v_1)c^{V_2}(\xi_2, v_2)=c^{V_1\otimes V_2}(\xi_1\otimes \xi_2, v_1\otimes v_2).
\]
Here $\xi_1\otimes \xi_2\in V_1^{\ast}\otimes V_2^{\ast}=V_1^{\ast}\ut V_2^{\ast}$ is considered as an element of $(V_1\otimes V_2)^{\ast}$ via $\Upsilon_{V_1, V_2}$. 
\end{lem}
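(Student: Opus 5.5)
We evaluate both sides on an arbitrary $X\in\Uq$ and compare. The multiplication on $\Uq^{\ast}$ is dual to $\Delta$, so for any $f,g\in\Uq^{\ast}$ we have $(fg)(X)=\sum_{(X)}f(X_{(1)})g(X_{(2)})$, with Sweedler notation $\Delta(X)=\sum_{(X)}X_{(1)}\otimes X_{(2)}$.

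Applying this to $f=c^{V_1}(\xi_1,v_1)$ and $g=c^{V_2}(\xi_2,v_2)$ gives
\[
\bigl(c^{V_1}(\xi_1,v_1)c^{V_2}(\xi_2,v_2)\bigr)(X)=\sum_{(X)}\langle \xi_1, X_{(1)}.v_1\rangle\langle \xi_2, X_{(2)}.v_2\rangle .
\]

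For the right-hand side, the module structure on $V_1\otimes V_2$ is given by $\Delta$, so $X.(v_1\otimes v_2)=\sum_{(X)}X_{(1)}.v_1\otimes X_{(2)}.v_2$. Here $\xi_1\otimes\xi_2$ is regarded as an element of $(V_1\otimes V_2)^{\ast}$ through $\Upsilon_{V_1,V_2}$, which pairs factorwise by \eqref{eq:dualtensor}. Hence
\[
c^{V_1\otimes V_2}(\xi_1\otimes\xi_2, v_1\otimes v_2)(X)=\sum_{(X)}\langle\xi_1,X_{(1)}.v_1\rangle\langle\xi_2,X_{(2)}.v_2\rangle,
\]
which is the same expression as the left-hand side.

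The only point needing care is that $\Upsilon_{V_1,V_2}$ is used here merely as a linear identification, defined by the same pairing formula. Finite dimensionality is not needed for this: the map into $(V_1\otimes V_2)^{\ast}$ is still well-defined. Consequently, neither the $\Uq$-equivariance of $\Upsilon$ (which involves $\ut$) nor any ordering issue enters the argument, and there is no genuine obstacle.
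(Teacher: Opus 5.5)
Your argument is correct and is exactly the direct verification the paper has in mind when it says the lemma ``immediately follows from the definition.'' You evaluate both sides on $X\in\Uq$, use that the product on $\Uq^{\ast}$ is dual to $\Delta$ and that $\Upsilon_{V_1,V_2}$ pairs factorwise, and your remark that $\Upsilon_{V_1,V_2}$ makes sense without finite-dimensionality is appropriate, since the lemma is stated for arbitrary modules.

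One cosmetic point: in this paper the subscript Sweedler notation $X_{(1)}\otimes X_{(2)}$ is reserved for $\uDelta(X)$, while $X^{(1)}\otimes X^{(2)}$ is used for $\Delta(X)$. You should write superscripts to avoid clashing with the paper's conventions.
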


Let $\cO_\qq(G)$ be a subspace of $\Uq^{\ast}$ spanned by  
\[
\left\{c^{V(\lambda)}(\xi, v) \mid \xi \in V(\lambda)^*, v \in V(\lambda), \lambda\in P_+\right\}.
\]
It is called \emph{the quantum coordinate ring of $G$} associated with $\Uq$. The quantized coordinate ring $\cO_\qq(G)$ has a Hopf algebra structure over $\Bbbk$ whose product and coproduct are induced from the coproduct and the product of $\Uq$, respectively. Moreover, there is an isomorphism of $\Uq\otimes \Uq$-modules
\begin{align}
\bigoplus_{\lambda\in P^+} V(\lambda)^{\ast}\boxtimes V(\lambda)\xrightarrow{\sim} \cO_\qq(G),\ \xi\otimes v\mapsto c^{V(\lambda)}(\xi, v)\ \text{for }\xi\otimes v\in V(\lambda)^{\ast}\boxtimes V(\lambda), \label{eq:PW} 
\end{align}
where $V(\lambda)^{\ast}\boxtimes V(\lambda)$ is the tensor product $V(\lambda)^{\ast}\otimes V(\lambda)$ with a $\Uq\otimes \Uq$-module structure
\[
(X_1\otimes X_2).(\xi\otimes v)=(X_1.\xi)\otimes (X_2.v),
\]
for $\xi\in V(\lambda)^{\ast}$, $v\in V(\lambda)$, $X_1, X_2\in \Uq$, and the $\Uq\otimes \Uq$-module structure on $\cO_\qq(G)$ is given by 
\[
\langle (X_1\otimes X_2).\phi, Y\rangle:= \langle \phi, \sfS(X_1)YX_2\rangle,
\]
for $\phi\in \cO_\qq(G)$ and $X_1, X_2, Y\in \Uq$. See \cite[Section 7.2]{Kashiwara:Global} for more details. 

Let $V_i$ be a $\Uq$-module, $\xi_i\in V_i^{\ast}$ and $v_i\in V_i$ for $i=1,2$. For $X\in \Uq$, write $\Delta(X)=\sum_{(X)} X^{(1)}\otimes X^{(2)}$ and $\uDelta(X)=\sum_{(X)} X_{(1)}\otimes X_{(2)}(=\sum_{(X)} X^{(2)}\otimes X^{(1)})$. Then,  
\begin{align*}
    (1\otimes X).\left(c^{V_1}(\xi_1, v_1)c^{V_2}(\xi_2, v_2)\right)
    &=(1\otimes X).c^{V_1\otimes V_2}(\xi_1\otimes \xi_2, v_1\otimes v_2)\notag\\
    &=\sum_{(X)} c^{V_1\otimes V_2}(\xi_1\otimes \xi_2, X^{(1)}.v_1\otimes X^{(2)}.v_2)\notag\\
    &=\sum_{(X)} c^{V_1}(\xi_1, X^{(1)}.v_1)c^{V_2}(\xi_2, X^{(2)}.v_2)\notag\\
    &=\sum_{(X)} \left((1\otimes X^{(1)}).c^{V_1}(\xi_1, v_1)\right)\left((1\otimes X^{(2)}).c^{V_2}(\xi_2, v_2)\right),
\end{align*}
and
\begin{align}
    (X\otimes 1).\left(c^{V_1}(\xi_1, v_1)c^{V_2}(\xi_2, v_2)\right)
    &=(X\otimes 1).c^{V_1\otimes V_2}(\xi_1\otimes \xi_2, v_1\otimes v_2)\notag\\
    &=\sum_{(X)} c^{V_1\otimes V_2}(X^{(2)}.\xi_1\otimes X^{(1)}.\xi_2, v_1\otimes v_2)\notag\\
    &=\sum_{(X)} c^{V_1}(X^{(2)}.\xi_1, v_1)c^{V_2}(X^{(1)}.\xi_2, v_2)\notag\\
    &=\sum_{(X)} \left((X^{(2)}\otimes 1).c^{V_1}(\xi_1, v_1)\right)\left((X^{(1)}\otimes 1).c^{V_2}(\xi_2, v_2)\right)\notag\\
    &=\sum_{(X)} \left((X_{(1)}\otimes 1).c^{V_1}(\xi_1, v_1)\right)\left((X_{(2)}\otimes 1).c^{V_2}(\xi_2, v_2)\right).\label{eq:modalg}
\end{align} 
In particular, the weight space decomposition of $\cO_\qq(G)$ with respect to the action of $\Uq^0\otimes \Uq^0$ gives a $P\oplus P$-graded algebra structure. 

\begin{ex}
Assume that $G=SL_n$. For $i, j\in [1, n]$, set
\[
c_{ij}:=c^{V(\varpi_1)}(\sfS^{-1}(E_1E_2\cdots E_{i-1}).\xi_{\varpi_1}, F_{j-1}\cdots F_2F_1.v_{\varpi_1}).
\]
Then $c_{ij}$'s satisfy the following relations in $\cO_\qq(SL_n)$, and these are actually the defining relations of $\cO_\qq(SL_n)$ (i.e. $\cO_\qq(SL_n)$ is isomorphic to the $\Bbbk$-algebra generated by $\{c_{ij}\mid i, j=1,\dots, n\}$ with the following defining relations). 
\begin{itemize}
    \item $c_{ij}c_{ik}=\qq c_{ik}c_{ij}$ if $j<k$, 
    \item $c_{ik}c_{jk}=\qq c_{jk}c_{ik}$ if $i<j$, 
    \item $c_{ij}c_{k\ell}=c_{k\ell}c_{ij}$ if $i<k$ and $j>\ell$, 
    \item $c_{ij}c_{k\ell}-c_{k\ell}c_{ij}=(\qq -\qq ^{-1})c_{i\ell}c_{kj}$ if $i<k$ and $j<\ell$, 
    \item $\sum_{\sigma\in \mathfrak{S}_n}(-\qq )^{\ell (\sigma)}c_{1\sigma(1)}c_{2\sigma(2)}\cdots c_{n\sigma(n)}=1$.  
\end{itemize}
\end{ex}
Let $U^+$ be the maximal unipotent subgroup of $G$ associated with $\Phi_+$. We set $\sA_G:=G/U^+$ and $\sA_G^{-}:=G/U^-$. These homogeneous $G$-spaces are called \emph{base affine spaces}, and their elements are called \emph{decorated flags}. The quantum analogue of the ring of regular functions on $\sA_G$ and $\sA_G^-$ are defined as   
\begin{align}
&\cO_\qq(\sA_G)
:=\{\phi\in \cO_\qq(G)\mid (1\otimes X).\phi=\varepsilon(X)\phi\text{ for all }X\in \Uq^+\},\label{eq:def_AG}\\
&\cO_\qq(\sA_G^-)
:=\{\phi\in \cO_\qq(G)\mid (1\otimes X).\phi=\varepsilon(X)\phi\text{ for all }X\in \Uq^-\}.\notag
\end{align}
By \eqref{eq:PW}, we can show that $\cO_\qq(\sA_G)$ (resp.~$\cO_\qq(\sA_G^-)$) is a subspace of $\cO_\qq(G)$ spanned by 
\begin{align*}
&\left\{c^{\lambda}(\xi, v_{\lambda})\in \Uq^{\ast}\mid \xi\in V(\lambda)^{\ast}, \lambda\in P_+\right\}&
&\left(
\text{resp.~}\ \left\{c^{\lambda}(\xi, v_{w_0\lambda})\in \Uq^{\ast} \mid \xi\in V(\lambda)^{\ast}, \lambda\in P_+\right\}\right),
\end{align*}
and it is a subalgebra of $\cO_\qq(G)$. Moreover, 
\begin{align}
\cO_\qq(\sA_G)\simeq \bigoplus_{\lambda\in P_+} V(\lambda)^{\ast}\simeq \cO_\qq(\sA_G^-)\label{eq:AGmod}
\end{align}
as $\Uq$-modules. 

Next we define quantum coordinate rings of Borel subgroups $B_+$ and $B_-$ of $G$, associated with $\Phi_+$ and $-\Phi_+$, respectively. Let $\widetilde{\pi}_{+}\colon \Uq^{\ast}\to (\Uq^{\geq 0})^{\ast}$ (resp.~$\widetilde{\pi}_{-}\colon \Uq^{\ast}\to (\Uq^{\leq 0})^{\ast}$) be the restriction map. These are $\Bbbk$-algebra homomorphisms with respect to the $\Bbbk$-algebra structures induced from the coproduct $\Delta$. Set $\cO_\qq(B^+):=\widetilde{\pi}_{+}(\cO_\qq(G))$ and $\cO_\qq(B^-):=\widetilde{\pi}_{-}(\cO_\qq(G))$. Note that $\cO_\qq(B^+)$ and $\cO_\qq(B^-)$ are also $P\oplus P$-graded algebras by the action of $\Uq^0\otimes \Uq^0$. 

We can show that 
\[
\pi_+:=\widetilde{\pi}_{+}|_{\cO_\qq(\sA_G^-)}\colon \cO_\qq(\sA_G^-)\to \cO_\qq(B^+)\quad \text{and}\quad 
\pi_-:=\widetilde{\pi}_{-}|_{\cO_\qq(\sA_G)}\colon \cO_\qq(\sA_G)\to \cO_\qq(B^-)
\]
are injective $\Bbbk$-algebra homomorphism. Moreover, $\mathbf{\Delta}_{w_0, w_0}:=\{\qq^ac^{\lambda}(\xi_{w_0\lambda}, v_{w_0\lambda})\mid  a\in \frac{1}{2d}\Z, \lambda\in P_+\}$ (resp.~$\mathbf{\Delta}_{e, e}:=\{\qq^ac^{\lambda}(\xi_{\lambda}, v_{\lambda})\mid  a\in \frac{1}{2d}\Z, \lambda\in P_+\}$) forms an Ore set of $\cO_\qq(\sA_G^-)$ (resp.~$\cO_\qq(\sA_G)$), and  $\pi_{+}|_{\cO_\qq(\sA_G^-)}$ (resp.~$\pi_{-}|_{\cO_\qq(\sA_G)}$) can be extended to an isomorphism of $\Bbbk$-algebras
\begin{align}
\pi_{+}\colon \cO_\qq(\sA_G^-)[\mathbf{\Delta}_{w_0, w_0}^{-1}]\xrightarrow{\sim} \cO_\qq(B^+)\quad (\text{resp.~}\pi_{-}\colon \cO_\qq(\sA_G)[\mathbf{\Delta}_{e, e}^{-1}]\xrightarrow{\sim} \cO_\qq(B^-)).\label{eq:AG_B}
\end{align}

\section{Quantum configuration space of decorated flags}
\subsection{Configuration spaces of decorated flags}\label{subsec:config_classical}
For $K \in \Z_{\geq 2}$, the configuration space of $K$ decorated flags in $\sA_G$ is defined to be the set 
\[
\Conf_K \sA_G := (\overbrace{\sA_G \times \dots \times \sA_G}^{K\text{ times}})/G,
\]
where the action of $G$ is the diagonal left action. An element of $\Conf_K \sA_G$ is written as $[\sfA_1,\dots, \sfA_K]$ for $\sfA_1,\dots, \sfA_K\in \sA_G$. A pair $(\sfA, \sfA')\in \sA_G\times \sA_G$ is said to be \emph{generic} if $[\sfA, \sfA']=[[U^+], h\overline{w_0}.[U^+]]$ for some $h\in H$ in $\Conf_2 \sA_G$. For $I\subset [1,K]$, set 
\[
\Conf_K^I \sA_G := \{[\sfA_1=\sfA_{K+1},\sfA_2, \dots, \sfA_K]\in \Conf_K \sA_G\mid (\sfA_i,\sfA_{i+1})\ \text{is generic for }i\in I\},
\]
When $I= [1,K]$, we write $\Conf_K^{[1,K]} \sA_G=:\Conf_K^{\times} \sA_G$.
\begin{rem}
    The configuration spaces of decorated flags can be also defined as stacks. 
    However, we do not discuss the geometric structure of $\Conf_K \sA_G$ directly in this paper (we use these symbols only to capture the impressions of the geometric objects underlying the algebras considered below). Hence we introduce them just as a set to keep the prerequisites minimal. 
    See \cite{IO:Wilson, IOS} for more details. 
\end{rem}

\subsection{Braided tensor products}\label{subsec:braided_product}
To formulate the quantum configuration spaces of decorated flags, let us recall the construction of braided tensor products; cf.~\cite{Majid}. 
Let $\A_i$ ($i=1, 2$) be locally finite $\Uq$-module algebras, that is, $\A_i$ is a $\Bbbk$-algebra endowed with a $\Uq$-module structure such that 
\begin{itemize}
    \item as a $\Uq$-module, $\A_i$ is decomposed into a direct sum of (possibly infinitely many) finite dimensional $\Uq$-modules, 
    \item $X.1=\varepsilon(X)1$ for $X\in \Uq$, and 
    \item \[
    X.(aa')=\sum_{(X)} (X_{(1)}.a)(X_{(2)}.a')
    \]
    for all $a, a'\in \A_i$ and $X\in \Uq$, where $\uDelta(X)=\sum_{(X)} X_{(1)}\otimes X_{(2)}$. 
\end{itemize}
The second and third conditions can be rephrased as saying that the unit map $\Bbbk\to \A_i$ and the multiplication map $m_{\A_i}\colon \A_i\ut \A_i\to\A_i$ are $\Uq$-module homomorphisms.

Define \emph{the braided tensor product} $\A_1\bt \A_2$ of $\A_1$ and $\A_2$ as follows.
\begin{itemize}
    \item As a $\Uq$-module, $\A_1\bt \A_2:=\A_1\ut \A_2$.
    \item The multiplication $m_{\A_1\bt \A_2}\colon (\A_1\bt \A_2)\ut (\A_1\bt \A_2)\to \A_1\bt \A_2$ is defined by 
    \[
m_{\A_1\bt \A_2}:= (m_{\A_1}\otimes m_{\A_2})\circ (\id_{\A_1}\otimes \tcR_{\A_2, \A_1}\otimes \id_{\A_2}). 
    \]
\end{itemize}
Note that $m_{\A_1\bt \A_2}$ is a $\Uq$-module homomorphism. It follows from \cref{c:YB} that  $\A_1\bt \A_2$ is again a locally finite $\Uq$-module algebra. Moreover, for locally finite $\Uq$-module algebras $\A_1, \A_2, \A_3$, we have an isomorphism of locally finite $\Uq$-module algebras
\[
(\A_1\bt\A_2)\bt \A_3\xrightarrow{\sim}\A_1\bt (\A_2\bt \A_3),\qquad (a_1\otimes a_2)\otimes a_3\mapsto a_1\otimes (a_2\otimes a_3)
\]
by \cref{c:YB}. Hence we simply write these algebras as $\A_1\bt\A_2\bt \A_3$.

Let $\A_1, \dots, \A_k$ be locally finite $\Uq$-module algebras. For $\ell=1,\dots, k$ and $a\in \A_{\ell}$, we write 
\[
a^{[\ell]}:=1\otimes \cdots \otimes 1\otimes \overset{\text{$\ell$-th}}{\overset{\vee}{{a}}} \otimes 1\otimes \cdots \otimes 1\in \A_1\bt\cdots\bt\A_{k}.
\]
Note that  
\[
a_1^{[1]}\cdots a_k^{[k]}=a_1\otimes\cdots\otimes a_k,
\]
for $a_{\ell}\in \A_{\ell}$ ($\ell=1,\dots, k$), but it may not be equal to $a_k^{[k]}\cdots a_1^{[1]}$. 
\subsection{Quantum configuration spaces of decorated flags}
The algebra $\cO_\qq(\sA_G)$ is a locally finite $\Uq$($=\Uq\otimes 1$)-module algebra (see \eqref{eq:modalg}). Henceforth, we always regard $\cO_\qq(\sA_G)$ as a $\Uq$-module by the action induced from that of $\Uq\otimes 1$ on $\cO_\qq(G)$. Define
\begin{align*}
    &\cO_\qq(\sA_G)^{\bt K}:=\underset{K\text{ times}}{\underbrace{\cO_\qq(\sA_G)\bt\cdots \bt \cO_\qq(\sA_G)}}  
\end{align*}
for $K\in \Z_{>0}$
with respect to this $\Uq$-module structure. In the following, we write
\[
\xi(\lambda):=c^{V(\lambda)}(\xi, v_{\lambda})\in \cO_\qq(\sA_G)(\subset \cO_\qq(G))
\]
for $\xi\in V(\lambda)^{\ast}$, $\lambda\in P_+$. For $\lambda_1, \lambda_2\in P_+$, there exists a $\Uq$-module homomorphism $V(\lambda_1+\lambda_2)\to V(\lambda_1)\otimes V(\lambda_2)$ given by $v_{\lambda_1+\lambda_2}\mapsto v_{\lambda_1}\otimes v_{\lambda_2}$. Therefore, for $\xi_1\in V(\lambda_1)^{\ast}$ and $\xi_2\in V(\lambda_2)^{\ast}$, there uniquely exists $\xi_1\diamond \xi_2\in V(\lambda_1+\lambda_2)^{\ast}$ satisfying
\[
\xi_1(\lambda_1)\xi_2(\lambda_2)=(\xi_1\diamond \xi_2)(\lambda_1+\lambda_2)
\]
in $\cO_\qq(\sA_G)$. The correspondence $(\xi_1, \xi_2)\mapsto \xi_1\diamond \xi_2$ is called the Cartan
product.  
By definition, the Cartan product makes $\bigoplus_{\lambda\in P_+} V(\lambda)^{\ast}$ into a $\Bbbk$-algebra which is isomorphic to $\cO_\qq(\sA_G)$ (see \eqref{eq:AGmod}). 
Note that $\xi_{w\lambda_1}\diamond \xi_{w\lambda_2}=\xi_{w(\lambda_1+\lambda_2)}$ for $\lambda_1, \lambda_2\in P_+$ and $w\in W$. 

By \eqref{eq:AGmod}, we have an isomorphism of $\Uq$-modules
\[
\Psi_K\colon\bigoplus_{\lambda_1,\dots, \lambda_K\in P_+}V(\lambda_1)^{\ast}\ut \cdots\ut V(\lambda_K)^{\ast}\xrightarrow{\sim}\cO_\qq(\sA_G)^{\bt K},
\]
given by 
\begin{align}
\xi_1\otimes \cdots \otimes \xi_K\mapsto \xi_1(\lambda_1)\otimes \cdots \otimes \xi_K(\lambda_K)   \label{eq:Psi_k} 
\end{align}
for $\xi_i\in V(\lambda_i)^{\ast}$, $i=1,\dots, K$. In the following, we write 
\begin{align}
    \cO(\lambda_1,\dots, \lambda_K)=\Psi_K(V(\lambda_1)^{\ast}\ut \cdots\ut V(\lambda_K)^{\ast})\label{eq:O_notation}
\end{align}
for $\lambda_1,\dots, \lambda_K\in P_+$. 
For $\phi\in \cO(\lambda_1,\dots, \lambda_K)$, write 
\begin{align}
\deg(\phi)=(\lambda_1,\dots, \lambda_K).\label{eq:deg}
\end{align}
\begin{lem}\label{lem:gradedalg}
The algebra $\cO_\qq(\sA_G)^{\bt K}$ is a graded algebra with respect to the grading \eqref{eq:deg}.
\end{lem}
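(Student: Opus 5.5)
We must show that $\cO(\lambda_1,\dots,\lambda_K)\cdot\cO(\mu_1,\dots,\mu_K)\subset\cO(\lambda_1+\mu_1,\dots,\lambda_K+\mu_K)$, and that $\cO_\qq(\sA_G)^{\bt K}=\bigoplus\cO(\lambda_1,\dots,\lambda_K)$. The decomposition is immediate: $\Psi_K$ is an isomorphism by \eqref{eq:AGmod}. For the product, we first observe that $\cO_\qq(\sA_G)$ itself is $P_+$-graded. Each summand $\cO(\lambda)=\{\xi(\lambda)\mid\xi\in V(\lambda)^\ast\}$ is a $\Uq$-submodule, and the Cartan product gives $\xi_1(\lambda_1)\xi_2(\lambda_2)=(\xi_1\diamond\xi_2)(\lambda_1+\lambda_2)\in\cO(\lambda_1+\lambda_2)$.

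Next we examine the braided multiplication
\[
m=(m\otimes\cdots\otimes m)\circ(\text{composite of }\tcR\text{'s}),
\]
which is built from multiplications in each tensor factor together with the braidings $\tcR_{\A,\A}$. We argue by induction on $K$, using the associativity isomorphism $\cO_\qq(\sA_G)^{\bt K}\cong\cO_\qq(\sA_G)^{\bt(K-1)}\bt\cO_\qq(\sA_G)$. The key step is that for locally finite module algebras $\A_1,\A_2$, the map $\tcR_{\A_2,\A_1}=P\circ\Pi\circ\Theta^{\sfS}$ sends $M_2\otimes M_1$ into $M_1\otimes M_2$ for any $\Uq$-submodules $M_i\subset\A_i$. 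This holds because $\Theta^{\sfS}$ is a sum of elements of $\Uq\otimes\Uq$ acting factorwise, $\Pi$ rescales weight vectors, and $P$ is the flip. Local finiteness makes the infinite sum finite on each element.

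Apply this with $\A_1=\cO_\qq(\sA_G)^{\bt(K-1)}$ and $\A_2=\cO_\qq(\sA_G)$. A product $(a\otimes b)(a'\otimes b')$ with $a\in\cO(\lambda_1,\dots,\lambda_{K-1})$, $b\in\cO(\lambda_K)$, $a'\in\cO(\mu_1,\dots,\mu_{K-1})$, $b'\in\cO(\mu_K)$ first has $b\otimes a'$ mapped into $\cO(\mu_1,\dots,\mu_{K-1})\otimes\cO(\lambda_K)$. These graded pieces are $\Uq$-submodules, since $\Psi_{K-1}$ is a module isomorphism and $V(\lambda_1)^\ast\ut\cdots$ is a submodule. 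The result is then multiplied factorwise. By the induction hypothesis for $K-1$ and the base case $K=1$, the product lies in $\cO(\lambda_1+\mu_1,\dots)\otimes\cO(\lambda_K+\mu_K)=\cO(\lambda_1+\mu_1,\dots,\lambda_K+\mu_K)$. The unit lies in $\cO(0,\dots,0)$.

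The only point needing care is the preservation of submodules by $\tcR$ on infinite-dimensional but locally finite algebras. We handle it by restricting to the finite-dimensional submodules $\cO(\lambda_1,\dots,\lambda_{K-1})$, where $\tcR$ is defined as in \cref{p:tildeR}. No further computation is needed.
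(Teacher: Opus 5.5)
Your proposal is correct and follows essentially the same route as the paper. The braiding acts through $\Uq$, so it keeps each tensor slot inside its summand $\cO(\lambda_i)$ or $\cO(\mu_i)$. The slotwise product then lands in $\cO(\lambda_i+\mu_i)$ via the embedding $V(\lambda_i+\mu_i)\to V(\lambda_i)\otimes V(\mu_i)$, i.e.\ the Cartan product. The paper handles all $K$ slots at once using \cref{p:coordprod} instead of inducting on $K$, which is only a cosmetic difference.
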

\begin{proof}
For $\xi_i\in V(\lambda_i)^{\ast}$ and $\zeta_i\in V(\mu_i)^{\ast}$, $i=1,\dots, K$, the element 
\[
\left(c^{V(\lambda_1)}(\xi_1, v_{\lambda_1})\otimes\cdots \otimes c^{V(\lambda_K)}(\xi_K, v_{\lambda_K})\right)\cdot \left(c^{V(\mu_1)}(\zeta_1, v_{\mu_1})\otimes\cdots \otimes c^{V(\mu_K)}(\zeta_K, v_{\mu_K})\right)
\]
is a finite sum of elements of the form
\[
c^{V(\lambda_1)\otimes V(\mu_1)}(\tilde{\eta}_1, v_{\lambda_1}\otimes v_{\mu_1})\otimes\cdots \otimes c^{V(\lambda_K)\otimes V(\mu_K)}(\tilde{\eta}_K, v_{\lambda_K}\otimes v_{\mu_K})
\]
with $\tilde{\eta}_i\in V(\lambda_i)^{\ast}\ut V(\mu_i)^{\ast}$ by \cref{p:coordprod} and the definition of the multiplication in $\cO_\qq(\sA_G)^{\bt K}$. Since there exists a $\Uq$-module homomorphism $V(\lambda_i+\mu_i)\to V(\lambda_i)\otimes V(\mu_i)$ given by $v_{\lambda_i+\mu_i}\mapsto v_{\lambda_i}\otimes v_{\mu_i}$, there exists $\eta_i\in V(\lambda_i+\mu_i)^{\ast}$ such that 
\[
c^{V(\lambda_i)\otimes V(\mu_i)}(\tilde{\eta}_i, v_{\lambda_i}\otimes v_{\mu_i})=c^{V(\lambda_i+\mu_i)}(\eta_i, v_{\lambda_i+\mu_i})
\]
in $\cO_\qq(\sA_G)$. Hence we obtain the assertion. 
\end{proof}

The following statements are immediate from the definition of the multiplication in $\cO_\qq(\sA_G)^{\bt K}$. 

\begin{prop}\label{p:emb}
For an order-preserving injective map $p\colon [1,K]\to [1,L]$, there exists an injective homomorphism of $\Uq$-module algebras
    \[
    \iota_p\colon \cO_\qq(\sA_G)^{\bt K}\to \cO_\qq(\sA_G)^{\bt L}
    \]
    given by 
    \[
    \phi_1^{[1]}\cdots \phi_K^{[K]}\mapsto \phi_1^{[p(1)]}\cdots \phi_K^{[p(K)]}
    \]
    for $\phi_1,\dots, \phi_K\in \cO_\qq(\sA_G)$.
\end{prop}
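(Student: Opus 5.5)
The approach is to define $\iota_p$ directly on the underlying vector space and then verify its three required properties separately. On the level of $\Uq$-modules, $\cO_\qq(\sA_G)^{\bt K}=\cO_\qq(\sA_G)^{\ut K}$. So I would define $\iota_p$ as the linear map
\[
\phi_1\otimes\cdots\otimes\phi_K\mapsto \psi_1\otimes\cdots\otimes\psi_L,
\]
where $\psi_{p(i)}=\phi_i$ and $\psi_j=1$ for $j\notin p([1,K])$. Since $a_1^{[1]}\cdots a_k^{[k]}=a_1\otimes\cdots\otimes a_k$, this agrees with the formula in the statement. Injectivity is then clear: choosing a linear functional $\epsilon\colon\cO_\qq(\sA_G)\to\Bbbk$ with $\epsilon(1)=1$ and applying it in the inserted slots gives a left inverse.

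Next, $\iota_p$ is a $\Uq$-module map. The unit $1\in\cO_\qq(\sA_G)$ spans a trivial submodule, since $X.1=\varepsilon(X)1$. Hence inserting $1$ in some tensor slot is a $\Uq$-homomorphism $M\ut N\cong M\ut\Bbbk\ut N\to M\ut\cO_\qq(\sA_G)\ut N$, by the counit axiom for $\uDelta$. Iterating this over all slots outside $p([1,K])$ gives the claim.

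The main point is multiplicativity. By associativity of the braided tensor product (\cref{c:YB}), it suffices to treat the case $L=K+1$ with one inserted slot and then iterate. By the associativity isomorphism, this reduces to the statement that
\[
\A_1\bt\A_3\to\A_1\bt\A_2\bt\A_3,\qquad a_1\otimes a_3\mapsto a_1\otimes1\otimes a_3,
\]
is an algebra map, with $\A_1$ or $\A_3$ allowed to be trivial. Expanding the multiplication of the triple product, the only nontrivial step is to check that the braiding $\tcR_{\A_2\bt\A_3,\A_1}$ applied to $(1\otimes a_3)\otimes a_1$ equals $\tcR_{\A_3,\A_1}(a_3\otimes a_1)$ with $1$ inserted. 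Using \cref{c:YB}, $\tcR_{\A_2\ut\A_3,\A_1}=\tcR_{12}\circ\tcR_{23}$, so it suffices to prove the following: for $\Bbbk$ the trivial module inside $\A_2$, $\tcR_{\Bbbk,V}(1\otimes v)=v\otimes1$ and $\tcR_{V,\Bbbk}(v\otimes1)=1\otimes v$.

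This last identity I would check directly from $\tcR=P\circ\Pi\circ\Theta^{\sfS}$. The factor $\Pi$ is trivial because the weight of $1$ is $0$. In $\Theta^{\sfS}$, every term with $\alpha\neq0$ contains a factor $\sfS^{-1}(B_{\alpha,i})$ or $\sfS^{-1}(B^+_{\alpha,i})$ acting on $1$. This factor has zero counit and so kills $1$, leaving only $\Theta_0=1\otimes1$.

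With these facts, comparing $m_{\A_1\bt\A_2\bt\A_3}$ on $(a_1\otimes1\otimes a_3)(a_1'\otimes1\otimes a_3')$ with $m_{\A_1\bt\A_3}$ gives the same expression, since $1\cdot1=1$ in $\A_2$.

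I expect the main obstacle to be careful bookkeeping of which $\tcR$ appears in the iterated braided multiplication, rather than any conceptual difficulty.
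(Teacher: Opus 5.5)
Your proposal is correct and matches the paper's approach. The paper simply declares the statement immediate from the definition of the multiplication in $\cO_\qq(\sA_G)^{\bt K}$; your reduction to a single inserted slot via \cref{c:YB}, together with the check that $\tcR$ fixes $1\otimes v$ and $v\otimes 1$ up to the swap (only $\Theta_0$ survives on $1$, and $\Pi$ is trivial in weight $0$), is exactly that definition unwound, and your injectivity and $\Uq$-linearity arguments are fine.
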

\begin{prop}\label{p:comm}
Let $\lambda_i\in P_+$ and $\xi_i\in (V(\lambda_i)^{\ast})_{\nu_i}$, $i=1,2$. Then, for $k<\ell$, 
\[
\xi_2(\lambda_2)^{[\ell]}\xi_1(\lambda_1)^{[k]}=\qq^{-(\nu_1, \nu_2)} \xi_1(\lambda_1)^{[k]}\xi_2(\lambda_2)^{[\ell]}+\sum_{j}\zeta_j(\lambda_1)^{[k]}\zeta'_j(\lambda_2)^{[\ell]}
\]
for some $\zeta_j\in (V(\lambda_1)^{\ast})_{>\nu_1}$ and $\zeta'_j\in (V(\lambda_2)^{\ast})_{<\nu_2}$.
\end{prop}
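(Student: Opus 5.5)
The plan is to reduce to two tensor factors and then read the relation off the definition of the braided multiplication. By \cref{p:emb}, the map $\iota_p$ for the order-preserving injection $p\colon[1,2]\to[1,L]$ with $p(1)=k$, $p(2)=\ell$ is an injective homomorphism of $\Uq$-module algebras. It sends $\phi^{[1]}\mapsto\phi^{[k]}$ and $\phi^{[2]}\mapsto\phi^{[\ell]}$, and it preserves the degree \eqref{eq:deg} in the relevant components. So it suffices to prove the identity in $\cO_\qq(\sA_G)\bt\cO_\qq(\sA_G)$ with $k=1$, $\ell=2$, and then apply $\iota_p$.

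In $\A_1\bt\A_2$ with $\A_1=\A_2=\cO_\qq(\sA_G)$, the product $a^{[2]}b^{[1]}=(1\otimes a)(b\otimes 1)$ equals $(m\otimes m)(1\otimes\tcR_{\A_2,\A_1}(a\otimes b)\otimes 1)$, which is just $\tcR_{\A_2,\A_1}(a\otimes b)\in\A_1\otimes\A_2$. Here take $a=\xi_2(\lambda_2)$ and $b=\xi_1(\lambda_1)$. Since $\cO_\qq(\sA_G)$ is locally finite, $\tcR=P\circ\Pi\circ\Theta^{\sfS}$ is well defined: $a$ and $b$ lie in the finite-dimensional submodules $\Psi_1(V(\lambda_2)^\ast)$ and $\Psi_1(V(\lambda_1)^\ast)$, and only finitely many $\Theta_\alpha$ act nontrivially on them. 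Expanding $\Theta^{\sfS}=\sum_{\alpha\in Q_+}\sum_i\sfS^{-1}(B_{\alpha,i})\otimes\sfS^{-1}(B^+_{\alpha,i})$ gives
\[
\tcR(a\otimes b)=\sum_{\alpha,i}\qq^{-(\nu_2-\alpha,\nu_1+\alpha)}\,\sfS^{-1}(B^+_{\alpha,i}).b\otimes \sfS^{-1}(B_{\alpha,i}).a .
\]
The $\alpha=0$ term is $\qq^{-(\nu_1,\nu_2)}\,b\otimes a=\qq^{-(\nu_1,\nu_2)}\,\xi_1(\lambda_1)^{[1]}\xi_2(\lambda_2)^{[2]}$, which is the leading term of the claimed formula.

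For the terms with $\alpha\neq0$, I will use that $\sfS^{-1}$ preserves weights, so $\sfS^{-1}(B^+_{\alpha,i})$ has weight $\alpha>0$ and $\sfS^{-1}(B_{\alpha,i})$ has weight $-\alpha$. I also need that the $\Uq$-action on $\cO_\qq(\sA_G)$ corresponds, under $\Psi_1$, to the action on $\bigoplus_\lambda V(\lambda)^\ast$ by \eqref{eq:AGmod}. This correspondence preserves each $V(\lambda)^\ast$ and its weight grading, i.e.\ $X.\xi(\lambda)=(X.\xi)(\lambda)$. Hence $\sfS^{-1}(B^+_{\alpha,i}).b=\zeta(\lambda_1)$ with $\zeta\in(V(\lambda_1)^\ast)_{\nu_1+\alpha}\subset(V(\lambda_1)^\ast)_{>\nu_1}$, and $\sfS^{-1}(B_{\alpha,i}).a=\zeta'(\lambda_2)$ with $\zeta'\in(V(\lambda_2)^\ast)_{<\nu_2}$. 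Absorbing the scalars $\qq^{-(\nu_2-\alpha,\nu_1+\alpha)}$ into $\zeta$ and using $\zeta(\lambda_1)\otimes\zeta'(\lambda_2)=\zeta(\lambda_1)^{[1]}\zeta'(\lambda_2)^{[2]}$ gives the error terms.

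The only delicate point is the bookkeeping: identifying which tensor factor receives the $E$-type part of $\Theta^{\sfS}$ after the flip $P$, and checking the sign conventions so that $\wt$ on $\xi\in V(\lambda)^\ast$ shifts in the stated direction. I expect this to be the main obstacle, and I would settle it by matching against the explicit form of $\tcR$ already used in the proof of \cref{p:tildeR}.
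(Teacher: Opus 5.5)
Your argument is correct and is essentially the paper's route: the paper only says this is immediate from the definition of the braided multiplication, and your expansion of $\tcR_{\A_2,\A_1}(a\otimes b)$ via $\Theta^{\sfS}$ is exactly that unpacking. The $\alpha=0$ term gives $\qq^{-(\nu_1,\nu_2)}\xi_1(\lambda_1)^{[k]}\xi_2(\lambda_2)^{[\ell]}$, and the $\alpha\neq 0$ terms produce the stated error terms; the bookkeeping you flagged is already settled by your displayed formula, since after the flip $P_{U,V}$ the factor $\sfS^{-1}(B^+_{\alpha,i})$ (of weight $\alpha>0$) acts on $b=\xi_1(\lambda_1)$ and $\sfS^{-1}(B_{\alpha,i})$ (of weight $-\alpha$) acts on $a=\xi_2(\lambda_2)$.
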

The following domain property is an important corollary of \cref{p:comm}. 
\begin{prop}\label{prop:domain}
For $K\in \Z_{>0}$, $\cO_\qq(\sA_G)^{\bt K}$ is an Ore domain. 
\end{prop}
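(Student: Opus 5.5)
We will prove that $\cO_\qq(\sA_G)^{\bt K}$ is a domain by degenerating it, through a filtration, to an associated graded algebra that is visibly a domain. The Ore property will then follow from finite Gelfand--Kirillov dimension.

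For the domain property, we filter using the commutation relation of \cref{p:comm}. By \cref{lem:gradedalg} the algebra is graded by $\deg$, with pieces $\cO(\lambda_1,\dots,\lambda_K)$. Each piece has a basis of products $\xi_1(\lambda_1)^{[1]}\cdots\xi_K(\lambda_K)^{[K]}$ with $\xi_i$ weight vectors, and these carry a multi-weight $(\nu_1,\dots,\nu_K)$. We choose a total order on $P^K$ (refining the dominance orders) under which the error terms of \cref{p:comm} are strictly smaller than the leading term. The error terms raise the weight in the earlier slot $k$ and lower it in the later slot $\ell$, keeping the total weight fixed. So a suitable order is lexicographic from the first slot, declaring a larger weight in the first slot to be smaller, with ties broken by a linear functional such as $\langle\rho^\vee,\cdot\rangle$.

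Under this order the filtration is multiplicative. In a product of two basis monomials we reorder the factors with \cref{p:comm}, and within a single slot the Cartan product satisfies $\xi\diamond\zeta\in V(\lambda+\mu)^\ast_{\nu+\nu'}$. The leading term of the product is therefore $\qq^{\ast}(\xi_1\diamond\zeta_1)(\lambda_1+\mu_1)^{[1]}\cdots$. Consequently the associated graded algebra is a twisted (\,$\qq$-commuting) tensor product $\mathrm{gr}$ of copies of $\mathrm{gr}\,\cO_\qq(\sA_G)$. In it, elements from different slots $\qq$-commute with factor $\qq^{-(\nu_1,\nu_2)}$, a bicharacter in the multi-weights.

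It remains to check that $\cO_\qq(\sA_G)$ itself is a domain; we take this from its embedding $\pi_-$ into $\cO_\qq(B^-)$, which is a known domain. The filtration of each single-slot factor is only by weight, and $\cO_\qq(\sA_G)$ is already weight-graded, so we may take a single copy ungraded. A $P^K$-graded twisting of a tensor product of domains, twisted by a bicharacter, is again a domain. To see this, take leading terms with respect to a total group order on the grading group: the leading term of a product is the product of the leading terms, and that product is a nonzero multiple of the tensor product of nonzero elements in a tensor product of domains. Here we need that $\cO_\qq(\sA_G)\otimes\cO_\qq(\sA_G)$ is a domain over a field that is not algebraically closed, which is delicate. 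We instead pass once more to an associated graded by a good filtration of $\cO_\qq(\sA_G)$, a quantum affine space or torus obtained from a PBW/dual canonical basis degeneration. Iterated degenerations then give a quantum torus-like algebra, which is a domain. Lifting back gives that $\cO_\qq(\sA_G)^{\bt K}$ is a domain.

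Finally, for the Ore property we bound the growth. Each graded piece $\cO(\lambda_1,\dots,\lambda_K)$ has dimension polynomial in the $\lambda_i$ by the Weyl dimension formula, and the algebra is finitely generated, for instance by the $V(\varpi_s)^{\ast}$ in each slot. Hence $\GKdim<\infty$, and a domain of finite GK dimension is an Ore domain (\cite{GW}). The main obstacle is making the filtration argument rigorous: one must choose a monomial order compatible simultaneously with \cref{p:comm} and with the Cartan products, and produce a domain degeneration of $\cO_\qq(\sA_G)$ itself. If that fails, the fallback is to show directly that leading terms multiply nontrivially, using that $\xi\diamond\zeta\neq0$ for nonzero $\xi,\zeta$, which follows from $\cO_\qq(\sA_G)$ being a domain.
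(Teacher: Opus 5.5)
Your proposal is correct and follows essentially the paper's route: a leading-term argument with respect to a total group order on $P^{\oplus K}$ refining dominance reduces the braided product to a $\qq$-twist of the ordinary product on $\cO_\qq(\sA_G)^{\otimes K}$, and the Ore property comes from polynomial growth of the $\deg$-filtration. (The paper takes minimal components, with the error terms of \cref{p:comm} strictly larger, where you take maximal ones; and it invokes \cite[Proposition A.1]{BZ} with a $\qq=1$ dimension count where you use the GK-dimension criterion.) You rightly isolate the one delicate input, that $\cO_\qq(\sA_G)^{\otimes K}$ is a domain; the paper gets it by embedding via $\pi_-^{\otimes K}$ into $\cO_\qq(B^-)^{\otimes K}$ and citing the proof of \cite[Lemma 2.3]{OQY}, so your degeneration sketch only replaces that citation, but your stated fallback would not suffice, since leading multi-weight components need not be pure tensors.
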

\begin{proof}
First, we show that the usual tensor product algebra $\cO_\qq(\sA_G)^{\otimes K}$ is a domain. Indeed, recall that $\pi_-\colon \cO_\qq(\sA_G)\to \cO_\qq(B^-)$ is an injective $\Bbbk$-algebra homomorphism. Hence, the assertion follows from the fact that  $\cO_\qq(B^-)^{\otimes K}$ is a domain (see, for example, the proof of \cite[Lemma 2.3]{OQY}). In the following, the multiplication of $\phi, \psi\in \cO_\qq(\sA_G)^{\otimes K}$ in $\cO_\qq(\sA_G)^{\otimes K}$ is denoted by $\phi\cdot_{\mathrm{usual}}  \psi$. 

Next, we prove that the braided tensor product $\cO_\qq(\sA_G)^{\bt K}$ is a domain. 
Write $S=\{1,\dots, n\}$ in this proof, that is, we arbitrarily fix a total order on the set of simple roots $\Pi$. Since $\Pi$ is a $\Q$-basis of $P\otimes_\Z\Q$, each
$\lambda\in P$ has a unique expression
\[
 \lambda=\sum_{s=1}^n c_s(\lambda)\alpha_s,\qquad
 c(\lambda):=(c_1(\lambda),\ldots,c_n(\lambda))\in\Q^n.
\]
We fix a total order on $P^{\oplus K}$ as follows. 
For $\boldsymbol{\lambda}=(\lambda_1,\dots,\lambda_K)\in P^{\oplus K}$, we set 
\begin{align*}
 c(\boldsymbol{\lambda})
 &:=(c(\lambda_1),\dots,c(\lambda_K))\\
 &=(\underbrace{c_1(\lambda_1),\dots,c_n(\lambda_1)}, \underbrace{c_1(\lambda_2),\dots,c_n(\lambda_2)},\dots, \underbrace{c_1(\lambda_K),\dots,c_n(\lambda_K)})\in\Q^{nK}.
 \end{align*}
Then, for $\boldsymbol{\lambda}, \boldsymbol{\lambda}'\in P^{\oplus K}$, define
\begin{equation*}
 \boldsymbol{\lambda}\preceq\boldsymbol{\lambda}'
 \quad\Longleftrightarrow
 \quad
 c(\boldsymbol{\lambda})
       \leq c(\boldsymbol{\lambda}'),
\end{equation*}
here $\leq$ on $\Q^{nK}$ denotes the lexicographic order reading from the left. Then it is a total order on $P^{\oplus K}$ satisfying the following. 
\begin{itemize}
    \item $ \boldsymbol{\lambda}\preceq \boldsymbol{\lambda}',\ \boldsymbol{\mu}\preceq \boldsymbol{\mu}'
 \quad\Longrightarrow\quad
 \boldsymbol{\lambda}+\boldsymbol{\mu}
       \preceq \boldsymbol{\lambda}'+\boldsymbol{\mu}'$. 
      \item It is a refinement of the partial order on $P^{\oplus K}$ induced from the partial order $\leq$ on $P$. 
\end{itemize}
For $\boldsymbol{\mu}=(\mu_1,\dots,\mu_K)\in P^{\oplus K}$, let
\[
 E_{\boldsymbol\mu}=\bigoplus_{\lambda_1,\dots, \lambda_K\in P_+}\cO(\lambda_1)_{\mu_1}\ut \cdots\ut \cO(\lambda_K)_{\mu_K}\subset \cO_\qq(\sA_G)^{\bt K}.
\]
Then, for $\phi\in E_{\boldsymbol\mu}$ and $\psi\in E_{\boldsymbol\nu}$ with $\boldsymbol{\mu}=(\mu_1,\dots,\mu_K), \boldsymbol{\nu}=(\nu_1,\dots,\nu_K)\in P^{\oplus K}$, repeated
use of \cref{p:comm} gives
\begin{equation}\label{eq:leading}
 \phi\psi-
 \qq^{-\sum_{i<j}(\mu_j,\nu_i)}\phi\cdot_{\mathrm{usual}}\psi
 \in
 \bigoplus_{\boldsymbol\gamma\succeq
              \boldsymbol\mu+\boldsymbol\nu, \boldsymbol\gamma\neq 
              \boldsymbol\mu+\boldsymbol\nu} E_{\boldsymbol\gamma}.
\end{equation}
Note that $\qq^{-\sum_{i<j}(\mu_j,\nu_i)}\phi\cdot_{\mathrm{usual}}\psi
 \in
E_{\boldsymbol\mu+\boldsymbol\nu}$. 

Let $\phi, \psi\in \cO_\qq(\sA_G)^{\bt K}\setminus \{0\}$. Since $\cO_\qq(\sA_G)^{\bt K}=\bigoplus_{\boldsymbol{\mu}\in P^{\oplus K}}E_{\boldsymbol\mu}$, we can write $\phi, \psi$ as 
\[
 \phi=\sum_{\boldsymbol\mu\in P^{\oplus K}} \phi_{\boldsymbol\mu},
 \qquad
 \psi=\sum_{\boldsymbol\nu\in P^{\oplus K}} \psi_{\boldsymbol\nu},
\]
where $\phi_{\boldsymbol\mu}\in E_{\boldsymbol\mu}$ and $\psi_{\boldsymbol\nu}\in E_{\boldsymbol\nu}$. 
Set 
\begin{align*}
&\boldsymbol{\mu}_0=(\mu_1^0,\dots, \mu_K^0):=\min\nolimits_{\preceq}\{\boldsymbol{\mu}\in P^{\oplus K}\mid \phi_{\boldsymbol\mu}\neq 0\},\\ 
&\boldsymbol{\nu}_0=(\nu_1^0,\dots, \nu_K^0):=\min\nolimits_{\preceq}\{\boldsymbol{\nu}\in P^{\oplus K}\mid \psi_{\boldsymbol\nu}\neq 0\}.
\end{align*}
Then, by \eqref{eq:leading}, 
\[
\phi\psi-\qq^{-\sum_{i<j}(\mu_j^0,\nu_i^0)}\phi_{\boldsymbol{\mu}_0}\cdot_{\mathrm{usual}}\psi_{\boldsymbol{\nu}_0}\in  \bigoplus_{\boldsymbol\gamma\succeq
              \boldsymbol{\mu}_0+\boldsymbol{\nu}_0, \boldsymbol\gamma\neq 
              \boldsymbol{\mu}_0+\boldsymbol{\nu}_0} E_{\boldsymbol\gamma}.
\]
Since the usual tensor product algebra $\cO_\qq(\sA_G)^{\otimes K}$ is a domain, $\qq^{-\sum_{i<j}(\mu_j^0,\nu_i^0)}\phi_{\boldsymbol{\mu}_0}\cdot_{\mathrm{usual}}\psi_{\boldsymbol{\nu}_0}\neq 0$. Hence $\phi\psi\neq 0$, which proves that $\cO_\qq(\sA_G)^{\bt K}$ is a domain.

Finally, we show that $\cO_\qq(\sA_G)^{\bt K}$ is an Ore domain. We set 
\begin{align*}
&\mathcal{F}_{m}:=\bigoplus_{\substack{\lambda_1,\dots, \lambda_K\in P_+\\ 
\sum_{i=1}^{K}\sum_{s\in S}\langle \alpha_s^\vee,\lambda_i\rangle =m}}\cO(\lambda_1,\dots, \lambda_K),\quad 
\mathcal{F}_{\leq m}:=\bigoplus_{0\leq m'\leq m}\mathcal{F}_{m'}
\end{align*}
for $m\in \Z_{\geq 0}$. Then, $\{\mathcal{F}_{\leq m}\}_{m\geq 0}$ is an increasing filtration of $\cO_\qq(\sA_G)^{\bt K}$ such that $\cO_\qq(\sA_G)^{\bt K}=\bigcup_{m\geq 0}\mathcal{F}_{\leq m}$. Hence, by \cite[Proposition A.1]{BZ}, it suffices to show that $\{\dim_{\Bbbk}\mathcal{F}_{\leq m}\}_{m\geq 0}$ has polynomial growth. It can be shown by considering its $\qq=1$ counterpart. Namely, we consider the direct sum $\cO_{\C}(\sA_G):=\bigoplus_{\lambda\in P_+} V_{\C}(\lambda)^{\ast}$ endowed with the Cartan product, where $V_{\C}(\lambda)^{\ast}$ denote the duals of finite dimensional irreducible $\lieg$-modules. We can define $\mathcal{F}_{m}^{\qq=1}$ and $\mathcal{F}_{\leq m}^{\qq=1}$ for $\cO_{\C}(\sA_G)^{\otimes K}$ in the same way as $\mathcal{F}_{m}$ and $\mathcal{F}_{\leq m}$, respectively. Then, $\mathcal{F}_{m_1}^{\qq=1}\cdot \mathcal{F}_{m_2}^{\qq=1}=\mathcal{F}_{m_1+m_2}^{\qq=1}$ for $m_1, m_2\geq 0$, and $\cO_{\C}(\sA_G)^{\otimes K}$ is a commutative $\C$-algebra generated by $\mathcal{F}_{1}^{\qq=1}$. Therefore, $\{\dim_{\Bbbk}\mathcal{F}_{\leq m}^{\qq=1}\}_{m\geq 0}$ has polynomial growth. Since $\dim_{\Bbbk}\mathcal{F}_{\leq m}=\dim_{\C}\mathcal{F}_{\leq m}^{\qq=1}$ for $m\neq 0$, we obtain the desired result. 
\end{proof}
Let us introduce the main object of this paper. 
\begin{dfn}\label{def:quantum_config_space}
    For $K\in \Z_{\geq 2}$, we define
    \begin{align*}
    \cO_\qq(\Conf_K\sA_G)&:=\left(\cO_\qq(\sA_G)^{\bt K}\right)^{\inv}\\
    &:=\left\{\phi\in \cO_\qq(\sA_G)^{\bt K}\ \middle|\ X.\phi=\varepsilon(X)\phi\text{ for all }X\in \Uq\right\}.
    \end{align*}
    and call it \emph{a quantum configuration space of $K$ decorated flags} (or, \emph{quantum configuration space}, for short). 
\end{dfn}
Recall that the multiplication map is a $\Uq$-module homomorphism in the braided tensor product. Hence we have the following. 
\begin{prop}\label{prop:qcsgrading}
For $K\in \Z_{\geq 2}$, the quantum configuration space $\cO_\qq(\Conf_K\sA_G)$ is a graded $\Bbbk$-subalgebra of $\cO_\qq(\sA_G)^{\bt K}$. In particular, $\cO_\qq(\Conf_K\sA_G)$ is an Ore domain. 
\end{prop}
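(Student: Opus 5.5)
The proof splits into three parts: closure under multiplication, compatibility with the grading, and the Ore domain property.

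\emph{Subalgebra.} Let $\phi,\psi\in\cO_\qq(\Conf_K\sA_G)$. The multiplication $m\colon \cO_\qq(\sA_G)^{\bt K}\ut\cO_\qq(\sA_G)^{\bt K}\to\cO_\qq(\sA_G)^{\bt K}$ is a $\Uq$-module homomorphism, so for $X\in\Uq$ with $\uDelta(X)=\sum X_{(1)}\otimes X_{(2)}$ we have
\[
X.(\phi\psi)=\sum (X_{(1)}.\phi)(X_{(2)}.\psi)=\sum\varepsilon(X_{(1)})\varepsilon(X_{(2)})\phi\psi=\varepsilon(X)\phi\psi .
\]
The unit $1$ is invariant because the unit map is a module map. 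Hence $\cO_\qq(\Conf_K\sA_G)$ is a subalgebra.

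\emph{Grading.} The $\Uq$-action preserves each graded piece $\cO(\lambda_1,\dots,\lambda_K)$, since it acts factorwise on $V(\lambda_1)^\ast\ut\cdots\ut V(\lambda_K)^\ast$ through $\Psi_K$. Therefore the invariant part decomposes as $\bigoplus\cO(\lambda_1,\dots,\lambda_K)^{\inv}$. Together with \cref{lem:gradedalg}, this makes $\cO_\qq(\Conf_K\sA_G)$ a graded subalgebra.

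\emph{Ore domain.} The domain property is inherited from $\cO_\qq(\sA_G)^{\bt K}$ by \cref{prop:domain}. For the Ore condition I would not try to descend it from the ambient algebra, since a common multiple found there need not be invariant. Instead I would repeat the filtration argument from the proof of \cref{prop:domain}. Set $\mathcal{F}^{\inv}_{\le m}:=\mathcal{F}_{\le m}\cap\cO_\qq(\Conf_K\sA_G)$. This is an exhaustive increasing filtration of the subalgebra: it is compatible with multiplication because the filtration comes from the grading. Moreover $\dim\mathcal{F}^{\inv}_{\le m}\le\dim\mathcal{F}_{\le m}$, which grows polynomially. Then \cite[Proposition A.1]{BZ} gives that a domain with such a filtration of polynomial growth is Ore.

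The only point needing care is that the hypotheses of \cite[Proposition A.1]{BZ} require just a finite-dimensional exhaustive filtration with polynomial growth. They do not require generation by the first piece, so the bound by the ambient dimensions suffices.
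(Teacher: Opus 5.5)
Your proposal is correct and follows the paper's route: invariance is preserved because the braided multiplication is a $\Uq$-module map, and the grading comes from \cref{lem:gradedalg} together with the decomposition of invariants over the summands $\cO(\lambda_1,\dots,\lambda_K)$. For the Ore property, the paper only says ``in particular''. You make this explicit by restricting the polynomial-growth filtration $\mathcal{F}_{\le m}$ from the proof of \cref{prop:domain} to the graded subalgebra and invoking \cite[Proposition A.1]{BZ}. This is the right justification, since the Ore condition does not pass to arbitrary subalgebras of an Ore domain.
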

Note that $\Psi_K$ restricts to
\begin{align}\label{eq:quantum_Conf_k}
\bigoplus_{\lambda_1,\dots, \lambda_K\in P_+}(V(\lambda_1)^{\ast}\ut \cdots\ut V(\lambda_K)^{\ast})^{\inv}\xrightarrow{\sim}\cO_\qq(\Conf_K\sA_G).
\end{align}
We write 
\begin{align}
    \cO^{\inv}(\lambda_1,\dots, \lambda_K)=\Psi_K((V(\lambda_1)^{\ast}\ut \cdots\ut V(\lambda_K)^{\ast})^{\inv})\label{eq:Oinv_notation}
\end{align}

\section{Basic structure of invariant algebras}
In this section, we study the $\Bbbk$-algebra structure of  $(\cO_\qq(\sA_G)\bt \A\bt \cO_\qq(\sA_G))^{\inv}$ for an arbitrary locally finite $\Uq$-module algebra $\A$. 
The quantum configuration space $\cO_\qq(\Conf_K\sA_G)$ corresponds to the case when $\A=\cO_\qq(\sA_G)^{\bt (K-2)}$. 
\subsection{Basic structure of invariant algebras}
Our starting point is the following injective $\Bbbk$-linear map, which will be upgraded into an injective algebra homomorphism in \cref{t:invstr} after some modifications.
\begin{prop}[{cf.~\cite[Proposition 31.2.6]{Lusztig:Intro}}]\label{p:invsp}
    Let $\lambda_1, \lambda_2\in P_+$ and $M$ be an integrable $\Uq$-module. Then the $\Bbbk$-linear map 
    \[
    \Phi_{\lambda_1, \lambda_2}^M\colon (V(\lambda_1)^{\ast}\ut V(\lambda_2)^{\ast}\ut M)^{\inv}\to M
    \]
    given by 
    \[
    \xi_{w_0\lambda_1}\otimes \xi_{\lambda_2}\otimes m+(\text{other terms})\mapsto m
    \]
    is injective. Here $\mathrm{(}$other terms$\mathrm{)}$ belong to 
    \[
    (V(\lambda_1)^{\ast})_{< \lambda_1^{\ast}}\ut V(\lambda_2)^{\ast}\ut M+V(\lambda_1)^{\ast}\ut (V(\lambda_2)^{\ast})_{> -\lambda_2}\ut M.
    \]
    Moreover, $\sum_{\mu_1, \mu_2\in P_+} \Ima \Phi_{\mu_1, \mu_2}^M=M$. 
\end{prop}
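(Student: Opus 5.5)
The plan is to reduce to Lusztig's description of invariants via Hom-spaces. Since $M$ is integrable and type one, we may assume $M$ is a direct sum of finite-dimensional irreducibles, and by linearity reduce to $M=V(\nu)$ for a single $\nu\in P_+$; the surjectivity statement is then also handled summand by summand.

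\textbf{Step 1: identify the invariants with a Hom-space.} Through $\Upsilon$ and the pairing \eqref{eq:dualeval}, an element of $(V(\lambda_1)^{\ast}\ut V(\lambda_2)^{\ast}\ut M)^{\inv}$ corresponds to a $\Uq$-module map. Concretely, after reordering the tensor factors with the $R$-matrices $\tcR$, which does not change the dimension, it corresponds to
\[
V(\lambda_1)\to V(\lambda_2)^{\ast}\ut M \quad\text{or}\quad V(\lambda_2)\otimes V(\lambda_1)\to M .
\]

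\textbf{Step 2: compute the coefficient $m$ and show injectivity.} Write $V(\lambda_1)^{\ast}\simeq V(\lambda_1^{\ast})$. In this model $\xi_{w_0\lambda_1}$ is the highest weight vector, of weight $\lambda_1^{\ast}$. On the other side, $\xi_{\lambda_2}$ is the lowest weight vector of $V(\lambda_2)^{\ast}$, of weight $-\lambda_2$.

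Let $x$ be an invariant and expand it in weight bases. The coefficient $m$ is obtained by pairing $x$ with $v_{w_0\lambda_1}\otimes v_{\lambda_2}$ on the first two factors, i.e. by the projection onto the component $\Bbbk\xi_{w_0\lambda_1}\otimes\Bbbk\xi_{\lambda_2}\otimes M$.

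Suppose $m=0$. The aim is to show $x=0$, using the fact that $x$ is annihilated by all $E_s$ and $F_s$. The first tensor factor of $x$ generates $V(\lambda_1)^{\ast}$ from its highest weight vector under $\Uq^-$, and the second generates $V(\lambda_2)^{\ast}$ from its lowest weight vector under $\Uq^+$. This is the Lusztig argument of \cite[Proposition 31.2.6]{Lusztig:Intro}, in which invariants of $V(\lambda_1^{\ast})\otimes V(\lambda_2)^{\ast}\otimes M$ (with coproduct $\uDelta$) are determined by their component at (highest)$\otimes$(lowest).

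To carry this out, I would use $\Uq^-\Uq^+$-type generation with a filtration argument. Order the components of $x$ by the weight of the first factor. Take a maximal nonzero component and apply a suitable $E_s$ to raise it. Because of the form of $\uDelta(E_s)=E_s\otimes K_s+1\otimes E_s$ acting across three factors, the top-degree contributions cannot cancel unless the component vanishes. Iterating pushes any nonzero $x$ to a nonzero coefficient at the extremal pair, which proves injectivity.

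\textbf{Step 3: surjectivity of $\sum_{\mu_1,\mu_2}\Ima\Phi^M_{\mu_1,\mu_2}=M$.} By Lusztig, the image of $\Phi^M_{\lambda_1,\lambda_2}$ is the subspace
\[
\{m\in M_{\lambda_2-\lambda_1^{\ast}}\ :\ E_s^{(\langle\alpha_s^\vee,\lambda_1^\ast\rangle+1)}m=0,\ F_s^{(\langle\alpha_s^\vee,\lambda_2\rangle+1)}m=0\ \text{for all } s\}
\]
(up to the appropriate ordering convention). Given a weight vector $m\in M$ of weight $\mu$, choose $\lambda_1,\lambda_2$ dominant with $\lambda_2-\lambda_1^{\ast}=\mu$ and both sufficiently large. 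Then the nilpotency conditions hold, because $E_s$ and $F_s$ act locally nilpotently on $M$. Hence $m$ lies in the image, and since $M$ is a sum of its weight spaces, surjectivity follows.

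The main obstacle is Step 2: translating Lusztig's statement, which is phrased for his coproduct and ordering conventions, into the present conventions with $\ut$ and duals. In particular, one must check that the extremal component really is $\xi_{w_0\lambda_1}\otimes\xi_{\lambda_2}$ and that the "other terms" lie in the stated subspace. I would first verify the triangularity claim directly from the explicit $\uDelta$ and the weight bookkeeping, and then invoke Lusztig's result rather than reprove the characterization.
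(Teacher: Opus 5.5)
Your proof of the last assertion is the paper's own: Lusztig's description of $\Ima\Phi^M_{\lambda_1,\lambda_2}$, then both weights taken large, as in the paper's choice $\mu_1=2N\rho$, $\mu_2=\mu+2N\rho$. For injectivity you take a genuinely different route.

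\textbf{The paper's route.} The paper never looks inside the invariant tensor. The map $\xi_1\otimes\xi_2\otimes m\mapsto(v_1\otimes v_2\mapsto\xi_1(v_1)\xi_2(v_2)m)$ identifies $(V(\lambda_1)^{\ast}\ut V(\lambda_2)^{\ast}\ut M)^{\inv}$ directly with $\Hom_{\Uq}(V(\lambda_1)\otimes V(\lambda_2),M)$. No $\tcR$-reordering is needed, and reordering would only obscure which coefficient is $m$. Under this identification, $\Phi^M_{\lambda_1,\lambda_2}$ is evaluation at $v_{w_0\lambda_1}\otimes v_{\lambda_2}$. It is injective because that vector generates $V(\lambda_1)\otimes V(\lambda_2)$.

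\textbf{Your route.} Your leading-term argument proves the dual of this cyclicity by hand. That is more self-contained and makes the triangularity behind the ``other terms'' visible. But as written (``apply a suitable $E_s$ \dots\ iterating'') it is imprecise: nothing is iterated, and two separate stages are needed.

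\emph{Stage 1.} $E_s$ acts on the triple product by
\[
1\otimes 1\otimes E_s+1\otimes E_s\otimes K_s+E_s\otimes K_s\otimes K_s .
\]
Take a first-factor weight $\nu$ that is maximal among those occurring in $x$. Only the last term contributes to the part of $E_s.x=0$ with first-factor weight $\nu+\alpha_s$. Write the weight-$\nu$ component as $\sum_i a_i\otimes y_i$ with the $y_i$ linearly independent. Then every $a_i$ is killed by all $E_s$, which forces $\nu=\lambda_1^{\ast}$.

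\emph{Stage 2.} Among components with first-factor weight $\lambda_1^{\ast}$, take a minimal second-factor weight $\mu$. $F_s$ acts by
\[
K_s^{-1}\otimes K_s^{-1}\otimes F_s+K_s^{-1}\otimes F_s\otimes 1+F_s\otimes 1\otimes 1 .
\]
Only the middle term contributes to the $(\lambda_1^{\ast},\mu-\alpha_s)$-part of $F_s.x=0$. The first term is excluded by minimality of $\mu$. The last is excluded because $\lambda_1^{\ast}+\alpha_s$ is not a weight of $V(\lambda_1)^{\ast}$. Hence the second-factor parts are killed by all $F_s$, so they lie in $\Bbbk\xi_{\lambda_2}$ and $\mu=-\lambda_2$. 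The $\xi_{w_0\lambda_1}\otimes\xi_{\lambda_2}$-coefficient is therefore nonzero.

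Written out this way your argument is complete. The paper's version is two lines, but it relies on the cyclicity fact. The reduction to $M=V(\nu)$ is harmless but unnecessary.
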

\begin{proof}
    We have an isomorphism of $\Bbbk$-vector spaces 
    \[
    V(\lambda_1)^{\ast}\ut V(\lambda_2)^{\ast}\ut M\xrightarrow{\sim} \Hom_{\Bbbk}(V(\lambda_1)\otimes V(\lambda_2),M)
    \]
    given by 
    \[
    \xi_1\otimes \xi_2\otimes m\mapsto (v_1\otimes v_2\mapsto \xi_1(v_1)\xi_2(v_2)m)
    \]
    which induces an isomorphism 
    \begin{align}
    (V(\lambda_1)^{\ast}\ut V(\lambda_2)^{\ast}\ut M)^{\inv}\xrightarrow{\sim}\Hom_{\Uq}(V(\lambda_1)\otimes V(\lambda_2),M).\label{eq:linmap1}
    \end{align}
    Moreover, the $\Bbbk$-linear map 
    \begin{align}
    \Hom_{\Uq}(V(\lambda_1)\otimes V(\lambda_2),M)\to M,\ \phi\mapsto \phi(v_{w_0\lambda_1}\otimes v_{\lambda_2})\label{eq:linmap2}
    \end{align}
    is injective since $V(\lambda_1)\otimes V(\lambda_2)$ is a cyclic $\Uq$-module generated by $v_{w_0\lambda_1}\otimes v_{\lambda_2}$. Therefore $\Phi_{\lambda_1, \lambda_2}^M$ injective since it is the composite of \eqref{eq:linmap1} and \eqref{eq:linmap2}. 
    Note that, by \cite[Proposition 31.2.6]{Lusztig:Intro}, 
\begin{align}
\Ima \Phi_{\lambda_1, \lambda_2}^M=\left\{m\in M_{-\lambda_1^{\ast}+\lambda_2}\ \middle|\begin{array}{l}E_s^{k}.m=0 \text{\ for\ all\ }k>\langle \lambda_1^{\ast}, \alpha_s^{\vee}\rangle,  s\in S, \text{\ and}\\
F_s^{k}.m=0 \text{\ for\ all\ }k>\langle \lambda_2, \alpha_s^{\vee}\rangle, s\in S\end{array}\right\}. \label{eq:image_phi}
\end{align}
Let $m\in M_{\mu}$ be a weight vector. There exists $N\in \Z_{\geq 0}$ such that 
\[
E_s^{N+1}.m=F_s^{N+1}.m=0\text{ for all }s\in S,\ \text{and}\ \mu+N\rho\in P_+. 
\]
Then $\mu_1:=2N\rho=\mu_1^{\ast}$ and $\mu_2=\mu+2N\rho$ satisfy
\[
\mu_1, \mu_2\in P_+,\ -\mu_1^{\ast}+\mu_2=\mu,\ \text{and}\ \langle \mu_1^{\ast}, \alpha_s^{\vee}\rangle \geq N, \langle \mu_2, \alpha_s^{\vee}\rangle\geq N\ \text{for all}\ s\in S.
\]
Therefore, by \eqref{eq:image_phi}, $m\in \Ima \Phi_{\mu_1, \mu_2}^M$. This proves the last equality in the assertion.
\end{proof}
\begin{prop}[{cf.~\cite[Section 28.2]{Lusztig:Intro}}]\label{p:cyclic}
    Let $M_1, M_2$ be integrable $\Uq$-modules. Then the $\Bbbk$-linear isomorphism 
    \[
    \widetilde{\sigma}_{M_1, M_2}\colon M_1\ut M_2\xrightarrow{\sim} M_2\ut M_1,\ 
    m_1\otimes m_2 \mapsto  (-1)^{\langle 2\rho^{\vee}, \wt m_2\rangle}\qq^{(2\rho, \wt m_2)}m_2\otimes m_1
    \]    
restricts to the $\Bbbk$-linear isomorphism 
    \[
    \sigma_{M_1, M_2}\colon (M_1\ut M_2)^{\inv}\xrightarrow{\sim} (M_2\ut M_1)^{\inv}.
    \]
\end{prop}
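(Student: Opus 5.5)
Since $\widetilde{\sigma}_{M_1,M_2}$ is clearly a $\Bbbk$-linear isomorphism (a flip composed with a diagonal rescaling on weight vectors), it suffices to show that it maps $(M_1\ut M_2)^{\inv}$ into $(M_2\ut M_1)^{\inv}$. The same argument with $M_1, M_2$ swapped then gives the reverse inclusion for $\widetilde{\sigma}_{M_2,M_1}$. The composite $\widetilde{\sigma}_{M_2,M_1}\circ\widetilde{\sigma}_{M_1,M_2}$ acts on an invariant (hence weight-zero) vector $m_1\otimes m_2$ by the scalar $(-1)^{\langle 2\rho^\vee,\wt m_1+\wt m_2\rangle}\qq^{(2\rho,\wt m_1+\wt m_2)}=1$. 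So the restriction is bijective.

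To show invariance is preserved, I would recast everything via duality, in the spirit of the proof of \cref{p:invsp}. Reduce first to finite dimensional $M_1,M_2$: an invariant vector lies in a finite sum of finite dimensional submodules, and $\widetilde{\sigma}$ is compatible with inclusions. For finite dimensional modules, $(M_1\ut M_2)^{\inv}$ is identified with $\Hom_{\Uq}(M_2^{\ast\ast}, M_1)$, or equivalently with invariant bilinear pairings. Concretely, $\sum m_1\otimes m_2$ is invariant for $\uDelta$ if and only if $X.m_1\otimes m_2$ summed equals $\sum m_1\otimes \sfS'(X).m_2$ for an appropriate antipode ($\sfS$ or $\sfS^{-1}$ depending on the coproduct convention). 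Swapping the factors then requires replacing $\sfS$ by $\sfS^{-1}$, i.e.\ comparing the double dual $M^{\ast\ast}$, with action through $\sfS^2$, with $M$.

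The key input is the standard fact that $\sfS^2(X)=K_{2\rho}XK_{2\rho}^{-1}$, which follows from the formulas for $\sfS$ on generators. Thus $M\to M^{\ast\ast}$, $m\mapsto K_{2\rho}^{\pm1}.m$ (composed with the canonical map), is a $\Uq$-isomorphism, where $K_{2\rho}$ acts on $m$ by $\qq^{(2\rho,\wt m)}$. This accounts for the factor $\qq^{(2\rho,\wt m_2)}$.

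The sign $(-1)^{\langle 2\rho^\vee,\wt m_2\rangle}$ I expect to be the main obstacle. It should arise because the naive check on the generators $E_s$, $F_s$ produces $\sfS(E_s)=-K_s^{-1}E_s$, and the minus signs must be absorbed. The plan is to verify invariance directly on generators instead of through abstract duality. Write $\widetilde{\sigma}(m_1\otimes m_2)=\epsilon(\wt m_2)\, m_2\otimes m_1$ with $\epsilon(\mu)=(-1)^{\langle2\rho^\vee,\mu\rangle}\qq^{(2\rho,\mu)}$. Invariance under $K_s$ is immediate, since weight zero is preserved.

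For $E_s$, the condition $\uDelta(E_s)=1\otimes E_s+E_s\otimes K_s$ acting to zero on $\sum m_1\otimes m_2$ gives a relation of the form $\sum m_1\otimes E_s m_2 = -\sum \qq_s^{\langle\alpha_s^\vee,\wt m_2\rangle} E_s m_1\otimes m_2$. We must then check the analogous identity for the swapped vector. The weight of $E_sm_2$ exceeds that of $m_2$ by $\alpha_s$, and $\epsilon(\alpha_s)=(-1)^{2}\qq_s^{2}$ since $\langle2\rho^\vee,\alpha_s\rangle=2$ and $(2\rho,\alpha_s)=2(\rho,\alpha_s)$. Combining this with $\wt m_1=-\wt m_2$, the powers of $\qq_s$ should match and the minus sign from the relation should be exactly compensated. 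The $F_s$ case is symmetric. I would carry out these two bookkeeping computations carefully, as they are where a convention mismatch, such as $K_{2\rho}$ versus $K_{2\rho}^{-1}$ or the role of the sign, would surface. If the sign does not come out as claimed, I would reconsider using the integrality $\langle2\rho^\vee,\mu\rangle\in\Z$ together with $\langle 2\rho^\vee,\alpha_s\rangle=2$, which appears to make the sign a character trivial on $Q$. That triviality would suggest the sign matters only across different cosets of $Q$ in $P$, in line with the cited \cite[Section 28.2]{Lusztig:Intro}.
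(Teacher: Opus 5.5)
Your proposal is correct, but it does not follow the paper. The paper gives no argument of its own: it defers to \cite[Section 28.2]{Lusztig:Intro} and only remarks afterwards that the sign factor is not needed. Your direct check on the generators is a self-contained replacement that isolates exactly what is used.

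Write an invariant vector as $v=\sum_\mu v_\mu$ with $v_\mu\in (M_1)_{-\mu}\otimes (M_2)_{\mu}$, and put $\epsilon(\mu):=(-1)^{\langle 2\rho^\vee,\mu\rangle}\qq^{(2\rho,\mu)}$. Since the scalar in your relation depends on $\wt m_2$, the bookkeeping has to be done weight by weight (or by applying $K_s^{\pm 1}\otimes 1$):
\begin{itemize}
\item $E_s$-invariance says $(1\otimes E_s)v_\mu=-\qq_s^{\langle\alpha_s^\vee,\mu\rangle+2}(E_s\otimes 1)v_{\mu+\alpha_s}$. The $(M_2)_{\mu+\alpha_s}\otimes (M_1)_{-\mu}$-component of $E_s.\widetilde{\sigma}(v)$ is then $\epsilon(\mu)\bigl(\epsilon(\alpha_s)-\qq_s^{2}\bigr)$ times the flip of $(E_s\otimes 1)v_{\mu+\alpha_s}$.
\item For $F_s$ one gets in the same way the factor $\epsilon(\mu)\bigl(1-\epsilon(\alpha_s)^{-1}\qq_s^{2}\bigr)$ in front of the flip of $(1\otimes F_s)v_\mu$.
\end{itemize}
Both factors vanish because $\epsilon(\alpha_s)=(-1)^{2}\qq^{(2\rho,\alpha_s)}=\qq_s^{2}$. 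Neither step needs finite dimensionality, so your reduction to finite-dimensional modules is unnecessary.

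The sign is therefore not an obstacle at all. It never enters, because $\langle 2\rho^\vee,\alpha_s\rangle=2$. The same computation shows that $\qq^{(2\rho,\wt m_2)}$ may be multiplied by any function of $\wt m_2 \bmod Q$, which is exactly the paper's remark.

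You can drop the duality paragraph. As written it also has the exponent reversed: in the paper's conventions $\sfS^2(E_s)=K_s^{-1}E_sK_s=\qq_s^{-2}E_s$, so $\sfS^2(X)=K_{2\rho}^{-1}XK_{2\rho}$.

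What the citation to Lusztig buys, by contrast, is the link with his cyclic-permutation construction and its compatibility with canonical bases \cite[Proposition 28.2.4]{Lusztig:Intro}. That is why the paper keeps the sign even though the statement does not need it.
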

\begin{rem}
The factor $(-1)^{\langle 2\rho^{\vee}, \wt m_2\rangle}$ in $\widetilde{\sigma}_{M_1, M_2}$ is not necessary for this assertion. However, this definition should be natural when we consider the  canonical basis (cf.~\cite[Proposition 28.2.4]{Lusztig:Intro}). We will discuss the canonical basis of $\cO_\qq(\Conf_K\sA_G)$ in a future work. See also \cref{cor:sigma_2}.
\end{rem}

Let $\A$ be a locally finite $\Uq$-module algebra. For $\lambda_1, \lambda_2\in P_+$, define a $\Bbbk$-linear map 
\begin{align}
    \Phi_{\lambda_1;\A; \lambda_2}\colon (\cO(\lambda_1)\ut \A\ut \cO(\lambda_2))^{\inv}\to \A
\end{align}
by 
\[
    \xi_{\lambda_1}(\lambda_1)\otimes a\otimes \xi_{w_0\lambda_2}(\lambda_2)+(\text{other terms})\mapsto \qq^{-(\lambda_1, \lambda_1)/2}a
\]
where $\mathrm{(}$other terms$\mathrm{)}$ belong to 
    \[
    \cO(\lambda_1)_{> -\lambda_1}\ut \A\ut \cO(\lambda_2)+\cO(\lambda_1)\ut \A\ut \cO(\lambda_2)_{<\lambda_2^{\ast}}.
    \]
Recall the notation \eqref{eq:O_notation}. By \cref{p:invsp,p:cyclic}, $\Phi_{\lambda_1;\A ;\lambda_2}$ is injective. 

\begin{prop}\label{p:key}
    Let $\lambda_1, \lambda_2, \lambda'_1, \lambda'_2\in P_+$. Then, for $\phi\in (\cO(\lambda_1)\ut \A\ut \cO(\lambda_2))^{\inv}$ and $\phi'\in (\cO(\lambda'_1)\ut \A\ut \cO(\lambda'_2))^{\inv}$, we have 
    \begin{align*}
        \Phi_{\lambda_1;\A;\lambda_2}(\phi)\Phi_{\lambda'_1;\A;\lambda'_2}(\phi')=\qq^{(\lambda'_1-\lambda^{\prime \ast}_2, \lambda_2^{\ast})}\Phi_{\lambda_1+\lambda'_1;\A;\lambda_2+\lambda'_2}(\phi\phi').
    \end{align*}
    where the multiplication $\phi\phi'$ is considered in $(\cO_\qq(\sA_G)\bt \A\bt \cO_\qq(\sA_G))^{\inv}$.
\end{prop}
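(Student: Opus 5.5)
The plan is to compare extremal components directly. By \cref{p:invsp,p:cyclic}, an element of $(\cO(\lambda_1)\ut\A\ut\cO(\lambda_2))^{\inv}$ is determined by its component along $\xi_{\lambda_1}(\lambda_1)\otimes(\ast)\otimes\xi_{w_0\lambda_2}(\lambda_2)$. Here $\cO(\lambda_1)$ carries its lowest weight $-\lambda_1$ and $\cO(\lambda_2)$ its highest weight $\lambda_2^\ast$. So I will compute the component of $\phi\phi'$ whose first tensor factor has weight $-(\lambda_1+\lambda'_1)$ and whose third has weight $(\lambda_2+\lambda'_2)^\ast$, and read off $\Phi_{\lambda_1+\lambda'_1;\A;\lambda_2+\lambda'_2}(\phi\phi')$.

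Write $\phi=\sum\phi_1\otimes a\otimes\phi_3$ and $\phi'=\sum\phi'_1\otimes a'\otimes\phi'_3$ as sums of weight vectors. In $\cO_\qq(\sA_G)\bt\A\bt\cO_\qq(\sA_G)$, the product is computed by moving $\phi'_1$ past $a\otimes\phi_3$ and then $a'$ past $\phi_3$, each time applying $\tcR=P\circ\Pi\circ\Theta^{\sfS}$. The first point to check is which way each crossing moves weights. In $\tcR_{\A_2,\A_1}$ applied to $(x\otimes y)$ with $y$ coming from the right, the $\Uq^-$-leg of $\Theta^{\sfS}$ acts on the left element $x$ and the $\Uq^+$-leg on $y$. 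Hence:
\begin{itemize}
\item at the crossing of $\phi'_1$ past $a\otimes\phi_3$, the weight of $\phi'_1$ is only raised, while $a\otimes\phi_3$ is lowered;
\item at the crossing of $a'$ past $\phi_3$, the weight of $\phi_3$ is only lowered.
\end{itemize}
This is the step I expect to need the most care; I would double-check it against \cref{p:comm}.

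Granting this, the resulting first-factor weight is $\wt\phi_1+\wt\phi'_1+(\text{element of }Q_+)$, and the third-factor weight is $\wt\phi_3+\wt\phi'_3-(\text{element of }Q_+)$. Since $\wt\phi_1\geq-\lambda_1$, $\wt\phi'_1\geq -\lambda'_1$, $\wt\phi_3\leq\lambda_2^\ast$ and $\wt\phi'_3\leq\lambda_2'^\ast$, the extremal component receives contributions only from the leading terms $\xi_{\lambda_1}(\lambda_1)\otimes a\otimes\xi_{w_0\lambda_2}(\lambda_2)$ and $\xi_{\lambda'_1}(\lambda'_1)\otimes a'\otimes\xi_{w_0\lambda'_2}(\lambda'_2)$, and only through the $\Theta_0=1\otimes1$ parts of both crossings. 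On these, $\Pi$ contributes
\[
\qq^{-(\wt a+\wt\phi_3,\wt\phi'_1)}\,\qq^{-(\wt\phi_3,\wt a')} .
\]
Invariance gives total weight zero, so $\wt a=\lambda_1-\lambda_2^\ast$ and $\wt a'=\lambda'_1-\lambda_2'^\ast$; together with $\wt\phi'_1=-\lambda'_1$ and $\wt\phi_3=\lambda_2^\ast$ the factor becomes $\qq^{(\lambda_1,\lambda'_1)-(\lambda_2^\ast,\lambda'_1-\lambda_2'^\ast)}$.

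The Cartan products give $\xi_{\lambda_1}(\lambda_1)\xi_{\lambda'_1}(\lambda'_1)=\xi_{\lambda_1+\lambda'_1}(\lambda_1+\lambda'_1)$ and likewise $\xi_{w_0\lambda_2}\diamond\xi_{w_0\lambda'_2}=\xi_{w_0(\lambda_2+\lambda'_2)}$, so the leading term of $\phi\phi'$ is this $\qq$-power times $\xi_{\lambda_1+\lambda'_1}(\lambda_1+\lambda'_1)\otimes aa'\otimes\xi_{w_0(\lambda_2+\lambda'_2)}(\lambda_2+\lambda'_2)$. Inserting the normalizations $\qq^{-(\lambda,\lambda)/2}$ from the definition of $\Phi$, and using $-\tfrac12(\lambda_1+\lambda'_1,\lambda_1+\lambda'_1)+\tfrac12(\lambda_1,\lambda_1)+\tfrac12(\lambda'_1,\lambda'_1)=-(\lambda_1,\lambda'_1)$, the $(\lambda_1,\lambda'_1)$ terms cancel. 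This gives $\Phi(\phi\phi')=\qq^{-(\lambda_2^\ast,\lambda'_1-\lambda_2'^\ast)}\Phi(\phi)\Phi(\phi')$, which is the claimed identity.
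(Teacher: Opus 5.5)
Your proposal is correct and follows essentially the same route as the paper. Both extract the extremal component of $\phi\phi'$. You observe via the triangularity of $\tcR$ (which is exactly what \cref{p:comm} encodes) that only the leading terms, through $\Theta_0$, contribute, with factor $\qq^{-(\wt\xi'_1,\wt a+\wt\xi_2)-(\wt a',\wt\xi_2)}$. Then the $(\lambda_1,\lambda'_1)$ terms cancel against the normalizations $\qq^{-(\lambda,\lambda)/2}$ in the definition of $\Phi$.
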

\begin{proof}
    Let $\xi_i\in V(\lambda_i)^{\ast}, \xi'_i\in V(\lambda'_i)^{\ast}$ ($i=1, 2$), $a, a'\in \A$ be weight vectors. By \cref{p:comm}, the multiplication
    \[
    (\xi_1(\lambda_1)\otimes a \otimes \xi_2(\lambda_2))\cdot (\xi'_1(\lambda'_1)\otimes a' \otimes \xi'_2(\lambda'_2))
    \]
    in $\cO_\qq(\sA_G)\bt \A\bt \cO_\qq(\sA_G)$ is written as 
    \[
    \qq^{-(\wt \xi'_1, \wt a +\wt \xi_2)-(\wt a', \wt \xi_2)}(\xi_1\diamond \xi'_1)(\lambda_1+\lambda'_1)\otimes aa'\otimes (\xi_2\diamond \xi'_2)(\lambda_2+\lambda'_2)+(\text{other terms}),
    \]
    where $\mathrm{(}$other terms$\mathrm{)}$ belong to 
    \begin{align*}
    &\cO(\lambda_1+\lambda'_1)_{>\wt \xi_1\diamond \xi'_1}\ut \A \ut \cO(\lambda_2+\lambda'_2)+\cO(\lambda_1+\lambda'_1)\ut \A \ut \cO(\lambda_2+\lambda'_2)_{<\wt \xi_2\diamond \xi'_2}.
    \end{align*}
    Therefore, if we set $a_0:=\qq^{(\lambda_1, \lambda_1)/2}\Phi_{\lambda_1;\A;\lambda_2}(\phi)$ and  $a'_0:=\qq^{(\lambda'_1, \lambda'_1)/2}\Phi_{\lambda'_1;\A; \lambda'_2}(\phi')$, we have 
    \begin{align}
    \phi\phi'=&\qq^{(\lambda'_1, \wt a_0 +\lambda_2^{\ast})-(\wt a'_0, \lambda_2^{\ast})}\xi_{\lambda_1+\lambda_1'}(\lambda_1+\lambda_1')\otimes a_0a'_0\otimes \xi_{w_0(\lambda_2+\lambda'_2)}(\lambda_2+\lambda'_2)\notag\\
    &+(\text{other terms}),\label{eq:product}
    \end{align}
    and the first term in the right-hand side is equal to 
\begin{align*}
\qq^{(\lambda_1+\lambda'_1, \lambda_1+\lambda'_1)/2}
\xi_{\lambda_1+\lambda_1'}(\lambda_1+\lambda_1')\otimes \Phi_{\lambda_1+\lambda'_1, ;\A;\lambda_2+\lambda'_2}(\phi\phi')\otimes \xi_{w_0(\lambda_2+\lambda'_2)}(\lambda_2+\lambda'_2)
\end{align*}
by definition of $\Phi_{\lambda_1+\lambda'_1, \lambda_2+\lambda'_2}^{\A}$. Hence,  
\begin{align*}
    \qq^{(\lambda'_1, \wt a_0 +\lambda_2^{\ast})-(\wt a'_0, \lambda_2^{\ast})}a_0a'_0=\qq^{(\lambda_1+\lambda'_1, \lambda_1+\lambda'_1)/2}\Phi_{\lambda_1+\lambda'_1;\A; \lambda_2+\lambda'_2}(\phi\phi').
\end{align*}
Since $\phi\in (\cO(\lambda_1)\ut \A\ut \cO(\lambda_2))^{\inv}$ and $\phi'\in (\cO(\lambda'_1)\ut \A\ut \cO(\lambda'_2))^{\inv}$, we have $\wt a_0=\lambda_1-\lambda_2^{\ast}$ and $\wt a'_0=\lambda'_1-\lambda^{\prime \ast}_2$. Therefore, 
\begin{align*}
&\Phi_{\lambda_1;\A ;\lambda_2}(\phi)\Phi_{\lambda'_1;\A ;\lambda'_2}(\phi')\\
&=\qq^{-(\lambda_1, \lambda_1)/2-(\lambda'_1, \lambda'_1)/2}a_0a'_0\\
&=\qq^{-(\lambda_1, \lambda_1)/2-(\lambda'_1, \lambda'_1)/2-(\lambda'_1, \lambda_1)+(\lambda'_1-\lambda^{\prime \ast}_2, \lambda_2^{\ast})+(\lambda_1+\lambda'_1, \lambda_1+\lambda'_1)/2}\Phi_{\lambda_1+\lambda'_1; \A ; \lambda_2+\lambda'_2}(\phi\phi')\\
&=\qq^{(\lambda'_1-\lambda^{\prime \ast}_2, \lambda_2^{\ast})}\Phi_{\lambda_1+\lambda'_1;\A ; \lambda_2+\lambda'_2}(\phi\phi').
\end{align*}
\end{proof}
Let $\Bbbk[P]=\bigoplus_{\lambda\in P}\Bbbk e(\lambda)$ be the group algebra of the additive group $P$, and  $\A$ a $\Bbbk$-algebra endowed with a weight space decomposition. We define the $\Bbbk$-algebra $\A^{\ext}$ as follows. 
\begin{itemize}
    \item As a vector space, $\A^{\ext}=\A\otimes \Bbbk[P]$.
    \item The multiplication on $\A^{\ext}$ is determined by the following conditions.   
    \begin{itemize}
        \item $\A\to \A^{\ext}, a\mapsto a\otimes 1$ and $\Bbbk[P]\to \A^{\ext}, x\mapsto 1\otimes x$ are injective $\Bbbk$-algebra homomorphisms. 
        \item For $\lambda\in P$ and a weight vector $a\in \A$, we have 
        \[
        (1\otimes e(\lambda))(a\otimes 1)=\qq^{-(\lambda, \wt a)}a\otimes e(\lambda)=\qq^{-(\lambda, \wt a)}(a\otimes 1)(1\otimes e(\lambda)).
        \]
    \end{itemize}
\end{itemize}
In the following, we simply write $a\otimes x$ as $ax$ in $\A^{\ext}$ for $a\in \A$ and $x\in \Bbbk[P]$. 
\begin{thm}\label{t:invstr}
Let $\A$ be a locally finite $\Uq$-module algebra. 
\begin{enumerate}
\item 
Define a $\Bbbk$-linear map
\begin{align*}
    \Phi_{\A}^{\ext}\colon (\cO_\qq(\sA_G)\bt \A\bt \cO_\qq(\sA_G))^{\inv}\to \A^{\ext}
\end{align*}
by 
\[
\Phi_{\A}^{\ext}(\phi)=\Phi_{\lambda_1;\A ;\lambda_2}(\phi)e(\lambda_2^{\ast})
\]
for $\phi\in (\cO(\lambda_1)\ut \A\ut \cO(\lambda_2))^{\inv}$. 
Then 
$\Phi^{\A, \ext}$ is an injective $\Bbbk$-algebra homomorphism. 
\item 
For $\lambda\in P_+$, there uniquely exists an element 
\[
D_{\lambda}\in (\cO(\lambda)\ut 1\ut \cO(\lambda^{\ast}))^{\inv}
\]
of the form 
\begin{align*}
D_{\lambda}=\qq^{(\lambda, \lambda)/2}\xi_{\lambda}(\lambda)\otimes 1\otimes \xi_{-\lambda}(\lambda^{\ast})+(\text{other terms}),
\end{align*}
where $($other terms$)$ belong to $\cO(\lambda)_{>-\lambda}\ut 1\ut \cO(\lambda^{\ast})_{<\lambda}$. Then, for $\lambda\in P_+$,    
    \begin{align}
    \Phi_{\A}^{\ext}(D_{\lambda})=e(\lambda),\label{eq:D_e}
    \end{align}
    and $\cD:= \{\qq^aD_{\lambda}\mid a\in \frac{1}{2d}\Z, \lambda\in P_+\}$ forms an Ore set of $(\cO_\qq(\sA_G)\bt \A\bt \cO_\qq(\sA_G))^{\inv}$. 
Moreover, $\Phi_{\A}^{\ext}$ can be extended to an isomorphism of $\Bbbk$-algebras
\[
\Phi_{\A}^{\ext}\colon (\cO_\qq(\sA_G)\bt \A\bt \cO_\qq(\sA_G))^{\inv}[\cD^{-1}]\xrightarrow{\sim} \A^{\ext}.
\]
\end{enumerate}
\end{thm}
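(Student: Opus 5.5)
Part (1) should follow directly from \cref{p:key}. The grading of \cref{lem:gradedalg} makes $(\cO_\qq(\sA_G)\bt\A\bt\cO_\qq(\sA_G))^{\inv}$ decompose into the graded pieces $(\cO(\lambda_1)\ut\A\ut\cO(\lambda_2))^{\inv}$. So $\Phi^{\ext}_\A$ is well defined, and it is injective because each $\Phi_{\lambda_1;\A;\lambda_2}$ is injective and the pieces for different $\lambda_2$ land in different components $\A e(\lambda_2^\ast)$. The pieces with the same $\lambda_2$ but different $\lambda_1$ have images in different weight spaces of $\A$: the weight of the image is $\lambda_1-\lambda_2^\ast$, using the invariance argument from the proof of \cref{p:key}. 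Hence the images of distinct pieces are linearly independent.

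For multiplicativity I take homogeneous $\phi,\phi'$ and compute in $\A^{\ext}$:
\[
\Phi_{\lambda_1;\A;\lambda_2}(\phi)e(\lambda_2^\ast)\,\Phi_{\lambda'_1;\A;\lambda'_2}(\phi')e(\lambda'^\ast_2)=\qq^{-(\lambda_2^\ast,\lambda_1'-\lambda_2'^\ast)}\Phi(\phi)\Phi(\phi')e((\lambda_2+\lambda_2')^\ast).
\]
By \cref{p:key}, $\Phi(\phi)\Phi(\phi')=\qq^{(\lambda'_1-\lambda'^\ast_2,\lambda_2^\ast)}\Phi(\phi\phi')$, so the two $q$-powers cancel and $\Phi^{\ext}_\A(\phi)\Phi^{\ext}_\A(\phi')=\Phi^{\ext}_\A(\phi\phi')$.

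For (2), existence and uniqueness of $D_\lambda$ come from \cref{p:invsp} and \cref{p:cyclic} applied with $M$ the trivial module $\Bbbk$. The space $(V(\lambda)^\ast\ut V(\lambda^\ast)^\ast)^{\inv}$ is one-dimensional, and its generator has nonzero coefficient on the extremal vector $\xi_\lambda\otimes\xi_{-\lambda}$; note $\xi_{w_0\lambda^\ast}=\xi_{-\lambda}$ up to the normalization fixed by $\Phi$. I then normalize the coefficient. Since $1\in\A$ has weight $0$, $\Phi_{\lambda;\A;\lambda^\ast}(D_\lambda)=1$, hence $\Phi^{\ext}_\A(D_\lambda)=e(\lambda)$.

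The Ore property and the isomorphism I would prove together. First, by the commutation rule in $\A^{\ext}$, each $e(\lambda)$ $q$-commutes with every homogeneous element of $\A^{\ext}$. Because $\Phi^{\ext}_\A$ is injective and multiplicative, each $D_\lambda$ then $q$-commutes with every homogeneous element of the invariant algebra; the relevant grading is $\deg$ together with the $\A$-weight. A multiplicative set of normal elements, up to scalars, is automatically a left and right Ore set. These elements are non-zero-divisors because $\Phi^{\ext}_\A$ is injective and $e(\lambda)$ is invertible. Also $D_\lambda D_\mu\in\qq^{\Z/2d}D_{\lambda+\mu}$, so $\cD$ is multiplicatively closed. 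The universal property then extends $\Phi^{\ext}_\A$ to the localization, and injectivity persists: if $\Phi(xD^{-1})=0$ then $\Phi(x)=0$.

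Surjectivity is the main step. It suffices to show that every $a e(\mu)$ with $a\in\A$ a weight vector and $\mu\in P$ lies in the image. By the last statement of \cref{p:invsp}, transported through \cref{p:cyclic} to the ordering $\cO\ut\A\ut\cO$, every $a$ equals $\Phi_{\mu_1;\A;\mu_2}(\phi)$ up to a scalar for suitable $\mu_1,\mu_2\in P_+$. Hence $a e(\mu_2^\ast)$ is in the image, and multiplying by $e(\nu)^{\pm1}=\Phi^{\ext}_\A(D_\nu)^{\pm1}$ reaches every $e(\mu)$, since $P=P_+-P_+$.

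The delicate points are twofold. One is checking carefully that the cyclic twist in \cref{p:cyclic} carries the surjectivity of \cref{p:invsp}, with its ordering $V^\ast\ut V^\ast\ut M$, over to the ordering $\cO\ut\A\ut\cO$ used here, with the leading-term conventions matching. The other is the bookkeeping of the weights $\lambda_1-\lambda_2^\ast$ in the injectivity argument.
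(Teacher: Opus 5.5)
Your proposal is correct and follows essentially the same route as the paper. Multiplicativity comes from \cref{p:key} with the two $q$-powers cancelling, and injectivity from injectivity of each $\Phi_{\lambda_1;\A;\lambda_2}$ together with $\Phi^{\ext}_\A((\cO(\lambda_1)\ut\A\ut\cO(\lambda_2))^{\inv})\subset\A_{\lambda_1-\lambda_2^\ast}e(\lambda_2^\ast)$; the Ore property follows from the $q$-commutation of $D_\lambda$ inherited from $e(\lambda)$, and surjectivity from the last assertion of \cref{p:invsp}. The only slip is a citation: the splitting into pieces $(\cO(\lambda_1)\ut\A\ut\cO(\lambda_2))^{\inv}$ comes directly from the $\Uq$-module decomposition $\cO_\qq(\sA_G)=\bigoplus_\lambda\cO(\lambda)$, not from \cref{lem:gradedalg}, which only treats $\cO_\qq(\sA_G)^{\bt K}$.
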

\begin{proof}
\item{\underline{(1)}}   
Let $\phi\in (\cO(\lambda_1)\ut \A\ut \cO(\lambda_2))^{\inv}$ and $\phi'\in (\cO(\lambda'_1)\ut \A\ut \cO(\lambda'_2))^{\inv}$. Then, by \cref{p:key},  
    \begin{align*}
    \Phi_{\A}^{\ext}(\phi)\Phi_{\A}^{\ext}(\phi')
    &=\Phi_{\lambda_1;\A ; \lambda_2}(\phi)e(\lambda_2^{\ast})\Phi_{\lambda'_1;\A ; \lambda'_2}(\phi')e(\lambda_2^{\prime\ast})\\
    &=\qq^{-(\lambda_2^{\ast}, \lambda'_1-\lambda^{\prime \ast}_2)}
    \Phi_{\lambda_1;\A;  \lambda_2}(\phi)\Phi_{\lambda'_1;\A; \lambda'_2}(\phi')e((\lambda_2+\lambda'_2)^{\ast})\\
    &=\Phi_{\lambda_1+\lambda'_1;\A; \lambda_2+\lambda'_2}(\phi\phi')e((\lambda_2+\lambda'_2)^{\ast})\\
    &=\Phi_{\A}^{\ext}(\phi\phi').
    \end{align*}
    Therefore $\Phi_{\A}^{\ext}$ is an algebra homomorphism. 
    The injectivity of $\Phi_{\A}^{\ext}$ follows from the injectivity of $\Phi_{\lambda_1;\A; \lambda_2}$ and the fact 
    \begin{align}
    \Phi_{\A}^{\ext}((\cO(\lambda_1)\ut \A\ut \cO(\lambda_2))^{\inv})\subset \A_{\lambda_1-\lambda_2^{\ast}}e(\lambda_2^{\ast}).\label{eq:weight}
    \end{align}
\item{\underline{(2)}}   
    The existence of $D_{\lambda}$ for $\lambda\in P_+$ is a standard fact of the representation theory of $\Uq$, and \eqref{eq:D_e} follows from the definition of $\Phi_{\A}^{\ext}$. The equality \eqref{eq:D_e} implies 
    \[
    D_{\lambda}\phi=\qq^{-(\lambda, \lambda_1-\lambda_2^{\ast})}\phi D_{\lambda}
    \]
    for $\lambda\in P_+$ and $\phi\in (\cO(\lambda_1)\ut \A\ut \cO(\lambda_2))^{\inv}$ (see \eqref{eq:weight}). Therefore, $\cD$ forms an Ore set of $(\cO_\qq(\sA_G)\bt \A\bt \cO_\qq(\sA_G))^{\inv}$, and $(\cO_\qq(\sA_G)\bt \A\bt \cO_\qq(\sA_G))^{\inv}[\cD^{-1}]\to \A^{\ext}$ is well-defined. 

    The injectivity of this extended homomorphism follows from that of $\Phi_{\A}^{\ext}$ and the definition of localization. The surjectivity follows from the last equality in \cref{p:invsp}.
\end{proof}
Note that \eqref{eq:D_e} implies 
\begin{align*}
    D_{\lambda}D_{\lambda'}=D_{\lambda+\lambda'}
\end{align*}
for $\lambda, \lambda'\in P_+$. 

We use the following lemma to show \cref{t:cyclic} below. The morphism $\sigma_{\A}$ in \cref{t:cyclic} will specialize to the \emph{quantum cyclic shift} in \eqref{eq:q-cyclic_shift}.
The proof of \cref{l:antipode} is given by a straightforward calculation. 

\begin{lem}\label{l:antipode}
       Let $M_1, M_2$ be integrable $\Uq$-modules. Then an element $\phi$ of $(M_1\ut M_2)^{\inv}$ satisfies 
       \[
       (X\otimes 1). \phi=(1\otimes \sfS(X)). \phi
       \]
       for $X\in \Uq$. 
\end{lem}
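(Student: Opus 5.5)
The approach is a direct Sweedler-notation computation, combining invariance of $\phi$ under the opposite coproduct with the antipode axiom. Write $\Delta(X)=\sum_{(X)}X^{(1)}\otimes X^{(2)}$, and use coassociativity to write $\sum_{(X)} X^{(1)}\otimes X^{(2)}\otimes X^{(3)}$ for $(\Delta\otimes\id)\Delta(X)=(\id\otimes\Delta)\Delta(X)$. Since $\Uq$ acts on $M_1\ut M_2$ through $\uDelta$, invariance of $\phi$ means
\[
\sum_{(Y)}(Y^{(2)}\otimes Y^{(1)}).\phi=\varepsilon(Y)\phi\qquad\text{for all }Y\in\Uq .
\]
Here $Y^{(2)}$ acts on $M_1$ and $Y^{(1)}$ acts on $M_2$. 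The actions of $\Uq\otimes\Uq$ on $M_1\otimes M_2$ used below are the componentwise ones.

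The plan is to evaluate the element
\[
\sum_{(X)}\bigl(1\otimes \sfS(X^{(1)})\bigr)\bigl(X^{(3)}\otimes X^{(2)}\bigr).\phi
\]
in two ways.

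First, group $X^{(2)}\otimes X^{(3)}$ as $\Delta(X^{(2)})$, which is allowed by coassociativity. The invariance identity with $Y=X^{(2)}$ then turns the inner factor into $\varepsilon(X^{(2)})\phi$. The counit axiom $\sum X^{(1)}\varepsilon(X^{(2)})=X$ collapses the whole expression to $(1\otimes\sfS(X)).\phi$.

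Second, multiply the operators first. This gives $\sum_{(X)}\bigl(X^{(3)}\otimes \sfS(X^{(1)})X^{(2)}\bigr).\phi$. Grouping $X^{(1)}\otimes X^{(2)}$ as $\Delta(X^{(1)})$, the antipode axiom $\sum \sfS(Y^{(1)})Y^{(2)}=\varepsilon(Y)$ applies with $Y=X^{(1)}$. The counit axiom then yields $(X\otimes 1).\phi$.

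Comparing the two evaluations gives $(X\otimes 1).\phi=(1\otimes\sfS(X)).\phi$, as claimed. Only finitely many terms occur, so integrability of $M_1$ and $M_2$ plays no role beyond making these module actions defined.

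The only point requiring care is bookkeeping. In $\uDelta$, the second Sweedler leg acts on the first tensor factor $M_1$. The antipode must be placed on the leg acting on $M_2$, and in the order $\sfS(X^{(1)})X^{(2)}$ so that the antipode axiom applies. Placing it on the other side would instead require $\sfS^{-1}$.
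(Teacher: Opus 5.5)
Your proof is correct and is essentially the ``straightforward calculation'' the paper mentions without writing out: evaluating $\sum_{(X)}\bigl(1\otimes \sfS(X^{(1)})\bigr)\bigl(X^{(3)}\otimes X^{(2)}\bigr).\phi$ once via invariance under $\uDelta$ and once via the antipode and counit axioms gives the identity. Your assignment of Sweedler legs to the factors of $M_1\ut M_2$ is right, and, as you note, integrability is not actually needed.
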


\begin{thm}\label{t:cyclic}
Let $\A$ be a locally finite $\Uq$-module algebra. Define a $\Bbbk$-linear map
\begin{align*}
    \sigma_{\A}\colon (\cO_\qq(\sA_G)\bt \A\bt \cO_\qq(\sA_G))^{\inv}\to (\cO_\qq(\sA_G)\bt \cO_\qq(\sA_G)\bt \A)^{\inv}
\end{align*}
by 
\[
\sigma_{\A}(\phi)=\sigma_{\cO(\lambda_1)\ut \A, \cO(\lambda_2)}(\phi)
\]
for $\phi\in (\cO(\lambda_1)\ut \A\ut \cO(\lambda_2))^{\inv}$ (see \cref{p:cyclic} for the notation). Then 
$\sigma_{\A}$ is an isomorphism of $\Bbbk$-algebras.
\end{thm}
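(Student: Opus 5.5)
The plan is to split the statement into two parts. Bijectivity is essentially already available. Multiplicativity will be proved by comparing "extremal coefficients" through an injective linear map, as in the proofs of \cref{p:key} and \cref{t:invstr}.

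\textbf{Bijectivity.} By \cref{p:cyclic}, applied to $M_1=\cO(\lambda_1)\ut\A$ and $M_2=\cO(\lambda_2)$, each $\sigma_{\cO(\lambda_1)\ut\A,\cO(\lambda_2)}$ is a linear isomorphism
\[
(\cO(\lambda_1)\ut\A\ut\cO(\lambda_2))^{\inv}\xrightarrow{\sim}(\cO(\lambda_2)\ut\cO(\lambda_1)\ut\A)^{\inv}.
\]
Both sides decompose as direct sums over $(\lambda_1,\lambda_2)\in P_+^2$ by the grading \eqref{eq:deg}. Hence $\sigma_{\A}$ is a $\Bbbk$-linear bijection, and it remains only to check $\sigma_{\A}(\phi\phi')=\sigma_{\A}(\phi)\sigma_{\A}(\phi')$ for homogeneous $\phi,\phi'$.

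\textbf{Multiplicativity: the injective test map.} On the target side I use the injective map from \cref{p:invsp}, with $M=\A$ and the roles of the two $\cO$-factors arranged appropriately. An element $\psi\in(\cO(\mu_1)\ut\cO(\mu_2)\ut\A)^{\inv}$ is determined by its coefficient $a$ in
\[
\psi=\xi_{w_0\mu_1}(\mu_1)\otimes\xi_{\mu_2}(\mu_2)\otimes a+(\text{other terms}),
\]
where the other terms lie in $\cO(\mu_1)_{<\mu_1^\ast}\ut\cO(\mu_2)\ut\A+\cO(\mu_1)\ut\cO(\mu_2)_{>-\mu_2}\ut\A$. Call $a$ the leading coefficient $L(\psi)$.

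\textbf{Leading coefficient of $\sigma_{\A}(\phi)$.} Write
\[
\phi=\xi_{\lambda_1}(\lambda_1)\otimes a_0\otimes\xi_{w_0\lambda_2}(\lambda_2)+\cdots,
\]
as in the definition of $\Phi_{\lambda_1;\A;\lambda_2}$. The definition of $\widetilde\sigma$ shows that
\[
L(\sigma_{\A}(\phi))=(-1)^{\langle2\rho^\vee,\lambda_2^\ast\rangle}\qq^{(2\rho,\lambda_2^\ast)}a_0 .
\]
The other terms of $\phi$ are sent to other terms of $\sigma_{\A}(\phi)$, since $\widetilde\sigma$ just permutes factors up to scalar.

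\textbf{Leading coefficients of the two products.} Next I compute $L(\sigma_{\A}(\phi)\sigma_{\A}(\phi'))$ and $L(\sigma_{\A}(\phi\phi'))$ in terms of $a_0a_0'$.
\begin{itemize}
\item For $\sigma_{\A}(\phi\phi')$: the leading term of $\phi\phi'$ is given by \eqref{eq:product}. Applying the previous step to it yields
\[
L(\sigma_{\A}(\phi\phi'))=(-1)^{\langle2\rho^\vee,(\lambda_2+\lambda_2')^\ast\rangle}\qq^{(2\rho,(\lambda_2+\lambda_2')^\ast)}\,\qq^{(\lambda_1',\wt a_0+\lambda_2^\ast)-(\wt a_0',\lambda_2^\ast)}a_0a_0'.
\]
\item For $\sigma_{\A}(\phi)\sigma_{\A}(\phi')$: I multiply the leading terms in $\cO\bt\cO\bt\A$ using \cref{p:comm} repeatedly, exactly as in the proof of \cref{p:key}. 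Here the $\qq$-powers come from moving $\xi_{w_0\lambda_1'}$ and $\xi_{\lambda_2'}$ past $\xi_{\lambda_2}$ and $a_0$.
\end{itemize}
The sign and the $\qq^{(2\rho,\cdot)}$ factors are multiplicative in $\lambda_2$, so they agree on both sides. Everything then reduces to an identity of exponents. To check it I use the invariance weight constraints $\wt a_0=\lambda_1-\lambda_2^\ast$ and $\wt a_0'=\lambda_1'-\lambda_2'^\ast$, together with the $W$-invariance of $(-,-)$.

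\textbf{Expected main obstacle: closure of the other terms.} The hardest step is the triangularity argument: the "other terms" must never contribute to the leading coefficient of a product. In the target, the first factor sits at its highest weight $\mu_1^\ast$ and the second factor at its lowest weight $-\mu_2$. \cref{p:comm} says that commuting a later factor past an earlier one only raises the weight of the earlier factor and lowers that of the later one. I would therefore argue via the total order $\preceq$ used in the proof of \cref{prop:domain}, as follows.
\begin{enumerate}
\item Define a filtration by the weight of the first factor (from above) and the weight of the second factor (from below).
\item Use \cref{p:comm} to show this filtration is multiplicative in the braided product.
\item Conclude that a product in which at least one input is a non-leading term lands strictly inside the filtration, since the Cartan product sends extremal vectors to extremal vectors.
\end{enumerate}
Once this is settled, injectivity of $L$ gives $\sigma_{\A}(\phi\phi')=\sigma_{\A}(\phi)\sigma_{\A}(\phi')$.

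If the exponent bookkeeping turns out unwieldy, the fallback is a direct argument: use \cref{l:antipode} together with \cref{c:YB} to move the braiding of the last factor across the invariant element.
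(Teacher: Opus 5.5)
Your bijectivity argument and the exponent bookkeeping are fine. Your $L(\sigma_{\A}(\phi\phi'))$ is what \eqref{eq:product} gives, and it does equal the naive exponent $(\lambda_2^\ast,\lambda_2^{\prime\ast})+(\lambda_1-\lambda_2^\ast,\lambda_1')$ obtained by multiplying leading terms. The gap is the ``closure of the other terms'' step, which is false as stated. In the target $\cO(\mu_1)\ut\cO(\mu_2)\ut\A$, the leading position has slot $1$ at its \emph{highest} weight and slot $2$ at its \emph{lowest} weight. To compute $(d^{[1]}c^{[2]}a^{[3]})(d'^{[1]}c'^{[2]}a'^{[3]})$ you must move $d'^{[1]}$ left past $c^{[2]}a^{[3]}$. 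By \cref{p:comm}, the correction terms raise $d'$ and lower $c$ (and $a$), so they push \emph{toward} the leading position. In \cref{p:key} the arrangement is reversed (slot $1$ lowest, slot $3$ highest), which is why triangularity works there and not here.

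Here is a concrete failure. Take a term of $\sigma_{\A}(\phi)$ with $d$ at weight $\lambda_2^\ast$ but $c$ at weight $-\lambda_1+\alpha_s$; this is allowed because $a$ absorbs the weight. Take a term of $\sigma_{\A}(\phi')$ with $d'$ at weight $\lambda_2^{\prime\ast}-\alpha_s$ and $c'$ at weight $-\lambda_1'$. The $\alpha_s$-component of the braiding sends $d'$ to the top and $c$ to the bottom. This gives a nonzero contribution at the leading position, even though both inputs have zero leading coefficient. So no filtration built from \cref{p:comm} alone can work: for general elements, the leading coefficient of a product is not determined by the leading coefficients of the factors.

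These stray contributions cancel only after summing over all terms of $\phi$ and $\phi'$, and the cancellation comes from invariance. The missing ingredient is \cref{l:antipode}, in the form \eqref{eq:XSX1}--\eqref{eq:XSX2}. For $X\in(\Uq)_{\alpha}$,
\[
\sum_{\wt d_i=\mu}d_i^{[1]}\bigl(X.(c_i^{[2]}a_i^{[3]})\bigr)=\sum_{\wt d_i=\mu-\alpha}\bigl(\sfS(X).d_i^{[1]}\bigr)c_i^{[2]}a_i^{[3]},
\]
and the analogous identity holds for $\phi'$ with $\sfS^{-1}$. Expand the first braiding as $\sum_\alpha\Theta_\alpha$ and use these identities to move the factors:
\begin{itemize}
\item move $B_{\alpha,\ell}$ off $c_ia_i$ onto $d_i$, where $\sfS(B_{\alpha,\ell})$ lowers $d_i$, away from the top;
\item move $B^+_{\alpha,\ell}$ off $d'_j$ onto $c'_ja'_j$, raising $c'_j$, away from the bottom.
\end{itemize}
After this transfer, only the $\alpha=0$ part survives modulo the other terms. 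The second braiding, of $c_j'^{[2]}$ past $a_i^{[3]}$, raises $c'_j$, so it is already triangular in the good direction. What remains is exactly the $(i_0,j_0)$ term with the naive exponent. Your ``fallback'' is therefore not optional but the essential step; once it replaces the filtration argument, the rest of your plan goes through.
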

\begin{proof}
    By definition, $\sigma_{\A}$ is an isomorphism of vector spaces. Hence it suffices to show that $\sigma_{\A}$ is an homomorphism of $\Bbbk$-algebras. 
    
    Let $\phi\in (\cO(\lambda_1)\ut \A\ut \cO(\lambda_2))^{\inv}$ and $\phi'\in (\cO(\lambda'_1)\ut \A\ut \cO(\lambda'_2))^{\inv}$. 
    Write 
    \begin{align*}
        &\phi=\sum_{i\in I}c_i^{[1]}a_i^{[2]}d_i^{[3]}&
        &\phi'=\sum_{j\in J}{c'}_j^{[1]} {a'}_j^{[2]}{d'}_j^{[3]}
    \end{align*}
where $I$ and $J$ are certain index sets, and $c_i, c'_j, a_i, a'_j, d_i, d'_j$ are weight vectors. By \cref{l:antipode}, 
    \begin{align*}
        &\sum_{i\in I}(X.(c_i^{[1]}a_i^{[2]}))d_i^{[3]}=\sum_{i\in I}c_i^{[1]}a_i^{[2]}(\sfS(X).d_i^{[3]})
    \end{align*}
for $X\in \Uq$. Hence, for $X\in (\Uq)_{\alpha}$ and $\mu\in P$, we have 
\begin{align*}
        &\sum_{i\in I, \wt d_i=\mu}(X.(c_i^{[1]}a_i^{[2]}))d_i^{[3]}=\sum_{i\in I, \wt d_i=\mu-\alpha}c_i^{[1]}a_i^{[2]}(\sfS(X).d_i^{[3]}),
\end{align*}
hence, in $ \cO_\qq(\sA_G)\bt \cO_\qq(\sA_G)\bt \A$, 
\begin{align}
        &\sum_{i\in I, \wt d_i=\mu}d_i^{[1]}(X.(c_i^{[2]}a_i^{[3]}))=\sum_{i\in I, \wt d_i=\mu-\alpha}(\sfS(X).d_i^{[1]})c_i^{[2]}a_i^{[3]}.\label{eq:XSX1}
\end{align}
Similarly, for $X\in (\Uq)_{\alpha}$ and $\mu\in P$, we have  
\begin{align}
        &\sum_{j\in J, \wt d'_j=\mu+\alpha}{d'}_j^{[1]}(\sfS^{-1}(X).({c'}_j^{[2]}{a'}_j^{[3]}))=\sum_{j\in J, \wt d'_j=\mu}(X.{d'}_j^{[1]}){c'}_j^{[2]}{a'}_j^{[3]}.\label{eq:XSX2}
\end{align}
in $ \cO_\qq(\sA_G)\bt \cO_\qq(\sA_G)\bt \A$. We may assume that there uniquely exists $i_0\in I$ (resp.~$j_0\in J$) such that
\begin{align*}
c_{i_0}\otimes d_{i_0}\in \cO(\lambda_1)_{-\lambda_1}\ut \cO(\lambda_2)_{\lambda_2^{\ast}}\quad 
(\text{resp.~} c'_{j_0}\otimes d'_{j_0}\in \cO(\lambda'_1)_{-\lambda'_1}\ut \cO(\lambda'_2)_{\lambda^{\prime \ast}_2}).
\end{align*}
Moreover, in this case, we may set 
\[
c_{i_0}=\xi_{\lambda_1}(\lambda_1),\ d_{i_0}=\xi_{w_0\lambda_2}(\lambda_2),\ c'_{j_0}=\xi_{\lambda'_1}(\lambda'_1),\ d'_{j_0}=\xi_{w_0\lambda'_2}(\lambda'_2).
\]
Note that $\wt a_{i_0}=\lambda_1-\lambda_2^{\ast}$ and $\wt a'_{j_0}=\lambda'_1-\lambda^{\prime \ast}_2$.

For elements $\phi_1, \phi_2\in \cO(\nu_1)\ut \cO(\nu_2)\ut \A$ ($\nu_1, \nu_2\in P_+$), we will write $\phi_1\equiv \phi_2$ if and only if $\phi_1-\phi_2$ is in 
\[
    \cO(\nu_1)_{<\nu_1^{\ast}}\ut\cO(\nu_2)\ut \A +\cO(\nu_1)\ut \cO(\nu_2)_{>-\nu_2}\ut \A.
\]
Note that \cref{p:invsp} implies that 
\begin{center}
$\phi_1\equiv \phi_2$ if and only if $\phi_1= \phi_2$     
\end{center}
for $\phi_1, \phi_2\in (\cO(\nu_1)\ut \cO(\nu_2)\ut \A)^{\inv}$. 
Then, by \cref{p:cyclic} (see also \eqref{eq:product}), 
\begin{align*}
        &\sigma_{\A}(\phi\phi')\equiv \\
        &(-1)^{\langle 2\rho^{\vee}, (\lambda_2+\lambda'_2)^{\ast}\rangle}\qq^{( 2\rho, (\lambda_2+\lambda'_2)^{\ast})+(\lambda'_1, \lambda_1)-(\lambda'_1-\lambda^{\prime \ast}_2, \lambda_2^{\ast})}
        \xi_{w_0(\lambda_2+\lambda'_2)}(\lambda_2+\lambda'_2)^{[1]} \xi_{\lambda_1+\lambda'_1}(\lambda_1+\lambda'_1)^{[2]} (a_{i_0}a'_{j_0})^{[3]}.
    \end{align*}
On the other hand, by the definition of the multiplication of $\cO_\qq(\sA_G)\bt \cO_\qq(\sA_G)\bt \A$ and \cref{prop:Rop}, \eqref{eq:XSX1}, \eqref{eq:XSX2}, we have
    \begin{align*}
        &\sigma_{\A}(\phi)\sigma_{\A}(\phi')\\
        &=(\sum_{i\in I}(-1)^{\langle 2\rho^{\vee}, \wt d_i\rangle}\qq^{(2\rho, \wt d_i)}d_i^{[1]}c_i^{[2]}a_i^{[3]})(\sum_{j\in J}(-1)^{\langle 2\rho^{\vee}, \wt d'_j\rangle}\qq^{(2\rho, \wt d'_j)}{d'}_j^{[1]}{c'}_j^{[2]}{a'}_j^{[3]})\\
        &=\sum_{\mu, \mu'\in P}\sum_{\substack{\alpha\in Q_+\\ \ell=1,\dots, k_{\alpha}}}\sum_{\substack{i\in I, j\in J\\ \wt d_i=\mu\\ \wt d'_j=\mu'}}(-1)^{\langle 2\rho^{\vee}, \mu+\mu'\rangle}\qq^{(2\rho, \mu+\mu')+(\mu, \mu')}d_i^{[1]}(B_{\alpha, \ell}^+.{d'}_j^{[1]})(B_{\alpha, \ell}. (c_i^{[2]}a_i^{[3]})){c'}_j^{[2]}{a'}_j^{[3]}\\
        &=\sum_{\mu, \mu'\in P}\sum_{\substack{\alpha\in Q_+\\ \ell=1,\dots, k_{\alpha}}}\sum_{\substack{i\in I, j\in J\\ \wt d_i=\mu+\alpha\\ \wt d'_j=\mu'+\alpha}}(-1)^{\langle 2\rho^{\vee}, \mu+\mu'\rangle}\qq^{(2\rho, \mu+\mu')+(\mu, \mu')}(\sfS(B_{\alpha, \ell}).d_i^{[1]}){d'}_j^{[1]}c_i^{[2]}a_i^{[3]}(\sfS^{-1}(B_{\alpha, \ell}^+).({c'}_j^{[2]}{a'}_j^{[3]}))\\
        &\equiv \sum_{\substack{i\in I, j\in J\\ \wt d_i=\lambda_2^{\ast}\\\wt d'_j=\lambda^{\prime \ast}_2}}
        (-1)^{\langle 2\rho^{\vee}, (\lambda_2+\lambda'_2)^{\ast}\rangle}\qq^{(2\rho, (\lambda_2+\lambda'_2)^{\ast})+(\lambda^{\ast}_2, \lambda^{\prime \ast}_2)}
        d_i^{[1]}{d'}_j^{[1]} c_i^{[2]}a_i^{[3]}{c'}_j^{[2]}{a'}_j^{[3]}\\
        &=\sum_{\substack{i\in I, j\in J\\ \wt d_i=\lambda_2^{\ast}\\\wt d'_j=\lambda^{\prime \ast}_2}}\sum_{\substack{\alpha\in Q_+\\ \ell=1,\dots, k_{\alpha}}}
        (-1)^{\langle 2\rho^{\vee}, (\lambda_2+\lambda'_2)^{\ast}\rangle}\qq^{(2\rho, (\lambda_2+\lambda'_2)^{\ast})+(\lambda^{\ast}_2, \lambda^{\prime \ast}_2)-(\wt a_i, \wt c'_j)}
        d_i^{[1]}{d'}_j^{[1]} c_i^{[2]}(B_{\alpha, \ell}^+.{c'}_j^{[2]})(B_{\alpha, \ell}.a_i^{[3]}){a'}_j^{[3]}\\
        &\equiv (-1)^{\langle 2\rho^{\vee}, (\lambda_2+\lambda'_2)^{\ast}\rangle}\qq^{(2\rho, (\lambda_2+\lambda'_2)^{\ast})+(\lambda^{\ast}_2, \lambda^{\prime \ast}_2)+(\lambda_1-\lambda_2^{\ast}, \lambda'_1)} 
        \xi_{w_0(\lambda_2+\lambda'_2)}(\lambda_2+\lambda'_2)^{[1]} \xi_{\lambda_1+\lambda'_1}(\lambda_1+\lambda'_1)^{[2]} (a_{i_0}a'_{j_0})^{[3]}.
    \end{align*}
    Therefore, $\sigma_{\A}(\phi\phi')\equiv \sigma_{\A}(\phi)\sigma_{\A}(\phi')$, which implies $\sigma_{\A}(\phi\phi')=\sigma_{\A}(\phi)\sigma_{\A}(\phi')$.
\end{proof}
\subsection{Localizations of $\cO_\qq(\Conf_K\sA_G)$}
Let $K\in \Z_{\geq 2}$. In the case when  $\A=\cO_\qq(\sA_G)^{\bt (K-2)}$, we write $\Phi_{\A}^{\ext}$ in \cref{t:invstr} (resp.~$\sigma_{\A}$ in \cref{t:cyclic}) as $\Phi_{K}^{\ext}$ (resp.~$\sigma_K$). Then, 
\begin{align*}
\Phi_{K}^{\ext}\colon\cO_\qq(\Conf_K\sA_G)\to (\cO_\qq(\sA_G)^{\bt (K-2)})^{\ext},
\end{align*}
and 
\begin{align*}
\sigma_K\colon \cO_\qq(\Conf_K\sA_G)\xrightarrow{\sim}\cO_\qq(\Conf_K\sA_G). 
\end{align*}
The automorphism $\sigma_K$ is called the \emph{quantum cyclic shift}. For $\lambda\in P_+$, we will write the element $D_{\lambda}\in \cO_\qq(\Conf_K\sA_G)$ in \cref{t:invstr} as $D_{K;\lambda}$, and set 
\begin{align}
    D_{i;\lambda}:=\sigma_K^{i}(D_{K;\lambda})\label{eq:Di}
\end{align}
for $i=1,\dots, K-1$. Note that $\sigma_K^{K}(D_{K;\lambda})=D_{K;\lambda}$. 
It follows from \cref{t:invstr,t:cyclic} that, for $I=\{i_1,\dots, i_l\}\subset [1,K]$,  
\[
\cD_I:= \left\{\qq^a D_{i_1;\lambda_1}\cdots D_{i_l;\lambda_l}\ \middle|\  a\in \frac{1}{2d}\Z, \lambda_1,\dots, \lambda_l\in P_+\right\}
\]
forms an Ore set of $\cO_\qq(\Conf_K\sA_G)$. We set 
\begin{align}
\cO_\qq(\Conf_K^I\sA_G):=\cO_\qq(\Conf_K\sA_G)[\cD_I^{-1}].\label{eq:Conf_I}
\end{align}
When $I=[1,K]$, we write $\cO_\qq(\Conf_K^{[1,K]}\sA_G)=:\cO_\qq(\Conf_K^{\times}\sA_G)$. By definition, $\sigma_K$ induces 
\[
\sigma_K\colon \cO_\qq(\Conf_K^I\sA_G)\xrightarrow{\sim} \cO_\qq(\Conf_K^{\tau_K(I)}\sA_G),
\]
where $\tau_K\in \mathfrak{S}_K$ is the cyclic permutation $(1\ 2\ \cdots\ K)$.

\begin{rem}
The element $D_{i;\lambda} \in \cO_\qq(\Conf_K \sA_G)$ is a quantum analogue of the function on $\Conf_K \sA_G$ given by $[\flA_1,\dots,\flA_K] \mapsto h(\flA_{i+1},\flA_{i})^{\lambda}$, where $i=1,\dots,K$ is considered modulo $K$. See \cref{fig:D_picture} for an illustration and \cref{lem:hw-distance} for the notation. 
\end{rem}

\begin{figure}
    \centering
\begin{tikzpicture}
\def\R{2}
\foreach \i in {1,2,3,4,5,6}{
    \draw(60*\i:\R) -- (60*\i+60:\R);
    \node at (-60*\i+180:\R+0.4) {$\flA_{\i}$};
    \node[scale=0.9] at (-60*\i+150:\R+0.2) {$D_{\i;\lambda}$};
}
\end{tikzpicture}
    \caption{The location of the elements $D_{i;\lambda} \in \cO_\qq(\Conf_K \sA_G)$ with $K=6$.}
    \label{fig:D_picture}
\end{figure}

\subsection{Summary}
Let $K\in \Z_{\geq 2}$. 
For the convenience of the reader, we summarize here the morphisms and formulas for $\cO_\qq(\Conf_K\sA_G)$ established in this section. 
We have an injective $\Bbbk$-algebra homomorphism 
\[
\Phi_{K}^{\ext}\colon\cO_\qq(\Conf_K\sA_G)\to (\cO_\qq(\sA_G)^{\bt (K-2)})^{\ext}
\]
determined by 
\begin{align}
    \xi_{\lambda_1}(\lambda_1)\otimes \phi\otimes \xi_{w_0\lambda_K}(\lambda_K)+(\text{other terms})\mapsto \qq^{-(\lambda_1, \lambda_1)/2}\phi e(\lambda_K^{\ast})
\label{eq:Phi_ke}
\end{align}
where $\mathrm{(}$other terms$\mathrm{)}$ belong to 
    \[
    \cO(\lambda_1)_{> -\lambda_1}\ut \cO_\qq(\sA_G)^{\bt (K-2)}\ut \cO(\lambda_K)+\cO(\lambda_1)\ut \cO_\qq(\sA_G)^{\bt (K-2)}\ut \cO(\lambda_K)_{<\lambda_K^{\ast}}.
    \]
    Moreover, in this case, $\phi\in (\cO_\qq(\sA_G)^{\bt (K-2)})_{\lambda_1-\lambda_K^{\ast}}$. The homomorphism $\Phi_{K}^{\ext}$ can be extended to an isomorphism $\cO_\qq(\Conf_K^{\{K\}}\sA_G)\xrightarrow{\sim} (\cO_\qq(\sA_G)^{\bt (K-2)})^{\ext}$.
    
The quantum cyclic shift is a $\Bbbk$-algebra isomorphism 
\begin{align*}
\sigma_K\colon \cO_\qq(\Conf_K\sA_G)\xrightarrow{\sim}\cO_\qq(\Conf_K\sA_G). 
\end{align*}
determined by 
\begin{align}
    \sum_{i}\xi_{i}(\lambda_1)\otimes \phi_i\otimes \xi'_{i}(\lambda_K)\mapsto 
     \sum_{i}(-1)^{\langle 2\rho^{\vee}, \wt \xi'_{i}\rangle}\qq^{(2\rho, \wt \xi'_{i})}\xi'_{i}(\lambda_K)\otimes \xi_{i}(\lambda_1)\otimes \phi_i
     \label{eq:q-cyclic_shift}
\end{align}
    for $\lambda_1, \lambda_K\in P_+$ and weight vectors $\xi_{i}\in V(\lambda_1)^{\ast}$, $\xi'_{i}\in V(\lambda_K)^{\ast}$. 

For $\lambda\in P_+$ and $\phi\in \cO^{\inv}(\lambda_1,\dots, \lambda_K)$, 
\begin{align}
D_{K;\lambda}\phi=\qq^{-(\lambda, \lambda_1-\lambda_K^{\ast})}\phi D_{K;\lambda}.
     \label{eq:D_k_qcomm}
\end{align}

\section{Disjoint embeddings of quantum configuration spaces}
For an order-preserving injective map $p\colon [1,K]\to [1,L]$, there exists an injective $\Bbbk$-algebra homomorphism
\begin{align}\label{eq:Conf_embedding}
\iota_p^{\inv}\colon \cO_\qq(\Conf_K\sA_G)\to \cO_\qq(\Conf_L\sA_G)
\end{align}
induced from $\iota_p$ in \cref{p:emb}. In this section, we study the commutation relations among images of this kind of embeddings. The following lemmas play a key role. 
\begin{lem}\label{lem:OA_comm}
Write the multiplication map $\cO_\qq(\sA_G)\ut \cO_\qq(\sA_G)\to \cO_\qq(\sA_G)$ as $m_{\cO_\qq(\sA_G)}$. Then, for $\lambda_1, \lambda_2\in P_+$,  
\[
m_{\cO_\qq(\sA_G)} =\qq^{(\lambda_1, \lambda_2)}m_{\cO_\qq(\sA_G)}\circ \tcR_{\cO(\lambda_1), \cO(\lambda_2)}
\] 
on $\cO(\lambda_1)\ut \cO(\lambda_2)$. 
\end{lem}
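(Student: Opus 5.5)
The plan is to move the whole computation into matrix coefficients and use \cref{p:coordprod} together with \cref{p:tildeR}, which together convert the braiding $\tcR$ on dual modules into the ordinary $R$-matrix $\cR$ acting on the vectors $v_{\lambda_1}\otimes v_{\lambda_2}$. On highest weight vectors $\cR$ acts by a scalar, and that scalar gives the factor.

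Write $V_i:=V(\lambda_i)$. Under \eqref{eq:AGmod}, the $\Uq$-module $\cO(\lambda_i)$ is identified with $V_i^{\ast}$ via $\xi\mapsto \xi(\lambda_i)=c^{V_i}(\xi,v_{\lambda_i})$. This identification intertwines the action of $\Uq\otimes 1$ with the dual action, so $\cO(\lambda_1)\ut\cO(\lambda_2)\cong V_1^{\ast}\ut V_2^{\ast}$ as $\Uq$-modules, and $\tcR_{\cO(\lambda_1),\cO(\lambda_2)}$ corresponds to $\tcR_{V_1^{\ast},V_2^{\ast}}$.

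By \cref{p:coordprod}, for $\eta\in V_1^{\ast}\ut V_2^{\ast}$ we have
\[
m_{\cO_\qq(\sA_G)}(\eta)=c^{V_1\otimes V_2}\bigl(\Upsilon_{V_1,V_2}(\eta),\,v_{\lambda_1}\otimes v_{\lambda_2}\bigr),
\]
where $\eta$ is read as an element of $\cO(\lambda_1)\ut\cO(\lambda_2)$. In the same way,
\[
m_{\cO_\qq(\sA_G)}\bigl(\tcR(\eta)\bigr)=c^{V_2\otimes V_1}\bigl(\Upsilon_{V_2,V_1}(\tcR_{V_1^{\ast},V_2^{\ast}}(\eta)),\,v_{\lambda_2}\otimes v_{\lambda_1}\bigr).
\]
By \cref{p:tildeR}, the argument $\Upsilon_{V_2,V_1}(\tcR_{V_1^{\ast},V_2^{\ast}}(\eta))$ equals $\cR_{V_2,V_1}^{\ast}(\Upsilon_{V_1,V_2}(\eta))$.

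Next I use the naturality identity $c^{W}(f^{\ast}\zeta,w)=c^{W'}(\zeta,f(w))$, valid for any $\Uq$-module map $f\colon W\to W'$. It follows directly from the definition, since $\langle f^{\ast}\zeta, X.w\rangle=\langle\zeta, X.f(w)\rangle$. Applying it with $f=\cR_{V_2,V_1}$ gives
\[
m_{\cO_\qq(\sA_G)}\bigl(\tcR(\eta)\bigr)=c^{V_1\otimes V_2}\bigl(\Upsilon_{V_1,V_2}(\eta),\,\cR_{V_2,V_1}(v_{\lambda_2}\otimes v_{\lambda_1})\bigr).
\]

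Finally I compute $\cR_{V_2,V_1}(v_{\lambda_2}\otimes v_{\lambda_1})=\Theta\,\Pi\,(v_{\lambda_1}\otimes v_{\lambda_2})$. The map $\Pi$ contributes the factor $\qq^{-(\lambda_1,\lambda_2)}$. In $\Theta=\sum_\alpha\Theta_\alpha$, every term with $\alpha\neq 0$ has an element of $(\Uq^+)_\alpha$ in its second tensor factor, and this kills the highest weight vector $v_{\lambda_2}$. Only $\Theta_0=1\otimes 1$ survives, so
\[
\cR_{V_2,V_1}(v_{\lambda_2}\otimes v_{\lambda_1})=\qq^{-(\lambda_1,\lambda_2)}\,v_{\lambda_1}\otimes v_{\lambda_2}.
\]
Substituting this gives $m\circ\tcR=\qq^{-(\lambda_1,\lambda_2)}m$ on $\cO(\lambda_1)\ut\cO(\lambda_2)$, which is the claim.

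The main point needing care is bookkeeping of conventions rather than any real obstacle. One must check that the $\Uq$-action on $\cO(\lambda_i)$ coming from $\Uq\otimes 1$ really is the dual action used in \cref{p:tildeR}. One must also confirm that $\Theta$ places $\Uq^+$ on the tensor factor containing $v_{\lambda_2}$, which the formula $\cR_{U,V}=\Theta\circ\Pi\circ P_{U,V}$ makes explicit.
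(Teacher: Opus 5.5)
Your proposal is correct and follows the paper's proof essentially step by step: you write both products as matrix coefficients via \cref{p:coordprod}, use \cref{p:tildeR} to convert $\tcR_{V(\lambda_1)^{\ast},V(\lambda_2)^{\ast}}$ into $\cR_{V(\lambda_2),V(\lambda_1)}^{\ast}$, move $\cR$ onto the vector using that it is a $\Uq$-module map, and evaluate it on $v_{\lambda_2}\otimes v_{\lambda_1}$ to get $\qq^{-(\lambda_1,\lambda_2)}v_{\lambda_1}\otimes v_{\lambda_2}$. The paper does the same computation by pairing with an arbitrary $X\in\Uq$; this is just your naturality identity written out inline.
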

\begin{proof}
For $\xi_1\in V(\lambda_1)^{\ast}, \xi_2\in V(\lambda_2)^{\ast}$ and $X\in \Uq$, 
\begin{align*}
    &\langle (m_{\cO_\qq(\sA_G)}\circ \tcR_{\cO(\lambda_1), \cO(\lambda_2)})(\xi_1(\lambda_1)\otimes \xi_2(\lambda_2)), X\rangle\\
    &=\langle \Upsilon_{V(\lambda_2), V(\lambda_1)}(\tcR_{V(\lambda_1)^{\ast}, V(\lambda_2)^{\ast}}(\xi_1\otimes \xi_2)), \Delta(X).v_{\lambda_2}\otimes v_{\lambda_1}\rangle\\
    &=\langle \Upsilon_{V(\lambda_1), V(\lambda_2)}(\xi_1\otimes \xi_2),\cR_{V(\lambda_2), V(\lambda_1)}(\Delta(X).v_{\lambda_2}\otimes v_{\lambda_1})\rangle\ \text{by \cref{p:tildeR}}\\
    &=\langle \Upsilon_{V(\lambda_1), V(\lambda_2)}(\xi_1\otimes \xi_2),\Delta(X).\cR_{V(\lambda_2), V(\lambda_1)}(v_{\lambda_2}\otimes v_{\lambda_1})\rangle\\
    &=\qq^{-(\lambda_1, \lambda_2)}\langle \Upsilon_{V(\lambda_1), V(\lambda_2)}(\xi_1\otimes \xi_2), \Delta(X).v_{\lambda_1}\otimes v_{\lambda_2}\rangle\\
    &=\qq^{-(\lambda_1, \lambda_2)}\langle m_{\cO_\qq(\sA_G)}(\xi_1(\lambda_1)\otimes \xi_2(\lambda_2)), X\rangle,
\end{align*}
which proves the assertion. 
 \end{proof} 
 \begin{lem}\label{lem:Conf_comm}
 Let $K\in \Z_{\geq 2}$. 
 \begin{itemize}
     \item[(1)] For $\phi\in (\cO_\qq(\sA_G)^{\bt K})^{\inv}\bt 1\subset \cO_\qq(\sA_G)^{\bt (K+1)}$ and $\psi\in 1^{\bt K}\bt \cO_\qq(\sA_G)\subset \cO_\qq(\sA_G)^{\bt (K+1)}$, $\phi \psi=\psi\phi$ in $\cO_\qq(\sA_G)^{\bt (K+1)}$. 
          \item[(2)] For $\phi\in 1\bt (\cO_\qq(\sA_G)^{\bt K})^{\inv}\subset \cO_\qq(\sA_G)^{\bt (K+1)}$ and $\psi\in \cO_\qq(\sA_G)\bt 1^{\bt K}\subset \cO_\qq(\sA_G)^{\bt (K+1)}$, $\phi \psi=\psi\phi$ in $\cO_\qq(\sA_G)^{\bt (K+1)}$. 
     \item[(3)] For $\phi\in \cO^{\inv}(\lambda_1, \dots, \lambda_K)\subset \cO_\qq(\sA_G)^{\bt K}$ and $\xi\in V(\lambda)^{\ast}$, 
     \[
     \phi \cdot \xi(\lambda)^{[1]}=\qq^{(\lambda_1, \lambda)}\xi(\lambda)^{[1]}\cdot \phi
     \]
     in $\cO_\qq(\sA_G)^{\bt K}$.
     \item[(4)] For $\phi\in \cO^{\inv}(\lambda_1, \dots, \lambda_K)\subset \cO_\qq(\sA_G)^{\bt K}$ and $\xi\in V(\lambda)^{\ast}$,  
     \[
     \phi \cdot \xi(\lambda)^{[K]}=\qq^{-(\lambda, \lambda_K)} \xi(\lambda)^{[K]}\cdot \phi
     \]
     in $\cO_\qq(\sA_G)^{\bt K}$.
 \end{itemize}
\end{lem}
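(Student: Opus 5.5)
The plan is to derive (1) and (2) directly from the braided multiplication together with the fact that an invariant vector is fixed by the quasi-$R$-matrix. Then (3) and (4) follow by combining (1)/(2) with \cref{lem:OA_comm} applied in the first or last tensor slot.

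For (1), write $\phi\in(\cO_\qq(\sA_G)^{\bt K})^{\inv}\bt 1$ and $\psi=\xi(\lambda)^{[K+1]}$. By the definition of the braided tensor product, iterated via \cref{c:YB}, the product $\psi\phi$ is computed as
\[
m\circ(\id\otimes\tcR_{\cO_\qq(\sA_G),\,\cO_\qq(\sA_G)^{\bt K}})(\psi\otimes\phi),
\]
up to the identification $(\A_1\bt\cdots)\bt\A_{K+1}$. Here $\tcR=P\circ\Pi\circ\Theta^{\sfS}$. Since $\phi$ is invariant, every term $\Theta^{\sfS}_\alpha$ with $\alpha\neq 0$ kills it: the factor acting on $\phi$ is $\sfS^{-1}(B_{\alpha,i}^+)$ or $\sfS^{-1}(B_{\alpha,i})$, which has zero counit, so only $\alpha=0$ survives. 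Moreover, $\phi$ has weight $0$, so $\Pi$ acts trivially. Hence $\tcR(\psi\otimes\phi)=\phi\otimes\psi$, and $\psi\phi=\phi\otimes\psi=\phi\psi$. The main check is the direction in which the braiding is applied, namely which of $\A_2,\A_1$ carries $\phi$. I must verify that the relevant $\tcR$ is $\tcR_{\A_{K+1},\A_{\le K}}$ acting with $\phi$ in the slot hit by a positive- or negative-part element; either way the counit argument works. Part (2) is symmetric: now $\phi$ sits in slots $2,\dots,K+1$ and $\psi$ in slot $1$. The braiding in $\psi\cdot\phi$ is trivial, since $1$'s are moved, and in $\phi\cdot\psi$ we braid $\phi$ past $\psi$. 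The same counit and weight-zero argument gives equality.

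For (3), write $\phi=\sum\phi_1^{(i)}\otimes\phi_{\geq2}^{(i)}$ with $\phi_1^{(i)}\in\cO(\lambda_1)$. The product $\xi(\lambda)^{[1]}\phi$ involves no braiding and equals $\sum(\xi(\lambda)\phi_1^{(i)})\otimes\phi_{\ge2}^{(i)}$. For $\phi\,\xi(\lambda)^{[1]}$, I embed $\cO_\qq(\sA_G)^{\bt K}$ via $\xi(\lambda)\otimes\phi\in\cO_\qq(\sA_G)\bt(\cO_\qq(\sA_G)^{\bt K})^{\inv}$ (slots $0,1,\dots,K$). By (2), the elements $\phi^{[1..K]}$ and $\xi(\lambda)^{[0]}$ commute. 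Then I apply the multiplication $m\colon$ slot $0$ $\otimes$ slot $1\to$ slot $1$, which is not an algebra map in general, so I argue more directly instead. The product $\phi\cdot\xi(\lambda)^{[1]}$ equals
\[
\sum_i m_{\cO_\qq(\sA_G)}\bigl(\tcR\text{-braided}\bigr)\cdots,
\]
that is, $\xi^{[1]}$ must be braided past $\phi_{\ge2}^{(i)}$. I plan to rewrite this as $(m\otimes\id)$ applied to $(\id\otimes\tcR_{\cO(\lambda),\ \text{rest}})$, and use the hexagon identity \cref{c:YB}: $\tcR_{\cO(\lambda_1)\ut M,\cO(\lambda)}$ is the composite of braiding past $M$ and past $\cO(\lambda_1)$. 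By invariance of $\phi$, the total braiding $\tcR_{\cO(\lambda_1)\ut M,\cO(\lambda)}(\phi\otimes\xi)=\xi\otimes\phi$, as in (1). Therefore braiding past $M$ equals the inverse of braiding past $\cO(\lambda_1)$ applied to $\xi\otimes\phi$. Combined with \cref{lem:OA_comm}, which states $m=\qq^{(\lambda_1,\lambda)}m\circ\tcR$, this yields $\phi\xi^{[1]}=\qq^{(\lambda_1,\lambda)}\xi^{[1]}\phi$. Part (4) is the mirror argument in the last slot, yielding the sign $-(\lambda,\lambda_K)$.

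I expect the main obstacle to be the bookkeeping in (3) and (4): correctly invoking the hexagon relation so that the inverse braiding combines with \cref{lem:OA_comm} to produce the exponent with the right sign and without extra $\Pi$-factors.
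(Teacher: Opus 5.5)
Your proposal is correct and is essentially the paper's proof. Parts (1) and (2) come from $\tcR$ acting trivially when one tensor factor is invariant: the $\alpha\neq 0$ parts of $\Theta^{\sfS}$ act through elements of zero counit, and $\Pi$ is trivial on weight $0$. Parts (3) and (4) come from \cref{lem:OA_comm} combined with the hexagon identity of \cref{c:YB}, which reassembles the partial braiding of $\xi(\lambda)$ past $\phi$ into the full braiding, and that full braiding is trivial by invariance. The only difference is cosmetic: you isolate the partial braiding as an inverse braiding and use \cref{lem:OA_comm} in inverted form, whereas the paper first inserts $\qq^{(\lambda_1,\lambda)}\tcR_{\cO(\lambda_1),\cO(\lambda)}$ and then merges the two braidings (also, the braiding you need in (3) is $\tcR_{M,\cO(\lambda)}$, not $\tcR_{\cO(\lambda),M}$).
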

\begin{proof}
\item[\underline{(1)}] Write $\phi=\phi'\otimes 1$ and $\psi=\underset{K}{\underbrace{1\otimes \cdots \otimes 1}}\otimes \psi'$. Then,
\begin{align*}
    \psi\phi&=(1\otimes \cdots \otimes 1\otimes \psi')\cdot (\phi'\otimes 1)
    =\tcR_{\cO_\qq(\sA_G), \cO_\qq(\sA_G)^{\bt K}}(\psi'\otimes \phi')
    =\phi'\otimes \psi'=\phi\psi. 
\end{align*}
Here the third equality follows from $\phi'\in (\cO_\qq(\sA_G)^{\bt K})^{\inv}$ and the definition of $\tcR_{\cO_\qq(\sA_G), \cO_\qq(\sA_G)^{\bt K}}$. 
\item[\underline{(2)}] This follows from the same argument as in the proof of (1).
\item[\underline{(3)}] Write $\phi=\sum_{i\in I} \xi_1^{(i)}(\lambda_1)\otimes \cdots \otimes \xi_K^{(i)}(\lambda_K)$ for some index set $I$. Then, 
\begin{align*}
    \phi \cdot \xi(\lambda)^{[1]}&=\left(\sum_{i\in I} \xi_1^{(i)}(\lambda_1)\otimes \cdots \otimes \xi_K^{(i)}(\lambda_K) \right)\cdot \left(\xi(\lambda)\otimes 1\otimes\cdots \otimes1 \right)\\
    &=\left((m_{\cO_\qq(\sA_G)}\otimes \id_{\cO_\qq(\sA_G)^{\bt (K-1)}})\circ (\id_{\cO_\qq(\sA_G)}\otimes \tcR_{\cO_\qq(\sA_G)^{\bt (K-1)}, \cO_\qq(\sA_G)}) \right)\\
    &\phantom{===================}\left(\sum_{i\in I} \xi_1^{(i)}(\lambda_1)\otimes \cdots \otimes \xi_K^{(i)}(\lambda_K) \otimes \xi(\lambda)\right)\\
    &=\qq^{(\lambda_1, \lambda)}\left((m_{\cO_\qq(\sA_G)}\otimes \id_{\cO_\qq(\sA_G)^{\bt (K-1)}})\right.\circ \\
    &\phantom{=}\left. (\tcR_{\cO_\qq(\sA_G), \cO_\qq(\sA_G)}\otimes \id_{\cO_\qq(\sA_G)^{\bt (K-1)}})\circ (\id_{\cO_\qq(\sA_G)}\otimes \tcR_{\cO_\qq(\sA_G)^{\bt (K-1)}, \cO_\qq(\sA_G)}) \right)\\
    &\phantom{===================}\left(\sum_{i\in I} \xi_1^{(i)}(\lambda_1)\otimes \cdots \otimes \xi_K^{(i)}(\lambda_K) \otimes \xi(\lambda)\right)\\
    &=\qq^{(\lambda_1, \lambda)}\left((m_{\cO_\qq(\sA_G)}\otimes \id_{\cO_\qq(\sA_G)^{\bt (K-1)}})\circ \tcR_{\cO_\qq(\sA_G)^{\bt K}, \cO_\qq(\sA_G)}\right)\\
    &\phantom{===================}\left(\sum_{i\in I} \xi_1^{(i)}(\lambda_1)\otimes \cdots \otimes \xi_K^{(i)}(\lambda_K) \otimes \xi(\lambda)\right)\\
    &=\qq^{(\lambda_1, \lambda)}\sum_{i\in I} \xi(\lambda)\xi_1^{(i)}(\lambda_1)\otimes \cdots \otimes \xi_K^{(i)}(\lambda_K)=\qq^{(\lambda_1, \lambda)}\xi(\lambda)^{[1]}\cdot \phi.    
\end{align*}
Here the third equality follows from \cref{lem:OA_comm}, the fourth equality follows from \cref{c:YB}, and the fifth equality follows from $\phi\in (\cO_\qq(\sA_G)^{\bt K})^{\inv}$ and the definition of $\tcR_{\cO_\qq(\sA_G)^{\bt K}, \cO_\qq(\sA_G)}$. 
\item[\underline{(4)}] This follows from the same argument as in the proof of (3).

 \end{proof}

The following theorem and corollaries provide the commutation relations among images of the ``disjoint'' embeddings of quantum configuration spaces.
\cref{fig:polygon_embedding} illustrates the geometric situations for \cref{thm:edge,cor:disjoint,cor:meet} below.

\begin{figure}[ht]
    \centering
\begin{tikzpicture}
\def\R{2}
\draw[thick] (0,0) circle(\R);
\foreach \i in {1,2,3,4,5,6,7}{
    \fill(141.42-51.42*\i:\R) circle(1.5pt) coordinate(A\i);
    \node at (141.42-51.42*\i:\R+0.3) {\scriptsize \i};
}
\filldraw[fill=myblue!15,draw=blue] (A1) -- (A2) -- (A3) -- (A4) --cycle;
\node at ($(A1)!0.6!(A3)$) {$p$};

\filldraw[fill=myblue!15,draw=blue] (A1) -- (A4) -- (A5) -- (A6) -- (A7) --cycle;
\node at ($(A1)!0.6!(A5)$) {$p'$};

\node at (0,-\R-1) {$K=4$, $K'=5$};

\begin{scope}[xshift=6cm]
\draw[thick] (0,0) circle(\R);
\foreach \i in {1,2,3,4,5,6,7}{
    \fill(141.42-51.42*\i:\R) circle(1.5pt) coordinate(A\i);
    \node at (141.42-51.42*\i:\R+0.3) {\scriptsize \i};
}
\filldraw[fill=myblue!15,draw=blue] (A1) -- (A2) -- (A3) -- (A4) --cycle;
\node at ($(A1)!0.6!(A3)$) {$p$};

\filldraw[fill=myblue!15,draw=blue] (A5) -- (A6) -- (A7) --cycle;
\node[left] at ($(A7)!0.5!(A5)$) {$p'$};

\node at (0,-\R-1) {$K=4$, $K'=3$};
\end{scope}

\begin{scope}[xshift=12cm]
\draw[thick] (0,0) circle(\R);
\foreach \i in {1,2,3,4,5,6,7}{
    \fill(141.42-51.42*\i:\R) circle(1.5pt) coordinate(A\i);
    \node at (141.42-51.42*\i:\R+0.3) {\scriptsize \i};
}
\filldraw[fill=myblue!15,draw=blue] (A1) -- (A2) -- (A3) -- (A4) --cycle;
\node at ($(A1)!0.6!(A3)$) {$p$};

\filldraw[fill=myblue!15,draw=blue] (A4) -- (A5) -- (A6) -- (A7) --cycle;
\node at ($(A7)!0.5!(A5)$) {$p'$};

\node at (0,-\R-1) {$K=4$, $K'=4$};
\end{scope}
\end{tikzpicture}
    \caption{The situations for \cref{thm:edge,cor:disjoint,cor:meet} from the left.}
    \label{fig:polygon_embedding}
\end{figure}

\begin{thm}\label{thm:edge}
Let $K, K'\in \Z_{\geq 2}$. Define $p\colon [1,K]\to[1,K+K'-2]$ and $p'\colon [1,K']\to [1,K+K'-2]$ by 
  \begin{align*}
      &p(i)=i\ \text{for}\ i\in [1, K],\\ 
      &p'(1)=1,\ p'(j)=K+j-2\ \text{for}\ j\in [2,K'].
  \end{align*}
 Then, for $\phi\in \cO^{\inv}(\lambda_1,\dots, \lambda_K)$ and $\phi'\in \cO^{\inv}(\lambda'_1,\dots, \lambda'_{K'})$, we have 
  \[
  \iota_{p}^{\inv}(\phi)\iota_{p'}^{\inv}(\phi')=\qq^{(\lambda_1, \lambda'_1)-(\lambda_K, \lambda'_2)}\iota_{p'}^{\inv}(\phi')\iota_p^{\inv}(\phi)
  \]
  in $\cO_\qq(\Conf_{K+K'-2}\sA_G)$.
\end{thm}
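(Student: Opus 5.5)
The plan is to reduce the statement to the commutation rules of \cref{lem:Conf_comm}. We factor $\iota_{p'}^{\inv}(\phi')$ into pieces supported on slot $1$, on slot $K$, and on slots $K+1,\dots,K+K'-2$, and then move $\iota_p^{\inv}(\phi)$ past each piece. Everything takes place inside $\cO_\qq(\sA_G)^{\bt(K+K'-2)}$. Here $\iota_p^{\inv}(\phi)=\phi\otimes 1^{\otimes(K'-2)}$ and $\phi$ is $\Uq$-invariant in $\cO_\qq(\sA_G)^{\bt K}$.

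First I write
\[
\phi'=\sum_i \xi_i(\lambda'_1)\otimes \eta_i(\lambda'_2)\otimes \chi_i
\]
with $\xi_i\in V(\lambda'_1)^\ast$, $\eta_i\in V(\lambda'_2)^\ast$ and $\chi_i\in \cO(\lambda'_3,\dots,\lambda'_{K'})$. Using $a_1^{[1]}\cdots a_k^{[k]}=a_1\otimes\cdots\otimes a_k$, this gives
\[
\iota_{p'}^{\inv}(\phi')=\sum_i \xi_i(\lambda'_1)^{[1]}\,\eta_i(\lambda'_2)^{[K]}\,X_i,
\]
where $X_i:=1^{\otimes K}\otimes\chi_i$ is supported on slots $K+1,\dots,K+K'-2$. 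Invariance of $\phi'$ is not needed for this factorization; it only enters through the final statement's setting.

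Next I commute $\iota_p^{\inv}(\phi)$ past each factor. Since $\iota_p$ is an algebra homomorphism by \cref{p:emb}, \cref{lem:Conf_comm}(3) applied in $\cO_\qq(\sA_G)^{\bt K}$ and then transported by $\iota_p$ gives
\[
\iota_p^{\inv}(\phi)\,\xi_i(\lambda'_1)^{[1]}=\qq^{(\lambda_1,\lambda'_1)}\,\xi_i(\lambda'_1)^{[1]}\,\iota_p^{\inv}(\phi),
\]
because $\xi_i(\lambda'_1)^{[1]}$ is the image of the corresponding element of $\cO_\qq(\sA_G)^{\bt K}$. In the same way, \cref{lem:Conf_comm}(4) gives
\[
\iota_p^{\inv}(\phi)\,\eta_i(\lambda'_2)^{[K]}=\qq^{-(\lambda'_2,\lambda_K)}\,\eta_i(\lambda'_2)^{[K]}\,\iota_p^{\inv}(\phi).
\]
Finally I claim that $\iota_p^{\inv}(\phi)\,X_i=X_i\,\iota_p^{\inv}(\phi)$. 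This is \cref{lem:Conf_comm}(1) with the single last factor $\cO_\qq(\sA_G)$ replaced by the locally finite module algebra $\cO_\qq(\sA_G)^{\bt(K'-2)}$, using the associativity $\cO_\qq(\sA_G)^{\bt(K+K'-2)}\cong \cO_\qq(\sA_G)^{\bt K}\bt\cO_\qq(\sA_G)^{\bt(K'-2)}$. The proof of (1) carries over verbatim: the product $X_i\cdot(\phi\otimes 1)$ equals $\tcR_{\cO_\qq(\sA_G)^{\bt(K'-2)},\cO_\qq(\sA_G)^{\bt K}}(\chi_i\otimes\phi)$. Because $\phi$ is invariant, $\Theta^{\sfS}$ acts trivially and the weight factor in $\Pi$ is $\qq^0$, so this equals $\phi\otimes\chi_i$.

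Combining the three commutations and summing over $i$ yields
\[
\iota_p^{\inv}(\phi)\,\iota_{p'}^{\inv}(\phi')=\qq^{(\lambda_1,\lambda'_1)-(\lambda_K,\lambda'_2)}\,\iota_{p'}^{\inv}(\phi')\,\iota_p^{\inv}(\phi).
\]
The degenerate case $K'=2$ is the same argument with no $X_i$ factor. The step I expect to need the most care is the block version of \cref{lem:Conf_comm}(1): one must check that $\tcR$ of a block algebra against an invariant element reduces to the flip, in particular that the $\Pi$-factor vanishes because the invariant has weight $0$. A secondary point is verifying that (3) and (4), proved in $\cO_\qq(\sA_G)^{\bt K}$, transport along $\iota_p$ with slot $K$ unchanged, which holds because $p(K)=K$.
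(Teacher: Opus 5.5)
Your proposal is correct and follows the paper's proof. The paper also writes $\iota_{p'}^{\inv}(\phi')$ as a sum of products $\eta_1^{(i)}(\lambda'_1)^{[1]}\eta_2^{(i)}(\lambda'_2)^{[K]}\eta_3^{(i)}(\lambda'_3)^{[K+1]}\cdots$ and moves $\iota_p^{\inv}(\phi)$ past the slot-$1$ factor with \cref{lem:Conf_comm}(3), the slot-$K$ factor with (4), and the remaining factors with (1); the only cosmetic difference is that you bundle slots $K+1,\dots,K+K'-2$ into a single block via associativity of $\bt$, whereas the paper applies part (1) to those factors directly.
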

\begin{proof}
The element $\iota_{p'}^{\inv}(\phi')$ is of the following form.
\begin{align*}
    \iota_{p'}^{\inv}(\phi')&=\sum_{i\in I}\eta_1^{(i)}(\lambda'_1)\otimes \underset{K-2}{\underbrace{1\otimes \cdots \otimes 1}}\otimes  \eta_2^{(i)}(\lambda'_2)\otimes \cdots \otimes \eta_{K'}^{(i)}(\lambda_{K'})\\
    &=\sum_{i\in I}\eta_1^{(i)}(\lambda'_1)^{[1]}\eta_2^{(i)}(\lambda'_2)^{[K]}\eta_3^{(i)}(\lambda'_3)^{[K+1]} \cdots \eta_{K'}^{(i)}(\lambda_{K'})^{[K+K'-2]}.
\end{align*}
Here $I$ is some index set. Therefore, by \cref{lem:Conf_comm}, 
\begin{align*}
    \iota_p^{\inv}(\phi)\iota_{p'}^{\inv}(\phi')&=\iota_p^{\inv}(\phi)\left( \sum_{i\in I}\eta_1^{(i)}(\lambda'_1)^{[1]}\eta_2^{(i)}(\lambda'_2)^{[K]}\eta_3^{(i)}(\lambda'_3)^{[K+1]} \cdots \eta_{K'}^{(i)}(\lambda_{K'})^{[K+K'-2]}\right)\\
    &=\qq^{(\lambda_1, \lambda'_1)}\sum_{i\in I} \eta_1^{(i)}(\lambda'_1)^{[1]}\iota_p^{\inv}(\phi)\eta_2^{(i)}(\lambda'_2)^{[K]}\eta_3^{(i)}(\lambda'_3)^{[K+1]} \cdots \eta_{K'}^{(i)}(\lambda_{K'})^{[K+K'-2]}\\
    &=\qq^{(\lambda_1, \lambda'_1)-(\lambda_K, \lambda'_2)}\sum_{i\in I} \eta_1^{(i)}(\lambda'_1)^{[1]}\eta_2^{(i)}(\lambda'_2)^{[K]}\iota_p^{\inv}(\phi)\eta_3^{(i)}(\lambda'_3)^{[K+1]} \cdots \eta_{K'}^{(i)}(\lambda_{K'})^{[K+K'-2]}\\
    &=\qq^{(\lambda_1, \lambda'_1)-(\lambda_K, \lambda'_2)}\sum_{i\in I} \eta_1^{(i)}(\lambda'_1)^{[1]}\eta_2^{(i)}(\lambda'_2)^{[K]}\eta_3^{(i)}(\lambda'_3)^{[K+1]} \cdots \eta_{K'}^{(i)}(\lambda_{K'})^{[K+K'-2]}\iota_p^{\inv}(\phi)\\
    &=\qq^{(\lambda_1, \lambda'_1)-(\lambda_K, \lambda'_2)}\iota_{p'}^{\inv}(\phi')\iota_p^{\inv}(\phi).
\end{align*}
\end{proof}
The following corollaries can be regarded as special cases of \cref{thm:edge}. See \cref{fig:polygon_embedding}. 
\begin{cor}\label{cor:disjoint}
  Let $K, K'\in \Z_{\geq 2}$. Define $p\colon [1,K]\to[1,K+K']$ and $p'\colon [1,K']\to [1,K+K']$ by  
 \begin{align*}
      &p(i)=i\ \text{for}\ i\in [1,K],\\ 
      &p'(j)=K+j\ \text{for}\ j\in [1,K'].
  \end{align*}
  Then $\Ima \iota_p^{\inv}$ and $\Ima \iota_{p'}^{\inv}$ commute with each other in $\cO_\qq(\Conf_{K+K'}\sA_G)$. 
\end{cor}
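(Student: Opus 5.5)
The plan is to reduce \cref{cor:disjoint} to \cref{lem:Conf_comm}(1),(2), without going through \cref{thm:edge}. Set $L:=K+K'$. For $\phi\in\cO_\qq(\Conf_K\sA_G)$ we have
\[
\iota_p^{\inv}(\phi)=\phi\otimes 1^{\otimes K'}\in (\cO_\qq(\sA_G)^{\bt K})^{\inv}\bt 1^{\bt K'},
\]
and for $\phi'\in\cO_\qq(\Conf_{K'}\sA_G)$ we have
\[
\iota_{p'}^{\inv}(\phi')=1^{\otimes K}\otimes\phi'\in 1^{\bt K}\bt(\cO_\qq(\sA_G)^{\bt K'})^{\inv}.
\]
The point is that each factor is invariant as a whole, so the braiding between the two blocks becomes trivial.

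\textbf{Key step.} We generalize the computation in \cref{lem:Conf_comm}(1) from a single last tensor factor to an arbitrary block. Regard $\cO_\qq(\sA_G)^{\bt L}=\A_1\bt\A_2$ with $\A_1=\cO_\qq(\sA_G)^{\bt K}$ and $\A_2=\cO_\qq(\sA_G)^{\bt K'}$; this is legitimate by the associativity of braided tensor products. Then $\phi\otimes 1$ and $1\otimes\phi'$ are elements of $\A_1\bt 1$ and $1\bt\A_2$.

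First, $(\phi\otimes1)(1\otimes\phi')=\phi\otimes\phi'$, since the braiding $\tcR_{\A_1,\A_2}$ is applied to $1\otimes 1$.

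Second,
\[
(1\otimes\phi')(\phi\otimes1)=(m_{\A_1}\otimes m_{\A_2})(1\otimes\tcR_{\A_2,\A_1}(\phi'\otimes\phi)\otimes 1).
\]
Here $\tcR_{\A_2,\A_1}=P\circ\Pi\circ\Theta^{\sfS}$. Since $\phi$ is invariant, every term $\Theta^{\sfS}_\alpha$ with $\alpha\neq0$ acts on $\phi$ through $\sfS^{-1}(B^+_{\alpha,i})$, which annihilates $\phi$. Moreover $\phi$ has weight $0$, so $\Pi$ acts trivially. Hence $\tcR_{\A_2,\A_1}(\phi'\otimes\phi)=\phi\otimes\phi'$, and the two products agree. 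This is exactly the argument the paper uses for part (1) of \cref{lem:Conf_comm}, only with $\cO_\qq(\sA_G)$ replaced by $\A_2$. Alternatively, one can use the invariance of $\phi'$ in the first slot, as in part (2). Either one of the two invariances suffices.

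\textbf{Possible alternative.} One could instead view $\iota_p$ and $\iota_{p'}$ as compositions and apply \cref{lem:Conf_comm} repeatedly, commuting $\iota_p^{\inv}(\phi)$ past the factors $\eta_j(\lambda'_j)^{[K+j]}$ of $\iota_{p'}^{\inv}(\phi')$ one at a time. That approach fails as it stands, because the intermediate factors are not invariant and do not individually lie at position $K+1$ only. So the block-level argument is the one to use.

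\textbf{Main obstacle.} The only subtle point is verifying that $\tcR_{\A_2,\A_1}$ acts as the plain flip on $\phi'\otimes\phi$ when $\phi$ is invariant. This requires checking that $\Theta^{\sfS}$ acts on the second tensor factor through elements $\sfS^{-1}(B^+_{\alpha,i})$ of augmentation zero for $\alpha\neq0$, and this is immediate from the definition of $\Theta^{\sfS}$. With that in hand, the commutation of $\Ima\iota_p^{\inv}$ and $\Ima\iota_{p'}^{\inv}$ follows directly.
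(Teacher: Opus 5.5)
Your argument is correct, but it is not the paper's route. The paper obtains \cref{cor:disjoint} as a special case of \cref{thm:edge}. One applies that theorem with $(K+1,K'+1)$ in place of $(K,K')$ to $\phi\otimes 1\in\cO^{\inv}(\lambda_1,\dots,\lambda_K,0)$ and $1\otimes\phi'\in\cO^{\inv}(0,\lambda'_1,\dots,\lambda'_{K'})$. Their images under the two embeddings of \cref{thm:edge} are exactly $\iota_p^{\inv}(\phi)$ and $\iota_{p'}^{\inv}(\phi')$, and the exponent $(\lambda_1,0)-(0,\lambda'_1)$ vanishes.

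That deduction takes one line, but it rests on \cref{thm:edge}. Its proof uses \cref{lem:Conf_comm}(3),(4), and hence the $R$-matrix computation of \cref{lem:OA_comm}. Your argument isolates the only mechanism actually needed here: $\tcR$ acts as the plain flip as soon as one tensor factor is invariant. This is the idea behind \cref{lem:Conf_comm}(1),(2), and you apply it at the level of the blocks $\A_1=\cO_\qq(\sA_G)^{\bt K}$ and $\A_2=\cO_\qq(\sA_G)^{\bt K'}$. Your route is more elementary and slightly stronger: only one of $\phi,\phi'$ has to be invariant. For instance, $\iota_p^{\inv}(\phi)$ commutes with all of $1^{\bt K}\bt\cO_\qq(\sA_G)^{\bt K'}$.

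One correction to a side remark: the factor-by-factor alternative you dismiss does work. \cref{lem:Conf_comm}(1) needs invariance only of the element being moved, not of the other factor. For a factor at position $K+j$, apply it inside $\cO_\qq(\sA_G)^{\bt(K+j)}$ to the invariant element $\phi\otimes 1^{\otimes(j-1)}$, then push forward along the embedding of \cref{p:emb}. This is essentially how the last step of the paper's proof of \cref{thm:edge} moves $\iota_p^{\inv}(\phi)$ past the factors at positions $\geq K+1$. The remark does not affect your main proof.
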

\begin{cor}\label{cor:meet}
  Let $K, K'\in \Z_{\geq 2}$. Define $p\colon [1,K]\to [1, K+K'-1]$ and $p'\colon [1,K']\to [1,K+K'-1]$ by  
 \begin{align*}
      &p(i)=i\ \text{for}\ i\in [1, K],\\ 
      &p'(j)=K+j-1\ \text{for}\ j\in [1,K'].
  \end{align*}
   Then, for $\phi\in \cO^{\inv}(\lambda_1,\dots, \lambda_K)$ and $\phi'\in \cO^{\inv}(\lambda'_1,\dots, \lambda'_{K'})$, we have 
  \[
  \iota_{p}^{\inv}(\phi)\iota_{p'}^{\inv}(\phi')=\qq^{-(\lambda_K, \lambda'_1)}\iota_{p'}^{\inv}(\phi')\iota_p^{\inv}(\phi)
  \]
  in $\cO_\qq(\Conf_{K+K'-1}\sA_G)$.
\end{cor}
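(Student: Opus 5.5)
We would prove \cref{cor:meet} directly, following the pattern of the proof of \cref{thm:edge}. By \cref{p:emb}, $\iota_p$ is an algebra embedding, and $\iota_p^{\inv}(\phi)=\phi\otimes 1^{\otimes (K'-1)}$ lies in $(\cO_\qq(\sA_G)^{\bt K})^{\inv}\bt 1^{\bt(K'-1)}$. For $\iota_{p'}^{\inv}(\phi')$ we fix an expansion
\[
\iota_{p'}^{\inv}(\phi')=\sum_{i\in I}\eta_1^{(i)}(\lambda'_1)^{[K]}\,\eta_2^{(i)}(\lambda'_2)^{[K+1]}\cdots\eta_{K'}^{(i)}(\lambda'_{K'})^{[K+K'-1]}
\]
with $\eta_j^{(i)}\in V(\lambda'_j)^{\ast}$. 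This is valid because products of elements in increasing tensor positions are just tensor products.

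The first step moves $\iota_p^{\inv}(\phi)$ past the factor in position $K$. Inside $\cO_\qq(\sA_G)^{\bt K}$, \cref{lem:Conf_comm}(4) gives $\phi\cdot\xi(\lambda)^{[K]}=\qq^{-(\lambda,\lambda_K)}\xi(\lambda)^{[K]}\cdot\phi$ for $\xi\in V(\lambda)^{\ast}$. Applying the algebra embedding $\iota_p$, the same relation holds in $\cO_\qq(\sA_G)^{\bt(K+K'-1)}$ with $\xi(\lambda)^{[K]}$ sitting in position $K$. Taking $\lambda=\lambda'_1$, each summand picks up the scalar $\qq^{-(\lambda_K,\lambda'_1)}$, which is independent of $i$.

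The second step moves $\iota_p^{\inv}(\phi)$ past the remaining product $\eta_2^{(i)}(\lambda'_2)^{[K+1]}\cdots\eta_{K'}^{(i)}(\lambda'_{K'})^{[K+K'-1]}$. This product lies in $1^{\bt K}\bt\cO_\qq(\sA_G)^{\bt(K'-1)}$, and we claim it commutes with $\iota_p^{\inv}(\phi)$. The argument is that of \cref{lem:Conf_comm}(1), with the last factor $\cO_\qq(\sA_G)$ replaced by the locally finite $\Uq$-module algebra $\A:=\cO_\qq(\sA_G)^{\bt(K'-1)}$, using the associativity $\cO_\qq(\sA_G)^{\bt(K+K'-1)}\cong\cO_\qq(\sA_G)^{\bt K}\bt\A$. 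Concretely, $(1\otimes\psi')(\phi\otimes1)=\tcR_{\A,\cO_\qq(\sA_G)^{\bt K}}(\psi'\otimes\phi)$. Since $\phi$ is invariant, the quasi-$R$-matrix $\Theta^{\sfS}$ acts through its $\alpha=0$ term and $\Pi$ acts trivially (the weight of $\phi$ is $0$), so the braiding reduces to the flip $P$, giving $\phi\otimes\psi'$.

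Combining the two steps yields $\iota_p^{\inv}(\phi)\iota_{p'}^{\inv}(\phi')=\qq^{-(\lambda_K,\lambda'_1)}\iota_{p'}^{\inv}(\phi')\iota_p^{\inv}(\phi)$. The only step needing care is the claim that \cref{lem:Conf_comm}(1) extends to an arbitrary module algebra $\A$ in place of a single $\cO_\qq(\sA_G)$. We expect no real obstacle there: the proof of \cref{lem:Conf_comm}(1) uses only the invariance of $\phi'$ and the definition of $\tcR$, so it transfers verbatim.
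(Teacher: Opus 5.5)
Your proof is correct and is essentially the paper's argument: it is the proof of \cref{thm:edge} with the commutation step at position $1$ deleted. Your extension of \cref{lem:Conf_comm}(1) is harmless, since it also follows by applying that lemma one tensor position at a time ($\phi\otimes 1^{\otimes m}$ stays invariant). The paper itself records the corollary as the special case of \cref{thm:edge} for $K$ and $K'+1$, taking $1\otimes\phi'\in\cO^{\inv}(0,\lambda'_1,\dots,\lambda'_{K'})$: its image under the edge embedding is your $\iota_{p'}^{\inv}(\phi')$, and the factor $\qq^{(\lambda_1,0)}$ is trivial.
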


\section{Structure of the quantum configuration space \texorpdfstring{$\cO_\qq(\Conf_2\sA_G)$}{Oq(Conf2)}}
In this section, we first describe the algebraic structure of $\cO_\qq(\Conf_2\sA_G)$. We have 
\[
\Psi_2\colon\bigoplus_{\lambda_1,\lambda_2\in P_+}(V(\lambda_1)^{\ast}\ut V(\lambda_2)^{\ast})^{\inv}\xrightarrow{\sim}\cO_\qq(\Conf_2\sA_G),
\]
The standard fact in the representation theory of $\Uq$ gives 
\[
\dim_{\Bbbk}(V(\lambda_1)^{\ast}\ut V(\lambda_2)^{\ast})^{\inv}=
\begin{cases}
1&\text{if }\lambda_2=\lambda_1^{\ast},\\
0&\text{otherwise.}
\end{cases}
\]
Hence, by \cref{t:invstr} for $\A=\Bbbk$ (the trivial $\Uq$-module algebra), 
\begin{align*}
\cO_\qq(\Conf_2\sA_G)&=\Psi_2\left(\bigoplus_{\lambda_1,\lambda_2\in P_+}(V(\lambda_1)^{\ast}\ut V(\lambda_2)^{\ast})^{\inv}\right)\\
&=\Psi_2\left(\bigoplus_{\lambda\in P_+}(V(\lambda)^{\ast}\ut V(\lambda^{\ast})^{\ast})^{\inv}\right)=\bigoplus_{\lambda\in P_+}\Bbbk D_{\lambda}.
\end{align*}
Therefore, by \eqref{eq:D_e}, we obtain the following. 
\begin{thm}\label{t:conf2str}
    The quantum configuration space $\cO_\qq(\Conf_2\sA_G)$ is isomorphic to the polynomial ring $\Bbbk[D_{\varpi_s}\mid s\in S]$.
\end{thm}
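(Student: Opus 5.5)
The plan is to combine the decomposition just obtained with the injective homomorphism of \cref{t:invstr} for the trivial module algebra $\A=\Bbbk$. The computation above gives
\[
\cO_\qq(\Conf_2\sA_G)=\bigoplus_{\lambda\in P_+}\Bbbk D_\lambda .
\]
Each summand is one-dimensional, since $(V(\lambda)^{\ast}\ut V(\lambda^{\ast})^{\ast})^{\inv}$ is one-dimensional. Moreover $D_\lambda\neq 0$, because its leading term $\qq^{(\lambda,\lambda)/2}\xi_\lambda(\lambda)\otimes\xi_{-\lambda}(\lambda^{\ast})$ is nonzero. So the set $\{D_\lambda\}_{\lambda\in P_+}$ is a $\Bbbk$-basis.

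The key multiplicative input is the identity $D_\lambda D_{\lambda'}=D_{\lambda+\lambda'}$, recorded right after \cref{t:invstr}. It follows from \eqref{eq:D_e} and the injectivity of $\Phi_{\Bbbk}^{\ext}$:
\[
\Phi_{\Bbbk}^{\ext}(D_\lambda D_{\lambda'})=e(\lambda)e(\lambda')=e(\lambda+\lambda')=\Phi_{\Bbbk}^{\ext}(D_{\lambda+\lambda'}).
\]
For $\A=\Bbbk$ the multiplication in $\A^{\ext}=\Bbbk[P]$ is untwisted, because $\wt 1=0$, so $e(\lambda)e(\lambda')=e(\lambda+\lambda')$ holds on the nose. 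In particular the $D_\lambda$ commute with one another.

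Next I would define the $\Bbbk$-algebra homomorphism $\Bbbk[x_s\mid s\in S]\to\cO_\qq(\Conf_2\sA_G)$ by $x_s\mapsto D_{\varpi_s}$. This is well defined because the $D_{\varpi_s}$ commute. It sends the monomial $\prod_s x_s^{m_s}$ to $D_{\sum_s m_s\varpi_s}$. Since $\lambda\mapsto(\langle\alpha_s^\vee,\lambda\rangle)_s$ is a bijection $P_+\cong\Z_{\ge0}^S$, the map sends the monomial basis bijectively onto the basis $\{D_\lambda\}_{\lambda\in P_+}$. Hence it is an isomorphism. Equivalently, $\Phi_{\Bbbk}^{\ext}$ identifies $\cO_\qq(\Conf_2\sA_G)$ with the monoid algebra $\Bbbk[P_+]\subset\Bbbk[P]$, and $\Bbbk[P_+]$ is a polynomial ring on the $e(\varpi_s)$.

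There is no real obstacle here. The only points to check with care are that the $\qq$-twist in $\A^{\ext}$ vanishes for $\A=\Bbbk$, and that distinct $\lambda$ give linearly independent $D_\lambda$. The latter holds because they lie in distinct graded pieces $\cO^{\inv}(\lambda,\lambda^{\ast})$, or alternatively because their images $e(\lambda)$ are distinct basis elements of $\Bbbk[P]$.
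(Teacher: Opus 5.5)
Your proposal is correct and follows the paper's argument: it combines the decomposition $\cO_\qq(\Conf_2\sA_G)=\bigoplus_{\lambda\in P_+}\Bbbk D_\lambda$ with $\Phi_{\Bbbk}^{\ext}(D_\lambda)=e(\lambda)$ from \eqref{eq:D_e}. This forces $D_\lambda D_{\lambda'}=D_{\lambda+\lambda'}$ and identifies the algebra with $\Bbbk[P_+]\subset\Bbbk[P]$. Your explicit checks, that the $\qq$-twist vanishes for $\A=\Bbbk$ and that the $D_\lambda$ are linearly independent, just spell out what the paper leaves implicit.
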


\begin{rem}
If we further invert $D_{\varpi_s}$ for $s \in S$, then
\cref{t:conf2str} gives a quantum analogue of the isomorphism of algebraic varieties $H \xrightarrow{\sim} \Conf_2^\times \sA_G$, $h \mapsto [h.[U^+],\vw.[U^+]]$. The element $D_{\varpi_s} \in \cO_\qq(\Conf_2 \sA_G)$ is a quantum analogue of the function on $\Conf_2 \sA_G$ given by $[\flA_1,\flA_2] \mapsto h(\flA_1,\flA_2)^{\varpi_s}$. See \cref{lem:hw-distance} for the notation. 
\end{rem}
A more precise description of $D_{\lambda}$ will be useful also in the study of $\cO_\qq(\Conf_K\sA_G)$ for $K\in \Z_{\geq 2}$. Hence, the rest of this section is devoted to the proof of the following theorem.  
\begin{thm}\label{thm:D_lambda_extremal}
Let $\lambda\in P_+$. Then 
\begin{align*}
D_{\lambda}=\sum_{\mu\in W\lambda}(-1)^{\langle\rho^{\vee}, \lambda-\mu\rangle}q^{(\lambda, \lambda)/2+(\rho, \lambda-\mu)}\xi_{\mu}(\lambda)\otimes \xi_{-\mu}(\lambda^{\ast})+(\text{other terms}),
\end{align*}
where $($other terms$)$ belong to $\bigoplus_{\nu\in P; \nu\not\in W\lambda}\cO(\lambda)_{-\nu}\ut \cO(\lambda^{\ast})_{\nu}$. 
\end{thm}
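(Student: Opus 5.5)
The plan is to determine the coefficients of $D_\lambda$ on the extremal components $\xi_\mu(\lambda)\otimes\xi_{-\mu}(\lambda^{\ast})$, $\mu\in W\lambda$, by an induction on $W$-orbit distance. The base case is $\mu=\lambda$, where \cref{t:invstr}(2) gives the coefficient $q^{(\lambda,\lambda)/2}$. The inductive step passes from $\mu$ to $r_s\mu$ using invariance under a single divided power $E_s^{(n)}$. Write
\[
D_\lambda=\sum_{\nu}\sum c\,\xi(\lambda)\otimes\xi'(\lambda^{\ast}),
\]
a sum over weight components $\cO(\lambda)_{-\nu}\ut\cO(\lambda^{\ast})_{\nu}$; only these occur, since $D_\lambda$ has total weight $0$. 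For $\mu\in W\lambda$ both weight spaces are one-dimensional, so the $\mu$-component of $D_\lambda$ is a scalar multiple $c_\mu\,\xi_\mu(\lambda)\otimes\xi_{-\mu}(\lambda^{\ast})$. Here $\xi_{-\mu}\in V(\lambda^{\ast})^{\ast}$ is the functional dual to the extremal vector $v_{-\mu}\in V(\lambda^{\ast})_{-\mu}$, and $-\mu\in W\lambda^{\ast}$. The goal is to show
\[
c_\mu=(-1)^{\langle\rho^\vee,\lambda-\mu\rangle}q^{(\lambda,\lambda)/2+(\rho,\lambda-\mu)}.
\]

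To extract $c_\mu$ cleanly, I would transport everything through $\Psi_2$ and \eqref{eq:dualtensor}. Then $D_\lambda$ becomes an invariant functional on $V(\lambda)\otimes V(\lambda^{\ast})$, and $c_\mu$ is its value on $v_\mu\otimes v_{-\mu}$. Invariance is applied in the form of \cref{l:antipode}: $(X\otimes 1).D_\lambda=(1\otimes\sfS(X)).D_\lambda$. Pairing both sides with a vector $u\otimes w$ moves the operators onto the vectors through the antipode. Now take $s$ with $n:=\langle\alpha_s^\vee,\mu\rangle>0$ and $X=E_s^{(n)}$, and test against $v_{r_s\mu}\otimes v_{-\mu}$ on one side. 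The operator then acts on $v_{r_s\mu}$ by (a $K$-twisted) $E_s^{(n)}$, and $E_s^{(n)}v_{r_s\mu}=v_\mu$ along the extremal $s$-string; on the other side it acts on $v_{-\mu}$ by the corresponding $F$-type divided power, yielding $v_{-r_s\mu}$. The crucial point is that extremal weight spaces are one-dimensional and the $s$-strings through extremal weights have extremal endpoints. Hence only the extremal components contribute, and the ``other terms'' in $\bigoplus_{\nu\notin W\lambda}$ drop out automatically, since they pair to zero with these vectors. The result is a relation $a\,c_{r_s\mu}=b\,c_\mu$ with explicit nonzero scalars $a,b$.

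The scalars come from $\sfS(E_s^{(n)})=(-1)^n q_s^{\,n(n-1)}K_s^{-n}E_s^{(n)}$ (up to the exact convention), the $K_s$-eigenvalues on the relevant weight vectors, and Lusztig's normalization of the extremal vectors, namely $F_s^{(n)}v_\mu=v_{r_s\mu}$ and $E_s^{(n)}v_{r_s\mu}=v_\mu$. The target identity is
\[
c_{r_s\mu}=-\,\text{(sign)}^{\,n}\,q_s^{\,n}\,c_\mu=(-1)^n q^{(\rho,n\alpha_s)}c_\mu,
\]
which agrees with the claimed formula, since $\lambda-r_s\mu=(\lambda-\mu)+n\alpha_s$, $\langle\rho^\vee,n\alpha_s\rangle=n$ and $(\rho,\alpha_s)=(\alpha_s,\alpha_s)/2$. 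Since every $\mu\in W\lambda$ is reached from $\lambda$ by a chain of such steps (going down along a reduced word, each step having $n>0$), induction finishes the proof.

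The main obstacle is the bookkeeping of $q$-powers and signs. Several conventions interact here: the dual action through $\sfS$, the opposite coproduct in $\ut$, the identification $\Upsilon$, and the normalization of $v_{-\mu}$ inside $V(\lambda^{\ast})$, where the string runs in the opposite direction. I would first carry out the computation completely for $U_{q}(\mathfrak{sl}_2)$, $\lambda=n\varpi$, checking against the explicit invariant in $V(n)^{\ast}\ut V(n)^{\ast}$, and then observe that the general step only involves the $U_{q_s}(\mathfrak{sl}_2)$-action together with weights, so the rank-one formula transfers with $q$ replaced by $q_s$.
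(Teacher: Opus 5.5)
Your proposal is correct, and it takes a genuinely different route from the paper. The paper does not induct along $\alpha_s$-strings; it uses Lusztig's braid operators instead. Invariance gives $T_{w^{-1}}(D_\lambda)=D_\lambda$, and the paper expands $T_{w^{-1}}$ on $V(\lambda)^{\ast}\ut V(\lambda^{\ast})^{\ast}$ by the tensor-product formula \eqref{eq:w_ut_action}. A weight argument then isolates $a_{w\lambda}\,T_{w^{-1}}(\xi_{w\lambda})\otimes T_{w^{-1}}(\xi_{-w\lambda})=a_\lambda\,\xi_\lambda\otimes\xi_{-\lambda}$, and $a_{w\lambda}/a_\lambda$ is read off from \cref{lem:xi_braid}.

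The two approaches trade off differently:
\begin{itemize}
\item The paper's handles each $w$ in one step. It needs the compatibility of $T_w$ with the coproduct (\cref{p:braid}) and the values of $T_s$ on extremal dual vectors. In exchange, \cref{lem:extremal_braid} and \eqref{eq:w_t_action} are reused later in the proof of \cref{thm:isigma}.
\item Yours needs only the Hopf identity of \cref{l:antipode} and the rank-one normalizations $F_s^{(n)}v_\mu=v_{r_s\mu}$ and $E_s^{(n)}v_{r_s\mu}=v_\mu$. Because you evaluate the invariant functional $f$ on pure tensors of extremal vectors, the components outside $W\lambda$ never enter. Also, $c_\mu=f(v_\mu\otimes v_{-\mu})$ is intrinsically defined, so the choice of chain from $\lambda$ to $\mu$ does not matter.
\end{itemize}

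One slip: the operator landing on $v_{-\mu}$ is not of $F$-type. An $F$-type power kills $v_{-\mu}$, since it is the lowest vector of its $\alpha_s$-string; the operator is a $K$-twisted $E_s^{(n)}$. Use the form $f(Yu\otimes w)=f(u\otimes\sfS(Y)w)$, which is \cref{l:antipode} with $Y=\sfS(X)$. Take $Y=E_s^{(n)}$, $u=v_{r_s\mu}$, $w=v_{-\mu}$, and use $\sfS(E_s^{(n)})=(-1)^nq_s^{n(n-1)}K_s^{-n}E_s^{(n)}$, $E_s^{(n)}v_{-\mu}=v_{-r_s\mu}$ and $K_s^{-n}v_{-r_s\mu}=q_s^{-n^2}v_{-r_s\mu}$. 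This gives $c_\mu=(-1)^nq_s^{-n}c_{r_s\mu}$, which is exactly your target identity. So the bookkeeping closes in two lines, and the separate $U_q(\mathfrak{sl}_2)$ check is unnecessary.
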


For $s\in S$, denote by $T_s$ the $\Bbbk$-algebra automorphism on $\Uq$ given by 
\begin{align*}
T_{s}(K_{\alpha}) & =K_{r_s(\alpha)}\quad \text{for }\alpha\in Q,\\
T_{s}(E_{t}) & =\begin{cases}
-K_{s}^{-1}F_{s} & \text{for}\;t=s,\\
{\displaystyle \sum_{k+l=-c_{st}}(-1)^{k}\qq_{s}^{-k}E_{s}^{(k)}E_{t}E_{s}^{(l)}} & \text{for}\;t\neq s,
\end{cases}\\
T_{s}(F_{t}) & =\begin{cases}
-E_{s}K_{s} & \text{for}\;s=t,\\
{\displaystyle \sum_{k+l=-c_{st}}(-1)^{k}\qq_{s}^{k}F_{s}^{(l)}F_{t}F_{s}^{(k)}} & \text{for}\;t\neq s.
\end{cases}
\end{align*}
Here $X_s^{(k)} := X_s^k/[k]_{\qq_s}!$ for $s\in S, k \in \Z_{\geq 0}, X=E, F$. This automorphism corresponds to $T'_{s,-1}$ in \cite{Lusztig:Intro}. Let $V=\bigoplus_{\mu\in P}V_{\mu}$ be an integrable $\Uq$-module. For $s\in S$, there exists a $\Bbbk$-linear automorphism $T_{s}$ on $V$ given by 
\begin{align*}
 & T_{s}(v)=\sum_{a, b, c\in \Z_{\geq 0}; a-b+c=\langle \alpha_s^{\vee},\mu\rangle}(-1)^{b}\qq_{s}^{ac-b}F_{s}^{(a)}E_{s}^{(b)}F_{s}^{(c)}.v
\end{align*}
for $v\in V_{\mu}$. It is denoted by $T'_{i,-1}$ in \cite{Lusztig:Intro}. Note that 
\begin{align}
T_{s}(V_{\mu})=V_{r_s\mu}\label{eq:braid_action_weight}
\end{align}
for $\mu\in P$.
\begin{prop}[{\cite[Proposition 5.3.4, Proposition 37.1.2, Theorem 39.4.3]{Lusztig:Intro}}]\label{p:braid} Let $V, V'$ be  integrable $\Uq$-modules. 
\begin{itemize}
    \item[(1)] For $s\in S$, $X\in\Uq$ and $v\in V$, $T_{s}(X.v)=T_{s}(X).T_{s}(v)$.
    \item[(2)] For $s\in S$ and $v\in V, v'\in V'$, 
    \[
    T_s(v\otimes v')=\sum_{k\in \Z_{\geq 0}}(-1)^k\qq_s^{-k(k-1)/2}\left(\prod_{l=1}^k(\qq_s^{l}-\qq_s^{-l})\right)F_s^{(k)}.T_s(v)\otimes E_s^{(k)}.T_s(v')
    \]
    on $V\otimes V'$, and 
    \[
    T_s(v\otimes v')=\sum_{k\in \Z_{\geq 0}}(-1)^k\qq_s^{-k(k-1)/2}\left(\prod_{l=1}^k(\qq_s^{l}-\qq_s^{-l})\right)E_s^{(k)}.T_s(v)\otimes F_s^{(k)}.T_s(v')
    \]
    on $V\ut V'$. 
    \item[(3)] For $w\in W$ and $(s_1,\dots, s_{\ell})\in S(w)$, the composite map $T_{w}:=T_{s_{1}}\cdots T_{s_{\ell}}\colon \Uq\to \Uq$ and $T_{w}:=T_{s_{1}}\cdots T_{s_{\ell}}\colon V\to V$ do not depend on the choice of $(s_1,\dots, s_{\ell})\in S(w)$.
\end{itemize}
\end{prop}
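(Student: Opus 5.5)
The statement is \cref{p:braid}, quoted from \cite{Lusztig:Intro}. The plan is to reduce each part to rank-one (for (1), (2)) or rank-two (for (3)) computations, using integrability throughout.

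For (1), it suffices to check the identity $T_s(X.v)=T_s(X).T_s(v)$ for $X$ among the generators $E_t,F_t,K_t^{\pm1}$. Since $V$ is integrable, it is a direct sum of finite-dimensional modules over the rank-one subalgebra $\Uq(\mathfrak{sl}_2)_s$ generated by $E_s,F_s,K_s^{\pm1}$.
\begin{itemize}
\item For $X=K_\alpha$, the identity is \eqref{eq:braid_action_weight} together with $T_s(K_\alpha)=K_{r_s\alpha}$.
\item For $X=E_s,F_s$, we work on each simple $\Uq(\mathfrak{sl}_2)_s$-summand $V(n)$ in its standard basis $F_s^{(j)}.v_n$. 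There $T_s$ sends $F_s^{(j)}.v_n$ to a signed $\qq_s$-power times $F_s^{(n-j)}.v_n$, and the check is an explicit rank-one computation.
\item For $t\neq s$, the $\Uq(\mathfrak{sl}_2)_s$-module generated by $E_t$ under the adjoint action is finite-dimensional. The formula for $T_s(E_t)$ is the element of extremal weight $\alpha_t-c_{st}\alpha_s$ in it.
\end{itemize}
For $t\neq s$, I would realize $T_s$ on $\Uq\otimes V\to V$ as the action of the rank-one braid operator on a tensor product. Equivalently, I would verify the identity on vectors $v$ that are lowest weight for $\Uq(\mathfrak{sl}_2)_s$, and then propagate it using the case $X=E_s,F_s$ already proved. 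This is Lusztig's 37.1.2 argument.

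For (2), I would first observe that both sides of each identity are invertible linear maps $\Theta'$ on $V\otimes V'$ (respectively $V\ut V'$) satisfying the same twisted intertwining property $\Theta'(X.u)=T_s(X).\Theta'(u)$. For the left side this comes from (1). For the right side it comes from the fact that the rank-one quasi-$R$-matrix $\sum_k(\dots)F_s^{(k)}\otimes E_s^{(k)}$ intertwines $\Delta\circ T_s$ with $(T_s\otimes T_s)\circ\Delta$. The restriction to $\Uq(\mathfrak{sl}_2)_s$ reduces the problem to $V=V(n)$, $V'=V(m)$ simple rank-one modules. By the intertwining property, it is enough to compare the two maps on one generating vector per simple summand of $V(n)\otimes V(m)$, for instance on lowest-weight vectors for $\Uq(\mathfrak{sl}_2)_s$. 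That comparison is a finite $q$-binomial identity. The version for $V\ut V'$ follows by applying the flip $P$ and exchanging the roles of $E_s$ and $F_s$.

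For (3), by Matsumoto's theorem, independence of the reduced word is equivalent to the braid relations $T_sT_tT_s\cdots=T_tT_sT_t\cdots$ ($m_{st}$ factors) for each pair $s\neq t$. On $\Uq$, I would check these relations on the generators; the $K_\alpha$ and $E_s,E_t$ images reduce to rank-two identities. On $V$, by (1) both sides are twisted intertwiners for the same automorphism of $\Uq$, so it suffices to compare them on a single cyclic vector of each rank-two integrable simple module, namely the extremal vector $v_{w\lambda}$. There both sides give $v_{w_{st}\lambda}$ up to the same scalar, using the formula for $v_{w\lambda}$ and its independence of the reduced word. I expect the rank-two braid relation on $\Uq$ to be the main obstacle, especially in type $G_2$, where the direct computation is long. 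There I would instead follow Lusztig's approach via the braid action on modules, and deduce the relation on $\Uq$ from the relation on all integrable modules together with faithfulness.
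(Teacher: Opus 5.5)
The paper gives no argument for this proposition beyond the citations to Lusztig. Your outlines for (1) and (2) are the standard rank-one arguments behind those citations and are essentially fine (see the remarks at the end). Your plan for (3), however, is circular at its crucial step.

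\textbf{The gap in (3).} The module-level argument needs $T_sT_tT_s\cdots$ and $T_tT_sT_t\cdots$ ($m_{st}$ factors) to twist by the \emph{same} automorphism of $\Uq$. That is the braid relation on $\Uq$ itself. After restricting to simple modules of the rank-two subalgebra $U_{st}$ generated by $E_s,F_s,E_t,F_t,K_s^{\pm1},K_t^{\pm1}$, you need it at least on these generators. Your fallback for the hard cases derives the relation on $\Uq$ from the relation on modules plus faithfulness. So the two steps presuppose each other.

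Without equality of the twists, the extremal-vector comparison proves nothing. Set $\Psi:=(T_tT_sT_t\cdots)^{-1}(T_sT_tT_s\cdots)$ on the module, and let $\phi$ be the corresponding composite on $\Uq$. Then $\Psi(X.v)=\phi(X).\Psi(v)$, and $\Psi(v_\lambda)=v_\lambda$ forces $\Psi=\id$ only once you know $\phi(F_s)=F_s$ and $\phi(F_t)=F_t$.

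So an independent rank-two statement on $\Uq$ is indispensable. The usual form is as follows. For each alternating word $(s_1,\dots,s_m)$ in $s,t$ with $m=m_{st}$, one has $T_{s_1}\cdots T_{s_{m-1}}(E_{s_m})=E_u$, where $\alpha_u=r_{s_1}\cdots r_{s_{m-1}}\alpha_{s_m}$, and likewise for $F$. From this, both composites send $E_s,E_t,F_s,F_t$ to the same elements.

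This lemma is where the real work lies. One option is a direct computation in types $A_2$, $B_2$, $G_2$. The other is a structural argument: show the element lies in $\Uq^+$, so that, having simple-root weight, it is a scalar multiple of $E_u$; then fix the scalar on an extremal vector.

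Once you have the lemma, your argument closes. Extremal vectors give the relation on modules, and faithfulness then gives it on all of $\Uq$. That includes $E_u,F_u$ for $u\notin\{s,t\}$. These are not rank-two identities, so your original plan to check the braid relation directly on all generators of $\Uq$ would not have stayed in rank two anyway.

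\textbf{Two smaller points.}
In (1), take $X=E_t$ with $t\neq s$. The map $v\mapsto T_s^{-1}(T_s(E_t).T_s(v))$ commutes with $F_s$, because $E_tF_s=F_sE_t$ and by the case $X=F_s$. It does not commute with $E_s$. So you must check the identity on vectors killed by $E_s$ and propagate with $F_s$. Lowest-weight vectors are the right choice only for $X=F_t$.

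In (2), claim the intertwining property of $\bigl(\sum_k(\cdots)F_s^{(k)}\otimes E_s^{(k)}\bigr)\circ(T_s\otimes T_s)$ only for $X\in\{E_s,F_s,K_s^{\pm1}\}$. There it is a direct rank-one check, and it is all your reduction uses. For general $X\in\Uq$, that identity is normally derived from (2) rather than available beforehand.
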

Let $V, V'$ be integrable $\Uq$-modules. For $w\in W$ and $v\in V, v'\in V'$, we have
\begin{align}
&T_{w}(v\otimes v')=\sum_{\bm{k}=(k_1,\dots, k_\ell)\in \Z_{\geq 0}^\ell}
(-1)^{k_1+\cdots +k_\ell}a_{\bm{s}}(\bm{k})
F(\bm{k}, \bm{s}).T_{w}(v)\otimes E(\bm{k}, \bm{s}).T_{w}(v'),\label{eq:w_t_action}
\end{align}
on $V\otimes V'$, and 
\begin{align}
&T_{w}(v\otimes v')=\sum_{\bm{k}=(k_1,\dots, k_\ell)\in \Z_{\geq 0}^\ell}
(-1)^{k_1+\cdots +k_\ell}a_{\bm{s}}(\bm{k})
E(\bm{k}, \bm{s}).T_{w}(v)\otimes F(\bm{k}, \bm{s}).T_{w}(v'),\label{eq:w_ut_action}
\end{align}
on $V\ut V'$, where $\bm{s}=(s_1,\dots, s_\ell)\in S(w)$, and 
\begin{align*}
    &a_{\bm{s}}(\bm{k}):=\prod_{i=1}^\ell\left(\qq_{s_i}^{-k_i(k_i-1)/2}\prod_{l_i=1}^{k_i}(\qq_{s_i}^{l_i}-\qq_{s_i}^{-l_i})\right),\\
    &E(\bm{k}, \bm{s}):=E_{s_1}^{(k_1)}T_{s_1}(E_{s_2}^{(k_2)})\cdots T_{s_1}\cdots T_{s_{\ell-1}}(E_{s_\ell}^{(k_\ell)}),\\
    &F(\bm{k}, \bm{s}):=F_{s_1}^{(k_1)}T_{s_1}(F_{s_2}^{(k_2)})\cdots T_{s_1}\cdots T_{s_{\ell-1}}(F_{s_\ell}^{(k_\ell)}).
\end{align*}
by \cref{p:braid}. By \cite[Section 37.2.4, Proposition 40.2.1]{Lusztig:Intro}, $E(\bm{k}, \bm{s})\in \Uq^+$ and $F(\bm{k}, \bm{s})\in \Uq^-$ for $\bm{k}\in \Z_{\geq 0}^\ell$. 
\begin{lem}[{\cite[Proposition 5.2.2]{Lusztig:Intro}}]\label{lem:extremal_braid}
For $\lambda\in P_+$ and $w\in W$,   
    \[
    T_s(v_{w\lambda})=
    \begin{cases}
        v_{r_sw\lambda}&\text{if }\ell(r_sw)=\ell(w)+1,\\
        (-1)^{\langle \rho^{\vee}, w\lambda-r_sw\lambda\rangle}\qq^{(\rho, w\lambda-r_sw\lambda)}v_{r_sw\lambda}&\text{if }\ell(r_sw)=\ell(w)-1.
    \end{cases}
    \]
\end{lem}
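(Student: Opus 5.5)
The plan is to reduce everything to a rank-one computation for the subalgebra $\Uq(\mathfrak{sl}_2)_s$ generated by $E_s,F_s,K_s^{\pm1}$. We use the standard characterization of extremal vectors: if $\ell(r_sw)=\ell(w)+1$, then $w^{-1}\alpha_s\in\Phi_+$, so $n:=\langle\alpha_s^\vee,w\lambda\rangle=\langle w^{-1}\alpha_s^\vee,\lambda\rangle\geq 0$. Moreover $w\lambda+\alpha_s$ is not a weight of $V(\lambda)$, since it would have to be $\leq\lambda$ while lying outside the convex hull of $W\lambda$. Hence $E_s.v_{w\lambda}=0$, and $v_{w\lambda}$ is a highest weight vector of weight $n$ for $\Uq(\mathfrak{sl}_2)_s$. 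Prepending $s$ to a reduced word of $w$ gives a reduced word of $r_sw$, so the definition of the extremal vectors yields $v_{r_sw\lambda}=F_s^{(n)}.v_{w\lambda}$.

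\emph{First case} ($\ell(r_sw)=\ell(w)+1$). Apply the defining formula of $T_s$ to $v=v_{w\lambda}\in V_\mu$, $\mu=w\lambda$. We use $E_s.v=0$ together with the commutation formula $E_s^{(b)}F_s^{(c)}.v=\left[\begin{smallmatrix}n-c+b\\ b\end{smallmatrix}\right]_{\qq_s}F_s^{(c-b)}.v$. Then every term of $T_s(v)$ collapses to a multiple of $F_s^{(n)}.v$, and the total coefficient is an explicit alternating sum of $\qq_s$-binomials. This is exactly \cite[Proposition 5.2.2]{Lusztig:Intro} for $T'_{s,-1}$, which gives $T_s(v)=F_s^{(n)}.v$ (the identity can be checked by a generating function / $q$-binomial argument). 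Combined with $v_{r_sw\lambda}=F_s^{(n)}.v_{w\lambda}$, this proves the first case.

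\emph{Second case} ($\ell(r_sw)=\ell(w)-1$). Put $w'=r_sw$, so $\ell(r_sw')=\ell(w')+1$. By the first case, $v_{w\lambda}=F_s^{(n)}.u$ with $u:=v_{w'\lambda}$, $E_s.u=0$, $n=\langle\alpha_s^\vee,w'\lambda\rangle$. Thus $v_{w\lambda}$ is a lowest weight vector (weight $-n$) of the same $\mathfrak{sl}_2$-string. The same rank-one computation, now with $F_s.v_{w\lambda}=0$ so that only the terms with $c=0$ and $a=0$ survive (forcing $b=n$), gives $T_s(F_s^{(n)}.u)=(-1)^n\qq_s^{-n}u$, up to checking the exact sign and $\qq$-power convention of $T'_{s,-1}$.

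Finally compare exponents. We have $w\lambda=r_sw'\lambda=w'\lambda-n\alpha_s$, so $w\lambda-r_sw\lambda=-n\alpha_s$. This gives $(-1)^{\langle\rho^\vee,-n\alpha_s\rangle}=(-1)^n$ and $\qq^{(\rho,-n\alpha_s)}=\qq_s^{-n}$, using $(\rho,\alpha_s)=(\alpha_s,\alpha_s)/2$. This matches the claimed formula in the second case.

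The main obstacle is purely bookkeeping: getting the signs and the $\qq_s$-powers in the two $\mathfrak{sl}_2$ evaluations of $T_s$ right for this normalization ($T'_{s,-1}$, $\qq^{ac-b}$). Concretely this amounts to the identity $\sum_{a+c=n}\qq_s^{ac}\left[\begin{smallmatrix}n\\a\end{smallmatrix}\right]_{\qq_s}(\text{with the }b\text{-corrections})=1$ in the first case, and the single term $a=c=0$, $b=n$ with coefficient $(-1)^n\qq_s^{-n}$ in the second. I would verify these directly on the $(n+1)$-dimensional module $V(n)$ of $\Uq(\mathfrak{sl}_2)$, or simply quote Lusztig 5.2.2–5.2.3.
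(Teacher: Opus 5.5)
Your proposal is correct and takes the paper's route: the paper gives no argument beyond citing \cite[Proposition 5.2.2]{Lusztig:Intro}, and your reduction is exactly what that citation implicitly uses. That reduction is: for $\ell(r_sw)=\ell(w)+1$, one has $E_s.v_{w\lambda}=0$ and $v_{r_sw\lambda}=F_s^{(n)}.v_{w\lambda}$ with $n=\langle\alpha_s^\vee,w\lambda\rangle\geq 0$; then Lusztig's rank-one formula for $T'_{s,-1}$, in the cases $j=0$ and $j=n$, gives $T_s(\eta)=F_s^{(n)}.\eta$ and $T_s(F_s^{(n)}.\eta)=(-1)^n\qq_s^{-n}\eta$ for a weight-$n$ vector $\eta$ killed by $E_s$. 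Your final exponent check, via $w\lambda-r_sw\lambda\in-\Z_{\geq 0}\alpha_s$ and $(\rho,\alpha_s)=(\alpha_s,\alpha_s)/2$, correctly matches the stated sign and power of $\qq$.
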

\begin{lem}\label{lem:xi_braid}
    Let $\lambda\in P_+$ and $w\in W$. Then, 
    \[
    T_s(\xi_{w\lambda})=
    \begin{cases}
        \qq^{-2( \rho, w\lambda-r_sw\lambda)}\xi_{r_sw\lambda}&\text{if }\ell(r_sw)=\ell(w)+1,\\
        (-1)^{-\langle \rho^{\vee}, w\lambda-r_sw\lambda\rangle}\qq^{-( \rho, w\lambda-r_sw\lambda)}\xi_{r_sw\lambda}&\text{if }\ell(r_sw)=\ell(w)-1.
    \end{cases}.
    \]
     Moreover, 
     \[
     T_{w^{-1}}(\xi_{w\lambda})=(-1)^{-\langle\rho^{\vee}, w\lambda-\lambda\rangle}\qq^{-(\rho, w\lambda-\lambda)}\xi_{\lambda},\quad T_{w^{-1}}(\xi_{-w\lambda})=\qq^{2(\rho, w\lambda-\lambda)}\xi_{-\lambda}. 
    \]
\end{lem}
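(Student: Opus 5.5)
The plan is to reduce everything to a single scalar computation in a one-dimensional weight space, and then to iterate along a reduced word.

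\emph{Reduction to one pairing.} By \eqref{eq:braid_action_weight} applied to the integrable module $V(\lambda)^{\ast}$, the vector $T_s(\xi_{w\lambda})$ has weight $r_s(-w\lambda)=-r_sw\lambda$. Since $(V(\lambda)^{\ast})_{-r_sw\lambda}=\Bbbk\,\xi_{r_sw\lambda}$, we get
\[
T_s(\xi_{w\lambda})=\langle T_s(\xi_{w\lambda}),v_{r_sw\lambda}\rangle\,\xi_{r_sw\lambda}.
\]
So it suffices to compute this one pairing. We expand $T_s(\xi_{w\lambda})$ by the defining formula $\sum(-1)^b\qq_s^{ac-b}F_s^{(a)}E_s^{(b)}F_s^{(c)}.\xi_{w\lambda}$ with $a-b+c=\langle\alpha_s^\vee,-w\lambda\rangle$, and we move operators to the $V(\lambda)$ side by $\langle X.\xi,v\rangle=\langle\xi,\sfS(X).v\rangle$.

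\emph{The case $\ell(r_sw)=\ell(w)+1$.} Here $w^{-1}\alpha_s>0$, so $m:=\langle\alpha_s^\vee,w\lambda\rangle\ge0$ and $v_{w\lambda}$ is the highest vector of its $s$-string. Then $E_s.\xi_{w\lambda}=0$: indeed $\langle E_s.\xi_{w\lambda},v\rangle=-\langle\xi_{w\lambda},K_s^{-1}E_s.v\rangle$, and $E_s.v$ never has a component along $v_{w\lambda}$, since $V(\lambda)_{w\lambda+\alpha_s}=0$. So $\xi_{w\lambda}$ is a lowest vector of weight $-m$ for the $s$-copy of $U_{\qq_s}(\mathfrak{sl}_2)$. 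By the standard $\mathfrak{sl}_2$ computation, only the term $a=c=0$, $b=m$ survives, and
\[
T_s(\xi_{w\lambda})=(-1)^m\qq_s^{-m}E_s^{(m)}.\xi_{w\lambda}.
\]
Next we use $\sfS(E_s^{(m)})=(-1)^m\qq_s^{m(m-1)}K_s^{-m}E_s^{(m)}$. By \cref{lem:extremal_braid} (first case, applied with $r_sw$ in place of $w$), $E_s^{(m)}.v_{r_sw\lambda}=v_{w\lambda}$, since $v_{w\lambda}=F_s^{(m)}v_{r_sw\lambda}$ when $v_{r_sw\lambda}$ is lowest in its string. Collecting the powers, the signs cancel and the exponent combines to $\qq_s^{-2m}=\qq^{-2(\rho,w\lambda-r_sw\lambda)}$, using $w\lambda-r_sw\lambda=m\alpha_s$ and $\qq_s^m=\qq^{(\rho,m\alpha_s)}$.

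\emph{The case $\ell(r_sw)=\ell(w)-1$.} This case is dual. Now $\xi_{w\lambda}$ is a highest vector of weight $m'=-\langle\alpha_s^\vee,w\lambda\rangle>0$. Only the term $a=m'$, $b=c=0$ survives, giving $T_s(\xi_{w\lambda})=F_s^{(m')}.\xi_{w\lambda}$. We use $\sfS(F_s^{(m')})=(-1)^{m'}\qq_s^{-m'(m'-1)}F_s^{(m')}K_s^{m'}$. Then $F_s^{(m')}$ applied to $v_{r_sw\lambda}$ gives $v_{w\lambda}$ up to the scalar determined by \cref{lem:extremal_braid}, and the power of $K_s^{m'}$ evaluated on the weight of $v_{r_sw\lambda}$ supplies another factor. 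Tracking these should give $(-1)^{m'}\qq_s^{-m'}$, which equals the stated $(-1)^{-\langle\rho^\vee,w\lambda-r_sw\lambda\rangle}\qq^{-(\rho,w\lambda-r_sw\lambda)}$.

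\emph{The ``moreover'' part.} Take a reduced word $(s_1,\dots,s_\ell)\in S(w)$, so that $T_{w^{-1}}=T_{s_\ell}\cdots T_{s_1}$ by \cref{p:braid}(3). Apply $T_{s_1}$, then $T_{s_2}$, and so on. For $\xi_{w\lambda}$, each step passes from $r_{s_{k-1}}\cdots r_{s_1}w$ to an element of length one less, so every step is in the second case. The exponents telescope to $-(\rho,w\lambda-\lambda)$ and the signs telescope similarly. For $\xi_{-w\lambda}\in V(\lambda^{\ast})^{\ast}$, write $-w\lambda=ww_0\lambda^{\ast}$. Now each step increases length, from $ww_0$ towards $w_0$, so every step is in the first case, giving $\prod\qq^{-2(\rho,\cdot)}$. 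Since $w\lambda^\ast$-type differences change sign under $-w_0$, the exponents sum to $+2(\rho,w\lambda-\lambda)$. The result is $\xi_{w_0\lambda^\ast}=\xi_{-\lambda}$.

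\emph{Main obstacle.} I expect the hardest step to be the careful bookkeeping of $\qq$-powers in the second case, namely the antipode of divided powers together with the $K_s$ factors, combined with the normalization in \cref{lem:extremal_braid}. Verifying that the telescoping matches the stated signs also needs care. I would first check both cases by a direct $\mathfrak{sl}_2$ computation for $V(1)$ and $V(2)$ before writing the general formula.
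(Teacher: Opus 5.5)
Your route is the paper's: reduce to the line $(V(\lambda)^{\ast})_{-r_sw\lambda}=\Bbbk\,\xi_{r_sw\lambda}$, compute $\langle T_s(\xi_{w\lambda}),v_{r_sw\lambda}\rangle$ by moving operators across with $\sfS$, and telescope along a reduced word. The ``moreover'' part is handled correctly. The single-reflection computations, however, contain errors.

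\textbf{Case $\ell(r_sw)=\ell(w)+1$.} The vanishing you need is $F_s.\xi_{w\lambda}=0$, not $E_s.\xi_{w\lambda}=0$. Indeed $\langle F_s.\xi_{w\lambda},v\rangle=-\langle\xi_{w\lambda},F_sK_s.v\rangle$, and $F_s$ reaches $V(\lambda)_{w\lambda}$ only from $V(\lambda)_{w\lambda+\alpha_s}=0$. By contrast, $E_s.\xi_{w\lambda}\neq 0$ for $m>0$; for example $\langle E_s.\xi_{w\lambda},F_s.v_{w\lambda}\rangle=-\qq_s^{-m}[m]_{\qq_s}$. Your collapse of the defining sum to $(a,b,c)=(0,m,0)$ actually uses $F_s.\xi_{w\lambda}=0$ to discard $c\geq 1$, so after this correction the case is fine. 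Likewise, $E_s^{(m)}.v_{r_sw\lambda}=v_{w\lambda}$ holds because $v_{r_sw\lambda}=F_s^{(m)}.v_{w\lambda}$ (the definition of $v_{r_sw\lambda}$ via a reduced word beginning with $s$) and $E_s.v_{w\lambda}=0$. You wrote this relation the wrong way round.

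\textbf{Case $\ell(r_sw)=\ell(w)-1$.} This case does not work as written, for two reasons.

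First, the defining sum does not collapse termwise to $(a,b,c)=(m',0,0)$. Already for $\mathfrak{sl}_2$, $\lambda=\varpi$, $w=r_s$ (so $m'=1$), the terms $(0,0,1)$ and $(1,1,1)$ equal $-\qq\,\xi_{\varpi}$ and $+\qq\,\xi_{\varpi}$; they are nonzero and merely cancel each other. The identity $T_s(\xi)=F_s^{(m')}.\xi$ for $E_s.\xi=0$ is true, but it is \cite[Proposition 5.2.2]{Lusztig:Intro}, which is exactly what the paper cites for both cases. It is not a termwise vanishing.

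Second, your predicted scalar is wrong. We have $\sfS(F_s^{(m')})=(-1)^{m'}\qq_s^{-m'(m'-1)}F_s^{(m')}K_s^{m'}$, and $K_s^{m'}$ acts on $v_{r_sw\lambda}$ by $\qq_s^{(m')^{2}}$. Moreover $F_s^{(m')}.v_{r_sw\lambda}=v_{w\lambda}$ exactly, by the definition of $v_{w\lambda}$; no scalar from \cref{lem:extremal_braid} enters. So the pairing is $(-1)^{m'}\qq_s^{m'}$, not $(-1)^{m'}\qq_s^{-m'}$. Since $w\lambda-r_sw\lambda=-m'\alpha_s$, the stated value $(-1)^{-\langle\rho^{\vee},w\lambda-r_sw\lambda\rangle}\qq^{-(\rho,w\lambda-r_sw\lambda)}$ is also $(-1)^{m'}\qq_s^{m'}$. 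Your claim that $(-1)^{m'}\qq_s^{-m'}$ matches it therefore conceals a second error. In the $\mathfrak{sl}_2$ example one gets $T_s(\xi_{r_s\varpi})=-\qq\,\xi_{\varpi}$.

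With the correct value, the telescoping goes through as you describe.
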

\begin{proof}
In this proof, we write 
\[
m:=\langle\alpha_s^{\vee}, w\lambda\rangle=\frac{1}{2}\langle \alpha_s^{\vee}, w\lambda-r_sw\lambda\rangle=\langle \rho^{\vee}, w\lambda-r_sw\lambda\rangle
=\frac{2}{(\alpha_s, \alpha_s)}( \rho, w\lambda-r_sw\lambda).
\]
When $\ell(r_sw)=\ell(w)+1$ (resp.~$\ell(r_sw)=\ell(w)-1$), $m\geq 0$ (resp.~$\leq 0$), hence $F_s.\xi_{w\lambda}=0$ (resp.~$E_s.\xi_{w\lambda}=0$). 
Therefore, by \cite[Proposition 5.2.2]{Lusztig:Intro}, 
\[
(V(\lambda)^{\ast})_{-r_sw\lambda}\ni T_s(\xi_{w\lambda})=
\begin{cases}
(-1)^{m}E_s^{(m)}\qq_{s}^{-m}.\xi_{w\lambda}&\text{if }\ell(r_sw)=\ell(w)+1,\\
F_s^{(-m)}.\xi_{w\lambda}&\text{if }\ell(r_sw)=\ell(w)-1.
\end{cases}
\]
Since $(V(\lambda)^{\ast})_{-r_sw\lambda}=\Bbbk\xi_{r_sw\lambda}$, $T_s(\xi_{w\lambda})=a\xi_{r_sw\lambda}$ for some $a\in \Bbbk$. 
When $\ell(r_sw)=\ell(w)+1$, 
\begin{align*}
    \langle E_s^{(m)}.\xi_{w\lambda}, v_{r_sw\lambda}\rangle&
    =\langle \xi_{w\lambda}, \sfS(E_s^{(m)}).v_{r_sw\lambda}\rangle\\
    &=(-1)^{m}\qq_s^{-m(m+1)}\langle \xi_{w\lambda}, E_s^{(m)}K_s^{-m}.v_{r_sw\lambda}\rangle\\
    &=(-1)^{m}\qq_s^{-m(m+1)+m^2}\langle \xi_{w\lambda}, E_s^{(m)}.v_{r_sw\lambda}\rangle\\
    &=(-1)^{m}\qq_s^{-m}\langle \xi_{w\lambda}, v_{w\lambda}\rangle=(-1)^{m}\qq_s^{-m}. 
\end{align*}
Therefore, we have $T_s(\xi_{w\lambda})=\qq_s^{-2m}\xi_{r_sw\lambda}$. Similarly, we can show $T_s(\xi_{w\lambda})=(-1)^{-m}\qq_s^{-m}\xi_{r_sw\lambda}$ when $\ell(r_sw)=\ell(w)-1$. 

Let $(s_{\ell},\dots, s_1)\in S(w)$. Then, $T_{w^{-1}}=T_{s_1}\cdots T_{s_{\ell}}$. Hence, by the above results, $T_{w^{-1}}(\xi_{w\lambda})=(-1)^A\qq^B\xi_{w_0\lambda}$, $T_{w^{-1}}(\xi_{-w\lambda})=\qq^C\xi_{\lambda}$, where
 \begin{align*}
   A&=-\sum_{i=1}^{\ell}\langle \rho^{\vee}, r_{s_{i}}\cdots r_{s_1}\lambda-r_{s_{i-1}}\cdots r_{s_1}\lambda \rangle 
     =-\langle \rho^{\vee}, w\lambda-\lambda\rangle,\\
     B&=-\sum_{i=1}^{\ell}(\rho, r_{s_{i}}\cdots r_{s_1}\lambda-r_{s_{i-1}}\cdots r_{s_1}\lambda)
     =-(\rho, w\lambda-\lambda),\\
     C&=-2\sum_{i=1}^{\ell}(\rho, -r_{s_i}\cdots r_{s_1}\lambda +r_{s_{i-1}}\cdots r_{s_1}\lambda)
     =2(\rho, w\lambda-\lambda).
 \end{align*} 
\end{proof}
\begin{proof}[{Proof of \cref{thm:D_lambda_extremal}}]
Write 
\[
D_{\lambda}=\sum_{\mu\in W\lambda}a_{\mu}\xi_{\mu}(\lambda)\otimes \xi_{-\mu}(\lambda^{\ast})+(\text{other terms}),
\]
where $a_{\lambda}\in \Bbbk$ and $($other terms$)$ belong to $\bigoplus_{\nu\in P; \nu\not\in W\lambda}\cO(\lambda)_{\nu}\ut \cO(\lambda^{\ast})_{-\nu}$. Since $D_{\lambda}\in \cO^{\inv}(\lambda, \lambda^{\ast})$, we have $T_{w^{-1}}(D_{\lambda})=D_{\lambda}$ for $w\in W$. On the other hand, by \eqref{eq:w_ut_action}, 
\begin{align*}
T_{w^{-1}}(D_{\lambda})=\sum_{\bm{k}=(k_1,\dots, k_{\ell})\in \Z_{\geq 0}^{\ell}}
(-1)^{k_1+\cdots +k_\ell}a_{\bm{s}}(\bm{k})
(E(\bm{k}, \bm{s})\otimes  F(\bm{k}, \bm{s}))(T_{w^{-1}}\otimes T_{w^{-1}})(D_{\lambda})
\end{align*}
where $\bm{s}=(s_1,\dots, s_\ell)\in S(w^{-1})$. Therefore, by weight reason, 
\[
a_{w\lambda}(T_{w^{-1}}(\xi_{w\lambda}))(\lambda)\otimes (T_{w^{-1}}(\xi_{-w\lambda}))(\lambda^{\ast})=a_{\lambda}\xi_{\lambda}(\lambda)\otimes \xi_{-\lambda}(\lambda^{\ast}). 
\]
\cref{lem:xi_braid} implies that the left-hand side is equal to 
\[
(-1)^{-\langle\rho^{\vee}, w\lambda-\lambda\rangle}\qq^{(\rho, w\lambda-\lambda)}
a_{w\lambda}\xi_{\lambda}(\lambda)\otimes \xi_{-\lambda}(\lambda^{\ast}).
\]
Therefore, $a_{w\lambda}=(-1)^{\langle\rho^{\vee}, \lambda-w\lambda\rangle}\qq^{(\rho, \lambda-w\lambda)}a_{\lambda}=(-1)^{\langle\rho^{\vee}, \lambda-w\lambda\rangle}\qq^{(\lambda, \lambda)/2+(\rho, \lambda-w\lambda)}$. 
\end{proof}
\begin{cor}\label{cor:sigma_2}
For $\lambda\in P_+$, $\sigma_2(D_{\lambda})=D_{\lambda^{\ast}}$. 
\end{cor}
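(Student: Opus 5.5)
The plan is to compute $\sigma_2(D_\lambda)$ directly from the definition of $\sigma_2$ and then recognize the result by its leading term. Here $\sigma_2=\sigma_{\Bbbk}$ is the specialization of \cref{t:cyclic} with $\A=\Bbbk$, i.e.\ the map $\sigma_{\cO(\lambda),\cO(\lambda^\ast)}$ of \cref{p:cyclic} applied to $D_\lambda\in\cO^{\inv}(\lambda,\lambda^\ast)$.

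Since $\sigma_2$ maps $\cO^{\inv}(\lambda,\lambda^\ast)$ into $\cO^{\inv}(\lambda^\ast,\lambda)$, this space is one-dimensional and spanned by $D_{\lambda^\ast}$ (see the computation of $\cO_\qq(\Conf_2\sA_G)$ before \cref{t:conf2str}). Hence $\sigma_2(D_\lambda)=cD_{\lambda^\ast}$ for some $c\in\Bbbk$. It suffices to compare a single coefficient, namely that of $\xi_{\lambda^\ast}(\lambda^\ast)\otimes\xi_{-\lambda^\ast}(\lambda)$, which is the leading term defining $D_{\lambda^\ast}$ in \cref{t:invstr}(2) with coefficient $\qq^{(\lambda^\ast,\lambda^\ast)}{}^{/2}=\qq^{(\lambda,\lambda)/2}$.

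To find this coefficient on the left, I use \cref{thm:D_lambda_extremal}. The term of $D_\lambda$ that $\sigma_2$ sends to a multiple of $\xi_{\lambda^\ast}(\lambda^\ast)\otimes\xi_{-\lambda^\ast}(\lambda)$ is the extremal term with $\xi_\mu(\lambda)=\xi_{-\lambda^\ast}(\lambda)$, i.e.\ $\mu=-\lambda^\ast=w_0\lambda$. Its coefficient is
\[
(-1)^{\langle\rho^\vee,\lambda-w_0\lambda\rangle}\qq^{(\lambda,\lambda)/2+(\rho,\lambda-w_0\lambda)}.
\]
The second tensor factor $\xi_{-w_0\lambda}(\lambda^\ast)=\xi_{\lambda^\ast}(\lambda^\ast)$ has weight $\lambda^\ast$. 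Applying $\widetilde\sigma$ multiplies by $(-1)^{\langle 2\rho^\vee,\lambda^\ast\rangle}\qq^{(2\rho,\lambda^\ast)}$ and swaps the factors. The "other terms" of \cref{thm:D_lambda_extremal} lie in non-extremal weight spaces, so they cannot contribute to this coefficient.

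It remains to simplify the total factor. Using $\lambda-w_0\lambda=\lambda+\lambda^\ast$ together with $\rho^\ast=\rho$ and $(\rho^\vee)^\ast=\rho^\vee$, we have
\[
\langle\rho^\vee,\lambda+\lambda^\ast\rangle=\langle 2\rho^\vee,\lambda^\ast\rangle,\qquad (\rho,\lambda+\lambda^\ast)=(2\rho,\lambda^\ast).
\]
The total sign is therefore $(-1)^{2\langle 2\rho^\vee,\lambda^\ast\rangle}=1$. The total $\qq$-power is $\qq^{(\lambda,\lambda)/2+2(2\rho,\lambda^\ast)}$, which looks wrong, since we need exactly $\qq^{(\lambda,\lambda)/2}$.

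The main obstacle is this sign and exponent bookkeeping, and I would recheck the conventions. The weight of $\xi_{\lambda^\ast}$ is $-\lambda^\ast$ in the paper's convention ($\wt\xi_{w\lambda}=-w\lambda$). So the factor from $\widetilde\sigma$ is actually $(-1)^{-\langle 2\rho^\vee,\lambda^\ast\rangle}\qq^{-(2\rho,\lambda^\ast)}$. This cancels $\qq^{(\rho,\lambda+\lambda^\ast)}=\qq^{(2\rho,\lambda^\ast)}$ exactly, and the signs cancel as well.

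The coefficient is thus exactly $\qq^{(\lambda,\lambda)/2}$, which forces $c=1$ and gives $\sigma_2(D_\lambda)=D_{\lambda^\ast}$.
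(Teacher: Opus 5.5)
Your argument is correct and is essentially the paper's proof. Both read off the coefficient of the extremal term $\xi_{w_0\lambda}(\lambda)\otimes\xi_{\lambda^{\ast}}(\lambda^{\ast})$ from \cref{thm:D_lambda_extremal}, apply $\widetilde{\sigma}$ using $\wt\xi_{\lambda^{\ast}}(\lambda^{\ast})=-\lambda^{\ast}$ so that the factors $(-1)^{\pm\langle 2\rho^{\vee},\lambda^{\ast}\rangle}\qq^{\pm(2\rho,\lambda^{\ast})}$ cancel, and conclude from the one-dimensionality of $\cO^{\inv}(\lambda^{\ast},\lambda)$ (equivalently, the uniqueness of $D_{\lambda^{\ast}}$); in a final write-up, drop your first pass that assigned weight $\lambda^{\ast}$ to $\xi_{\lambda^{\ast}}(\lambda^{\ast})$, since that bookkeeping is wrong.
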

\begin{proof}
By \cref{thm:D_lambda_extremal}, 
\[
D_{\lambda}=\qq^{(\lambda, \lambda)/2}\xi_{\lambda}(\lambda)\otimes \xi_{-\lambda}(\lambda^{\ast})+(-1)^{2\langle\rho^{\vee}, \lambda\rangle}\qq^{(\lambda, \lambda)/2+2(\rho, \lambda)}\xi_{-\lambda^{\ast}}(\lambda)\otimes \xi_{\lambda^{\ast}}(\lambda^{\ast})+\sum_{i\in I}\xi_i(\lambda)\otimes \xi_i'(\lambda^{\ast}),
\]
where $I$ is some index set, $\xi_i$ and $\xi_i'$ are weight vectors such that $\wt \xi_i\neq -\lambda, \lambda^{\ast}$ and $\wt \xi'_i\neq \lambda, -\lambda^{\ast}$. Hence, 
\begin{align*}
    &\sigma_2(D_{\lambda})\\
    &=(-1)^{2\langle\rho^{\vee}, \lambda\rangle}\qq^{2(\rho, \lambda)+(\lambda, \lambda)/2}\xi_{-\lambda}(\lambda^{\ast})\otimes \xi_{\lambda}(\lambda)\\
    &\phantom{=}
    +(-1)^{2\langle\rho^{\vee}, -\lambda^{\ast}\rangle+2\langle\rho^{\vee}, \lambda\rangle}\qq^{2(\rho, -\lambda^{\ast})+(\lambda, \lambda)/2+2(\rho, \lambda)}\xi_{\lambda^{\ast}}(\lambda^{\ast})\otimes \xi_{-\lambda^{\ast}}(\lambda)\\
    &\phantom{=}+\sum_{i\in I}(-1)^{2\langle\rho^{\vee}, \wt \xi_i'\rangle}\qq^{2(\rho, \wt \xi_i')}\xi_i'(\lambda^{\ast})\otimes \xi_i(\lambda) \\
    &=\qq^{(\lambda^{\ast}, \lambda^{\ast})/2}\xi_{\lambda^{\ast}}(\lambda^{\ast})\otimes \xi_{-\lambda^{\ast}}(\lambda)+(-1)^{2\langle\rho^{\vee}, \lambda\rangle}\qq^{2(\rho, \lambda)+(\lambda, \lambda)/2}\xi_{-\lambda}(\lambda^{\ast})\otimes \xi_{\lambda}(\lambda)\\
    &\phantom{=}   +\sum_{i\in I}(-1)^{2\langle\rho^{\vee}, \wt \xi_i'\rangle}\qq^{2(\rho, \wt \xi_i')}\xi_i'(\lambda^{\ast})\otimes \xi_i(\lambda).
\end{align*}
This is an element of $\cO^{\inv}(\lambda^{\ast}, \lambda)$, hence the uniqueness of $D_{\lambda^{\ast}}$ implies $\sigma_2(D_{\lambda})=D_{\lambda^{\ast}}$. 
\end{proof}
\begin{cor}\label{cor:D_lambda}
Let $\A$ be a locally finite $\Uq$-module algebra. Then, for $\lambda\in P_+$, the element $D_{\lambda}$ in \cref{t:invstr} is of the form
\begin{align*}
D_{\lambda}=\sum_{\mu\in W\lambda}(-1)^{\langle\rho^{\vee}, \lambda-\mu\rangle}q^{(\lambda, \lambda)/2+(\rho, \lambda-\mu)}\xi_{\mu}(\lambda)\otimes 1\otimes \xi_{-\mu}(\lambda^{\ast})+(\text{other terms}),
\end{align*}
where $($other terms$)$ belong to $\bigoplus_{\nu\in P; \nu\not\in W\lambda}\cO(\lambda)_{-\nu}\ut 1\ut \cO(\lambda^{\ast})_{\nu}$. 
\end{cor}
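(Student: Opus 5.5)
The plan is to reduce \cref{cor:D_lambda} to \cref{thm:D_lambda_extremal} by showing that $D_\lambda$ for general $\A$ is just the $\Conf_2$ element $D_\lambda$ with a unit $1\in\A$ inserted in the middle slot. Consider the unit map $\eta\colon \Bbbk\to\A$. It is an injective homomorphism of locally finite $\Uq$-module algebras, because $X.1=\varepsilon(X)1$. It therefore induces a $\Uq$-module map
\[
\id\otimes\eta\otimes\id\colon \cO(\lambda)\ut\Bbbk\ut\cO(\lambda^{\ast})\to \cO(\lambda)\ut\A\ut\cO(\lambda^{\ast}),\qquad \phi_1\otimes c\otimes\phi_2\mapsto \phi_1\otimes c1\otimes\phi_2 .
\]
This map sends invariants to invariants. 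Identifying $\cO(\lambda)\ut\Bbbk\ut\cO(\lambda^{\ast})$ with $\cO(\lambda)\ut\cO(\lambda^{\ast})$, let $D^{(2)}_\lambda\in\cO^{\inv}(\lambda,\lambda^\ast)\subset\cO_\qq(\Conf_2\sA_G)$ denote the $\Conf_2$ element. Its image $D^{(2)}_\lambda\otimes 1$ then lies in $(\cO(\lambda)\ut 1\ut\cO(\lambda^{\ast}))^{\inv}$.

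The next step is to check that $D^{(2)}_\lambda\otimes 1$ has the normalized leading form required in \cref{t:invstr}(2). By \cref{thm:D_lambda_extremal}, or directly from the defining form of $D^{(2)}_\lambda$ (which is \cref{t:invstr}(2) applied with $\A=\Bbbk$), its component in $\cO(\lambda)_{-\lambda}\ut 1\ut\cO(\lambda^\ast)_{\lambda}$ is $\qq^{(\lambda,\lambda)/2}\xi_\lambda(\lambda)\otimes1\otimes\xi_{-\lambda}(\lambda^\ast)$. The remaining terms lie in $\cO(\lambda)_{>-\lambda}\ut 1\ut \cO(\lambda^{\ast})_{<\lambda}$. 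This uses that $-\lambda$ is the lowest weight of $V(\lambda)^\ast$ and $\lambda$ is the highest weight of $V(\lambda^\ast)^\ast$, together with the fact that all terms have total weight zero.

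By the uniqueness of $D_\lambda$ asserted in \cref{t:invstr}(2), we conclude $D_\lambda=D^{(2)}_\lambda\otimes 1$. Uniqueness follows from the injectivity of $\Phi_{\lambda;\A;\lambda^\ast}$, which is \cref{p:invsp} combined with \cref{p:cyclic}. Substituting the expansion of \cref{thm:D_lambda_extremal} into $D^{(2)}_\lambda\otimes 1$ yields exactly the displayed formula, with the other terms lying in $\bigoplus_{\nu\notin W\lambda}\cO(\lambda)_{-\nu}\ut1\ut\cO(\lambda^\ast)_{\nu}$.

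The only point requiring care is the middle slot in the $\ut$-module structure. There one must confirm that inserting the trivial module does not change invariance. This holds because $\uDelta$ is coassociative, so $M_1\ut\Bbbk\ut M_2\cong M_1\ut M_2$ as $\Uq$-modules via $\varepsilon$. I do not expect any genuine obstacle here.
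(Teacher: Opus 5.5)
Your proposal is correct and is essentially the paper's argument. The paper states this corollary without proof, as immediate from \cref{thm:D_lambda_extremal}: $D_\lambda$ lies in $(\cO(\lambda)\ut 1\ut\cO(\lambda^{\ast}))^{\inv}$, which is identified with $\cO^{\inv}(\lambda,\lambda^{\ast})$. Your unit-map embedding together with the uniqueness in \cref{t:invstr}(2) simply makes that identification explicit.
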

\section{Quantum Wilson lines}
We have an injective morphism 
\[
G\times H\times H \to \Conf_4\sA_G,\ (g, h, h')\mapsto [[U^+], gh.[U^+], g\overline{w_0}.[U^+], \overline{w_0}{h'}^{\ast}.[U^+]],
\]
which induces an isomorphism of algebraic varieties 
\[
\imath_{4}\colon G\times H\times H\xrightarrow{\sim} \Conf_4^{\{2, 4\}}\sA_G.
\]
See \cref{fig:band_config}. 
In this subsection, we construct a quantum analogue of this morphism. 
The morphism 
\begin{align}\label{eq:Wilson_line}
\Conf_4^{\{2, 4\}}\sA_G \xrightarrow{\imath_{4}^{-1}} G\times H\times H\xrightarrow{\text{projection}} G
\end{align}
is called a \emph{Wilson line} in \cite{IOS}.

\begin{figure}[ht]
    \centering
\begin{tikzpicture}[rotate=90]
\path (3,3) coordinate(A);
\path (0,3) coordinate(B);
\path (0,0) coordinate(C);
\path (3,0) coordinate(D);
\bline{-0.5,0}{3.5,0}{0.15};
\tline{-0.5,3}{3.5,3}{0.15};
\draw[red,->-,thick] (1.5,3) --node[midway,above]{$g$} (1.5,0);
\filldraw(A) circle(1.5pt) node[left=0.3em,anchor=east]{$[U^+]=\flA_1$}; 
\filldraw(B) circle(1.5pt) node[left=0.3em,anchor=east]{$\vw {h'}^{\ast}.[U^+]=\flA_4$};
\filldraw(D) circle(1.5pt) node[right=0.3em,anchor=west]{$\flA_2=gh.[U^+]$};
\filldraw(C) circle(1.5pt) node[right=0.3em,anchor=west]{$\flA_3=g\vw.[U^+]$};
\begin{pgfonlayer}{bg}  
\filldraw[fill=myblue!15,draw=blue,dashed,thick] (B) -- (C) -- (D) -- (A) --cycle;
\end{pgfonlayer}
\end{tikzpicture}
    \caption{A parameterization of $\Conf_4^{\{2,4\}} \sA_G$.}
    \label{fig:band_config}
\end{figure}

By \cref{t:invstr} (see \eqref{eq:Phi_ke}), there exists an injective algebra homomorphism
\[
\Phi_4^{\ext}\colon\cO_\qq(\Conf_4\sA_G)\to (\cO_\qq(\sA_G)\bt \cO_\qq(\sA_G))^{\ext},
\]
which induces the isomorphism 
\begin{align*}
\Phi_4^{\ext}\colon\cO_\qq(\Conf_4^{\{4\}}\sA_G)\xrightarrow{\sim} (\cO_\qq(\sA_G)\bt \cO_\qq(\sA_G))^{\ext}.
\end{align*}
We are going to relate $\cO_\qq(\sA_G)\bt \cO_\qq(\sA_G)$ with $\cO_\qq(G)$. 
For $\lambda\in P_+$ and $\xi\in V(\lambda^{\ast})^{\ast}$, we write
\[
\xi(-\lambda):=c^{V(\lambda^{\ast})}(\xi, v_{-\lambda})\in \cO_\qq(\sA_G^-)(\subset \cO_\qq(G)).
\]
Denote by $\Psi_1^-$ the isomorphism of $\Uq$-modules 
\[
\bigoplus_{\lambda\in P_+}V(\lambda^{\ast})^{\ast}\xrightarrow{\sim} \cO_\qq(\sA_G^-),\quad  V(\lambda^{\ast})^{\ast}\ni \xi\mapsto \xi(-\lambda)\in \cO_\qq(\sA_G^-). 
\]
We will write 
\begin{align}
    \cO^-(-\lambda):=\Psi_1^-(V(\lambda^{\ast})^{\ast}).\label{eq:Ominus_notation}
\end{align}
The $\Uq$-module isomorphism 
\begin{align}
\Psi^{+,-}:=\Psi_1^-\circ \Psi_1^{-1}\colon \cO_\qq(\sA_G)\to \cO_\qq(\sA_G^-),\quad  \xi(\lambda)\mapsto \xi(w_0\lambda)\ \text{for}\ \xi\in V(\lambda)^{\ast}\label{eq:Psi_+-}
\end{align}
is an isomorphism of $\Uq$-module algebras. Hence we have an isomorphism of $\Uq$-module algebras
\[
\mathrm{id}\otimes \Psi^{+,-}\colon \cO_\qq(\sA_G)\bt \cO_\qq(\sA_G)\xrightarrow{\sim} \cO_\qq(\sA_G)\bt \cO_\qq(\sA_G^-).
\]
Let $\sfm\colon \cO_\qq(\sA_G)\bt \cO_\qq(\sA_G^-)\to \cO_\qq(G)$ be the $\Bbbk$-linear map defined by 
\[
\xi_1(\lambda_1)\otimes \xi_2(-\lambda_2)\mapsto \qq^{-(\lambda_1, \lambda_1)/2}\xi_1(\lambda_1)\xi_2(-\lambda_2)
\]
for $\xi_1\in V(\lambda_1)^{\ast}, \xi_2\in V(\lambda_2^{\ast})^{\ast}$. It is known that $\sfm$ is surjective (cf.~\cite[Proposition 9.2.2]{Joseph:book}). 
\begin{prop}\label{p:reltoG}
For $\psi\in \cO(\lambda_1)\ut \cO^-(-\lambda_2)$, $\psi'\in \cO(\lambda'_1)\ut \cO^-(-\lambda_2')$, we have 
\[
\sfm(\psi\psi')=\qq^{(\lambda'_1, -\lambda_1+\lambda_2)}\sfm(\psi)\sfm(\psi').
\]
\end{prop}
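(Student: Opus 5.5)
We reduce to elementary tensors of weight vectors and compare the braided product in $\cO_\qq(\sA_G)\bt \cO_\qq(\sA_G^-)$ with the ordinary product in $\cO_\qq(G)$. Write $\psi=\xi_1(\lambda_1)\otimes\xi_2(-\lambda_2)$ and $\psi'=\xi'_1(\lambda'_1)\otimes\xi'_2(-\lambda'_2)$ with weight vectors $\xi_i,\xi'_i$. By the definition of the braided tensor product,
\[
\psi\psi'=(m\otimes m)\bigl(\xi_1(\lambda_1)\otimes \tcR_{\cO^-(-\lambda_2),\cO(\lambda'_1)}(\xi_2(-\lambda_2)\otimes\xi'_1(\lambda'_1))\otimes \xi'_2(-\lambda'_2)\bigr).
\]
Applying $\sfm$ gives $\qq^{-(\lambda_1+\lambda'_1,\lambda_1+\lambda'_1)/2}$ times the ordinary product $\xi_1(\lambda_1)\cdot m(\tcR(\xi_2(-\lambda_2)\otimes\xi'_1(\lambda'_1)))\cdot \xi'_2(-\lambda'_2)$ in $\cO_\qq(G)$. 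Here we use that $\cO_\qq(\sA_G)$ and $\cO_\qq(\sA_G^-)$ are subalgebras of $\cO_\qq(G)$, so the first and last factors multiply inside $\cO_\qq(G)$.

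The key step is an analogue of \cref{lem:OA_comm} for a mixed pair. We claim that for $\zeta\in\cO^-(-\lambda_2)$ and $\eta\in\cO(\lambda'_1)$,
\[
m_{\cO_\qq(G)}\circ\tcR_{\cO^-(-\lambda_2),\cO(\lambda'_1)}(\zeta\otimes\eta)=\qq^{c}\,\eta\,\zeta
\]
in $\cO_\qq(G)$, for an explicit constant $c$. We prove it exactly as in \cref{lem:OA_comm}. Pair against $X\in\Uq$ and use \cref{p:coordprod} and \cref{p:tildeR} to move $\tcR$ onto $\cR_{V(\lambda'_1),V(\lambda_2^{\ast})}$ acting on $v_{\lambda'_1}\otimes v_{-\lambda_2}$ (up to the order of factors). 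Then use that $\cR$ commutes with $\Delta(X)$.

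The main point, and the expected obstacle, is the evaluation of $\Theta\circ\Pi\circ P$ on the vector $v_{-\lambda_2}\otimes v_{\lambda'_1}$ (or the swapped version). Its first tensor factor is a lowest weight vector and its second is a highest weight vector. In $\Theta=\sum B_{\alpha,i}\otimes B^+_{\alpha,i}$, the positive part $B^+_{\alpha,i}$ acts on the second factor, and on the highest weight vector $v_{\lambda'_1}$ all terms with $\alpha\neq0$ vanish. Only $\Pi$ then survives, giving the scalar $\qq^{-(-\lambda_2,\lambda'_1)}=\qq^{(\lambda_2,\lambda'_1)}$. The orientation must be checked carefully: if the factors come out the other way, then $F$'s would hit $v_{-\lambda_2}$, which also annihilate it, so the conclusion is the same either way. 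So $c=\pm(\lambda_2,\lambda'_1)$, with the sign fixed by comparison with \cref{lem:OA_comm}, where the analogous computation gave $\qq^{-(\lambda_1,\lambda_2)}$. This suggests
\[
m\circ\tcR(\zeta\otimes\eta)=\qq^{(\lambda'_1,\lambda_2)}\,\eta\zeta .
\]

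Combining these,
\[
\sfm(\psi\psi')=\qq^{-(\lambda_1+\lambda'_1,\lambda_1+\lambda'_1)/2+(\lambda'_1,\lambda_2)}\,\xi_1(\lambda_1)\,\xi'_1(\lambda'_1)\,\xi_2(-\lambda_2)\,\xi'_2(-\lambda'_2).
\]
On the other hand,
\[
\sfm(\psi)\sfm(\psi')=\qq^{-(\lambda_1,\lambda_1)/2-(\lambda'_1,\lambda'_1)/2}\,\xi_1(\lambda_1)\,\xi_2(-\lambda_2)\,\xi'_1(\lambda'_1)\,\xi'_2(-\lambda'_2).
\]
The ratio of the two quadratic prefactors is $\qq^{-(\lambda_1,\lambda'_1)}$. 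Hence
\[
\sfm(\psi\psi')=\qq^{(\lambda'_1,-\lambda_1+\lambda_2)}\sfm(\psi)\sfm(\psi'),
\]
as claimed. Linearity extends the identity from elementary tensors to all $\psi,\psi'$.
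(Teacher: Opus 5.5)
Your route is the paper's: reduce to elementary tensors, pair against $X$, use \cref{p:coordprod} and \cref{p:tildeR} to pass to $\cR_{V(\lambda'_1),V(\lambda_2^{\ast})}$, commute it past $\Delta(X)$, and collapse $\Theta$ to get $\qq^{(\lambda_2,\lambda'_1)}$. However, your key mixed identity has the factors in the wrong order.

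Carrying out the computation you describe, you end with
\[
\qq^{(\lambda_2,\lambda'_1)}\bigl\langle \Upsilon_{V(\lambda_2^{\ast}),V(\lambda'_1)}(\xi_2\otimes\xi'_1),\ \Delta(X).(v_{-\lambda_2}\otimes v_{\lambda'_1})\bigr\rangle=\qq^{(\lambda_2,\lambda'_1)}\langle \xi_2(-\lambda_2)\xi'_1(\lambda'_1),X\rangle
\]
by \cref{p:coordprod}. So the correct identity is $m\circ\tcR(\zeta\otimes\eta)=\qq^{(\lambda_2,\lambda'_1)}\zeta\eta$, in the \emph{original} order. This is just as \cref{lem:OA_comm} gives $m\circ\tcR=\qq^{-(\lambda_1,\lambda_2)}m$, not ``flip, then multiply''. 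Your version $\qq^{(\lambda'_1,\lambda_2)}\eta\zeta$ is false, since $\eta\zeta$ and $\zeta\eta$ are not proportional in $\cO_\qq(G)$. For $SL_2$, take $\eta=c_{11}\in\cO(\varpi_1)$ and $\zeta=c_{22}\in\cO^-(-\varpi_1)$. The defining relations give $c_{11}c_{22}=1+\qq c_{12}c_{21}$ but $c_{22}c_{11}=1+\qq^{-1}c_{12}c_{21}$.

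The error is masked in your last step. You compare $\xi_1(\lambda_1)\xi'_1(\lambda'_1)\xi_2(-\lambda_2)\xi'_2(-\lambda'_2)$ with $\xi_1(\lambda_1)\xi_2(-\lambda_2)\xi'_1(\lambda'_1)\xi'_2(-\lambda'_2)$ only through the quadratic prefactors. That tacitly commutes $\xi'_1(\lambda'_1)$ past $\xi_2(-\lambda_2)$, which is exactly the false commutation above. So two errors cancel and you land on the right constant. With the corrected identity no commutation is needed:
\[
\sfm(\psi\psi')=\qq^{-(\lambda_1+\lambda'_1,\lambda_1+\lambda'_1)/2+(\lambda_2,\lambda'_1)}\,\xi_1(\lambda_1)\xi_2(-\lambda_2)\xi'_1(\lambda'_1)\xi'_2(-\lambda'_2),
\]
and the claim follows at once, as in the paper.

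Also, the orientation is not optional. Since $\cR_{V,U}=\Theta\circ\Pi\circ P_{V,U}$, \cref{p:tildeR} forces $\Theta$ to act on $v_{-\lambda_2}\otimes v_{\lambda'_1}$, and only in this order does $\Theta$ collapse. On $v_{\lambda'_1}\otimes v_{-\lambda_2}$ the $\Uq^-$ part would hit $v_{\lambda'_1}$ and the $\Uq^+$ part would hit $v_{-\lambda_2}$, and neither is annihilated. So your ``either way'' remark is wrong, and the sign of the exponent is fixed by the computation, not by analogy with \cref{lem:OA_comm}.
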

\begin{proof}
For $\xi_1\in V(\lambda_1)^{\ast}, \xi_2\in V(\lambda_2^{\ast})^{\ast}, \xi'_1\in V(\lambda'_1)^{\ast}, \xi'_2\in V(\lambda_2^{\prime\ast})^{\ast}$,  
\begin{align*}
    &\sfm(\xi_1(\lambda_1)^{[1]}\xi_2(-\lambda_2)^{[2]}\xi'_1(\lambda'_1)^{[1]} \xi'_2(-\lambda'_2)^{[2]})\\
    &=\qq^{-(\lambda_1+\lambda'_1, \lambda_1+\lambda'_1)/2}\xi_1(\lambda_1)c^{V(\lambda'_1)\otimes V(\lambda_2^{\ast})}(\tcR_{V(\lambda_2^{\ast})^{\ast}, V(\lambda'_1)^{\ast}}(\xi_2\otimes \xi'_1), v_{\lambda'_1}\otimes v_{-\lambda_2})\xi'_2(-\lambda'_2).
\end{align*}
For $X\in \Uq$, 
\begin{align*}
 &\langle c^{V(\lambda'_1)\otimes V(\lambda_2^{\ast})}(\tcR_{V(\lambda_2^{\ast})^{\ast}, V(\lambda'_1)^{\ast}}(\xi_2\otimes \xi'_1), v_{\lambda'_1}\otimes v_{-\lambda_2}), X\rangle\\
    &=\langle \Upsilon_{V(\lambda'_1), V(\lambda_2^{\ast})}(\tcR_{V(\lambda_2^{\ast})^{\ast}, V(\lambda'_1)^{\ast}}(\xi_2\otimes \xi'_1)), \Delta(X).v_{\lambda'_1}\otimes v_{-\lambda_2}\rangle\\
    &=\langle \Upsilon_{V(\lambda_2^{\ast}), V(\lambda'_1)}(\xi_2\otimes \xi'_1),\cR_{V(\lambda'_1), V(\lambda_2^{\ast})}(\Delta(X).v_{\lambda'_1}\otimes v_{-\lambda_2})\rangle\ \text{by \cref{p:tildeR}}\\
    &=\langle \Upsilon_{V(\lambda_2^{\ast}), V(\lambda'_1)}(\xi_2\otimes \xi'_1),\Delta(X).\cR_{V(\lambda'_1), V(\lambda_2^{\ast})}(v_{\lambda'_1}\otimes v_{-\lambda_2})\rangle\\
    &=\qq^{(\lambda_2, \lambda'_1)}\langle \Upsilon_{V(\lambda_2^{\ast}), V(\lambda'_1)}(\xi_2\otimes \xi'_1),\Delta(X).v_{-\lambda_2}\otimes v_{\lambda'_1}\rangle\\
    &=\qq^{(\lambda_2, \lambda'_1)}\langle \xi_2(-\lambda_2)\xi'_1(\lambda'_1), X\rangle.
\end{align*}
Therefore, 
\begin{align*}
    &\sfm(\xi_1(\lambda_1)^{[1]}\xi_2(-\lambda_2)^{[2]}\xi'_1(\lambda'_1)^{[1]} \xi'_2(-\lambda'_2)^{[2]})\\
    &=\qq^{-(\lambda_1+\lambda'_1, \lambda_1+\lambda'_1)/2+(\lambda_2, \lambda'_1)}\xi_1(\lambda_1)\xi_2(-\lambda_2)\xi'_1(\lambda'_1)\xi'_2(-\lambda'_2)\\
    &=\qq^{-(\lambda_1+\lambda'_1, \lambda_1+\lambda'_1)/2+(\lambda_2, \lambda'_1)+(\lambda_1, \lambda_1)/2+(\lambda'_1, \lambda'_1)/2}\sfm(\xi_1(\lambda_1)^{[1]}\xi_2(-\lambda_2)^{[2]})\sfm(\xi'_1(\lambda'_1)^{[1]}\xi'_2(-\lambda'_2)^{[2]})\\
    &=\qq^{(\lambda'_1, -\lambda_1+\lambda_2)}\sfm(\xi_1(\lambda_1)^{[1]}\xi_2(-\lambda_2)^{[2]})\sfm(\xi'_1(\lambda'_1)^{[1]}\xi'_2(-\lambda'_2)^{[2]}).
\end{align*}
    \end{proof}
    We define the $\Bbbk$-algebra  ${}^{\ext'}\cO_\qq(G)$ as follows. 
\begin{itemize}
    \item ${}^{\ext'}\cO_\qq(G)=\Bbbk[P]\otimes \cO_\qq(G)$ as a vector space. Here $\Bbbk[P]=\oplus_{\lambda\in P}\Bbbk e'(\lambda)$ is the group algebra of the additive group $P$. 
    \item The multiplication on ${}^{\ext'}\cO_\qq(G)$ is determined by the following conditions. 
    \begin{itemize}
        \item $\cO_\qq(G)\to {}^{\ext'}\cO_\qq(G), \phi\mapsto 1\otimes \phi$ and $\Bbbk [P]\to {}^{\ext'}\cO_\qq(G), x\mapsto x\otimes 1$ are injective $\Bbbk$-algebra homomorphisms.
        \item For $\lambda\in P$ and $\phi\in \cO_\qq(G)$ satisfying $(1\otimes K_s)\cdot \phi=\qq_s^{\langle \alpha_s^{\vee}, \nu\rangle}\phi$ for $s\in S$, 
        \[
        (e'(\lambda)\otimes 1)(1\otimes \phi)=e'(\lambda)\otimes \phi=\qq^{(\lambda, \nu)}(1\otimes \phi)(e'(\lambda)\otimes 1).
        \]
    \end{itemize}
\end{itemize}
In the following, we simply write $x\otimes \phi$ as $x\phi$ in ${}^{\ext'}\cO_\qq(G)$ for $x\in \Bbbk [P]$ and $\phi\in \cO_\qq(G)$. When $\phi\in \cO_\qq(G)$ satisfies $(1\otimes K_s)\cdot \phi=\qq_s^{\langle \alpha_s^{\vee}, \nu\rangle}\phi$ for $s\in S$, we write 
\[
\wt_r\phi=\nu.
\]
\begin{thm}\label{t:Gext}
Define a $\Bbbk$-linear map
\begin{align*}
    \wtmu\colon \cO_\qq(\sA_G)\bt \cO_\qq(\sA_G^-)\to {}^{\ext'}\cO_\qq(G)
\end{align*}
by 
\[
\wtmu(\psi)=e'(\lambda_1)\sfm(\psi)
\]
for $\psi\in \cO(\lambda_1)\ut  \cO^-(-\lambda_2)$. Then 
$\wtmu$ is an injective $\Bbbk$-algebra homomorphism. Moreover, for $\lambda\in P_+$, there uniquely exists an element 
\[
D'_{\lambda}\in (\cO(\lambda)\ut \cO^-(-\lambda))^{\inv}\subset \cO_\qq(\sA_G)\bt \cO_\qq(\sA_G^-)
\]
of the form 
\begin{align*}
D'_{\lambda}=\qq^{(\lambda, \lambda)/2}\xi_{\lambda}(\lambda)\otimes \xi_{-\lambda}(-\lambda)+(\text{other terms}),
\end{align*}
here $($other terms$)$ belong to $\cO(\lambda)_{>-\lambda}\ut \cO^-(-\lambda)_{<\lambda}$. Then $\wtmu(D'_{\lambda})=e'(\lambda)$ and  $\cD':= \{\qq^aD'_{\lambda}\mid a\in \frac{1}{2d}\Z, \lambda\in P_+\}$ forms an Ore set of $\cO_\qq(\sA_G)\bt \cO_\qq(\sA_G^-)$ and $\wtmu$ can be extended to an isomorphism of $\Bbbk$-algebras
\[
\wtmu\colon \cO_\qq(\sA_G)\bt \cO_\qq(\sA_G^-)[\cD^{\prime -1}]\xrightarrow{\sim} {}^{\ext'}\cO_\qq(G).
\]
\end{thm}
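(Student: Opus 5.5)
The plan is to verify the homomorphism property degree by degree using \cref{p:reltoG}, to deduce injectivity from a cyclicity argument as in \cref{p:invsp}, and then to copy the localization argument in the proof of \cref{t:invstr}.

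\emph{Step 1: degrees and right weights.} As in \cref{lem:gradedalg}, the product in $\cO_\qq(\sA_G)\bt\cO_\qq(\sA_G^-)$ sends $(\cO(\lambda_1)\ut\cO^-(-\lambda_2))\times(\cO(\lambda'_1)\ut\cO^-(-\lambda'_2))$ into $\cO(\lambda_1+\lambda'_1)\ut\cO^-(-\lambda_2-\lambda'_2)$. Here I use the Cartan product in $\cO_\qq(\sA_G^-)$, transported via the algebra isomorphism $\Psi^{+,-}$. Next, for $\psi\in\cO(\lambda_1)\ut\cO^-(-\lambda_2)$ the element $\sfm(\psi)$ has the form $c^{V(\lambda_1)\otimes V(\lambda_2^\ast)}(\tilde\xi,v_{\lambda_1}\otimes v_{-\lambda_2})$ by \cref{p:coordprod}. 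Hence $\wt_r\sfm(\psi)=\lambda_1-\lambda_2$.

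\emph{Step 2: homomorphism property.} Using the commutation rule in ${}^{\ext'}\cO_\qq(G)$, I compute
\begin{align*}
\wtmu(\psi)\wtmu(\psi')=e'(\lambda_1)\sfm(\psi)e'(\lambda'_1)\sfm(\psi')
=\qq^{-(\lambda'_1,\lambda_1-\lambda_2)}e'(\lambda_1+\lambda'_1)\sfm(\psi)\sfm(\psi').
\end{align*}
By \cref{p:reltoG} the right-hand side equals $e'(\lambda_1+\lambda'_1)\sfm(\psi\psi')$. By Step 1 this is $\wtmu(\psi\psi')$.

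\emph{Step 3: injectivity.} Distinct pairs $(\lambda_1,\lambda_2)$ land in distinct summands $e'(\lambda_1)\cO_\qq(G)_{\wt_r=\lambda_1-\lambda_2}$ of ${}^{\ext'}\cO_\qq(G)$. So it suffices that $\sfm$ is injective on each $\cO(\lambda_1)\ut\cO^-(-\lambda_2)$. Under $\Upsilon$ this restriction is the map $\tilde\xi\mapsto c^{V(\lambda_1)\otimes V(\lambda_2^\ast)}(\tilde\xi,v_{\lambda_1}\otimes v_{-\lambda_2})$. Since $V(\lambda_1)\otimes V(\lambda_2^\ast)$ is cyclic on the vector $v_{\lambda_1}\otimes v_{-\lambda_2}$ (highest weight vector tensor lowest weight vector, as used in \cref{p:invsp}), vanishing of this coefficient forces $\tilde\xi=0$.

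\emph{Step 4: the elements $D'_\lambda$.} Existence and uniqueness of $D'_\lambda$ follow exactly as for $D_\lambda$. The invariant space $(V(\lambda)^\ast\ut V(\lambda^\ast)^\ast)^{\inv}$ is one-dimensional. Its coefficient on $\xi_\lambda\otimes\xi_{-\lambda}$ is nonzero by the injectivity in \cref{p:invsp}, so we may normalize it. To get $\wtmu(D'_\lambda)=e'(\lambda)$, I need $\sfm(D'_\lambda)=1$. Under $\Upsilon$, $D'_\lambda$ is a $\Uq$-invariant functional on $V(\lambda)\otimes V(\lambda^\ast)$. Therefore $X\mapsto\langle D'_\lambda,\Delta(X).(v_\lambda\otimes v_{-\lambda})\rangle$ equals $\varepsilon(X)\langle D'_\lambda,v_\lambda\otimes v_{-\lambda}\rangle$. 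By weights only the leading term contributes, giving $\qq^{(\lambda,\lambda)/2}$. The prefactor $\qq^{-(\lambda,\lambda)/2}$ in $\sfm$ then gives $1$. I expect this normalization bookkeeping to be the main place errors can creep in; everything else is formal.

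\emph{Step 5: Ore set and localization.} Since $\wtmu$ is an injective homomorphism and $e'(\lambda)$ $\qq$-commutes with every homogeneous element, each $D'_\lambda$ $\qq$-commutes with homogeneous elements of the source. Together with the domain property, this makes $\cD'$ an Ore set, and $\wtmu$ extends to the localization. Injectivity of the extension follows formally from Step 3. For surjectivity, I take $e'(\nu)\phi$ with $\phi\in\cO_\qq(G)$. By surjectivity of $\sfm$, write $\phi=\sfm(\psi)$ with $\psi$ homogeneous of degree $\lambda_1$. Then write $\nu-\lambda_1=\alpha-\beta$ with $\alpha,\beta\in P_+$. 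Now $e'(\nu)\sfm(\psi)$ equals, up to a power of $\qq$, $\wtmu(D'_\beta)^{-1}\wtmu(D'_\alpha)\wtmu(\psi)$, which lies in the image.
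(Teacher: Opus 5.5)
Your proposal is correct and follows essentially the same route as the paper: the homomorphism property comes from the $e'$-commutation rule combined with \cref{p:reltoG}, and injectivity from the direct-sum decomposition by $(\lambda_1,\lambda_2)$ plus cyclicity of $v_{\lambda_1}\otimes v_{-\lambda_2}$. Likewise $\sfm(D'_\lambda)=1$ gives $\wtmu(D'_\lambda)=e'(\lambda)$, the resulting $\qq$-commutation gives the Ore property, and surjectivity follows from that of $\sfm$. One small correction: the nonvanishing of the coefficient of $\xi_\lambda\otimes\xi_{-\lambda}$ is not literally \cref{p:invsp}, which concerns the coefficient of $\xi_{w_0\lambda}\otimes\xi_{\lambda^\ast}$; it follows from \cref{p:invsp} together with \cref{p:cyclic}, or directly from the cyclicity you already use in Step 3.
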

\begin{proof}
    For $\psi\in \cO(\lambda_1)\ut \cO^-(-\lambda_2)$ and $\psi'\in \cO(\lambda'_1)\ut \cO^-(-\lambda'_2)$,  
    \begin{align*}
    \wtmu(\psi)\wtmu(\psi')
    &=e'(\lambda_1)\sfm(\psi)e'(\lambda'_1)\sfm(\psi')\\
    &=\qq^{-(\lambda'_1, \lambda_1-\lambda_2)}e'(\lambda_1+\lambda'_1)\sfm(\psi)\sfm(\psi')\\
    &=e'(\lambda_1+\lambda'_1)\sfm(\psi\psi')\ \text{by \cref{p:reltoG}}\\
    &=\wtmu(\psi\psi').
    \end{align*}
    Note that $\wt_r \mu(\psi)=\lambda_1-\lambda_2$. Therefore $\wtmu$ is a $\Bbbk$-algebra homomorphism. Next, we prove the injectivity of $\wtmu$. By definition, 
    \[
    \wtmu(\cO(\lambda_1)\ut \cO^-(-\lambda_2))\subset \{e'(\lambda_1)\phi\mid \phi\in \cO_\qq(G), \wt_r \mu(\psi)=\lambda_1-\lambda_2\}.
    \]
 Therefore, 
    \begin{align}
\wtmu(\cO_\qq(\sA_G)\bt \cO_\qq(\sA_G^-))&=\sum_{\lambda_1, \lambda_2\in P_+}\wtmu(\cO(\lambda_1)\ut \cO^-(-\lambda_2))\notag\\
&=\bigoplus_{\lambda_1, \lambda_2\in P_+}\wtmu(\cO(\lambda_1)\ut \cO^-(-\lambda_2)).\label{eq:direct_sum}
    \end{align}
    Moreover, for any non-zero element $\widetilde{\xi}\in V(\lambda_1)^{\ast}\ut V(\lambda_2^{\ast})^{\ast}$, $c^{V(\lambda_1)\otimes V(\lambda_2^{\ast})}(\Upsilon_{V(\lambda_1), V(\lambda_2^{\ast})}(\widetilde{\xi}), v_{\lambda_1}\otimes v_{-\lambda_2})$ is a non-zero element in $\cO_\qq(G)$ since $v_{\lambda_1}\otimes v_{-\lambda_2}$ generates $V(\lambda_1)\otimes V(\lambda_2^{\ast})$ as a $\Uq$-module. Hence, $\sfm$ is injective on $\cO(\lambda_1)\ut \cO^-(-\lambda_2)$. Therefore, $\wtmu$ is also injective by \eqref{eq:direct_sum}. 

   The existence of $D'_{\lambda}$ for $\lambda\in P_+$ is a standard fact of the representation theory of $\Uq$. For $\lambda\in P_+$, $\sfm(D'_{\lambda})=1$. Hence $\wtmu(D'_{\lambda})=e'(\lambda)$. Therefore, for $\psi\in \cO(\lambda_1)\ut \cO^-(-\lambda_2)$, 
\[
D'_{\lambda}\psi=\qq^{(\lambda, \lambda_1-\lambda_2)}\psi D'_{\lambda}.
\]
Therefore, $\cD'$ forms an Ore set of $\cO_\qq(\sA_G)\bt \cO_\qq(\sA_G^-)$ and $\wtmu$ can be extended to an injective $\Bbbk$-algebra homomorphism
\[
\wtmu\colon \cO_\qq(\sA_G)\bt \cO_\qq(\sA_G^-)[\cD^{\prime -1}]\to {}^{\ext'}\cO_\qq(G).
\]
The surjectivity of this homomorphism follows from that of $\sfm$. 
\end{proof}

\cref{cor:D_lambda} and \eqref{eq:q-cyclic_shift} imply that  
\[
D_{2; \lambda}=\qq^{(\lambda,\lambda)/2}\cdot 1\otimes \xi_{\lambda^{\ast}}(\lambda^{\ast})\otimes \xi_{w_0\lambda}(\lambda)\otimes 1+(\text{other terms}),
\]
where $($other terms$)$ belong to $1\ut \cO(\lambda^{\ast})_{>-\lambda^{\ast}}\ut \cO(\lambda)_{<\lambda^{\ast}}\ut 1$. Therefore, by \eqref{eq:Phi_ke},
\begin{align}
((\mathrm{id}\otimes \Psi^{+,-})\otimes \mathrm{id})(\Phi_4^{\ext}(D_{2; \lambda}))=D'_{\lambda^{\ast}}\in (\cO_\qq(\sA_G)\bt \cO_\qq(\sA_G^-))^{\ext}.\label{eq:D_2_D_prime}
\end{align}

Therefore, we have the following theorem. 
\begin{thm}\label{thm:qW}
    The $\Bbbk$-algebra homomorphism
    \begin{align*}
    \cO_\qq(\Conf_4\sA_G)&\xrightarrow{\Phi_4^{\ext}} (\cO_\qq(\sA_G)\bt \cO_\qq(\sA_G))^{\ext}\\
    &\xrightarrow{(\mathrm{id}\otimes \Psi^{+,-})\otimes \mathrm{id}} (\cO_\qq(\sA_G)\bt \cO_\qq(\sA_G^-))^{\ext}\xrightarrow{\wtmu\otimes \mathrm{id}} ({}^{\ext'}\cO_\qq(G))^{\ext}
    \end{align*}
    induces a $\Bbbk$-algebra isomorphism
    \[
    \imath_{4,\qq}\colon\cO_\qq(\Conf_4^{\{2,4\}}\sA_G)\xrightarrow{\sim}({}^{\ext'}\cO_\qq(G))^{\ext}.
    \]
    Here $({}^{\ext'}\cO_\qq(G))^{\ext}$ is defined with respect to the weight space decomposition of ${}^{\ext'}\cO_\qq(G)$ induced from the $\Uq^0\otimes 1$-module structure on $\cO_\qq(G)$ and $\wt (\Bbbk[P])=0$. 
\end{thm}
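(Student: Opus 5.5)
The plan is to factor the composite map into three pieces whose behaviour under localization is already known, and then check that the relevant Ore sets correspond.

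\textbf{Step 1: localize at $\cD_{\{4\}}$ and check injectivity.} By \cref{t:invstr} with $\A=\cO_\qq(\sA_G)^{\bt 2}$, the map $\Phi_4^{\ext}$ is injective. It extends to an isomorphism
\[
\cO_\qq(\Conf_4^{\{4\}}\sA_G)\xrightarrow{\sim}(\cO_\qq(\sA_G)\bt\cO_\qq(\sA_G))^{\ext},
\]
sending $D_{4;\lambda}=D_\lambda$ to $e(\lambda)$. The map $(\mathrm{id}\otimes\Psi^{+,-})\otimes\mathrm{id}$ is an algebra isomorphism. This is because $\Psi^{+,-}$ is an isomorphism of $\Uq$-module algebras and the $\ext$-construction depends only on weights, which $\Psi^{+,-}$ preserves. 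The map $\wtmu\otimes\mathrm{id}$ is an injective algebra homomorphism by \cref{t:Gext}. It remains compatible with the $\ext$-twist because $\wtmu$ preserves the $\Uq^0\otimes1$-weight (the left weight of $\sfm(\psi)$ equals the weight of $\psi$) and $\wt(\Bbbk[P])=0$. Hence the composite is an injective algebra homomorphism $\cO_\qq(\Conf_4\sA_G)\to({}^{\ext'}\cO_\qq(G))^{\ext}$.

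\textbf{Step 2: show the elements of $\cD_{\{2,4\}}$ become invertible.} The element $D_{4;\lambda}$ maps to $e(\lambda)$, which is invertible. By \eqref{eq:D_2_D_prime}, $D_{2;\lambda}$ maps under the first two maps to $D'_{\lambda^\ast}$. Then $\wtmu\otimes\mathrm{id}$ sends $D'_{\lambda^\ast}$ to $e'(\lambda^\ast)$, again invertible. Since $\cD_{\{2,4\}}$ is an Ore set, the universal property of localization (General notation (6)) extends the composite to $\cO_\qq(\Conf_4^{\{2,4\}}\sA_G)$. Injectivity of the extension follows from injectivity on $\cO_\qq(\Conf_4\sA_G)$, since every element of the localization has the form $xs^{-1}$ with $s$ mapping to an invertible element.

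\textbf{Step 3: surjectivity.} I would localize in two stages, first at $\cD_{\{4\}}$ and then at the images of the $D_{2;\lambda}$, identifying the target as an iterated localization. Through the isomorphism of Step 1, $\cO_\qq(\Conf_4^{\{2,4\}}\sA_G)$ is identified with $(\cO_\qq(\sA_G)\bt\cO_\qq(\sA_G^-))^{\ext}[\{D'_{\mu}\}^{-1}]$, where $\mu$ ranges over $P_+$. Here I use that $\lambda\mapsto\lambda^\ast$ is a bijection of $P_+$. I also need that the $D'_\mu$ form an Ore set in the $\ext$-algebra. This holds because they $q$-commute with the $e(\nu)$ (their weight is $0$) and with the elements of $\cO_\qq(\sA_G)\bt\cO_\qq(\sA_G^-)$, as in \cref{t:Gext}.

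This algebra is $(\cO_\qq(\sA_G)\bt\cO_\qq(\sA_G^-)[\cD'^{-1}])^{\ext}$. By \cref{t:Gext}, $\wtmu$ maps it isomorphically onto $({}^{\ext'}\cO_\qq(G))^{\ext}$. Every element of the target is therefore hit: the $e(\nu)$ come from $D_{4;\lambda}^{\pm1}$, and ${}^{\ext'}\cO_\qq(G)$ is covered via \cref{t:Gext}.

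\textbf{Main obstacle.} The main point of care is the bookkeeping of localizations. One must check that localizing first at $D_{4;\lambda}$ and then at $D_{2;\lambda}$ gives the same ring as localizing at $\cD_{\{2,4\}}$ directly. One must also check that localization commutes with the $(-)^{\ext}$ construction. Both follow from the $q$-commutation relations of these elements with homogeneous elements, and from the uniqueness of Ore localizations.
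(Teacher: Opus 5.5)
Your proposal is correct and is essentially the paper's argument. The paper deduces the theorem directly from four inputs, exactly as in your Steps 1--3: \cref{t:invstr} (so $\Phi_4^{\ext}$ becomes an isomorphism once $D_{4;\lambda}\mapsto e(\lambda)$ is inverted), the $\Uq$-module algebra isomorphism $\Psi^{+,-}$, \cref{t:Gext} (so $\wtmu$ becomes an isomorphism after inverting $\cD'$), and \eqref{eq:D_2_D_prime}, which identifies the image of $D_{2;\lambda}$ with $D'_{\lambda^\ast}$. One simplification is available in Step 3: the image already contains $\wtmu(\cO_\qq(\sA_G)\bt\cO_\qq(\sA_G^-))$, the inverses $e'(\mu)^{-1}$ and all $e(\nu)$, which generate the target, so the comparison of iterated localizations you flag as the main obstacle is not actually needed for surjectivity.
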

In the following, we simply write $({}^{\ext'}\cO_\qq(G))^{\ext}$ as ${}^{\ext'}\cO_\qq(G)^{\ext}$. The isomorphism $\imath_{4,\qq}$ gives an injective $\Bbbk$-algebra homomorphism 
    \[
    \cO_\qq(G)\xrightarrow{\mathrm{inclusion}} {}^{\ext'}\cO_\qq(G)^{\ext}\xrightarrow{\imath_{4,\qq}^{-1}}\cO_\qq(\Conf_4^{\{2, 4\}}\sA_G). 
    \]
This injective $\Bbbk$-algebra homomorphisms can be regarded as a quantum analogue of the Wilson line \eqref{eq:Wilson_line}, which we call \emph{the quantum Wilson line} for short. 

We conclude this section by providing a concrete formula for $\imath_{4,\qq}$ and its cyclic shift $\isigma$. For $\phi\in \cO^{\inv}(\lambda_1, \lambda_2,\lambda_3, \lambda_4)$ of the form 
\begin{align*}
    \phi=\xi_{\lambda_1}(\lambda_1)\otimes \left(\sum_{i}\xi_i(\lambda_2)\otimes\xi'_i(\lambda_3)\right)\otimes \xi_{w_0\lambda_4}(\lambda_4)+(\text{other terms})
\end{align*}
where $\mathrm{(}$other terms$\mathrm{)}$ belong to 
    \[
    \cO(\lambda_1)_{> -\lambda_1}\ut \cO(\lambda_2)\ut\cO(\lambda_3)\ut \cO(\lambda_4)+\cO(\lambda_1)\ut \cO(\lambda_2)\ut\cO(\lambda_3)\ut \cO(\lambda_4)_{<\lambda_4^{\ast}},
    \]
we have 
\begin{align}
    \imath_{4,\qq}(\phi)=\qq^{-(\lambda_1, \lambda_1)/2-(\lambda_2, \lambda_2)/2}e'(\lambda_2)\left(\sum_{i}\xi_i(\lambda_2)\xi'_i(w_0\lambda_3)\right)e(\lambda_4^{\ast}).\label{eq:imath_formula}
\end{align}
We set $\isigma:=\imath_{4,\qq}\circ \sigma_4^{-1}$. Then, for $\phi\in \cO^{\inv}(\lambda_1, \lambda_2,\lambda_3, \lambda_4)$ of the form 
\begin{align*}
    \phi=\xi_{w_0\lambda_1}(\lambda_1)\otimes \xi_{\lambda_2}(\lambda_2)\otimes \left(\sum_{i}\xi_i(\lambda_3)\otimes\xi'_i(\lambda_4)\right)+(\text{other terms})
\end{align*}
where $\mathrm{(}$other terms$\mathrm{)}$ belong to 
    \[
    \cO(\lambda_1)_{<\lambda_1^{\ast}}\ut \cO(\lambda_2)\ut \cO(\lambda_3)\ut\cO(\lambda_4)+
    \cO(\lambda_1)\ut\cO(\lambda_2)_{> -\lambda_2}\ut \cO(\lambda_3)\ut\cO(\lambda_4),
    \]
we have 
\begin{align}
    \isigma(\phi)=(-1)^{\langle 2\rho^{\vee}, \lambda_1^{\ast}\rangle}\qq^{-(2\rho, \lambda_1^{\ast})-(\lambda_2, \lambda_2)/2-(\lambda_3, \lambda_3)/2}e'(\lambda_3)\left(\sum_{i}\xi_i(\lambda_3)\xi'_i(w_0\lambda_4)\right)e(\lambda_1^{\ast})\label{eq:isigma_formula}
\end{align}
by \eqref{eq:imath_formula}. At the classical level,  $\imath_{4,\qq}$ corresponds to \cref{fig:band_config}, while $\isigma$ corresponds to \cref{fig:band_config_appendix} in \cref{subsub:cluster_config}. In the following, we will mainly use $\isigma$ in the study of quantum cluster structures on the quantum configuration spaces, since it is more compatible with the amalgamation of cluster structures. See \cref{sec:polygon}.  

\section{Quantum cluster structure on \texorpdfstring{$\cO_\qq(\Conf_3\sA_G)$ and $\cO_\qq(\Conf_4\sA_G)$}{Oq(Conf3) and Oq(Conf4)}}
In this section, we establish quantum cluster algebra structures on $\cO_\qq(\Conf_3^{\times}\sA_G)$ and $\cO_\qq(\Conf_4^{\times}\sA_G)$. They can be regarded as a quantum analogue of Goncharov--Shen cluster structures, established in \cite{GS:Quantum} (cf.~\cref{subsub:cluster_config}). We first construct quantum seeds for $\cO_\qq(\Conf_4^{\times}\sA_G)$, and then define quantum seeds for $\cO_\qq(\Conf_3^{\times}\sA_G)$ by restriction. See \cref{app:QCA} for our notation on the quantum cluster algebras. 

\subsection{Quantum seeds for $\cO_\qq(\Conf_4^{\times}\sA_G)$}\label{subsec:Qseed_square}
We define a quantum cluster structure of $\cO_\qq(\Conf_4^{\times}\sA_G)$ by transporting that of $\cO_\qq(G)$ via $(\isigma)^{-1}\colon {}^{\ext'}\cO_\qq(G)^{\ext}\xrightarrow{\sim}\cO_\qq(\Conf_4^{\{1,3\}}\sA_G)$. Let us recall the definition of the Berenstein--Zelevinsky quantum seeds \cite{BZ} for $\cO_{\qq}(G)$. 

For $\lambda\in P_+$ and $w, w'\in W$, we write 
\[
\Delta_{w\lambda, w'\lambda}:=c^{V(\lambda)}(\xi_{w\lambda}, v_{w'\lambda}). 
\]
Recall that $\mathbf{\Delta}_{w_0,e}=\{\qq^{a}\Delta_{w_0\lambda, \lambda} \mid a\in \frac{1}{2d}\Z, \lambda\in P_+\}$, and set $\mathbf{\Delta}_{e, w_0}:=\{\qq^{a}\Delta_{\lambda, w_0\lambda} \mid a\in \frac{1}{2d}\Z, \lambda\in P_+\}$. It is known that $\mathbf{\Delta}_{w_0,e}\cup \mathbf{\Delta}_{e, w_0}$ forms an Ore set of $\cO_\qq(G)$, and set 
\[
\cO_\qq(G^{w_0, w_0}):=\cO_\qq(G)[\mathbf{\Delta}_{w_0,e}^{-1}, \mathbf{\Delta}_{e, w_0}^{-1}]
\]
(see \cite[Definition 9.5]{BZ}). Note that $\cO_\qq(G^{w_0, w_0})$ has a  $P\oplus P$-graded algebra structure induced from that of $\cO_\qq(G)$. 

Set $N:=\ell (w_0)$. Write $\olS:=\{\ols\mid s\in S\}$. For $s\in S$, we set $|s|=|\ols|:=s$, $\sgn(s):=1$, $\sgn(\ols):=-1$, and $\overline{\ols}:=s$. A word $\bbs=(\bbs_1,\dots, \bbs_{2N})$ in the alphabet $S\sqcup \olS$ is called a \emph{double reduced word} of $(w_0,w_0) \in W \times W$ if the subwords of $\bbs$ in $S$ and in $\olS$ (with bars removed) are reduced words of $w_0$. The set of the double reduced words of $(w_0,w_0)$ is denoted by $S(w_0, w_0)$. 

Fix $\bbs=(\bbs_1,\dots, \bbs_{2N})\in S(w_0, w_0)$. For $1\leq k\leq 2N$, set 
\[
w_{\leq k}=r_{\bbs_1}\cdots r_{\bbs_k},\quad w^{\leq k}=r_{\overline{\bbs}_1}\cdots r_{\overline{\bbs}_k},
\]
where we set $r_{\ols}=e$ for $s\in S$. Define the set of vertices as $J(\bbs)=S_0\sqcup [1,2N]$. For $j\in J(\bbs)$, set
\[
j^+=\begin{cases}
    \min\{l\in [1, 2N]\mid |\bbs_l|=s \}&\text{if }j=s_0\in S_0,\\
    \min(\{l\in [1, 2N]\mid l>j, |\bbs_l|=|\bbs_j| \}\sqcup \{\infty\})&\text{if }j\in [1, 2N].
\end{cases}
\]
Define the set of unfrozen vertices as $J(\bbs)^{\uf}:=\{j\in [1,2N]\mid j^+\neq \infty\}$ and the set of frozen vertices as $J(\bbs)^{\fr}:=J(\bbs)\setminus J(\bbs)^{\uf}$. Then $J(\bbs)^{\fr}=S_0\sqcup \{j\in J(\bbs)\mid j^+= \infty\}$. 

For $j\in J(\bbs)$, set 
\[
\gamma_{\bbs; j}=
\begin{cases}
        \varpi_{s}&\text{ if }j=s_0\in S_0,\\
        w^{\leq j}\varpi_{|\bbs_j|}&\text{ if }j\in [1,2N],
    \end{cases}\quad 
\delta_{\bbs; j}=
\begin{cases}
        w_0\varpi_{s}&\text{ if }j=s_0\in S_0,\\
        w_0w_{\leq j}\varpi_{|\bbs_j|}&\text{ if }j\in [1,2N],
    \end{cases}
\]
and define 
\[
\Delta_{\bbs; j}:=\Delta_{\gamma_{\bbs; j}^{\ast}, \delta_{\bbs; j}^{\ast}}. 
\]
Define the $J(\bbs) \times J(\bbs)$ skew-symmetric matrix $\Lambda(\bbs)=(\Lambda_{\bbs; ij})_{i,j\in J(\bbs)}$ by 
\[
\Lambda_{\bbs; ij}=(\gamma_{\bbs;i}^{\ast},\gamma_{\bbs;j}^{\ast})-(\delta_{\bbs;i}^{\ast},\delta_{\bbs;j}^{\ast})=(\gamma_{\bbs;i},\gamma_{\bbs;j})-(\delta_{\bbs;i},\delta_{\bbs;j})
\]
for $i>j$, where we fix an arbitrary total order on $S_0$ and we set $s_0<k$ for all $s\in S$ and $k\in [1, 2N]$. Note that $\Lambda_{\bbs; ij}=0$ if $i, j\in S_0$, hence $\Lambda(\bbs)$ does not depend on the choice of the total order on $S_0$. 

Define the matrix $\cE(\bbs)=(\varepsilon_{\bbs;ij})_{i\in J(\bbs)^\uf,j\in J(\bbs)}$ by
\begin{align}
    \varepsilon_{\bbs;ij}=
    \begin{cases}
        \epsilon_i & i=j^+ \\
        -\epsilon_j & j=i^+\\
        \epsilon_i c_{|\bbs_j|,|\bbs_i|} & \epsilon_i=\epsilon_{j^+},\quad j<i<j^+<i^+\\
        -\epsilon_j c_{|\bbs_j|,|\bbs_i|} & \epsilon_j=\epsilon_{i^+},\quad i<j<i^+<j^+\\
        \epsilon_i c_{|\bbs_j|,|\bbs_i|} & \epsilon_i=-\epsilon_{i^+},\quad j<i<i^+<j^+\\
        -\epsilon_j c_{|\bbs_j|,|\bbs_i|} & \epsilon_j=-\epsilon_{j^+},\quad i<j<j^+<i^+\\
        0 & \text{otherwise,}       
    \end{cases}.\label{eq:cE_bbs}
\end{align}
where $\bbs_{s_0}:=s$ for $s_0\in S_0$ and $\epsilon_j:=\sgn(\bbs_j)$ for $j\in J(\bbs)$. 
For a more diagrammatic description of $\cE(\bbs)$, see \cref{sec:quiver}. By \cite[Theorem 8.3]{BZ}, the collection $\bi(\bbs):=(\Lambda(\bbs), \cE(\bbs),(A_{\bi(\bbs); j})_{j\in J(\bbs)})$ forms a quantum seed.

\begin{thm}[{\cite[Main Theorem]{GY:BZ}, \cite[Theorem A]{QY}}]\label{thm:clusterDouble}
Let $\bbs\in S(w_0, w_0)$. There exists an algebra isomorphism 
\[
\kappa_{\bbs}:U_{\qq}(\bi(\bbs); J(\bbs)^{\fr})\simeq \cO_\qq(G^{w_0, w_0})
\]
satisfying $\kappa_{\bbs}(A_{\bi(\bbs); j})=\Delta_{\bbs; j}$ for $j\in J(\bbs)$. Moreover, $A_{\qq}(\bi(\bbs); J(\bbs)^{\fr})=U_{\qq}(\bi(\bbs); J(\bbs)^{\fr})$. 
\end{thm}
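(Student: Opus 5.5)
The statement is a translation of known results into the conventions of this paper, so the plan is to reduce to \cite{BZ}, \cite[Main Theorem]{GY:BZ} and \cite[Theorem A]{QY}. The bulk of the work is checking that our data $(\Lambda(\bbs),\cE(\bbs),(\Delta_{\bbs;j})_{j\in J(\bbs)})$ agree with theirs.

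First, I would match conventions. Our generalized minors are $\Delta_{\bbs;j}=\Delta_{\gamma_{\bbs;j}^{\ast},\delta_{\bbs;j}^{\ast}}$, defined via the coproduct $\Delta$ and the dual module convention of \cref{s:QEA}. I expect the Berenstein--Zelevinsky minors $\Delta_{u\varpi_s,v\varpi_s}$ for the pair $(w_0,w_0)$ to correspond to these under an explicit (anti-)automorphism of $\cO_\qq(G)$ composed with the Dynkin involution $\ast$. The check is as follows.
\begin{itemize}
\item Using the Peter--Weyl description \eqref{eq:PW} and the extremal vectors $\xi_{w\lambda},v_{w\lambda}$, verify that this map sends BZ minors to ours, up to powers of $\qq$ that can be absorbed.
\item Verify that the quasi-commutation exponents of these minors are $\Lambda_{\bbs;ij}=(\gamma_{\bbs;i},\gamma_{\bbs;j})-(\delta_{\bbs;i},\delta_{\bbs;j})$ for $i>j$. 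This follows from the standard $q$-commutation of extremal minors, which comes from the $R$-matrix acting on tensor products of extremal weight vectors, as in \cref{lem:OA_comm}.
\item Verify that $\cE(\bbs)$ in \eqref{eq:cE_bbs} is the BZ exchange matrix rewritten in our indexing, so that compatibility holds by \cite[Theorem 8.3]{BZ}.
\end{itemize}
Once this is done, \cite{BZ} gives an algebra embedding of the quantum torus $\cT(\Lambda(\bbs))$ into $\cF(\cO_\qq(G))$ sending $A_{\bi(\bbs);j}\mapsto \Delta_{\bbs;j}$. The frozen minors $\Delta_{\bbs;j}$, $j\in J(\bbs)^{\fr}$, are exactly the elements of $\mathbf{\Delta}_{w_0,e}\cup\mathbf{\Delta}_{e,w_0}$ up to powers of $\qq$, so inverting them matches the localization defining $\cO_\qq(G^{w_0,w_0})$.

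Second, I would obtain the inclusion $A_\qq(\bi(\bbs);J(\bbs)^{\fr})\subset \cO_\qq(G^{w_0,w_0})$. The exchange relations in adjacent seeds are the quantum generalized determinantal identities of \cite{BZ}, and these keep the mutated variables inside $\cO_\qq(G^{w_0,w_0})$. Applying the standard Laurent-phenomenon and starfish-type argument to the upper algebra gives $A\subset U\subset\cO_\qq(G^{w_0,w_0})$. The last inclusion uses that $\cO_\qq(G^{w_0,w_0})$ is a domain in which every unfrozen initial variable and its one-step mutation are coprime.

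Third, and this is the main obstacle, I would prove the reverse inclusion $\cO_\qq(G^{w_0,w_0})\subset A_\qq$ together with $A=U$. Here I would invoke \cite[Main Theorem]{GY:BZ} directly. Their argument realizes $\cO_\qq(G^{w_0,w_0})$, up to localization, through quantum Schubert cell algebras $\cO_\qq(U^\pm\cap B^\mp w_0B^\mp)$, which are CGL extensions, and shows that the BZ seed is mutation-equivalent to the CGL seeds. This yields the generators $c^{V(\varpi_s)}(\xi,v)$ as elements of the cluster algebra and gives $A=U$. Alternatively, \cite[Theorem A]{QY} proves $A=U=\cO_\qq(G^{w_0,w_0})$ for all $\bbs$ via a base-change argument.

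The only genuinely delicate point on our side is the normalization in the first step: the map identifying our $\Delta_{\bbs;j}$ with the BZ minors must send the cluster monomials to monomials exactly, with no stray $\qq$-powers. This is needed for $\kappa_{\bbs}(A_{\bi(\bbs);j})=\Delta_{\bbs;j}$ to hold on the nose. I would pin this down by evaluating both sides on extremal weight vectors using \cref{lem:extremal_braid} and \cref{lem:xi_braid}.
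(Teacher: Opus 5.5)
Your reduction is what the paper does: it gives no independent proof but quotes \cite[Main Theorem]{GY:BZ} and \cite[Theorem A]{QY}. The only adaptation is that $\Delta_{\bbs;j}=\Delta_{\gamma_{\bbs;j}^{\ast},\delta_{\bbs;j}^{\ast}}$ are the Berenstein--Zelevinsky initial minors twisted by the Dynkin involution $\ast$ (see the remark after the theorem), and your first step is exactly this convention check.

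Your second step is superfluous, since \cite[Main Theorem]{GY:BZ} already gives $A_{\qq}=U_{\qq}=\cO_\qq(G^{w_0,w_0})$. As stated, it also needs more care than you give it, because coprimality in a noncommutative ring is not automatic, so it is best dropped.
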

\begin{rem}
Our choice of quantum cluster variables is twisted from the Berenstein--Zelevinsky convention by the $\ast$-involution in order to match the convention of Goncharov--Shen. See \cref{prop:GS_variable_minor}.   
\end{rem}

In the following, we identify $U_{\qq}(\bi(\bbs); J(\bbs)^{\fr})$ (and $A_{\qq}(\bi(\bbs); J(\bbs)^{\fr})$) with $\cO_\qq(G^{w_0, w_0})$ through $\kappa_{\bbs}$. In particular, we will write $A_{\bi(\bbs); j}=\Delta_{\bbs; j}$ and regard $\bi(\bbs)$ as a quantum seed in the skew field of fractions $\cF(\cO_\qq(G^{w_0, w_0}))$ of $\cO_\qq(G^{w_0, w_0})$. Under this identification, we have the following. 
\begin{thm}[{\cite[Theorem 4.11]{QY}}]\label{thm:indep_Double}
For $\bbs, \bbs'\in S(w_0, w_0)$, $\bi(\bbs)$ and $\bi(\bbs')$ are mutation equivalent. 
\end{thm}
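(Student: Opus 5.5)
Since every double reduced word of $(w_0,w_0)$ can be reached from any other by elementary moves, the plan is to reduce the statement to a local check for each type of move. Concretely, $\bbs$ and $\bbs'$ are connected by a finite sequence of the following moves: (a) a braid move inside the unbarred letters; (b) a braid move inside the barred letters; (c) a transposition of adjacent letters $\bbs_k=s$, $\bbs_{k+1}=\overline{t}$ (or the reverse) with $s\neq t$; (d) a transposition of adjacent letters $s$ and $\overline{s}$. Transitivity of mutation equivalence then reduces the theorem to showing that, for a single such move $\bbs\to\bbs'$, the seed $\bi(\bbs')$ is obtained from $\bi(\bbs)$ by a sequence of mutations followed by a relabelling of $J(\bbs)$. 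This relabelling must identify both the quantum cluster variables (as elements of $\cF(\cO_\qq(G^{w_0,w_0}))$ via $\kappa_{\bbs}$, $\kappa_{\bbs'}$) and the matrices $\Lambda$ and $\cE$.

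For moves of type (c), the weights $\gamma_{\bbs;j}$ and $\delta_{\bbs;j}$ are unchanged after relabelling the two swapped positions. This is because $w_{\leq j}$ and $w^{\leq j}$ only see the letters of their own kind. Hence the minors $\Delta_{\bbs;j}$ and the matrix $\Lambda$ agree. From \eqref{eq:cE_bbs} one checks that $\cE$ also agrees, since the interlacing conditions are insensitive to swapping letters $s$ and $\overline{t}$ with $|s|\neq|t|$. So (c) is a pure relabelling.

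For moves of type (d), exactly one vertex changes, namely the position $k$ whose $k^+=k+1$ has the same colour $s$. I will show this vertex is unfrozen and that $\bi(\bbs')=\mu_k(\bi(\bbs))$. Matrix mutation of $\cE$ is a direct comparison of the cases in \eqref{eq:cE_bbs}. For the cluster variable, the required exchange relation is the quantum analogue of the Fomin--Zelevinsky determinantal identity
\[
\Delta_{u\varpi_s,v\varpi_s}\Delta_{ur_s\varpi_s,vr_s\varpi_s}=\qq^{a}\Delta_{ur_s\varpi_s,v\varpi_s}\Delta_{u\varpi_s,vr_s\varpi_s}+\qq^{b}\prod_{t\neq s}\Delta_{u\varpi_t,v\varpi_t}^{-c_{ts}}.
\]
This identity is available in \cite{BZ} (proof of Theorem 8.3). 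I will match its two monomials with those in the quantum exchange relation $\mu_k$, after applying the $\ast$-twist in $\Delta_{\bbs;j}=\Delta_{\gamma^\ast,\delta^\ast}$. The $\qq$-powers are then forced by the compatibility of $(\Lambda,\cE)$ and by bar-invariance, so it suffices to check the classical identity together with the degree (weight) bookkeeping.

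Moves (a) and (b) are the main obstacle. A braid move of length $m_{st}\in\{2,3,4,6\}$ changes several vertices at once. For these I plan to use the standard trick of reducing to (c) and (d): move the barred letters out of the way with (c), so that the braid move occurs in a segment of the form $(\dots,s,t,s,\dots)$. I then realize the passage $sts\cdots\to tst\cdots$ through an auxiliary double word, for instance by inserting the rank-2 segment into a barred/unbarred shuffle where (d)-moves apply, in the spirit of Fomin--Zelevinsky for double Bruhat cells. If this does not close up, the fallback is to write down the explicit mutation sequences for rank 2: one mutation in type $A_2$, and the known sequences of length $4$ in type $B_2$ and $10$ in type $G_2$ (as in Shen--Weng). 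The quantum exchange relations would then be verified as follows. Each mutated variable is a priori an element of the quantum torus. By \cref{thm:clusterDouble} it lies in $U_\qq(\bi(\bbs);J(\bbs)^{\fr})=\cO_\qq(G^{w_0,w_0})$. It is homogeneous of the same $P\oplus P$-degree as the predicted minor, and it $\qq$-commutes with the untouched variables in the same way. Its classical specialization equals the minor by the classical result. Together with bar-invariance, these properties should pin it down uniquely as the predicted $\Delta_{\bbs';j}$.
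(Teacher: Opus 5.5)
\textbf{Gap: the braid moves (a), (b).} Your move-by-move skeleton is the standard one. The paper itself gives no proof and only cites \cite{QY}, where Theorem 4.11 is deduced from explicit per-move mutation sequences (their Theorem 4.10). Your treatment of moves (c) and (d) is fine. For (c), the pairs $(\gamma_{\bbs;j},\delta_{\bbs;j})$ are preserved under the relabelling, and the case analysis in \eqref{eq:cE_bbs} is insensitive to the swap. For (d), only vertex $k$ changes; it is unfrozen, and once the quantum two-term identity is imported, bar-invariance fixes its exponents. The difficulty is entirely in the braid moves. Your first plan for them cannot work: moves (c) and (d) never change the $S$-subword or the $\overline{S}$-subword of a double word, while a braid move changes one of them. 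So no chain of (c)/(d)-moves, through any auxiliary double words, realizes a braid move, and braid moves must be treated on their own.

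\textbf{The fallback's uniqueness claim is false as stated.} Explicit rank-two mutation sequences are the right idea, but the quantum verification you propose does not establish anything. Consider the properties you list: membership in $\cO_\qq(G^{w_0,w_0})$, the correct $P\oplus P$-degree, the same $\qq$-commutation with the untouched variables, the correct classical specialization, and bar-invariance. They are all shared by $(1+(\qq^{1/2}-\qq^{-1/2})^2)\Delta_{\bbs';j}$. More generally, they are preserved by adding $(\qq^{1/2}-\qq^{-1/2})^2Z$ for any $Z$ that has them, so they cannot single out $\Delta_{\bbs';j}$. Unlike in (d), for $m_{st}=4,6$ the intermediate cluster variables of the rank-two sequences are in general not generalized minors. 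So you cannot simply import two-term quantum minor identities and fix the exponents by bar-invariance.

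\textbf{What is missing.} You need a genuine rigidity input. One option is explicit quantum exchange relations for every step of the rank-two sequences. Another is a basis argument. By the theorem on $g$-vectors in the appendix, the mutated variable is pointed, $A^{\bg}(1+\sum c_{\bn}X^{\bn})$. If it and $\Delta_{\bbs';j}$ both lie in a basis of $\cO_\qq(G^{w_0,w_0})$ whose elements are pointed and determined by their $g$-vectors (for instance a common triangular or dual canonical basis), then equality of $g$-vectors, which follows from the classical mutation sequence, forces equality. Note that the uniqueness clause of that appendix theorem alone is circular here, since it presupposes that $\Delta_{\bbs';j}$ is already a quantum cluster variable of the same mutation class.

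\textbf{A smaller correction.} The rank-two sequence for a $4$-move in type $B_2$ has length three, not four. On the two vertices whose variables change, $\cE(\bbs)$ and $\cE(\bbs')$ have opposite $2\times 2$ blocks under the type-preserving identification, so an odd number of mutations there is needed. The two clusters share no variable, so they are opposite in the hexagon.
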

Suppose that $ \lambda, \mu, \nu \in P_+$ satisfies 
\begin{align}
w\lambda=\mu^{\ast}-\nu\label{eq:weightcond}
\end{align}
for some $w\in W$. For example, for $w\in W$ and $\lambda\in P_+$, 
\[
\mu=[w\lambda]_+^{\ast},\ \nu=[w\lambda]_-
\]
satisfies \eqref{eq:weightcond}. Here, for $\gamma=\sum_{s\in S}m_s\varpi_s\in P$, we set 
\begin{align}
[\gamma]_+:=\sum_{s\in S}\max\{m_s, 0\}\varpi_s,\quad [\gamma]_-:=\sum_{s\in S}\max\{-m_s, 0\}\varpi_s.\label{eq:+-notation}
\end{align}
Note that $[\gamma]_+,[\gamma]_-\in P_+$ and $\gamma=[\gamma]_+-[\gamma]_-$. When $ \lambda,\mu, \nu\in P_+$ satisfy \eqref{eq:weightcond}, $\dim_{\Bbbk} (V(\mu)^{\ast}\ut V(\nu)^{\ast}\ut V(\lambda)^{\ast})^{\inv}=1$ by \cref{p:invsp}, \eqref{eq:image_phi} and $\dim_{\Bbbk} (V(\lambda)^{\ast})_{-w\lambda}=1$. Therefore, by \cref{p:cyclic}, $\dim_{\Bbbk} (V(\nu)^{\ast}\ut V(\lambda)^{\ast}\ut V(\mu)^{\ast})^{\inv}=1$. 

The following theorem plays an essential role in relating elements of $\cO_\qq(\Conf_4^{\{1,3\}}\sA_G)$ to those of ${}^{\ext'}\cO_\qq(G)^{\ext}$ through $\isigma$. 
\begin{thm}\label{thm:isigma}
Assume that $\lambda, \mu, \nu \in P_+$ satisfies $w\lambda=\mu^{\ast}-\nu$ for some $w\in W$. 
\begin{itemize}
    \item[(1)] There uniquely exists $\phi_{\mu, \nu, \lambda}\in \cO^\inv(\mu, \nu, \lambda, 0)$ such that  
\[
    \phi_{\mu, \nu, \lambda}=\xi_{w_0\mu}(\mu)\otimes \xi_{\nu}(\nu)\otimes \xi_{w\lambda}(\lambda)\otimes 1+(\text{other terms})
\]
where $\mathrm{(}$other terms$\mathrm{)}$ belong to 
    \[
    \cO(\mu)_{<\mu^{\ast}}\ut \cO(\nu)\ut \cO(\lambda)\ut 1+\cO(\mu)\otimes \cO(\nu)_{> -\nu}\ut \cO(\lambda)\ut 1.
    \]
Then 
\[
\isigma(\phi_{\mu, \nu, \lambda})=(-1)^{\langle 2\rho^{\vee}, \mu^{\ast}\rangle}\qq^{-(2\rho, \mu^{\ast})-(\nu, \nu)/2-(\lambda, \lambda)/2}e'(\lambda)\Delta_{w\lambda, \lambda}e(\mu^{\ast}).
\]
    \item[(1)${}^\prime$] There uniquely exists $\phi'_{\mu, \nu, \lambda}\in \cO^\inv(\mu, \nu, 0, \lambda)$ such that  
\[
    \phi'_{\mu, \nu, \lambda}=\xi_{w_0\mu}(\mu)\otimes \xi_{\nu}(\nu)\otimes 1\otimes \xi_{w\lambda}(\lambda)+(\text{other terms})
\]
where $\mathrm{(}$other terms$\mathrm{)}$ belong to 
    \[
    \cO(\mu)_{<\mu^{\ast}}\ut\cO(\nu)\ut 1\ut\cO(\lambda)+\cO(\mu)\ut \cO(\nu)_{> -\nu}\ut 1\ut\cO(\lambda).
    \]
Then 
\[
\isigma(\phi'_{\mu, \nu, \lambda})=(-1)^{\langle 2\rho^{\vee}, \mu^{\ast}\rangle}\qq^{-(2\rho, \mu^{\ast})-(\nu, \nu)/2}\Delta_{w\lambda, w_0\lambda}e(\mu^{\ast}).
\]
\item[(2)] There uniquely exists $\psi_{\lambda, \mu, \nu}\in \cO^\inv(\lambda, 0, \mu, \nu)$ such that  
\[
    \psi_{\lambda, \mu, \nu}=\xi_{w\lambda}(\lambda)\otimes 1\otimes \xi_{w_0\mu}(\mu)\otimes \xi_{\nu}(\nu)+(\text{other terms})
\]
where $\mathrm{(}$other terms$\mathrm{)}$ belong to 
    \[
    \cO(\lambda)\ut 1\ut \cO(\mu)_{<\mu^{\ast}}\ut \cO(\nu)+\cO(\lambda)\ut 1\ut \cO(\mu)\ut \cO(\nu)_{> -\nu}.
    \]
Then 
\[
\isigma(\psi_{\lambda, \mu, \nu})=(-1)^{\langle 2\rho^{\vee}, \nu\rangle}q^{-(2\rho, \nu)-(\mu, \mu)/2}e'(\mu)\Delta_{\lambda^{\ast}, w^{\ast}\lambda^{\ast}}e(\lambda^{\ast}), 
\]
where $w^{\ast}:=w_0ww_0$.
\item[(2)${}^\prime$] There uniquely exists $\psi'_{\lambda, \mu, \nu}\in \cO^\inv(0, \lambda, \mu, \nu)$ such that  
\[
    \psi'_{\lambda, \mu, \nu}=1\otimes \xi_{w\lambda}(\lambda)\otimes \xi_{w_0\mu}(\mu)\otimes \xi_{\nu}(\nu)+(\text{other terms})
\]
where $\mathrm{(}$other terms$\mathrm{)}$ belong to 
    \[
    1\ut \cO(\lambda)\ut \cO(\mu)_{<\mu^{\ast}}\ut \cO(\nu)+1\ut \cO(\lambda)\ut \cO(\mu)\ut \cO(\nu)_{> -\nu}.
    \]
Then 
\[
\isigma(\psi'_{\lambda, \mu, \nu})=(-1)^{\langle 2\rho^{\vee}, \nu\rangle}q^{-(2\rho, \nu)-(\lambda, \lambda)/2-(\mu, \mu)/2}e'(\mu)\Delta_{w_0\lambda^{\ast}, w^{\ast}\lambda^{\ast}}. 
\]
\end{itemize}
\end{thm}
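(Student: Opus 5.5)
The plan is to reduce everything to the explicit formula \eqref{eq:isigma_formula} for $\isigma$ (and, for (2) and (2)${}^\prime$, to a version of \eqref{eq:imath_formula} read off after applying powers of $\sigma_4$). After that, the only work is to identify the middle product $\sum_i \xi_i(\lambda_3)\xi'_i(w_0\lambda_4)$ as a matrix coefficient.

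Uniqueness is handled first, uniformly for all four items. The dimension counts just before the statement give $\dim (V(\mu)^{\ast}\ut V(\nu)^{\ast}\ut V(\lambda)^{\ast})^{\inv}=1$. Inserting the trivial module $V(0)^\ast=\Bbbk$ in any slot, and using \cref{p:cyclic} to rotate, gives one-dimensionality of $\cO^\inv(\mu,\nu,\lambda,0)$, $\cO^\inv(\mu,\nu,0,\lambda)$, $\cO^\inv(\lambda,0,\mu,\nu)$ and $\cO^\inv(0,\lambda,\mu,\nu)$.

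For existence and normalization I use \cref{p:invsp} with $M=V(\lambda)^\ast$, after cyclic rotation via \cref{p:cyclic}. The image \eqref{eq:image_phi} is the extremal weight line $(V(\lambda)^\ast)_{-w\lambda}=\Bbbk\xi_{w\lambda}$, since $-w\lambda=-\mu^\ast+\nu$. Hence the leading term $\xi_{w_0\mu}\otimes\xi_\nu\otimes m$ has $m$ a nonzero multiple of $\xi_{w\lambda}$, and it can be normalized to $\xi_{w\lambda}$. The same reasoning works with the third and fourth factors swapped, and after rotation for (2) and (2)${}^\prime$.

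For (1) and (1)${}^\prime$, apply \eqref{eq:isigma_formula} directly with $(\lambda_1,\lambda_2,\lambda_3,\lambda_4)=(\mu,\nu,\lambda,0)$, resp.\ $(\mu,\nu,0,\lambda)$. The sum has a single term, $\xi_{w\lambda}\otimes 1$, resp.\ $1\otimes\xi_{w\lambda}$. In (1), $\xi_{w\lambda}(\lambda)=c^{V(\lambda)}(\xi_{w\lambda},v_\lambda)=\Delta_{w\lambda,\lambda}$, and the prefactor $e'(\lambda)$ together with $-(\lambda,\lambda)/2$ comes straight from the formula. In (1)${}^\prime$, $\lambda_3=0$ gives $e'(0)=1$ and $\xi_{w\lambda}(w_0\lambda)=\Delta_{w\lambda,w_0\lambda}$. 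Both come out immediately.

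For (2) and (2)${}^\prime$, the nonzero leading data sits in positions $3,4$ rather than $1,2$, so \eqref{eq:isigma_formula} cannot be applied directly. Instead I write $\isigma=\imath_{4,\qq}\circ\sigma_4^{-1}$ and compute $\sigma_4^{-1}(\psi)$ from \eqref{eq:q-cyclic_shift}. The element $\psi_{\lambda,\mu,\nu}$ is invariant, so I expect $\sigma_4^{-1}$ (equivalently $\sigma_4^3$) to move $\psi$ to an element of $\cO^\inv(0,\mu,\nu,\lambda)$ whose leading term has the shape required in \eqref{eq:imath_formula}: $\xi_{\lambda_1}(\lambda_1)$ in slot $1$, with $\lambda_1=0$, and $\xi_{w_0\lambda_4}$ in slot $4$. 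Then \eqref{eq:imath_formula} gives $e'(\mu)\,(\text{middle})\,e(\lambda^{\ast})$.

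The main obstacle is that the leading term of $\psi$ with respect to the filtration in (2) is not the leading term with respect to the filtration that \eqref{eq:imath_formula} needs. So I have to identify which component of $\psi$ has first-slot weight $-w_0\lambda$ (highest position). I plan to determine that coefficient by a braid-group argument modeled on the proof of \cref{thm:D_lambda_extremal}: apply $T_{w^{-1}}$ or $T_{w^\ast}$ to the invariant $\psi$, using \eqref{eq:w_ut_action} and \cref{lem:xi_braid}, to transport $\xi_{w\lambda}$ to $\xi_{w_0\lambda}$ and read off the $q$-power.

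For the middle factor, the remaining sum $\sum\xi_i(\mu)\xi'_i(w_0\nu)$, I will argue it is a nonzero element of weight type $(-\lambda^\ast... )$ in $c^{V(\mu)\otimes V(\nu^\ast)}$. By the invariance and the weight condition it must be an extremal coefficient, which should be $\Delta_{\lambda^\ast,w^\ast\lambda^\ast}$, resp.\ $\Delta_{w_0\lambda^\ast,w^\ast\lambda^\ast}$ for (2)${}^\prime$. To pin it down, I would pair it against $X\in\Uq$ and use that $V(\lambda^\ast)\hookrightarrow V(\mu)\otimes V(\nu^\ast)$ sends the extremal vector to the tensor of extremal vectors. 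The sign and $q$-power then follow from \cref{lem:xi_braid} together with the factor $(-1)^{\langle2\rho^\vee,\nu\rangle}q^{-(2\rho,\nu)}$ from $\sigma_4^{-1}$.
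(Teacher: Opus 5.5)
Your treatment of uniqueness and normalization, and of (1) and (1)${}^\prime$, is correct and matches the paper, which also reads these off directly from \eqref{eq:isigma_formula}. In fact \eqref{eq:isigma_formula} applies directly to $\psi_{\lambda,\mu,\nu}$ too, since $\lambda_2=0$; all you need is the component of $\psi_{\lambda,\mu,\nu}$ whose first slot is $\xi_{w_0\lambda}(\lambda)$.

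For (2) and (2)${}^\prime$ there is a genuine gap. The component you are after is $\xi_{w_0\lambda}(\lambda)\otimes1\otimes y$, where $y$ is an unknown \emph{vector} of weight $w_0\lambda$ in $\cO(\mu)\ut\cO(\nu)$, not a scalar. The normalization of $\psi_{\lambda,\mu,\nu}$ fixes only the leading term $\xi_{w_0\mu}(\mu)\otimes\xi_\nu(\nu)$ of the $\xi_{w\lambda}$-component. Moreover, in \eqref{eq:w_ut_action} the $E$'s act on the first factor, so comparing $T_u(\psi)=\psi$ isolates a single term only when the target weight in slot $1$ is the lowest one, $-\lambda$. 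That is why \cref{thm:D_lambda_extremal} goes through $\xi_\lambda$. Transporting directly to the highest weight $\lambda^\ast$ picks up all $\bm{k}$. Going through $\xi_\lambda$ only gives $y$ as $T_{w_0}^{-1}T_{w^{-1}}$ applied to the unknown $\xi_{w\lambda}$-component, on the reducible module $V(\mu)^\ast\ut V(\nu)^\ast$.

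The missing idea is to factor through the projection $\pi\colon V(\mu)\otimes V(\nu)\to V(\lambda^\ast)$ with $\pi(v_{w_0\mu}\otimes v_\nu)=v_{-w\lambda}$, which exists by \eqref{eq:image_phi}. By one-dimensionality, $\psi_{\lambda,\mu,\nu}$ lies in $(\cO(\lambda)\ut1\ut\pi^\ast(\cO(\lambda^\ast)))^{\inv}$ and is a rescaled image of $D_\lambda$. Then \cref{thm:D_lambda_extremal} gives all extremal components at once, in particular $(-1)^{\langle\rho^\vee,w\lambda+\lambda^\ast\rangle}\qq^{(\rho,w\lambda+\lambda^\ast)}\xi_{w_0\lambda}(\lambda)\otimes1\otimes\pi^\ast(\xi_{\lambda^\ast}(\lambda^\ast))$.

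After this reduction the middle factor is $c^{V(\mu)\otimes V(\nu)}(\pi^\ast(\xi_{\lambda^\ast}),v_\mu\otimes v_{w_0\nu})=c^{V(\lambda^\ast)}(\xi_{\lambda^\ast},\pi(v_\mu\otimes v_{w_0\nu}))$; note it is $V(\nu)$, not $V(\nu^\ast)$. Weights alone show this is a multiple of $\Delta_{\lambda^\ast,w^\ast\lambda^\ast}$. The real content is the scalar, namely $\pi(v_\mu\otimes v_{w_0\nu})$ relative to the normalization at the \emph{other} extremal tensor $v_{w_0\mu}\otimes v_\nu$.

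The input you propose for this is false. An embedding $V(\lambda^\ast)\hookrightarrow V(\mu)\otimes V(\nu)$ does not send an extremal vector to a tensor of extremal vectors: for $\mathfrak{sl}_2$, $\mu=\nu=\varpi_1$, $\lambda=0$, the invariant line is spanned by $v_{\varpi_1}\otimes v_{-\varpi_1}-\qq\,v_{-\varpi_1}\otimes v_{\varpi_1}$. Nor does \cref{lem:xi_braid} by itself relate the two extremal tensors.

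The paper's step is as follows. By \eqref{eq:w_t_action}, $T_{w_0}(v_\mu\otimes v_{w_0\nu})=T_{w_0}(v_\mu)\otimes T_{w_0}(v_{w_0\nu})$, because every term with $\bm{k}\neq0$ has an $F$ acting on the lowest weight vector $T_{w_0}(v_\mu)\in\Bbbk v_{w_0\mu}$. Then $T_{w_0}$-equivariance of $\pi$ and \cref{lem:extremal_braid} give $\pi(v_\mu\otimes v_{w_0\nu})=(-1)^{\langle\rho^\vee,\lambda-\mu-\nu\rangle}\qq^{(\rho,\lambda-\mu-\nu)}v_{w^\ast\lambda^\ast}$.

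Your sign bookkeeping is also off. $\sigma_4^{-1}$ contributes $(-1)^{\langle2\rho^\vee,\lambda^\ast\rangle}\qq^{-(2\rho,\lambda^\ast)}$, coming from the weight of $\xi_{w_0\lambda}$ in slot $1$. The factor $(-1)^{\langle2\rho^\vee,\nu\rangle}\qq^{-(2\rho,\nu)}$ appears only after combining this with the two scalars above.
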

The correspondence of elements up to scalars and the extended part is summarized in \cref{tab:Conf4_OG}.

\begin{table}[ht]
    \centering
    \begin{tabular}{l|l}
        \qquad $\cO_\qq(\Conf_4\sA_G)$ &  $\cO_\qq(G)$ \\ \hline
        $\phi_{\mu, \nu,\lambda} \in \cO^{\inv}(\mu, \nu,\lambda,0)$ & $\Delta_{w\lambda,\lambda}$ \\
        $\phi'_{\mu, \nu,\lambda} \in \cO^{\inv}(\mu, \nu,0,\lambda)$ & $\Delta_{w\lambda,w_0\lambda}$ \\
        $\psi_{\lambda, \mu,\nu} \in \cO^{\inv}(\lambda, 0,\mu,\nu)$ & $\Delta_{\lambda^\ast,w^\ast\lambda^\ast}$ \\
        $\psi'_{\lambda,\mu,\nu} \in \cO^{\inv}(0, \lambda,\mu,\nu)$ & $\Delta_{w_0\lambda^\ast,w^\ast\lambda^\ast}$ \\ \vspace{0.5mm}
    \end{tabular}
    \caption{Correspondence of elements up to scalars and the extended part.}
    \label{tab:Conf4_OG}
\end{table}

\begin{proof}
The statements (1) and (1)${}^\prime$ immediately follow from \eqref{eq:isigma_formula}. 

Let us show (2). There exists a $\Uq$-module homomorphism $\pi\colon V(\mu)\otimes V(\nu)\to V(\lambda^{\ast})$ satisfying $v_{w_0\mu}\otimes v_{\nu}\mapsto v_{-w\lambda}$ (see \eqref{eq:image_phi}). Hence, it induces a $\Uq$-module homomorphism 
\[
\pi^{\ast}\colon V(\lambda^{\ast})^{\ast}\to (V(\mu)\otimes V(\nu))^{\ast}\xrightarrow[\sim]{\Upsilon_{V(\mu), V(\nu)}}  V(\mu)^{\ast}\ut V(\nu)^{\ast}. 
\]
Hence, through $\Psi_1$ (recall \eqref{eq:Psi_k}), we obtain a $\Uq$-module homomorphism
\[
\pi^{\ast}\colon \cO(\lambda^{\ast})\to \cO(\mu)\ut \cO(\nu).\ \text{(abuse of notation)}
\]
Note that, by definition of $\pi$, 
\begin{align}
    \pi^{\ast}(\xi_{-w\lambda}(\lambda^{\ast}))=\xi_{w_0\mu}(\mu)\otimes \xi_{\nu}(\nu)+(\text{other terms}),\label{eq:pi_ast_xi}
\end{align}
where $\mathrm{(}$other terms$\mathrm{)}$ belong to $\cO(\mu)_{<\mu^{\ast}}\ut \cO(\nu)+\cO(\mu)\ut \cO(\nu)_{>-\nu}$. 
Since $\dim_{\Bbbk}\cO^\inv(\lambda, 0, \mu, \nu)=1=\dim_{\Bbbk} (\cO(\lambda)\ut 1\ut \pi^{\ast}(\cO(\lambda^{\ast})))^\inv$, we have 
\[
\psi_{\lambda, \mu, \nu}\in \cO^\inv(\lambda, 0, \mu, \nu)=(\cO(\lambda)\ut 1\ut \pi^{\ast}(\cO(\lambda^{\ast})))^\inv. 
\]
Therefore, by \cref{thm:D_lambda_extremal} and \eqref{eq:pi_ast_xi}, 
\begin{align*}
\psi_{\lambda, \mu, \nu}=\sum_{\lambda'\in W\lambda}(-1)^{\langle\rho^{\vee}, w\lambda-\lambda'\rangle}q^{(\rho, w\lambda-\lambda')}\xi_{\lambda'}(\lambda)\otimes 1\otimes \pi^{\ast}(\xi_{-\lambda'}(\lambda^{\ast}))+(\text{other terms}),
\end{align*}
where $($other terms$)$ belong to $\bigoplus_{\gamma\in P; \gamma\not\in W\lambda}\cO(\lambda)_{-\gamma}\ut 1\otimes \pi^{\ast}(\cO(\lambda^{\ast})_{\gamma})$. Therefore, by \eqref{eq:isigma_formula}, 
\begin{align}
&\isigma(\psi_{\lambda, \mu, \nu})\notag\\
&=(-1)^{\langle\rho^{\vee}, w\lambda+\lambda^{\ast}\rangle+\langle 2\rho^{\vee}, \lambda^{\ast}\rangle}q^{(\rho, w\lambda+\lambda^{\ast})-(2\rho, \lambda^{\ast})-(\mu, \mu)/2}e'(\mu)(\qq^{(\mu, \mu)/2}(\sfm\circ (\mathrm{id}\otimes \Psi^{+,-}))(\pi^{\ast}(\xi_{\lambda^{\ast}}(\lambda^{\ast}))))e(\lambda^{\ast})\notag\\
&=(-1)^{\langle\rho^{\vee}, \mu^{\ast}-\nu+\lambda^{\ast}\rangle+\langle 2\rho^{\vee}, \lambda^{\ast}\rangle}q^{(\rho, \mu^{\ast}-\nu-\lambda^{\ast})}e'(\mu)((\sfm\circ (\mathrm{id}\otimes \Psi^{+,-}))(\pi^{\ast}(\xi_{\lambda^{\ast}}(\lambda^{\ast}))))e(\lambda^{\ast}).\label{eq:imath_psi_1}
\end{align}
Moreover, 
\begin{align}
    &(\sfm\circ (\mathrm{id}\otimes \Psi^{+,-}))(\pi^{\ast}(\xi_{\lambda^{\ast}}(\lambda^{\ast})))\notag\\
    &=\qq^{-(\mu, \mu)/2}c^{V(\mu)\otimes V(\nu)}(\pi^{\ast}(\xi_{\lambda^{\ast}}), v_{\mu}\otimes v_{w_0\nu})\notag\\
    &=\qq^{-(\mu, \mu)/2}c^{V(\lambda^\ast)}(\xi_{\lambda^{\ast}}, \pi(v_{\mu}\otimes v_{w_0\nu})).\label{eq:imath_psi_2}
\end{align}
By \eqref{eq:w_t_action} and \cref{lem:extremal_braid}, 
\begin{align*}
    T_{w_0}(v_{\mu}\otimes v_{w_0\nu})&=T_{w_0}(v_{\mu})\otimes T_{w_0}(v_{w_0\nu})\notag\\
    &=(-1)^{\langle\rho^{\vee}, w_0\nu-\nu\rangle}q^{(\rho, w_0\nu-\nu)}v_{w_0\mu}\otimes v_{\nu},\\
    T_{w_0}(v_{w^{\ast}\lambda^{\ast}})&=T_{w}(v_{-\lambda})
    =(-1)^{\langle\rho^{\vee}, -\lambda+w\lambda\rangle}q^{(\rho, -\lambda+w\lambda)}v_{-w\lambda}.
\end{align*}
Therefore, 
\begin{align}
\pi(v_{\mu}\otimes v_{w_0\nu})&=(-1)^{\langle\rho^{\vee}, w_0\nu-\nu\rangle}q^{(\rho, w_0\nu-\nu)}\pi(T_{w_0}^{-1}(v_{w_0\mu}\otimes v_{\nu}))\notag\\
    &=(-1)^{\langle\rho^{\vee}, w_0\nu-\nu\rangle}q^{(\rho, w_0\nu-\nu)}T_{w_0}^{-1}\pi(v_{w_0\mu}\otimes v_{\nu})\notag\\
    &=(-1)^{\langle\rho^{\vee}, w_0\nu-\nu\rangle}q^{(\rho, w_0\nu-\nu)}T_{w_0}^{-1}v_{-w\lambda}\notag\\
    &=(-1)^{\langle\rho^{\vee}, w_0\nu-\nu+\lambda-w\lambda\rangle}q^{(\rho, w_0\nu-\nu+\lambda-w\lambda)}v_{w^{\ast}\lambda^{\ast}}\notag\\
    &=(-1)^{\langle\rho^{\vee}, -\mu^{\ast}-\nu^{\ast}+\lambda\rangle}q^{(\rho, -\mu^{\ast}-\nu^{\ast}+\lambda)}v_{w^{\ast}\lambda^{\ast}}\notag\\
    &=(-1)^{\langle\rho^{\vee}, -\mu-\nu+\lambda\rangle}q^{(\rho, -\mu-\nu+\lambda)}v_{w^{\ast}\lambda^{\ast}}.\label{eq:pi_extremal_image}
\end{align}
Hence, by \eqref{eq:imath_psi_1}, \eqref{eq:imath_psi_2} and \eqref{eq:pi_extremal_image}, 
\begin{align*}
    &\isigma(\psi_{\lambda, \mu, \nu})\\
    &=(-1)^{\langle\rho^{\vee}, \mu^{\ast}-\nu+\lambda^{\ast}\rangle+\langle 2\rho^{\vee}, \lambda^{\ast}\rangle+\langle\rho^{\vee}, -\mu-\nu+\lambda\rangle}q^{(\rho, \mu^{\ast}-\nu-\lambda^{\ast})-(\mu, \mu)/2+(\rho, -\mu-\nu+\lambda)}e'(\mu)\Delta_{\lambda^{\ast}, w^{\ast}\lambda^{\ast}}e(\lambda^{\ast})\\
    &=(-1)^{\langle 2\rho^{\vee}, \nu\rangle}q^{-(2\rho, \nu)-(\mu, \mu)/2}e'(\mu)\Delta_{\lambda^{\ast}, w^{\ast}\lambda^{\ast}}e(\lambda^{\ast}).
\end{align*}
We can show (2)${}^\prime$ in a parallel manner. 
\end{proof}
Let us relate $\cO_\qq(\Conf_4^{\times}\sA_G)$ with $\cO_\qq(G^{w_0, w_0})$ via $\isigma$. 
\cref{t:invstr}, \eqref{eq:q-cyclic_shift}, and \cref{cor:D_lambda} imply that  
\begin{align*}
D_{1; \lambda}&=(-1)^{\langle 2\rho^{\vee}, \lambda\rangle}\qq^{(\lambda, \lambda)/2+(2\rho, \lambda)} \xi_{-\lambda}(\lambda^{\ast})\otimes \xi_{\lambda}(\lambda)\otimes 1\otimes 1+(\text{other terms})\\
&=(-1)^{\langle 2\rho^{\vee}, \lambda\rangle}\qq^{(\lambda, \lambda)/2+(2\rho, \lambda)}\phi_{\lambda^{\ast}, \lambda, 0},\\
D_{2; \lambda}&=\qq^{(\lambda, \lambda)/2}\cdot 1\otimes \xi_{\lambda^{\ast}}(\lambda^{\ast})\otimes \xi_{w_0\lambda}(\lambda)\otimes 1+(\text{other terms}) \\
&=\qq^{(\lambda, \lambda)/2}\phi_{0, \lambda^{\ast}, \lambda},\\
D_{3; \lambda}&=(-1)^{\langle 2\rho^{\vee}, \lambda\rangle}\qq^{(\lambda, \lambda)/2+(2\rho, \lambda)}\cdot 1\otimes 1\otimes \xi_{-\lambda}(\lambda^{\ast})\otimes \xi_{\lambda}(\lambda)+(\text{other terms}),\\
&=(-1)^{\langle 2\rho^{\vee}, \lambda\rangle}\qq^{(\lambda, \lambda)/2+(2\rho, \lambda)}\psi_{0, \lambda^{\ast}, \lambda},\\
D_{4; \lambda}&=(-1)^{\langle 2\rho^{\vee}, \lambda\rangle}\qq^{(\lambda, \lambda)/2+(2\rho, \lambda)} \xi_{w_0\lambda}(\lambda)\otimes 1\otimes 1\otimes \xi_{\lambda^{\ast}}(\lambda^{\ast})+(\text{other terms}),\\
&=(-1)^{\langle 2\rho^{\vee}, \lambda\rangle}\qq^{(\lambda, \lambda)/2+(2\rho, \lambda)}\psi_{\lambda, 0, \lambda^{\ast}}.
\end{align*}
Therefore, \cref{thm:isigma} implies that 
\begin{align}
&\isigma(D_{1; \lambda})=e(\lambda),\label{eq:isigma_D1}\\    
&\isigma(D_{2; \lambda})=\qq^{-(\lambda, \lambda)/2}e'(\lambda)\Delta_{w_0\lambda, \lambda},  \label{eq:isigma_D2} \\
&\isigma(D_{3; \lambda})=e'(\lambda^{\ast}),\label{eq:isigma_D3}\\    
&\isigma(D_{4; \lambda})=\qq^{(\lambda, \lambda)/2}\Delta_{\lambda^{\ast}, w_0\lambda^{\ast}}e(\lambda^{\ast}).  \label{eq:isigma_D4} 
\end{align}
Therefore, $\isigma$ can be extended to 
\[
\isigma\colon\cO_\qq(\Conf_4^{\times}\sA_G)\xrightarrow{\sim} {}^{\ext'}\cO_\qq(G)^{\ext}[\mathbf{\Delta}_{w_0,e}^{-1}, \mathbf{\Delta}_{e, w_0}^{-1}]\xrightarrow{\sim}{}^{\ext'}\cO_\qq(G^{w_0, w_0})^{\ext}.
\]
Let $\bbs\in S(w_0, w_0)$. We set $S_{-\infty}:=\{s_{-\infty}\mid s\in S\}, S_{\infty}:=\{s_{\infty}\mid s\in S\}$, and 
\begin{align*}
&J_{\square}(\bbs):=S_{-\infty}\sqcup J(\bbs)\sqcup S_{\infty}=S_{-\infty}\sqcup S_0\sqcup [1,2N]\sqcup S_{\infty},\\
&J_{\square}(\bbs)^{\uf}:=J(\bbs)^{\uf},\ J_{\square}(\bbs)^{\fr}:=J_{\square}(\bbs)\setminus J_{\square}(\bbs)^{\uf}=S_{-\infty}\sqcup J(\bbs)^{\fr}\sqcup S_{\infty},\\
&\Lambda_{\dsquare}(\bbs)=(\Lambda_{\dsquare, \bbs; ij})_{i,j\in J_{\square}(\bbs)},\  \Lambda_{\dsquare,\bbs;ij}:=
\begin{cases}
    \Lambda_{\bbs; ij}&\text{if }i, j\in  J(\bbs),\\
    0&\text{if }i, j\in  S_{-\infty}\sqcup S_{\infty},\\
    (\varpi_s^{\ast}, \gamma_{\bbs; j}^{\ast})&\text{if }i=s_{-\infty}\in  S_{-\infty},\ j\in  J(\bbs),\\
    -(\gamma_{\bbs; i}^{\ast}, \varpi_s^{\ast})&\text{if }i\in  J(\bbs), j=s_{-\infty}\in  S_{-\infty},\\
     (\varpi_s^{\ast}, \delta_{\bbs; j}^{\ast})&\text{if }i=s_{\infty}\in  S_{\infty},\ j\in  J(\bbs),\\
    -(\delta_{\bbs; i}^{\ast}, \varpi_s^{\ast})&\text{if }i\in  J(\bbs), j=s_{\infty}\in  S_{\infty},\\
\end{cases}\\
&\cE_{\dsquare}(\bbs)=(\varepsilon_{\dsquare, \bbs;ij})_{i\in J_{\square}(\bbs)^{\uf},j\in J_{\square}(\bbs)},\ \varepsilon_{\dsquare, \bbs;ij}:=
\begin{cases}
    \varepsilon_{\bbs;ij}&\text{if }i\in J_{\square}(\bbs)^{\uf}, j\in  J(\bbs),\\
    0&\text{if }i\in J_{\square}(\bbs)^{\uf}, j\in  S_{-\infty}\sqcup S_{\infty}.
\end{cases}\\
&A_{\dsquare, \bbs; j}:=
\begin{cases}
(\isigma)^{-1}(A_{\bi(\bbs); j})&\text{if }j\in  J(\bbs),\\
    D_{1; \varpi_s^{\ast}}&\text{if }j=s_{-\infty}\in  S_{-\infty},\\
    D_{3; \varpi_s}&\text{if }j=s_{\infty}\in  S_{\infty}.
\end{cases}
\end{align*}
Note that, by \cref{thm:clusterDouble} and \eqref{eq:isigma_D1}, \eqref{eq:isigma_D3}, we have 
\begin{align}
    A_{\dsquare, \bbs; i}A_{\dsquare, \bbs; j}=\qq^{\Lambda_{\dsquare,\bbs;ij}}A_{\dsquare, \bbs; j}A_{\dsquare, \bbs; i}\label{eq:qcomm_dsquare}
\end{align}
for $i, j\in J_{\square}(\bbs)$. As explained in \cref{sec:quiver}, we can describe  $\cE_{\dsquare}(\bbs)$ by a quiver. An example of the quiver $\bJ_{\dsquare}(\bbs)$ associated with $\cE_{\dsquare}(\bbs)$ is shown in \cref{fig:quiver_dsquare}. Observe that the frozen vertices in $S_{-\infty}$ and $S_\infty$ are isolated.

\begin{figure}[ht]
    \centering
\begin{tikzpicture}
\def\R{3}
\coordinate(A) at (0,0);
\coordinate(B) at (\R,0);
\coordinate(C) at (0,\R);
\coordinate(D) at (\R,\R);
\fill[gray!40] ($(A)!0.5!(B)$) ellipse[
      x radius=.4*\R,
      y radius=.06*\R
   ];
\node[below=3pt] at ($(A)!0.5!(B)$) {$S_{\infty}$};
\fill[gray!40] ($(C)!0.5!(D)$) ellipse[
      x radius=.4*\R,
      y radius=.06*\R
   ];
\node[above=3pt] at ($(C)!0.5!(D)$) {$S_{-\infty}$};
\fill[gray!40] ($(A)!0.5!(C)$) ellipse[
      x radius=.06*\R,
      y radius=.4*\R
   ];
\node[left=3pt] at ($(A)!0.5!(C)$) {$S_0$};
\node[below left] at ($(A)$) {$4$};
\node[below right] at ($(B)$) {$3$};
\node[above left] at ($(C)$) {$1$};
\node[above right] at ($(D)$) {$2$};

\draw[thick] (A) -- (B) -- (D) -- (C) --cycle;
\draw[thick] ($(A)!0.1!(C)$) -- ($(B)!0.1!(D)$);
\draw[thick] ($(A)!0.9!(C)$) -- ($(B)!0.9!(D)$);

\begin{scope}[xshift=7cm]
\coordinate(A) at (0,0);
\coordinate(B) at (\R,0);
\coordinate(C) at (0,\R);
\coordinate(D) at (\R,\R);

\draw[thick,dashed] (B) -- (C);
\draw[thick] (A) -- (B) -- (D) -- (C) --cycle;
\draw[thick] ($(A)!0.1!(C)$) -- ($(B)!0.1!(D)$);
\draw[thick] ($(A)!0.9!(C)$) -- ($(B)!0.9!(D)$);

\vertexA{C}{B}{A}{3};
\foreach \i in {1,2,3}{
    \filldraw[fill=white](a\i) circle(2pt);
    \fnode{b\i}{black};
    \fnode{V\i 0}{myblue};
    }
\foreach \i in {11,21,12}
\draw(V\i) circle(2pt);
\qarrow{a3}{b3};
\qarrow{a2}{V12}; \qarrow{V12}{b2};
\qarrow{a1}{V11}; \qarrow{V11}{V21}; \qarrow{V11}{b1};
\qarrow{b2}{V21};
\qarrow{b3}{V12}; 
\qarrow{V12}{V11};
\qarrow{V21}{V12};
\qarrow{V12}{a3};
\qarrow{V11}{a2};

\vertexA{B}{C}{D}{3};
\foreach \i in {1,2,3}{
    \fnode{b\i}{black};
    \fnode{V\i 0}{myblue};
    }
\foreach \i in {11,21,12}
\draw(V\i) circle(2pt);
\qarrow{a3}{b3};
\qarrow{a2}{V12}; \qarrow{V12}{b2};
\qarrow{a1}{V11}; \qarrow{V11}{V21}; \qarrow{V11}{b1};
\qarrow{b2}{V21};
\qarrow{b3}{V12}; 
\qarrow{V12}{V11};
\qarrow{V21}{V12};
\qarrow{V12}{a3};
\qarrow{V11}{a2};
\end{scope}
\end{tikzpicture}
    \caption{The location of the frozen subsets $S_0,S_{-\infty},S_\infty$ (Left) and the quiver $\bJ_{\dsquare}(\bbs)$ with $\bbs=(1,2,3,1,2,1,\overline{3},\overline{2},\overline{1},\overline{3},\overline{2},\overline{3})$.}
    \label{fig:quiver_dsquare}
\end{figure}

\begin{thm}\label{thm:cluster_dsquare}
Let $\bbs\in S(w_0, w_0)$. The collection $\bi_{\dsquare}(\bbs):=(\Lambda_{\dsquare}(\bbs), \cE_{\dsquare}(\bbs),(A_{\dsquare, \bbs; j})_{j\in J_{\square}(\bbs)})$ forms a quantum seed in $\cF(\cO_\qq(\Conf_4^{\times}\sA_G))$, and  
\[
A_{\qq}(\bi_{\dsquare}(\bbs); J_{\square}(\bbs)^{\fr})=U_{\qq}(\bi_{\dsquare}(\bbs); J_{\square}(\bbs)^{\fr})=\cO_\qq(\Conf_4^{\times}\sA_G).
\]
Moreover, for $\bbs'\in S(w_0, w_0)$, $\bi_{\dsquare}(\bbs)$ and $\bi_{\dsquare}(\bbs')$ are mutation equivalent. 
\end{thm}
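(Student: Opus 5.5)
The plan is to transport everything through the isomorphism $\isigma\colon\cO_\qq(\Conf_4^{\times}\sA_G)\xrightarrow{\sim}{}^{\ext'}\cO_\qq(G^{w_0,w_0})^{\ext}$ and reduce to \cref{thm:clusterDouble} and \cref{thm:indep_Double}. Under $\isigma$ the variables $A_{\dsquare,\bbs;j}$ for $j\in J(\bbs)$ go to $\Delta_{\bbs;j}\in\cO_\qq(G^{w_0,w_0})$. By \eqref{eq:isigma_D1} and \eqref{eq:isigma_D3}, the frozen variables indexed by $S_{-\infty}$ and $S_\infty$ go to $e(\varpi_s^{\ast})$ and $e'(\varpi_s^{\ast})$ respectively. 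The target algebra is therefore a Laurent polynomial extension $\cO_\qq(G^{w_0,w_0})\otimes\Bbbk[P]\otimes\Bbbk[P]$ of $\cO_\qq(G^{w_0,w_0})$ by the $2|S|$ quasi-commuting invertible elements $e(\varpi_s), e'(\varpi_s)$. These elements $\qq$-commute with every homogeneous element, with exponents given by the left and right weights.

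\textbf{Step 1 (quantum seed).} I would first check the compatibility $\cE_{\dsquare}(\bbs)^{\mathrm{T}}$ versus $\Lambda_{\dsquare}(\bbs)$. Mutually $\qq$-commuting is \eqref{eq:qcomm_dsquare}, which follows from the $P\oplus P$-grading: $\wt\Delta_{\bbs;j}$ is $(-\gamma_{\bbs;j}^\ast,\delta_{\bbs;j}^\ast)$ up to sign conventions, matching the new rows of $\Lambda_{\dsquare}$. For compatibility, the new frozen rows must satisfy $\sum_{k}\varepsilon_{\bbs;ik}\Lambda_{\dsquare;k,s_{\pm\infty}}=0$ for $i$ unfrozen, since the compatibility matrix entries in the new columns must vanish. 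This is equivalent to saying that the exchange binomials for $\bi(\bbs)$ are homogeneous in both left and right weight. That holds because each mutated BZ variable is again a homogeneous minor, so $\sum_k\varepsilon_{ik}\gamma_{\bbs;k}=0=\sum_k\varepsilon_{ik}\delta_{\bbs;k}$. I would extract this identity directly from the BZ compatibility (\cite[Theorem 8.3]{BZ}) or from the explicit formula \eqref{eq:cE_bbs}.

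Algebraic independence and the fact that the variables generate the skew field then follow. The field $\cF(\cO_\qq(\Conf_4^{\times}\sA_G))$ is $\isigma^{-1}$ of $\cF(\cO_\qq(G))(e,e')$, and the old seed already generates $\cF(\cO_\qq(G))$. The matrix $\Lambda_{\dsquare}$ has the right rank because the $e,e'$ are new independent generators.

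\textbf{Step 2 (equalities $A=U=\cO_\qq(\Conf_4^\times)$).} Since the new frozen variables are isolated in the quiver and invertible, mutations of $\bi_{\dsquare}(\bbs)$ in directions $J(\bbs)^{\uf}$ produce exactly the mutations of $\bi(\bbs)$, and the $e,e'$ factors never change. More precisely, the exchange relations are the same, and the cluster monomials differ only by bar-invariance normalizing powers of $\qq$; these agree because the $\Lambda$-entries with $e,e'$ only twist the multiplication by grading. Hence
\[
A_\qq(\bi_{\dsquare}(\bbs);J_\square(\bbs)^{\fr})=A_\qq(\bi(\bbs);J(\bbs)^{\fr})\otimes\Bbbk[e^{\pm},e'^{\pm}],
\]
and the analogous equality holds for $U_\qq$. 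For the upper algebra I would argue via intersections of quantum Laurent tori: each torus for $\bi_{\dsquare}$ is the torus for $\bi$ tensored with the Laurent ring in $e,e'$, and intersections commute with this tensoring, by comparing coefficients in the $P\oplus P$-basis $e(\lambda)e'(\mu)$. Applying \cref{thm:clusterDouble} gives ${}^{\ext'}\cO_\qq(G^{w_0,w_0})^{\ext}$, and pulling back by $\isigma$ gives $\cO_\qq(\Conf_4^{\times}\sA_G)$.

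\textbf{Step 3 (mutation equivalence).} The mutation sequence from $\bi(\bbs)$ to $\bi(\bbs')$ supplied by \cref{thm:indep_Double} lifts verbatim to $\bi_{\dsquare}$. The $S_{\pm\infty}$ variables are literally the same elements $D_{1;\varpi_s^\ast}, D_{3;\varpi_s}$, and they do not depend on $\bbs$. The remaining $\Lambda$-entries are determined by $\qq$-commutation, and the $\cE$-columns for $S_{\pm\infty}$ stay zero by the homogeneity of Step 1 applied to the mutated seed.

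I expect the main obstacle to be Step 1's homogeneity/compatibility verification and the careful $\qq$-power bookkeeping in Step 2. The latter means ensuring that the normalized (bar-invariant) cluster monomials of the extended seed coincide with products of old ones times $e,e'$ monomials. I would handle it by using the grading to show that the $\Lambda$-twist by frozen isolated variables is a strict similarity (cf.\ \cref{def:coeffmod}).
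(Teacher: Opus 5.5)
Your proposal is correct and follows the paper's proof essentially step by step. Both arguments transport along $\isigma$, reduce compatibility on the new columns $S_{\pm\infty}$ to the degree-zero identities $\sum_{j}\varepsilon_{\bbs;ij}\gamma^{\ast}_{\bbs;j}=0=\sum_{j}\varepsilon_{\bbs;ij}\delta^{\ast}_{\bbs;j}$, note that the isolated frozen vertices make the mutation class of $\bi_{\dsquare}(\bbs)$ that of $\bi(\bbs)$ with $D_{1;\varpi_s^{\ast}}^{\pm1}, D_{3;\varpi_s}^{\pm1}$ adjoined, and then invoke \cref{thm:clusterDouble} and \cref{thm:indep_Double}.

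The only real difference is how the degree-zero identities are obtained: the paper gets them from \cref{prop:weight0} applied to the graded algebra $U_{\qq}(\bi(\bbs);J(\bbs)^{\fr})=\cO_\qq(G^{w_0,w_0})$, whereas you appeal to homogeneity of the mutated minors or to \eqref{eq:cE_bbs}. Your concern about $\qq$-power bookkeeping and strict similarity transforms is unnecessary: the exchange monomials involve no $S_{\pm\infty}$ indices, so their normalizing powers use only entries of $\Lambda(\bbs)$ and agree exactly.
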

\begin{proof}
    By \cref{thm:clusterDouble}, the definition of ${}^{\ext'}\cO_\qq(G^{w_0, w_0})^{\ext}$, and \eqref{eq:qcomm_dsquare}, we can immediately show that $\cF(\cO_\qq(\Conf_4^{\times}\sA_G))$ is generated by $A_{\dsquare, \bbs; j}$, $j\in J_{\square}(\bbs)$ as a skew field, and the $\Bbbk$-subalgebra of $\cF(\cO_\qq(\Conf_4^{\times}\sA_G))$ generated by $A_{\dsquare, \bbs; j}^{\pm 1}$, $j\in J_{\square}(\bbs)$ is isomorphic to the quantum torus $\cT(\Lambda_{\dsquare}(\bbs))$ associated with $\Lambda_{\dsquare}(\bbs)$. 
    Let us show that $(\Lambda_{\dsquare}(\bbs), \cE_{\dsquare}(\bbs))$ is a compatible pair. As in \eqref{eq:Xvariable}, we set 
    \begin{align*}
X_{\bi_{\dsquare}(\bbs); i}&:=A_{\bi_{\dsquare}(\bbs)}^{\sum_{j\in J_{\square}(\bbs)}\varepsilon_{\dsquare, \bbs;ij}f_j}\\
&:=\qq^{-\frac{1}{2}\sum_{j_1 < j_2}\varepsilon_{\dsquare, \bbs;ij_1} \varepsilon_{\dsquare, \bbs;ij_2} \Lambda_{\dsquare,\bbs;j_1j_2}} \dprod_{j \in J_{\square}(\bbs)} A_{\dsquare, \bbs; j}^{\varepsilon_{\dsquare, \bbs;ij}}\in \cF(\cO_\qq(\Conf_4^{\times}\sA_G))
\end{align*}
for $i\in J_{\square}(\bbs)^{\uf}$, where we fix a total ordering on $J_{\square}(\bbs)$. As mentioned in \eqref{eq:AXrel}, it suffices to show that, for $i\in J_{\square}(\bbs)^{\uf}$, there exists $d_i\in \Z_{>0}$ such that 
\begin{align}
X_{\bi_{\dsquare}(\bbs); i}A_{\dsquare, \bbs; j}=\qq^{-d_i\delta_{i,j}/d}A_{\dsquare, \bbs; j}X_{\bi_{\dsquare}(\bbs); i}\label{eq:AXrel_dsquare}
\end{align}
for $j\in J_{\square}(\bbs)$. By the definition of $\cE_{\dsquare}(\bbs)$ and $(A_{\dsquare, \bbs; j})_{j\in J_{\square}(\bbs)}$, we have 
\[
X_{\bi_{\dsquare}(\bbs); i}=(\isigma)^{-1}(X_{\bi(\bbs); i})
\]
for $i\in J_{\square}(\bbs)^{\uf}=J(\bbs)^{\uf}$. 
Since $\bi(\bbs)$ is a quantum seed, for $i\in J_{\square}(\bbs)^{\uf}$, there exists $d_i\in \Z_{>0}$ satisfying \eqref{eq:AXrel_dsquare} for $j\in J(\bbs)$. Hence, it remains to show that 
\begin{align}
X_{\bi_{\dsquare}(\bbs); i}A_{\dsquare, \bbs; j}=A_{\dsquare, \bbs; j}X_{\bi_{\dsquare}(\bbs); i}\label{eq:AXrel_dsquare_remain}
\end{align}
for $i\in J_{\square}(\bbs)^{\uf}$ and $j\in S_{-\infty}\sqcup S_{\infty}$. By \cref{thm:clusterDouble} and \cref{prop:weight0}, we have
\begin{align*}
&0=-\sum_{j\in J(\bbs)}\varepsilon_{\bbs;ij}\gamma_{\bbs; j}^{\ast}=-\sum_{j\in J(\bbs)}\varepsilon_{\dsquare, \bbs;ij}\gamma_{\bbs; j}^{\ast},\\
&0=\sum_{j\in J(\bbs)}\varepsilon_{\bbs;ij}\delta_{\bbs; j}^{\ast}=\sum_{j\in J(\bbs)}\varepsilon_{\dsquare, \bbs;ij}\delta_{\bbs; j}^{\ast},
\end{align*}
for $i\in J_{\square}(\bbs)^{\uf}$. Here we consider the $P$-gradings on $\cO_\qq(G^{w_0, w_0})$ induced from those on $\cO_\qq(G)$ given by $\wt$ and $\wt_r$. Hence, by the definition of $\Lambda_{\dsquare}(\bbs)$ and \eqref{eq:qcomm_dsquare}, it implies \eqref{eq:AXrel_dsquare_remain} for $i\in J_{\square}(\bbs)^{\uf}$ and $j\in S_{-\infty}\sqcup S_{\infty}$. Therefore, we conclude that $\bi_{\dsquare}(\bbs)$ is a quantum seed. 

By the definition of $\cE_{\dsquare}(\bbs)$ and \cref{thm:clusterDouble}, we have 
\begin{align*}
    A_{\qq}(\bi_{\dsquare}(\bbs); J_{\square}(\bbs)^{\fr})&=(\isigma)^{-1}(A_{\qq}(\bi(\bbs); J(\bbs)^{\fr}))[D_{1; \varpi_s^{\ast}}^{\pm1}, D_{3; \varpi_s}^{\pm 1}\mid s\in S]\\
    &=(\isigma)^{-1}(\cO_\qq(G^{w_0, w_0}))[D_{1; \varpi_s^{\ast}}^{\pm 1}, D_{3; \varpi_s}^{\pm 1}\mid s\in S]\\
    &=(\isigma)^{-1}({}^{\ext'}\cO_\qq(G^{w_0, w_0})^{\ext})=\cO_\qq(\Conf_4^{\times}\sA_G).
\end{align*}
Similarly, $U_{\qq}(\bi_{\dsquare}(\bbs); J_{\square}(\bbs)^{\fr})=(\isigma)^{-1}(U_{\qq}(\bi(\bbs); J(\bbs)^{\fr}))[D_{1; \varpi_s^{\ast}}^{\pm 1}, D_{3; \varpi_s}^{\pm 1}\mid s\in S]=\cO_\qq(\Conf_4^{\times}\sA_G)$. The mutation equivalence of $\bi_{\dsquare}(\bbs), \bbs\in S(w_0,w_0)$ immediately follows from \cref{thm:indep_Double}.
\end{proof}
Next we construct a quantum analogue of Goncharov--Shen seed \cite{GS:Quantum} for $\cO_\qq(\Conf_4^{\times}\sA_G)$ by a strict similarity transform of $\bi_{\dsquare}(\bbs)$ in the sense of \cref{def:coeffmod}. In the following, we fix $\bbs=(\bbs_1,\dots, \bbs_{2N})\in S(w_0, w_0)$. For $k\in J(\bbs)$ and $s\in S$, define $m_{k; s}, n_{k; s}\in \Z_{\geq 0}$ by 
\begin{align*}
&\sum_{s\in S} m_{k; s}\varpi_s=[\gamma_{\bbs; k}]_+,
&&\sum_{s\in S} n_{k; s}\varpi_s=[\delta_{\bbs; k}]_+.
\end{align*}
For $j\in J_{\square}(\bbs)$, define $f_{\bbs; j}\in \Z^{J_{\square}(\bbs)}$ and $A_{\square, \bbs; j}\in \cO_\qq(\Conf_4^{\times}\sA_G)$ by 
\[
f_{\bbs; j}:=\begin{cases}
    f_j+\sum_{s\in S} m_{j; s}f_{s_{-\infty}}+\sum_{s\in S} n_{j; s}f_{s_{\infty}}&\text{if }j\in J(\bbs),\\
    f_j&\text{if }j\in S_{-\infty}\sqcup S_{\infty},
\end{cases} 
\]
\begin{align*}
A_{\square, \bbs; j}&:=A_{\bi_{\dsquare}(\bbs)}^{f_{\bbs; j}}\\
&=\begin{cases}
\qq^{\frac{1}{2}(\gamma_{\bbs; j}^{\ast}, [\gamma_{\bbs; j}^{\ast}]_+)-\frac{1}{2}(\delta_{\bbs; j}^{\ast}, [\delta_{\bbs; j}^{\ast}]_+)}\left(\prod_{s\in S}A_{\dsquare, \bbs; s_{\infty}}^{n_{j; s}}\right)A_{\dsquare, \bbs; j}\left(\prod_{s\in S}A_{\dsquare, \bbs; s_{-\infty}}^{m_{j; s}}\right)&\text{if }j\in J(\bbs),\\
A_{\dsquare, \bbs; j}&\text{if }j\in S_{-\infty}\sqcup S_{\infty}.
\end{cases}
\end{align*}
Note that, by \eqref{eq:isigma_D1} and \eqref{eq:isigma_D3},
\begin{align}
\isigma(A_{\square, \bbs; j})
&=\begin{cases}
\qq^{\frac{1}{2}(\gamma_{\bbs; j}^{\ast}, [\gamma_{\bbs; j}^{\ast}]_+)-\frac{1}{2}(\delta_{\bbs; j}^{\ast}, [\delta_{\bbs; j}^{\ast}]_+)}e'([\delta_{\bbs; j}^{\ast}]_+)\Delta_{\gamma_{\bbs; j}^{\ast}, \delta_{\bbs; j}^{\ast}}e([\gamma_{\bbs; j}^{\ast}]_+)&\text{if }j\in J(\bbs),\\
e(\varpi_s^{\ast})&\text{if }j=s_{-\infty}\in S_{-\infty},\\
e'(\varpi_s^{\ast})&\text{if }j=s_{\infty}\in S_{\infty}.
\end{cases}\label{eq:isigma_A_square}
\end{align}

Define $\Lambda_{\square}(\bbs)=(\Lambda_{\square, \bbs; ij})_{i,j\in J_{\square}(\bbs)}$ and $\cE_{\square}(\bbs)=(\varepsilon_{\square, \bbs;ij})_{i\in J_{\square}(\bbs)^{\uf},j\in J_{\square}(\bbs)}$ by 
\begin{align}
    &A_{\square, \bbs; i}A_{\square, \bbs; j}=\qq^{\Lambda_{\square,\bbs;ij}}A_{\square, \bbs; j}A_{\square, \bbs; i}\quad \text{for }i, j\in J_{\square}(\bbs),\label{eq:Lambda_square_def}\\
    &\sum_{j\in J_{\square}(\bbs)}\varepsilon_{\square, \bbs;ij}f_{\bbs; j}=\sum_{j\in J_{\square}(\bbs)}\varepsilon_{\dsquare, \bbs;ij}f_j
    \quad \text{for }i\in J_{\square}(\bbs)^{\uf}.\label{eq:varepsilon_square_def}
\end{align}
Note that 
\[
    \varepsilon_{\square, \bbs;ij}=
    \varepsilon_{\dsquare, \bbs;ij}=
    \varepsilon_{\bbs;ij}
  \]  
for $i\in J_{\square}(\bbs)^{\uf}$ and $j\in J(\bbs)$. The following theorem follows from \cref{thm:cluster_dsquare} and \cref{prop:modified_seed}.
\begin{thm}\label{thm:cluster_square}
Let $\bbs\in S(w_0, w_0)$. The collection $\bi_{\square}(\bbs):=(\Lambda_{\square}(\bbs), \cE_{\square}(\bbs),(A_{\square, \bbs; j})_{j\in J_{\square}(\bbs)})$ forms a quantum seed in $\cF(\cO_\qq(\Conf_4^{\times}\sA_G))$. Moreover, 
\[
A_{\qq}(\bi_{\square}(\bbs); J_{\square}(\bbs)^{\fr})=U_{\qq}(\bi_{\square}(\bbs); J_{\square}(\bbs)^{\fr})=\cO_\qq(\Conf_4^{\times}\sA_G).
\]
\end{thm}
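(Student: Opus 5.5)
The plan is to deduce \cref{thm:cluster_square} from \cref{thm:cluster_dsquare} by observing that $\bi_{\square}(\bbs)$ is a strict similarity transform of $\bi_{\dsquare}(\bbs)$ in the sense of \cref{def:coeffmod}, and then to invoke \cref{prop:modified_seed}. The transform is encoded by the vectors $f_{\bbs;j}$. For $j\in J(\bbs)$, the vector $f_{\bbs;j}$ equals $f_j$ plus a nonnegative integer combination of the frozen basis vectors $f_{s_{\pm\infty}}$. For $j\in S_{\pm\infty}$ it is simply $f_j$. Hence the matrix sending $(f_j)$ to $(f_{\bbs;j})$ is unipotent and integral, so it lies in $GL(\Z^{J_{\square}(\bbs)})$. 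It fixes the unfrozen directions modulo frozen ones and only alters cluster variables by multiplying them by monomials in frozen variables. By construction we have $A_{\square,\bbs;j}=A_{\bi_{\dsquare}(\bbs)}^{f_{\bbs;j}}$.

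The key steps run in order. (i) Identify $\Lambda_{\square}(\bbs)$ as the form $\Lambda_{\dsquare}(\bbs)$ evaluated on the new basis, $\Lambda_{\square,\bbs;ij}=\Lambda_{\dsquare}(f_{\bbs;i},f_{\bbs;j})$. This follows from \eqref{eq:Lambda_square_def} and the commutation rule of monomials in the quantum torus $\cT(\Lambda_{\dsquare}(\bbs))$. (ii) Check that \eqref{eq:varepsilon_square_def} uniquely determines $\cE_{\square}(\bbs)$. Rewriting the exchange vectors $\sum_j\varepsilon_{\dsquare,\bbs;ij}f_j$ in the basis $(f_{\bbs;j})$ gives an integer matrix whose $J(\bbs)$-columns agree with $\varepsilon_{\bbs;ij}$; only the $S_{\pm\infty}$-columns change. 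Consequently the $X$-variables, and hence the compatibility relation \eqref{eq:AXrel_dsquare} with the same $d_i$, are unchanged. This is exactly the content of \cref{prop:modified_seed}, which yields that $\bi_{\square}(\bbs)$ is a quantum seed mutation-compatible with $\bi_{\dsquare}(\bbs)$. (iii) Mutations at unfrozen vertices commute with this frozen rescaling. Therefore every cluster variable of $\bi_{\square}(\bbs)$ is the corresponding cluster variable of $\bi_{\dsquare}(\bbs)$ times a Laurent monomial in the frozen variables $D_{1;\varpi_s^{\ast}}$ and $D_{3;\varpi_s}$.

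Step (iii) is the main obstacle, and it is where the frozen set enters. Because all frozen variables indexed by $J_{\square}(\bbs)^{\fr}$, and in particular those in $S_{\pm\infty}$, are inverted, rescaling by frozen monomials does not change either the quantum cluster algebra or the upper cluster algebra. We therefore obtain $A_\qq(\bi_{\square}(\bbs);J_{\square}(\bbs)^{\fr})=A_\qq(\bi_{\dsquare}(\bbs);J_{\square}(\bbs)^{\fr})$, and similarly for $U_\qq$, again by \cref{prop:modified_seed}. \cref{thm:cluster_dsquare} then identifies both sides with $\cO_\qq(\Conf_4^{\times}\sA_G)$. One further point needs care: the powers of $\qq$ appearing in the definition of $A_{\square,\bbs;j}$ must match the normalization of the monomial $A^{f}$ prescribed by \eqref{eq:Lambda_square_def}. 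I would verify this using \eqref{eq:isigma_A_square} and the weight gradings $\wt$, $\wt_r$, in the same way as in the proof of \cref{thm:cluster_dsquare}.
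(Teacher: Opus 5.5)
Your proposal is correct and follows the paper's route: $\bi_{\square}(\bbs)$ is a strict similarity transform of $\bi_{\dsquare}(\bbs)$, with modifying frozen set $S_{-\infty}\sqcup S_{\infty}$, so the theorem follows from \cref{thm:cluster_dsquare} together with \cref{prop:modified_seed}; passing to all of $J_{\square}(\bbs)^{\fr}$ is harmless because every frozen variable is inverted, as you note. Your final ``further point'' is unnecessary: $A_{\square,\bbs;j}$ is defined as the monomial $A_{\bi_{\dsquare}(\bbs)}^{f_{\bbs;j}}$, the displayed $\qq$-power is simply that monomial evaluated using $\Lambda_{\dsquare}(\bbs)$, and \eqref{eq:Lambda_square_def} does not depend on scalar normalizations anyway.
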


\begin{ex}\label{ex:SST}
In the case of type $A_2$ and $\bbs=(1,2,1,\overline{2},\overline{1},\overline{2})$, the relation between $\bi_{\dsquare}(\bbs)$ and $\bi_{\square}(\bbs)$ is shown in  \cref{fig:SST}. Here we express the exchange matrices $\cE_{\dsquare}(\bbs)$ and $\cE_{\square}(\bbs)$ by quivers as in \cref{sec:quiver} and label each vertex with the image under $\isigma$ of the corresponding quantum cluster variable.
\begin{figure}[ht]
    \centering
    \scalebox{0.85}{
\begin{tikzpicture}
\def\R{4.6}
\def\H{1.5}
\node(V10) at (0,0) {$\Delta_{\varpi_2, w_0\varpi_2}$};
\node(V11) at (\R,0) {$\Delta_{\varpi_2, r_2\varpi_2}$};
\node(V12) at (2*\R,0) {$\Delta_{\varpi_2, \varpi_2}$};
\node(V13) at (3*\R,0) {$\Delta_{w_0\varpi_2, \varpi_2}$};
\node(V20) at (0.5*\R,\H) {$\Delta_{\varpi_1, w_0\varpi_1}$};
\node(V21) at (1.5*\R,\H) {$\Delta_{\varpi_1, \varpi_1}$};
\node(V22) at (2.5*\R,\H) {$\Delta_{r_1\varpi_1, \varpi_1}$};
\node(V23) at (3.5*\R,\H) {$\Delta_{w_0\varpi_1, \varpi_1}$};
\qarrow{V11}{V10};
\qarrow{V12}{V11};
\qarrow{V21}{V20};
\qarrow{V21}{V22};
\qarrow{V22}{V23};
\qarrow{V12}{V13};
\qarrow{V20}{V11};
\qarrow{V11}{V21};
\qarrow{V13}{V22};
\qarrow{V22}{V12};
\qdarrow{V10}{V20};
\qdarrow{V23}{V13};
{\color{myblue}
\node(Y1) at (0.5*\R,-\H) {$e'(\varpi_1)$};
\node(Y2) at (1.5*\R,-\H) {$e'(\varpi_2)$};
\node(Y3) at (2*\R,2*\H) {$e(\varpi_1)$};
\node(Y4) at (3*\R,2*\H) {$e(\varpi_2)$};
}

\begin{scope}[yshift=-4.5*\H cm]
\node(V10) at (0,0) {$\qq^{\frac{1}{3}}\Delta_{\varpi_2, w_0\varpi_2}e(\varpi_2)$};
\node(V11) at (\R,0) {$\qq^{\frac{1}{6}}e'(\varpi_1)\Delta_{\varpi_2, r_2\varpi_2}e(\varpi_2)$};
\node(V12) at (2*\R,0) {$e'(\varpi_2)\Delta_{\varpi_2, \varpi_2}e(\varpi_2)$};
\node(V13) at (3*\R,0) {$\qq^{-\frac{1}{3}}e'(\varpi_2)\Delta_{w_0\varpi_2, \varpi_2}$};
\node(V20) at (0.5*\R,\H) {$\qq^{\frac{1}{3}}\Delta_{\varpi_1, w_0\varpi_1}e(\varpi_1)$};
\node(V21) at (1.5*\R,\H) {$e'(\varpi_1)\Delta_{\varpi_1, \varpi_1}e(\varpi_1)$};
\node(V22) at (2.5*\R,\H) {$\qq^{-\frac{1}{6}}e'(\varpi_1)\Delta_{r_1\varpi_1, \varpi_1}e(\varpi_2)$};
\node(V23) at (3.5*\R,\H) {$\qq^{-\frac{1}{3}}e'(\varpi_1)\Delta_{w_0\varpi_1, \varpi_1}$};
\qarrow{V11}{V10};
\qarrow{V12}{V11};
\qarrow{V21}{V20};
\qarrow{V21}{V22};
\qarrow{V22}{V23};
\qarrow{V12}{V13};
\qarrow{V20}{V11};
\qarrow{V11}{V21};
\qarrow{V13}{V22};
\qarrow{V22}{V12};
\qdarrow{V10}{V20};
\qdarrow{V23}{V13};
{\color{myblue}
\node(Y1) at (0.5*\R,-\H) {$e'(\varpi_1)$};
\node(Y2) at (1.5*\R,-\H) {$e'(\varpi_2)$};
\node(Y3) at (2*\R,2*\H) {$e(\varpi_1)$};
\node(Y4) at (3*\R,2*\H) {$e(\varpi_2)$};
\qarrow{V10}{Y1}; 
\qarrow{Y1}{V11};
\qarrow{V11}{Y2};
\qarrow{Y2}{V12};
\qdarrow{Y2}{Y1};
\qarrow{V22}{Y3};
\qarrow{Y3}{V21};
\qarrow{V23}{Y4};
\qarrow{Y4}{V22};
\qdarrow{Y3}{Y4};
}
\end{scope}
\end{tikzpicture}}
    \caption{$\bi_{\dsquare}(\bbs)$ (above) and $\bi_{\square}(\bbs)$ (below).}
    \label{fig:SST}
\end{figure}
\end{ex}

The following theorem verifies that the quantum seed $\bi_{\square}(\bbs)$ is a quantum analogue of the Goncharov--Shen seed $\bi^\GS(\bbs)=(\cE^{\GS}(\bbs), (A_{\bbs; j}^{\GS})_{j\in J_{\square}(\bbs)})$ for $\Conf_4^{\times}\sA_G$. See \cref{subsub:cluster_config} for its precise definition. 

\begin{thm}\label{thm:GS_quantum}
For $\bbs\in S(w_0, w_0)$, $\cE_{\square}(\bbs)=\cE^{\GS}(\bbs)$. Moreover, for $\bbs, \bbs'\in S(w_0, w_0)$, $\bi_{\square}(\bbs)$ and $\bi_{\square}(\bbs')$ are mutation equivalent. 
\end{thm}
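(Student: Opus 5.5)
The proof has two parts: compare the exchange matrices directly, then deduce mutation equivalence from \cref{thm:cluster_dsquare} via the strict similarity transform.

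\textbf{Matrix comparison.} I work block by block with respect to the decomposition $J_{\square}(\bbs)=S_{-\infty}\sqcup J(\bbs)\sqcup S_{\infty}$. We already know that $\varepsilon_{\square,\bbs;ij}=\varepsilon_{\bbs;ij}$ for $i\in J(\bbs)^{\uf}$ and $j\in J(\bbs)$. On this block, the Goncharov--Shen matrix restricted to $J(\bbs)$ is the amalgamation of elementary quivers attached to the letters of $\bbs$. It agrees with the Berenstein--Zelevinsky matrix \eqref{eq:cE_bbs}, once the $\ast$-twist in our choice of $\Delta_{\bbs;j}$ is taken into account. This is a case check on the six cases of \eqref{eq:cE_bbs} against the quiver description in \cref{sec:quiver}.

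The remaining columns are indexed by $j=s_{\pm\infty}$. Solving \eqref{eq:varepsilon_square_def} gives
\[
\varepsilon_{\square,\bbs;i s_{-\infty}}=-\sum_{k\in J(\bbs)}\varepsilon_{\bbs;ik}m_{k;s},\qquad
\varepsilon_{\square,\bbs;i s_{\infty}}=-\sum_{k\in J(\bbs)}\varepsilon_{\bbs;ik}n_{k;s},
\]
with $m_{k;s}$ and $n_{k;s}$ the coefficients of $[\gamma_{\bbs;k}]_+$ and $[\delta_{\bbs;k}]_+$. The key computation evaluates these sums. Since $\sum_k\varepsilon_{\bbs;ik}\gamma_{\bbs;k}=0$ by \cref{prop:weight0}, the sum $\sum_k\varepsilon_{\bbs;ik}[\gamma_{\bbs;k}]_+$ equals $\sum_k\varepsilon_{\bbs;ik}[\gamma_{\bbs;k}]_-$. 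Only the vertices $k$ adjacent to $i$ contribute. I would then show, using that $\gamma_{\bbs;k}=w^{\leq k}\varpi_{|\bbs_k|}$ changes only at barred letters, that this sum is nonzero only when $i$ is adjacent to the boundary on the $S_{-\infty}$ side. In that case it produces exactly the arrows between $s_{-\infty}$ and the last vertices of each level, as in \cref{fig:SST}. The same argument with $\delta$ handles $S_{\infty}$, this time with the first vertices. These are precisely the extra arrows in the Goncharov--Shen quiver in \cref{subsub:cluster_config}, which connect the frozen variables of the edges $12$ and $34$.

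\textbf{Main obstacle.} The hard step is showing that the positive-part truncations $[\cdot]_+$ produce no spurious arrows at interior vertices. My plan is a local argument on each elementary move of $\bbs$: across one mutation-relevant window, the chamber weights $w\varpi_s$ satisfy the usual three-term relation, and $[\cdot]_+$ is additive on weights that lie in a common Weyl chamber. I would verify this first for $\bbs$ of the form $\bs\ast\overline{\bs'}$, where the weights are explicit. For general $\bbs$, I would argue that $\cE_{\square}(\bbs)$ and $\cE^{\GS}(\bbs)$ transform identically under the moves changing $\bbs$. These moves are the commutation of $s$ with $\overline{t}$, which changes neither matrix, and braid moves, which are realized by mutations on both sides (\cite{BZ}, \cite{GS:Quantum}). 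Then only one $\bbs$ needs to be checked.

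\textbf{Mutation equivalence.} By \cref{thm:cluster_dsquare}, $\bi_{\dsquare}(\bbs)$ and $\bi_{\dsquare}(\bbs')$ are connected by a sequence of mutations, and \cref{prop:modified_seed} says strict similarity transforms commute with mutation. So it suffices to show that the transform attached to $\bbs$, carried along this sequence of mutations, matches the transform attached to $\bbs'$. I would check this on frozen variables: the transform only multiplies the cluster variables by monomials in $e(\cdot)$ and $e'(\cdot)$, and since \eqref{eq:isigma_A_square} depends only on the pair $(\gamma_{\bbs;j}^{\ast},\delta_{\bbs;j}^{\ast})$, cluster variables sharing that pair receive the same factor.
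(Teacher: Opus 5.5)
\textbf{Mutation equivalence.} This step has a genuine gap. Formula \eqref{eq:isigma_A_square} describes only the seeds $\bi_{\square}(\bbs')$ built directly from a word. It says nothing about the variables of $\sigma\mu_{t_k}\cdots\mu_{t_1}\bi_{\square}(\bbs)$.

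By \cref{prop:modified_seed}~(2), those variables are $A_{\bi_{\dsquare}(\bbs')}^{f_j+\sum_{l}m'_{j;l}f_l}$ for some new similarity datum $m'$. This datum, equivalently the $\Deg$ of the mutated variables, is produced by the exchange relations of $\bi_{\square}(\bbs)$. At each step $\Deg(A'_t)=-\Deg(A_t)+\sum_j[\varepsilon_{tj}]_+\Deg(A_j)$, with $j$ running over $S_{\pm\infty}$ as well.

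Proving that the outcome is $([\gamma_{\bbs';j}]_+,[\delta^{\ast}_{\bbs';j}]_+)$ is the whole content of the claim. Your sentence ``cluster variables sharing that pair receive the same factor'' assumes it. Checking frozen variables says nothing, since they are never mutated.

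The paper gets the missing control from the classical side. Since $\cE_{\square}(\bbs)=\cE^{\GS}(\bbs)$ and the initial degrees agree (\cref{prop:weight}), mutated quantum and Goncharov--Shen variables have equal degrees along the same sequence. Then $g$-vectors (\cref{prop:modified_seed}~(3), \cref{prop:GS_variable_minor}, \cref{thm:indep_GS}) together with \cref{thm:separation} show that $\sigma\mu_{t_k}\cdots\mu_{t_1}\bi^{\GS}(\bbs)$ has the variables of $\bi^{\GS}(\bbs')$, whose degrees are known. So the matrix identity is an essential input to the second assertion, and your argument never uses it.

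\textbf{Matrix identity.} Your formula $\varepsilon_{\square,\bbs;is_{-\infty}}=-\sum_k\varepsilon_{\bbs;ik}m_{k;s}$ says exactly that $X_{\bi_{\square}(\bbs);i}$ has $\Deg_1=0$. What you are missing is that $\cE^{\GS}(\bbs)$ satisfies the same linear identities. Classical $X$-variables are $H^4$-invariant by \eqref{eq:weight_Poisson}. \cref{prop:weight} gives the Goncharov--Shen variables the same degrees $([\gamma_{\bbs;j}]_+,[\delta^\ast_{\bbs;j}]_+)$, $(\varpi_s,0)$, $(0,\varpi_s^\ast)$ as the $A_{\square,\bbs;j}$. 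The basic blocks agree and the degrees of the $S_{\pm\infty}$-variables are linearly independent, so the extra columns are forced to agree. No evaluation of $[\cdot]_+$-sums is needed.

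Your substitute plan does not close as written.
\begin{itemize}
\item For $\cE_{\square}$, ``transforms identically under moves'' presupposes that mutating $\bi_{\square}(\bbs)$ yields $\bi_{\square}(\bbs')$, i.e.\ the second assertion.
\item Swapping $s$ with $\overline{s}$ is itself a mutation: it changes $(\gamma_{\bbs;j},\delta_{\bbs;j})$ at one vertex. So it does not leave the matrices unchanged.
\item Your picture of the extra arrows is inaccurate. They attach at the positions where $\beta^{\bbs}_k$ is simple; in \cref{fig:SST} both $S_\infty$-vertices attach to level~$1$. They do not attach to the end vertices of every level.
\end{itemize}
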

\begin{proof}
    We have a decomposition 
    \[
    {}^{\ext'}\cO_\qq(G^{w_0, w_0})^{\ext}=\bigoplus_{(\lambda, \mu)\in P\oplus P }{}^{\ext'}\cO_\qq(G^{w_0, w_0})^{\ext}_{\lambda, \mu},
    \]
    where ${}^{\ext'}\cO_\qq(G^{w_0, w_0})^{\ext}_{\lambda, \mu}:=e'(\mu)\cO_\qq(G^{w_0, w_0})e(\lambda^{\ast})$. 
    This decomposition endows ${}^{\ext'}\cO_\qq(G^{w_0, w_0})^{\ext}$ with a $P\oplus P$-graded algebra structure. Through the isomorphism $\isigma\colon\cO_\qq(\Conf_4^{\times}\sA_G)\xrightarrow{\sim}{}^{\ext'}\cO_\qq(G^{w_0, w_0})^{\ext}$, $\cO_\qq(\Conf_4^{\times}\sA_G)$ is also endowed with the $P\oplus P$-graded algebra structure. 
    By \eqref{eq:isigma_formula}, this grading is the same as the one induced by the first and third components of the $P^{\oplus 4}$-grading on $\cO_\qq(\Conf_4\sA_G)$ (recall \cref{prop:qcsgrading}). 
    For $\phi\in (\isigma)^{-1}({}^{\ext'}\cO_\qq(G^{w_0, w_0})^{\ext}_{\lambda, \mu})$, we write $\Deg(\phi)=(\Deg_1(\phi), \Deg_3(\phi))=(\lambda, \mu)$ in this proof. By \eqref{eq:isigma_A_square}, 
    \begin{align*}
        \Deg(A_{\square, \bbs; j})=
        \begin{cases}
            ([\gamma_{\bbs; j}]_+, [\delta_{\bbs; j}^{\ast}]_+)&\text{if }j\in J(\bbs),\\
            (\varpi_s, 0)&\text{if }j=s_{-\infty}\in S_{-\infty},\\
            (0, \varpi_s^{\ast})&\text{if }j=s_{\infty}\in S_{\infty}.
        \end{cases}
    \end{align*}
    These data coincide with $(\deg_{\flA^{\sfL}}(A_{\bbs; j}^\GS), \deg_{\flA_{\sfR}}(A_{\bbs; j}^\GS))=:(\deg_{1}(A_{\bbs; j}^\GS), \deg_{3}(A_{\bbs; j}^\GS))$ for $j\in J_{\square}(\bbs)$ in \cref{subsec:weights} by \cref{prop:weight}. By \eqref{eq:weight_Poisson}, 
    \begin{align*}
    \sum_{j\in J_{\square}(\bbs)} \varepsilon_{\bbs;ij}^\GS \Deg_k(A_{\square, \bbs; j})=
        \sum_{j\in J_{\square}(\bbs)} \varepsilon_{\bbs;ij}^\GS \deg_k(A_{\bbs; j}^\GS)=0
    \end{align*}
    for $k=1, 3$ and $i\in J_{\square}(\bbs)^{\uf}$. On the other hand, by \cref{prop:weight0}, 
    \begin{align*}
    \sum_{j\in J_{\square}(\bbs)} \varepsilon_{\square, \bbs;ij} \Deg_k(A_{\square, \bbs; j})=0
    \end{align*}
    for $k=1, 3$ and $i\in J_{\square}(\bbs)^{\uf}$. Moreover, 
    \[
    \varepsilon_{\bbs;ij}^\GS=\varepsilon_{\bbs;ij}=\varepsilon_{\square, \bbs;ij}
    \]
     for $i\in J_{\square}(\bbs)^{\uf}$ and $j\in J(\bbs)$ (see \cref{conv:labeling_corresp}). 
    Therefore, we obtain 
    \begin{align*}
        \sum_{j\in S_{-\infty}\sqcup S_{\infty}} \varepsilon_{\bbs;ij}^\GS \Deg_k(A_{\square, \bbs; j})=
        \sum_{j\in S_{-\infty}\sqcup S_{\infty}} \varepsilon_{\square, \bbs;ij} \Deg_k(A_{\square, \bbs; j})
    \end{align*}
    for $k=1, 3$ and $i\in J_{\square}(\bbs)^{\uf}$, which implies 
    \[
    \varepsilon_{\bbs;ij}^\GS=\varepsilon_{\square, \bbs;ij}
    \]
     for $i\in J_{\square}(\bbs)^{\uf}$ and $j\in S_{-\infty}\sqcup S_{\infty}$. Therefore, $\cE_{\square}(\bbs)=\cE^{\GS}(\bbs)$. 

     Let $\bbs, \bbs'\in S(w_0, w_0)$. By \cref{thm:cluster_dsquare}, there exist a sequence $(t_1,\dots, t_k)$ of elements of $J(\bbs)^{\uf}=J_{\square}(\bbs)^{\uf}$ and an admissible permutation $\sigma$ of $J_{\square}(\bbs)$ (see \cref{app:qca}) such that 
     \[
     \sigma\mu_{t_k}\cdots \mu_{t_1}\bi_{\dsquare}(\bbs)=\bi_{\dsquare}(\bbs').
     \]
     Then, by \cref{prop:modified_seed} (2), $\sigma\mu_{t_k}\cdots \mu_{t_1}\bi_{\square}(\bbs)$ is a strict similarity transform of $\bi_{\dsquare}(\bbs')$. For $j\in J_{\square}(\bbs)^{\uf}$, we have 
     \begin{align*}
     \bg_{\bi^\GS(\bbs)}(A_{\sigma\mu_{t_k}\cdots \mu_{t_1}\bi^\GS(\bbs); j})&=
     \bg_{\bi_{\dsquare}(\bbs)}(A_{\sigma\mu_{t_k}\cdots \mu_{t_1}\bi_{\dsquare}(\bbs); j})\\
     &=     \bg_{\bi_{\dsquare}(\bbs)}(A_{\dsquare, \bbs'; j})\\
     &=\bg_{\bi(\bbs)}(\Delta_{\gamma_{\bbs'; j}^{\ast}, \delta_{\bbs'; j}^{\ast}})\\
     &=\bg_{\bi^\GS(\bbs)}(A_{\bi^\GS(\bbs'); j}).
    \end{align*}
    Here the first equality follows from $\cE_{\square}(\bbs)=\cE^{\GS}(\bbs)$ and \cref{prop:modified_seed} (3), and the last equality follows from \cref{thm:indep_GS} and \cref{prop:GS_variable_minor}. Note that quantum generalized minors specialize to the corresponding generalized minors under the specialization at $\qq=1$ (see, for example, \cite[Section 2.3]{OQY} for the details of the specialization). Therefore, by \cref{thm:separation}, we have $A_{\sigma\mu_{t_k}\cdots \mu_{t_1}\bi^\GS(\bbs); j}=A_{\bi^\GS(\bbs'); j}$. By definition of the mutation, we have 
    \begin{align*}
        \Deg(A_{\sigma\mu_{t_k}\cdots \mu_{t_1}\bi_{\square}(\bbs); j})&=(\deg_1(A_{\sigma\mu_{t_k}\cdots \mu_{t_1}\bi^\GS(\bbs); j}), \deg_3(A_{\sigma\mu_{t_k}\cdots \mu_{t_1}\bi^\GS(\bbs); j}))\\
        &=(\deg_1(A_{\bi^\GS(\bbs'); j}), \deg_3(A_{\bi^\GS(\bbs'); j}))\\
        &=([\gamma_{\bbs'; j}]_+, [\delta_{\bbs'; j}^{\ast}]_+).
    \end{align*}
    for $j\in J_{\square}(\bbs)$. Therefore, the similarity datum for making $\sigma\mu_{t_k}\cdots \mu_{t_1}\bi_{\square}(\bbs)$ from $\bi_{\dsquare}(\bbs')$ is the same as that for making $\bi_{\square}(\bbs')$ from $\bi_{\dsquare}(\bbs')$. Therefore, $\sigma\mu_{t_k}\cdots \mu_{t_1}\bi_{\square}(\bbs)=\bi_{\square}(\bbs')$. 
\end{proof}
We conclude this subsection by stating the following refinement of \cref{thm:cluster_square}, which will be used in \cref{sec:polygon}. 
\begin{thm}\label{thm:cluster_square_cpt}
    For $\bbs\in S(w_0, w_0)$, we have  
\[
A_{\qq}(\bi_{\square}(\bbs); S_{-\infty}\sqcup S_{\infty})=U_{\qq}(\bi_{\square}(\bbs); S_{-\infty}\sqcup S_{\infty})=\cO_\qq(\Conf_4^{\{1, 3\}}\sA_G).
\]
\end{thm}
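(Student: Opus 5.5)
The plan is to transport the statement to ${}^{\ext'}\cO_\qq(G)^{\ext}$ through $\isigma$ and reduce it to the equalities $A_\qq(\bi(\bbs);\varnothing)=U_\qq(\bi(\bbs);\varnothing)=\cO_\qq(G)$ of \cite[Theorem B]{QY}, \cite[Theorem A]{OQY}, \cite[Corollary B]{Dey}.

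The first step is to identify the target. By \cref{thm:qW} and the cyclic shift, $\isigma=\imath_{4,\qq}\circ\sigma_4^{-1}$ restricts to an isomorphism $\cO_\qq(\Conf_4^{\{1,3\}}\sA_G)\xrightarrow{\sim}{}^{\ext'}\cO_\qq(G)^{\ext}$, since $\sigma_4$ sends $\cD_{\{2,4\}}$ to $\cD_{\{1,3\}}$ up to reindexing. This is compatible with \eqref{eq:isigma_D1} and \eqref{eq:isigma_D3}: $D_{1;\lambda}\mapsto e(\lambda)$ and $D_{3;\lambda}\mapsto e'(\lambda^\ast)$. Hence ${}^{\ext'}\cO_\qq(G)^{\ext}=\cO_\qq(G)[e(\pm\varpi_s),e'(\pm\varpi_s)]$, and it is a twisted Laurent extension of $\cO_\qq(G)$ graded by $P\oplus P$.

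The second step handles the upper algebra, comparing first with the seed $\bi_{\dsquare}(\bbs)$. In that seed the variables $A_{\dsquare,\bbs;s_{\pm\infty}}$ are isolated frozen vertices whose images are $e(\varpi_s^\ast)$ and $e'(\varpi_s^\ast)$. The remaining variables are the Berenstein--Zelevinsky minors, and their mutations stay inside $\cO_\qq(G)$, possibly multiplied by powers of these $q$-commuting frozen variables. Consequently every cluster of $\bi_{\dsquare}(\bbs)$ is a cluster of $\bi(\bbs)$ adjoined with $e^{\pm},e'^{\pm}$, so
\begin{align*}
A_\qq(\bi_{\dsquare}(\bbs);S_{-\infty}\sqcup S_\infty)&=(\isigma)^{-1}\bigl(A_\qq(\bi(\bbs);\varnothing)\bigr)[D_{1;\varpi_s^\ast}^{\pm1},D_{3;\varpi_s}^{\pm1}],\\
U_\qq(\bi_{\dsquare}(\bbs);S_{-\infty}\sqcup S_\infty)&=(\isigma)^{-1}\bigl(U_\qq(\bi(\bbs);\varnothing)\bigr)[D_{1;\varpi_s^\ast}^{\pm1},D_{3;\varpi_s}^{\pm1}].
\end{align*}
For the second equality, work with the $P\oplus P$-grading given by the $e,e'$ exponents. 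The seed-wise Laurent polynomial rings over each cluster decompose as tensor products of the $\bi(\bbs)$-Laurent ring with the quantum torus in the $e$'s. Taking the intersection over all clusters therefore commutes with adjoining the $e$'s. The cited equality $A=U=\cO_\qq(G)$ then gives both algebras equal to $(\isigma)^{-1}({}^{\ext'}\cO_\qq(G)^{\ext})=\cO_\qq(\Conf_4^{\{1,3\}}\sA_G)$.

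The third step passes from $\bi_{\dsquare}(\bbs)$ to $\bi_{\square}(\bbs)$. The seed $\bi_{\square}(\bbs)$ is a strict similarity transform of $\bi_{\dsquare}(\bbs)$ obtained by multiplying each variable by monomials in the variables at $S_{\pm\infty}$, which are exactly the inverted frozen variables here. By \cref{prop:modified_seed}, mutation commutes with this similarity transform. So every cluster variable of $\bi_{\square}(\bbs)$ differs from the corresponding one of $\bi_{\dsquare}(\bbs)$ by an invertible monomial in $A_{s_{\pm\infty}}$, and the Laurent rings over corresponding clusters coincide once the $S_{\pm\infty}$ variables are inverted. Both $A_\qq$ and $U_\qq$ with respect to $S_{-\infty}\sqcup S_\infty$ therefore agree for the two seeds.

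I expect the main obstacle to be the careful justification in the second step that the upper cluster algebra with only $S_{\pm\infty}$ inverted splits off the $e$'s. The point to check is that the frozen vertices $J(\bbs)^{\fr}$ (the $S_0$ and the last occurrences) are \emph{not} inverted on either side. I will rely on the grading argument, together with the isolation of the $S_{\pm\infty}$ vertices in $\cE_{\dsquare}(\bbs)$.
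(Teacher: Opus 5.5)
Your proposal is correct and is essentially the paper's proof. You reduce from $\bi_{\square}(\bbs)$ to $\bi_{\dsquare}(\bbs)$ by \cref{prop:modified_seed} (with $J^\fr_\times=S_{-\infty}\sqcup S_{\infty}$), identify both algebras for $\bi_{\dsquare}(\bbs)$ with $(\isigma)^{-1}$ of the $e,e'$-extensions of $A_\qq(\bi(\bbs);\varnothing)$ and $U_\qq(\bi(\bbs);\varnothing)$, and conclude via $A_\qq(\bi(\bbs);\varnothing)=U_\qq(\bi(\bbs);\varnothing)=\cO_\qq(G)$ and \cref{thm:qW}; the only difference is that you do these steps in the opposite order. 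Your grading argument for why the upper-algebra intersection splits off the isolated frozen variables fills in a step the paper leaves implicit. One small correction: since $\varepsilon_{\dsquare,\bbs;ij}=0$ for $j\in S_{-\infty}\sqcup S_{\infty}$, the mutated variables of $\bi_{\dsquare}(\bbs)$ are exactly $(\isigma)^{-1}$ of the mutated Berenstein--Zelevinsky variables, so no frozen factors appear at all, not merely ``possibly''.
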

\begin{proof}
    By \cref{prop:modified_seed}, 
    \begin{align*}
        &A_{\qq}(\bi_{\square}(\bbs); S_{-\infty}\sqcup S_{\infty})=A_{\qq}(\bi_{\dsquare}(\bbs); S_{-\infty}\sqcup S_{\infty})=\bigoplus_{(\lambda, \mu)\in P\oplus P}e'(\mu)(\isigma)^{-1}(A_{\qq}(\bi(\bbs); \varnothing))e(\lambda^{\ast}),\\
        &U_{\qq}(\bi_{\square}(\bbs); S_{-\infty}\sqcup S_{\infty})=U_{\qq}(\bi_{\dsquare}(\bbs); S_{-\infty}\sqcup S_{\infty})=\bigoplus_{(\lambda, \mu)\in P\oplus P}e'(\mu)(\isigma)^{-1}(U_{\qq}(\bi(\bbs); \varnothing))e(\lambda^{\ast}).
    \end{align*}
    By \cite[Theorem B]{QY}, \cite[Theorem A]{OQY}, and \cite[Corollary B]{Dey}, we have 
    \[
    A_{\qq}(\bi(\bbs); \varnothing)=U_{\qq}(\bi(\bbs); \varnothing)=\cO_\qq(G).
    \]
    Therefore, by \cref{thm:qW}, 
    \[
    A_{\qq}(\bi_{\square}(\bbs); S_{-\infty}\sqcup S_{\infty})=U_{\qq}(\bi_{\square}(\bbs); S_{-\infty}\sqcup S_{\infty})=\cO_\qq(\Conf_4^{\{1, 3\}}\sA_G). 
    \]
\end{proof}
\subsection{Quantum seeds for $\cO_\qq(\Conf_3^{\times}\sA_G)$} Next we construct quantum seed for $\cO_\qq(\Conf_3^{\times}\sA_G)$ by restricting $\bi_{\square}(\bbs)$. 
Fix $\bs=(s_1,\dots, s_N), \bs'=(s'_1,\dots, s'_N)\in S(w_0)$. Set $\bbs:=\bs\ast \overline{\bs'}=(s_1,\dots, s_N, \overline{s'_1},\dots, \overline{s'_N})$. Then $\bbs\in S(w_0, w_0)$. 
Write 
\begin{align}
 \iota_{43}^{1\bullet }:=\iota_{p_{1,3,4}}^{\inv},\ \text{where}\ p_{1,3,4}\colon [1,3]\to [1, 4], 1\mapsto 1, 2\mapsto 3, 3\mapsto 4.  \label{eq:triangle_emb}
\end{align}
Set $J(\bs):=S_0\sqcup [1,N]$, and $J_{\triangle}(\bs):=S_{\infty}\sqcup J(\bs)=S_{\infty}\sqcup S_0\sqcup [1,N]$. We regard these sets as subsets of $J_{\square}(\bbs)$ in an obvious manner. 

Let us describe $A_{\square, \bbs; j}$ for $j\in J(\bs)$ explicitly. In this case, $\delta_{\bbs; j}$ does not depend on the choice of $\bs'$. Hence, we write 
\[
\delta_{\bs; j}:=\delta_{\bbs; j}=\begin{cases}
        w_0\varpi_{s}&\text{ if }j=s_0\in S_0,\\
        w_0r_{s_1}\cdots r_{s_{j}}\varpi_{s_j}=r_{s_N}\cdots r_{s_{j+1}}\varpi_{s_j}&\text{ if }j\in [1,N].
    \end{cases}
\]

By \eqref{eq:isigma_A_square}, 
\begin{align*}
\isigma(A_{\square, \bbs; j})
&=\qq^{\frac{1}{2}(\varpi_{s_j}^{\ast}, \varpi_{s_j}^{\ast})-\frac{1}{2}(\delta_{\bs; j}^{\ast}, [\delta_{\bs; j}^{\ast}]_+)}e'([\delta_{\bs; j}^{\ast}]_+)\Delta_{\varpi_{s_j}^{\ast}, \delta_{\bs; j}^{\ast}}e(\varpi_{s_j}^{\ast}).
\end{align*}
On the other hand, by \cref{thm:isigma}, 
\begin{align*}
\isigma(\psi_{\varpi_{s_j}, [\delta_{\bs; j}^{\ast}]_+, [\delta_{\bs; j}]_-})
&=(-1)^{\langle 2\rho^{\vee}, [\delta_{\bs; j}]_-\rangle}\qq^{-(2\rho, [\delta_{\bs; j}]_-)-\frac{1}{2}([\delta_{\bs; j}^{\ast}]_+,[\delta_{\bs; j}^{\ast}]_+)}e'([\delta_{\bs; j}^{\ast}]_+)\Delta_{\varpi_{s_j}^{\ast}, \delta_{\bs; j}^{\ast}}e(\varpi_{s_j}^{\ast}).
\end{align*}
Therefore, 
\begin{align*}
    A_{\square, \bbs; j}&=(-1)^{\langle 2\rho^{\vee}, [\delta_{\bs; j}]_-\rangle}\qq^{(2\rho, [\delta_{\bs; j}]_-)+\frac{1}{2}(\varpi_{s_j}^{\ast}, \varpi_{s_j}^{\ast})+\frac{1}{2}([\delta_{\bs; j}^{\ast}]_-,[\delta_{\bs; j}^{\ast}]_+)}\psi_{\varpi_{s_j}, [\delta_{\bs; j}^{\ast}]_+, [\delta_{\bs; j}]_-}\\
    &=(-1)^{\langle 2\rho^{\vee}, [\delta_{\bs; j}]_-\rangle}\qq^{(2\rho, [\delta_{\bs; j}]_-)+\frac{1}{2}(\varpi_{s_j}^{\ast}, \varpi_{s_j}^{\ast})+\frac{1}{2}([\delta_{\bs; j}^{\ast}]_-,[\delta_{\bs; j}^{\ast}]_+)}\\
    &\phantom{=}\times \xi_{\delta_{\bs; j}}(\varpi_{s_j})\otimes 1\otimes \xi_{w_0[\delta_{\bs; j}^{\ast}]_+}([\delta_{\bs; j}^{\ast}]_+)\otimes \xi_{[\delta_{\bs; j}]_-}([\delta_{\bs; j}]_-)\\
    &\phantom{=}+(\text{other terms}),
\end{align*}
where $\mathrm{(}$other terms$\mathrm{)}$ belong to 
    \[
    \cO(\varpi_{s_j})\ut 1\ut \cO([\delta_{\bs; j}^{\ast}]_+)_{<[\delta_{\bs; j}]_+}\ut \cO([\delta_{\bs; j}]_-)+\cO(\varpi_{s_j})\ut 1\ut \cO([\delta_{\bs; j}^{\ast}]_+)\ut \cO([\delta_{\bs; j}]_-)_{> -[\delta_{\bs; j}]_-}.
    \]
Therefore, for $j\in J_{\triangle}(\bs)$, there uniquely exists $A_{\triangle, \bs; j}\in \cO_\qq(\Conf_3\sA_G)$ such that $\iota_{43}^{1\bullet }(A_{\triangle, \bs; j})=A_{\square, \bbs; j}$. More explicitly, 
\begin{align}
A_{\triangle, \bs; j}&=\begin{cases}
\begin{array}{ll}
(-1)^{\langle 2\rho^{\vee}, [\delta_{\bs; j}]_-\rangle}\qq^{(2\rho, [\delta_{\bs; j}]_-)+\frac{1}{2}(\varpi_{s_j}^{\ast}, \varpi_{s_j}^{\ast})+\frac{1}{2}([\delta_{\bs; j}^{\ast}]_-,[\delta_{\bs; j}^{\ast}]_+)}\\
    \phantom{=}\times \xi_{\delta_{\bs; j}}(\varpi_{s_j})\otimes \xi_{w_0[\delta_{\bs; j}^{\ast}]_+}([\delta_{\bs; j}^{\ast}]_+)\otimes \xi_{[\delta_{\bs; j}]_-}([\delta_{\bs; j}]_-)\\
    \phantom{=}+(\text{other terms})    
\end{array}&\text{if }j\in J(\bs),\\
D_{2; \varpi_s}&\text{if }j=s_{\infty}\in S_{\infty},
\end{cases}\label{eq:A_triangle}
\end{align}
    where $\mathrm{(}$other terms$\mathrm{)}$ in the first case belong to 
    \[
    \cO(\varpi_{s_j})\ut \cO([\delta_{\bs; j}^{\ast}]_+)_{<[\delta_{\bs; j}]_+}\ut \cO([\delta_{\bs; j}]_-)+\cO(\varpi_{s_j})\ut \cO([\delta_{\bs; j}^{\ast}]_+)\ut \cO([\delta_{\bs; j}]_-)_{> -[\delta_{\bs; j}]_-}.
    \]
We set
\begin{align*}
&J_{\triangle}(\bs)^{\uf}:=\{j\in [1,N]\mid j^+\leq N\},\ J_{\triangle}(\bs)^{\fr}:=J_{\triangle}(\bs)\setminus J_{\triangle}(\bs)^{\uf},\\
&\Lambda_{\triangle}(\bs)=(\Lambda_{\triangle, \bs; ij})_{i,j\in J_{\triangle}(\bs)},\  \Lambda_{\triangle,\bs;ij}:=\Lambda_{\square,\bbs;ij}\ \text{for}\ i, j\in J_{\triangle}(\bs),\\
&\cE_{\triangle}(\bs)=(\varepsilon_{\triangle, \bs;ij})_{i\in J_{\triangle}(\bs)^{\uf},j\in J_{\triangle}(\bs)},\ \varepsilon_{\triangle, \bs;ij}:=\varepsilon_{\square, \bbs;ij}\ \text{for}\ i\in J_{\triangle}(\bs)^{\uf},j\in J_{\triangle}(\bs).
\end{align*}
Then these data give a quantum seed for $\cO_\qq(\Conf_3^{\times}\sA_G)$ as follows. 

\begin{thm}\label{thm:cluster_triangle}
Let $\bs\in S(w_0)$. The collection $\bi_{\triangle}(\bs):=(\Lambda_{\triangle}(\bs), \cE_{\triangle}(\bs),(A_{\triangle, \bs; j})_{j\in J_{\triangle}(\bs)})$ forms a quantum seed in $\cF(\cO_\qq(\Conf_3^{\times}\sA_G))$, and  
\[
A_{\qq}(\bi_{\triangle}(\bs); J_{\triangle}(\bs)^{\fr})=U_{\qq}(\bi_{\triangle}(\bs); J_{\triangle}(\bs)^{\fr})=\cO_\qq(\Conf_3^{\times}\sA_G).
\]
Moreover, for $\bs_1, \bs_2\in S(w_0)$, $\bi_{\triangle}(\bs_1)$ and $\bi_{\triangle}(\bs_2)$ are mutation equivalent.
\end{thm}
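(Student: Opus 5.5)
The plan is to transport everything from $\bi_{\square}(\bbs)$, $\bbs=\bs\ast\overline{\bs'}$, through the embedding $\iota_{43}^{1\bullet}$, and to work with frozen-variable localizations.

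\textbf{Step 1: $\bi_{\triangle}(\bs)$ is a quantum seed.} By construction, $\iota_{43}^{1\bullet}(A_{\triangle,\bs;j})=A_{\square,\bbs;j}$ for $j\in J_{\triangle}(\bs)$. Since $\iota_{43}^{1\bullet}$ is an injective algebra homomorphism, the $A_{\triangle,\bs;j}$ $q$-commute according to $\Lambda_{\triangle}(\bs)$. The $A_{\triangle,\bs;j}$ are nonzero, and $D_{2;\varpi_s}$ and $D_{3;\varpi_s}$ in $\Conf_4$ correspond to $D_{2;\varpi_s}$ and $D_{3;\varpi_s}$ in $\Conf_3$ under $\iota_{43}^{1\bullet}$. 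So $\iota_{43}^{1\bullet}$ extends to $\cO_\qq(\Conf_3^{\times}\sA_G)\to\cO_\qq(\Conf_4^{\times}\sA_G)$ and then to skew fields.

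Compatibility of $(\Lambda_{\triangle},\cE_{\triangle})$ needs two checks.
\begin{itemize}
\item For $i\in J_{\triangle}(\bs)^{\uf}$, i.e.\ $i^+\le N$, the exchange vertices of $i$ in $\cE(\bbs)$ lie in $J(\bs)\sqcup S_\infty$. This follows from the explicit formula \eqref{eq:cE_bbs}, since all letters involved are unbarred, together with the proof of \cref{thm:GS_quantum} for the $S_{\pm\infty}$ columns. Hence $\varepsilon_{\square,\bbs;ij}=0$ for $j\notin J_{\triangle}(\bs)$.
\item Given this, the relation $\sum_k \varepsilon_{ik}\Lambda_{kj}=\delta_{ij}d_i$ restricts from $J_{\square}(\bbs)$ to $J_{\triangle}(\bs)$.
\end{itemize}
Finally, algebraic independence and generation of $\cF(\cO_\qq(\Conf_3^{\times}\sA_G))$ follow from a count. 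The degree/GK-dimension of $\Conf_3^\times$ is $N+2|S|$, which equals $|J_{\triangle}(\bs)|$. In addition, leading terms of the distinct variables have distinct multidegrees.

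\textbf{Step 2: mutations commute with $\iota_{43}^{1\bullet}$.} Because unfrozen vertices of $\bi_{\triangle}(\bs)$ only see vertices in $J_{\triangle}(\bs)$, mutation sequences in $J_{\triangle}(\bs)^{\uf}$ applied to $\bi_{\square}(\bbs)$ restrict to the corresponding mutations of $\bi_\triangle(\bs)$. Therefore
\[
\iota_{43}^{1\bullet}\bigl(A_\qq(\bi_\triangle(\bs);J_\triangle(\bs)^{\fr})\bigr)\subset A_\qq(\bi_\square(\bbs);\ldots)
\]
and similarly for the upper algebras, provided the frozen variables involved are invertible.

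\textbf{Step 3: the equalities $A=U=\cO_\qq(\Conf_3^\times\sA_G)$.} I plan three inclusions.
\begin{itemize}
\item $A\subset U$ is standard.
\item $\cO_\qq(\Conf_3^\times)\subset A$. Use $\Conf_3$ as a restricted $\Conf_4$: via $\imath_{4,\qq}$-type formulas, $\iota_{43}^{1\bullet}(\cO_\qq(\Conf_3))$ corresponds to elements of $\cO_\qq(G)$ right-invariant under $U^-$, i.e.\ $\cO_\qq(\sA_G^-)$ extended by $e(\cdot)$. The subseed of $\bi(\bbs)$ on $J(\bs)$ is the Berenstein–Zelevinsky/GLS-type seed for the quantum unipotent cell/base affine space. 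The known result $A=U$ for the quantum coordinate ring of $\sA_G$ localized at frozen variables (the flag-minor seed; e.g.\ the quantum unipotent cell results of Goodearl–Yakimov, or the $\bbs=\bs\ast\overline{\bs'}$ case of \cite{QY}) then gives the algebra equal to $\cO_\qq(\Conf_3^\times\sA_G)$ after transporting by $\isigma$ and inverting $D_{1;\varpi_s}$, $D_{2;\varpi_s}$, $D_{3;\varpi_s}$. These are exactly the frozen variables of $J_\triangle(\bs)^{\fr}$: namely $S_0$, $S_\infty$, and the $j$ with $j^+>N$, which give $D_{1}$ up to the $e(\cdot)$ factors.
\item $U\subset\cO_\qq(\Conf_3^\times)$. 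If $x\in U(\bi_\triangle)$, then $\iota(x)\in U(\bi_\square(\bbs);J_\square^{\fr})=\cO_\qq(\Conf_4^\times)$. Its $\deg$ in the second slot is $0$, and an invariance argument shows that the elements of $\cO_\qq(\Conf_4^\times)$ with zero second degree are exactly $\iota_{43}^{1\bullet}(\cO_\qq(\Conf_3^\times))$. The grading is respected because the seed variables are homogeneous.
\end{itemize}

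\textbf{Step 4: mutation equivalence.} Any two reduced words $\bs_1,\bs_2$ are linked by braid moves. Choosing $\bbs_i=\bs_i\ast\overline{\bs'}$ with a common $\bs'$, the BZ mutation sequences realizing braid moves on the first half only involve vertices of $J(\bs)^{\uf}$. By \cref{thm:GS_quantum} and Step 2 these restrict, giving mutation equivalence of $\bi_\triangle(\bs_1)$ and $\bi_\triangle(\bs_2)$.

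The main obstacle I expect is Step 3: identifying the image of $\iota_{43}^{1\bullet}$ with the zero-second-degree part of $\cO_\qq(\Conf_4^\times)$, and getting the $A=U$ statement for the restricted seed without inverting $S_{-\infty}$-type frozens incorrectly. I would try first a degree-argument using the $P^{\oplus4}$-grading from \cref{prop:qcsgrading}.
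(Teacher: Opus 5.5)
Your compatibility check in Step~1 (rows indexed by $J_{\triangle}(\bs)^{\uf}$ are supported in $J_{\triangle}(\bs)$, so $\sum_k\varepsilon_{ik}\Lambda_{kj}$ restricts) is fine. So are the restriction of mutations in Step~2 and Step~4, which is exactly the paper's argument.

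The gap is in Step~3, and it starts with a false assertion in Step~1: $\iota_{43}^{1\bullet}$ does \emph{not} extend to a map $\cO_\qq(\Conf_3^{\times}\sA_G)\to\cO_\qq(\Conf_4^{\times}\sA_G)$.
\begin{itemize}
\item Since $p_{1,3,4}$ sends slots $(1,2,3)$ to $(1,3,4)$, the elements $D_{2;\lambda},D_{3;\lambda}$ of $\cO_\qq(\Conf_3\sA_G)$ go to $D_{3;\lambda},D_{4;\lambda}$ of $\cO_\qq(\Conf_4\sA_G)$.
\item But $D_{1;\lambda}\in\cO^{\inv}(\lambda^{\ast},\lambda,0)$ goes into the one-dimensional space $\cO^{\inv}(\lambda^{\ast},0,\lambda,0)$ attached to the diagonal $(1,3)$. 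By \cref{thm:isigma}(2), its $\isigma$-image is a multiple of $e'(\lambda)\Delta_{\lambda,\lambda}e(\lambda)$, which is not a unit.
\item Equivalently, the frozen vertices $j\in[1,N]$ of $\bi_{\triangle}(\bs)$ with $j^+>N$ (where $A_{\triangle,\bs;j}$ is a multiple of $D_{1;\varpi_{s_j}^{\ast}}$) are \emph{unfrozen} in $\bi_{\square}(\bbs)$.
\end{itemize}
Hence $A_{\triangle,\bs;j}^{-1}\in U_\qq(\bi_{\triangle}(\bs);J_{\triangle}(\bs)^{\fr})$, but $\iota_{43}^{1\bullet}(A_{\triangle,\bs;j}^{-1})\notin U_\qq(\bi_{\square}(\bbs);J_{\square}(\bbs)^{\fr})=\cO_\qq(\Conf_4^{\times}\sA_G)$. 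So the first implication of your third bullet fails, and the proviso in Step~2 cannot be met.

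The degree characterization you plan to fall back on also fails in the other direction. The element $D_{1;\lambda}D_{2;\lambda^{\ast}}^{-1}\in\cO_\qq(\Conf_4^{\times}\sA_G)$ has second degree $0$ but does not come from $\Conf_3$; classically it depends on the flag underlying $\flA_2$.

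Finally, equal GK-dimensions cannot show that the $A_{\triangle,\bs;j}$ generate $\cF(\cO_\qq(\Conf_3^{\times}\sA_G))$ as a skew field (think of $\Bbbk(x^2)\subset\Bbbk(x)$). Injectivity of $\cT(\Lambda_{\triangle}(\bs))\to\cF$ does follow via the square, but generation needs a real argument.

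The missing step is to carry out your second bullet precisely and directly on $\Conf_3$; this gives everything at once. The paper does it as follows.
\begin{itemize}
\item It uses $\Phisigma=\Phi_3^{\ext}\circ\sigma_3$ together with $\pi_-$ to get $\cO_\qq(\Conf_3^{\times}\sA_G)\xrightarrow{\sim}\cO_\qq(B^-_{\ast})^{\ext}$. Here $D_{1;\lambda},D_{2;\lambda},D_{3;\lambda}$ go to $\qq^{(\lambda,\lambda)/2}\Delta_{\lambda^{\ast},\lambda^{\ast}}e(\lambda^{\ast})$, $e(\lambda)$, and $\Delta_{w_0\lambda,\lambda}$.
\item It computes $\Phisigma(A_{\triangle,\bs;j})=\qq^{\frac12([\delta_{\bs;j}]_+,\delta_{\bs;j})}\Delta^-_{\delta_{\bs;j},\varpi_{s_j}}e([\delta_{\bs;j}]_+)$.
\item Via the relabeling $\widetilde{\sfJ}_{\bs\to\bs^{\op}}$ and $\gamma_{\bs^{\op};j}=\delta_{\bs;\sfJ_{\bs^{\op}\to\bs}(j)}$, it recognizes $\Phisigma(\bi_{\triangle}(\bs))$ as a strict similarity transform of the Berenstein--Zelevinsky seed $\widetilde{\bi}^-(\bs^{\op})$ extended by the frozen $e(\varpi_s)$.
\item The Goodearl--Yakimov equality $A=U=\cO_\qq(B^-_{\ast})$ (\cref{thm:clusterBorel}), its frozen extension \cref{thm:cluster_Be}, and \cref{prop:modified_seed} then give the seed property (including skew-field generation) and $A=U=\cO_\qq(\Conf_3^{\times}\sA_G)$ simultaneously.
\end{itemize}
No inclusion $U\subset\cO_\qq(\Conf_3^{\times}\sA_G)$ has to be routed through $\Conf_4$. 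Working through $\isigma$ instead is awkward, because there the triangle variables carry factors $e(\varpi_{s_j}^{\ast})$ coming from $S_{-\infty}\not\subset J_{\triangle}(\bs)$.
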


The rest of this subsection is devoted to the proof of \cref{thm:cluster_triangle}. First, we recall the Berenstein--Zelevinsky quantum seeds for $\cO_\qq(B^-)$. For $\lambda\in P_+$ and $w, w'\in W$, we write 
\[
\Delta_{w\lambda, w'\lambda}^-:=\pi_-(\Delta_{w\lambda, w'\lambda})\in \cO_\qq(B^-). 
\]
Write $\mathbf{\Delta}_{w_0,e}^-:=\{\qq^{a}\Delta_{w_0\lambda, \lambda}^- \mid a\in \frac{1}{2d}\Z, \lambda\in P_+\}$. It is known that $\mathbf{\Delta}_{w_0,e}^-$ forms an Ore set of $\cO_\qq(B^-)$, and set 
\[
\cO_\qq(B_{\ast}^-):=\cO_\qq(B^-)[(\mathbf{\Delta}_{w_0,e}^-)^{-1}].
\]
See \cite[Definition 9.5]{BZ}. Note that $\cO_\qq(B_{\ast}^-)$ has a  $P\oplus P$-graded algebra structure induced from that of $\cO_\qq(B^-)$.

Write $\bs^\op:=(s_N,\dots, s_1)\in S(w_0)$. For $j\in J(\bs^\op)$\footnote{As sets, $J(\bs^\op)=S_0\sqcup [1, N]=J(\bs)$. However, we distinguish between them to avoid possible confusion. }, set 
\begin{align*}
\Delta_{\bs^\op; j}^-:=\Delta_{\gamma_{\bs^{\op}; j}, \varpi_{s_{N+1-j}}}^-, 
\end{align*}
where
\begin{align}
\gamma_{\bs^{\op}; j}:=\begin{cases}
\varpi_s&\text{if}\ j=s_0\in S_0,\\
r_{s_N}\cdots r_{s_{N+1-j}}\varpi_{s_{N+1-j}}&\text{if}\ j\in [1, N],
\end{cases}\quad\text{and}\quad  
s_{N+1-s_0}:=s\ \text{for}\ s_0\in S_0\label{eq:gamma_op}
\end{align}
We define an injective map $\sfI_{\bs}\colon J(\bs) \to S\times \Z_{\geq 0}$ by 
\begin{align}
\sfI_{\bs}(j):=
\begin{cases}
    (s, 0)&\text{if }j=s_0\in S_0,\\
    (s_j, n[j])&\text{if }j\in [1, N]\text{ where } n[j] := |\{1 \leq i \leq j\mid s_i = s_j\}|.
\end{cases}  \label{eq:sfI_def}
\end{align}
Define $\sfI_{\bs^\op}\colon J(\bs^\op) \to S\times \Z_{\geq 0}$ in the same way. For $s\in S$, we write $n^s(\bs) := |\{j\in  [1,N] \mid s_j = s\}|$. Then, $\Ima \sfI_{\bs}=\Ima \sfI_{\bs^{\op}}=\{(s, m)\mid s\in S, 0\leq m\leq n^s(\bs)\}$. We can define an involution $\sfRev$ on $\Ima \sfI_{\bs}$ by 
\begin{align*}
\sfRev((s, m)):=(s, n^s(\bs)-m)\ \text{for}\ s\in S,\ 0\leq m\leq n^s(\bs).
\end{align*}
We can define mutually inverse bijections $\sfJ_{\bs\to \bs^{\op}}:J(\bs)\to J(\bs^{\op})$ and $\sfJ_{\bs^{\op}\to \bs}:J(\bs^{\op})\to J(\bs)$ by $\sfJ_{\bs\to \bs^{\op}}:=\sfI_{\bs^\op}^{-1}\circ \sfRev \circ \sfI_{\bs}$ and $\sfJ_{\bs^{\op}\to \bs}:=\sfI_{\bs}^{-1}\circ \sfRev\circ \sfI_{\bs^{\op}}$.

Note that 
\begin{align}
    \gamma_{\bs^{\op}; j}=\delta_{\bs; \sfJ_{\bs^{\op}\to \bs}(j)}\ \text{for}\ j\in J(\bs^\op).\label{eq:gamma_delta}
\end{align}
\begin{ex}\label{ex:gamma_delta}
  In the case of type $A_3$, $\bs:=(2,1,2,3,2,1)\in S(w_0)$. Then $\bs^\op:=(1,2,3,2,1,2)$, and $n^1(\bs)=2, n^2(\bs)=3, n^3(\bs)=1$. We have the following correspondence. 
  \[
  \begin{array}{ccccccc}
       J(\bs)& \overset{\sfI_{\bs}}{\longrightarrow}&  \Ima \sfI_{\bs}&\overset{\sfRev}{\longleftrightarrow}& \Ima \sfI_{\bs}&\overset{\sfI_{\bs^\op}}{\longleftarrow}&J(\bs^{\op}) \\
       \rotatebox{90}{$\in$}& &\rotatebox{90}{$\in$}& &\rotatebox{90}{$\in$}& &\rotatebox{90}{$\in$}\\
       1_0&\mapsto &(1,0)&\leftrightarrow&(1,2)&\mapsfrom &5\\
       2_0&\mapsto &(2,0)&\leftrightarrow&(2,3)&\mapsfrom &6\\
       3_0&\mapsto &(3,0)&\leftrightarrow&(3,1)&\mapsfrom &3\\
       1&\mapsto &(2,1)&\leftrightarrow&(2,2)&\mapsfrom &4\\
       2&\mapsto &(1,1)&\leftrightarrow&(1,1)&\mapsfrom &1\\
       3&\mapsto &(2,2)&\leftrightarrow&(2,1)&\mapsfrom &2\\
       4&\mapsto &(3,1)&\leftrightarrow&(3,0)&\mapsfrom &3_0\\
       5&\mapsto &(2,3)&\leftrightarrow&(2,0)&\mapsfrom &2_0\\
       6&\mapsto &(1,2)&\leftrightarrow&(1,0)&\mapsfrom &1_0
  \end{array}
  \]
Moreover, 
\begin{align*}
    &\gamma_{\bs^{\op}; 1_0}=\varpi_1=w_0r_2r_1r_2r_3r_2r_1\varpi_1=\delta_{\bs; 6},\\
    &\gamma_{\bs^{\op}; 2_0}=\varpi_2=w_0r_2r_1r_2r_3r_2r_1\varpi_2=w_0r_2r_1r_2r_3r_2\varpi_2=\delta_{\bs; 5},\\
    &\gamma_{\bs^{\op}; 3_0}=\varpi_3=w_0r_2r_1r_2r_3r_2r_1\varpi_3=w_0r_2r_1r_2r_3\varpi_3=\delta_{\bs; 4},\\
    &\gamma_{\bs^{\op}; 1}=r_1\varpi_1=w_0r_2r_1r_2r_3r_2\varpi_1=w_0r_2r_1\varpi_1=\delta_{\bs; 2},\\
    &\gamma_{\bs^{\op}; 2}=r_1r_2\varpi_2=w_0r_2r_1r_2r_3\varpi_2=w_0r_2r_1r_2\varpi_2=\delta_{\bs; 3},\\
    &\gamma_{\bs^{\op}; 3}=r_1r_2r_3\varpi_3=w_0r_2r_1r_2\varpi_3=w_0\varpi_3=\delta_{\bs; 3_0},\\
    &\gamma_{\bs^{\op}; 4}=r_1r_2r_3r_2\varpi_2=w_0r_2r_1\varpi_2=w_0r_2\varpi_2=\delta_{\bs; 1},\\
    &\gamma_{\bs^{\op}; 5}=r_1r_2r_3r_2r_1\varpi_1=w_0r_2\varpi_1=w_0\varpi_1=\delta_{\bs; 1_0},\\
    &\gamma_{\bs^{\op}; 6}=r_1r_2r_3r_2r_1r_2\varpi_2=w_0\varpi_2=\delta_{\bs; 2_0}.
\end{align*}

\end{ex}
For $j\in J(\bs^\op)$, set 
\begin{align*}
&j^+:=\begin{cases}
    \min\{l\in [1, N]\mid s_{N+1-l}=s \}&\text{if }j=s_0\in S_0,\\
    \min(\{l\in [1, N]\mid l>j, s_{N+1-l}=s_{N+1-j} \}\sqcup \{\infty\})&\text{if }j\in [1, N],
\end{cases}\\
&J(\bs^{\op})^\uf:=\{j\in [1, N]\mid j^+\neq \infty\},\\
&J(\bs^{\op})^\fr:=J(\bs^{\op})\setminus J(\bs^{\op})^\uf=S_0\sqcup \{j\in J(\bs^{\op})\mid j^+=\infty\}.
\end{align*}

Define the $J(\bs^\op) \times J(\bs^\op)$ skew-symmetric matrix $\Lambda^-(\bs^\op)=(\Lambda_{\bs^\op; ij}^-)_{i,j\in J(\bs^\op)}$ by 
\begin{align*}
\Lambda_{\bs^\op; ij}^-:=(\gamma_{\bs^{\op}; i},\gamma_{\bs^{\op}; j})-(\varpi_{s_{N+1-i}},\varpi_{s_{N+1-j}})
\end{align*}
for $i>j$, where we fix an arbitrary total order on $S_0$ and we set $s_0<j$ for all $s\in S$ and $j\in [1, N]$. Note that $\Lambda_{\bs^\op; ij}^-=0$ if $i, j\in S_0$, hence $\Lambda^-(\bs^\op)$ does not depend on the choice of the total order on $S_0$. Define the matrix $\cE^-(\bs^{\op})=(\varepsilon_{\bs^{\op};ij}^-)_{i\in J(\bs^{\op})^\uf,j\in J(\bs^{\op})}$ by
\begin{align*}
    \varepsilon_{\bs^{\op};ij}^-=
    \begin{cases}
        -1 & \text{if}\ i=j^+, \\
        1 & \text{if}\ j=i^+,\\
        -c_{s_{N+1-j},s_{N+1-i}} & \text{if}\ j<i<j^+<i^+,\\
        c_{s_{N+1-j},s_{N+1-i}} &  \text{if}\ i<j<i^+<j^+,\\
        0 & \text{otherwise.}       
    \end{cases}
\end{align*}
By \cite[Theorem 8.3]{BZ}, the collection $\bi^-(\bs^\op):=(\Lambda^-(\bs^\op), \cE^-(\bs^\op),(A_{\bi^-(\bs^\op); j})_{j\in J(\bs^\op)})$ forms a quantum seed. 
We can directly check that $\cE^-(\bs^\op)$ coincides with the exchange matrix of $\bJ(\overline{\bs}^\op)$ defined in \cref{sec:quiver}, by the labeling in \cref{conv:labeling_corresp}. See also \cref{ex:cE^-} below.

\begin{thm}[{\cite[Main Theorem]{GY:BZ}}]\label{thm:clusterBorel}
There exists an algebra isomorphism 
\[
\kappa_{\bs^\op}^-:U_{\qq}(\bi^-(\bs^\op); J(\bs^\op)^{\fr})\simeq \cO_\qq(B_{\ast}^-)
\]
satisfying $\kappa_{\bs^\op}^-(A_{\bi^-(\bs^\op); j})=\Delta_{\bs^\op; j}^-$ for $j\in J(\bs^\op)$. Moreover, $A_{\qq}(\bi^-(\bs^\op); J(\bs^\op)^{\fr})=U_{\qq}(\bi^-(\bs^\op); J(\bs^\op)^{\fr})$. 
\end{thm}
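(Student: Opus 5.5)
We plan to deduce this statement from the Goodearl--Yakimov theorem for quantum double Bruhat cells, rather than prove it from scratch. The argument has three steps: identify the algebra with a quantum double Bruhat cell, match the seed with a Berenstein--Zelevinsky seed, and then transport the Goodearl--Yakimov isomorphism.

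\emph{Step 1 (identify the algebra).} We will identify $\cO_\qq(B_{\ast}^-)$ with the quantum double Bruhat cell $\cO_\qq(G^{e,w_0})$ of \cite[Definition 9.5]{BZ}.
\begin{itemize}
\item The restriction $\widetilde{\pi}_-\colon \cO_\qq(G)\to\cO_\qq(B^-)$ is surjective.
\item It kills exactly the ideal that defines the closure of the cell $B^+ e B^+\cap B^-w_0B^-$. Equivalently, it kills the matrix coefficients $c^{V(\lambda)}(\xi,v)$ with $v\notin \Bbbk v_\lambda$ modulo lower terms. So $\cO_\qq(G)/\ker\widetilde{\pi}_- \cong \cO_\qq(B^-)$.
\item Under this identification, the Ore set $\mathbf{\Delta}_{w_0,e}^-$ is the image of the frozen minors inverted in $\cO_\qq(G^{e,w_0})$.
\end{itemize}
Hence $\pi_-$ induces an isomorphism $\cO_\qq(G^{e,w_0})\xrightarrow{\sim}\cO_\qq(B^-_\ast)$. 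This identification is also implicit in \eqref{eq:AG_B}.

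\emph{Step 2 (match the seed).} We take the double reduced word of $(e,w_0)$ that consists only of barred letters, namely $\overline{\bs^\op}$. For this word we compute the Berenstein--Zelevinsky data $\gamma$, $\delta$, the minors, $\Lambda$ and $\cE$, using the same recipe as for $\bi(\bbs)$ in \cref{subsec:Qseed_square} (with $w_0$ replaced by $e$ on the unbarred side).
\begin{itemize}
\item With only barred letters, the right weight of every minor is a fundamental weight $\varpi_{s_{N+1-j}}$.
\item The left weight is $\gamma_{\bs^\op;j}$ as in \eqref{eq:gamma_op}.
\item After applying $\pi_-$, this yields $\Delta^-_{\bs^\op;j}$.
\item The matrix $\Lambda$ reduces to $\Lambda^-(\bs^\op)$, since only the $(\gamma,\gamma)$ and $(\varpi,\varpi)$ terms survive.
\item Specialising \eqref{eq:cE_bbs} with all $\epsilon_i=-1$ produces exactly the four nonzero cases of $\cE^-(\bs^\op)$.
\end{itemize}
Then \cite[Theorem 8.3]{BZ} shows that $\bi^-(\bs^\op)$ is a quantum seed.

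\emph{Step 3 (transport).} The Main Theorem of \cite{GY:BZ} gives $U_\qq=A_\qq=\cO_\qq(G^{e,w_0})$ for the seed of Step 2, with all frozen variables inverted. Composing this with the isomorphism of Step 1 gives $\kappa^-_{\bs^\op}$, which sends $A_{\bi^-(\bs^\op);j}$ to $\Delta^-_{\bs^\op;j}$. The equality $A_\qq=U_\qq$ is transported along the same isomorphism.

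The main obstacle is convention-matching in Step 2. The paper twists the Berenstein--Zelevinsky minors by the $\ast$-involution and uses $\bs^\op$ instead of $\bs$, whereas \cite{GY:BZ} follows the untwisted BZ labelling. We would handle this by applying the Dynkin/Chevalley-type antiautomorphism of $\cO_\qq(G)$ that exchanges the two conventions and checking that it preserves $\ker\widetilde{\pi}_-$. We would then verify the sign of $\cE^-$ directly on a rank-one example before arguing in general.
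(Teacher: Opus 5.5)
Your route is the paper's. The paper gives no argument of its own: it quotes the Main Theorem of \cite{GY:BZ} for the quantum double Bruhat cell whose closure is $B^-$, after recognizing $\bi^-(\bs^\op)$ as the Berenstein--Zelevinsky seed of the all-barred word $\overline{\bs^\op}$. Your Step~2 is correct: the minors $\Delta_{\gamma_{\bs^\op;j},\varpi_{s_{N+1-j}}}$, the matrix $\Lambda^-(\bs^\op)$, and $\cE^-(\bs^\op)$ read off from \eqref{eq:cE_bbs} with all relevant signs equal to $-1$ all check out.

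Step~1, however, names the wrong cell, and as written it asserts something false.
\begin{itemize}
\item \emph{Which cell.} Barred letters encode the first factor, so $\overline{\bs^\op}$ is a reduced word of $(w_0,e)$, not $(e,w_0)$. The minors you found in Step~2, with nontrivial left index and right index $\varpi$, are those of $G^{w_0,e}=B^+w_0B^+\cap B^-$ in the convention $G^{u,v}=B^+uB^+\cap B^-vB^-$.
\item \emph{Why it matches.} The closure of $G^{w_0,e}$ is $B^-$, and its frozen minors are $\Delta_{w_0\varpi_s,\varpi_s}$ and $\Delta_{\varpi_s,\varpi_s}$. The latter are already invertible in $\cO_\qq(B^-)$ by \eqref{eq:AG_B}. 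So inverting $\mathbf{\Delta}^-_{w_0,e}$ reproduces exactly the Berenstein--Zelevinsky localization.
\item \emph{Why your version fails.} The cell you wrote, $B^+\cap B^-w_0B^-=G^{e,w_0}$, lies in $B^+$. Its frozen minors include $\Delta_{\varpi_s,w_0\varpi_s}$, which $\widetilde{\pi}_-$ kills: $\Uq^{\leq 0}.v_{w_0\varpi_s}=\Bbbk v_{w_0\varpi_s}$ and $\xi_{\varpi_s}$ vanishes there. So $\pi_-$ cannot induce an isomorphism out of that localization.
\item \emph{The kernel.} Your description of $\ker\widetilde{\pi}_-$ is also off. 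For example, $\Delta_{w_0\lambda,w_0\lambda}=c^{V(\lambda)}(\xi_{w_0\lambda},v_{w_0\lambda})$ has $v\notin\Bbbk v_\lambda$ but is not killed. What you actually need to check against \cite{GY:BZ} is that their defining ideal for the $(w_0,e)$ cell equals $\ker\widetilde{\pi}_-$.
\item \emph{No twist.} No $\ast$-twist enters here. Unlike the variables of $\bi(\bbs)$, the $\Delta^-_{\bs^\op;j}$ are the untwisted Berenstein--Zelevinsky minors of the word $(-s_N,\dots,-s_1)$ for $(w_0,e)$. The antiautomorphism you propose is therefore unnecessary.
\end{itemize}
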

In the following, we identify $U_{\qq}(\bi^-(\bs^\op); J(\bs^\op)^{\fr})$ (and $A_{\qq}(\bi^-(\bs^\op); J(\bs^\op)^{\fr})$) with $\cO_\qq(B_{\ast}^-)$ through $\kappa_{\bs^\op}^-$. In particular, we will write $A_{\bi^-(\bs^\op); j}=\Delta_{\bs^\op; j}^-$ and regard $\bi^-(\bs^\op)$ as a quantum seed in $\cF(\cO_\qq(B_{\ast}^-))$. We extend the quantum structure of $\cO_\qq(B_\ast^-)$ to that of $\cO_\qq(B_\ast^-)^{\ext}$, as follows. We set
\begin{align*}
&\widetilde{J}(\bs^\op):=S_{\infty}\sqcup J(\bs^\op)=S_{\infty}\sqcup S_0\sqcup [1,N],\\
&\widetilde{J}(\bs^\op)^{\uf}:=J(\bs^\op)^{\uf},\ \widetilde{J}(\bs^\op)^{\fr}:=\widetilde{J}(\bs^\op)\setminus \widetilde{J}(\bs^\op)^{\uf}=S_{\infty}\sqcup J(\bs^\op)^{\fr},\\
&\widetilde{\Lambda}^-(\bs^\op)=(\widetilde{\Lambda}_{\bs^\op; ij}^-)_{i,j\in \widetilde{J}(\bs^\op)},\  \widetilde{\Lambda}_{\bs^\op; ij}^-:=\begin{cases}
    \Lambda_{\bs^\op; ij}^-&\text{if }i, j\in  J(\bs^\op),\\
    0&\text{if }i, j\in  S_{\infty},\\
    (\varpi_s, \gamma_{\bs^\op; j})&\text{if }i=s_{\infty}\in  S_{\infty},\ j\in  J(\bs^\op),\\
    -(\gamma_{\bs^\op; i}, \varpi_s)&\text{if }i\in  J(\bs^\op), j=s_{\infty}\in  S_{\infty}.
\end{cases}\\
&\widetilde{\cE}^-(\bs^\op)=(\widetilde{\varepsilon}_{\bs^\op;ij}^-)_{i\in \widetilde{J}(\bs^\op)^{\uf},j\in \widetilde{J}(\bs^\op)},\ \widetilde{\varepsilon}_{\bs^\op;ij}^-:=
\begin{cases}
    \varepsilon_{\bs^\op;ij}^-&\text{if }i\in \widetilde{J}(\bs^\op)^{\uf}, j\in J(\bs^\op),\\
    0&\text{if }i\in \widetilde{J}(\bs^\op)^{\uf}, j\in  S_{\infty}.
\end{cases}\\
&\widetilde{\Delta}_{\bs^\op; j}^-:=
\begin{cases}
\Delta_{\bs^\op; j}^-&\text{if }j\in  J(\bs^\op),\\
e(\varpi_s)&\text{if }j=s_{\infty}\in  S_{\infty}.
\end{cases}
\end{align*}
Note that, by \cref{thm:clusterBorel} and the definition of $\cO_\qq(B_{\ast}^-)^{\ext}$, we have 
\begin{align*}
\widetilde{\Delta}_{\bs^\op; i}^-\widetilde{\Delta}_{\bs^\op; j}^-=\qq^{\widetilde{\Lambda}_{\bs^\op; ij}^-}\widetilde{\Delta}_{\bs^\op; j}^-\widetilde{\Delta}_{\bs^\op; i}^-
\end{align*}
for $i, j\in \widetilde{J}(\bs^\op)$. The following theorem can be proved in the same way as \cref{thm:cluster_dsquare}. 
\begin{thm}\label{thm:cluster_Be}
The collection $\widetilde{\bi}^-(\bs^\op):=(\widetilde{\Lambda}^-(\bs^\op), \widetilde{\cE}^-(\bs^\op),(\Delta_{\bs^\op; j}^-)_{j\in \widetilde{J}(\bs^\op)})$ forms a quantum seed in $\cF(\cO_\qq(B_{\ast}^-)^{\ext})$, and  
\[
A_{\qq}(\widetilde{\bi}^-(\bs^\op); \widetilde{J}(\bs^\op)^{\fr})=U_{\qq}(\widetilde{\bi}^-(\bs^\op); \widetilde{J}(\bs^\op)^{\fr})=\cO_\qq(B_{\ast}^-)^{\ext}.
\]
\end{thm}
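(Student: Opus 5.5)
The plan is to mimic the proof of \cref{thm:cluster_dsquare} verbatim, with $\cO_\qq(G^{w_0,w_0})$ replaced by $\cO_\qq(B_\ast^-)$, $\bi(\bbs)$ replaced by $\bi^-(\bs^\op)$, and only one extension $e(\cdot)$ added instead of two.

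\textbf{Step 1: quantum torus.} By \cref{thm:clusterBorel}, the elements $\Delta^-_{\bs^\op;j}$, $j\in J(\bs^\op)$, generate $\cF(\cO_\qq(B_\ast^-))$ as a skew field. Their Laurent monomials span a quantum torus $\cT(\Lambda^-(\bs^\op))$. By the definition of $\cO_\qq(B_\ast^-)^{\ext}$, $\Delta^-_{\bs^\op;j}$ is homogeneous for the left weight $\wt$, with weight $-\gamma_{\bs^\op;j}$. Hence
\[
e(\varpi_s)\Delta^-_{\bs^\op;j}=\qq^{(\varpi_s,\gamma_{\bs^\op;j})}\Delta^-_{\bs^\op;j}e(\varpi_s),
\]
which is exactly the extended entry of $\widetilde{\Lambda}^-(\bs^\op)$. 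The sign convention must be checked against the displayed $q$-commutation relation. Adjoining the mutually commuting $e(\varpi_s)^{\pm1}$ therefore gives the quantum torus $\cT(\widetilde{\Lambda}^-(\bs^\op))$, and its skew field of fractions is $\cF(\cO_\qq(B_\ast^-)^{\ext})$.

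\textbf{Step 2: compatibility.} For $i\in\widetilde J(\bs^\op)^\uf$, the $X$-variable $X_{\widetilde\bi^-(\bs^\op);i}$ equals $X_{\bi^-(\bs^\op);i}$, since $\widetilde\varepsilon^-_{ij}=0$ for $j\in S_\infty$. Compatibility against $j\in J(\bs^\op)$ is inherited from $\bi^-(\bs^\op)$ being a quantum seed. Against $j=s_\infty$, it suffices that $X_{\bi^-(\bs^\op);i}$ has left weight zero, i.e.
\[
\sum_{j\in J(\bs^\op)}\varepsilon^-_{\bs^\op;ij}\gamma_{\bs^\op;j}=0.
\]
This is the analogue of the weight-zero statement \cref{prop:weight0} used for $\bi(\bbs)$. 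It holds because $X$-variables are homogeneous of degree zero for both gradings of a Berenstein--Zelevinsky seed. Alternatively, it can be checked directly from the exchange relations: the two monomials in $A_iA_i'=\prod A_j^{[\varepsilon_{ij}]_+}+\prod A_j^{[-\varepsilon_{ij}]_+}$ must have the same weight, since all quantum minors are homogeneous.

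\textbf{Step 3: the equalities.} Because the $S_\infty$ vertices are frozen, invertible and isolated, every mutation sequence in unfrozen directions acts on $\widetilde\bi^-(\bs^\op)$ as on $\bi^-(\bs^\op)$, leaving the $e(\varpi_s)$ untouched. Hence
\[
A_\qq(\widetilde\bi^-(\bs^\op);\widetilde J(\bs^\op)^\fr)=A_\qq(\bi^-(\bs^\op);J(\bs^\op)^\fr)[e(\varpi_s)^{\pm1}\mid s\in S]=\cO_\qq(B_\ast^-)^{\ext}
\]
by \cref{thm:clusterBorel}.

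For the upper algebra, each $\cT$ of a mutated seed is $\cT(\text{old})\otimes$ the $e$-torus, compatibly with the twisted multiplication. The intersection therefore factors after decomposing by $e$-degree: an element lies in every torus iff each $e(\lambda)$-component, multiplied by $e(-\lambda)$, lies in every torus of $\bi^-(\bs^\op)$. This uses the $P$-grading by $e$-degree on $\cO_\qq(B_\ast^-)^{\ext}=\bigoplus_\lambda\cO_\qq(B_\ast^-)e(\lambda)$. This yields $U_\qq=U_\qq(\bi^-(\bs^\op))[e^{\pm}]=\cO_\qq(B_\ast^-)^{\ext}$.

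The main obstacle is the weight identity in Step 2 and the bookkeeping of signs in $\widetilde\Lambda^-$. I would settle both by invoking the $\Uq^0$-homogeneity of $\Delta^-_{\gamma,\varpi}$, as in the Double Bruhat case.
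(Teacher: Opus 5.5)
Your proposal is correct and follows the paper's route. The paper proves this theorem only by saying it ``can be proved in the same way as \cref{thm:cluster_dsquare}'', and that is the argument you carry out: the quantum torus check, compatibility at the adjoined frozen vertices via the weight-zero property (\cref{prop:weight0} applied to the grading of $\cO_\qq(B_\ast^-)$ from \cref{thm:clusterBorel}), and adjoining $e(\varpi_s)^{\pm1}$ to both $A_\qq$ and $U_\qq$.

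Your $e$-degree argument for the upper cluster algebra fills in a detail the paper leaves implicit. It becomes rigorous inside $\cO_\qq(B_\ast^-)^{\ext}$ localized at $\cO_\qq(B_\ast^-)\setminus\{0\}$, which is a $P$-graded ring whose degree-zero part contains all the relevant quantum tori.
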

\begin{ex}\label{ex:cE^-}
In the case of type $A_3$ and $\bs=(2,1,2,3,2,1)$, the quantum seed $\widetilde{\bi}^-(\bs^\op)$ is described by \cref{ex:SST_triangle} (above). Here we express the exchange matrices $\widetilde{\cE}^-(\bs^\op)$ by a quiver as in \cref{sec:quiver} and label each vertex with the corresponding quantum cluster variable.

\end{ex}
Let us relate $\cO_\qq(\Conf_3^{\times}\sA_G)$ with $\cO_\qq(B_{\ast}^-)^{\ext}$. Recall that, by \eqref{eq:Phi_ke} and \eqref{eq:q-cyclic_shift}, we have an $\Bbbk$-algebra isomorphism 
\begin{align*}
\Phisigma:=\Phi_{3}^{\ext}\circ \sigma_3\colon\cO_\qq(\Conf_3^{\{2\}}\sA_G)\xrightarrow{\sim} \cO_\qq(\sA_G)^{\ext},
\end{align*}
determined by 
\begin{align}
    \xi(\lambda_1)\otimes \xi_{w_0\lambda_2}(\lambda_2)\otimes \xi_{\lambda_3}(\lambda_3)+(\text{other terms})\mapsto (-1)^{-\langle 2\rho^{\vee}, \lambda_3\rangle}\qq^{-( 2\rho, \lambda_3)-(\lambda_3, \lambda_3)/2}\xi(\lambda_1) e(\lambda_2^{\ast}),\label{eq:Phisigma_formula} 
\end{align}
where $\mathrm{(}$other terms$\mathrm{)}$ belong to 
    \[
    \cO(\lambda_1)\ut \cO(\lambda_2)_{<\lambda_2^{\ast}}\ut \cO(\lambda_3)+\cO(\lambda_1)\ut \cO(\lambda_2)\ut \cO(\lambda_3)_{>-\lambda_3}. 
    \]
\cref{t:invstr}, \eqref{eq:q-cyclic_shift}, and \cref{cor:D_lambda} imply that  
\begin{align*}
D_{1; \lambda}&=\qq^{(\lambda, \lambda)/2}\xi_{\lambda^{\ast}}(\lambda^{\ast})\otimes \xi_{w_0\lambda}(\lambda)\otimes 1+(\text{other terms}),\\
D_{2; \lambda}&=(-1)^{\langle 2\rho^{\vee}, \lambda\rangle}\qq^{(\lambda, \lambda)/2+(2\rho, \lambda)}\cdot 1\otimes \xi_{-\lambda}(\lambda^{\ast})\otimes \xi_{\lambda}(\lambda)+(\text{other terms}),\\
D_{3; \lambda}&=(-1)^{\langle 2\rho^{\vee}, \lambda\rangle}\qq^{(\lambda, \lambda)/2+(2\rho, \lambda)} \xi_{w_0\lambda}(\lambda)\otimes 1\otimes \xi_{\lambda^{\ast}}(\lambda^{\ast})+(\text{other terms}),
\end{align*}
where $\mathrm{(}$other terms$\mathrm{)}$ belong to the similar spaces as above. Therefore, \eqref{eq:Phisigma_formula} implies that 
\begin{align}
&\Phisigma(D_{1; \lambda})=\qq^{(\lambda, \lambda)/2}\Delta_{\lambda^{\ast}, \lambda^{\ast}}e(\lambda^{\ast}),\label{eq:Phisigma_D1}\\    
&\Phisigma(D_{2; \lambda})=e(\lambda),\label{eq:Phisigma_D2}\\    
&\Phisigma(D_{3; \lambda})=\Delta_{w_0\lambda, \lambda}.  \label{eq:Phisigma_D3} 
\end{align}
Therefore, \eqref{eq:AG_B} implies that $\Phisigma$ can be extended to 
\[
\Phisigma\colon\cO_\qq(\Conf_3^{\times}\sA_G)\xrightarrow{\sim} \cO_\qq(\sA_G)^{\ext}[\mathbf{\Delta}_{w_0,e}^{-1}, \mathbf{\Delta}_{e,e}^{-1}]\xrightarrow{\sim}\cO_\qq(B_{\ast}^-)^{\ext},
\]
where the second isomorphism is induced from $\pi_-$. 

\begin{proof}[{Proof of \cref{thm:cluster_triangle}}]
    Let us show the assertion in the first sentence. By \cref{thm:cluster_Be} and \cref{prop:modified_seed} (1), it suffices to show that 
    \[
    \Phisigma(\bi_{\triangle}(\bs)):=(\Lambda_{\triangle}(\bs), \cE_{\triangle}(\bs),(\Phisigma(A_{\triangle, \bs; j}))_{j\in J_{\triangle}(\bs)})
    \]
    is a strict similarity transform of $\widetilde{\bi}^-(\bs^\op)$, up to a suitable identification between $J_{\triangle}(\bs)$ and $\widetilde{J}(\bs^\op)$. By \eqref{eq:A_triangle} and \eqref{eq:Phisigma_formula},
    \begin{align}
\Phisigma(A_{\triangle, \bs; j})&=\begin{cases}
\qq^{\frac{1}{2}([\delta_{\bs; j}]_+,\delta_{\bs; j})}\Delta_{\delta_{\bs; j}, \varpi_{s_j}}^-e([\delta_{\bs; j}]_+)&\text{if }j\in J(\bs),\\
e(\varpi_s)&\text{if }j=s_{\infty}\in S_{\infty}.
\end{cases}\label{eq:Phisigma_cluster}
\end{align}
Here, note that 
\[
\frac{1}{2}(\varpi_{s_j}^{\ast}, \varpi_{s_j}^{\ast})+\frac{1}{2}([\delta_{\bs; j}^{\ast}]_-,\delta_{\bs; j}^{\ast})=
\frac{1}{2}(\varpi_{s_j}^{\ast}, \varpi_{s_j}^{\ast})-\frac{1}{2}(\delta_{\bs; j}^{\ast},\delta_{\bs; j}^{\ast})+\frac{1}{2}([\delta_{\bs; j}^{\ast}]_+,\delta_{\bs; j}^{\ast})=\frac{1}{2}([\delta_{\bs; j}]_+,\delta_{\bs; j}).
\]
Therefore, if we identify $J_{\triangle}(\bs)$ with $\widetilde{J}(\bs^\op)$ through the bijection
\[
\widetilde{\sfJ}_{\bs\to \bs^{\op}}:J_{\triangle}(\bs)\to \widetilde{J}(\bs^\op),\ 
\widetilde{\sfJ}_{\bs\to \bs^{\op}}(j):=\begin{cases}
    \sfJ_{\bs\to \bs^{\op}}(j)&\text{if}\ j\in J(\bs),\\
    j&\text{if}\ j\in S_{\infty},
\end{cases}
\]
which maps $J_{\triangle}(\bs)^\fr$ to $\widetilde{J}(\bs^\op)^\fr$, our claim follows from \eqref{eq:varepsilon_square_def}, \eqref{eq:gamma_delta}, and the definitions of $\Lambda_{\triangle}(\bs)$ and $\cE_{\triangle}(\bs)$. See also the construction of $\bJ_{\std}(\bs), \bJ_{\std}(\overline{\bs}^{\op})$ in \cref{sec:quiver} and \cref{ex:SST_triangle} below. 

 The mutation equivalence among $\bi_{\triangle}(\bs), \bs\in S(w_0)$ follows from \cref{thm:GS_quantum} and the fact that the mutation sequence connecting $\bi_{\square}(\bs_1\ast\overline{\bs'})$ and $\bi_{\square}(\bs_2\ast\overline{\bs'})$ for $\bs_1, \bs_2\in S(w_0)$ consists only of mutations at the vertices in $J_{\triangle}(\bs)^\uf$ (see \cite[Proposition 3.7]{SW}, \cite[Theorem 4.10]{QY}).
\end{proof}    
\begin{ex}\label{ex:SST_triangle}
In the case of type $A_3$ and $\bs=(2,1,2,3,2,1)$, the relation between $\widetilde{\bi}^-(\bs^\op)$ and $\Phisigma(\bi_{\triangle}(\bs))$ is shown in \cref{fig:SST_triangle}. 
    \begin{figure}[ht]
    \centering
\begin{tikzpicture}
\def\R{4}
\def\H{1.5}
\node(V10) at (-0.5*\R,0) {$\Delta_{w_0\varpi_1, \varpi_1}^-$};
\node(V11) at (-2.5*\R,0) {$\Delta_{r_1\varpi_1, \varpi_1}^-$};
\node(V12) at (-3.5*\R,0) {$\Delta_{\varpi_1, \varpi_1}^-$};
\node(V20) at (0,\H) {$\Delta_{w_0\varpi_2, \varpi_2}^-$};
\node(V21) at (-\R,\H) {$\Delta_{r_1r_2r_3r_2\varpi_2, \varpi_2}^-$};
\node(V22) at (-2*\R,\H) {$\Delta_{r_1r_2\varpi_2, \varpi_2}^-$};
\node(V23) at (-3*\R,\H) {$\Delta_{\varpi_2, \varpi_2}^-$};
\node(V30) at (-1.5*\R,2*\H) {$\Delta_{w_0\varpi_3, \varpi_3}^-$};
\node(V31) at (-2.5*\R,2*\H) {$\Delta_{\varpi_3, \varpi_3}^-$};
\qarrow{V12}{V11};
\qarrow{V11}{V10};
\qarrow{V23}{V22};
\qarrow{V22}{V21};
\qarrow{V21}{V20};
\qarrow{V31}{V30};
\qarrow{V10}{V21};
\qarrow{V21}{V11};
\qarrow{V11}{V23};
\qarrow{V30}{V22};
\qarrow{V22}{V31};
\qdarrow{V20}{V10};
\qdarrow{V23}{V12};
\qdarrow{V20}{V30};
\qdarrow{V31}{V23};
{\color{myblue}
\node(Y1) at (-3*\R,-\H) {$e(\varpi_1)$};
\node(Y2) at (-1.5*\R,-\H) {$e(\varpi_3)$};
\node(Y3) at (-0.5*\R,-\H) {$e(\varpi_2)$};
}
\begin{scope}[xshift=-3.5*\R cm, yshift=-4.5*\H cm]
\def\R{4}
\def\H{1.5}
\node(V10) at (0.5*\R,0) {$\Delta_{w_0\varpi_1, \varpi_1}^-$};
\node(V11) at (1.5*\R,0) {$\Delta_{r_1\varpi_1, \varpi_1}^-e(\varpi_2)$};
\node(V12) at (3.5*\R,0) {$\Delta_{\varpi_1, \varpi_1}^-e(\varpi_1)$};
\node(V20) at (0,\H) {$\Delta_{w_0\varpi_2, \varpi_2}^-$};
\node(V21) at (\R,\H) {$\Delta_{r_1r_2r_3r_2\varpi_2, \varpi_2}^-e(\varpi_2)$};
\node(V22) at (2*\R,\H) {$\Delta_{r_1r_2\varpi_2, \varpi_2}^-e(\varpi_3)$};
\node(V23) at (3*\R,\H) {$\Delta_{\varpi_2, \varpi_2}^-e(\varpi_2)$};
\node(V30) at (0.5*\R,2*\H) {$\Delta_{w_0\varpi_3, \varpi_3}^-$};
\node(V31) at (2.5*\R,2*\H) {$\Delta_{\varpi_3, \varpi_3}^-e(\varpi_3)$};
\qarrow{V12}{V11};
\qarrow{V11}{V10};
\qarrow{V23}{V22};
\qarrow{V22}{V21};
\qarrow{V21}{V20};
\qarrow{V31}{V30};
\qarrow{V10}{V21};
\qarrow{V21}{V11};
\qarrow{V11}{V23};
\qarrow{V30}{V22};
\qarrow{V22}{V31};
\qdarrow{V20}{V10};
\qdarrow{V23}{V12};
\qdarrow{V20}{V30};
\qdarrow{V31}{V23};
{\color{myblue}
\node(Y1) at (3*\R,-\H) {$e(\varpi_1)$};
\node(Y2) at (1.5*\R,-\H) {$e(\varpi_3)$};
\node(Y3) at (0.5*\R,-\H) {$e(\varpi_2)$};
\qarrow{V20}{Y3};
\qarrow{Y3}{V21};
\draw[qarrow] (V21) to[bend right=15] (Y2);
\draw[qarrow] (Y2) to[bend right=15] (V22);
\qarrow{V11}{Y1};
\qarrow{Y1}{V12};
\qdarrow{Y2}{Y3};
\draw[qarrow,dashed] (Y1) to[bend left=5] (Y3);
}
\end{scope}
\end{tikzpicture}
    \caption{$\widetilde{\bi}^-(\bs^\op)$ (above) and $\Phisigma(\bi_{\triangle}(\bs))$ (below).}
    \label{fig:SST_triangle}
\end{figure}
\end{ex}
\begin{ex}\label{ex:triangle_smallrak}
    Let us describe the quantum seeds for $\cO_{\qq}(\Conf_3^{\times}\sA_{SL_2})$ (type $A_1$) and  $\cO_{\qq}(\Conf_3^{\times}\sA_{SL_3})$ (type $A_2$). 
\begin{enumerate}
    \item
    In the case of type $A_1$, $S(w_0)=\{(1)\}$. Hence the quantum seed $\bi_{\triangle}((1))$ is described as follows. 
\[
\begin{tikzpicture}
\def\R{2.5}
\def\H{2}
\node(V10) at (0,0) {$A_{\triangle, (1); 1_0}$};
\node(V11) at (\R,0) {$A_{\triangle, (1); 1}$};
\qarrow{V11}{V10};
{\color{myblue}
\node(Y1) at (0.5*\R,-0.8*\H) {$A_{\triangle, (1); 1_{\infty}}$};
\qarrow{V10}{Y1};
\qarrow{Y1}{V11};
}
\end{tikzpicture}
\]    
All of these variables are frozen, namely $J_{\triangle}((1))=J_{\triangle}((1))^\fr$. By \eqref{eq:Phisigma_D1}, \eqref{eq:Phisigma_D2}, \eqref{eq:Phisigma_D3} and \eqref{eq:Phisigma_cluster}, we have 
\[
A_{\triangle, (1); 1_0}=D_{3; \varpi_1},\quad A_{\triangle, (1); 1}=D_{1; \varpi_1},\quad 
A_{\triangle, (1); 1_{\infty}}=D_{2; \varpi_1}. 
\]
By \eqref{eq:Phisigma_D1}, \eqref{eq:Phisigma_D2}, \eqref{eq:Phisigma_D3}, the $\qq$-commutation relations among these variables are given as follows. 
\begin{align*}
    &A_{\triangle, (1); 1_0}A_{\triangle, (1); 1}=\qq^{-\frac{1}{2}} A_{\triangle, (1); 1}A_{\triangle, (1); 1_0},\\
    &A_{\triangle, (1); 1}A_{\triangle, (1); 1_{\infty}}=\qq^{-\frac{1}{2}} A_{\triangle, (1); 1_{\infty}}A_{\triangle, (1); 1},\\
    &A_{\triangle, (1); 1_{\infty}}A_{\triangle, (1); 1_0}=\qq^{-\frac{1}{2}} A_{\triangle, (1); 1_0}A_{\triangle, (1); 1_{\infty}}.
\end{align*}
There relations agree with the Muller's quantum cluster algebra \cite{Muller} (resp. Jordan--Le--Schrader--Shapiro's one \cite[Corollary 3.56]{JLSS}) by identifying $\qq^{-1/2}$ (resp. $\qq^{-1}$) with the quantum parameters \emph{loc.~cit}.
    \item
    In the case of type $A_2$, $S(w_0)=\{\bs_1:=(1, 2, 1),\ \bs_2:=(2, 1, 2)\}$. The quantum seed $\bi_{\triangle}(\bs_1)$ is described as follows. 
\[
\begin{tikzpicture}
\def\R{2.5}
\def\H{2}
\node(V10) at (0,0) {$A_{\triangle, \bs_1; 1_0}$};
\node(V11) at (\R,0) {$A_{\triangle, \bs_1; 1}$};
\node(V12) at (2*\R,0) {$A_{\triangle, \bs_1; 3}$};
\node(V20) at (0.5*\R,0.8*\H) {$A_{\triangle, \bs_1; 2_0}$};
\node(V21) at (1.5*\R,0.8*\H) {$A_{\triangle, \bs_1; 2}$};
\qarrow{V11}{V10};
\qarrow{V12}{V11};
\qarrow{V21}{V20};
\qdarrow{V10}{V20};
\qarrow{V20}{V11};
\qarrow{V11}{V21};
\qdarrow{V21}{V12};
{\color{myblue}
\node(Y1) at (0.5*\R,-0.8*\H) {$A_{\triangle, \bs_1; 2_{\infty}}$};
\node(Y2) at (1.5*\R,-0.8*\H) {$A_{\triangle, \bs_1; 1_{\infty}}$};
\qarrow{V10}{Y1};
\qarrow{Y1}{V11};
\qarrow{V11}{Y2};
\qarrow{Y2}{V12};
\qdarrow{Y2}{Y1};
}
\end{tikzpicture}
\]    
The skew-symmetric matrix $\Lambda_{\triangle}(\bs_1)$ is equal to 
\[
\begin{array}{c}
    1_0\\
    2_0\\
   1\\
   2\\
   3\\
   1_{\infty}\\
   2_{\infty}
\end{array}
\left[
\begin{array}{ccccccc}
    0&0&-1/3&-1/3&-2/3&1/3&2/3\\
    0&0&1/3&-2/3&-1/3&2/3&1/3\\
    1/3&-1/3&0&1/3&-1/3&1/3&-1/3\\
    1/3&2/3&-1/3&0&0&-1/3&-2/3\\
    2/3&1/3&1/3&0&0&-2/3&-1/3\\
    -1/3&-2/3&-1/3&1/3&2/3&0&0\\
    -2/3&-1/3&1/3&2/3&1/3&0&0
\end{array}
\right].
\]
We have $J_{\triangle}(\bs_1)^{\uf}=\{1\}$, and 
\begin{align}\label{eq:A2_exchange}
A_{\triangle, \bs_1; 1}A_{\mu_1(\bi_{\triangle}(\bs_1)); 1}&=\qq^{\frac{2}{3}}A_{\triangle, \bs_1; 1_0}A_{\triangle, \bs_1; 2}A_{\triangle, \bs_1; 1_{\infty}}+\qq^{-\frac{1}{3}}A_{\triangle, \bs_1; 2_0}A_{\triangle, \bs_1; 3}A_{\triangle, \bs_1; 2_{\infty}}.
\end{align}
Then we have $A_{\mu_1(\bi_{\triangle}(\bs_1)); 1}=A_{\triangle, \bs_2; 1}$. Indeed, we can show this equality by $\Phisigma$ and the equality
\[
\Delta_{r_1\varpi_1}^-\Delta_{r_2\varpi_2, \varpi_2}^-=\qq\Delta_{w_0\varpi_1, \varpi_1}^-\Delta_{\varpi_2, \varpi_2}^-+\Delta_{w_0\varpi_2, \varpi_2}^-\Delta_{\varpi_1, \varpi_1}^-
\]
in $\cO_\qq(B^-)$. We leave the details of the calculation to the reader. Hence $\mu_1(\bi_{\triangle}(\bs_1))$ is described as follows. 
\[
\begin{tikzpicture}
\def\R{2.5}
\def\H{2}
\node(V10) at (0,0) {$A_{\triangle, \bs_1; 1_0}$};
\node(V11) at (\R,0) {$A_{\triangle, \bs_2; 1}$};
\node(V12) at (2*\R,0) {$A_{\triangle, \bs_1; 3}$};
\node(V20) at (0.5*\R,0.8*\H) {$A_{\triangle, \bs_1; 2_0}$};
\node(V21) at (1.5*\R,0.8*\H) {$A_{\triangle, \bs_1; 2}$};
\qarrow{V10}{V11};
\qarrow{V11}{V12};
\qdarrow{V20}{V10};
\qarrow{V11}{V20};
\qarrow{V21}{V11};
\qdarrow{V12}{V21};
\draw[qarrow,bend left] (V12) to (V10);
{\color{myblue}
\node(Y1) at (0.5*\R,-0.8*\H) {$A_{\triangle, \bs_1; 2_{\infty}}$};
\node(Y2) at (1.5*\R,-0.8*\H) {$A_{\triangle, \bs_1; 1_{\infty}}$};
\qarrow{V11}{Y1};
\qarrow{Y2}{V11};
\qdarrow{Y1}{Y2};
\draw[qarrow,bend right] (Y1) to (V21);
\draw[qarrow,bend right] (V20) to (Y2);
}
\end{tikzpicture}\quad \raisebox{1.8cm}{$=$}\quad
\begin{tikzpicture}
\def\R{2.5}
\def\H{2}
\node(V10) at (0.5*\R,0) {$A_{\triangle, \bs_2; 1_0}$};
\node(V11) at (\R,0.8*\H) {$A_{\triangle, \bs_2; 1}$};
\node(V12) at (1.5*\R,0) {$A_{\triangle, \bs_2; 2}$};
\node(V20) at (0,0.8*\H) {$A_{\triangle, \bs_2; 2_0}$};
\node(V21) at (2*\R,0.8*\H) {$A_{\triangle, \bs_2; 3}$};
\qarrow{V10}{V11};
\qarrow{V11}{V12};
\qdarrow{V20}{V10};
\qarrow{V11}{V20};
\qarrow{V21}{V11};
\qdarrow{V12}{V21};
\qarrow{V12}{V10};
{\color{myblue}
\node(Y1) at (1.5*\R,-0.8*\H) {$A_{\triangle, \bs_2; 2_{\infty}}$};
\node(Y2) at (0.5*\R,-0.8*\H) {$A_{\triangle, \bs_2; 1_{\infty}}$};
\qarrow{V11}{Y1};
\qarrow{Y2}{V11};
\qdarrow{Y1}{Y2};
\qarrow{Y1}{V21};
\qarrow{V20}{Y2};
}
\end{tikzpicture}
\]
Therefore, $\mu_1(\bi_{\triangle}(\bs_1))=\bi_{\triangle}(\bs_2)$. 
The relation \eqref{eq:A2_exchange} agrees with the quantum cluster algebra of Ishibashi--Yuasa \cite{IYsl3} by identifying $\qq^{-1/3}=q_{\mathrm{IY}}$ with the quantum parameter \emph{loc.~cit}. Compare with the third displayed equation in \cite[p.72]{IYsl3},
\begin{align*}
    e_i e'_i = q_{\mathrm{IY}}^{-3/2}[e_{k_2}e_{k_4}e_{k_6}] + q_{\mathrm{IY}}^{3/2}[e_{k_1}e_{k_3}e_{k_5}] = q_{\mathrm{IY}}^{-2} e_{k_6}e_{k_4}e_{k_2} + q_{\mathrm{IY}}e_{k_5}e_{k_3}e_{k_1}.
\end{align*}
\end{enumerate}
\end{ex}
\subsection{Further study of the relation between $\bi_{\triangle}(\bs)$ and $\bi_{\square}(\bbs)$}
In this subsection, we show that $\bi_{\square}(\bbs)$ can be obtained as an ``amalgamation'' of two seeds $\bi_{\triangle}(\bs)$. 

For $1\leq i<j<k\leq 4$, we define 
\begin{align}
\iota_{i,j,k}^{\inv}\colon \cO_\qq(\Conf_3\sA_G)\to \cO_\qq(\Conf_4\sA_G)    \label{eq:iota_ijk}
\end{align}
by $\iota_{i,j,k}^{\inv}:=\iota_{p_{i, j, k}}^{\inv}$, where $p_{i, j, k}\colon [1,3]\to [1, 4],\ 1\mapsto i,\ 2\mapsto j,\ 3\mapsto k$. 

Let $\bs=(s_1,\dots, s_N)\in S(w_0)$. We define an injective map $\sfI_{\triangle, \bs}\colon J_{\triangle}(\bs) \to S\times (\Z_{\geq 0}\sqcup \{\infty\})$ by 
\[
\sfI_{\triangle, \bs}(j):=
\begin{cases}
    \sfI_{\bs}(j)&\text{if }j\in J(\bs)\ \text{(see \eqref{eq:sfI_def})},\\
    (s, \infty)&\text{if }j=s_{\infty}\in S_{\infty}.
\end{cases}  
\]
Recall that we write $n^s(\bs) := |\{1 \leq j \leq N \mid s_j = s\}|$ for $s\in S$. Then,  
\[
\sfI_{\triangle, \bs}(J_{\triangle}(\bbs)^\fr)=\{(s, \infty), (s, 0), (s, n^s(\bs))\mid s\in S\}. 
\]
Let $\bbs=(\bbs_1,\dots, \bbs_{2N})\in S(w_0, w_0)$. We define an injective map $\sfI_{\square, \bbs}\colon J_{\square}(\bbs) \to S\times (\Z_{\geq 0}\sqcup \{\infty, -\infty\})$ by 
\[
\sfI_{\square, \bbs}(j):=
\begin{cases}
    (s, 0)&\text{if }j=s_0\in S_0,\\
    (|\bbs_j|, n[j])&\text{if }j\in [1, 2N]\text{ where } n[j] := |\{1 \leq i \leq j\mid |\bbs_i| = |\bbs_j|\}|,\\
    (s, -\infty)&\text{if }j=s_{-\infty}\in S_{-\infty},\\
    (s, \infty)&\text{if }j=s_{\infty}\in S_\infty.
\end{cases}  
\]
For $s\in S$, we write $n^s(\bbs) := |\{1 \leq j \leq 2N \mid |\bbs_j| = s\}|$. Then,  
\[
\sfI_{\square, \bbs}(J_{\square}(\bbs)^\fr)=\{(s, -\infty), (s, 0), (s, n^s(\bbs)), (s, \infty)\mid s\in S\}. 
\]

The following theorem gives a precise statement of the amalgamation. It is illustrated in \cref{fig:triangle_square_1}. 

\begin{thm}\label{thm:triangle_square_1}
Let $\bs_1=(s_{1;1},\dots, s_{1; N}), \bs_2=(s_{2; 1},\dots, s_{2;N})\in S(w_0)$. Set 
\begin{align*}
&\bbs:=\bs_{1}\ast(\overline{\bs}_2^\ast)^\op=(s_{1;1},\dots, s_{1;N}, \overline{s}_{2;N}^{\ast},\dots, \overline{s}_{2;1}^{\ast})\in S(w_0, w_0).    
\end{align*}
\begin{itemize}
    \item[(1)] The injective $\Bbbk$-algebra homomorphism (see \eqref{eq:triangle_emb})
    \[
    \iota_{43}^{1\bullet }\colon \cO_\qq(\Conf_3\sA_G)\to \cO_\qq(\Conf_4\sA_G)\ 
    \]
     is an embedding of seeds (in the sense of \cref{def:clusteremb}) from $\bi_{\triangle}(\bs_1)$ to $\bi_{\square}(\bbs)$. The index correspondence associated to $\iota_{43}^{1\bullet }$ is given as $\sfJ_{43}^{1\bullet }\colon J_{\triangle}(\bs_1) \to J_{\square}(\bbs), j\mapsto j'$, where 
    \[
    \sfI_{\square, \bbs}(\sfJ_{43}^{1\bullet }(j'))=\sfI_{\triangle, \bs_1}(j).
    \]
    \item[(2)] The injective $\Bbbk$-algebra homomorphism 
    \[
    \iota_{\bullet 3}^{12}:=\sigma_4^2\circ \iota_{43}^{1\bullet }=\iota_{1,2,3}^{\inv}\circ \sigma_3^2\colon \cO_\qq(\Conf_3\sA_G)\to \cO_\qq(\Conf_4\sA_G)
    \]
    is an embedding of seeds from $\bi_{\triangle}(\bs_2)$ to $\bi_{\square}(\bbs)$.  The index correspondence associated to $ \iota_{\bullet 3}^{12}$ is given as $\sfJ_{\bullet 3}^{12}\colon J_{\triangle}(\bs_2) \to J_{\square}(\bbs), j\mapsto j'$, where 
    \[
    \sfI_{\square, \bbs}(\sfJ_{\bullet 3}^{12}(j'))=    \begin{cases}
        (s^{\ast}, -\infty)&\text{if}\ \sfI_{\triangle, \bs_2}(j)=(s, \infty),\\
        (s^{\ast}, n^{s^{\ast}}(\bbs)-m)&\text{if}\  \sfI_{\triangle, \bs_2}(j)=(s, m),\ 0\leq m\leq n^s(\bs_2).
    \end{cases}
    \]
\end{itemize}
\end{thm}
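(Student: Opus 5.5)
The plan is to reduce both parts to the explicit leading-term descriptions of quantum cluster variables given by \cref{thm:isigma} and \eqref{eq:isigma_A_square}. The commutation matrices then take care of themselves, and the only genuinely combinatorial input is a vanishing statement for certain rows of $\cE_{\square}(\bbs)$.

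\textbf{Part (1).} Write $\bbs=\bs_1\ast\overline{\bs'}$ with $\bs':=(\bs_2^\ast)^\op\in S(w_0)$. This is exactly the situation in which $A_{\triangle,\bs_1;j}$ was defined: by construction $\iota_{43}^{1\bullet}(A_{\triangle,\bs_1;j})=A_{\square,\bbs;j}$ for $j\in J_{\triangle}(\bs_1)\subset J_{\square}(\bbs)$, and these sets are embedded via the positions $S_\infty\sqcup S_0\sqcup[1,N]$. Since $w^{\leq j}=e$ for $j\le N$, one has $n[j]$ computed in $\bs_1$ equal to $n[j]$ computed in $\bbs$. So this inclusion is the map $\sfJ_{43}^{1\bullet}$ described via $\sfI_{\square,\bbs}$ and $\sfI_{\triangle,\bs_1}$.

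The matrix $\Lambda_{\triangle}(\bs_1)$ is by definition the restriction of $\Lambda_{\square}(\bbs)$, and $J_{\triangle}(\bs_1)^\uf=\{j\le N\mid j^+\le N\}$ lies in $J_{\square}(\bbs)^\uf$. It then remains to check that for $i\in J_{\triangle}(\bs_1)^\uf$ the row $\varepsilon_{\square,\bbs;i\,\cdot}$ vanishes outside $J_{\triangle}(\bs_1)$ and that frozen indices go to frozen indices. There are two kinds of indices to handle.
\begin{itemize}
\item For $j\in[N+1,2N]$: every nonzero case of \eqref{eq:cE_bbs} requires $j=i^+$, or $j<i$, or $i<j<i^+$. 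Each of these is impossible because $i^+\le N<j$.
\item For $j=s_{-\infty}$: compare the coefficients of $f_{s_{-\infty}}$ in \eqref{eq:varepsilon_square_def}. This gives $\varepsilon_{\square,\bbs;i\,s_{-\infty}}=-\sum_{k}\varepsilon_{\bbs;ik}m_{k;s}$. Every $k$ with $\varepsilon_{\bbs;ik}\ne0$ has $k\le N$ or $k\in S_0$, so $\gamma_{\bbs;k}=\varpi_{|\bbs_k|}$ is dominant and $[\gamma_{\bbs;k}]_+=\gamma_{\bbs;k}$. Hence the sum equals the $\varpi_s$-coefficient of $\sum_k\varepsilon_{\bbs;ik}\gamma_{\bbs;k}$, which vanishes by \cref{prop:weight0}.
\end{itemize}

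\textbf{Part (2).} First establish $\sigma_4^2\circ\iota_{43}^{1\bullet}=\iota_{1,2,3}^{\inv}\circ\sigma_3^2$. It suffices to compare leading terms using \eqref{eq:q-cyclic_shift}, since elements of $\cO^\inv$ are determined by their leading terms (\cref{p:invsp}).

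The image of $\iota_{1,2,3}^{\inv}$ lies in $\bigoplus\cO^\inv(\lambda_1,\lambda_2,\lambda_3,0)$. By \cref{thm:isigma}(1), $\isigma$ sends the relevant elements $\phi_{\mu,\nu,\lambda}$ to $e'(\lambda)\Delta_{w\lambda,\lambda}e(\mu^\ast)$ up to explicit scalars. These are exactly the images \eqref{eq:isigma_A_square} of $A_{\square,\bbs;j'}$ for $j'\in[N+1,2N]\sqcup S_{-\infty}$ (together with the frozen $j'$ with $\delta_{\bbs;j'}^\ast$ dominant), since there $w_{\le j'}=w_0$. The plan is as follows.
\begin{enumerate}
\item Compute $\sigma_3^2(A_{\triangle,\bs_2;j})$ from \eqref{eq:A_triangle} and the cyclic formula. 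Equivalently, compute $\Phisigma$-images via \eqref{eq:Phisigma_cluster}, \eqref{eq:Phisigma_D1}--\eqref{eq:Phisigma_D3}, and pull back.
\item Identify the resulting leading term $\xi_{\gamma}\otimes\xi_{\nu}\otimes\xi_{\cdot}\otimes1$ as that of $\phi_{\mu,\nu,\lambda}$, and read off $\isigma$ from \cref{thm:isigma}(1).
\item Match this with $A_{\square,\bbs;j'}$, using that for the barred letters $\overline{s}_{2;N}^\ast,\dots,\overline{s}_{2;1}^\ast$ the chamber weights $w^{\le j'}\varpi$ are the $\ast$-twisted reversed chamber weights $\delta_{\bs_2;\cdot}$, via the analogue of \eqref{eq:gamma_delta}. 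The $S_\infty$ variables $D_{2;\varpi_s}$ go to $D_{4;\varpi_s}$, i.e.\ to the $S_{-\infty}$ vertices, with the $\ast$ as stated.
\end{enumerate}
Once the variables match, $\Lambda$ is automatically preserved, since it is read off from $q$-commutation. The vanishing of rows of $\cE_{\square}(\bbs)$ outside the image is proved as in (1), now using $[\delta]_+$ and the $S_\infty$ coefficients.

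I expect the main obstacle to be the bookkeeping in step 3: checking that the scalar prefactors (signs $(-1)^{\langle2\rho^\vee,\cdot\rangle}$ and powers of $\qq$ coming from two cyclic shifts and from \cref{thm:isigma}) agree exactly with the normalization $\qq^{\frac12(\gamma^\ast,[\gamma^\ast]_+)-\frac12(\delta^\ast,[\delta^\ast]_+)}$ in \eqref{eq:isigma_A_square}, and that the index bookkeeping produces $n^{s^\ast}(\bbs)-m$. I would first verify the whole matching in type $A_2$ as a sanity check before doing the general computation.
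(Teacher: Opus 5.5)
\paragraph{Verdict: a genuine gap in part (2).}
Your plan for condition (2) of \cref{def:clusteremb} in part (2) is exactly the paper's argument. You shift \eqref{eq:A_triangle} twice, recognize a multiple of $\phi_{[\delta_{\bs_2;j}^\ast]_+,[\delta_{\bs_2;j}]_-,\varpi_{s_{2;j}}}$, apply \cref{thm:isigma}(1), and match with \eqref{eq:isigma_A_square} via $\delta_{\bs_2;j}=\gamma^\ast_{\bbs;\sfJ_{\bullet 3}^{12}(j)}$ and $\varpi_{s_{2;j}}=\delta^\ast_{\bbs;\sfJ_{\bullet 3}^{12}(j)}$. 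The signs and powers of $\qq$ do cancel there. Your row-vanishing argument in part (1) is also correct, and more explicit than the paper's.

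The gap is condition (3) in part (2): you never show $\varepsilon_{\triangle,\bs_2;ij}=\varepsilon_{\square,\bbs;\sfJ_{\bullet 3}^{12}(i)\,\sfJ_{\bullet 3}^{12}(j)}$. In part (1) this holds by definition. In part (2), however, $\cE_{\triangle}(\bs_2)$ is the restriction of $\cE_{\square}(\bs_2\ast\overline{\bs''})$ to its unbarred part. It has to be compared with the block of $\cE_{\square}(\bbs)$ supported on three sets of vertices: the barred positions $(\overline{\bs}_2^\ast)^{\op}$, the last occurrences of letters in $\bs_1$, and $S_{-\infty}$. The comparison goes through an order-reversing, $\ast$-twisted relabeling.

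This does not follow from matching the variables and $\Lambda$. Compatibility determines a row of $\cE$ only modulo the left kernel of $\Lambda$. Here $\Lambda_{\triangle}(\bs_2)$ is skew-symmetric of size $N+2|S|$, hence degenerate whenever $N$ is odd.

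\paragraph{How to close it.}
The paper disposes of (1) and (3) via the construction of $\bJ_{\std}(\bbs)$. By \cref{thm:GS_quantum}, $\cE_{\square}(\bbs)$ is the exchange matrix of $\bJ_{\std}(\bbs)$. The part of $\bJ_{\std}(\bbs)$ coming from $(\overline{\bs}_2^\ast)^{\op}$ is, after the relabeling $\sfJ_{\bullet 3}^{12}$, a copy of $\bJ_{\std}(\bs_2)$: reversing a word mirrors each elementary quiver, which reverses its arrows, and passing to barred letters reverses them back.

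Alternatively, you can check it directly from \eqref{eq:cE_bbs} in terms of the labels $\sfI_{\square,\bbs}$ and $\sfI_{\triangle,\bs_2}$. The $S_{-\infty}$-columns then follow from \eqref{eq:varepsilon_square_def}, using $[\gamma_{\bbs;\sfJ_{\bullet 3}^{12}(k)}]_+=[\delta_{\bs_2;k}]_+^\ast$.

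\paragraph{Two smaller slips.}
First, frozen indices do not go to frozen indices. A last occurrence of a letter in $\bs_1$ is frozen in $J_{\triangle}(\bs_1)$ but mutable in $J_{\square}(\bbs)$, which is why \cref{def:clusteremb} does not require this. Second, in $\cO_\qq(\Conf_4\sA_G)$ one has $\iota_{\bullet 3}^{12}(D_{2;\varpi_s})=D_{1;\varpi_s}=A_{\square,\bbs;s^\ast_{-\infty}}$, not $D_{4;\varpi_s}$. The latter is the $S_0$-variable $A_{\square,\bbs;s_0}$.
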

\begin{figure}[ht]
    \centering
\begin{tikzpicture}
\def\R{3}
\coordinate(A) at (0,0);
\coordinate(B) at (1.2*\R,0);
\coordinate(C) at (0.6*\R,\R);
\node[scale=0.7] at (0.6*\R,0.9*\R) {$\bullet$};
\draw[thick,right hook->] (1.2*\R,0.5*\R) --node[midway,above]{$\iota_{43}^{1\bullet}$ or $\iota_{\bullet 3}^{12}$} ++(2,0);
\draw[thick] (B) -- (C);
\draw[thick] (A) -- (B) -- (C) --cycle;

\vertexA{C}{B}{A}{3};
\foreach \i in {1,2,3}{
    \fnode{a\i}{black};
    \fnode{b\i}{black};
    \fnode{V\i 0}{myblue};
    }
\foreach \i in {11,21,12}
\draw(V\i) circle(2pt);
\qarrow{a3}{b3};
\qarrow{a2}{V12}; \qarrow{V12}{b2};
\qarrow{a1}{V11}; \qarrow{V11}{V21}; \qarrow{V21}{b1};
\qarrow{b2}{V21};
\qarrow{b3}{V12}; 
\qarrow{V12}{V11};
\qarrow{V21}{V12};
\qarrow{V12}{a3};
\qarrow{V11}{a2};
{\color{myblue}
\qarrow{V10}{a1};
\qarrow{V11}{V10};
\qarrow{V20}{V11};
\qarrow{V21}{V20};
\qarrow{V30}{V21};
\qarrow{b1}{V30};
}
\begin{scope}[xshift=2.2*\R cm]
\coordinate(A) at (0,0);
\coordinate(B) at (\R,0);
\coordinate(C) at (0,\R);
\coordinate(D) at (\R,\R);
\node[below left] at ($(A)$) {$4$};
\node[below right] at ($(B)$) {$3$};
\node[above left] at ($(C)$) {$1$};
\node[above right] at ($(D)$) {$2$};
\node[scale=0.7] at (0.05*\R,0.86*\R) {$\bullet$};
\node[scale=0.7] at (0.95*\R,0.14*\R) {$\bullet$};

\draw[thick,dashed] (B) -- (C);
\draw[thick] (A) -- (B) -- (D) -- (C) --cycle;

\vertexA{C}{B}{A}{3};
\foreach \i in {1,2,3}{
    \filldraw[fill=white](a\i) circle(2pt);
    \fnode{b\i}{black};
    \fnode{V\i 0}{myblue};
    }
\foreach \i in {11,21,12}
\draw(V\i) circle(2pt);
\qarrow{a3}{b3};
\qarrow{a2}{V12}; \qarrow{V12}{b2};
\qarrow{a1}{V11}; \qarrow{V11}{V21}; \qarrow{V21}{b1};
\qarrow{b2}{V21};
\qarrow{b3}{V12}; 
\qarrow{V12}{V11};
\qarrow{V21}{V12};
\qarrow{V12}{a3};
\qarrow{V11}{a2};
{\color{myblue}
\qarrow{V10}{a1};
\qarrow{V11}{V10};
\qarrow{V20}{V11};
\qarrow{V21}{V20};
\qarrow{V30}{V21};
\qarrow{b1}{V30};
}

\vertexA{B}{C}{D}{3};
\foreach \i in {1,2,3}{
    \fnode{b\i}{black};
    \fnode{V\i 0}{myblue};
    }
\foreach \i in {11,21,12}
\draw(V\i) circle(2pt);
\qarrow{a3}{b3};
\qarrow{a2}{V12}; \qarrow{V12}{b2};
\qarrow{a1}{V11}; \qarrow{V11}{V21}; \qarrow{V21}{b1};
\qarrow{b2}{V21};
\qarrow{b3}{V12}; 
\qarrow{V12}{V11};
\qarrow{V21}{V12};
\qarrow{V12}{a3};
\qarrow{V11}{a2};
{\color{myblue}
\qarrow{V10}{a1};
\qarrow{V11}{V10};
\qarrow{V20}{V11};
\qarrow{V21}{V20};
\qarrow{V30}{V21};
\qarrow{b1}{V30};
}
\end{scope}
\end{tikzpicture}
    \caption{Type $A_3$, $\bs=(1,2,3,1,2,1)$, $\bbs=(1,2,3,1,2,1,\overline{3},\overline{2},\overline{3},\overline{1},\overline{2},\overline{3})$. The quantum seed $\bi_{\square}(\bbs)$ can be obtained as an amalgamation of two copies of $\bi_{\triangle}(\bs)$.}
    \label{fig:triangle_square_1}
\end{figure}
\begin{proof}
The statement (1) follows from the definition of $\bi_{\triangle}(\bs_1)$. Let us show (2). By \eqref{eq:A_triangle},
\begin{align*}
\iota_{\bullet 3}^{12}(A_{\triangle, \bs_2; j})
&=\begin{cases}
\begin{array}{ll}
(-1)^{\langle 2\rho^{\vee}, [\delta_{\bs_2; j}]_+\rangle}\qq^{(2\rho, [\delta_{\bs_2; j}]_+)+\frac{1}{2}(\varpi_{s_{2;j}}^{\ast}, \varpi_{s_{2; j}}^{\ast})+\frac{1}{2}([\delta_{\bs_2; j}^{\ast}]_-,[\delta_{\bs_2; j}^{\ast}]_+)}\\
    \phantom{=}\times \xi_{w_0[\delta_{\bs_2; j}^{\ast}]_+}([\delta_{\bs_2; j}^{\ast}]_+)\otimes \xi_{[\delta_{\bs_2; j}]_-}([\delta_{\bs_2; j}]_-)\otimes \xi_{\delta_{\bs_2; j}}(\varpi_{s_{2;j}})\otimes 1\\
    \phantom{=}+(\text{other terms})    
\end{array}&\text{if }j\in J(\bs_2),\\
D_{1; \varpi_s}=A_{\square, \bbs; s^{\ast}_{-\infty}}&\text{if }j=s_{\infty}\in S_{\infty},
\end{cases}
\end{align*}
    where $\mathrm{(}$other terms$\mathrm{)}$ in the first case belong to 
    \[
    \cO([\delta_{\bs_2; j}^{\ast}]_+)_{<[\delta_{\bs_2; j}]_+}\ut\cO([\delta_{\bs_2; j}]_-)\ut\cO(\varpi_{s_{2; j}})\ut 1 +
    \cO([\delta_{\bs_2; j}^{\ast}]_+)\ut\cO([\delta_{\bs_2; j}]_-)_{> -[\delta_{\bs_2; j}]_-}\ut\cO(\varpi_{s_{2; j}})\ut 1.
    \]
For $j\in J(\bs_2)$, 
\[
\iota_{\bullet 3}^{12}(A_{\triangle, \bs_2; j})=(-1)^{\langle 2\rho^{\vee}, [\delta_{\bs_2; j}]_+\rangle}\qq^{(2\rho, [\delta_{\bs_2; j}]_+)+\frac{1}{2}(\varpi_{s_{2;j}}^{\ast}, \varpi_{s_{2; j}}^{\ast})+\frac{1}{2}([\delta_{\bs_2; j}^{\ast}]_-,[\delta_{\bs_2; j}^{\ast}]_+)}\phi_{[\delta_{\bs_2; j}^{\ast}]_+, [\delta_{\bs_2; j}]_-, \varpi_{s_{2; j}}}
\]
in the notation of \cref{thm:isigma}. Therefore, by \cref{thm:isigma},  
\begin{align*}
    &\isigma(\iota_{\bullet 3}^{12}(A_{\triangle, \bs_2; j}))\\
    &=\qq^{\frac{1}{2}(\varpi_{s_{2;j}}^{\ast}, \varpi_{s_{2; j}}^{\ast})+\frac{1}{2}([\delta_{\bs_2; j}^{\ast}]_-,[\delta_{\bs_2; j}^{\ast}]_+)-\frac{1}{2}([\delta_{\bs_2; j}]_-, [\delta_{\bs_2; j}]_-)-\frac{1}{2}(\varpi_{s_{2; j}}, \varpi_{s_{2; j}})}e'(\varpi_{s_{2; j}})\Delta_{\delta_{\bs_2; j}, \varpi_{s_{2; j}}}e([\delta_{\bs_2; j}]_+)\\
    &=\qq^{\frac{1}{2}(\delta_{\bs_2; j}, [\delta_{\bs_2; j}]_+)-\frac{1}{2}(\varpi_{s_{2; j}}, \varpi_{s_{2; j}})}e'(\varpi_{s_{2; j}})\Delta_{\delta_{\bs_2; j}, \varpi_{s_{2; j}}}e([\delta_{\bs_2; j}]_+).
\end{align*}
Moreover, we can directly check that 
\[
\delta_{\bs_2; j}=\gamma_{\bbs; \sfJ_{\bullet 3}^{12}(j)}^{\ast},\quad \varpi_{s_{2; j}} = \delta_{\bbs; \sfJ_{\bullet 3}^{12}(j)}^{\ast},
\]
for $j\in J(\bs_2)$ (cf.~\eqref{eq:gamma_delta}). Therefore, we have confirmed the condition (2) in \cref{def:clusteremb}. The conditions (1) and (3) in \cref{def:clusteremb} can be straightforwardly checked by the construction of $\bJ_{\std}(\bbs)$ in \cref{sec:quiver} (cf.~\cref{fig:triangle_square_1}). 
\end{proof}
The following theorem gives another way of amalgamation, which is illustrated in \cref{fig:triangle_square_2}. The proof is identical to that of \cref{thm:triangle_square_2}, hence we leave the details to the reader. 
\begin{thm}\label{thm:triangle_square_2}
Let $\bs_1=(s_{1;1},\dots, s_{1; N}), \bs_2=(s_{2; 1},\dots, s_{2;N})\in S(w_0)$. Set 
\begin{align*}
&\bbs':=(\overline{\bs}_2^\ast)^\op\ast \bs_{1}=(\overline{s}_{2;N}^{\ast},\dots, \overline{s}_{2;1}^{\ast}, s_{1;1},\dots, s_{1;N})\in S(w_0, w_0).    
\end{align*}
\begin{itemize}
    \item[(1)] The injective $\Bbbk$-algebra homomorphism 
    \[
    \iota_{4\bullet}^{12}:=\sigma_4^2\circ \iota_{2,3,4}^{\inv} = \iota_{1,2,4}^{\inv}\circ \sigma_3^2\colon \cO_\qq(\Conf_3\sA_G)\to \cO_\qq(\Conf_4\sA_G)
    \]
    is an embedding of seeds from $\bi_{\triangle}(\bs_2)$ to $\bi_{\square}(\bbs')$. The index correspondence associated to $ \iota_{4\bullet}^{12}$ is given by $\sfJ_{4\bullet}^{12}\colon J_{\triangle}(\bs_2) \to J_{\square}(\bbs'), j\mapsto j'$, where 
    \[
    \sfI_{\square, \bbs'}(\sfJ_{4\bullet}^{12}(j'))=    \begin{cases}
        (s^{\ast}, -\infty)&\text{if}\ \sfI_{\triangle, \bs_2}(j)=(s, \infty),\\
        (s^{\ast}, n^s(\bs_2)-m)&\text{if}\  \sfI_{\triangle, \bs_2}(j)=(s, m),\ 0\leq m\leq n^s(\bs_2).
    \end{cases}
    \]
    \item[(2)] The injective $\Bbbk$-algebra homomorphism 
    \[
    \iota_{43}^{\bullet 2}:=\iota_{2,3,4}^{\inv}=\sigma_4^2\circ \iota_{4\bullet}^{12}\colon \cO_\qq(\Conf_3\sA_G)\to \cO_\qq(\Conf_4\sA_G)
    \]
    is an embedding of seeds from $\bi_{\triangle}(\bs_1)$ to $\bi_{\square}(\bbs')$. The index correspondence associated to $\iota_{43}^{\bullet 2}$ is given by $\sfJ_{43}^{\bullet 2}\colon J_{\triangle}(\bs_1) \to J_{\square}(\bbs'), j\mapsto j'$, where 
    \[
    \sfI_{\square, \bbs'}(\sfJ_{43}^{\bullet 2}(j'))=    \begin{cases}
        (s, \infty)&\text{if}\ \sfI_{\triangle, \bs_1}(j)=(s, \infty),\\
        (s, n^{s^{\ast}}(\bs_2)+m)&\text{if}\  \sfI_{\triangle, \bs_1}(j)=(s, m),\ 0\leq m\leq n^{s}(\bs_1).
    \end{cases}
    \]
\end{itemize}
\end{thm}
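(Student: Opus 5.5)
The plan mirrors the proof of \cref{thm:triangle_square_1}, with the roles of the two triangles exchanged, since in $\bbs'$ the barred letters now come first. For each frozen or unfrozen index I compute the image under $\isigma$ of the embedded triangle variable. I then compare it, via \cref{thm:isigma}, with the explicit formula \eqref{eq:isigma_A_square} for $\isigma(A_{\square,\bbs';j'})$. Finally I check the combinatorial conditions of \cref{def:clusteremb} on the quivers.

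For (2), the embedding is $\iota_{43}^{\bullet 2}=\iota_{2,3,4}^{\inv}$. By \eqref{eq:A_triangle}, the image of $A_{\triangle,\bs_1;j}$ for $j\in J(\bs_1)$ is a scalar multiple of the element $\psi'_{\varpi_{s_{1;j}},[\delta_{\bs_1;j}^\ast]_+,[\delta_{\bs_1;j}]_-}$ of \cref{thm:isigma}(2)$'$. Taking $w$ with $w\varpi_{s_{1;j}}=\delta_{\bs_1;j}$, that theorem gives
\[
\isigma\bigl(\iota_{43}^{\bullet 2}(A_{\triangle,\bs_1;j})\bigr)\in \Bbbk\, e'([\delta^{\ast}_{\bs_1;j}]_+)\,\Delta_{w_0\varpi_{s_{1;j}}^\ast,\,\delta_{\bs_1;j}^\ast}.
\]
For $j=s_\infty$, the image is $D_{3;\varpi_s}=A_{\square,\bbs';s_\infty}$. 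I then verify directly, as in \eqref{eq:gamma_delta} and \cref{ex:gamma_delta}, that $\gamma_{\bbs';j'}=w_0\varpi_{s_{1;j}}$ and $\delta_{\bbs';j'}=\delta_{\bs_1;j}$ for $j'=\sfJ_{43}^{\bullet 2}(j)$. The key point is that the unbarred letters of $\bbs'$ occur after all the barred ones, so $w^{\leq j'}=w_0$, while $w_{\leq j'}$ is a prefix of $\bs_1$; the label shift by $n^{s^\ast}(\bs_2)$ in the index correspondence accounts for the barred occurrences of the same letter. Since $[\gamma^\ast_{\bbs';j'}]_+=0$, no $e(\cdot)$ factor appears, consistent with the formula above. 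I then compare the $\qq$-power prefactors, which should match after the same norm identity used in the proof of \cref{thm:cluster_triangle}.

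Statement (1) then follows by applying the automorphism $\sigma_4^2$. Here $\iota_{4\bullet}^{12}(A_{\triangle,\bs_2;j})$ is, by \eqref{eq:A_triangle} and \eqref{eq:q-cyclic_shift}, a multiple of $\phi'_{[\delta^\ast_{\bs_2;j}]_+,[\delta_{\bs_2;j}]_-,\varpi_{s_{2;j}}}$ (the sign and $\qq^{2\rho}$ factors arise as in the proof of \cref{thm:triangle_square_1}). By \cref{thm:isigma}(1)$'$ this lies in $\Bbbk\,\Delta_{\delta_{\bs_2;j},\,w_0\varpi_{s_{2;j}}}e([\delta_{\bs_2;j}]_+)$. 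For $s_\infty$ the image is $D_{1;\varpi_s}=A_{\square,\bbs';s^\ast_{-\infty}}$. I then check $\gamma^\ast_{\bbs';j'}=\delta_{\bs_2;j}$ and $\delta^\ast_{\bbs';j'}=w_0\varpi_{s_{2;j}}$ using the reversed, $\ast$-twisted prefix $(\overline{\bs}_2^\ast)^\op$. The identities $\sigma_4^2\circ\iota_{2,3,4}^{\inv}=\iota_{1,2,4}^{\inv}\circ\sigma_3^2$ and $\iota_{2,3,4}^{\inv}=\sigma_4^2\circ\iota_{4\bullet}^{12}$ follow from the explicit formula \eqref{eq:q-cyclic_shift} together with $\sigma_4^4=\mathrm{id}$ on homogeneous components (the same check as in \cref{thm:triangle_square_1}).

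Finally, conditions (1) and (3) of \cref{def:clusteremb} amount to the statement that the restricted blocks of $\Lambda_\square(\bbs')$ and $\cE_\square(\bbs')$ agree with $\Lambda_\triangle$ and $\cE_\triangle$. For $\Lambda$ this is automatic once the variables are identified, since $\Lambda$ is defined by $\qq$-commutation. For the exchange matrix, I compare the quiver $\bJ_{\std}(\bbs')$ of \cref{sec:quiver} with the triangle quivers, as in \cref{introfig:triangle_square_2}; this shows that the rows of unfrozen triangle vertices have no support outside the image. I expect the main obstacle to be the bookkeeping: the index correspondence through $\sfRev$ and the $\ast$-twist, and the matching of the $\qq$-power and sign normalizations. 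The weight-degree argument of \cref{thm:GS_quantum} serves as a fallback for the frozen $S_{\pm\infty}$ columns.
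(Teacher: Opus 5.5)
Your proposal is correct and follows the paper's route. The paper just says the proof is identical to that of \cref{thm:triangle_square_1}: identify the embedded triangle variables as scalar multiples of $\psi'$ (for (2)) and $\phi'$ (for (1)) from \cref{thm:isigma}, match $\gamma_{\bbs';j'}$, $\delta_{\bbs';j'}$ and the $\qq$-prefactors against \eqref{eq:isigma_A_square}, and read off conditions (1) and (3) of \cref{def:clusteremb} from the quiver $\bJ_{\std}(\bbs')$. Your identifications, including $D_{3;\varpi_s}$ and $D_{1;\varpi_s}$ for the $S_{\infty}$ vertices, check out; the only difference from \cref{thm:triangle_square_1} is that here neither part holds by definition, so both need the explicit computation you carry out.
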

\begin{figure}[ht]
    \centering
\begin{tikzpicture}
\def\R{3}
\coordinate(A) at (0,0);
\coordinate(B) at (1.2*\R,0);
\coordinate(C) at (0.6*\R,\R);
\node[scale=0.7] at (0.6*\R,0.9*\R) {$\bullet$};
\draw[thick,right hook->] (1.2*\R,0.5*\R) --node[midway,above]{$\iota_{4\bullet}^{12}$ or $\iota_{43}^{\bullet 2}$} ++(2,0);
\draw[thick] (B) -- (C);
\draw[thick] (A) -- (B) -- (C) --cycle;

\vertexA{C}{B}{A}{3};
\foreach \i in {1,2,3}{
    \fnode{a\i}{black};
    \fnode{b\i}{black};
    \fnode{V\i 0}{myblue};
    }
\foreach \i in {11,21,12}
\draw(V\i) circle(2pt);
\qarrow{a3}{b3};
\qarrow{a2}{V12}; \qarrow{V12}{b2};
\qarrow{a1}{V11}; \qarrow{V11}{V21}; \qarrow{V21}{b1};
\qarrow{b2}{V21};
\qarrow{b3}{V12}; 
\qarrow{V12}{V11};
\qarrow{V21}{V12};
\qarrow{V12}{a3};
\qarrow{V11}{a2};
{\color{myblue}
\qarrow{V10}{a1};
\qarrow{V11}{V10};
\qarrow{V20}{V11};
\qarrow{V21}{V20};
\qarrow{V30}{V21};
\qarrow{b1}{V30};
}
\begin{scope}[xshift=2.2*\R cm]
\coordinate(A) at (0,0);
\coordinate(B) at (\R,0);
\coordinate(C) at (0,\R);
\coordinate(D) at (\R,\R);
\node[below left] at ($(A)$) {$4$};
\node[below right] at ($(B)$) {$3$};
\node[above left] at ($(C)$) {$1$};
\node[above right] at ($(D)$) {$2$};
\node[scale=0.7] at (0.05*\R,0.14*\R) {$\bullet$};
\node[scale=0.7] at (0.95*\R,0.86*\R) {$\bullet$};

\draw[thick,dashed] (D) -- (A);
\draw[thick] (A) -- (B) -- (D) -- (C) --cycle;

\vertexA{D}{B}{A}{3};
\foreach \i in {1,2,3}{
    \filldraw[fill=white](b\i) circle(2pt);
    \fnode{a\i}{black};
    \fnode{V\i 0}{myblue};
    }
\foreach \i in {11,21,12}
\draw(V\i) circle(2pt);
\qarrow{a3}{b3};
\qarrow{a2}{V12}; \qarrow{V12}{b2};
\qarrow{a1}{V11}; \qarrow{V11}{V21}; \qarrow{V21}{b1};
\qarrow{b2}{V21};
\qarrow{b3}{V12}; 
\qarrow{V12}{V11};
\qarrow{V21}{V12};
\qarrow{V12}{a3};
\qarrow{V11}{a2};
{\color{myblue}
\qarrow{V10}{a1};
\qarrow{V11}{V10};
\qarrow{V20}{V11};
\qarrow{V21}{V20};
\qarrow{V30}{V21};
\qarrow{b1}{V30};
}

\vertexA{A}{C}{D}{3};
\foreach \i in {1,2,3}{
    \fnode{a\i}{black};
    \filldraw[fill=white](b\i) circle(2pt);
    \fnode{V\i 0}{myblue};
    }
\foreach \i in {11,21,12}
\draw(V\i) circle(2pt);
\qarrow{a3}{b3};
\qarrow{a2}{V12}; \qarrow{V12}{b2};
\qarrow{a1}{V11}; \qarrow{V11}{V21}; \qarrow{V21}{b1};
\qarrow{b2}{V21};
\qarrow{b3}{V12}; 
\qarrow{V12}{V11};
\qarrow{V21}{V12};
\qarrow{V12}{a3};
\qarrow{V11}{a2};
{\color{myblue}
\qarrow{V10}{a1};
\qarrow{V11}{V10};
\qarrow{V20}{V11};
\qarrow{V21}{V20};
\qarrow{V30}{V21};
\qarrow{b1}{V30};
}
\end{scope}
\end{tikzpicture}
    \caption{Type $A_3$, $\bs=(1,2,3,1,2,1)$, $\bbs=(\overline{3},\overline{2},\overline{3},\overline{1},\overline{2},\overline{3}, 1,2,3,1,2,1)$. The quantum seed $\bi_{\square}(\bbs)$ can be obtained as an amalgamation of two copies of $\bi_{\triangle}(\bs)$.}
    \label{fig:triangle_square_2}
\end{figure}

\section{Quantum cluster structure on \texorpdfstring{$\cO_\qq(\Conf_{K+2}\sA_G)$}{Oq(Conf(K+2))}} \label{sec:polygon}
Let $K\in \Z_{>0}$. By \cref{prop:qcsgrading}, we can define the skew-field of fractions $\cF(\cO_\qq(\Conf_{K+2}^{\times}\sA_G))$.
In this section, we construct a quantum seed in $\cF(\cO_\qq(\Conf_{K+2}^{\times}\sA_G))$, which  generalizes $\bi_{\triangle}(\bs)$ and $\bi_{\square}(\bbs)$. 

Write the $(K+2)$-gon as $\bbD_{K+2}$. We label the vertices of $\bbD_{K+2}$ as follows.  

\begin{itemize}
    \item The case where $K+2$ is even ($K+2=:2L$): 
    \[
\begin{tikzpicture}[scale=2.5]
  \draw (0,1) -- (5,1);
  \draw (0,0) -- (5,0);
  \draw (0,0) -- (0,1);
  \draw (5,0) -- (5,1);

  \draw (1,0) -- (1,1);
  \draw (2,0) -- (2,1);
  \draw (4,0) -- (4,1);
  
  \draw (0,1) -- (1,0);
  \draw (1,1) -- (2,0);
  \draw (4,1) -- (5,0);
  
  \node at (3,0.5) {$\cdots$};

  \node[above] at (0,1) {\scriptsize $(1,-)$};
  \node[above] at (1,1) {\scriptsize $(2,-)$};
  \node[above] at (2,1) {\scriptsize $(3,-)$};
  \node[above] at (4,1) {\scriptsize $(L-1,-)$};
  \node[above] at (5,1) {\scriptsize $(L,-)$};  

  \node[below] at (0,0) {\scriptsize $(1,+)$};
  \node[below] at (1,0) {\scriptsize $(2,+)$};
  \node[below] at (2,0) {\scriptsize $(3,+)$};
  \node[below] at (4,0) {\scriptsize $(L-1,+)$};
  \node[below] at (5,0) {\scriptsize $(L,+)$};
\end{tikzpicture}
\]
    \item The case where $K+2$ is odd ($K+2=:2L+1$): 
    \[
\begin{tikzpicture}[scale=2.5]
  \draw (0,1) -- (5,1);
  \draw (0,0) -- (6,0);
  \draw (0,0) -- (0,1);
  \draw (5,0) -- (5,1);

  \draw (1,0) -- (1,1);
  \draw (2,0) -- (2,1);
  \draw (4,0) -- (4,1);
  
  \draw (0,1) -- (1,0);
  \draw (1,1) -- (2,0);
  \draw (4,1) -- (5,0);
  \draw (5,1) -- (6,0);
  
  \node at (3,0.5) {$\cdots$};

  \node[above] at (0,1) {\scriptsize $(1,-)$};
  \node[above] at (1,1) {\scriptsize $(2,-)$};
  \node[above] at (2,1) {\scriptsize $(3,-)$};
  \node[above] at (4,1) {\scriptsize $(L-1,-)$};
  \node[above] at (5,1) {\scriptsize $(L,-)$};  

  \node[below] at (0,0) {\scriptsize $(1,+)$};
  \node[below] at (1,0) {\scriptsize $(2,+)$};
  \node[below] at (2,0) {\scriptsize $(3,+)$};
  \node[below] at (4,0) {\scriptsize $(L-1,+)$};
  \node[below] at (5,0) {\scriptsize $(L,+)$};
  \node[below] at (6,0) {\scriptsize $(L+1,+)$};
\end{tikzpicture}
\]
\end{itemize}
Moreover, we consider another labeling of the vertices of $\bbD_{K+2}$ obtained by labeling $(1, -)$ by $1$, and the remaining vertices consecutively by $2,\dots, K+2$ in the clockwise order (e,g.~the vertex $(1,+)$ is labeled by $K+2$). These two labeling gives a bijection between $[1, K+2]$ and $\{(1, -),\dots, (L, -), (1, +),\dots, (L+\varepsilon_K, +)\}$ ($\varepsilon_K:=(1-(-1)^K)/2$), and we will identify these index sets by this bijection. 
For $1\leq i_1<i_2<i_3\leq K+2$\ ($K\geq 1$), set 
\begin{align}
    &\iota_{i_3 i_2}^{i_1}:=\iota_{i_1, i_2, i_3}^{\inv}\colon \cO_\qq(\Conf_3\sA_G)\to \cO_\qq(\Conf_{K+2}\sA_G),\\
    &\iota_{i_3}^{i_1 i_2}:=\iota_{i_1, i_2, i_3}^{\inv}\circ\sigma_3^2\colon \cO_\qq(\Conf_3\sA_G)\to \cO_\qq(\Conf_{K+2}\sA_G).
\end{align}
Recall \eqref{eq:iota_ijk}. By using this notation, the $\Bbbk$-algebra homomorphisms in \cref{thm:triangle_square_1,thm:triangle_square_1} can be written as follows. 
\begin{align}
    \iota_{43}^{1\bullet }=\iota_{4 3}^{1},\quad \iota_{\bullet 3}^{12}=\iota_{3}^{1 2},\quad 
    \iota_{4\bullet}^{12}=\iota_{4}^{1 2},\quad \iota_{43}^{\bullet 2}=\iota_{4 3}^{2}.\label{eq:iota_notation}
\end{align}

For $1\leq i_1<i_2<i_3<i_4\leq K+2$ ($K\geq 2$), set 
\begin{align}
    &\iota_{i_4 i_3}^{i_1 i_2}:=\iota_{p_{i_1, i_2, i_3, i_4}}^{\inv}\colon \cO_\qq(\Conf_4\sA_G)\to \cO_\qq(\Conf_{K+2}\sA_G),\label{eq:square_emb}
\end{align}
where $p_{i_1, i_2, i_3, i_4}\colon [1,4]\to [1, K+2], j\mapsto i_j\ (j=1,2,3,4)$. 
Although the targets of these morphisms depend on $K$, we omit $K$ from the notation for simplicity, as this should cause no confusion. 
 
Fix $\bs_1,\dots, \bs_K\in S(w_0)$, and write $\bs_k=(s_{k; 1},\dots, s_{k; N})$ for $k=1,\dots,K$. We set 
\begin{align*}
\tbbs&:=\bs_1\ast (\overline{\bs}_2^{\ast})^\op\ast \bs_3\ast (\overline{\bs}_4^\ast)^\op\ast \cdots\\
&=
\begin{cases}
\bs_1\ast (\overline{\bs}_2^\ast)^\op\ast \bs_3\ast (\overline{\bs}_4^\ast)^\op\ast \cdots \ast (\overline{\bs}_K^\ast)^\op&\text{when $K$ is even},\\
\bs_1\ast (\overline{\bs}_2^\ast)^\op\ast \bs_3\ast (\overline{\bs}_4^\ast)^\op\ast \cdots \ast \bs_K&\text{when $K$ is odd},
\end{cases}\\
&=(\underbrace{s_{1; 1},\dots, s_{1; N}}, \underbrace{\overline{s_{2; N}^\ast},\dots, \overline{s_{2; 1}^\ast}}, \underbrace{s_{3; 1},\dots, s_{3; N}}, \underbrace{\overline{s_{4; N}^\ast},\dots, \overline{s_{4; 1}^\ast}},\dots).
\end{align*}
Consider the weighted quiver $\bJ_\std(\tbbs)$ associated with $\tbbs$ as in \cref{sec:quiver} (see \eqref{eq:std_tbbs}). Set 
\begin{align*}
 n^s(\tbbs):=\sum_{k=1,\dots, K}n^{s^{\ast (k-1)}}(\bs_k), \quad 
n^s(\tbbs_{\leq K'}):=\sum_{k=1,\dots, K'}n^{s^{\ast (k-1)}}(\bs_k)   
\end{align*}
for $s\in S$ and $K'=0, 1,\dots, K$. Here $s^{\ast (k-1)}:=s^{\overset{\text{$(k-1)$ times}}{\overbrace{\scalebox{0.8}{$\ast\cdots \cdots \ast$}}}}$.  
The vertices of $\bJ_\std(\tbbs)$ are labeled by the following subset of $S\times (\Z_{\geq 0}\sqcup \{\infty_{k}\mid k=1,\dots, K\})$; 
\begin{align*}
    J_{\bbD_{K+2}}(\tbbs)&:=
    \{(s, m)\mid 0\leq m\leq n^s(\tbbs), s\in S\}\sqcup \{(s, \infty_k)\mid s\in S, k=1,\dots, K\},
\end{align*}
where 
\begin{align*}
    J_{\bbD_{K+2}}(\tbbs)^\uf&:=
    \{(s, m)\in J_{\bbD_{K+2}}(\tbbs)\mid 1\leq  m\leq n^s(\tbbs)-1, s\in S\},\\
    J_{\bbD_{K+2}}(\tbbs)^\fr&:=J_{\bbD_{K+2}}(\tbbs)\setminus J_{\bbD_{K+2}}(\tbbs)^\uf\\
    &=\{(s, 0), (s, n^s(\tbbs)), (s, \infty_k) \mid s\in S, k=1,\dots, K\}.    
\end{align*}
Write the corresponding exchange matrix as $\cE_{\bbD_{K+2}}(\tbbs)=(\varepsilon_{\bbD_{K+2}, \tbbs;ij})_{i\in J_{\bbD_{K+2}}(\tbbs)^{\uf},j\in J_{\bbD_{K+2}}(\tbbs)}$. 
Define $A_{\bbD_{K+2}, \tbbs; (s, m)}\in \cO_\qq(\Conf_{K+2}^{\times}\sA_G)$ for $(s, m)\in J_{\bbD_{K+2}}(\tbbs)$ as follows. \vspace{5pt}

    \noindent (1) For $\ell=1,\dots, L-1+\varepsilon_K$, 
    \begin{align*}
    &A_{\bbD_{K+2}, \tbbs; (s, m)}:=\iota_{(\ell, +), (\ell+1, +)}^{(\ell, -)}(A_{\triangle, \bs_{2\ell-1}; j})\ \text{if $\sfI_{\bs_{2\ell-1}}(j)=(s, r)$ and $m=n^s(\tbbs_{\leq 2\ell-2})+r$},\\
     &A_{\bbD_{K+2}, \tbbs; (s, \infty_{2\ell-1})}:=\iota_{(\ell, +), (\ell+1, +)}^{(\ell, -)}(A_{\triangle, \bs_{2\ell-1}; s_{\infty}}). 
    \end{align*}

   \noindent (2) For $\ell=1,\dots, L-1$, 
    \begin{align*}
    &A_{\bbD_{K+2}, \tbbs; (s, m)}:=\iota_{(\ell+1, +)}^{(\ell, -), (\ell+1, -)}(A_{\triangle, \bs_{2\ell}; j})\ \text{if $\sfI_{\bs_{2\ell}}(j)=(s^\ast, r)$ and $m=n^s(\tbbs_{\leq 2\ell})-r$},\\
     &A_{\bbD_{K+2}, \tbbs; (s, \infty_{2\ell})}:=\iota_{(\ell+1, +)}^{(\ell, -), (\ell+1, -)}(A_{\triangle, \bs_{2\ell}; s^{\ast}_{\infty}}). 
    \end{align*}

\noindent By \cref{thm:edge,thm:triangle_square_1,thm:triangle_square_2}, the elements $A_{\bbD_{K+2}, \tbbs; (s, m)}$, for $(s, m)\in J_{\bbD_{K+2}}(\tbbs)$, are well-defined and mutually $\qq$-commute. 

Define $\Lambda_{\bbD_{K+2}}(\tbbs)=(\Lambda_{\bbD_{K+2}, \tbbs; (s, m), (s', m')})_{(s, m), (s', m')\in J_{\bbD_{K+2}}(\tbbs)}$ by 
\begin{align*}
    A_{\bbD_{K+2}, \tbbs; (s, m)}A_{\bbD_{K+2}, \tbbs; (s', m')}=\qq^{\Lambda_{\bbD_{K+2},\tbbs;(s, m), (s', m')}}A_{\bbD_{K+2}, \tbbs; (s', m')}A_{\bbD_{K+2}, \tbbs; (s, m)}
\end{align*}
for $(s, m), (s', m')\in J_{\bbD_{K+2}}(\tbbs)$. Then we have a $\Bbbk$-algebra homomorphism (see \cref{ssec:Qtorus})
\[
M_{\tbbs}:\cT(\Lambda_{\bbD_{K+2}}(\tbbs))\to \cF(\cO_\qq(\Conf_{K+2}^{\times}\sA_G)),\ A_{(s, m)}\mapsto A_{\bbD_{K+2}, \tbbs; (s, m),}\ (s, m)\in J_{\bbD_{K+2}}(\tbbs).
\]

The following is the main theorem of this section.
\begin{thm}\label{thm:cluster_polygon}
The collection 
\[
\bi_{\bbD_{K+2}}(\tbbs):=(\Lambda_{\bbD_{K+2}}(\tbbs), \cE_{\bbD_{K+2}}(\tbbs),(A_{\bbD_{K+2}, \tbbs; (s, m)})_{(s, m)\in J_{\bbD_{K+2}}(\tbbs)})
\]
forms a quantum seed in $\cF(\cO_\qq(\Conf_{K+2}^{\times}\sA_G))$, and  
\[
\cO_\qq(\Conf_{K+2}^{\times}\sA_G)\subset A_{\qq}(\bi_{\bbD_{K+2}}(\tbbs); J_{\bbD_{K+2}}(\tbbs)^{\fr}).
\]
\end{thm}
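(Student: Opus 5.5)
We need three things: (a) the $A_{\bbD_{K+2},\tbbs;(s,m)}$ are algebraically independent, i.e.\ $M_{\tbbs}$ is injective; (b) $(\Lambda_{\bbD_{K+2}}(\tbbs),\cE_{\bbD_{K+2}}(\tbbs))$ is a compatible pair and the variables generate $\cF(\cO_\qq(\Conf_{K+2}^{\times}\sA_G))$ as a skew field; (c) the inclusion of $\cO_\qq(\Conf_{K+2}^{\times}\sA_G)$ into the cluster algebra. The approach is local-to-global: every unfrozen vertex $(s,m)$ of $\bJ_\std(\tbbs)$ lies in the image of a single square embedding $\iota^{i_1i_2}_{i_4i_3}$ of \eqref{eq:square_emb} attached to two adjacent triangles of the zigzag triangulation. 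By \cref{thm:triangle_square_1,thm:triangle_square_2}, the full star of $(s,m)$ in $\cE_{\bbD_{K+2}}(\tbbs)$ is the image of the star of the corresponding vertex in $\bi_{\square}(\bs_{k}\ast(\overline{\bs}_{k+1}^\ast)^\op)$ or $\bi_{\square}((\overline{\bs}_{k+1}^\ast)^\op\ast\bs_k)$.

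For compatibility, define $X_{(s,m)}$ as in \eqref{eq:Xvariable}. It is the image under $\iota^{i_1i_2}_{i_4i_3}$ of the corresponding $X$-variable of $\bi_{\square}$, so \cref{thm:cluster_square} gives the required relation with variables inside that square. For a variable supported on a triangle outside the square, \cref{thm:edge} together with \cref{cor:disjoint,cor:meet} shows that it $\qq$-commutes with images of $\iota^{i_1i_2}_{i_4i_3}$ via a character depending only on the degrees at the shared vertices. The $X$-variable has degree zero at every vertex of the square, by \cref{prop:weight0} (equivalently, the GS weight-balance \eqref{eq:weight_Poisson}). Hence it commutes with such a variable, which yields \eqref{eq:AXrel} with the $d_i$ coming from $\bi_\square$.

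For algebraic independence and generation, I would count Gelfand--Kirillov dimension. Note that $|J_{\bbD_{K+2}}(\tbbs)|$ equals $\dim\Conf_{K+2}\sA_G=K\dim(G/U^+)-\dim G+\dots$, computable from $\Phi^{\ext}_{K+2}$ as $\GKdim(\cO_\qq(\sA_G)^{\bt K})+\mathrm{rk}$. For generation, I would induct on $K$, cutting off the last triangle. Via the normalization isomorphism $\Phi^{\ext}$ of \cref{t:invstr} and the Conf$_3$ generation in \cref{thm:cluster_triangle}, each added triangle contributes the generators of its $\cO_\qq(\sA_G)$ factor up to localization. Thus the skew field generated by the $A$'s has transcendence degree equal to the number of variables. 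If $M_{\tbbs}$ had a kernel, the GK dimension of the localization $\cO_\qq(\Conf_{K+2}^{\times}\sA_G)[M_{\tbbs}(\cM)^{-1}]$ would drop, which is a contradiction via the standard lemma on quantum tori (a quotient of a quantum torus by a nonzero ideal has smaller GK dimension). I expect this injectivity/dimension step to be the main obstacle.

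For the inclusion, I would use generators. By the cyclic shift and the normalization, $\cO_\qq(\Conf_{K+2}^{\times}\sA_G)$ is generated by the $D_{i;\lambda}^{\pm1}$ together with images of the $\iota^{i_1i_2}_{i_4i_3}$ of $\cO_\qq(\Conf_4^{\{1,3\}}\sA_G)$ for the quadrilaterals of the triangulation. This is shown inductively: a general invariant is expanded by peeling off one flag with $\Phi^{\ext}$ and rewriting it through the diagonal. By \cref{thm:cluster_square_cpt}, each such image lies in $A_\qq(\bi_\square;S_{-\infty}\sqcup S_\infty)$. Its frozen variables are either frozen in $\bi_{\bbD_{K+2}}$ (boundary edges) or are diagonal variables that only need to be inverted at $S_{\pm\infty}$, so mutations at vertices of the square embed into $A_{\qq}(\bi_{\bbD_{K+2}}(\tbbs);J^{\fr})$. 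The inverted $D$'s are frozen variables, which completes the inclusion.
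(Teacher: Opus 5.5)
Your plan is essentially the paper's proof. Compatibility comes from the square $X$-variables via \cref{thm:cluster_square}, \cref{thm:edge} and the degree-zero property of \cref{prop:weight0}. Injectivity of $M_{\tbbs}$ comes from a Gelfand--Kirillov comparison through $\Phi^{\ext}_{K+2}$, using that quotients of a quantum torus by nonzero ideals have smaller GK dimension. The inclusion comes from generation of $\cO_\qq(\Conf_{K+2}^{\times}\sA_G)$ by the $D_{i;\lambda}^{\pm1}$ and square images of $\cO_\qq(\Conf_4^{\{1,3\}}\sA_G)$, combined with \cref{thm:cluster_square_cpt}.

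Two points need fixing.

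First, the Ore property of $M_{\tbbs}(\cM)$ and the surjectivity of $M_{\tbbs}$ onto $\cO_\qq(\Conf_{K+2}^{\times}\sA_G)[M_{\tbbs}(\cM)^{-1}]$ must come from that same square-generation statement together with the Laurent property of $\bi_{\square}$. They cannot come from \cref{thm:cluster_triangle}: peeling off a flag with $\Phi^{\ext}$ at a boundary edge produces triangles such as $(L-1,L,L+1)$, which are not triangles of the zigzag triangulation.

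Second, the bound $\GKdim\geq NK+(K+1)|S|$ still needs a proof. The paper obtains it from linear independence of ordered monomials in the Berenstein--Zelevinsky minors $\Delta_j^{(k)}$ and the $e(\varpi_s)$ inside $(\cO_\qq(\sA_G)^{\bt K})^{\ext}$. The $\qq=1$ Hilbert-function comparison from the proof of \cref{prop:domain} would also work.
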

\begin{proof}
When $K=1, 2$, the statement is a (weaker version of) \cref{thm:cluster_triangle,thm:cluster_square}. We show the case $K\in \Z_{\geq 3}$. 
Set
\[
\cO_\qq(\Conf_{K+2}^{\overline{\times}}\sA_G):=\cO_\qq(\Conf_{K+2}^{[1, L-1]\cup [L+1, K+1]}\sA_G)=\cO_\qq(\Conf_{K+2}^{[1, K+2]\setminus \{L, K+2\}}\sA_G).
\]    
The following claim plays a key role in the proof. 
\begin{claim}\label{claim}
    For $K\in \Z_{\geq 3}$, $\cO_\qq(\Conf_{K+2}^{\overline{\times}}\sA_G)$ is generated by $\iota^{(\ell, -),(\ell+1, -)}_{(\ell, +), (\ell+1, +)}(\cO_\qq(\Conf_{4}^{\overline{\times}}\sA_G))$ ($\ell =1,\dots, L-1$), and  $\iota^{(\ell', -),(\ell'+1, -)}_{(\ell'+1, +),(\ell'+2, +)}(\cO_\qq(\Conf_{4}^{\overline{\times}}\sA_G))$ ($\ell'=1,\dots, L-2+\varepsilon_K$) as a $\Bbbk$-algebra.
\end{claim}
\begin{proof}[Proof of \cref{claim}]
Define $\iota_{<}\colon \cO_\qq(\Conf_{K+1}^{\overline{\times}}\sA_G)\to \cO_\qq(\Conf_{K+2}^{\overline{\times}}\sA_G)$ by $\iota_{<}:=\iota_{p_{<}}^{\inv}$, where 
\[
p_{<}\colon [1,K+1]\to [1,K+2],\ p_{<}(j)=
\begin{cases}
    j&\text{if}\ 1\leq j\leq L-1+\varepsilon_K,\\
    j+1&\text{if}\ L+\varepsilon_K\leq j\leq K+2.
\end{cases}
\]
It suffices to show the following statements.
        \item[(1)] If $K$ is odd, $\cO_\qq(\Conf_{K+2}^{\overline{\times}}\sA_G)$ is generated by $\iota_{<}(\cO_\qq(\Conf_{K+1}^{\overline{\times}}\sA_G))$ and $\iota^{(L-1, -),(L, -)}_{(L, +), (L+1, +)}(\cO_\qq(\Conf_{4}^{\overline{\times}}\sA_G))$ as a $\Bbbk$-algebra.
        \item[(2)] If $K$ is even, $\cO_\qq(\Conf_{K+2}^{\overline{\times}}\sA_G)$ is generated by $\iota_{<}(\cO_\qq(\Conf_{K+1}^{\overline{\times}}\sA_G))$ and $\iota^{(L-1, -),(L, -)}_{(L-1, +), (L, +)}(\cO_\qq(\Conf_{4}^{\overline{\times}}\sA_G))$ as a $\Bbbk$-algebra.        

Indeed, the assertion follows from the iterated application of (1) and (2). Let us show (1). By \cref{t:invstr}, $\cO_\qq(\Conf_{K+2}^{\{L-1\}}\sA_G)$ is generated by $\iota_{<}(\cO_\qq(\Conf_{K+1}^{\{L-1\}}\sA_G))$ and $\iota^{L-1}_{L+1, L}(\cO_\qq(\Conf_{3}^{\{1\}}\sA_G))$. (We regard the $L$-th tensor component as the first tensor component in \cref{t:invstr} by the quantum cyclic shifts.) 
    Since $\iota^{L-1}_{L+1, L}(\cO_\qq(\Conf_{3}^{\{1\}}\sA_G))\subset \iota^{L-1, L}_{L+2, L+1}(\cO_\qq(\Conf_{4}^{\overline{\times}}\sA_G))$ in $\cO_\qq(\Conf_{K+2}^{\overline{\times}}\sA_G)$ and 
    \begin{align*}
    &D_{j;\lambda}^{-1}\in \iota_{<}(\cO_\qq(\Conf_{K+1}^{\overline{\times}}\sA_G))\ \text{for}\ j\in [1,K+1]\setminus \{L, L+1\}, \lambda\in P_+,\\
    &D_{L+1;\lambda}^{-1}\in \iota^{L-1, L}_{L+2, L+1}(\cO_\qq(\Conf_{4}^{\overline{\times}}\sA_G))\ \text{for}\ \lambda\in P_+,
    \end{align*}
    we obtain the assertion. The statement (2) can be proved in the same way by regarding $L+2$-th tensor component as the first tensor component in \cref{t:invstr} by the quantum cyclic shifts. 
\end{proof}
Let us return to the proof of \cref{thm:cluster_polygon}. 
Define the set of monomials in $\cT(\Lambda_{\bbD_{K+2}}(\tbbs))$ as 
\begin{align*}
\cM:=\{\qq^aA^\ba\mid a\in \frac{1}{2d}\Z, \ba\in \Z_{\geq 0}^{J_{\bbD_{K+2}}(\tbbs)}\},
\end{align*}
and $\cT_+(\Lambda_{\bbD_{K+2}}(\tbbs)):=\sum_{m\in \cM}\Bbbk m$. By \cref{thm:triangle_square_1} and the construction of $A_{\bbD_{K+2}, \tbbs; (s, m)}$, 
\[
\iota^{(\ell, -),(\ell+1, -)}_{(\ell, +), (\ell+1, +)}(A_{\square, \bs_{2\ell-1}\ast (\overline{\bs}_{2\ell}^\ast)^\op; j})\in\mathbf{A}_{\tbbs}:=\{A_{\bbD_{K+2}, \tbbs; (s, m)}\mid (s, m)\in J_{\bbD_{K+2}}(\tbbs)\}
\]
for $j\in J_{\square}(\bs_{2\ell-1}\ast (\overline{\bs}_{2\ell}^\ast)^\op)$ and $\ell=1,\dots, L-1$. Such an element will be called an element of $\mathbf{A}_{\tbbs}$ on the square $((\ell, -),(\ell +1, -), (\ell +1, +),(\ell, +))$. Therefore, it follows from \cref{thm:cluster_square} that, for $\phi\in \iota^{(\ell, -),(\ell+1, -)}_{(\ell, +), (\ell+1, +)}(\cO_\qq(\Conf_{4}^{\overline{\times}}\sA_G))$, there exists $m\in \cM$ such that $M_{\tbbs}(m)\phi\in M_{\tbbs}(\cT_+(\Lambda_{\bbD_{K+2}}(\tbbs)))$. 
Similarly, by \cref{thm:triangle_square_2} and the construction of $A_{\bbD_{K+2}, \tbbs; (s, m)}$, 
\[
\iota^{(\ell', -),(\ell'+1, -)}_{(\ell'+1, +), (\ell'+2, +)}(A_{\square,  (\overline{\bs}_{2\ell'}^\ast)^\op\ast \bs_{2\ell'+1}; j})\in \mathbf{A}_{\tbbs}
\]
for $j\in J_{\square}((\overline{\bs}_{2\ell'}^\ast)^\op\ast \bs_{2\ell'+1})$ and $\ell'=1,\dots, L-2+\varepsilon_K$. Such an element will be called an element of $\mathbf{A}_{\tbbs}$ on the square $((\ell', -),(\ell'+1, -), (\ell'+2, +),(\ell'+1, +))$. Therefore, it follows from \cref{thm:cluster_square} that, for $\phi\in \iota^{(\ell', -),(\ell'+1, -)}_{(\ell'+1, +), (\ell'+2, +)}(\cO_\qq(\Conf_{4}^{\overline{\times}}\sA_G))$, there exists $m\in \cM$ such that $M_{\tbbs}(m)\phi\in M_{\tbbs}(\cT_+(\Lambda_{\bbD_{K+2}}(\tbbs)))$. 

Therefore, by Claim,  
we conclude that $M_{\tbbs}(\cM)$ forms an Ore set of $\cO_\qq(\Conf_{K+2}^{\times}\sA_G)$. 
Moreover, $M_{\tbbs}$ induces a surjective $\Bbbk$-algebra homomorphism
\[
M_{\tbbs}\colon \cT(\Lambda_{\bbD_{K+2}}(\tbbs))\twoheadrightarrow \cO_\qq(\Conf_{K+2}^{\times}\sA_G)[M_{\tbbs}(\cM)^{-1}]. 
\]
We shall show that this $M_{\tbbs}$ is an isomorphism. By \cite[Lemma 2.2 (b)]{LeYu}, it suffices to show that the Gelfand--Kirillov dimension $\GKdim (\cO_\qq(\Conf_{K+2}^{\times}\sA_G)[M_{\tbbs}(\cM)^{-1}])$ of $\cO_\qq(\Conf_{K+2}^{\times}\sA_G)[M_{\tbbs}(\cM)^{-1}]$ satisfies 
\begin{align*}
\GKdim (\cO_\qq(\Conf_{K+2}^{\times}\sA_G)[M_{\tbbs}(\cM)^{-1}])&\geq 
\GKdim (\cT(\Lambda_{\bbD_{K+2}}(\tbbs)))\\
&=|J_{\bbD_{K+2}}(\tbbs)|=NK+(K+1)|S|. 
\end{align*}
Recall \eqref{eq:gamma_op}. Given $\bs=(s_1,\dots, s_N)\in S(w_0)$, set 
\[
\Delta_{j}^{(k)}:=1^{\otimes (k-1)}\otimes \Delta_{\gamma_{\bs^{\op}; j}, \varpi_{s_{N+1-j}}}\otimes 1^{\otimes (K-k)} \in \cO_\qq(\sA_G)^{\bt K}\subset (\cO_\qq(\sA_G)^{\bt K})^{\ext} 
\]
for $j\in J(\bs^\op)$ and $k=1,\dots, K$. Then, by \cref{thm:clusterBorel}, the elements 
\[
\bigg(\dprod_{k=1,\dots, K}\dprod_{j\in J(\bs^\op)}(\Delta_{j}^{(k)})^{a_{j; k}}\bigg)e\bigg(\sum_{s\in S}a_s\varpi_s\bigg)\quad \text{for}\ a_{j; k}, a_s\in \Z_{\geq 0},
\]
are mutually distinct and linearly independent in $(\cO_\qq(\sA_G)^{\bt K})^{\ext}$, where we fix an arbitrary total order on $J(\bs^\op)$. Recall that, by \cref{t:invstr}, we have 
\begin{align*}
 (\cO_\qq(\sA_G)^{\bt K})^{\ext}\xrightarrow[\sim]{(\Phi_{K+2}^{\ext})^{-1}} \cO_\qq(\Conf_{K+2}^{\{K+2\}}\sA_G)\hookrightarrow \cO_\qq(\Conf_{K+2}^{\times}\sA_G)[M_{\tbbs}(\cM)^{-1}]. 
\end{align*}
Therefore, if we take a finite set $\mathcal{G}$ of $\Bbbk$-algebra generators of  $\cO_\qq(\Conf_{K+2}^{\times}\sA_G)[M_{\tbbs}(\cM)^{-1}]$ such that 
\[
\mathcal{G}\supset \{(\Phi_{K+2}^{\ext})^{-1}(\Delta_{j}^{(k)})\mid j\in J(\bs^\op), k\in [1,K]\}\cup \{ (\Phi_{K+2}^{\ext})^{-1}(e(\varpi_s))\mid s\in S\},
\]
then the above argument implies that   
\[
\dim_{\Bbbk}\bigg(\sum_{m\in \mathcal{G}_{\leq r(K(N+|S|)+|S|)}} \Bbbk m\bigg)\geq (r+1)^{K(N+|S|)+|S|}
\]
for $r\in \Z_{>0}$, where $\mathcal{G}_{\leq r'}:=\{x_1\cdots x_i\mid x_1,\dots, x_i\in \mathcal{G}, 0\leq i\leq r'\}$. Therefore, 
\begin{align*}
&\GKdim (\cO_\qq(\Conf_{K+2}^{\times}\sA_G)[M_{\tbbs}(\cM)^{-1}])\\
&\geq \lim_{r\to \infty }\left(\log\dim_{\Bbbk}\bigg(\sum_{m\in \mathcal{G}_{\leq r(K(N+|S|)+|S|)}} \Bbbk m\bigg)\middle/ \log (r(K(N+|S|)+|S|))  \right) \\
&\geq \lim_{r\to \infty }\frac{\log((r+1)^{K(N+|S|)+|S|})}{\log (r(K(N+|S|)+|S|))}  \\
&= \lim_{r\to \infty }\frac{(K(N+|S|)+|S|)\log(r+1)}{\log (r)+\log(K(N+|S|)+|S|)}=K(N+|S|)+|S|=NK+(K+1)|S|,  
\end{align*}
which proves that $M_{\tbbs}\colon \cT(\Lambda_{\bbD_{K+2}}(\tbbs))\twoheadrightarrow \cO_\qq(\Conf_{K+2}^{\times}\sA_G)[M_{\tbbs}(\cM)^{-1}]$ is an isomorphism. 
In particular, $\cF(\cO_\qq(\Conf_{K+2}^{\times}\sA_G))$ is isomorphic to $\cF(\cT(\Lambda_{\bbD_{K+2}}(\tbbs)))$. 

Next, we show that $(\Lambda_{\bbD_{K+2}}(\tbbs), \cE_{\bbD_{K+2}}(\tbbs))$ is a compatible pair.  As in \eqref{eq:Xvariable}, we set 
    \begin{align*}
X_{\bbD_{K+2}, \tbbs; (s, m)}&:=M_{\tbbs}(A^{\sum_{(s',m')\in J_{\bbD_{K+2}}(\tbbs)}\varepsilon_{\bbD_{K+2}, \tbbs;(s, m), (s', m')}f_{(s', m')}})\in \cF(\cO_\qq(\Conf_{K+2}^{\times}\sA_G))
\end{align*}
for $(s, m)\in J_{\bbD_{K+2}}(\tbbs)^\uf$. As mentioned in \eqref{eq:AXrel}, it suffices to show that there exists $d_{(s, m)}\in \Z_{>0}$ such that 
\begin{align}
X_{\bbD_{K+2}, \tbbs; (s, m)}A_{\bbD_{K+2}, \tbbs; (s', m')}=\qq^{-d_{(s, m)}\delta_{(s,m),(s',m')}/d}A_{\bbD_{K+2}, \tbbs; (s', m')}X_{\bbD_{K+2}, \tbbs; (s, m)}\label{eq:AXrel_polygon}
\end{align}
for $(s', m')\in J_{\bbD_{K+2}}(\tbbs)$. By the definition of $\cE_{\bbD_{K+2}}(\tbbs)$ and $\mathbf{A}_{\tbbs}$ (see \cref{thm:triangle_square_1,thm:triangle_square_2}), there exists a pair $((v_1, v_2, v_3, v_4), j)$ such that $(v_1, v_2, v_3, v_4)=((\ell, -),(\ell+1, -), (\ell+1, +),(\ell, +))$ for some $\ell=1,\dots, L-1$ or $((\ell', -),(\ell'+1, -), (\ell'+2, +),(\ell'+1, +))$ for some $\ell'=1,\dots, L-2+\varepsilon_K$, 
\[
\bbs:=
\begin{cases}
    \bs_{2\ell-1}\ast (\overline{\bs}_{2\ell}^\ast)^\op&\text{ if }(v_1, v_2, v_3, v_4)=((\ell, -),(\ell+1, -), (\ell+1, +),(\ell, +))\\
    (\overline{\bs}_{2\ell'}^\ast)^\op\ast \bs_{2\ell'+1}&\text{ if }(v_1, v_2, v_3, v_4)=((\ell', -),(\ell'+1, -), (\ell'+2, +),(\ell'+1, +))
\end{cases}
\]
and $j\in J_{\square}(\bbs)^\uf$ satisfying  
\[
X_{\bbD_{K+2}, \tbbs; (s, m)}=\iota^{v_1, v_2}_{v_4, v_3} (X_{\bi_{\square}(\bbs); j}),
\]
where we extend $\iota^{v_1, v_2}_{v_4, v_3} $ to the homomorphism between the skew-field of fractions. If $A_{\bbD_{K+2}, \tbbs; (s', m')}\in \mathbf{A}_{\tbbs}$ is on the square $(v_1, v_2, v_3, v_4)$, then 
\[
X_{\bbD_{K+2}, \tbbs; (s, m)}A_{\bbD_{K+2}, \tbbs; (s', m')}=\qq^{-d_{(s, m)}\delta_{(s,m),(s',m')}}A_{\bbD_{K+2}, \tbbs; (s', m')}X_{\bbD_{K+2}, \tbbs; (s, m)}
\]
for some $d_{(s, m)}\in \Z_{>0}$ by \cref{thm:cluster_square}. If $A_{\bbD_{K+2}, \tbbs; (s', m')}\in \mathbf{A}_{\tbbs}$ is not on the square $(v_1, v_2, v_3, v_4)$, then 
\[
X_{\bbD_{K+2}, \tbbs; (s, m)}A_{\bbD_{K+2}, \tbbs; (s', m')}=A_{\bbD_{K+2}, \tbbs; (s', m')}X_{\bbD_{K+2}, \tbbs; (s, m)}
\]
by \cref{thm:edge} and the degree $0$ property shown in \cref{prop:weight0}. Therefore, \eqref{eq:AXrel_polygon} is proved, hence $\bi_{\bbD_{K+2}}(\tbbs)$ forms a quantum seed in $\cF(\cO_\qq(\Conf_{K+2}^{\times}\sA_G))$. 

Finally, we show the inclusion $\cO_\qq(\Conf_{K+2}^{\times}\sA_G)\subset A_{\qq}(\bi_{\bbD_{K+2}}(\tbbs); J_{\bbD_{K+2}}(\tbbs)^{\fr})$. By the construction of $\bi_{\bbD_{K+2}}(\tbbs)$  (see \cref{thm:triangle_square_1,thm:triangle_square_2}), $\iota^{(\ell, -),(\ell+1, -)}_{(\ell, +), (\ell+1, +)}$ (resp.~$\iota^{(\ell', -),(\ell'+1, -)}_{(\ell'+1, +),(\ell'+2, +)}$) is an embedding of seeds in the sense of \cref{def:clusteremb} from $\bi_{\square}(\bs_{2\ell-1}\ast (\overline{\bs}_{2\ell}^\ast)^\op)$ (resp.~$\bi_{\square}((\overline{\bs}_{2\ell'}^\ast)^\op\ast \bs_{2\ell'+1})$) to $\bi_{\bbD_{K+2}}(\tbbs)$ for $\ell =1,\dots, L-1$ (resp.~$\ell'=1,\dots, L-2+\varepsilon_K$). Therefore (see a remark in \cref{def:clusteremb}), $A_{\qq}(\bi_{\bbD_{K+2}}(\tbbs); J_{\bbD_{K+2}}(\tbbs)^{\fr})$ contains 
\begin{align*}
&\iota^{(\ell, -),(\ell+1, -)}_{(\ell, +), (\ell+1, +)}(A_{\qq}(\bi_{\square}(\bs_{2\ell-1}\ast (\overline{\bs}_{2\ell}^\ast)^\op); S_{-\infty}\sqcup S_{\infty}))=\iota^{(\ell, -),(\ell+1, -)}_{(\ell, +), (\ell+1, +)}(\cO_\qq(\Conf_{4}^{\overline{\times}}\sA_G)),\\
&\iota^{(\ell', -),(\ell'+1, -)}_{(\ell'+1, +),(\ell'+2, +)}(A_{\qq}(\bi_{\square}((\overline{\bs}_{2\ell'}^\ast)^\op\ast \bs_{2\ell'+1})); S_{-\infty}\sqcup S_{\infty}))=\iota^{(\ell', -),(\ell'+1, -)}_{(\ell'+1, +),(\ell'+2, +)}(\cO_\qq(\Conf_{4}^{\overline{\times}}\sA_G))
\end{align*}
for $\ell=1,\dots, L-1$ and $\ell'=1,\dots, L-2+\varepsilon_K$. Here, the equalities follow from \cref{thm:cluster_square_cpt}. Therefore, the desired result follows from Claim.
\end{proof}

\appendix
\section{Quantum cluster algebras}\label{app:QCA}
In this appendix, we fix our notation concerning the quantum cluster algebras. 

\subsection{Quantum torus}\label{ssec:Qtorus}
Let $J$ be a finite set. 
For a $\frac{1}{d}\Z$-valued skew-symmetric $J\times J$-matrix 
$\Lambda = (\Lambda_{ij})_{i,j \in J}$, we define a \emph{quantum torus} 
$\cT(\Lambda)$ as the $\Bbbk$-algebra defined by the generators 
\[
A_j,\ A_j^{-1}\ (j \in J)
\]
and the relations:
\begin{itemize}
\item $A_j A_j^{-1} = A_j^{-1} A_j  =1$ for $j \in J$, 
\item $A_{i} A_{j} = \qq^{\Lambda_{ij}} A_j A_i$ for $i,j\in J$.
\end{itemize} 
We write the standard $\Z$-basis of $\Z^{J}$ as $\{f_j\mid j\in J\}$. Then for $\ba = (a_j)_{j \in J}=\sum_{j\in J}a_jf_j \in \Z^{J}$, we set
\[
A^{\ba} := \qq^{-\frac{1}{2}\sum_{i < j}a_i a_j \Lambda_{ij}} \dprod_{j \in J} A_j^{a_j},
\]
where we fix an arbitrary total ordering $<$ of the set $J$. The element $A^{\ba} \in \cT(\Lambda)$ does not depend on the choice of total ordering $<$. The set $\{A^\ba \mid \ba \in \Z^{J}\}$ forms a free $\Bbbk$-basis of $\cT(\Lambda)$. For $\ba=\sum_{j\in J}a_jf_j, \ba'=\sum_{j\in J}a'_jf_j\in \Z^{ J}$, we set 
\[
\Lambda(\ba, \ba'):=\sum_{i,j\in J}\Lambda_{ij}a_i a'_j.
\]
Then we have 
\[
A^{\ba}A^{\ba'}=\qq^{\frac{1}{2}\Lambda(\ba, \ba')} A^{\ba+\ba'}.
\]
Since $\cT(\Lambda)$ is an Ore domain, it is embedded into the skew field of fractions $\cF(\cT(\Lambda))$ \cite[Appendix A]{BZ}.

\subsection{Quantum cluster algebra}
\label{app:qca}
Let $J^\fr \subset J$ be a subset and put $J^\uf := J \setminus J^\fr$.
Let $\cE = (\varepsilon_{ij})_{i \in J^{\uf}, j \in J}$ be a $\Z$-valued $J^{\uf} \times J$-matrix.  
The pair $(\Lambda, \cE)$ is said to be \emph{compatible} if 
\begin{align}
\sum_{k \in J} \varepsilon_{ik}\Lambda_{kj} = - d_i  \delta_{i,j}/d    \label{eq:compatible}
\end{align}
for $i\in J^\uf, j \in J$, where $d_i\in \Z_{>0}$ for $i\in J^\uf$. In this case, $\cE$ has full rank, and its principal part $(\varepsilon_{ij})_{i,j \in J^{\uf}}$  is skew-symmetrizable by the diagonal matrix $\mathrm{diag}(d_i \mid i \in J^{\uf})$, that is, $\varepsilon_{ij}d_j=-\varepsilon_{ji}d_i$ for $i,j \in J^{\uf}$ \cite[Proposition 3.3]{BZ}. The matrix $\cE$ is called an \emph{exchange matrix}.

For a compatible pair $(\Lambda, \cE)$ and an element $k \in J^{\uf}$, define a new pair 
\[
\mu_k (\Lambda, \cE) = (\mu_k\Lambda, \mu_k\cE) := ((\Lambda'_{ij})_{i, j\in J}, (\varepsilon'_{ij})_{i \in J^{\uf}, j \in J}) 
\]
by 
\begin{align*}
    &\Lambda'_{ij}:=\Lambda(f'_i, f'_j),\ 
    f'_l:=\begin{cases}
        -f_k+\sum_{t\in J}[\varepsilon_{kt} ]_+f_t&\text{if $l=k$,}\\
        f_l&\text{if $l\neq k$,}        
    \end{cases}\\[10pt]
    &\varepsilon'_{ij}:=\begin{cases}
        -\varepsilon_{ij}&\text{if $k\in \{i, j\}$,}\\
        \varepsilon_{ij}+[\varepsilon_{ik}]_+[\varepsilon_{kj}]_+-[-\varepsilon_{ik}]_+[-\varepsilon_{kj}]_+&\text{otherwise,}        
    \end{cases}
\end{align*}
where $[a]_+:=\max\{a, 0\}$ for $a\in \Z$. The pair $\mu_k(\Lambda, \cE)$ again forms a compatible pair \cite[Proposition 3.4]{BZ}. 
The operation $\mu_k$ is called the \emph{mutation} at $k$ and it is involutive, i.e., $\mu_k(\mu_k(\Lambda, \cE)) = (\Lambda, \cE)$. 

Let $\cF$ be a skew-field which is isomorphic to the skew field of fractions of some quantum torus $\cT(\Lambda_{\bullet})$. The triple $\bi=(\Lambda, \cE, (A_{\bi; j})_{j\in J})$ is called a \emph{quantum seed} in $\cF$ if 
\begin{itemize}
    \item $(\Lambda, \cE)$ is a compatible pair, 
    \item There exists an injective homomorphism of $\Bbbk$-algebras $ \cT(\Lambda)\to \cF$ determined by 
    \begin{align}
    A_{j}\mapsto A_{\bi; j}\ \text{for}\ j\in J, \label{eq:toric_frame}
    \end{align}
which extends to an isomorphism $\cF(\cT(\Lambda))\xrightarrow{\sim}\cF$. 
\end{itemize}
The image of $A^{\bm{a}}$ under the map \eqref{eq:toric_frame} is denoted by  $A_{\bi}^{\bm{a}}$ for $\bm{a}\in \Z^J$. Note that $\cF$ is determined from $\Lambda$ as a $\Bbbk$-algebra. 

For $k \in J^{\uf}$, we set 
\begin{equation}
A_{\mu_k\bi; j} :=
\begin{cases} \label{eq:exrel}
A_{\bi}^{-f_k+\sum_{t\in J}[\varepsilon_{kt} ]_+f_t} + A_{\bi}^{-f_k+\sum_{t\in J}[-\varepsilon_{kt} ]_+f_t} & \text{if $j =k$}, \\
A_{\bi; j} & \text{if $j \neq k$}.
\end{cases}
\end{equation}
Then $\mu_k\bi:=(\mu_k\Lambda, \mu_k\cE, (A_{\mu_k\bi; j})_{j\in J})$ is again a quantum seed in $\cF$ \cite[Proposition 4.7]{BZ}. We say that $\mu_k\bi$ is obtained from $\bi$ by the mutation at $k$. It is also involutive, i.e., $\mu_k\mu_k\bi = \bi$. 

Let $\sigma$ be a permutation of the index set $J$ satisfying $\sigma(J^\fr) \subset J^\fr$ and $\sigma(J^\fr_{\times}) \subset J^\fr_{\times}$. 
We call it an \emph{admissible} permutation of $J$. 
For a quantum seed $\bi=(\Lambda, \cE, (A_{\bi; j})_{j\in J})$ in $\cF$, we can define another quantum seed $\sigma \bi = (\sigma \Lambda, \sigma \cE, (A_{\sigma\bi; j})_{j\in J})$ by 
\[
\sigma \Lambda := (\Lambda_{\sigma^{-1}(i),\sigma^{-1}(j)})_{i,j \in J},\quad 
\sigma \cE := (\varepsilon_{\sigma^{-1}(i),\sigma^{-1}(j)})_{i \in J^{\uf}, j \in J},\quad 
A_{\sigma\bi; j}:=A_{\bi; \sigma^{-1}(j)}.
\]

Two quantum seeds $\bi$ and $\bi'$ in $\cF$ are said to be \emph{mutation equivalent}, and written as $\bi\sim \bi'$, if there exist $k_1,\dots, k_t\in J^\uf$ and an admissible permutation $\sigma$ of $J$ satisfying 
\[
\sigma\mu_{k_t}\cdots \mu_{k_1}\bi=\bi'.
\]
\begin{dfn}
Let $\bi=(\Lambda, \cE, (A_{\bi; j})_{j\in J})$ be a quantum seed in $\cF$. Fix $J^\fr_{\times}\subset J^\fr$. The \emph{quantum cluster algebra} $A_{\qq}(\bi; J^\fr_{\times})$ is defined to be the $\Bbbk$-subalgebra of $\cF(\cT(\Lambda))$ generated by $
A_{\bi'; j}$ for $\bi'\sim \bi$ and $j\in J$, and $A_j^{-1}$ for $j\in J^\fr_{\times}$. 

An element of $\cF$ of the form $ 
A_{\bi'; j}$ (resp.  $A_{\bi'; J}^\ba$) for $\bi'\sim \bi$ and $j \in J$ (resp.~$\ba \in \Z_{\geq 0}^{J\setminus J^\fr_{\times}}\oplus \Z^{J^\fr_{\times}}$)  is called a \emph{quantum cluster variable} (resp.~\emph{quantum cluster monomial}) of $A_{\qq}(\bi; J^\fr_{\times})$. Note that, if $j\in J^{\fr}$, 
\[
A_{\bi'; j}=A_{\bi; j}
\]
for any $\bi'\sim \bi$. Hence, $A_{\bi; j}, j\in J^\fr$ are called the \emph{frozen variables}. 

For $\bi'\sim \bi$, define $\bcT(\bi'; J^\fr_{\times})$ as a $\Bbbk$-subalgebra of $\cF$ generated by $A_{\bi'; j}, j\in J$ and $A_{\bi'; j}^{-1}, j\in J^\uf\cup J^\fr_{\times}$. Then we define the \emph{quantum upper cluster algebra} by 
\[
U_{\qq}(\bi; J^\fr_{\times})=\bigcap_{\bi'; \bi'\sim\bi}\bcT(\bi'; J^\fr_{\times}). 
\]
\end{dfn}
We have the inclusion $A_{\qq}(\bi; J^\fr_{\times})\subset U_{\qq}(\bi; J^\fr_{\times})$, which is known as the \emph{quantum Laurent phenomenon} \cite[Corollary 5.2]{BZ} (cf.~\cite[Theorem 2.15]{GY:Quantumnilp}).

For a quantum seed $\bi=(\Lambda, \cE, (A_{\bi; j})_{j\in J})$ in $\cF$ and $k\in J^\uf$, we set 
\begin{align}
X_{\bi; k}:=A_{\bi}^{\sum_{t\in J}\varepsilon_{kt}f_t}.    \label{eq:Xvariable}
\end{align}
Moreover, we write the standard $\Z$-basis of $\Z^{J^{\uf}}$ as $\{e_j\mid j\in J^{\uf}\}$, and for $\bn = (n_j)_{j \in J^{\uf}}=\sum_{j\in J}n_je_j \in \Z^{J^{\uf}}$, we set
\begin{align}
X_{\bi}^{\bn} := A_{\bi}^{\sum_{k\in J^{\uf}}\sum_{t\in J}n_k\varepsilon_{kt}f_t}.\label{eq:Xmonomial}  
\end{align}
Note that $X_{\bi}^{e_k}=X_{\bi; k}$, and the compatibility condition \eqref{eq:compatible} is equivalent to the condition 
\begin{align}
X_{\bi; i}A_{\bi; j}=\qq^{-d_i\delta_{i, j}/d} A_{\bi; j}X_{\bi; i}\label{eq:AXrel}    
\end{align}
for all $i\in J^\uf$ and $j\in J$.

The following theorem is a highly non-trivial theorem which was conjectured in \cite{CAIV} and proved in \cite{DWZ,GHKK} (see also \cite[Proposition 5.6]{CAIV}, \cite[Theorem 6.7]{Tran}, \cite[Theorem A.8]{FHOO2}).

\begin{thm}\label{thm:separation}
Let $\bi$ be a quantum seed in $\cF$. Then, for any $\bi'\sim\bi$ and $j\in J^{\uf}$, there uniquely exist $\bg^\uf_{\bi'; j}\in \Z^{J^\uf}(\subset \Z^{J})$ and $\bg^\fr_{\bi'; j}\in \Z^{J^\fr}(\subset \Z^{J})$  such that 
\begin{align}
A_{\bi'; j}=A_{\bi}^{\bg^\uf_{\bi'; j}+\bg^\fr_{\bi'; j}}(1+\sum_{\bn\in \Z_{\geq 0}^{J^{\uf}}}c_{\bn}X_{\bi}^{\bn})\quad (c_{\bn}\in \Bbbk).\label{eq:separation}
\end{align}
Moreover, $\bg^\uf_{\bi'; j}=\bg^\uf_{\bi''; j'}$ implies $A_{\bi'; j}=A_{\bi'; j'}$ for $\bi'\sim\bi\sim \bi''$ and $j, j'\in J^{\uf}$. 
\end{thm}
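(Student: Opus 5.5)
The plan is to reduce the quantum statement to the classical sign-coherence and $\bg$-vector results of \cite{DWZ,GHKK}, following the strategy of \cite[Proposition 5.6]{CAIV}, \cite{Tran} and \cite[Theorem A.8]{FHOO2}. First fix $\bi$ and attach to each $\bi'\sim\bi$ its extended exchange matrix and its $\bg$-vectors, computed by tropical mutation of the principal-coefficient seed. Then argue by induction on the length of a mutation sequence $\bi'=\sigma\mu_{k_t}\cdots\mu_{k_1}\bi$. In the inductive step, rewrite the exchange relation \eqref{eq:exrel} for $\mu_k\bi'$ in terms of $X$-monomials of $\bi'$:
\[
A_{\mu_k\bi';k}=A_{\bi'}^{-f_k+\sum_t[-\varepsilon'_{kt}]_+f_t}\bigl(1+\qq^{c}X_{\bi';k}\bigr)
\]
for a suitable scalar $\qq^{c}$ coming from the quasi-commutation rule $A^{\ba}A^{\ba'}=\qq^{\frac12\Lambda(\ba,\ba')}A^{\ba+\ba'}$. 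Substituting the inductive expansions \eqref{eq:separation} of the $A_{\bi';j}$ in terms of $\bi$ then expresses $A_{\mu_k\bi';k}$ as a Laurent monomial in $A_{\bi}$ times a rational function in the $X_{\bi}$.

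The key input is sign-coherence of $\bc$-vectors, which is a statement about the exchange matrices only and so transfers verbatim from the classical case \cite{GHKK}. It guarantees that $X_{\bi';k}=A_{\bi}^{\ba}X_{\bi}^{\pm\bc}$ with $\bc\in\Z^{J^\uf}_{\geq 0}$, where $\ba$ is the part that cancels against the $\bg$-vector bookkeeping. Choosing the term ($1$ or $X_{\bi';k}$) that carries the non-positive exponent, we can factor out a monomial and obtain a \emph{polynomial} in $X_{\bi}$ with constant term $1$. The compatibility relations \eqref{eq:AXrel} ensure that $X_{\bi}$-monomials commute with $A_{\bi}$-monomials up to powers of $\qq$. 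This lets us move the monomial prefactor to the left, so that only the coefficients $c_{\bn}$ change, while the leading coefficient stays $1$ after the normalization of $A^{\ba}$. This yields \eqref{eq:separation} with $\bg$-vectors obeying the classical tropical recursion. Uniqueness of $\bg^\uf+\bg^\fr$ follows because $\cE$ has full rank: the monomials $A_{\bi}^{\bg}X_{\bi}^{\bn}$ are then distinct for distinct $(\bg,\bn)$, and by the Laurent phenomenon the expansion lives in the quantum torus.

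For the final assertion, suppose $\bg^\uf_{\bi';j}=\bg^\uf_{\bi'';j'}$. By the classical result (the $\bg$-vector determines the cluster variable, \cite{GHKK}), the specializations at $\qq=1$, or equivalently the principal-coefficient cluster variables, coincide. Moreover, in a \emph{full} rank setting the frozen part $\bg^\fr$ is determined by $\bg^\uf$ through the same tropical recursion, so the two sides have equal full $\bg$-vectors. Next, use that the mutation sequences producing the two variables give the same tropical data, so the quantum exchange computations are governed by the same exchange matrices. To conclude, invoke \cite[Theorem 6.7]{Tran}, or compare bar-invariance: both $A_{\bi';j}$ and $A_{\bi'';j'}$ are bar-invariant elements of the quantum torus whose expansions have the same leading term and the same $\qq=1$ specialization.

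The main obstacle is this last step: passing from equality at $\qq=1$ to quantum equality. I would handle it by the bar-invariance and leading-term argument of \cite[Theorem 6.7]{Tran}, which recovers the coefficients $c_{\bn}$ recursively from the $\bg$-vector and the $F$-polynomial data alone.
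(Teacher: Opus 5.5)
The paper gives no proof of this statement. It is quoted from \cite{DWZ,GHKK}, with \cite[Proposition 5.6]{CAIV}, \cite[Theorem 6.7]{Tran} and \cite[Theorem A.8]{FHOO2} indicated for the quantum setting, so reducing to the classical sign-coherence and $\bg$-vector results is the intended route. The argument you actually give, however, fails at the step you yourself flag.

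\textbf{The final step does not work.} Bar-invariance, the leading term and the specialization at $\qq=1$ do not determine an element of the quantum torus. Take $k\in J^{\uf}$ and $\ba:=\sum_{t}\varepsilon_{kt}f_t$. The elements $A_{\bi}^{\bg}+A_{\bi}^{\bg+\ba}$ and $A_{\bi}^{\bg}+(\qq+\qq^{-1}-1)A_{\bi}^{\bg+\ba}$ are both bar-invariant and both of the form $A_{\bi}^{\bg}(1+cX_{\bi;k})$ with $c\in\Bbbk$. They also have the same classical limit, yet they differ.
\begin{itemize}
\item Hence the classical equality coming from \cite{GHKK} does not upgrade to a quantum one in this way.
\item \cite{Tran} does not supply the missing step. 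It produces the quantum $F$-polynomial from the mutation sequence defining the variable, not from its $\bg$-vector or its classical $F$-polynomial, so invoking it here is circular.
\item Your claim that the two mutation sequences ``give the same tropical data'' is unfounded. They are different sequences, with different $\mathbf{c}$-vectors along the way.
\end{itemize}
What is needed is a genuinely quantum statement that quantum cluster variables are determined by their $\bg$-vectors. Possible sources are the identification of quantum cluster monomials with quantum theta functions indexed by $\bg$, or positivity of the quantum Laurent coefficients. With positivity your specialization argument does go through: coefficients vanishing at $\qq=1$ must vanish identically, and the surviving coefficient is then a bar-invariant power $\qq^{a}$, hence equal to $1$. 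Separately, equality of the frozen parts $\bg^{\fr}$ follows from the classical separation formula, where $\bg^{\fr}$ is minus the tropical evaluation of the $F$-polynomial on the frozen columns of $\cE$. It does not follow from ``the same tropical recursion'', since the two recursions run along different sequences.

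\textbf{The existence part also has a hole.} After substitution, $X_{\bi';k}$ is not a monomial $A_{\bi}^{\ba}X_{\bi}^{\pm\mathbf{c}}$. By the separation formula for $X$-variables it is $X_{\bi}^{\mathbf{c}_k}$ times a suitably ordered product $\prod_{t\in J^{\uf}}F_t(X_{\bi})^{\varepsilon'_{kt}}$ of the earlier $F$-polynomials. So the new $F$-polynomial comes out as $F_k^{-1}$ times a sum of two products of earlier $F$-polynomials and $X_{\bi}$-monomials. Sign-coherence only decides which of the two summands carries the constant term $1$. It does not show that the quotient is a polynomial in the $X_{\bi;i}$, i.e.\ that only $\bn\in\Z_{\geq 0}^{J^{\uf}}$ occur. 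That is the Laurent phenomenon with principal coefficients (\cite{CAIV}, with its quantum counterpart in \cite{Tran}), and it has to be invoked explicitly.

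\textbf{Uniqueness needs a different argument.} It is not true that the monomials $A_{\bi}^{\bg}X_{\bi}^{\bn}$ are distinct for distinct pairs $(\bg,\bn)$. The correct argument is that full rank of $\cE$ makes $\bn\mapsto\sum_{k,t}n_k\varepsilon_{kt}f_t$ injective. Then $\bg^{\uf}+\bg^{\fr}$ is the unique minimal exponent of the expansion for the induced partial order.
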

For example, by \eqref{eq:exrel} and \eqref{eq:AXrel},
\begin{align}
A_{\mu_k\bi; k} =A_{\bi}^{-f_k+\sum_{t\in J}[-\varepsilon_{kt} ]_+f_t}(1+\qq^{d_k/2d} X_{\bi; k})\label{eq:onestep}
\end{align}
for $k\in J^{\uf}$. 

The vector $\bg^\uf_{\bi'; j}$ in \cref{thm:separation} is called \emph{the $g$-vector} of $A_{\bi'; j}$. In this paper, we write such $\bg^\uf_{\bi'; j}$ as 
\begin{equation}
\bg^\uf_{\bi'; j}=:\bg_{\bi}(A_{\bi'; j}).   \label{eq:gvect_notation}  
\end{equation}

In the main body of the paper, the following proposition plays an important role. 
\begin{prop}\label{prop:weight0}
    Let $\bi=(\Lambda, \cE, (A_{\bi; j})_{j\in J})$ be a quantum seed in $\cF$ and $L$ a free $\Z$-module. Assume that the quantum cluster algebra $A_{\qq}(\bi; J^\fr_{\times})$ (resp.~the quantum upper cluster algebra $U_{\qq}(\bi; J^\fr_{\times})$) is an $L$-graded $\Bbbk$-algebra, and that $A_{\bi; j}$, $j\in J$ are homogeneous with respect to this grading. Then $A_{\bi'; k}$ is homogeneous for all $\bi'\sim \bi$ and $k\in J^\uf$. Moreover, if we extend the graded structure on $A_{\qq}(\bi; J^\fr_{\times})$ (resp.~$U_{\qq}(\bi; J^\fr_{\times})$) to that on $\cT(\Lambda)$, then the degree of $X_{\bi'; k}$ is equal to $0$ for all $\bi'\sim \bi$ and $k\in J^\uf$.
\end{prop}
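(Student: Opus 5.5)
The plan is to argue via the Laurent expansion in the initial seed $\bi$, combined with the separation formula (\cref{thm:separation}). We first extend the $L$-grading to the quantum torus $\cT(\Lambda)$. Since each $A_{\bi;j}$, $j\in J$, is homogeneous and invertible in $\cT(\Lambda)$, we set $\deg A^{\ba}:=\sum_j a_j\deg A_{\bi;j}$. The quantum torus relations $A_iA_j=\qq^{\Lambda_{ij}}A_jA_i$ are homogeneous, and the monomials $A^{\ba}$ form a basis. Hence this defines an $L$-grading on $\cT(\Lambda)$ compatible with the given grading on the subalgebra $A_\qq(\bi;J^\fr_\times)$ (resp.~$U_\qq(\bi;J^\fr_\times)$), which lies in $\cT(\Lambda)$ by the quantum Laurent phenomenon. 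The two gradings agree on the subalgebra because they agree on its homogeneous generators in the cluster algebra case. In the upper case, we restrict to the subalgebra generated by the $A_{\bi;j}$ and $A_{\bi;k}^{-1}$ for $k\in J^\uf\cup J^\fr_\times$, which lies in $U_\qq$; there the grading is determined by the grading on $\cT(\Lambda)$.

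The main step is to show $\deg X_{\bi;k}=0$ for $k\in J^\uf$ in the initial seed. For this we use the one-step mutation formula \eqref{eq:onestep}:
\[
A_{\mu_k\bi;k}=A_{\bi}^{-f_k+\sum_t[-\varepsilon_{kt}]_+f_t}\bigl(1+\qq^{d_k/2d}X_{\bi;k}\bigr).
\]
The left side lies in $A_\qq(\bi;J^\fr_\times)\subset U_\qq(\bi;J^\fr_\times)$ and is therefore a homogeneous element if the grading on the (upper) cluster algebra is what we use. Here lies the real obstacle: the hypothesis only says the generators $A_{\bi;j}$ are homogeneous, not that the mutated variables are. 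The resolution is that the grading on the algebra, restricted to $\cT(\Lambda)$-Laurent expansions, must coincide with the torus grading. Granting this, the right side decomposes into the torus-homogeneous pieces $A^{\bm b}$ and $A^{\bm b}X_{\bi;k}$, which are distinct basis monomials. Homogeneity then forces $\deg A^{\bm b}=\deg(A^{\bm b}X_{\bi;k})$, that is, $\deg X_{\bi;k}=0$. It simultaneously shows that $A_{\mu_k\bi;k}$ is homogeneous of degree $\deg A^{\bm b}$.

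To justify the needed compatibility, I would instead argue directly on the torus: the torus grading is an algebra grading of $\cT(\Lambda)$ extending that of the subalgebra. It then suffices to prove homogeneity of every cluster variable with respect to the torus grading, since the subalgebra grading is the restriction. We proceed by induction on the length of the mutation sequence from $\bi$ to $\bi'$.

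For the induction, suppose that in $\bi'$ all $A_{\bi';j}$ are homogeneous and all $X_{\bi';k}$ have degree $0$. By \eqref{eq:exrel}, $A_{\mu_k\bi';k}$ is the sum of two monomials in the $A_{\bi';j}$. Their degrees differ exactly by $\deg X_{\bi';k}=0$, so $A_{\mu_k\bi';k}$ is homogeneous. The new $X$-variables $X_{\mu_k\bi';i}$ are Laurent monomials in the new cluster, and a direct computation with the matrix mutation rule expresses them as products of the old $X$-variables and $X_{\bi';k}^{\pm}$. Concretely, via the standard $\hat y$-mutation formula, $X_{\mu_k\bi';i}$ equals $X_{\bi';i}X_{\bi';k}^{[\varepsilon_{ik}]_+}$ times a $\qq$-power for $i\neq k$, and $X_{\bi';k}^{-1}$ for $i=k$. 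Hence each new $X$-variable has degree $0$. This closes the induction, and admissible permutations only relabel indices, so they do not affect the argument.
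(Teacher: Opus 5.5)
There is a genuine gap: your induction has no base case. The hypothesis you carry forward, ``all $A_{\bi';j}$ are homogeneous and all $X_{\bi';k}$ have degree $0$'', is at $\bi'=\bi$ exactly the substantive content of the proposition, and you never prove it. Your first attempt presupposes that $A_{\mu_k\bi;k}$ is homogeneous because it lies in a graded algebra. That does not follow: homogeneity of mutated variables is part of what must be shown. Your fallback argument on the torus never uses the hypothesis that $A_\qq(\bi;J^\fr_\times)$ (resp.\ $U_\qq(\bi;J^\fr_\times)$) is graded. No argument of that kind can work, because assigning arbitrary degrees to the $A_{\bi;j}$ gives an $L$-grading of $\cT(\Lambda)$ in which $\deg X_{\bi;k}$ is generally nonzero.

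The missing idea, which is how the paper argues, is a contradiction with the Laurent phenomenon in the seed $\mu_k\bi$. Let $R$ be the (upper) cluster algebra. Since $R$ is graded compatibly with the torus grading, every homogeneous component of $A_{\mu_k\bi;k}\in R$ lies in $R$. If $\deg X_{\bi;k}\neq 0$, then by \eqref{eq:onestep} these components are the two monomials, so $A_{\bi}^{-f_k+\sum_{t}[-\varepsilon_{kt}]_+f_t}\in R$. In the cluster of $\mu_k\bi$, however, this monomial equals $A_{\mu_k\bi;k}\bigl(1+\qq^{d_k/2d}X_{\mu_k\bi;k}^{-1}\bigr)^{-1}$, and $X_{\mu_k\bi;k}=X_{\bi;k}^{-1}\neq 1$ because $\cE$ has full rank. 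Hence it is not in $\bcT(\mu_k\bi;J^\fr_\times)$, contradicting the quantum Laurent phenomenon (resp.\ the definition of $U_\qq$).

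This step needs $R_\alpha\subset\cT(\Lambda)_\alpha$, and your justification of that fails in both cases. In the cluster-algebra case, only the $A_{\bi;j}$, not all generators, are known to be homogeneous. In the upper case, $A_{\bi;k}^{-1}$ with $k\in J^\uf$ does \emph{not} lie in $U_\qq(\bi;J^\fr_\times)$; the inclusion runs the other way. A correct argument: for $r\in R_\alpha$, pick a monomial $m$ in the $A_{\bi;j}$ such that $mr$ lies in the subalgebra $P\subset R$ generated by the $A_{\bi;j}$ and by the $A_{\bi;j}^{-1}$ for $j\in J^\fr_\times$. The two gradings agree on $P$, since $P$ is spanned by products of $R$-homogeneous elements. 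Hence $mr$, and therefore $r=m^{-1}(mr)$, is torus-homogeneous.

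Once the initial-seed claim is established, your induction does go through at the level of degrees. Your stated $X$-mutation identity omits a binomial factor in $X_{\bi';k}$, but the relation $\deg X_{\mu_k\bi';i}=\deg X_{\bi';i}+[\varepsilon_{ik}]_+\deg X_{\bi';k}$ for $i\neq k$ is correct. The paper instead gets homogeneity of all cluster variables directly from \eqref{eq:separation}. It then gets $\deg X_{\bi';k}=0$ by re-rooting at $\bi'$, using $A_\qq(\bi';J^\fr_\times)=A_\qq(\bi;J^\fr_\times)$ (resp.\ the same for $U_\qq$).
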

\begin{proof}
    In this proof, we write $A_{\qq}(\bi; J^\fr_{\times})$ (resp.~$U_{\qq}(\bi; J^\fr_{\times})$) as $R$. Denote by $R_{\alpha}$ the graded component of $R$ of degree $\alpha\in L$. We extend the graded structure on $R$ to that on $\cT(\Lambda)$, and write as $\deg x$ the degree of the homogeneous element $x$ of $\cT(\Lambda)$. 

    Let $k\in J^{\uf}$. We shall show $\deg (X_{\bi; k})=0$ by contradiction. Assume that $\deg (X_{\bi; k})\neq 0$. Then, by our assumption, $A_{\bi}^{-f_k+\sum_{t\in J}[-\varepsilon_{kt} ]_+f_t}$ and $A_{\bi}^{-f_k+\sum_{t\in J}[-\varepsilon_{kt} ]_+f_t}X_{\bi; k}$ are homogeneous elements with different degrees (in $\cT(\Lambda)$). Therefore, \eqref{eq:onestep} implies that both $A_{\bi}^{-f_k+\sum_{t\in J}[-\varepsilon_{kt} ]_+f_t}$ and $A_{\bi}^{-f_k+\sum_{t\in J}[-\varepsilon_{kt} ]_+f_t}X_{\bi; k}$ lie in $R$. 
    However, we can easily show that $A_{\bi}^{-f_k+\sum_{t\in J}[-\varepsilon_{kt} ]_+f_t}\notin \bcT(\mu_k\bi; J^\fr_{\times})$. Here note that $X_{\mu_k\bi; k}\neq 1$ since the exchange matrix of a quantum seed has full rank. It contradicts the quantum Laurent phenomenon (resp.~the definition of quantum upper cluster algebra). Therefore, $\deg (X_{\bi; k})=0$. 

    The above result and the expression \eqref{eq:separation} imply that $A_{\bi'; k}$ is homogeneous for all $\bi'\sim \bi$ and $k\in J^\uf$. Moreover, since $A_{\qq}(\bi'; J^\fr_{\times})=A_{\qq}(\bi; J^\fr_{\times})$ (resp.~$U_{\qq}(\bi'; J^\fr_{\times})=U_{\qq}(\bi; J^\fr_{\times})$) for $\bi'\sim \bi$, we  deduce that $\deg X_{\bi'; k}=0$ for all $\bi'\sim \bi$ and $k\in J^\uf$ by regarding $\bi'$ as an initial quantum seed.
\end{proof}
\begin{dfn}\label{def:coeffmod}
Let $\bi=(\Lambda, \cE, (A_{\bi; j})_{j\in J})$ be a quantum seed in $\cF$, and fix $J^\fr_{\times}\subset J^\fr$. Take $m_{j; l}\in \Z$ for $j\in J\setminus J^\fr_{\times}$ and $l\in J^\fr_{\times}$. For $j\in J$, define $f_{\mrm; j}\in \Z^J$ by 
\[
f_{\mrm; j}:=\begin{cases}
    f_j+\sum_{l\in J^\fr_{\times}} m_{j; l}f_l&\text{if }j\in J\setminus J^\fr_{\times},\\
    f_j&\text{if }j\in J^\fr_{\times}.
\end{cases} 
\]
Define $(A_{\bi_{\mrm}; j})_{j\in J}$, $\Lambda_{\mrm}=(\Lambda_{\mrm; ij})_{i,j\in J}$ and $\cE_{\mrm}=(\varepsilon_{\mrm;ij})_{i\in J^{\uf},j\in J}$ by 
\begin{align}
    &A_{\bi_{\mrm}; j}:=A_{\bi}^{f_{\mrm; j}}\quad \text{for }j\in J,\label{eq:A_modified_def}\\
    &\Lambda_{\mrm;ij}:=\Lambda(f_{\mrm; i}, f_{\mrm; j})\quad \text{for }i, j\in J,\label{eq:Lambda_modified_def}\\
    &\sum_{j\in J}\varepsilon_{\mrm;ij}f_{\mrm; j}=\sum_{j\in J}\varepsilon_{ij}f_j
    \quad \text{for }i\in J^{\uf}.\label{eq:varepsilon_modified_def}
\end{align}
Note that the equalities \eqref{eq:varepsilon_modified_def} uniquely determine $\cE_{\mrm}$, and 
\begin{align}
&A_{\bi_{\mrm}; i}A_{\bi_{\mrm}; j}=\qq^{\Lambda_{\mrm;ij}}A_{\bi_{\mrm}; j}A_{\bi_{\mrm}; i}\quad \text{for }i, j\in J,\label{eq:qcomm_A_modified}\\
&    \varepsilon_{\mrm;ij}= \varepsilon_{ij}\quad \text{for }i\in J^{\uf}, j\in J\setminus J^\fr_{\times}.\label{eq:varepsilon_modified_coincidence}
\end{align}
We call the collection $\bi_{\mrm}:=(\Lambda_{\mrm}, \cE_{\mrm},(A_{\bi_{\mrm}; j})_{j\in J})$  \emph{a strict similarity transform of $\bi$ associated with the similarity datum $(m_{j; l})_{j\in J\setminus J^\fr_{\times}, l\in J^\fr_{\times}}$}.
\end{dfn}
\begin{prop}\label{prop:modified_seed}
In the notation of \cref{def:coeffmod}, $\bi_{\mrm}:=(\Lambda_{\mrm}, \cE_{\mrm},(A_{\bi_{\mrm}; j})_{j\in J})$ forms a quantum seed in $\cF$. Moreover, the following hold. 
\begin{itemize}
    \item[(1)]  $A_{\qq}(\bi_{\mrm}; J^\fr_{\times})=A_{\qq}(\bi; J^\fr_{\times})$ and $U_{\qq}(\bi_{\mrm}; J^\fr_{\times})=U_{\qq}(\bi; J^\fr_{\times})$.
    \item[(2)] $\mu_k\bi_{\mrm}$ is a strict similarity transform of $\mu_k\bi$ for $k\in J^\uf$. 
    \item[(3)] For any sequence $(k_1,\dots, k_t)$ of elements of $J^\uf$ and $j\in J^\uf$, $\bg_{\bi}(A_{\mu_{k_t}\cdots \mu_{k_1}\bi; j})=\bg_{\bi_{\mrm}}(A_{\mu_{k_t}\cdots \mu_{k_1}\bi_{\mrm}; j})$.
\end{itemize}
\end{prop}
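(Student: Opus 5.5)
The approach is to write $\bi_{\mrm}$ as the image of $\bi$ under a monomial change of variables of the quantum torus, and to check that this change of variables commutes with mutation.

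First, the seed property. The vectors $\{f_{\mrm;j}\}_{j\in J}$ form a $\Z$-basis of $\Z^J$. Indeed, the change-of-basis matrix is unitriangular: $f_{\mrm;j}=f_j$ for $j\in J^\fr_\times$, and $f_{\mrm;j}\in f_j+\Z^{J^\fr_\times}$ otherwise. Hence $A_j\mapsto A^{f_{\mrm;j}}$ defines an automorphism of $\cT(\Lambda)$ carrying $\cT(\Lambda_\mrm)$ isomorphically onto $\cT(\Lambda)$, by \eqref{eq:Lambda_modified_def}. Composing with the toric frame of $\bi$ gives the required embedding into $\cF$, inducing an isomorphism on skew fields. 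For compatibility, \eqref{eq:varepsilon_modified_def} says that the $X$-variables coincide: $X_{\bi_\mrm;i}=A_\bi^{\sum_j\varepsilon_{ij}f_j}=X_{\bi;i}$. I then check \eqref{eq:AXrel} for $\bi_\mrm$. We have $X_{\bi;i}A_{\bi;j}=\qq^{-d_i\delta_{ij}/d}A_{\bi;j}X_{\bi;i}$, and $A_{\bi_\mrm;j}$ is a monomial in the $A_{\bi;l}$. The exponent of the commutation factor is therefore the pairing of $\sum_t\varepsilon_{it}f_t$ with $f_{\mrm;j}$ under $\Lambda$. By compatibility this equals $-d_i/d$ times the $f_i$-coefficient of $f_{\mrm;j}$, which is $\delta_{ij}$ because $i\in J^\uf\subset J\setminus J^\fr_\times$. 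So $(\Lambda_\mrm,\cE_\mrm)$ is compatible with the same $d_i$.

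Second, part (2), which is the main step. For $k\in J^\uf$ I expand the exchange relation \eqref{eq:exrel} for $\bi_\mrm$ in terms of $\bi$. By \eqref{eq:onestep}, $A_{\mu_k\bi_\mrm;k}=A_{\bi_\mrm}^{-f_k+\sum_t[-\varepsilon_{\mrm;kt}]_+f_t}(1+\qq^{d_k/2d}X_{\bi;k})$, using $X_{\bi_\mrm;k}=X_{\bi;k}$. The same formula holds for $\bi$ with $\varepsilon$ in place of $\varepsilon_\mrm$. I then compare the two leading monomials. Writing $\varepsilon_{\mrm;kt}=\varepsilon_{kt}$ for $t\notin J^\fr_\times$, the two monomials differ only by a monomial in the variables $A_{\bi;l}$ with $l\in J^\fr_\times$, together with a $\qq$-power. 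Since both are bar-invariant normalized monomials, the $\qq$-power is accounted for by the normalization in $A^{\ba}$. Hence $A_{\mu_k\bi_\mrm;k}=A_{\mu_k\bi}^{f_k+\sum_{l}m'_{k;l}f_l}$ for some integers $m'_{k;l}$; all other variables are unchanged. The difference $\sum_t([-\varepsilon_{\mrm;kt}]_+-[-\varepsilon_{kt}]_+)f_t$ plus the shifts coming from $f_{\mrm;t}$ lies in $\Z^{J^\fr_\times}$, because only frozen entries of $\cE$ change. The new similarity datum is read off accordingly. The $\Lambda$ and $\cE$ of $\mu_k\bi_\mrm$ then agree with \eqref{eq:Lambda_modified_def} and \eqref{eq:varepsilon_modified_def}, since both sides are determined by the cluster variables, the $q$-commutations and the $X$-variables. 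The $X$-variables again coincide: the mutated $X_k$ is $X_k^{-1}$ in both cases, and for $i\neq k$ the mutated $X_i$ depends only on the $X$'s of the previous seed. This bookkeeping is where I expect the only real care to be needed, particularly the $\qq$-power normalization.

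Finally, (1) and (3). Iterating (2), every seed mutation-equivalent to $\bi_\mrm$ has cluster variables equal to those of the corresponding seed of $\bi$, up to multiplication by Laurent monomials in the frozen variables indexed by $J^\fr_\times$. These frozen variables are the same in both seeds and are invertible in both algebras. Hence the generating sets agree up to units, which gives equality of the $A_\qq$'s. The tori $\bcT(\bi';J^\fr_\times)$ also coincide, which gives equality of the $U_\qq$'s. For (3), I insert $A_{\mu\bi_\mrm;j}=A_{\mu\bi;j}\cdot(\text{frozen monomial in }J^\fr_\times)$ into \eqref{eq:separation}. Since $X_{\bi_\mrm}^\bn=X_\bi^\bn$, and since $A_\bi^{\bg}$ and $A_{\bi_\mrm}^{\bg}$ have the same $J^\uf$-components after re-expansion (the unitriangular change only affects $J^\fr_\times$-coordinates), the uniqueness in \cref{thm:separation} forces the unfrozen parts of the $g$-vectors to agree.
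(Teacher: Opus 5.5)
Your proposal is correct. The first part matches the paper: the unitriangular change of basis $f_j\mapsto f_{\mrm;j}$, the identity $X_{\bi_{\mrm};i}=X_{\bi;i}$ read off from \eqref{eq:varepsilon_modified_def}, and compatibility via \eqref{eq:AXrel} with the same $d_i$.

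You differ in how (1)--(3) are obtained.
\begin{itemize}
\item The paper proves them separately by citation. For (1) it uses the quantum separation formula with arbitrary coefficients \cite[Theorem 5.3]{Tran}, which, combined with $X_{\bi_{\mrm};i}=X_{\bi;i}$, shows that cluster variables of corresponding seeds agree up to powers of $\qq^{\pm\frac{1}{2d}}$ and $A_{\bi;l}^{\pm1}$, $l\in J^\fr_\times$. For (2) it uses \eqref{eq:onestep} and \cite[Lemma 5.4]{Tran}. For (3) it uses the principal-part dependence of $g$-vectors \cite[Proposition 6.6]{CAIV} together with \eqref{eq:varepsilon_modified_coincidence}.
\item You make (2) the engine and prove it by hand. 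You use \eqref{eq:onestep} and the quantum $Y$-mutation rule for the $X$-variables, which involves only the old $X$-variables, the principal part of the exchange matrix and the $d_i$. These are unchanged because $i,k\in J^\uf$; this is essentially the content of the lemma of Tran that the paper cites.
\item You then get (1) by iterating (2); admissible permutations preserve $J^\fr_\times$, so they cause no trouble. You get (3) from (2), the uniqueness in \cref{thm:separation}, and the fact that the change of basis leaves coordinates outside $J^\fr_\times$ untouched.
\end{itemize}
Your route is more self-contained. It also yields a sharper statement: corresponding cluster variables differ by exactly a normalized monomial $A_{\mu\bi}^{f_j+\mathbf{c}}$ with $\mathbf{c}\in\Z^{J^\fr_\times}$, with no stray power of $\qq$. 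The paper's route is shorter and avoids tracking similarity data along mutation sequences.

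For the $\qq$-power you flagged, you do not need a bar involution, which the paper never introduces. Put $\mathbf{a}:=-f_k+\sum_t[-\varepsilon_{kt}]_+f_t$, and let $\mathbf{c}\in\Z^{J^\fr_\times}$ be the frozen discrepancy. Then
\[
(\mu_k\Lambda)(\mathbf{c},f_k)=\Lambda\Bigl(\mathbf{c},\,\mathbf{a}+\sum_t\varepsilon_{kt}f_t\Bigr)=\Lambda(\mathbf{c},\mathbf{a}),
\]
because $\Lambda(\mathbf{c},\sum_t\varepsilon_{kt}f_t)=0$ by \eqref{eq:compatible} and $k\notin J^\fr_\times$. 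This is exactly the identity that gives $A_{\mu_k\bi_{\mrm};k}=A_{\mu_k\bi}^{f_k+\mathbf{c}}$ on the nose. The same vanishing shows that $A_{\bi}^{\mathbf{c}}$ commutes with every $X_{\bi}^{\bn}$, so no scalar appears when you substitute into \eqref{eq:separation} for (3).
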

\begin{proof}
The correspondence $f_j\mapsto f_{\mrm; j}$ is extended to a $\Z$-module automorphism on $\Z^J$. Hence, the $\Bbbk$-subalgebra of $\cF$ generated by $A_{\bi_{\mrm}; j}^{\pm 1}$, $j\in J$ is isomorphic to the quantum torus $\cT(\Lambda_{\mrm})$, and $\cF$ is generated by $A_{\bi_{\mrm}; j}$, $j\in J$ as a skew field. Moreover, \eqref{eq:varepsilon_modified_def} implies that 
    \begin{align}
X_{\bi_\mrm; i}
 &:=A_{\bi_\mrm}^{\sum_{j\in J}\varepsilon_{\mrm;ij}f_j}\notag\\
 &:=\qq^{-\frac{1}{2}\sum_{j_1 < j_2}\varepsilon_{\mrm;ij_1} \varepsilon_{\mrm;ij_2} \Lambda_{\mrm;j_1j_2}} \dprod_{j \in J} A_{\bi_\mrm; j}^{\varepsilon_{\mrm;ij}}\notag\\
  &=\qq^{-\frac{1}{2}\sum_{j_1 < j_2}\varepsilon_{\mrm;ij_1} \varepsilon_{\mrm;ij_2} \Lambda_{\mrm;j_1j_2}} \dprod_{j \in J} A_{\bi}^{\varepsilon_{\mrm;ij}f_{\mrm; j}}\notag\\
    &=A_{\bi}^{\sum_{j\in J}\varepsilon_{\mrm;ij}f_{\mrm; j}}
    =A_{\bi}^{\sum_{j\in J}\varepsilon_{ij}f_j}=X_{\bi; i}\label{eq:X_coincidence}
\end{align}
for $i\in J_{\triangle}(\bs)^{\uf}$, where we fix a total ordering on $J$. Therefore, 
\begin{align*}
X_{\bi_\mrm; i}A_{\bi_\mrm; j}=\qq^{-d_i\delta_{i,j}/d}A_{\bi_\mrm; j}X_{\bi_\mrm; i}
\end{align*}
for $i\in J^{\uf}$ and $j\in J$. Here $d_i$ is a positive integer satisfying $X_{\bi; i}A_{\bi; i}=\qq^{-d_i/d}A_{\bi; i}X_{\bi; i}$. Hence, by the argument in \eqref{eq:AXrel}, $(\Lambda_\mrm, \cE_\mrm)$ is a compatible pair, and $\bi_\mrm$ is  a quantum seed in $\cF$. By \eqref{eq:varepsilon_modified_coincidence}, \eqref{eq:X_coincidence}, and \cite[Theorem 5.3]{Tran}, the quantum cluster variables of $A_{\qq}(\bi_{\mrm}; J^\fr_{\times})$ are those of $A_{\qq}(\bi; J^\fr_{\times})$ up to multiplication by the powers of $\qq^{\pm \frac{1}{2d}}$ and $A_{\bi; j}^{\pm 1}, j\in J^\fr_{\times}$. Therefore, the statement (1) follows. The statement (2) straightforwardly follows from \eqref{eq:onestep}, \eqref{eq:X_coincidence}, and \cite[Lemma 5.4]{Tran}. The statement (3) follows from \eqref{eq:varepsilon_modified_coincidence} and \cite[Proposition 6.6]{CAIV}.
\end{proof}
Note that the identity map $A_{\qq}(\bi_{\mrm}; J^\fr_{\times})\xrightarrow{\sim}A_{\qq}(\bi; J^\fr_{\times})$ is regarded as a quantum quasi-homomorphism with respect to $\bi_{\mrm}$ and $\bi$ in \cite[Definition 2.1]{CHL}, and the identity map $U_{\qq}(\bi_{\mrm}; J^\fr_{\times})\xrightarrow{\sim}U_{\qq}(\bi; J^\fr_{\times})$ is regarded as a variation map from $\bi_{\mrm}$ to $\bi$ in \cite[Definition 3.3, Lemma 3.10]{KQW}.

\begin{dfn}\label{def:clusteremb}
Let $R_1$ and $R_2$ be Ore domains over $\Bbbk$ such that $\cF(R_1)$ and $\cF(R_2)$ are isomorphic to the skew field of fractions of quantum tori. Assume that $\cF(R_k)$ admits a quantum seed $\bi_k=(\Lambda_k=(\Lambda_{k; ij})_{i, j\in J_k}, \cE_k=(\varepsilon_{k; ij})_{i \in J_k^{\uf}, j \in J_k}, (A_{\bi_k; j})_{j\in J_k})$ for $k=1,2$. An injective $\Bbbk$-algebra homomorphism $\iota:R_1\to R_2$ is said to be \emph{an embedding of seeds from $\bi_1$ to $\bi_2$} if there exists an injective map $\sfJ\colon  J_1\to J_2$ such that 
\begin{itemize}
    \item[(1)] $\sfJ(J_1^\uf)\subset J_2^\uf$, and $j\in \sfJ(J_1)$ whenever $\varepsilon_{2; ij}\neq 0$ for some $i\in \sfJ(J_1^\uf)$.\footnote{We do not assume $\sfJ(J_1^\fr)\subset J_2^\fr$.}
    \item[(2)] $\iota(A_{\bi_1; j})=A_{\bi_2; \sfJ(j)}$ for $j\in J_1$.
    \item[(3)] $\varepsilon_{1; ij}=\varepsilon_{2; \sfJ(i)\sfJ(j)}$ for $i \in J_1^{\uf}$ and $j \in J_1$. 
\end{itemize}
In this case, the injective map $\sfJ$, which is uniquely determined by (2), is called \emph{an index correspondence associated with $\iota$}. Note that $\Lambda_{1; ij}=\Lambda_{2; \sfJ(i)\sfJ(j)}$ for $i, j\in J_1$ by (2). Moreover, if $\sfJ(J_{1\times}^\fr)\subset J_{1\times}^\fr$, $\iota$ induces 
\[
A_{\qq}(\bi_1; J_{1\times}^\fr)\to A_{\qq}(\bi_2; J_{2\times}^\fr).
\]
\end{dfn}
\section{Quivers associated with Coxeter words}\label{sec:quiver}
A \emph{weighted quiver} (or \emph{quiver} for short) consists of the following data $Q=(J,J^\fr,\sigma,\bm{d})$:
\begin{itemize}
    \item $J^\fr \subset J$ are finite sets. 
    \item $\sigma=(\sigma_{ij})_{i,j \in J}$ is a  skew-symmetric $\Z/2$-valued matrix such that $\sigma_{ij} \in \Z$ unless $(i,j) \in J^\fr \times J^\fr$.
    \item $\bm{d}=(d_i)_{i \in J} \in \Z^J_{>0}$ is a tuple of positive integers.
\end{itemize}
Diagrammatically, $J$ is the set of vertices of the quiver, $\bm{d}$ is the tuple of weights assigned to vertices, and the data of arrows are encoded in the matrix $\sigma$ as 
\begin{align*}
    \sigma_{ij}:= \#\{\text{arrows from $i$ to $j$}\}- \#\{\text{arrows from $j$ to $i$}\}.
\end{align*}
Here we have \lq\lq half" arrows when $\sigma_{ij} \in \Z/2$ (shown by dashed arrows in figures). The quiver has no loops nor 2-cycles by definition. The subset $J^\fr$ is called the frozen set, and mutations will be allowed only at the vertices in the complement $J^\uf:= J \setminus J^\fr$, called the unfrozen set. We define the \emph{exchange matrix} $\cE=(\ve_{ij})_{i\in J^\uf,j \in J}$ of $Q$ to be  
\begin{align*}
    \ve_{ij}:= d_i \sigma_{ij}\gcd(d_i,d_j)^{-1}.
\end{align*}
The \emph{chiral dual} of $Q$ is defined by $Q^\mathrm{op}:=(J,J^\fr,-\sigma,d)$. 

\smallskip
\paragraph{\textbf{Picture convention}}
In figures, we show unfrozen vertices by circular nodes and frozen vertices by square nodes. All the weights of quivers related to the Lie theory are either $1,2$ or $3$. The corresponding node styles are shown in the table below.

\begin{table}[ht]
    \centering
\begin{tabular}{c|ccc}
 & $d_i=1$ & $d_i=2$ & $d_i=3$ \\ \hline
unfrozen     & $\tikz{\draw(0,0) circle(2pt);}$ & $\tikz{\dnode{0,0}{black};}$ &  $\tikz{\tnode{0,0}{black};}$\\
frozen     & $\tikz{\fnode{0,0}{black};}$ & $\tikz{\fdnode{0,0}{black};}$ & $\tikz{\ftnode{0,0}{black};}$
\end{tabular}
\end{table}

Recall the notation from \cref{sec:Lie_theory}. Let $\lieg$ be a complex finite dimensional simple Lie algebra associated with the Cartan matrix $C=(c_{st})_{s,t \in S}$.  

For $s \in S$, we define a weighted quiver $\bJ_\mathrm{full}(s)=(J_\mathrm{full}(s),J_\mathrm{full}(s)^\fr,\sigma(s),\bm{d}(s))$ as follows.
\begin{itemize}
    \item $J_\mathrm{full}(s)=J_\mathrm{full}(s)^\fr:=(S \setminus \{s\})\cup\{s^l,s^r,s^e\}$, where $s^l,s^r,s^e$ are new elements (`left', `right', `extra').
    \item The skew-symmetric matrix $\sigma(s)=(\sigma_{tu})_{t,u \in J_\mathrm{full}(s)}$ is given by
    \begin{align*}
            &\sigma_{s^r,s^l}=\sigma_{s^l,s^e}=\sigma_{s^e,s^r}=1, \\
            &\sigma_{s^l,u}=\sigma_{u,s^r}= \begin{cases} 
            1/2 & \text{if $u \neq s$ and $c_{su} \neq 0$}, \\
            0 & \text{if $u \neq s$ and $c_{su}=0$}.
            \end{cases}
    \end{align*}
    Note that other entries are determined by the skew-symmetricity. 
    \item $\bm{d}(s)$ is given by 
    \begin{align*}
        \bm{d}(s)_t := \begin{cases}
            (\alpha_s, \alpha_s)/2 & \mbox{if $t =s^l,s^r,s^e$}, \\
            (\alpha_t, \alpha_t)/2 & \mbox{if $t \in S \setminus \{s\}$}.
        \end{cases}
    \end{align*}
\end{itemize}
Let $\bJ_\mathrm{full}(\overline{s}):=\bJ_\mathrm{full}(s)^{\mathrm{op}}$. We call $\bJ_\mathrm{full}(s)$, $\bJ_\mathrm{full}(\overline{s})$ the \emph{elementary quivers} associated with $\mathfrak{g}$. 

We define a function $\delta: J_\mathrm{full}(s) \to S$ on the set of vertices, which we call the \emph{Dynkin labeling}, by $\delta_{s^l}=\delta_{s^r}=\delta_{s^e}:=s$ and $\delta_t:=t$ for $t \in S \setminus \{s\}$.

\begin{ex}
Here are some examples of the elementary quivers.
\begin{enumerate}
\item Type $A_3$: $S=\{1,2,3\}$ and the Cartan matrix is given by
\begin{align*}
    C = \begin{pmatrix}
    2&-1&0\\
    -1&2&-1\\
    0&-1&2
    \end{pmatrix}.
\end{align*}
The elementary quivers $\bJ_\mathrm{full}(1)$, $\bJ_\mathrm{full}(2)$ and $\bJ_\mathrm{full}(3)$ are given as follows:
\[
\scalebox{0.9}{
\begin{tikzpicture}

\begin{scope}[>=latex]
\foreach \i in {0,1,2}
\draw[gray!50] (-1,\i) -- (10,\i);
\draw[thick] (-0.5,0)-- (-0.5,2);
\foreach \i in {0,1,2} \filldraw[draw=black,thick,fill=white](-0.5,\i) circle(2pt);
\fnode{3,0}{black} coordinate(B) node[right]{$1^r$};
\fnode{1,0}{black} coordinate(C) node[left]{$1^l$};
\fnode{2,1}{black} coordinate(D) node[above]{$2$};
\fnode{2,2}{black} node[above]{$3$};
\fnode{2,-1}{myblue} coordinate(X) node[myblue,below]{$1^e$};
\qarrow{B}{C};
\qdarrow{C}{D};
\qdarrow{D}{B};
{\color{myblue}
 \qarrow{C}{X};
 \qarrow{X}{B};
}
\draw (2,-2) node{$\bJ_{\mathrm{full}}(1)$};
\fnode{6,1}{black} coordinate(E) node[right]{$2^r$};
\fnode{4,1}{black} coordinate(F) node[left]{$2^l$};
\fnode{5,0}{black} coordinate(G) node[below]{$1$};
\fnode{5,2}{black} coordinate(H) node[above]{$3$};
\fnode{5,-1}{myblue} coordinate(Y) node[myblue,below]{$2^e$};
\qarrow{E}{F};
\qdarrow{F}{G};
\qdarrow{G}{E};
\qdarrow{F}{H};
\qdarrow{H}{E};
{\color{myblue}
 \qarrow{F}{Y};
 \qarrow{Y}{E};
}
\draw (5,-2) node{$\bJ_{\mathrm{full}}(2)$};
\fnode{9,2}{black} coordinate(B) node[above]{$3^r$};
\fnode{7,2}{black} circle(2pt) coordinate(C) node[above]{$3^l$};
\fnode{8,1}{black} circle(2pt) coordinate(D) node[below]{$2$};
\fnode{8,0}{black} circle(2pt) node[below]{$1$};
\fnode{8,-1}{myblue} coordinate(X) node[myblue,below]{$3^e$};
\qarrow{B}{C};
\qdarrow{C}{D};
\qdarrow{D}{B};
{\color{myblue}
 \qarrow{C}{X};
 \qarrow{X}{B};
}
\draw (8,-2) node{$\bJ_{\mathrm{full}}(3)$};
\end{scope}
\end{tikzpicture}}
\] 
\item Type $C_3$: $S=\{1,2,3\}$ and the Cartan matrix is given by
\begin{align*}
    C = \begin{pmatrix}
    2&-1&0\\
    -1&2&-2\\
    0&-1&2
    \end{pmatrix}.
\end{align*}
The elementary quivers $\bJ_\mathrm{full}(1)$, $\bJ_\mathrm{full}(2)$, $\bJ_\mathrm{full}(3)$ are given as follows.
\[
\scalebox{0.9}{
\begin{tikzpicture}
\foreach \i in {0,1,2}
\draw[gray!50] (-1,\i) -- (10,\i);
\draw[thick] (-0.5,0)-- (-0.5,1);
\draw[thick,double distance=2pt,-<-](-0.5,1)-- (-0.5,2);
\foreach \i in {0,1,2} \filldraw[draw=black,thick,fill=white](-0.5,\i) circle(2pt);
\begin{scope}[>=latex]
\fnode{3,0}{black} coordinate(B) node[right]{$1^r$};
\fnode{1,0}{black} coordinate(C) node[left]{$1^l$};
\fnode{2,1}{black} coordinate(D) node[above]{$2$};
\fdnode{2,2}{black} node[above=0.2em]{$3$};
\fnode{2,-1}{myblue} coordinate(X) node[myblue,below]{$1^e$};
\qarrow{B}{C};
\qdarrow{C}{D};
\qdarrow{D}{B};
{\color{myblue}
 \qarrow{C}{X};
 \qarrow{X}{B};
}
\draw (2,-2) node{$\bJ_{\mathrm{full}}(1)$};
\fnode{6,1}{black} coordinate(E) node[right]{$2^r$};
\fnode{4,1}{black} coordinate(F) node[left]{$2^l$};
\fnode{5,0}{black} coordinate(G) node[below]{$1$};
\fdnode{5,2}{black} coordinate(H) node[above=0.2em]{$3$};
\fnode{5,-1}{myblue} coordinate(Y) node[myblue,below]{$2^e$};
\qarrow{E}{F};
\qdarrow{F}{G};
\qdarrow{G}{E};
\draw[qarrow,head] (F) -- (H);
\draw[qarrow,tail] (H)--(E);
{\color{myblue}
 \qarrow{F}{Y};
 \qarrow{Y}{E};
}
\draw (5,-2) node{$\bJ_{\mathrm{full}}(2)$};
\fdnode{9,2}{black} coordinate(B) node[above=0.2em]{$3^r$};
\fdnode{7,2}{black} coordinate(C) node[above=0.2em]{$3^l$};
\fnode{8,1}{black} coordinate(D) node[below]{$2$};
\fnode{8,0}{black} node[below]{$1$};
\fdnode{8,-1}{myblue} coordinate(X) node[myblue,below]{$3^e$};
\qsarrow{B}{C};
\draw[qarrow,dashed,tail](C)--(D);
\draw[qarrow,dashed,head] (D)--(B);
{\color{myblue}
 \qarrow{C}{X};
 \qarrow{X}{B};
}
\draw (8,-2) node{$\bJ_{\mathrm{full}}(3)$};
\end{scope}
\end{tikzpicture}}
\] 
\end{enumerate}
Note that the vertices with the same Dynkin label are drawn on the same level in the pictures.
\end{ex}

\paragraph{\textbf{The quiver $\bJ_\mathrm{full}(\bbs)$}}
Recall the amalgamation of quivers \cite{FG08}. See also \cite[Appendix B]{IO:Wilson}. 
A Coxeter word $\bbs=(\bbs_1, \ldots, \bbs_m)$ is a finite word of alphabets in $\overline{S} \sqcup S$, where $\overline{S}=\{\overline{s} \mid s \in S\}$. 
We construct an associated quiver
$\bJ_\mathrm{full}(\bbs)=\bJ_\mathrm{full}(\bbs_1)\ast \bJ_\mathrm{full}(\bbs_2)\ast \dots \ast\bJ_\mathrm{full}(\bbs_m)$ as follows. First, amalgamate the associated elementary quivers $\bJ_\mathrm{full}(\bbs_i)$ for $i=1,\dots,m-1$ by identifying the right-most vertex of $\bJ_\mathrm{full}(\bbs_i)$ with the left-most vertex of $\bJ_\mathrm{full}(\bbs_{i+1})$ having the same Dynkin label. The half-arrows with same direction combine to give a single solid arrow, and those with the opposite directions are canceled together. 
See \cref{ex:amalgamation}. 

Note that the Dynkin labelings are preserved under this amalgamation, hence it makes sense in the quiver $\bJ_\mathrm{full}(\bbs)$. 
For each $s \in S$, the left-most and right-most vertices of Dynkin index $s$ are declared to be frozen, as well as all the extra vertices. All the other vertices are promoted to be mutable. The full subquiver obtained from $\bJ_{\mathrm{full}}(\bbs)$ by deleting all the extra vertices is called the \emph{basic part} of $\bJ_{\mathrm{full}}(\bbs)$, and denoted by $\bJ(\bbs)$. The exchange matrix of $\bJ(\bbs)$ will be written as $\cE(\bbs)=(\varepsilon_{\bbs; ij})_{i\in J(\bbs)^\uf, j\in J(\bbs)}$. 

\begin{conv}[labeling of vertices in the basic part]\label{conv:basic_vertex}
Let $\bbs$ be any Coxeter word. 
For the basic part of $\bJ_{\mathrm{full}}(\bbs)$, let $v_m^s$ denote the $(m+1)$-st vertex with Dynkin label $s$ from the left, for $s \in S$ and $m=0,\dots,n^s(\bbs)$. Here $n^s(\bbs)$ is the total number of $s$ and $\overline{s}$ appearing in the word $\bbs$. The labeling of extra vertices will be given in special cases: see \cref{conv:extra_labeling}.
\end{conv}

\smallskip

\paragraph{\textbf{Inversion set and the quiver $\bJ_\std(\bbs)$}}
Given $w \in W$, let $\Phi^\vee(w):=\{ \alpha^\vee \in \Phi^\vee_+ \mid w\alpha^\vee\in -\Phi^\vee_+\}$, which we call the \emph{inversion set} for $w \in W$. We have $|\Phi^\vee(w)|=\ell(w)$. We also use the notation
\begin{align*}
    \Inv(w):= \{ s \in S \mid w \alpha_s^\vee \in -\Phi^\vee_+\}.
\end{align*}
Namely, $\Phi^\vee(w)\cap \Pi^\vee = \{\alpha_s^\vee \mid s \in \Inv(w)\}$. 
Given a reduced word $\bs=(s_1,\dots,s_m)$ of $w \in W$, let 
\begin{align*}
    \beta^\bs_k:= r_{s_m}\dots r_{s_{k+1}}\alpha^\vee_{s_k} \in \Phi^\vee
\end{align*}
for $k=1,\dots,m$. Then we have $\Phi^\vee(w)=\{\beta_1^\bs,\dots,\beta_m^\bs\}$. 

\begin{conv}\label{conv:double_reduced}
The Weyl group $W \times W$ is generated by the simple reflections $r_s$ with $s \in \overline{S} \sqcup S$, 
where the generators of the first component is indexed by $\overline{S}$. 
A reduced word of $(u,v) \in W \times W$ is represented by a sequence $\bbs=(\bbs_1,\dots,\bbs_m)$ with $\bbs_i \in \overline{S} \sqcup S$. The subsequence $\{i_1,\dots,i_{\overline{n}}\} $ (resp. $\{j_1,\dots,j_{n}\}$) of $\{1,\dots,m\}$ with $\bbs_{i_k} \in \overline{S}$ (resp. $\bbs_{j_k} \in S$) is called the $\overline{S}$-part (resp. $S$-part) of $\bbs$, which gives a reduced word of $u$ (resp. $v$). 
Then $\beta_k^\bbs=r_{\bbs_m}\dots r_{\bbs_{k+1}}\alpha_{\bbs_k}^\vee$ for $k=1,\dots,m$ parameterize $\Phi^\vee(u) \sqcup \Phi^\vee(v)$.
\end{conv}
Now we assume $\bbs=(\bbs_1,\dots,\bbs_m)$ is a reduced word of $(u,v) \in W \times W$. 
The extra vertex of the $k$-th elementary quiver $\bJ_{\mathrm{full}}(\bbs_k)$ is accompanied with the positive coroot $\beta_k^\bbs=r_{\bbs_m}\dots r_{\bbs_{k+1}}\alpha_{\bbs_k}^\vee \in \Phi^\vee(u) \sqcup \Phi^\vee(v)$. 
The extra vertices get distinct coroots in $\Phi^\vee(u) \sqcup \Phi^\vee(v)$. 

We then delete all the extra vertices except for those attached to the $k$-th elementary quiver such that $\beta_k^\bbs \in \Pi^\vee \sqcup \Pi^\vee$. For such $k$, we write $\beta_k^\bbs=\alpha_{s(k)}^{\vee}$. We add (half-)arrows among these remaining extra vertices as given by the exchange matrix
\begin{align*}
    \sigma_{\alpha_{s(j)}^{\vee}\alpha_{s(k)}^{\vee}}^{\mathrm{ex}}
    :=\begin{cases}
        \frac{\mathrm{sgn}(j-k)}{2} & \mbox{if $s(j), s(k) \in S, c_{s(j), s(k)}<0$}, \\
        \frac{\mathrm{sgn}(k-j)}{2} & \mbox{if $s(j), s(k) \in \overline{S}, c_{s(j), s(k)}<0$}, \\
        0 & \mbox{otherwise}.
    \end{cases}
\end{align*}
The resulting quiver is denoted by $\bJ_\std(\bbs)$. 

\begin{conv}[labeling of extra vertices]\label{conv:extra_labeling}
Let $\bbs$ be a reduced word of $(u,v) \in W \times W$. 
For the basic part of $\bJ_{\std}(\bbs)$, we use the vertex labeling as in \cref{conv:basic_vertex}. 
We denote by $v_{-\infty}^{\alpha^\vee}$ (resp. $v_{\infty}^{\alpha^\vee}$) the extra vertex associated with a positive coroot $\alpha^\vee \in \Phi^\vee(u)$ (resp. $\alpha^\vee \in \Phi^\vee(v)$). Moreover,
\begin{itemize}
    \item If $u=w_0$, then $\Phi^\vee(u)=\{\alpha_{\overline{s}} \mid \overline{s} \in \overline{S}\}$. In this case, we simply write $v_{-\infty}^s:=v_{-\infty}^{\alpha_{\overline{s}^\ast}^\vee}$. 
    \item If $v=w_0$, then $\Phi^\vee(v)=\{\alpha_s \mid s \in S\}$. In this case, we simply write $v_{\infty}^s:=v_{\infty}^{\alpha_s^\vee}$. 
\end{itemize}
With this notation, for a reduced word $\bbs$ of $(u,v)=(w_0,w_0)$, the
frozen vertices of $\bJ_\std(\bbs)$ are $v_0^s$, $v_{n^s(\bbs)}^s$, $v_\infty^s$ and $v_{-\infty}^s$ for $s \in S$.
\end{conv}

\begin{ex}\label{ex:amalgamation}
Here are some examples of the quivers $\bJ_\mathrm{full}(\bbs)$ and $\bJ_\std(\bbs)$: 
\begin{enumerate}
\item Type $A_3$, $\bbs=(1,2,3,1,2,1)$. The associated positive coroots are computed as 
    \begin{align*}
        &\beta_6^\bbs= \alpha^\vee_1, \quad \beta_5^\bbs=\alpha^\vee_1+\alpha^\vee_2, \quad \beta_4^\bbs=\alpha^\vee_2, \\
        &\beta_3^\bbs=\alpha^\vee_1+\alpha^\vee_2+\alpha^\vee_3, \quad \beta_2^\bbs=\alpha^\vee_2+\alpha^\vee_3, \quad \beta_1^\bbs=\alpha^\vee_3,
    \end{align*}
    and hence $1=s(6),~2=s(4),~3=s(1)$.
The picture follows, where we omit the arrows between extra vertices in $\bJ_\mathrm{full}(\bbs)$.
\[
\scalebox{0.9}{
\begin{tikzpicture}

\begin{scope}[>=latex]
\foreach \i in {1,2}
\draw (2*\i+1,0) circle(2pt) node[below]{$v_{\i}^1$};
\foreach \i in {1}
\draw (2*\i+2, 1) circle(2pt) node[above=0.2em]{$v_{\i}^2$};
\foreach \i in {0,3}
\fnode{2*\i+1,0}{black} node[below]{$v_{\i}^1$};
\foreach \i in {0,2}
\fnode{2*\i+2, 1}{black} node[above=0.2em]{$v_{\i}^2$};
\foreach \i in {0,1}
\fnode{2*\i+3, 2}{black} node[above]{$v_{\i}^3$};
\foreach \i in {1,2,3}
\qarrow{2*\i+1,0}{2*\i-1,0};
\foreach \i in {1,2}
\qarrow{2*\i+2,1}{2*\i,1};
\qarrow{5,2}{3,2};
\foreach \y in {1,2}
	{
	\qdarrow{\y,\y-1}{\y+1,\y};
	\qarrow{\y+2,\y-1}{\y+3,\y};
	}
\qarrow{5,0}{6,1};
\foreach \y in {1,2}
	{
	\qarrow{-\y+5,\y}{-\y+6,\y-1};
	\qdarrow{-\y+7,\y}{-\y+8,\y-1};
	}
\qarrow{2,1}{3,0};
{\color{myblue}
\foreach \j in {1,2,3}
    {\fnode{8-2*\j,-0.5}{myblue};
    \qarrow{7-2*\j,0}{8-2*\j,-0.5};
    \qarrow{8-2*\j,-0.5}{9-2*\j,0}; }
\foreach \j in {1,2}
    {\fnode{7-2*\j,0.5}{myblue}; 
    \qarrow{6-2*\j,1}{7-2*\j,0.5};
    \qarrow{7-2*\j,0.5}{8-2*\j,1}; }
\foreach \j in {1}
    {\fnode{6-2*\j,1.7}{myblue}; 
    \qarrow{5-2*\j,2}{6-2*\j,1.7};
    \qarrow{6-2*\j,1.7}{7-2*\j,2};
    }
}
\node at (4,-2) {$\bJ_\mathrm{full}(\bbs)$};

{\begin{scope}[xshift=8cm]
\foreach \i in {1,2}
\draw (2*\i+1,0) circle(2pt) node[below]{$v_{\i}^1$};
\foreach \i in {1}
\draw (2*\i+2, 1) circle(2pt) node[above=0.2em]{$v_{\i}^2$};
\foreach \i in {0,3}
\fnode{2*\i+1,0}{black} node[below]{$v_{\i}^1$};
\foreach \i in {0,2}
\fnode{2*\i+2, 1}{black} node[above=0.2em]{$v_{\i}^2$};
\foreach \i in {0,1}
\fnode{2*\i+3, 2}{black} node[above]{$v_{\i}^3$};
    {\color{myblue}
    \foreach \j in {1,2,3}
    \fnode{8-2*\j,-1}{myblue} node[below]{$v_\infty^{\j}$};
    }
\foreach \i in {1,2,3}
\qarrow{2*\i+1,0}{2*\i-1,0};
\foreach \i in {1,2}
\qarrow{2*\i+2,1}{2*\i,1};
\qarrow{5,2}{3,2};
\foreach \y in {1,2}
	{
	\qdarrow{\y,\y-1}{\y+1,\y};
	\qarrow{\y+2,\y-1}{\y+3,\y};
	}
\qarrow{5,0}{6,1};
\foreach \y in {1,2}
	{
	\qarrow{-\y+5,\y}{-\y+6,\y-1};
	\qdarrow{-\y+7,\y}{-\y+8,\y-1};
	}
\qarrow{2,1}{3,0};
{\color{myblue}
\qarrow{6,-1}{7,0};
\qarrow{5,0}{6,-1};
\qarrow{4,-1}{5,0};
\qarrow{3,0}{4,-1};
\qarrow{2,-1}{3,0};
\qarrow{1,0}{2,-1};
\qdarrow{4,-1}{2,-1};
\qdarrow{6,-1}{4,-1};
}
\node at (4,-2) {$\bJ_\std(\bbs)$};
\end{scope}}
\end{scope}
\end{tikzpicture}}
\] 
Here the vertices $v_1^1$, $v_2^1$, $v_1^2$ are mutable. The latter is the Fock--Goncharov quiver in \cite{FG06}.

\item Type $C_3$, $\bbs=(1,2,3,1,2,3,1,2,3)$. The associated positive coroots are computed as
    \begin{align*}
        &\beta_9^\bbs= \alpha^\vee_3, \quad \beta_8^\bbs=\alpha^\vee_2+2\alpha^\vee_3, \quad \beta_7^\bbs=\alpha^\vee_1+\alpha^\vee_2+2\alpha^\vee_3, \\
        &\beta_6^\bbs= \alpha^\vee_2+\alpha^\vee_3, \quad \beta_5^\bbs=\alpha^\vee_1+2\alpha^\vee_2+2\alpha^\vee_3, \quad \beta_4^\bbs=\alpha^\vee_2, \\
        &\beta_3^\bbs=\alpha^\vee_1+\alpha^\vee_2+\alpha^\vee_3, \quad \beta_2^\bbs=\alpha^\vee_1+\alpha^\vee_2, \quad \beta_1^\bbs=\alpha^\vee_1,
    \end{align*}
    and hence $1=s(1),~2=s(4),~3=s(9)$. 
The picture follows, where we omit the arrows between extra vertices in $\bJ_\mathrm{full}(\bbs)$.
\[
\scalebox{0.9}{
\begin{tikzpicture}
\begin{scope}[>=latex]
\foreach \i in {1,2}
	\dnode{2*\i+1,3}{black} node[above=0.2em]{$v_{\i}^3$};
\foreach \i in {1,2}
{
    \draw(2*\i+1,0) circle(2pt) node[below]{$v_{\i}^1$};
    \draw(2*\i+1,1.5) circle(2pt) node[above right]{$v_{\i}^2$};    
}
\foreach \i in {0,3}
	\fdnode{2*\i+1,3}{black} node[above=0.2em]{$v_{\i}^3$};
\foreach \i in {0,3}
{
    \fnode{2*\i+1,0}{black} node[below]{$v_{\i}^1$};
    \fnode{2*\i+1,1.5}{black} node[above right]{$v_{\i}^2$};    
}
\foreach \i in {0,1,2}
    {\qarrow{2*\i+3,0}{2*\i+1,0};
     \qarrow{2*\i+3,1.5}{2*\i+1,1.5};
     \qsarrow{2*\i+3,3}{2*\i+1,3};
    }
\qdarrow{7,0}{7,1.5};
\qarrow{5,1.5}{7,0};
\qarrow{5,0}{5,1.5};
\qarrow{3,1.5}{5,0};
\qarrow{3,0}{3,1.5};
\qarrow{1,1.5}{3,0};
\qdarrow{1,0}{1,1.5};

\draw[qarrow,head](7,1.5)--(7,3);
\draw[qarrow,tail](5,3)--(7,1.5);
\draw[qarrow,head](5,1.5)--(5,3);
\draw[qarrow,tail](3,3)--(5,1.5);
\draw[qarrow,head](3,1.5)--(3,3);
\draw[qarrow,tail](1,3)--(3,1.5);
\draw[qarrow,dashed,head](1,1.5)--(1,3);

{\color{myblue}
\foreach \i in {1,2,3}
    {\fnode{2*\i,-0.5}{myblue};
    \qarrow{2*\i-1,0}{2*\i,-0.5}; 
    \qarrow{2*\i,-0.5}{2*\i+1,0}; 
    }
\foreach \i in {1,2,3}
    {\fnode{2*\i,1}{myblue};
    \qarrow{2*\i-1,1.5}{2*\i,1}; 
    \qarrow{2*\i,1}{2*\i+1,1.5}; 
    }
\foreach \i in {1,2,3}
    {\fdnode{2*\i,2.5}{myblue};
    \qarrow{2*\i-1,3}{2*\i,2.5}; 
    \qarrow{2*\i,2.5}{2*\i+1,3}; 
    }
}
\node at (4,-2) {$\bJ_\mathrm{full}(\bbs)$};

{\begin{scope}[xshift=9cm]
\foreach \i in {1,2}
	\dnode{2*\i+1,3}{black} node[above right=0.2em]{$v_{\i}^3$};
\foreach \i in {1,2}
{
    \draw(2*\i+1,0) circle(2pt) node[above right]{$v_{\i}^1$};
    \draw(2*\i+1,1.5) circle(2pt) node[above right]{$v_{\i}^2$};    
}
\foreach \i in {0,3}
	\fdnode{2*\i+1,3}{black} node[above=0.2em]{$v_{\i}^3$};
\foreach \i in {0,3}
{
    \fnode{2*\i+1,0}{black} node[below]{$v_{\i}^1$};
    \fnode{2*\i+1,1.5}{black} node[above right]{$v_{\i}^2$};    
}
{\color{myblue}
    \foreach \j in {1,2}
        \fnode{2*\j,-1}{myblue} node[below]{$v_\infty^{\j}$};
    \fdnode{6,-1}{myblue} node[below]{$v_{\alpha^\vee_3}$};
}
\foreach \i in {0,1,2}
    {\qarrow{2*\i+3,0}{2*\i+1,0};
     \qarrow{2*\i+3,1.5}{2*\i+1,1.5};
     \qsarrow{2*\i+3,3}{2*\i+1,3};
    }
\qdarrow{7,0}{7,1.5};
\qarrow{5,1.5}{7,0};
\qarrow{5,0}{5,1.5};
\qarrow{3,1.5}{5,0};
\qarrow{3,0}{3,1.5};
\qarrow{1,1.5}{3,0};
\qdarrow{1,0}{1,1.5};

\draw[qarrow,tail](7,1.5)--(7,3);
\draw[qarrow,head](5,3)--(7,1.5);
\draw[qarrow,tail](5,1.5)--(5,3);
\draw[qarrow,head](3,3)--(5,1.5);
\draw[qarrow,tail](3,1.5)--(3,3);
\draw[qarrow,head](1,3)--(3,1.5);
\draw[qarrow,tail](1,1.5)--(1,3);
{\color{myblue}
\foreach \j in {1,2}
    {
    \qarrow{2*\j-1,0}{2*\j,-1};
    \qarrow{2*\j,-1}{2*\j+1,0};
    }
\qdarrow{4,-1}{2,-1};
\qdarrow{6,-1}{4,-1};
\qsarrow{5,3}{6,-1};
\qsarrow{6,-1}{7,3};
}
\node at (4,-2) {$\bJ_\std(\bbs)$};
\end{scope}}

\end{scope}
\end{tikzpicture}}
\] 
Here the vertices $v_i^s$ for $s=1,2,3$, $i=1,2$ are mutable. 

\item Type $A_2$, $\bbs=(1,2,1,\overline{2},\overline{1},\overline{2})$. The picture shows $\bJ_\std(\bbs)$.
\[
\begin{tikzpicture}
\def\R{2}
\def\H{1}
\draw (0,0) circle(2pt) coordinate(V10) node[below,scale=0.9]{$v_0^1$};
\draw (\R,0) circle(2pt) coordinate(V11) node[below,scale=0.9]{$v_1^1$};
\draw (2*\R,0) circle(2pt) coordinate(V12) node[below,scale=0.9]{$v_2^1$};
\draw (3*\R,0) circle(2pt) coordinate(V13) node[below,scale=0.9]{$v_3^1$};
\draw (0.5*\R,\H) circle(2pt) coordinate(V20) node[below,scale=0.9]{$v_0^2$};
\draw (1.5*\R,\H) circle(2pt) coordinate(V21) node[below,scale=0.9]{$v_1^2$};
\draw (2.5*\R,\H) circle(2pt) coordinate(V22) node[below,scale=0.9]{$v_2^2$};
\draw (3.5*\R,\H) circle(2pt) coordinate(V23) node[below,scale=0.9]{$v_3^2$};
\qarrow{V11}{V10};
\qarrow{V12}{V11};
\qarrow{V21}{V20};
\qarrow{V21}{V22};
\qarrow{V22}{V23};
\qarrow{V12}{V13};
\qarrow{V20}{V11};
\qarrow{V11}{V21};
\qarrow{V13}{V22};
\qarrow{V22}{V12};
\qdarrow{V10}{V20};
\qdarrow{V23}{V13};
{\color{myblue}
\draw (0.5*\R,-\H) circle(2pt) coordinate(Y1) node[below,scale=0.9]{$v_\infty^2$};
\draw (1.5*\R,-\H) circle(2pt) coordinate(Y2) node[below,scale=0.9]{$v_\infty^1$};
\draw (2*\R,2*\H) circle(2pt) coordinate(Y3) node[above,scale=0.9]{$v_{-\infty}^2$};
\draw (3*\R,2*\H) circle(2pt) coordinate(Y4) node[above,scale=0.9]{$v_{-\infty}^1$};
\qarrow{V10}{Y1}; 
\qarrow{Y1}{V11};
\qarrow{V11}{Y2};
\qarrow{Y2}{V12};
\qdarrow{Y2}{Y1};
\qarrow{V22}{Y3};
\qarrow{Y3}{V21};
\qarrow{V23}{Y4};
\qarrow{Y4}{V22};
\qdarrow{Y3}{Y4};
}
\end{tikzpicture}
\]

\end{enumerate}
\end{ex}

\begin{conv}[Another labeling of vertices]\label{conv:labeling_corresp}
For a Coxeter word $\bbs=(\bbs_1,\dots, \bbs_m)$, the set of vertices $J(\bbs)$ of the basic part $\bJ(\bbs)$ of $\bJ_{\mathrm{full}}(\bbs)$ is in bijection with $S_0\sqcup [1, m]$, where $S_0:=\{s_0\mid s\in S\}$, as follows. 
\begin{itemize}
    \item $v_0^s$ corresponds to $s_0\in S_0$ for $s\in S$.
    \item The vertex arising from $|\bbs_k|^r$ in the $k$-th elementary quiver $\bJ(\bbs_k)$ in $\bJ(\bbs)=\bJ(\bbs_1)\ast \bJ(\bbs_2)\ast \dots \ast\bJ(\bbs_m)$ corresponds to $k$. Here, $|s|=|\ols|:=s$ for $s\in S$. 
\end{itemize}
In the main body of the paper, we focus especially on reduced words of $(w_0, w_0)$. Recall that the set of reduced words of $(w_0, w_0)$ is denoted by $S(w_0, w_0)$, and write $N:=\ell (w_0)$. For $\bbs\in S(w_0, w_0)$, we identify $J(\bbs)$ with $S_0\sqcup [1, 2N]$ by the above bijection. Then, we can directly check that $\cE(\bbs)=(\varepsilon_{\bbs; ij})_{i\in J(\bbs)^\uf, j\in J(\bbs)}$ coincides with the one defined in \eqref{eq:cE_bbs}. 
\end{conv}

\paragraph{\textbf{Quivers associated with decorated triangulations}}
Let $\boldsymbol{D}_{K+2}$ be a $(K+2)$-gon with $K \geq 1$. A \emph{$\mathfrak{g}$-decorated triangulation} $\bD$ of $\boldsymbol{D}_{K+2}$ consists of the following data:
\begin{enumerate}
    \item a triangulation of $\boldsymbol{D}_{K+2}$;
    \item a choice of a corner of each triangle $T$;
    \item a choice of a reduced word $\bs_T$ of $w_0$ for each triangle $T$.
\end{enumerate}
Given a $\mathfrak{g}$-decorated triangulation, we put the quiver $\bJ_\std(\bs_T)$ on each triangle $T$ so that 
\begin{itemize}
    \item the frozen sets $\{v_0^s\}_{s \in S}$, $\{v_{n^s(\bs_T)}^s\}_{s \in S}$, $\{v_{\alpha_s^\vee}\}_{s \in S}$ are located on the three edges of $T$, respectively, in this clockwise order;
    \item $\{v_{\alpha_s^\vee}\}_{s \in S}$ lies on the opposite edge of the chosen corner. 
\end{itemize}
Define the ``weights'' (cf. \cite[Lemma 4.9]{IO:Wilson}) of the frozen vertices by
\begin{align*}
    d_v:=\begin{cases}
        \alpha_s & \mbox{if $v=v_0^s$}, \\
        \alpha_{s^\ast} & \mbox{if $v=v_{n^s(\bs)}$}, \\
        \alpha_s & \mbox{if $v=v_{\alpha_s}$}.
    \end{cases}
\end{align*}
We define a quiver $Q_{\bD}$ on $\boldsymbol{D}_{K+2}$ by amalgamating these quivers along the interior edges of the triangulation so that the pairs of vertices with the same weight are identified, and then promoting the frozen vertices on the interior edges to be unfrozen.
See \cref{fig:quiver_polygon} for an example. It is known that the quivers $Q_{\bD}$ associated with any $\mathfrak{g}$-decorated triangulations $\bD$ of $\boldsymbol{D}_{K+2}$ are mutation-equivalent to each other \cite[Section 12.5]{GS:Quantum}. 

\begin{figure}[ht]
    \centering
\begin{tikzpicture}
\draw[blue] (0,0) -- (4,0) -- (4,4) -- (0,4) -- cycle;
\draw[blue] (4,0) -- (0,4);
\node[scale=0.7] at (0.1,4-0.3) {$\bullet$};
\node[scale=0.7] at (4-0.2,4-0.2
) {$\bullet$};
\node[scale=0.9] at (1,1.5) {$(1,2,1,2)$};
\node[scale=0.9,rotate=-45] at (2.5,2.5) {$(1,2,1,2)$};

\begin{scope}[xshift=7cm]
\draw[blue] (0,0) -- (4,0) -- (4,4) -- (0,4) -- cycle;
\draw[blue] (4,0) -- (0,4);
\node[scale=0.7] at (0.1,4-0.3) {$\bullet$};
\node[scale=0.7] at (4-0.2,4-0.2
) {$\bullet$};
\quiverCv{0,4}{0,0}{4,0}{2} 
\foreach \i in {0} \fnode{V\i 1}{black} coordinate(X\i 1);
\foreach \i in {1,2} \draw(V\i 1) circle(2pt) coordinate(X\i 1);
\foreach \i in {0} \fdnode{V\i 2}{black} coordinate(X\i 2);
\foreach \i in {1,2} \dnode{V\i 2}{black} coordinate(X\i 2);
\fnode{V10}{black} coordinate (X10);
\fdnode{V20}{black} coordinate (X20);
\draw[qarrow,dashed,head] (X01) to[bend left=20] (X02);
\draw[qarrow,dashed,tail] (X20) to[bend left=20] (X10);

\vertexC{4,4}{0,4}{4,0}{2} 
\foreach \i in {0,2} \fnode{V\i 1}{black} coordinate(Y\i 1);
\foreach \i in {1} \draw(V\i 1) circle(2pt) coordinate(Y\i 1);
\foreach \i in {0,2} \fdnode{V\i 2}{black} coordinate(Y\i 2);
\foreach \i in {1} \dnode{V\i 2}{black} coordinate(Y\i 2);
\draw(V10) coordinate (Y10);
\draw(V20) coordinate (Y20);
\qarrow{Y21}{Y11};
\qarrow{Y11}{Y01};
\qsarrow{Y22}{Y12};
\qsarrow{Y12}{Y02};
\draw[qarrow,head](Y11) -- (Y12);
\draw[qarrow,tail](Y02) -- (Y11);
\draw[qarrow,tail](Y12) -- (Y21);
\draw[qarrow,dashed,tail,head](Y01) to[bend left=20] (Y02);
\draw[qarrow,dashed,tail,head](Y21) to[bend right=20] (Y22);
{\color{myblue}
    \qarrow{Y01}{X21};
    \qarrow{X21}{Y11};
    \qsarrow{Y12}{X22};
    \draw[qarrow,tail,head] (X22) to[bend right=20] (Y22);
}
\end{scope}
\end{tikzpicture}
    \caption{An $\mathfrak{sp}_4$-decorated triangulation of $\boldsymbol{D}_4$ (Left) and the associated quiver $Q_{\bD}$ (Right). The chosen corner is shown by the symbol \scalebox{0.8}{$\bullet$}.}
    \label{fig:quiver_polygon}
\end{figure}

\begin{figure}[ht]
    \centering
\begin{tikzpicture}[scale=2.5]
\node[anchor=east] at (-0.2,0.5){$K$: even};
{\color{blue}
  \draw (0,1) -- (5,1);
  \draw (0,0) -- (5,0);
  \draw (0,0) -- (0,1);
  \draw (5,0) -- (5,1);

  \draw (1,0) -- (1,1);
  \draw (2,0) -- (2,1);
  \draw (4,0) -- (4,1);
  
  \draw (0,1) -- (1,0);
  \draw (1,1) -- (2,0);
  \draw (4,1) -- (5,0);}
  
  \node at (3,0.5) {$\cdots$};  

  \foreach \i in {0,1,4} \node[scale=0.7] at (0.05+\i,0.85){$\bullet$};
  \foreach \i in {1,2,5} \node[scale=0.7] at (-0.05+\i,0.15){$\bullet$};

  \node[scale=0.9] at (0.25,0.25){$\bs_1$};
  \node[scale=0.9] at (1.25,0.25){$\bs_3$};
  \node[scale=0.9] at (4.25,0.25){$\bs_{K-1}$};
  \node[scale=0.9,rotate=180] at (0.75,0.75){$\bs_2$};
  \node[scale=0.9,rotate=180] at (1.75,0.75){$\bs_4$};
  \node[scale=0.9,rotate=180] at (4.75,0.75){$\bs_K$};

\begin{scope}[yshift=-1.5cm]
\node[anchor=east] at (-0.2,0.5){$K$: odd};
{\color{blue}
  \draw (0,1) -- (5,1);
  \draw (0,0) -- (6,0);
  \draw (0,0) -- (0,1);
  \draw (5,0) -- (5,1);

  \draw (1,0) -- (1,1);
  \draw (2,0) -- (2,1);
  \draw (4,0) -- (4,1);
  
  \draw (0,1) -- (1,0);
  \draw (1,1) -- (2,0);
  \draw (4,1) -- (5,0);
  \draw (5,1) -- (6,0);}
  
  \node at (3,0.5) {$\cdots$};

  \foreach \i in {0,1,4,5} \node[scale=0.7] at (0.05+\i,0.85){$\bullet$};
  \foreach \i in {1,2,5} \node[scale=0.7] at (-0.05+\i,0.15){$\bullet$};

  \node[scale=0.9] at (0.25,0.25){$\bs_1$};
  \node[scale=0.9] at (1.25,0.25){$\bs_3$};
  \node[scale=0.9] at (4.25,0.25){$\bs_{K-2}$};
  \node[scale=0.9] at (5.25,0.25){$\bs_K$};
  \node[scale=0.9,rotate=180] at (0.75,0.75){$\bs_2$};
  \node[scale=0.9,rotate=180] at (1.75,0.75){$\bs_4$};
  \node[scale=0.9,rotate=180] at (4.75,0.75){$\bs_{K-1}$};

\end{scope}
\end{tikzpicture}
    \caption{A special example of $\mathfrak{g}$-decorated triangulation of $\boldsymbol{D}_{K+2}$.}
    \label{fig:polygon_std_triangulation}
\end{figure}

A special example of $\mathfrak{g}$-decorated triangulation of $\boldsymbol{D}_{K+2}$ is shown in \cref{fig:polygon_std_triangulation}. In this case, the quiver $Q_{\bD}$ consists of basic part $\bJ(\tbbs)$ of $\bJ_{\mathrm{full}}(\tbbs)$ with the following concatenation $\tbbs$ of Coxeter words:
\begin{align*}
    \tbbs&:=\bs_1\ast (\overline{\bs}_2^{\ast})^\op\ast \bs_3\ast (\overline{\bs}_4^\ast)^\op\ast \cdots\\
    &=
\begin{cases}
\bs_1\ast (\overline{\bs}_2^\ast)^\op\ast \bs_3\ast (\overline{\bs}_4^\ast)^\op\ast \cdots \ast (\overline{\bs}_K^\ast)^\op&\text{when $K$ is even},\\
\bs_1\ast (\overline{\bs}_2^\ast)^\op\ast \bs_3\ast (\overline{\bs}_4^\ast)^\op\ast \cdots \ast \bs_K&\text{when $K$ is odd},
\end{cases}
\end{align*}
together with the extra vertices coming from each piece: $\bJ_\std(\bs_{2l-1})$ or $\bJ_\std((\overline{\bs}_{2l}^\ast)^\op)$. Here for a reduced word $\bs=(s_1,\dots,s_N)$ of $w_0$, we write $\overline{\bs}:=(\ols_1,\dots,\ols_N)$, $\bs^\ast:=(s_1^\ast,\dots,s_N^\ast)$, and $\bs^\op:=(s_N,\dots,s_1)$. The extra vertex of $Q_{\bD}$ coming from $v^{s}_{\infty}$ of $\bJ_\std(\bs_{2\ell-1})$ (resp.~$\bJ_\std(\bs_{2\ell})$) is denoted by $v_{\infty_{2\ell-1}}^s$ (resp.~$v_{\infty_{2\ell}}^{s^{\ast}}$). 
For this special choice of $\lieg$-decorated triangulation of $\boldsymbol{D}_{K+2}$, we denote the resulting quiver described above by 
\begin{align}
\bJ_{\std}(\tbbs):=Q_{\bD}. \label{eq:std_tbbs}
\end{align}
See \cref{fig:quiver_std_triangulation} for type $A_2$ and $C_2$ examples.

\begin{figure}
    \centering
\begin{tikzpicture}
\def\R{3}
\draw[blue] (0,\R) -- (0,0) -- (2*\R,0) --(2*\R,\R) -- (0,\R) -- (\R,0) -- (\R,\R) -- (2*\R,0);
\quiverAv{0,\R}{0,0}{\R,0}{2}
\foreach \i in {1,2}{
    \fnode{a\i}{black};
    \fnode{V\i 0}{myblue};
    }
\draw(V11) circle(2pt);
\quiverAv{\R,0}{\R,\R}{0,\R}{2}
\foreach \i in {1,2}{
    \draw(a\i) circle(2pt);
    \draw(b\i) circle(2pt);
    \fnode{V\i 0}{myblue};
    }
\draw(V11) circle(2pt);
\quiverAv{\R,\R}{\R,0}{2*\R,0}{2}
\foreach \i in {1,2}{
    \fnode{V\i 0}{myblue};
    }
\draw(V11) circle(2pt);
\quiverAv{2*\R,0}{2*\R,\R}{\R,\R}{2}
\foreach \i in {1,2}{
    \fnode{a\i}{black};
    \draw(b\i) circle(2pt);
    \fnode{V\i 0}{myblue};
    }
\draw(V11) circle(2pt);
\node[scale=0.9,align=center] at (\R,-1){
Type $A_2$ \\ 
$\bs_1=\dots=\bs_4=(1,2,1)$\\ 
$\widetilde{\bbs}=(1,2,1,\overline{2},\overline{1},\overline{2},1,2,1,\overline{2},\overline{1},\overline{2})$};

\begin{scope}[xshift=3*\R cm]
\draw[blue] (0,\R) -- (0,0) -- (2*\R,0) --(2*\R,\R) -- (0,\R) -- (\R,0) -- (\R,\R) -- (2*\R,0);
\quiverCvMiddle{0,\R}{0,0}{\R,0}{2};
    \foreach \i in {1,2} \draw(V\i 1) circle(2pt);
    \foreach \i in {1,2} \dnode{V\i 2}{black};
    \draw[qarrow,tail] (V21) -- (V22);
    \draw[qarrow,head,dashed] (V01) to[bend left=20] (V02);
    \fnode{V01}{black};
    \fdnode{V02}{black};
    \fnode{V10}{myblue};
    \fdnode{V20}{myblue};
    \draw[qarrow,tail,dashed,myblue] (V20) to[bend left=20] (V10);
\vertexC{\R,0}{\R,\R}{0,\R}{2};
    \foreach \i in {0,1} \draw(V\i 2) circle(2pt);
    \foreach \i in {0,1} \dnode{V\i 1}{black};
    \qsarrow{V21}{V11};
    \qsarrow{V11}{V01};
    \qarrow{V22}{V12};
    \qarrow{V12}{V02};
    \draw[qarrow,head] (V12) -- (V11);
    \draw[qarrow,tail] (V21) -- (V12);
    \draw[qarrow,tail] (V11) -- (V02);
    \draw[qarrow,head] (V02) -- (V01);
    \fnode{V10}{myblue};
    \fdnode{V20}{myblue};
    {\color{myblue}
    \qarrow{V02}{V10};
    \qarrow{V10}{V12};
    \qsarrow{V11}{V20};
    \qsarrow{V20}{V21};
    \draw[qarrow,tail,dashed] (V20) to[bend left=20] (V10);}
\quiverCvMiddle{\R,\R}{\R,0}{2*\R,0}{2};
    \foreach \i in {0,1,2} \draw(V\i 1) circle(2pt);
    \foreach \i in {0,1,2} \dnode{V\i 2}{black};
    \draw[qarrow,tail](V21)--(V22);
    \fnode{V10}{myblue};
    \fdnode{V20}{myblue};
    \draw[qarrow,tail,myblue,dashed] (V20) to[bend left=20] (V10);
\vertexC{2*\R,0}{2*\R,\R}{\R,\R}{2};
    \foreach \i in {1} \draw(V\i 2) circle(2pt);
    \foreach \i in {1} \dnode{V\i 1}{black};
    \fnode{V02}{black};
    \fdnode{V01}{black};
    \qsarrow{V21}{V11};
    \qsarrow{V11}{V01};
    \qarrow{V22}{V12};
    \qarrow{V12}{V02};
    \draw[qarrow,head] (V12) -- (V11);
    \draw[qarrow,tail] (V21) -- (V12);
    \draw[qarrow,tail] (V11) -- (V02);
    \draw[qarrow,head,dashed] (V02) to[bend right=20] (V01);
    \fnode{V10}{myblue};
    \fdnode{V20}{myblue};
    {\color{myblue}
    \qarrow{V02}{V10};
    \qarrow{V10}{V12};
    \qsarrow{V11}{V20};
    \qsarrow{V20}{V21};
    \draw[qarrow,tail,dashed] (V20) to[bend left=20] (V10);}
\node[scale=0.9,align=center] at (\R,-1){
Type $C_2$\\
$\bs_1=\dots=\bs_4=(1,2,1,2)$\\
$\widetilde{\bbs}=(1,2,1,2,\overline{1},\overline{2},\overline{1},\overline{2},1,2,1,2,\overline{1},\overline{2},\overline{1},\overline{2})$};
\end{scope}
\end{tikzpicture}
    \caption{Quivers associated with the $\mathfrak{g}$-decorated triangulation of $\boldsymbol{D}_4$ in \cref{fig:polygon_std_triangulation}. The black part is the same as the basic part of  $\bJ_{\mathrm{full}}(\widetilde{\bbs})$.}
    \label{fig:quiver_std_triangulation}
\end{figure}

\section{Cluster structure on \texorpdfstring{$\Conf_4^\times \sA_G$}{Conf4}}\label{subsub:cluster_config}
We briefly recall the classical cluster structure on $\Conf_4^\times \sA_G$ following \cite[Section 11]{GS:Quantum}. See \cite{GS:Quantum} for all missing definitions. 

\subsection{Relative position of decorated flags}
Recall the notation in \cref{subsec:QCR} and \cref{subsec:config_classical}. 
In particular, for $K \in \Z_{\geq 2}$, the configuration space of $K$ decorated flags is defined to be the set 
\[
\Conf_K \sA_G := (\overbrace{\sA_G \times \dots \times \sA_G}^{K\text{ times}})/G,
\]
where the action of $G$ is the diagonal left action. The elements of $\Conf_K \sA_G$ are written as $[\sfA_1,\dots, \sfA_K]$ for $\sfA_1,\dots, \sfA_K\in \sA_G$. Write $H:=B^+\cap B^-$, which is called a Cartan subgroup. The weight lattice $P$ can be understood as $\Hom_{\text{alg. grp.}}(H,\C^{\times})$, and the coroot lattice $Q^{\vee}:=\sum_{s\in S}\Z\alpha_s^{\vee}$ can be understood as $\Hom_{\text{alg. grp.}}(\C^{\times}, H)$. For $w\in W$, we take a certain lift $\overline{w}$ of $w$ in $G$. See \cite[Section 2.1]{IOS} for their precise definitions. The Bruhat decomposition $G=\bigsqcup_{w \in W} U^+ H \overline{w}U^+$ implies the following:

\begin{lem}\label{lem:hw-distance}
The $G$-orbit of any pair $(\flA_1,\flA_2)$ contains a unique representative of the form
\begin{align*}
    (h.[U^+],\overline{w}.[U^+])
\end{align*}
for $h \in H$ and $w \in W$. We call $h=:h(\flA_1,\flA_2)$ and $w=:w(\flA_1,\flA_2)$ the \emph{$h$-} and \emph{$w$-distances}, respectively.
\end{lem}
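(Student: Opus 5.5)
The plan is to reduce the statement to the Bruhat decomposition $G=\bigsqcup_{w\in W}U^+H\overline{w}U^+$ together with the structure of stabilizers in $\sA_G=G/U^+$. Given a pair $(\flA_1,\flA_2)=(g_1.[U^+],g_2.[U^+])$, I would first use the $G$-action to move $\flA_1$ to a point of the form $h.[U^+]$ and then track what freedom remains for $\flA_2$.

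For existence, act by $g_1^{-1}$, so that the pair becomes $([U^+],g.[U^+])$ with $g=g_1^{-1}g_2$. Bruhat gives $g=u h'\overline{w}u'$ with $u,u'\in U^+$, $h'\in H$, $w\in W$. Acting by $u^{-1}$ fixes $[U^+]$ and sends $g.[U^+]$ to $h'\overline{w}.[U^+]$. Then act by $h'^{-1}$: the pair becomes $(h'^{-1}.[U^+],\overline{w}.[U^+])$, which has the required form with $h=h'^{-1}$.

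For uniqueness, suppose $x\in G$ carries $(h.[U^+],\overline{w}.[U^+])$ to $(h_1.[U^+],\overline{w}_1.[U^+])$. The first component gives $xhU^+=h_1U^+$, so $x\in h_1U^+h^{-1}=h_1h^{-1}U^+$, using that $H$ normalizes $U^+$. Write $x=tu$ with $t=h_1h^{-1}$ and $u\in U^+$. The second component gives $tu\overline{w}U^+=\overline{w}_1U^+$, i.e.\ $\overline{w}_1\in U^+\,t\,\overline{w}\,U^+$ after commuting $t$ past $u$ (again since $H$ normalizes $U^+$). Since $t\overline{w}\in H\overline{w}$, the disjointness of the Bruhat cells forces $w_1=w$. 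The equality $U^+\overline{w}U^+=U^+t\overline{w}U^+$ then has to be upgraded to $t=1$, and this is the main obstacle.

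To get $t=1$, I would use uniqueness of the $H$-component in the refined decomposition $U^+H\overline{w}U^+\cong U^+_w\times H\times U^+$, where $U^+_w=U^+\cap\overline{w}U^-\overline{w}^{-1}$. This is the standard fact that the torus part of an element in $U^+H\overline{w}U^+$ is well defined. Concretely, if $u_1t\overline{w}u_2=\overline{w}$, then $t^{-1}u_1^{-1}t\in\overline{w}U^+\overline{w}^{-1}\cap U^+$ after rearranging. Projecting to $B^+=HU^+$ and comparing $H$-parts then gives $t=1$, since $\overline{w}U^+\overline{w}^{-1}\cap B^+$ is unipotent. Hence $h_1=h$ and $w_1=w$, which proves uniqueness of the representative.
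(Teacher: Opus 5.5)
Your proof is correct and follows the paper's route: the paper simply invokes the Bruhat decomposition $G=\bigsqcup_{w\in W}U^+H\overline{w}U^+$, with its well-defined $H$-component, and your existence and uniqueness steps make that citation explicit. One small fix is needed in the last step. From $u_1t\overline{w}u_2=\overline{w}$, the element that lands in $\overline{w}U^+\overline{w}^{-1}\cap B^+$ is $t^{-1}u_1^{-1}=\overline{w}u_2\overline{w}^{-1}$, not $t^{-1}u_1^{-1}t$ (which is not in $\overline{w}U^+\overline{w}^{-1}$ in general and has trivial $H$-part anyway). Since $\overline{w}U^+\overline{w}^{-1}\cap B^+$ is unipotent, hence contained in $U^+$, the $H$-part $t^{-1}$ of $t^{-1}u_1^{-1}$ must equal $1$.
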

A pair $(\flA_1,\flA_2)$ is said to be \emph{generic} if $w(\flA_1,\flA_2)=w_0$. We note that the $w$-distance only depends on the underlying flags in $G/B^+$.

\begin{lem}[Interpolation Lemma, {\cite[Lemma-Definition 5.3]{GS:Quantum}}]\label{lem:decorated_chain}
Let $(\flA_l,\flA_r)$ be a generic pair of decorated flags. 
Given a reduced word $\bs=(s_1,\dots,s_N)$ of $w_0$, 
there exists a unique chain $\flA_l=\flA_{(0)}\xleftarrow{s_1} \flA_{(1)}\xleftarrow{s_2}\dots\xleftarrow{s_N} \flA_{(N)}=\flA_r$ of decorated flags such that
\begin{itemize}
    \item $w(\flA_{(k)},\flA_{(k-1)})=r_{s_k}$,
    \item $h(\flA_{(k)},\flA_{(k-1)})=\alpha_{s_k}^\vee(c_k)$ 
\end{itemize} 
for $k=1,\dots,N$.
Here, $c_k \in \C^\ast$ is given by
\begin{align*}
    c_k:=\begin{cases}
    h(\flA_r,\flA_l)^{\varpi_t} & \mbox{if $\beta_k^\bs=\alpha_t^\vee$ is simple}, \\
    1 & \mbox{otherwise}.
    \end{cases}
\end{align*}
\end{lem}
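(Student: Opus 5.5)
We construct the chain inductively from the left end, and get uniqueness from the uniqueness of decorated flags at prescribed $w$-distance $r_s$ and $h$-distance. By $G$-equivariance of $h$ and $w$, we may normalize the pair via \cref{lem:hw-distance}: take $\flA_l=[U^+]$ and $\flA_r=h^{-1}\vw.[U^+]$, or an equivalent normal form, so that $h(\flA_r,\flA_l)$ is an explicit element of $H$. The natural candidate chain is
\[
\flA_{(k)}:=\overline{r_{s_1}}\cdots\overline{r_{s_k}}\,t_k.[U^+]
\]
for suitable $t_k\in H$. With the standard lifts, $\overline{w_0}=\overline{r_{s_1}}\cdots\overline{r_{s_N}}$ because $\bs$ is reduced. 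Hence the chain ends at a decorated flag lying over the same flag as $\flA_r$, and $t_N$ must be chosen to match the decoration.

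Step 1 is to show that, for each $k$, the set of decorated flags $\flA'$ with $w(\flA',\flA_{(k-1)})=r_{s_k}$ and prescribed $h$-distance is a single point. This holds because $U^+\overline{r_s}U^+/U^+$ is parametrized by $U_{\alpha_s}\overline{r_s}$, and the torus part is then fixed by $h$. So, given the $c_k$, the chain exists and is unique up to the terminal condition.

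Step 2 is the key computation. Let $a_k$ be the parameter in $U_{\alpha_{s_k}}$ at step $k$. We must show that the constraints $h(\flA_{(k)},\flA_{(k-1)})=\alpha_{s_k}^\vee(c_k)$ together with $\flA_{(N)}=\flA_r$ force all $a_k$ to be trivial. We must also show that the accumulated torus element equals
\[
\prod_k \overline{r_{s_1}}\cdots\overline{r_{s_{k-1}}}\,\alpha^\vee_{s_k}(c_k)\,(\cdots)^{-1},
\]
which rearranges to $\prod_k (r_{s_N}\cdots r_{s_{k+1}}\alpha^\vee_{s_k})(c_k)=\prod_k \beta_k^\bs(c_k)$. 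Since $\beta_k^\bs$ runs over the positive coroots, the condition $\flA_{(N)}=\flA_r$ becomes
\[
\prod_k\beta_k^\bs(c_k)=h(\flA_r,\flA_l).
\]
Writing $h(\flA_r,\flA_l)=\prod_t\alpha_t^\vee\bigl(h(\flA_r,\flA_l)^{\varpi_t}\bigr)$, this is solved by setting $c_k=h^{\varpi_t}$ when $\beta_k^\bs=\alpha_t^\vee$ and $c_k=1$ otherwise.

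Step 3 establishes uniqueness of the $c_k$ themselves. The statement asserts that the chain is determined by the listed $c_k$, so we can either read the $c_k$ as specified data, in which case only existence and uniqueness of the chain are needed, or show that no other choice works. I would take the former reading, as \cite{GS:Quantum} does.

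The main obstacle is the bookkeeping in Step 2: conjugating the torus factors past the lifts $\overline{r_s}$, where sign conventions such as $\overline{r_s}^2=\alpha_s^\vee(-1)$ may appear, and checking that the $U$-parts vanish so that the $h$-distances are exactly $\alpha_{s_k}^\vee(c_k)$. I would handle this by induction on $N$, applying the rank-one $SL_2$ computation at each step.
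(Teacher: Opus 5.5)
Your existence argument is essentially the paper's. You normalize by $G$-equivariance, write down an explicit chain of the form (lifts $\overline{r}_s$)$\cdot$(torus element)$.[U^+]$ with no unipotent parts, check each step in rank one, and reduce the endpoint condition to $\prod_k\beta_k^{\bs}(c_k)=h(\flA_r,\flA_l)$.

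The paper normalizes from the $\flA_r$ end instead: $(\flA_r,\flA_l)=g(h.[U^+],\vw.[U^+])$ and $\flA_{(k)}=g\overline{r}_{s_N}\cdots\overline{r}_{s_{k+1}}h_k.[U^+]$ with $h_k=r_{s_k}(h_{k-1})\,\alpha^\vee_{s_k}(c_k)$. Each pair $(\flA_{(k)},\flA_{(k-1)})$ then translates exactly to $(\alpha^\vee_{s_k}(c_k).[U^+],\overline{r}_{s_k}.[U^+])$, and no signs occur. In your orientation you get $t_k=r_{s_k}(t_{k-1})\,\alpha^\vee_{s_k}(-c_k)$, hence $t_N=\prod_k\beta^{\bs}_k(-c_k)$, while $h(\flA_r,\flA_l)=\vw^{2}t_N$. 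The signs cancel only because $\vw^{2}=\prod_{\beta\in\Phi^\vee_+}\beta(-1)$. That works, but it is less clean.

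The genuine gap is uniqueness, because your Step 1 is false. Suppose $\flA_{(k-1)}=g\overline{r}_s.[U^+]$. The decorated flags $\flA'$ with $w(\flA',\flA_{(k-1)})=r_s$ and $h(\flA',\flA_{(k-1)})=\alpha^\vee_s(c)$ are exactly $g\,x\,\alpha^\vee_s(c).[U^+]$, with $x$ in the stabilizer $\overline{r}_sU^+\overline{r}_s^{-1}$ of $\overline{r}_s.[U^+]$. These are the points $g\,x_{-\alpha_s}(a)\,\alpha^\vee_s(c).[U^+]$, $a\in\C$, and they are pairwise distinct. So this set is an affine line, not a point; your own parametrization by $U_{\alpha_s}\overline{r}_s$ already shows this.

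Consequently the local conditions leave one free parameter $a_k$ per step, and the $a_k$ enter no $h$-distance. No rank-one computation at a single step can force them to vanish, contrary to your last paragraph. What forces them is the endpoint condition together with the reducedness of $\bs$, which you never use for this purpose. Here is the missing argument.

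Take $\flA_l=[U^+]$ and push the unipotent factors to the left. Every chain satisfying the local conditions then ends at $\flA_{(N)}=u\,\vw\,t.[U^+]$. Here $t\in H$ depends only on the $c_k$, and $u=y_1\cdots y_N$ with $y_k\in U_{\gamma_k}$, where $\gamma_k=r_{s_1}\cdots r_{s_{k-1}}\alpha_{s_k}$ and $y_k$ is a nonzero rescaling of $x_{\alpha_{s_k}}(a_k)$. Since $\bs$ is a reduced word of $w_0$, the $\gamma_k$ are distinct and exhaust $\Phi_+$. Hence $(a_1,\dots,a_N)\mapsto u$ is a bijection $\C^N\to U^+$. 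The condition $\flA_{(N)}=\flA_r$, with $\flA_r$ lying over $\vw B^+$, forces $u\in U^+\cap\vw B^+\vw^{-1}=U^+\cap B^-=\{1\}$, so all $a_k=0$. Equivalently: a minimal gallery between opposite flags is unique.

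Only after this step do your torus computations give uniqueness, since $h(\flA.t,\flA')=h(\flA,\flA')\,t$ then pins down the decorations. The paper itself only exhibits the chain and checks it, deferring the lemma to Goncharov--Shen. Because you claim uniqueness, you need to supply this step.
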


When we write $(\flA_r,\flA_l)=g(h.[U^+],\overline{v}.[U^+])$, the intermediate flags are explicitly given by
\begin{align}\label{eq:interpolation_explicit}
    \flA_{(k)}:=g\overline{r}_{s_N}\dots \overline{r}_{s_{k+1}} h_k.[U^+],
\end{align}
where $h_k:=\prod_{j=1}^k r_{s_k}\dots r_{s_{j+1}}\alpha_{s_j}^\vee(c_j) \in H$. It is easy to verify that the flags \eqref{eq:interpolation_explicit} satisfy the required conditions.

\subsection{Cluster $K_2$-structure on $\Conf_4^\times \sA_G$}
By the Peter--Weyl theorem, we have $\cO(\sA_G) \cong \bigoplus_{\lambda \in P_+} V(\lambda)^\ast$, and hence
\begin{align*}
    \cO(\Conf_2 \sA_G) = (\cO(\sA_G)\otimes \cO(\sA_G))^{G} \cong \bigoplus_{\lambda \in P_+}(V(\lambda)^\ast \otimes V(\lambda^\ast)^\ast)^G.
\end{align*}
For $s \in S$, let $\Delta_s \in \cO(\Conf_2 \sA_G)$ denote the unique regular function such that
\begin{align*}
    \Delta_s \in (V(\varpi_s)^\ast \otimes V(\varpi_s^\ast)^\ast)^G, \quad \Delta_s([U^+],\vw.[U^+])=1.
\end{align*}
For a generic pair $(\flA_1,\flA_2)$ of decorated flags, we have $\Delta_s([\flA_1,\flA_2])=h(\flA_1,\flA_2)^{\varpi_s}$ for $s \in S$. These functions are the basic ingredients for the cluster variables described below.

\begin{figure}[ht]
    \centering
\begin{tikzpicture}
\path (3,3) coordinate(A);
\path (0,3) coordinate(B);
\path (0,0) coordinate(C);
\path (3,0) coordinate(D);
\bline{-0.5,0}{3.5,0}{0.15};
\tline{-0.5,3}{3.5,3}{0.15};
\draw[red,->-,thick] (1.5,3) --node[midway,right]{$g$} (1.5,0);
\filldraw(A) circle(1.5pt) node[above=0.3em,anchor=south west]{$\flA_2=\flA^\sfR=[U^+]$}; 
\filldraw(B) circle(1.5pt) node[above=0.3em,anchor=south east]{$\vw^{-1}{h'}^\ast.[U^+]=\flA^\sfL=\flA_1$};
\filldraw(D) circle(1.5pt) node[below=0.3em,anchor=north west]{$\flA_3=\flA_\sfR=gh.[U^+]$};
\filldraw(C) circle(1.5pt) node[below=0.3em,anchor=north east]{$g\vw.[U^+]=\flA_\sfL=\flA_4$};
\begin{pgfonlayer}{bg}  
\filldraw[fill=myblue!15,draw=blue,dashed,thick] (B) -- (C) -- (D) -- (A) --cycle;
\end{pgfonlayer}
\end{tikzpicture}
    \caption{A quadruple of decorated flags.}
    \label{fig:band_config_appendix}
\end{figure}

Fix a double reduced word $\bbs\in S(w_0,w_0)$. The subword of $\bbs$ consisting of letters in $S$ gives a reduced word $\bs_{\bullet}=(s_1,\dots,s_N)$ of $w_0$, and the $\overline{S}$-part gives another reduced word $\bs^{\bullet}=(s^1,\dots,s^N)$ of $w_0$. 
These subwords define order-preserving maps 
\begin{align*}
    \iota_\bullet, \iota^\bullet: \{1,\dots,N\} \to \{1,\dots,2N\}
\end{align*}
such that $\mathrm{Im}(\iota_\bullet) \sqcup \mathrm{Im}(\iota^\bullet)=\{1,\dots,2N\}$. 
Let $(\flA^\sfL,\flA^\sfR,\flA_\sfR,\flA_\sfL)$ be a quadruple of decorated flags such that the pairs $(\flA^\sfL,\flA^\sfR)$ and $(\flA_\sfL,\flA_\sfR)$ are generic. We apply \cref{lem:decorated_chain} to the pair $(\flA^\sfR,\flA^\sfL)$ with the word $(\bs^\bullet)^\op$, and to the pair $(\flA_\sfL,\flA_\sfR)$ with the word $\bs_\bullet^\ast$. Then we get the following chains of decorated flags:
\begin{align}
    &\flA^\sfL=\flA^0\xrightarrow{s^1} \flA^1\xrightarrow{s^2}\dots\xrightarrow{s^{N}} \flA^N=\flA^\sfR, \qquad w(\flA^{k-1},\flA^{k})=r_{s^k}, \quad h(\flA^{k-1},\flA^{k})=\alpha_{s^k}^\vee(c^k),\label{eq:chain_upper} \\
    &\flA_\sfL=\flA_0\xleftarrow{s_1^\ast} \flA_1\xleftarrow{s_2^\ast} \dots\xleftarrow{s_N^\ast} \flA_N=\flA_\sfR, \qquad w(\flA_k,\flA_{k-1})=r_{s_k^\ast}, \quad h(\flA_k,\flA_{k-1})=\alpha_{s_k^\ast}^\vee(c_k),\label{eq:chain_lower}
\end{align}
where $c^k$ and $c_k$ are given by
\begin{align*}
    c^k:=\begin{cases}
    h(\flA^\sfL,\flA^\sfR)^{\varpi_t} & \mbox{if $\beta_{k}^{\bs^\bullet}=\alpha_{t^\ast}^\vee$ is simple\footnotemark}, \\
    1 & \mbox{otherwise},
    \end{cases}
\end{align*}
and 
\begin{align*}
    c_k:=\begin{cases}
    h(\flA_\sfR,\flA_\sfL)^{\varpi_u^\ast} & \mbox{if $\beta_k^{\bs_\bullet^\ast}=\alpha_{u^\ast}^\vee$ is simple}, \\
    1 & \mbox{otherwise},
    \end{cases}
\end{align*}
respectively. \footnotetext{Here, note that $\beta^{\bs^\op}_{N+1-k}=\alpha_t^\vee$ if and only if $\beta^{\bs}_k=\alpha_{t^\ast}^\vee$.}
We call the following triples the \emph{elementary triples}:
\begin{itemize}
    \item The triple $(\flA^{{k}},\flA_{l+1},\flA_{l})$ such that $\iota^\bullet(k)< \iota_\bullet(l+1)<\iota^\bullet(k+1)$. 
    \item The triple $(\flA^{{k}},\flA^{{k+1}},\flA_{l})$ such that $\iota_\bullet(l)<\iota^\bullet(k+1)<\iota_\bullet(l+1)$. 
\end{itemize}
Here we set $\iota_\bullet(0)=\iota^\bullet(0):=0$ and $\iota_\bullet(N+1)=\iota^\bullet(N+1):=2N+1$. 
\begin{dfn}[Cluster $K_2$-coordinates on $\Conf_4^\times \sA_G$]
For a double reduced word $\bbs\in S(w_0,w_0)$, let $\mathbf{A}(\bbs)$ denote the collection of the following regular functions on $\Conf_4^\times \sA_G$:
\begin{itemize}
    \item $\Delta_s(\flA^{{k}},\flA_{l})$ for $s \in S$, where $(\flA^{{k}},\flA_{l+1},\flA_{l})$ or $(\flA^{{k}},\flA^{{k+1}},\flA_{l})$ is an elementary triple.
    \item $\Delta_s(\flA^R,\flA_R)$ for $s \in S$.
    \item $h(\flA_{l},\flA_{l-1})^{\varpi_{t^\ast}}$ such that $\beta_l^{\bs_\bullet^\ast}=\alpha_{t^\ast}^\vee$ is simple.
    \item $h(\flA^{k-1},\flA^{k})^{\varpi_{t}}$ such that $\beta_k^{\bs^\bullet}=\alpha_{t^\ast}^\vee$ is simple.
\end{itemize}
Define the frozen subset to be
\begin{align*}
    \mathbf{A}^\fr(\bbs):=& \{\Delta_s(\flA^L,\flA_L) \mid s \in S\}\cup\{\Delta_s(\flA^R,\flA_R) \mid s \in S\}\\
    &\cup\{h(\flA_{l},\flA_{l-1})^{\varpi_{t^\ast}} \mid \text{$\beta_l^{\bs_\bullet^\ast}=\alpha_{t^\ast}^\vee$ is simple}\} \cup \{h(\flA^{k-1},\flA^{k})^{\varpi_{t}} \mid \text{$\beta_k^{\bs^\bullet}=\alpha_{t^\ast}^\vee$ is simple}\}.
\end{align*} 
\end{dfn}
Here, some of functions in the collection $\mathbf{A}(\bbs)$ are identical to each other. 
The independent functions are well-assigned to the vertices of the quiver $\bJ_\std(\bbs)$ (see \cref{sec:quiver} for the construction), as follows.

For each elementary configuration of the form $(\flA^k,\flA_{l+1},\flA_l)$, we have $\Delta_t(\flA^k,\flA_l)=\Delta_t(\flA^k,\flA_{l+1})$ for $t \neq s_l$. Similarly, $\Delta_t(\flA^k,\flA_l)=\Delta_t(\flA^{k+1},\flA_l)$ for $t \neq s^k$ for each elementary configuration $(\flA^k,\flA^{k+1},\flA_l)$. These properties agree with the amalgamation pattern for the quiver $\bJ_\std(\bbs)$, and hence the collection $\bA(\bbs)$ can be assigned to the vertices of $\bJ_\std(\bbs)$. The set of vertices of $\bJ_\std(\bbs)$ is denoted by $J_{\square}(\bbs)$. 
For a vertex $j \in J_{\square}(\bbs)$, denote by $A_{\bbs; j}^\GS \in \bA(\bbs)$ the corresponding function. By using the labeling in \cref{conv:labeling_corresp} for $J(\bbs)\subset J_{\square}(\bbs)$, we can describe $A_{\bbs; j}^\GS$ explicitly as follows. 
\begin{align}
    A_{\bbs; j}^\GS
    &=\begin{cases}
        \Delta_{s}(\flA^\sfL,\flA_\sfL)&\text{if $j=s_0\in S_0$,}\\
        \Delta_{|\bbs_j|}(\flA^k,\flA_l)&\text{if $j\in [1, 2N]$ and $\iota^\bullet(k)\leq j <\iota^\bullet(k+1), \iota_\bullet(l)\leq j <\iota_\bullet(l+1)$,}\\
          h(\flA^{k-1},\flA^{k})^{\varpi_{s}}& \text{if $j=s_{-\infty}:=v^s_{-\infty}$ and  $\beta_k^{\bs^\bullet}=\alpha_{s^{\ast}}^\vee$,}\\
        h(\flA_{l},\flA_{l-1})^{\varpi_{s^\ast}}& \text{if $j=s_{\infty}:=v^s_{\infty}$ and  $\beta_l^{\bs_\bullet}=\alpha_{s}^\vee$.}
    \end{cases}\label{eq:GS_variable}
\end{align}
Denote by $\cE^\GS(\bbs):=(\ve_{\bbs; ij}^\GS)_{i\in J_{\square}(\bbs)^\uf, j\in J_{\square}(\bbs)}$ the exchange matrix of $\bJ_\std(\bbs)$. Then we get a cluster $K_2$-seed $\bi^\GS(\bbs)=(\cE^\GS(\bbs), (A_{\bbs; j}^\GS )_{j\in J_{\square}(\bbs)})$ in the field of rational functions $\mathcal{K}(\Conf_4^\times \sA_G)$ on $\Conf_4^\times \sA_G$. 

\begin{thm}[{\cite{GS:Quantum}}]\label{thm:indep_GS}
The cluster $K_2$-seeds $\bi^\GS(\bbs)$ associated with any double reduced words $\bbs$ of $(w_0,w_0)$ are mutation-equivalent to each other. 
\end{thm}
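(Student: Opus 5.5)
The plan is to transport the statement to the double Bruhat cell $G^{w_0,w_0}$ through the classical Wilson line. Then the mutation equivalence of Berenstein--Fomin--Zelevinsky seeds for different double reduced words can be quoted, and only the extra frozen variables in $S_{\pm\infty}$ need separate treatment. This is the classical shadow of the route taken for $\bi_{\square}(\bbs)$. By \eqref{eq:classical_Wilson_isom} (in the rotated form of \cref{fig:band_config_appendix}), $\Conf_4^\times\sA_G$ is isomorphic to an open subset $G^{w_0,w_0}\times H\times H$ of $G\times H\times H$. The two $H$-factors are recorded by $h(\flA^\sfL,\flA^\sfR)$ and $h(\flA_\sfL,\flA_\sfR)$, i.e.\ by the functions $\Delta_s(\flA^\sfL,\flA^\sfR)$ and $\Delta_s(\flA_\sfL,\flA_\sfR)$.

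The first step is to express each $A^\GS_{\bbs;j}$ with $j\in J(\bbs)$ via the explicit interpolation formula \eqref{eq:interpolation_explicit} applied to the chains \eqref{eq:chain_upper} and \eqref{eq:chain_lower}. The aim is to show that it equals the generalized minor $\Delta_{\gamma^\ast_{\bbs;j},\delta^\ast_{\bbs;j}}(g)$ times a monomial in the $H$-coordinates. The exponents of that monomial are the positive parts $[\gamma_{\bbs;j}]_+$ and $[\delta^\ast_{\bbs;j}]_+$ appearing in \eqref{eq:isigma_A_square}. The variables attached to $v^s_{\pm\infty}$ are exactly the $h$-distance characters $h(\cdot,\cdot)^{\varpi}$, i.e.\ the coordinates on $H\times H$. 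This is a direct computation: $\Delta_s(\flA^k,\flA_l)$ is evaluated by pairing $\xi_{\varpi_s}$ against $\overline{r}\cdots\overline{r}\,h\,g\,\overline{r}\cdots\overline{r}\,h'.v_{\varpi_s}$, and one uses that $\overline{r}_{s_N}\cdots\overline{r}_{s_{k+1}}$ sends extremal vectors to extremal vectors up to signs that cancel.

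Once this dictionary is in place, the seed $\bi^\GS(\bbs)$ restricted to $J(\bbs)$ is, up to the frozen monomial factors, the BFZ seed of $\cO(G^{w_0,w_0})$ for $\bbs$. Any $\bbs,\bbs'\in S(w_0,w_0)$ are connected by commutation moves and rank-two braid moves. For the BFZ seeds, each such move is realized by a sequence of mutations at vertices of $J(\bbs)^\uf$ followed by a relabeling; this is the classical version of \cref{thm:indep_Double}. I will apply the same mutation sequence to $\bi^\GS(\bbs)$. Mutation commutes with rescaling cluster variables by monomials in frozen variables, provided the exchange matrix is modified accordingly (the classical shadow of \cref{prop:modified_seed}(2)). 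Hence the mutated seed differs from $\bi^\GS(\bbs')$ only by such a frozen rescaling and possibly by its columns in $S_{\pm\infty}$.

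The main obstacle is to show that these rescalings and columns agree exactly, so that one gets equality of seeds rather than merely similar ones. For this I would use the $P\oplus P$-grading of $\cO(\Conf_4\sA_G)$ by the degrees at $\flA^\sfL$ and $\flA_\sfR$. The key facts are that every $A^\GS$-variable is homogeneous, and that every $X$-variable has degree $0$: this is the classical analogue of \cref{prop:weight0}, and by \eqref{eq:weight_Poisson} it holds for both the GS quivers and the mutated ones. The degree-zero condition uniquely determines the entries $\ve_{ij}$ with $j\in S_{\pm\infty}$ from the entries with $j\in J(\bbs)$, because the degrees of the $S_{\pm\infty}$ variables are the fundamental weights, which are linearly independent. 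It also forces the degree of each mutated variable to be $([\gamma_{\bbs';j}]_+,[\delta^\ast_{\bbs';j}]_+)$. So the mutated seed agrees with $\bi^\GS(\bbs')$ on the frozen monomial factors, on the extra columns, and on all cluster variables.
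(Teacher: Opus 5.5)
Your reduction is fine up to the point where $\sigma\mu_{t_k}\cdots\mu_{t_1}\bi^\GS(\bbs)$ and $\bi^\GS(\bbs')$ are both strict similarity transforms of the classical seed $\bi_{\dsquare}(\bbs')$. There is a genuine gap in the final step, where you say the grading ``forces'' each mutated variable to have degree $([\gamma_{\bbs';j}]_+,[\delta^\ast_{\bbs';j}]_+)$.

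Nothing you invoke forces this. Rescaling $\flA^\sfL$ or $\flA_\sfR$ leaves $g$ unchanged, so every generalized minor of $g$ has degree $(0,0)$. On the other hand, the degree of a monomial $h^a{h'}^b$ determines $(a,b)$. So for a variable of the form $\Delta_{\gamma^\ast_{\bbs';j},\delta^\ast_{\bbs';j}}(g)\,h^a{h'}^b$, knowing its degree is the same as knowing the variable. Your claim is therefore equivalent to the equality of seeds you are trying to prove.

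The degree-zero property of the $X$-variables cannot supply it. That property is propagated automatically by mutation. Moreover, the degrees of the $S_{\pm\infty}$-variables form a $\Z$-basis of $P\oplus P$, so \emph{every} assignment of degrees to the mutable vertices is compatible with it after a suitable choice of the $S_{\pm\infty}$-columns. Your two assertions (the extra columns are determined by the degrees; the degrees are forced) each presuppose the other.

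What actually has to be shown is a statement about the degree recursion $\deg A_{\mu_k\bi;k}=-\deg A_{\bi;k}+\sum_t[\ve_{kt}]_+\deg A_{\bi;t}$. Equivalently, it concerns the max-plus recursion governing the exponents of the $S_{\pm\infty}$-monomials. You need this recursion to carry $j\mapsto([\gamma_{\bbs;j}]_+,[\delta^\ast_{\bbs;j}]_+)$ to $j\mapsto([\gamma_{\bbs';j}]_+,[\delta^\ast_{\bbs';j}]_+)$ along the BFZ mutation sequences. This is a genuine piecewise-linear verification for each commutation move and each rank-two braid move, including the intermediate steps of the longer sequences realizing the $B_2$ and $G_2$ braid moves. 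You do not carry it out.

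Note also that the paper does not prove \cref{thm:indep_GS}; it quotes it from \cite{GS:Quantum}. Your argument follows the classical shadow of the proof of \cref{thm:GS_quantum}. In that proof, exactly this step (identifying the mutated variables with $A_{\bi^\GS(\bbs');j}$ and then reading off their degrees) is obtained by \emph{invoking} \cref{thm:indep_GS}, together with \cref{thm:separation} and \cref{prop:GS_variable_minor}. Transplanting that argument to prove \cref{thm:indep_GS} is therefore circular unless you supply the verification above or some other independent input.
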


\subsection{Weights of cluster variables}\label{subsec:weights}
The right action of $H$ on $\sA_G$ induces a right action of $H^4$ on $\Conf_4 \sA_G$, given by 
\begin{align*}
    [\flA_1,\flA_2,\flA_3,\flA_4].(h_1,h_2,h_3,h_4) := [\flA_1.h_1,\flA_2.h_2,\flA_3.h_3,\flA_4.h_4]
\end{align*}
for $[\flA_1,\flA_2,\flA_3,\flA_4] \in \Conf_4 \sA_G$ and $(h_1,h_2,h_3,h_4) \in H^4$. 
For any homogeneous function $F \in \cO(\Conf_4 \sA_G)$ and $k=1,2,3,4$, let $\deg_{\flA_k} F \in P$ denote the weight for the $H$-action on $\flA_k$. 
When we view the configuration as in \cref{fig:band_config_appendix}, we write 
\begin{align*}
    \deg F:=\begin{bmatrix}
    \deg_{\flA^\sfL} F & \deg_{\flA^\sfR} F \\ 
    \deg_{\flA_\sfL} F & \deg_{\flA_\sfR} F
\end{bmatrix}
\end{align*}
We are going to compute $\deg A_v^{\bbs}$ for $v \in J_{\square}(\bbs)$.

\begin{conv}
For any subset $A \subset S$, let $H_A:=\{ \prod_{s \in A} \alpha_s^\vee(h^{\varpi_s}) \mid h \in H\}$. Denote the corresponding projection by
\begin{align*}
    \pi_A: H \to H_A, \quad h \mapsto \prod_{s \in A} \alpha_s^\vee(h^{\varpi_s}).
\end{align*}
\end{conv}
Recall the intermediate flags \eqref{eq:interpolation_explicit} and 
$h_k:=\prod_{j=1}^k r_{s_k}\dots r_{s_{j+1}}\alpha_{s_j}^\vee(c_j) \in H$. If we write $u_k:=r_{s_{k+1}}\dots r_{s_m}$ for $k=1,\dots,m$ with $u_m=e$, we get 
\begin{align*}
    u_k^{-1}(h_k)=\prod_{j=1}^k \beta_j^{\bs}(c_j)=\prod_{t \in \Inv(w_0 u_k)} \alpha_t^\vee(h^{\varpi_t})=\pi_{\Inv(w_0u_k)}(h).
\end{align*}
Hence
\begin{align}
    h_k =u_k(\pi_{\Inv(w_0u_k)}(h(\flA_r,\flA_l))).
\end{align}

\begin{lem}\label{lem:weight_interpolating_flag}
For $h \in H$ and  $u \in W$, let  
\begin{align*}
    \vtr{u}{h}:=u(\pi_{\Inv(w_0 u)}(h)),\quad \btr{u}{h}:=uw_0(\pi_{\Inv(u)}(h)) \in H
\end{align*}
Then we have 
\begin{align*}
    (\vtr{u}{h})^{\lambda} = h^{[u^{-1}\lambda]_+}, \quad (\btr{u}{h})^{\lambda} = h^{[u^{-1}\lambda]_-^\ast}
\end{align*}
for any $\lambda \in  P_+$.
\end{lem}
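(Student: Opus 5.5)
We reduce both identities to evaluating a character of $H$ on the explicit elements $\vtr{u}{h}$ and $\btr{u}{h}$. The definition of $\pi_A$ gives
\[
\pi_A(h)=\prod_{t\in A}\alpha_t^\vee(h^{\varpi_t}),
\]
and for any $\mu\in P$ we have $\alpha_t^\vee(c)^{\mu}=c^{\langle\alpha_t^\vee,\mu\rangle}$. Hence $\pi_A(h)^{\mu}=\prod_{t\in A}h^{\langle\alpha_t^\vee,\mu\rangle\varpi_t}$. In other words, $\pi_A(h)^\mu=h^{\mu_A}$, where $\mu_A:=\sum_{t\in A}\langle\alpha_t^\vee,\mu\rangle\varpi_t$ is the truncation of $\mu$ to the fundamental-weight coordinates indexed by $A$. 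The Weyl group acts on $H$ compatibly with its action on $P$, so $(u(h'))^{\lambda}=(h')^{u^{-1}\lambda}$. Therefore
\[
(\vtr{u}{h})^\lambda=h^{(u^{-1}\lambda)_{\Inv(w_0u)}},\qquad
(\btr{u}{h})^\lambda=h^{\bigl((uw_0)^{-1}\lambda\bigr)_{\Inv(u)}}.
\]

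For the first identity it then suffices to show that, for $\lambda\in P_+$ and $t\in S$, the coefficient $\langle\alpha_t^\vee,u^{-1}\lambda\rangle$ is positive only if $t\in\Inv(w_0u)$, and is non-positive whenever $t\in\Inv(w_0u)$ fails. Once this is established, the truncation to $\Inv(w_0u)$ equals $[u^{-1}\lambda]_+$; coordinates with $t\in\Inv(w_0u)$ but zero coefficient contribute nothing either way. To prove the sign claim, write $\langle\alpha_t^\vee,u^{-1}\lambda\rangle=\langle u\alpha_t^\vee,\lambda\rangle$. By definition, $t\in\Inv(w_0u)$ means $w_0u\alpha_t^\vee<0$, equivalently $u\alpha_t^\vee>0$. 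Since $\lambda$ is dominant, $u\alpha_t^\vee>0$ forces the pairing to be $\ge0$, and $u\alpha_t^\vee<0$ forces it to be $\le0$. This gives the first identity.

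For the second identity we compute the weight $((uw_0)^{-1}\lambda)=w_0u^{-1}\lambda=-(u^{-1}\lambda)^\ast$. Its coefficient at $t$ is $-\langle\alpha_{t^\ast}^\vee,u^{-1}\lambda\rangle=-\langle u\alpha_{t^\ast}^\vee,\lambda\rangle$. We restrict to $t\in\Inv(u)$, i.e.\ $u\alpha_t^\vee<0$, and must match the result with $[u^{-1}\lambda]_-^\ast=\sum_s\max\{-\langle\alpha_s^\vee,u^{-1}\lambda\rangle,0\}\varpi_{s^\ast}$, whose $t$-coefficient is $\max\{-\langle u\alpha_{t^\ast}^\vee,\lambda\rangle,0\}$.

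The main obstacle is reconciling the index $t$ in $\Inv(u)$ with the $t^\ast$ appearing in the coefficient. The conventions in $\btr{u}{h}$ involve $w_0$ on one side only, so the correct statement must be pinned down; it may be that $\Inv(u)$ should be read with the twist $\Inv(u)^\ast$, or that the order of $w_0$ and $u$ matters. I would first recheck via $\btr{u}{h}=\vtr{uw_0}{h'}$-type rewriting. Note that $\Inv(w_0\cdot uw_0)$ relates to $\Inv(u)$ through $\ast$, since $w_0\alpha_t^\vee=-\alpha_{t^\ast}^\vee$. This should let us apply the first identity to $uw_0$ and then dualize using $[-\mu^\ast]_+=[\mu]_-^\ast$. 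Concretely, $[(uw_0)^{-1}\lambda]_+=[w_0u^{-1}\lambda]_+=[-(u^{-1}\lambda)^\ast]_+=[u^{-1}\lambda]_-^\ast$. So the second identity follows from the first applied to $uw_0$, provided $\pi_{\Inv(u)}$ agrees with $\pi_{\Inv(w_0\cdot uw_0)}$ up to the $\ast$-relabeling. That index bookkeeping is the only delicate check; the sign argument itself is identical to the first case.
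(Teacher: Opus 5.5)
Your argument for the first identity is correct and is essentially the paper's: evaluate the character via $\pi_A(h)^\mu=h^{\mu_A}$ and $(u(h'))^\lambda=(h')^{u^{-1}\lambda}$. Then use that $t\in\Inv(w_0u)$ if and only if $u\alpha_t^\vee>0$, so dominance of $\lambda$ fixes the sign of $\langle u\alpha_t^\vee,\lambda\rangle$. (Your stated ``sign claim'' is one implication written twice in contrapositive form. What you actually need, and what your next sentence proves, is $\ge 0$ on $\Inv(w_0u)$ and $\le 0$ off it.)

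The gap is in the second identity. You end with the proviso that $\pi_{\Inv(u)}$ agrees with $\pi_{\Inv(w_0uw_0)}$ ``up to $\ast$-relabeling'', and this proviso is false. One has $\Inv(w_0uw_0)=\Inv(u)^\ast$, and in general $\pi_{\Inv(u)}(h)\neq\pi_{\Inv(u)^\ast}(h)$; relabeling the index set changes the element of $H$.

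In fact the identity as printed does not hold. Your own coefficient computation shows that the $\varpi_t$-coefficient of the exponent of $(\btr{u}{h})^\lambda$ is $-\langle u\alpha_{t^\ast}^\vee,\lambda\rangle$ if $t\in\Inv(u)$ and $0$ otherwise. Matching $[u^{-1}\lambda]_-^\ast$ would instead require the condition $t^\ast\in\Inv(u)$. Concretely, take type $A_2$, $u=r_1$, $\lambda=\varpi_1$. Then $\Inv(r_1)=\{1\}$ and $\btr{r_1}{h}=(-\alpha_1^\vee-\alpha_2^\vee)(h^{\varpi_1})$, so $(\btr{r_1}{h})^{\varpi_1}=h^{-\varpi_1}$. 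On the other hand $[r_1\varpi_1]_-^\ast=[-\varpi_1+\varpi_2]_-^\ast=\varpi_2$.

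The step you are missing is to commit to the corrected definition $\btr{u}{h}:=uw_0(\pi_{\Inv(u)^\ast}(h))=\vtr{uw_0}{h}$. With it, your reduction (the first identity applied to $uw_0$, together with $[w_0\mu]_+=[\mu]_-^\ast$) is a complete one-line proof, and it is cleaner than the paper's direct computation because it makes the $\ast$-twist visible.

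This corrected element is exactly what arises in \cref{lem:scaling_interpolating_flag}, where one obtains $u_k\bigl(\prod_{s\in\Inv(u_k)}\alpha_s^\vee(h_l^{-\varpi_s^\ast})\bigr)=u_kw_0(\pi_{\Inv(u_k)^\ast}(h_l))$. The later degree formulas are therefore unaffected. The paper's own proof of the second identity has the same slip: its final equality $\prod_{s\in\Inv(u)}(h^{\varpi_s})^{-\langle\alpha_{s^\ast}^\vee,u^{-1}\lambda\rangle}=h^{[u^{-1}\lambda]_-^\ast}$ holds only when $\Inv(u)$ is $\ast$-stable.
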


\begin{proof}
Write $u^{-1}\lambda=\sum_{s \in S} c_s \varpi_s$ with $c_s \in \Z$. The coefficients are given by $c_s=\langle \alpha_s^\vee, u^{-1}\lambda \rangle=\langle u.\alpha_s^\vee, \lambda \rangle$, and hence $u.\alpha_s^\vee > 0$ if and only if $c_s \geq 0$. 
Then we have
\begin{align*}
    (\vtr{u}{h})^\lambda &=u(\pi_{\Inv(w_0 u)}(h))^{\lambda} = \prod_{s \in \Inv(w_0u)} (h^{\varpi_s})^{\langle \alpha_s^\vee, u^{-1}\lambda\rangle} = h^{[u^{-1}\lambda]_+}, \\
    (\btr{u}{h})^\lambda &=uw_0(\pi_{\Inv(u)}(h))^{\lambda} = \prod_{s \in \Inv(u)} (h^{\varpi_s})^{-\langle \alpha_{s^\ast}^\vee, u^{-1}\lambda\rangle} = h^{[u^{-1}\lambda]_-^\ast}.
\end{align*}
\end{proof}

\begin{lem}\label{lem:scaling_interpolating_flag}
Let $(\flA_l,\flA_r)$ be a generic pair of decorated flags, and $\bs=(s_1,\dots,s_N)$ a reduced word of $w_0$. Then by the $H^2$-action $(\flA_l,\flA_r) \mapsto (\flA_l.h_l, \flA_r.h_r)$ for $(h_l,h_r) \in H^2$, the intermediate flags $(\flA_{(k)})_{k=0,\dots,N}$ obtained by \cref{lem:decorated_chain} are rescaled as
\begin{align*}
    \flA_{(k)} \mapsto \flA_{(k)}.(\btr{u_k}{h_l}) (\vtr{u_k}{h_r}).
\end{align*}
\end{lem}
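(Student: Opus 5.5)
The plan is to track the explicit formula \eqref{eq:interpolation_explicit} for the intermediate flags under the rescaling. Write $(\flA_r,\flA_l)=g(h.[U^+],\overline{w_0}.[U^+])$ with $h=h(\flA_r,\flA_l)$. Then \eqref{eq:interpolation_explicit} gives
\[
\flA_{(k)}=g\overline{r}_{s_N}\cdots\overline{r}_{s_{k+1}}h_k.[U^+],\qquad h_k=\vtr{u_k}{h},
\]
using the identity $h_k=u_k(\pi_{\Inv(w_0u_k)}(h))$ derived just before \cref{lem:weight_interpolating_flag}. By the uniqueness part of \cref{lem:decorated_chain}, it suffices to do two things. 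First, rewrite the rescaled pair $(\flA_l.h_l,\flA_r.h_r)$ in the normal form $g'(h'.[U^+],\overline{w_0}.[U^+])$. Second, compute the new intermediate flags from the same formula and compare.

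\textbf{Step 1: normal form of the rescaled pair.} The right action of $H$ on $\sA_G=G/U^+$ is $gU^+\mapsto gtU^+$. We have $\flA_r.h_r=ghh_r.[U^+]$ and $\flA_l.h_l=g\overline{w_0}h_l.[U^+]=g w_0(h_l)\overline{w_0}.[U^+]$. Put $g'=g\,w_0(h_l)$, which is legitimate since $H$ is commutative. Then the pair becomes $g'(h'.[U^+],\overline{w_0}.[U^+])$ with
\[
h'=w_0(h_l)^{-1}hh_r .
\]

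\textbf{Step 2: new intermediate flags.} Applying \eqref{eq:interpolation_explicit} to the new pair gives
\[
\flA'_{(k)}=g\,w_0(h_l)\,\overline{r}_{s_N}\cdots\overline{r}_{s_{k+1}}\,(\vtr{u_k}{h'}).[U^+].
\]
Commute $w_0(h_l)$ past $\overline{u_k}=\overline{r}_{s_N}\cdots\overline{r}_{s_{k+1}}$; this turns it into $u_k^{-1}w_0(h_l)$. Next, $\vtr{u}{\cdot}=u\circ\pi_{\Inv(w_0u)}$ is a group homomorphism. Each $\pi_A$ is a homomorphism with $\pi_A+\pi_{S\setminus A}=\mathrm{id}$ (written additively), since $h=\prod_s\alpha_s^\vee(h^{\varpi_s})$. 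Hence
\[
\flA'_{(k)}=\flA_{(k)}.\bigl(u_k^{-1}w_0(h_l)\bigr)\,u_k\pi_{\Inv(w_0u_k)}\bigl(w_0(h_l)^{-1}\bigr)\,(\vtr{u_k}{h_r}).
\]

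\textbf{Step 3: identify the $h_l$-factor.} We must show
\[
u_k^{-1}w_0(h_l)\cdot u_k\pi_{\Inv(w_0u_k)}(w_0(h_l))^{-1}=\btr{u_k}{h_l}.
\]
Here $u_k^{-1}$ should really be read as the appropriate Weyl action; the exact convention is to be checked against $u_k=r_{s_{k+1}}\cdots r_{s_N}$ versus its inverse. The cleanest check is via characters: test both sides against every $\lambda\in P_+$ and use \cref{lem:weight_interpolating_flag}. The right-hand side gives $h_l^{[u_k^{-1}\lambda]_-^\ast}$. On the left, the first factor contributes $h_l^{(u_k^{-1}\lambda)^\ast}$ up to sign conventions. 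The second factor contributes $h_l^{-[u_k^{-1}\lambda]_+^\ast}$, because $w_0$ intertwines $\pi_A$ with $\pi_{A^\ast}$ and $\Inv(w_0u)$ corresponds to $\Inv(u)$ via complement and $\ast$. Their difference is $[\,\cdot\,]_-$, as required.

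The main obstacle is this bookkeeping: matching the sign conventions $[\gamma]_\pm$, the Dynkin involution $\ast$, and the Weyl action direction ($u_k$ versus $u_k^{-1}$, $w_0$ versus $w_0^{-1}$). I would first verify it in type $A_1$ and then carry out the general character computation as above. A second point needing care is the lifts: $\overline{w}h\overline{w}^{-1}=w(h)$ must hold exactly, with no extra factors of $s_G$, which is fine because $H$ is abelian and $\overline{r}_s$ normalizes $H$ with the expected action.
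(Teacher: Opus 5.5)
Your outline is the paper's proof. You write the rescaled pair as $g\,w_0(h_l)\bigl(h_l^\ast hh_r.[U^+],\overline{w_0}.[U^+]\bigr)$ with $h_l^\ast:=w_0(h_l)^{-1}$, reapply \eqref{eq:interpolation_explicit}, commute the torus element past the Weyl lift, and split the homomorphism $\vtr{u_k}{\cdot}$. The only substantive step is identifying the $h_l$-factor. There you use characters instead of the paper's explicit coroot product. That route works, but as you wrote it the computation does not come out right, and it needs the following corrections.

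\textbf{Direction of the Weyl action.} With $u_k=r_{s_{k+1}}\cdots r_{s_N}$, the product $\overline{r}_{s_N}\cdots\overline{r}_{s_{k+1}}$ lifts $u_k^{-1}$. Hence $w_0(h_l)\,\overline{u_k^{-1}}=\overline{u_k^{-1}}\,(u_kw_0)(h_l)$, and the $h_l$-factor is
\[
F=(u_kw_0)(h_l)\cdot\vtr{u_k}{h_l^\ast},
\]
exactly as in the paper.

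\textbf{Signs.} Using $(w(t))^\lambda=t^{w^{-1}\lambda}$, the first factor has character $h_l^{w_0u_k^{-1}\lambda}=h_l^{-(u_k^{-1}\lambda)^\ast}$. The second factor has character $h_l^{+[u_k^{-1}\lambda]_+^\ast}$, by the first formula of \cref{lem:weight_interpolating_flag} together with $(h_l^\ast)^\mu=h_l^{\mu^\ast}$. No intertwining argument is needed; also note $\Inv(w_0u)=S\setminus\Inv(u)$, with no $\ast$. Your signs give $h_l^{-[u_k^{-1}\lambda]_-^\ast}$, the inverse of the target. The correct signs give $h_l^{(-\gamma+[\gamma]_+)^\ast}=h_l^{[\gamma]_-^\ast}$ with $\gamma=u_k^{-1}\lambda$, which matches the second formula of \cref{lem:weight_interpolating_flag}.

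\textbf{Closing the argument.} Every $t\in H$ satisfies $t=\prod_s\alpha_s^\vee(t^{\varpi_s})$, so the fundamental weights already separate points of $H$, and the proof is complete.

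\textbf{Comparison with the paper.} Your character test lands directly on $F^\lambda=h_l^{[u_k^{-1}\lambda]_-^\ast}$, which is all that \cref{prop:weight} uses. It also avoids the paper's final rewriting
\[
u_k\Bigl(\prod_{s\in\Inv(u_k)}\alpha_s^\vee(h_l^{-\varpi_s^\ast})\Bigr)=u_kw_0(\pi_{\Inv(u_k)}(h_l)).
\]
Since $w_0\alpha_s^\vee=-\alpha_{s^\ast}^\vee$, this identity seems to hold only when $\Inv(u_k)$ is $\ast$-stable. For example, take type $A_2$, $\bs=(1,2,1)$, $k=2$, so $u_k=r_1$. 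Then $F=\alpha_1^\vee(h_l^{\varpi_2})$, whereas $r_1w_0(\pi_{\{1\}}(h_l))=(\alpha_1^\vee+\alpha_2^\vee)(h_l^{\varpi_1})^{-1}$. The closed form consistent with \cref{lem:weight_interpolating_flag} is $u_kw_0\bigl(\pi_{\{s^\ast\mid s\in\Inv(u_k)\}}(h_l)\bigr)$. Your argument determines $\btr{u_k}{h_l}$ only through its character, so once the signs are fixed it proves the lemma in the form that is actually used later.
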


\begin{proof}
Assume $(\flA_r,\flA_l)=(h.[U^+], \vw.[U^+])$ with $h=h(\flA_r,\flA_l)$. Then 
\begin{align*}
    \flA_{(k)}=\overline{u_k^{-1}} (\vtr{u_k}{h}).[U^+]
\end{align*}
with $u_k=r_{s_{k+1}}\dots r_{s_N}$. The $H^2$-action is computed as 
\begin{align*}
    (\flA_r.h_r,\flA_l.h_l) = (hh_r.[U^+], \vw h_l.[U^+]) = w_0(h_l).( h_l^\ast h h_r.[U^+], \vw.[U^+]),
\end{align*}
and hence
\begin{align*}
    \flA_{(k)} \mapsto w_0(h_l) \overline{u_k^{-1}} (\vtr{u_k}{h_l^\ast h h_r}).[U^+] = \flA_{(k)}. u_kw_0(h_l)\cdot (\vtr{u_k}{h_l^\ast}) \cdot (\vtr{u_k}{h_r}).
\end{align*}
The dependence on $h_l$ is further computed as follows:
\begin{align*}
    u_k w_0(h_l) \cdot(\vtr{u_k}{h_l^\ast}) &= u_k \bigg(\prod_{s \in S} \alpha_s^\vee(h_l^{-\varpi_s^\ast}) \cdot \prod_{s \in \Inv(w_0u_k)} \alpha_s^\vee(h_l^{\varpi_s^\ast}) \bigg) \\
    &= u_k \bigg(\prod_{s \in \Inv(u_k)} \alpha_s^\vee(h_l^{-\varpi_s^\ast})  \bigg)\\
    &= u_k w_0 (\pi_{\Inv(u_k)}(h_l)) = \btr{u_k}{h_l}.
\end{align*}
Thus the assertion is proved. 
\end{proof}

\begin{prop}\label{prop:weight}
For any double reduced word $\bbs$ of $(w_0,w_0)$, the weights of the associated cluster variables are given by
\begin{align*}
    \deg \Delta_s(\flA^k,\flA_l) &=\begin{bmatrix}
        [u_k^{-1}\varpi_s]_+ & [u_k^{-1}\varpi_s]_-^\ast \vspace{2mm}\\ 
        [v_l^{-1}\varpi_s]_- & [v_l^{-1}\varpi_s]_+^\ast
    \end{bmatrix} \\
    \deg h(\flA_{l},\flA_{l-1})^{\varpi_t^\ast} &= \begin{bmatrix}
        0 & 0 \\
        \varpi_t & \varpi_t^\ast
    \end{bmatrix} & \mbox{if $\beta_l^{\bs_\bullet^\ast}=\alpha_{t^\ast}^\vee$ is simple}, \\
    \deg h(\flA^{k-1},\flA^{k})^{\varpi_t} &= \begin{bmatrix}
        \varpi_t & \varpi_t^\ast \\
        0 & 0
    \end{bmatrix} & \mbox{if $\beta_k^{\bs^\bullet}=\alpha_{t^\ast}^\vee$ is simple}.
\end{align*}
Here, $u_k:=r_{s^{k}}\dots r_{s^1}$ and $v_l:=r_{s_{l+1}}\dots r_{s_N}$. By using the labeling in \eqref{eq:GS_variable} and the notation in \cref{subsec:Qseed_square}, 
\begin{align*}
    \deg A_{\bbs; j}^\GS &=
    \begin{cases}
    \begin{bmatrix}
        \varpi_s & 0\\ 
        \varpi_s^\ast & 0
    \end{bmatrix}&  \mbox{if $j=s_0\in S$}, \vspace{5pt}\\
    \begin{bmatrix}
        [\gamma_{\bbs; j}]_+ & [\gamma_{\bbs; j}]_-^\ast \\ 
        [\delta_{\bbs; j}]_- & [\delta_{\bbs; j}]_+^\ast
    \end{bmatrix}&  \mbox{if $j\in [1, 2N]$}, \vspace{5pt}\\
        \begin{bmatrix}
        \varpi_s & \varpi_s^\ast \\
        0 & 0
    \end{bmatrix} & \mbox{if $j=s_{-\infty}\in S_{-\infty}$}, \vspace{5pt}\\ 
    \begin{bmatrix}
        0 & 0 \\
        \varpi_s & \varpi_s^\ast
    \end{bmatrix}  & \mbox{if $j=s_{\infty}\in S_{\infty}$}.
        \end{cases}
\end{align*}
\end{prop}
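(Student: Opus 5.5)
The plan is to compute the weights of the $H^4$-action on each type of function directly from the chains \eqref{eq:chain_upper} and \eqref{eq:chain_lower}, using \cref{lem:scaling_interpolating_flag} to track how the intermediate flags rescale.

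First, rescale $(\flA^\sfL,\flA^\sfR,\flA_\sfR,\flA_\sfL)$ by $(h_1,h_2,h_3,h_4)$. The upper chain is obtained from \cref{lem:decorated_chain} applied to $(\flA^\sfR,\flA^\sfL)$ with the word $(\bs^\bullet)^\op$. So by \cref{lem:scaling_interpolating_flag}, applied with $\flA_l=\flA^\sfR$ and $\flA_r=\flA^\sfL$, we get
\[
\flA^k\mapsto \flA^k.(\btr{u_k}{h_2})(\vtr{u_k}{h_1}),
\]
where $u_k=r_{s^k}\cdots r_{s^1}$ is the appropriate tail of the reversed word. Similarly, the lower chain (pair $(\flA_\sfL,\flA_\sfR)$, word $\bs_\bullet^\ast$) gives
\[
\flA_l\mapsto \flA_l.(\btr{v'_l}{h_4})(\vtr{v'_l}{h_3}),
\]
where $v'_l$ is the tail $r_{s_{l+1}^\ast}\cdots r_{s_N^\ast}=v_l^\ast$. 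Here I must check carefully that the indexing of tails matches $u_k$ and $v_l$ as stated, including the twist by $\ast$.

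Next, $\Delta_s$ has weight $\varpi_s$ in the first slot and $\varpi_s^\ast$ in the second. Hence
\[
\Delta_s(\flA^k.x,\flA_l.y)=x^{\varpi_s}\,y^{\varpi_s^\ast}\,\Delta_s(\flA^k,\flA_l).
\]
Applying \cref{lem:weight_interpolating_flag} gives
\[
(\vtr{u_k}{h_1})^{\varpi_s}=h_1^{[u_k^{-1}\varpi_s]_+},\qquad (\btr{u_k}{h_2})^{\varpi_s}=h_2^{[u_k^{-1}\varpi_s]_-^\ast}.
\]
For the lower flag with $\varpi_s^\ast$ we get $h_3^{[v_l'^{-1}\varpi_s^\ast]_+}$ and $h_4^{[v_l'^{-1}\varpi_s^\ast]_-^\ast}$. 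Using $v_l'=w_0v_lw_0$ and $\ast=-w_0$, these become $h_3^{[v_l^{-1}\varpi_s]_+^\ast}$ and $h_4^{[v_l^{-1}\varpi_s]_-}$, which is the claimed matrix.

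For the $h$-distances: by \cref{lem:decorated_chain}, $h(\flA_l,\flA_{l-1})^{\varpi_{t^\ast}}$ equals the coefficient $c_l$, which is $h(\flA_\sfR,\flA_\sfL)^{\varpi_t}$-type data. Now $h(\flA.x,\flA'.y)=x\,w_0(y)^{-1}\,h(\flA,\flA')$-type, i.e. $h(\flA.x,\flA'.y)=x\,y^{\ast}h(\flA,\flA')$, read off from the normalization $(h.[U^+],\vw.[U^+])$. From this the weight is $\varpi_t$ on one of $\flA_\sfL,\flA_\sfR$ and $\varpi_t^\ast$ on the other, and zero on the upper flags. The upper case is symmetric.

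Finally, the second display is a translation of the first through the labeling \eqref{eq:GS_variable}. For $j\in[1,2N]$ with $\iota^\bullet(k)\le j<\iota^\bullet(k+1)$ and $\iota_\bullet(l)\le j<\iota_\bullet(l+1)$, I need to verify:
\begin{itemize}
\item $u_k^{-1}\varpi_{|\bbs_j|}=\gamma_{\bbs;j}=w^{\le j}\varpi_{|\bbs_j|}$;
\item $v_l^{-1}\varpi_{|\bbs_j|}=\delta_{\bbs;j}$ up to the $w_0$ twist (since $w_0w_{\le j}\varpi=r_{s_N}\cdots r_{s_{l+1}}\varpi$ when the stabilizing letters are dropped).
\end{itemize}
This uses that simple reflections not equal to $|\bbs_j|$ fix $\varpi_{|\bbs_j|}$ only partially, so it needs the reduced-word bookkeeping. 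The $S_0$ case is $k=l=0$.

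The main obstacle will be the index and $\ast$ bookkeeping: matching tails of reversed and starred words with $u_k$, $v_l$, $\gamma$ and $\delta$, and pinning down the convention for how the $h$-distance transforms (which side receives $\ast$). I would settle the $h$-distance rule first on the normalized pair $([U^+],\vw.[U^+])$, and test the identifications on the $A_2$ example of \cref{ex:SST}.
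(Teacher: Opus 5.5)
Your proposal is correct and follows the paper's proof. You rescale the intermediate flags of both chains via \cref{lem:scaling_interpolating_flag}, evaluate with \cref{lem:weight_interpolating_flag}, and reduce the frozen variables to $h(\flA_\sfR,\flA_\sfL)$ and $h(\flA^\sfL,\flA^\sfR)$; your explicit use of the twisted tail $w_0v_lw_0$ for the lower chain is, if anything, more careful than the paper's shorthand.

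The two points you left open close at once. First, \eqref{eq:chain_lower} gives $c_l=h(\flA_\sfR,\flA_\sfL)^{\varpi_t^\ast}$, so your rule $h(\flA.x,\flA'.y)=x\,w_0(y)^{-1}h(\flA,\flA')$ puts $\varpi_t^\ast$ on $\flA_\sfR$ and $\varpi_t$ on $\flA_\sfL$; the case of $c^k$ is symmetric. Second, $w_0w_{\le j}=r_{s_N}\cdots r_{s_{l+1}}=v_l^{-1}$ holds exactly in $W$, so $\delta_{\bbs;j}=v_l^{-1}\varpi_{|\bbs_j|}$ with no twist.
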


\begin{proof}
The rescalings of the cluster variables in $\mathbf{A}_\uf(\bbs)$ are computed as follows:
\begin{align*}
    \Delta_s(\flA^k,\flA_l) \mapsto& \Delta_s(\flA^k.(\btr{u_k}{h^\sfR})(\vtr{u_k}{h^\sfL}), \flA_l.(\btr{v_l}{h_\sfL})(\vtr{v_l}{h_\sfR})) \\
    =& \Delta_s(\flA^k,\flA_l) \cdot (\btr{u_k}{h^\sfR})^{\varpi_s}(\vtr{u_k}{h^\sfL})^{\varpi_s}(\btr{v_l}{h_\sfL})^{\varpi_s^\ast}(\vtr{v_l}{h_\sfR})^{\varpi_s^\ast}.
\end{align*}
Therefore we get the degrees as asserted by \cref{lem:scaling_interpolating_flag}. 
For the variables in $\mathbf{A}^\fr(\bbs)$, note that 
\begin{align*}
    h(\flA_{l},\flA_{l-1})^{\varpi_{t^\ast}} = h(\flA_\sfR,\flA_\sfL)^{\varpi_{t^\ast}}, \quad h(\flA^{k-1},\flA^{k})^{\varpi_{t}} = h(\flA^\sfL,\flA^\sfR)^{\varpi_t}
\end{align*}
by construction, from which we immediately get the degrees as asserted. 
\end{proof}

\subsection{Cluster variables and generalized minors}
The following is essentially \cite[Proposition 4.12]{IOS}, where some conventions are modified:

\begin{prop}\label{prop:GS_variable_minor}
Let $(g, h, h') \in G\times H\times H$ be the parameter in the standard configuration in $\Conf_4^\times \sA_G$, see \cref{fig:band_config_appendix}. Then we have
\begin{align}\label{eq:GS_variable_minor}
    \Delta_s(\flA^{k},\flA_l)=\Delta_{u_k^{-1}\varpi_s, v_l^{-1}\varpi_s}(g^\ast) h^{[v_l^{-1}\varpi_s]_+^\ast} {h'}^{[u_k^{-1}\varpi_s]^\ast_+}
\end{align}
for $s \in S$ and $k,l=1,\dots,N$. Here, $u_k:=r_{s^{k}}\dots r_{s^1}$ and $v_l:=r_{s_{l+1}}\dots r_{s_N}$. By using the labeling in \eqref{eq:GS_variable} and the notation in \cref{subsec:Qseed_square}, 
\begin{align*}
    A_{\bbs; j}^\GS (g, h, h')&=
    \begin{cases}
\Delta_{\varpi_s^{\ast}, w_0\varpi_s^{\ast}}(g) {h'}^{\varpi_s^\ast}&  \mbox{if $j=s_0\in S$}, \\
 \Delta_{\gamma_{\bbs; j}^{\ast}, \delta_{\bbs; j}^{\ast}}(g) h^{[\delta_{\bbs; j}]_+^\ast} {h'}^{[\gamma_{\bbs; j}]^\ast_+}&  \mbox{if $j\in [1, 2N]$}, \\
    {h'}^{\varpi_s^\ast} & \mbox{if $j=s_{-\infty}\in S_{-\infty}$}, \\
    h^{\varpi_s^\ast}& \mbox{if $j=s_{\infty}\in S_{\infty}$}.
        \end{cases}
\end{align*}
\end{prop}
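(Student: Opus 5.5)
The approach is a direct classical computation: write each intermediate decorated flag explicitly in the standard configuration of \cref{fig:band_config_appendix}, then evaluate $\Delta_s$ on pairs of such flags as a matrix coefficient. First, I would record a formula for $\Delta_s$ on pairs of the form $(x.[U^+],y.[U^+])$ with $x,y\in G$. Since $\Delta_s\in (V(\varpi_s)^\ast\otimes V(\varpi_s^\ast)^\ast)^G$ is normalized by $\Delta_s([U^+],\vw.[U^+])=1$, $G$-invariance gives $\Delta_s(x.[U^+],y.[U^+])=\Delta_s([U^+],x^{-1}y.[U^+])$. This should equal a fixed generalized minor of $x^{-1}y$, namely $\Delta_{\varpi_s,w_0\varpi_s}$ evaluated on $x^{-1}y$, possibly after applying the anti-automorphism/involution that produces the $\ast$. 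I would fix this identity first by checking it on $x^{-1}y=h\vw$ with $h\in H$; this also pins down the sign and lift conventions for $\vw$ from \cite[Section 2.1]{IOS}.

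Second, I would compute the chains \eqref{eq:chain_upper} and \eqref{eq:chain_lower} via \eqref{eq:interpolation_explicit}. For the upper pair $(\flA^\sfR,\flA^\sfL)=([U^+],\vw^{-1}{h'}^\ast.[U^+])$, \eqref{eq:interpolation_explicit} gives $\flA^k$ as a product of a lift of $u_k^{-1}$ (up to the $w_0$ twist) with the torus element $\vtr{u_k}{h'}$ (or its $\ast$-variant). For the lower pair $(\flA_\sfL,\flA_\sfR)=(g\vw.[U^+],gh.[U^+])$, it gives $\flA_l=g\,\overline{v_l^{-1}}\,(\vtr{v_l}{h}).[U^+]$ after moving $\vw$ through using $s^\ast$. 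Here I would use \cref{lem:scaling_interpolating_flag} to justify extracting the $H$-dependence. I would then substitute these into the first step and pull the torus factors out of the minor, using that extremal weight vectors are weight vectors. By \cref{lem:weight_interpolating_flag}, the factors $(\vtr{u}{h})^{\lambda}=h^{[u^{-1}\lambda]_+}$ contribute exactly $h^{[v_l^{-1}\varpi_s]_+^\ast}{h'}^{[u_k^{-1}\varpi_s]_+^\ast}$. The remaining lifts of Weyl group elements convert $\Delta_{\varpi_s,w_0\varpi_s}$ into $\Delta_{u_k^{-1}\varpi_s,v_l^{-1}\varpi_s}(g^\ast)$. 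As a cross-check, the resulting $H^4$-weights must agree with \cref{prop:weight}.

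Finally, I would translate to the labeling \eqref{eq:GS_variable}. For $j\in[1,2N]$, the task is to check that $u_k^{-1}\varpi_{|\bbs_j|}$ and $v_l^{-1}\varpi_{|\bbs_j|}$ match $\gamma_{\bbs;j}$ and $\delta_{\bbs;j}$ up to $\ast$, by comparing $w^{\leq j}$ and $w_0w_{\leq j}$ with $u_k,v_l$ (with $\ast$ on the lower word). One then uses $\Delta_{\mu,\nu}(g^\ast)=\Delta_{\mu^\ast,\nu^\ast}(g)$. The frozen cases are direct: the $S_0$ case is the $k=l=0$ instance of the general formula, and the $S_{\pm\infty}$ cases follow from $h(\flA_l,\flA_{l-1})=h(\flA_\sfR,\flA_\sfL)$ and $h(\flA^{k-1},\flA^k)=h(\flA^\sfL,\flA^\sfR)$ applied to the standard configuration. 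I expect the main obstacle to be the bookkeeping of lifts $\overline{w}$, signs, and the $\ast$-twists: moving $\vw$ past lifts of $u_k$ and $v_l$ and identifying the result with an honest generalized minor of $g^\ast$ rather than one differing by a sign. I would handle this by reducing to the case of a single simple reflection and inducting along the reduced words.
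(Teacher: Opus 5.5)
Your plan is the paper's argument: write $\flA^k$ and $\flA_l$ via \eqref{eq:interpolation_explicit}, pull out the torus factors with \cref{lem:weight_interpolating_flag}, and identify the remaining value of $\Delta_s$ with a generalized minor of $g^\ast$. The paper does this last step by moving $\vw^{-1}$ into the second slot, using $[U^-]=\vw.[U^+]$ and $g^\ast=\vw(g^{-1})^{\sfT}\vw^{-1}$, and then transposing; this is equivalent to your reduction to a function of $x^{-1}y$.

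One correction to your first step: the candidate $\Delta_{\varpi_s,w_0\varpi_s}(x^{-1}y)$ is not right $U^+$-invariant, so it cannot be the answer. The correct formula is $\Delta_s(x.[U^+],y.[U^+])=\Delta_{w_0\varpi_s^\ast,\varpi_s^\ast}(x^{-1}y)$: this minor is bi-$U^+$-invariant and equals $h^{-\varpi_s}$ at $h\vw$ (up to the sign conventions for the lifts). After moving the lifts through, it gives $\Delta_{(u_k^{-1}\varpi_s)^\ast,(v_l^{-1}\varpi_s)^\ast}(g)=\Delta_{u_k^{-1}\varpi_s,v_l^{-1}\varpi_s}(g^\ast)$.
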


\begin{proof}
In the standard configuration of \cref{fig:band_config}, the intermediate flags are explicitly given by
\begin{align*}
    \flA^{k} &= \vw^{-1}\overline{r}_{s^1}\dots \overline{r}_{s^{k}} (\vtr{u_k^{-1}}{{h'}^\ast}).[U^+], \\
    \flA_l &= g\overline{r}_{s_N}^\ast\dots \overline{r}_{s_{l+1}}^\ast (\vtr{v_l^{-1}}{h}).[U^+].
\end{align*}
Then we compute
\begin{align*}
    \Delta_s(\flA^k,\flA_l) &= \Delta_s(\vw^{-1}\overline{r}_{s^1}\dots \overline{r}_{s^{k}}.[U^+],\ g\overline{r}_{s_N}^\ast\dots \overline{r}_{s_{l+1}}^\ast.[U^+]) (\vtr{u_k^{-1}}{{h'}^\ast})^{\varpi_s} (\vtr{v_l^{-1}}{h})^{\varpi_s^\ast} \\
    &= \Delta_s(\overline{r}_{s^1}\dots \overline{r}_{s^{k}}.[U^+],\ \vw g\overline{r}_{s_N}^\ast\dots \overline{r}_{s_{l+1}}^\ast \vw^{-1}.[U^-]) {h'}^{[u_k^{-1}.\varpi_s]^\ast_+} h^{[v_l^{-1}.\varpi_s]_+^\ast} \\
    &= \Delta_s(\overline{r}_{s^1}\dots \overline{r}_{s^{k}}.[U^+],\  ((g^\ast)^\sfT)^{-1} \overline{r}_{s_N}\dots \overline{r}_{s_{l+1}}.[U^-]) {h'}^{[u_k^{-1}.\varpi_s]^\ast_+} h^{[v_l^{-1}.\varpi_s]_+^\ast} \\
    &= \Delta_s((g^\ast)^\sfT\overline{r}_{s^1}\dots \overline{r}_{s^{k}}.[U^+],\   \overline{r}_{s_N}\dots \overline{r}_{s_{l+1}}.[U^-]) {h'}^{[u_k^{-1}.\varpi_s]^\ast_+} h^{[v_l^{-1}.\varpi_s]_+^\ast} \\
    &= \Delta_{u_k^{-1}\varpi_s, v_l^{-1}\varpi_s}(g^\ast){h'}^{[u_k^{-1}.\varpi_s]^\ast_+} h^{[v_l^{-1}.\varpi_s]_+^\ast}.
\end{align*}
Here we used \cref{lem:weight_interpolating_flag} and $[U^-]=\vw.[U^+]$ in the second line, and $g^\ast = \vw (g^{-1})^\sfT \vw^{-1}$ in the third line.
\end{proof}

\subsection{Cluster Poisson variables}
Let us quickly recall the cluster Poisson variables on $\Conf_4^\times \mathscr{B}_{G'}$, following \cite{GS:Quantum}. 

Let $G'$ be the adjoint group of $G$ with the canonical projection $\pi_G: G \to G'$. 
Consider the flag variety $\mathscr{B}_{G'}:=G'/{B'}^+$, where ${B'}^+:=\pi_G(B^+)$. We have the configuration space
\begin{align*}
    \Conf_4 \mathscr{B}_{G'}:= [(\mathscr{B}_{G'}\times \mathscr{B}_{G'}\times \mathscr{B}_{G'}\times \mathscr{B}_{G'})/G],
\end{align*}
and the open subspace $\Conf_4^\times \mathscr{B}_{G'} \subset \Conf_4 \mathscr{B}_{G'}$ of generic configurations. We have a projection
\begin{align}\label{eq:Conf_ensemble}
    p: \Conf_4 \sA_G \to \Conf_4 (G/B^+) \to \Conf_4 \mathscr{B}_{G'}
\end{align}
defined by the projections $\pi: G/U^+ \to G/B^+$ and $\pi_G: G\to G'$. For any double reduced word $\bbs$ of $(w_0,w_0)$, there exists an associated cluster Poisson seed $(\cE^{\GS}(\bbs),(X_{\bbs; j}^\GS)_{j\in J_{\square}(\bbs)^{\uf}})$ in the field of rational functions $\mathcal{K}(\Conf_4 \mathscr{B}_{G'})$ on $\Conf_4 \mathscr{B}_{G'}$. 
The map \eqref{eq:Conf_ensemble} has the coordinate expression
\begin{align*}
    p^\ast X_{\bbs; j}^\GS = \prod_{j' \in J_{\square}(\bbs)} (A_{\bbs; j'}^\GS) ^{\ve_{\bbs; j,j'}^\GS}
\end{align*}
for all $j\in J_{\square}(\bbs)^{\uf}$. 
Since the first map in \eqref{eq:Conf_ensemble} is a principal $H$-bundle, we have 
\begin{align}\label{eq:weight_Poisson}
    \deg_{\flA} p^\ast X_{\bbs; j}^\GS =0
\end{align}
for all $j\in J_{\square}(\bbs)^{\uf}$ and $\flA \in \{\flA^\sfL,\flA^\sfR,\flA_\sfR,\flA_\sfL\}$.

\printbibliography

\end{document}